\newcommand{\sH}{\ensuremath{\mathscr{H}}\xspace}
\newcommand{\sV}{\ensuremath{\mathscr{V}}\xspace}
\newcommand{\sS}{\ensuremath{\mathscr{S}}\xspace}
\DeclareMathOperator{\Gal}{Gal}
\DeclareMathOperator{\Hom}{Hom}
\DeclareMathOperator{\End}{End}
\DeclareMathOperator{\Res}{Res}
\newcommand{\wit}{\widetilde}
\newcommand{\Sh}{\mathrm{Sh}}
\DeclareMathOperator{\Int}{\ensuremath{\mathrm{Int}}\xspace}
\DeclareMathOperator{\Lie}{Lie}
\newcommand{\val}{{\mathrm{val}}}
\DeclareMathOperator{\tr}{Tr}
\newcommand{\U}{\mathrm{U}}
\newcommand{\SO}{\mathrm{SO}}
\DeclareMathOperator{\vol}{vol}
\newcommand{\isoarrow}{%
   \ifbool{@display}{\overset{\sim}{\longrightarrow}}{\xrightarrow\sim}%
   }
\newcommand{\Fb}{{\breve F}}
\newcommand{\OFb}{{O_{\breve F}}}
\newcommand{\Herm}{\mathrm{Herm}}
\newcommand{\rd}{\mathrm{d}}
\newcommand{\Den}{\mathrm{Den}}
\newcommand{\Hor}{\mathrm{Hor}}
\newcommand{\pDen}{\partial\mathrm{Den}}
\newcommand{\Intp}{\Int_{L^\flat,\sV}^\perp}
\newcommand{\pDenp}{\pDen_{L^\flat,\sV}^\perp}
\newcommand{\pPden}{\partial \mathrm{Pden}}
\newcommand{\KG}{K}
\newcommand{\pEis}{\partial\mathrm{Eis}}
\newcommand{\Eis}{\mathrm{Eis}}
\newcommand{\Diff}{\mathrm{Diff}}
\newcommand{\wdeg}{\widehat\deg}
\newcommand{\sz}{\mathsf{z}}
\newcommand{\sx}{\mathsf{x}}
\newcommand{\sy}{\mathsf{y}}
\DeclareFontFamily{U}{matha}{\hyphenchar\font45}
\DeclareFontShape{U}{matha}{m}{n}{
      <5> <6> <7> <8> <9> <10> gen * matha
      <10.95> matha10 <12> <14.4> <17.28> <20.74> <24.88> matha12
      }{}
\DeclareSymbolFont{matha}{U}{matha}{m}{n}
\DeclareFontFamily{U}{mathx}{\hyphenchar\font45}
\DeclareFontShape{U}{mathx}{m}{n}{
      <5> <6> <7> <8> <9> <10>
      <10.95> <12> <14.4> <17.28> <20.74> <24.88>
      mathx10
      }{}
\DeclareSymbolFont{mathx}{U}{mathx}{m}{n}
\DeclareMathSymbol{\obot}         {2}{matha}{"6B}
\newcommand{\kb}{{\bar\kappa}}
\newcommand{\bX}{\mathbb{X}}
\newcommand{\bV}{\mathbb{V}}
\newcommand{\bF}{\mathbf{F}}
\newcommand{\bY}{\mathbb{Y}}
\newcommand{\bP}{\mathbb{P}}
\newcommand{\bW}{\mathbb{W}}
\newcommand{\bA}{\mathbb{A}}
\newcommand{\bfV}{\mathbf{V}}
\newcommand{\rF}{\mathrm{F}}
\newcommand{\Gr}{\mathrm{Gr}}
\newcommand{\SGr}{\mathrm{SGr}}
\newcommand{\Fil}{\mathrm{Fil}}
\newcommand{\bL}{\mathbb{L}}
\newcommand{\Q}{\mathbb{Q}}
\newcommand{\Z}{\mathbb{Z}}
\newcommand{\C}{\mathbb{C}}
\newcommand{\F}{\mathbb{F}}
\newcommand{\spa}{\mathrm{Span}}
\newcommand{\Oo}{\mathcal{O}}
\newcommand{\cZ}{\mathcal{Z}}
\newcommand{\cD}{\mathcal{D}}
\newcommand{\cH}{H}
\newcommand{\tZ}{\tilde{\mathcal{Z}}}
\newcommand{\cG}{\mathcal{G}}
\newcommand{\cU}{\mathcal{U}}
\newcommand{\cN}{\mathcal{N}}
\newcommand{\cM}{\mathcal{M}}
\newcommand{\cF}{\mathcal{F}}
\newcommand{\Exc}{\mathrm{Exc}}
\newcommand{\GL}{\mathrm{GL}}
\newcommand{\Nilp}{\mathrm{Nilp}\, }
\newcommand{\Spec}{\mathrm{Spec}\, }
\newcommand{\Spf}{\mathrm{Spf}\, }
\newcommand{\SpfOF}{{\mathrm{Spf}\,\mathcal{O}_{\breve{F}} }}
\newcommand{\q}{q}
\newcommand{\pden}{\partial \mathrm{Den}}
\newcommand{\red}{\mathrm{red}}
\newcommand{\diag}{\mathrm{Diag}}
\newcommand{\Pden}{\mathrm{Pden}}
\newcommand{\ppden}{\partial \mathrm{Pden}}
\newcommand{\den}{\mathrm{Den}}
\newcommand{\m}{\mathrm{m}}
\newcommand{\ff}{\mathrm{if }}
\newtheorem{theorem}{Theorem}[section]
\newtheorem{corollary}[theorem]{Corollary}
\newtheorem{proposition}[theorem]{Proposition}
\newtheorem{lemma}[theorem]{Lemma}
\newtheorem{remark}[theorem]{Remark}
\theoremstyle{definition}
\newtheorem{definition}[theorem]{Definition}
\newtheorem{example}[theorem]{Example}
\numberwithin{equation}{section}
\title{A proof of the Kudla--Rapoport conjecture for Kr\"amer models}
\author[Qiao He]{Qiao He}
\address{Department of Mathematics, University of Wisconsin Madison, Van Vleck Hall,
Madison, WI 53706, USA}
\email{qhe36@wisc.edu}
\author[Chao Li]{Chao Li}
\address{Columbia University, Department of Mathematics, 2990 Broadway,	New York, NY 10027, USA}
\email{chaoli@math.columbia.edu}
\author[Yousheng Shi]{Yousheng Shi}
\address{Department of Mathematics, University of Wisconsin Madison, Van Vleck Hall,
Madison, WI 53706, USA}
\email{shi58@wisc.edu}
\author[Tonghai Yang]{Tonghai Yang}
\address{Department of Mathematics, University of Wisconsin Madison, Van Vleck Hall,
Madison, WI 53706, USA}
\email{thyang@math.wisc.edu}
\date{\today}
\begin{document}

\begin{abstract}
We prove the Kudla--Rapoport conjecture for Kr\"amer models of unitary Rapoport--Zink spaces at ramified places. It is a precise identity between arithmetic intersection numbers of special cycles on Kr\"amer models and modified derived local densities of hermitian forms. As an application, we relax the local assumptions at ramified places in the arithmetic Siegel--Weil formula for unitary Shimura varieties, which is in particular applicable to unitary Shimura vartieties associated to unimodular hermitian lattices over imaginary quadratic fields.
\end{abstract}

\maketitle{}

\tableofcontents{}

\section{Introduction}

\subsection{Background}
The classical \emph{Siegel--Weil formula} (\cite{Sie35,Siegel1951,Weil1965}) relates certain Siegel Eisenstein series to the arithmetic of quadratic forms, namely it expresses special \emph{values} of these series as theta functions --- generating series of representation numbers of quadratic forms. Kudla (\cite{Kudla97, Kudla2004}) initiated an influential program to establish the \emph{arithmetic Siegel--Weil formula} relating certain Siegel Eisenstein series to objects in arithmetic geometry, which among others, aims to express the \emph{central derivative} of these series as the arithmetic analogue of theta functions --- generating series of arithmetic intersection numbers of $n$ special divisors on Shimura varieties associated to $\SO(n-1,2)$ or $\U(n-1,1)$.

For  $\U(n-1,1)$--Shimura varieties with hyperspecial level at an \emph{unramified} place,  Kudla--Rapoport  \cite{KR1} conjectured a local arithmetic Siegel--Weil formula, now known as the (local) \emph{Kudla--Rapoport conjecture}. It is a precise identity between the \emph{central derivative} of local representation densities of hermitian forms (the analytic side) and the arithmetic intersection number of special cycles on unitary Rapoport--Zink spaces (the geometric side). This conjecture was recently proved by Zhang and one of us \cite{LZ}, and we refer to the introduction of \cite{LZ} for more background and related results.

It is a natural question, which is also important for global applications, to formulate and prove an analogue of the Kudla--Rapoport conjecture at a \emph{ramified} place. At a ramified place,
there are two well-studied level structures for unitary Rapoport--Zink spaces, one gives rise to the \emph{exotic smooth model} which has good reduction, and the other one gives rise to the \emph{Kr\"amer model} which has bad (semistable) reduction. For the even dimensional exotic smooth model, the analogue of Kudla--Rapoport conjecture was formulated and proved by Liu and one of us \cite{LL2} using a strategy similar to \cite{LZ}.

For the Kr\"amer model, however, the situation is more complicated --- it is expected that the analytic side of the conjecture requires nontrivial modification, by a certain linear combination of \emph{central values} of local representation densities. The necessity of such modification in the presence of bad reduction was first discovered by Kudla--Rapoport  \cite{KRshimuracurve}   via explicit computation in the context of the Drinfeld $p$-adic half plane. In \cite{HSY3}, three of us formulated \emph{the Kudla--Rapoport conjecture for Kr\"amer models} (recalled in \S\ref{sec:kudla-rapop-conj}) by providing a conceptual recipe for the precise modification needed for the analytic side. Moreover, this conjecture was proved for $n=2$ (based on the previous works \cite{Shi2,HSY}) and $n=3$ in \cite{HSY3}.

The main theorem of the present paper settles this conjecture for any $n$ (and the proof is new even for $n=2,3$).  As a first application, we relax the local assumptions in the arithmetic Siegel--Weil formula for $\U(n-1,1)$--Shimura varieties by allowing Kr\"amer models at ramified places.  The main theorem should also be useful to  relax the local assumptions at ramified places in the arithmetic inner product formula \cite{LL2020,LL2} and its $p$-adic avatar by Disegni--Liu \cite{DL22}.

\subsection{Kudla--Rapoport conjecture for Kr\"amer models}\label{sec:kudla-rapop-conj}

Let $p$ be an odd prime. Let $F_0$ be a finite extension of $\mathbb{Q}_p$ with residue field $\kappa=\mathbb{F}_q$. Let $F$ be a ramified quadratic extension of $F_0$. Let $\pi$ be a uniformizer of $F$ such that $\tr_{F/F_0}(\pi)=0$. Then $\pi_0=\pi^2$ is a uniformizer of $F_0$. Let $\breve F$ be the completion of the maximal unramified extension of $F$. Let $ O_F, \OFb$ be the ring of integers of $F,\Fb$ respectively.

Let $n\ge2$ be an integer. To define the Kr\"amer model of the unitary Rapoport--Zink space, we fix  a (principally polarized) supersingular hermitian $O_F$-modules $\mathbb{X}$ of signature $(1,n-1)$ over $\kb$ (Definition \ref{def:hermitian modules}). The \emph{Kr\"amer model} $\mathcal{N}=\mathcal{N}_n$ is the formal scheme over $\Spf O_{\breve F}$ parameterizing hermitian formal $O_F$-modules $X$ of signature $(1,n-1)$ within the quasi-isogeny class of $\mathbb{X}$, together with a rank 1 filtration $\mathcal{F}\subset \Lie X$ satisfying the Kr\"amer condition (Definition \ref{def:NKra}). The space $\mathcal{N}$ is locally of finite type, and semistable of relative dimension $n-1$ over $\Spf \OFb$. There are two choices of the framing object $\mathbb{X}$ (up to quasi-isogeny), giving rise to two non-isomorphic (resp. isomorphic) spaces $\mathcal{N}$ when $n$ is even (resp. odd) (\S\ref{subsec:associated hermitian space}).

Let $\mathbb{Y}$ be the framing hermitian $O_F$-modules of signature $(0,1)$ over $\kb$ defined as in Definition~\ref{def:hermitian modules}. The \emph{space of quasi-homomorphisms} $\mathbb{V}=\mathbb{V}_n\coloneqq \Hom_{O_F}(\mathbb{Y}, \mathbb{X}) \otimes_{O_F}F$ carries a natural $F/F_0$-hermitian form, which makes $\mathbb{V}$ a non-degenerate $F/F_0$-hermitian space of dimension $n$ (\S\ref{subsec:associated hermitian space}). The two choices of the framing object $\mathbb{X}$ exactly correspond to the two isomorphism classes of $\mathbb{V}$, classified by $\chi(\mathbb{V}):=\chi((-1)^{\frac{n(n-1)}2}\det(\mathbb{V}))\in\{\pm1\}$, where $\chi: F_0^\times\rightarrow \{\pm1\}$ is the quadratic character associated to $F/F_0$. For any subset $L\subset \mathbb{V}$, the \emph{special cycle} $\mathcal{Z}(L)$ (\S\ref{subsec:specialcycles}) is a closed formal subscheme of $\mathcal{N}$, over which each quasi-homomorphism $x\in L$ deforms to a homomorphism.

Let $L\subset \mathbb{V}$ be an $O_F$-lattice (of full rank $n$). We will associate to $L$ two integers: the \emph{arithmetic intersection number} $\Int(L)$ and the \emph{modified derived local density} $\pDen(L)$.

\begin{definition}
Let $L\subset \mathbb{V}$ be an $O_F$-lattice. Let $x_1,\ldots, x_n$ be an $O_F$-basis of $L$. Define the \emph{arithmetic intersection number}
\begin{equation}
  \label{eq:IntL}
  \Int(L)\coloneqq \chi(\mathcal{N},\mathcal{O}_{\mathcal{Z}(x_1)} \otimes^\mathbb{L}\cdots \otimes^\mathbb{L}\mathcal{O}_{\mathcal{Z}(x_n)} )\in \mathbb{Z},
\end{equation}
 where $\mathcal{O}_{\mathcal{Z}(x_i)}$ denotes the structure sheaf of the special divisor $\mathcal{Z}(x_i)$, $\otimes^\mathbb{L}$ denotes the derived tensor product of coherent sheaves on $\mathcal{N}$, and $\chi$ denotes the Euler--Poincar\'e characteristic (Definition \ref{def:Int L}). By  Howard \cite[Corollary D]{Ho2}), we know that $\Int(L)$ is independent of the choice of the basis $x_1,\ldots, x_n$ and hence is a well-defined invariant of $L$ itself.
\end{definition}

For  another hermitian $O_F$-lattice $M$ (of arbitrary rank), denote by $\Herm_{L,M}$ the $O_{F_0}$-scheme of hermitian $O_F$-module homomorphisms from $L$ to $M$ (Definition \ref{def: Den}) and define its \emph{local density} to be $$\Den(M,L)\coloneqq \lim_{d\rightarrow +\infty}\frac{|\Herm_{L,M}(O_{F_0}/\pi_0^{d})|}{q^{N\cdot d_{L,M}}},$$ where $d_{L,M}$ is the dimension of $\Herm_{L,M} \otimes_{O_{F_0}}F_0$. Let $H$ be the standard hyperbolic hermitian $O_F$-lattice of rank 2 (given by the hermitian matrix $\left(\begin{smallmatrix}0 & \pi^{-1} \\ -\pi^{-1} & 0\end{smallmatrix}\right)$). It is well-known that  there exists a \emph{local density polynomial} $\den(M,L,X)\in \mathbb{Q}[X]$  such that for any integer $k\ge0$,
\begin{equation}\label{eq:Den poly introduction}
	\Den(M, L, \q^{-2k})=\Den(H^k\obot M,L).
\end{equation}
        When $M$  has also rank $n$ and $\chi(M)=-\chi(L)$, we have $\Den(M,L)=0$ (Lemma \ref{lem: 0 vanish of error term}) and in this case we write $$\den'(M, L):=-2\cdot \frac{\rd}{\rd X}\bigg|_{X=1} \den(M,L, X).$$ Define the (normalized) \emph{derived local density}
        \begin{equation}\label{eq: def of Den'}
        \Den'(L):=\frac{\Den'(I_n, L)}{\Den(I_n,I_n)}\in \mathbb{Q}.\end{equation}
        Here $I_n$ is the unimodular lattice of rank $n$ with $\chi(I_n)=-\chi(L)$. Recall that a hermitian $O_F$-lattice $L$ is \emph{unimodular}\footnote{We refrain from using the terminology \emph{self-dual} in the ramified case to avoid possible confusion with a lattice $L$ such that $L=L^\vee$, where $L^\vee$ is the dual lattice with respect to the underlying quadratic form, see \S\ref{subsec:hermfourier}.} if $L=L^\sharp$, where $L^\sharp$ is the dual lattice of $L$ with respect to the hermitian form (see \S\ref{subsec: notation} for notation).

        The naive analogue of the Kudla--Rapoport conjecture for Kr\"amer model states that $$\Int(L)\stackrel{?}=\Den'(L).$$ However, as explained in \cite{HSY3} this naive analogue does not hold for trivial reasons. In fact, by definition $\Int(L)$ vanishes unless $L$ is integral (i.e., $L\subset L^\sharp$), while $\Den'(L)$ does not vanish for non-integral lattices $L$ which are dual to vertex lattices. More precisely, recall that an integral $O_F$-lattice $\Lambda\subset \mathbb{V}$ is called a \emph{vertex lattice (of type $t$)}  if $\Lambda^\sharp/\Lambda$ is a $\kappa$-vector space (of dimension $t$). For a vertex lattice $\Lambda\subset \mathbb{V}$ of type $t>0$,  $\Lambda^\sharp$ is non-integral and so  $\Int(\Lambda^\sharp)=0$, while $\Den'(\Lambda^\sharp)\ne0$ in general (see e.g. \eqref{eq: Den' Lambda tmax}). In general, we define  the type $t(L)$ of $L$ to be the number of positive fundamental invariants of $L$ (see \S \ref{subsec: notation}).

        To account for these discrepancies, we will define $\pDen(L)$ by modifying $\Den'(L)$ with a linear combination of the (normalized) \emph{local densities} (Corollary \ref{cor: int of pden_{2t}})
        \begin{equation}
          \label{eq: def of den(L)}
          \Den_{t}(L):=\frac{\Den(\Lambda_t^\sharp, L)}{\Den(\Lambda_t^\sharp,\Lambda_t^\sharp)}\in \mathbb{Z}.
        \end{equation}
 Here $\Lambda_t\subset \mathbb{V}$ is a vertex lattice of type $t$  (in particular $\chi(\Lambda_t^\sharp)=\chi(L)$). Recall that the possible vertex type $t$ is given by any even integer such that $0\le t\le t_\mathrm{max}$, where $$t_\mathrm{max}=
        \begin{cases}
          n, &\ff \, n\text{ even, }  \chi(\mathbb{V})=+1, \\
          n-1, &\ff \,  n\text{ odd}, \\
          n-2, &\ff \, n\text{ even, } \chi(\mathbb{V})=-1.
        \end{cases}$$

 \begin{definition}
Let $L\subset \mathbb{V}$ be an $O_F$-lattice. Define the \emph{modified derived local density} (Corollary \ref{cor: int of pden})
\begin{equation}
  \label{eq: def of pdenL}
  \pDen(L):=\Den'(L)+\sum_{j=1}^{t_\mathrm{max}/2}c_{2j}\cdot\Den_{2j}(L)\in \mathbb{Z}.
\end{equation}
 Here the coefficients $c_{2j}\in\mathbb{Q}$ are chosen to satisfy
 \begin{equation} \label{eq:coeff}
\pDen(\Lambda_{2i}^\sharp)=0,\quad 1\le i\le t_\mathrm{max}/2,
\end{equation}
which turns out to be a linear system in $(c_2,c_4,\ldots,c_{t_\mathrm{max}})$ with a unique solution  (\cite[Theorem 6.1]{HSY3}.
 \end{definition}

 The main purpose of this paper is to  prove the following local arithmetic Siegel-Weil formula, settling the main conjecture  of \cite{HSY3}. We will prove this theorem in \S\ref{sec: pf of main theorem}.

\begin{theorem}[Kudla--Rapoport conjecture for Kr\"amer models]\label{conj:main}
  Let $L\subset \mathbb{V}$ be an $O_F$-lattice. Then $$\Int(L)=\pDen(L).$$
\end{theorem}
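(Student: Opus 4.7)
The natural approach is to follow the blueprint of \cite{LZ, LL2}, adapted to the semistable Kr\"amer setting. I would proceed by induction on the rank $n$ combined with induction on the fundamental invariants of $L$, establishing a uniqueness principle that matches both sides as invariants of hermitian $O_F$-lattices in $\mathbb{V}$.

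\textbf{Step 1: Reduction to integral $L$.} For non-integral $L$ the geometric side satisfies $\Int(L)=0$, since at least one $\mathcal{Z}(x_i)$ is empty. On the analytic side, the defining equations \eqref{eq:coeff} already secure the vanishing on the vertex duals $\Lambda_{2i}^\sharp$. The remaining task is to extend this to arbitrary non-integral $L$, which I would do by showing that every non-integral $L$ is isometric to an orthogonal direct sum of the form $\Lambda_{2i}^\sharp \obot L^\flat$ for an integral $L^\flat$, and then reducing the analytic side via the multiplicativity of $\pDen$ under orthogonal sums.

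\textbf{Step 2: Cycle decomposition.} Decompose each special divisor $\mathcal{Z}(x_i)$ into a horizontal part (the flat closure of its generic fiber) and a vertical part (supported on the special fiber of $\mathcal{N}$). The horizontal intersection should recover the central derivative $\Den'(L)$ via the theory of quasi-canonical lifts on semistable hermitian models, while the vertical part, controlled by the Bruhat--Tits stratification of $\mathcal{N}$ indexed by vertex lattices, should account precisely for the correction terms $c_{2j}\Den_{2j}(L)$.

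\textbf{Step 3: Induction and uniqueness.} Choose a rank-one splitting $L = L^\flat \obot \langle x \rangle$ compatible with an ambient vertex lattice, and view both $\Int(L)$ and $\pDen(L)$ as functions of $x$ varying over the orthogonal complement of $L^\flat$ in $\mathbb{V}$. The rank induction gives the desired identity on $L^\flat$; on the analytic side the polynomial identity \eqref{eq:Den poly introduction} controls how $\pDen$ varies with $x$; on the geometric side the deformation theory of $\mathcal{Z}(x)$ relative to $\mathcal{Z}(L^\flat)$ plays the analogous role. The problem then reduces to a finite list of minuscule comparisons, which extend the $n=2,3$ base cases of \cite{HSY3}.

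\textbf{Main obstacle.} The hardest step is quantifying the vertical contribution. The exceptional divisor of $\mathcal{N}$ carries a rich stratification by vertex lattices of even type, and each stratum contributes to $\Int(L)$ with multiplicities that must assemble into precisely the linear combination $\sum_j c_{2j}\, \Den_{2j}(L)$. Unlike in \cite{LZ, LL2}, where the underlying Rapoport--Zink space is smooth and no such correction arises, one must establish a fine geometric match between the vertex stratification and the normalized local densities $\Den_{2j}$; it is this semistability that makes the Kr\"amer case genuinely more delicate than the smooth (unramified or exotic) cases, and the place where I would expect the most novel input to be required.
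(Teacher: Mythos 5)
Your high-level instincts (induction on rank, splitting off $L^\flat$, horizontal/vertical decomposition, attention to the vertex stratification) are on the right track, but several of the concrete steps would not go through, and you are missing the central technical device that makes the argument work.

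\textbf{Step 1 is not correct as stated.} It is not true that every non-integral $L$ is isometric to an orthogonal sum $\Lambda_{2i}^\sharp \obot L'$ with $L'$ integral: a generic non-integral lattice (e.g.\ one with a fundamental invariant $\leq -2$) does not have such a splitting. Moreover, $\pDen$ is not multiplicative under $\obot$, so even if such a decomposition existed you could not reduce to the integral case that way. In the paper, the vanishing $\pDen(L)=0$ for non-integral $L$ is a nontrivial theorem (Corollary~\ref{cor: int of pden}), proved via the primitive decomposition $\pDen(L)=\sum_{L\subset L'}\partial\mathrm{Pden}(L')$ together with the explicit formula for $\partial\mathrm{Pden}$ (Theorem~\ref{thm: formula of ppden}), and the latter already takes an entire two sections (\S\ref{sec: formula of ppden}--\S\ref{sec:pPden}) to establish.

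\textbf{Step 2 misidentifies the role of the decomposition.} You propose that the horizontal intersection recovers $\Den'(L)$ and the vertical intersection recovers $\sum_j c_{2j}\Den_{2j}(L)$. This is not what happens. The paper decomposes \emph{both} sides into horizontal and vertical pieces --- $\Int_{L^\flat}=\Int_{L^\flat,\sH}+\Int_{L^\flat,\sV}$ and $\pDen_{L^\flat}=\pDen_{L^\flat,\sH}+\pDen_{L^\flat,\sV}$ (Definitions~\ref{def:horizontal and vertical intersection number} and \ref{def:horizontal Den}, \ref{def: ver of pden}) --- and matches them pairwise. The horizontal piece of the analytic side is a sum over lattices $L'$ whose flat part $L'^\flat$ lies in $\Hor(L^\flat)$, not the bare derivative $\Den'(L)$. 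The $c_{2j}$-correction is already baked into $\partial\mathrm{Pden}$ and is distributed over both pieces.

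\textbf{The missing idea is the partial Fourier transform.} You frame the vertical comparison as a ``fine geometric match between the vertex stratification and the normalized local densities $\Den_{2j}$,'' which would require computing intersection numbers of Deligne--Lusztig strata with special divisors explicitly, along the lines of the Tate-conjecture input in the unramified case. The paper deliberately \emph{avoids} this. Instead, it applies a partial Fourier transform along $L_F^\flat$ and shows that $\Int_{L^\flat,\sV}^\perp$ and $\pDen_{L^\flat,\sV}^\perp$ are both \emph{constant} on $\bW^{\geq 0}\setminus\{0\}$ (Theorems~\ref{lem:Int vertical is schwartz} and \ref{prop: part FT of denLflat}). On the geometric side this follows from the translation invariance of $\cN_\Lambda$-supported $K$-theory classes under $\Lambda$ (Theorem~\ref{thm:higher modularity}), which forces the Fourier transform to be supported on $\bV^{\geq -1}$ (Lemma~\ref{lem:Lambda invariant function is geq -1}); on the analytic side it requires the explicit formula for $\partial\mathrm{Pden}$ plus delicate lattice counting (Propositions~\ref{prop: lat counting t odd}, \ref{prop: lat counting t even}). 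Once both are constant on $\bW^{\geq 0}\setminus\{0\}$, the induction on $\val(L^\flat)$ (using Lemma~\ref{lem: ind on val of Lflat}) forces the difference to vanish on $\bW^{\leq 0}$, hence identically. Without this mechanism the induction you sketch has no way to close, because the ``finite list of minuscule comparisons'' you would need to make is not finite in any controllable sense as the type of $L^\flat$ grows.

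You correctly located the difficulty (quantifying the vertical contribution and matching it to the $c_{2j}$-correction), but the proposed route toward it --- direct geometric matching of strata to densities --- is precisely what the authors found intractable and engineered around with the Fourier-theoretic approach.
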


\subsection{The arithmetic Siegel--Weil formula}\label{subsec:global introduction} Next let us describe some global applications of our main theorem, following the setting of \cite[\S1.3]{LZ}. We now switch to global notations. Let $F$ be a CM number field with maximal total real subfield $F_0$.  Fix an embedding $\overline{\mathbb{Q}}\hookrightarrow \mathbb{C}$ and  fix  a CM type $\Phi\subset \Hom(F, \overline{\mathbb{Q}}) =\Hom(F, \C) $ of $F$. We also identify the CM type $\Phi$ with the set of  archimedean places of $F_0$. Let $V$ be an $F/F_0$-hermitian space of dimension $n$ and $G=\Res_{F_0/\mathbb{Q}}\U(V)$. Assume the signatures of $V$ are $\{(n-1,1)_{\phi_0},(n,0)_{\phi\in\Phi-\{\phi_0\}}\}$ for some distinguished element $\phi_0\in \Phi$. Define a torus $Z^\mathbb{Q}=\{z\in \Res_{F/\mathbb{Q}}\mathbb{G}_m: \mathrm{Nm}_{F/F_0}(z)\in \mathbb{G}_m\}$. Associated to $\wit G\coloneqq Z^\mathbb{Q}\times G$ there is a natural Shimura datum $(\wit G,\{h_{\wit G}\})$ of PEL type (\cite[\S11.1]{LZ}). Let $K=K_{Z^\mathbb{Q}}\times K_G\subset \wit G(\mathbb{A}_f)$ be a compact open subgroup. Then the associated Shimura variety $\Sh_{\KG}=\Sh_{\KG}(\wit G,\{h_{\wit G}\})$ is of dimension $n-1$ and has a canonical model over its reflex field $E$.

Assume that $K_{Z^\mathbb{Q}}\subset Z^\mathbb{Q}(\mathbb{A}_f)$ is the unique maximal open compact subgroup. Assume that $K_G=\prod_v K_{G,v}$, where $v$ runs over the finite places of $F_0$ such that $K_{G,v}\subset \U(V)(F_{0,v})$ is given by
\begin{itemize}
\item the stabilizer of a self-dual or almost self-dual lattice $\Lambda_v\subset V_v$ if $v$ is inert in $F$,
\item the stabilizer of a unimodular lattice $\Lambda_v\subset V_v$ if $v$ is ramified in $F$,
\item a principal congruence subgroup of $\U(V)(F_{0,v})\simeq\GL_n(F_{0,v})$ if $v$ is split in $F$.
\end{itemize}
Let $\mathcal{V}_\mathrm{ram}$ (resp. $\mathcal{V}_\mathrm{asd}$) be the set of finite places $v$ of $F_0$ such that $v$ is ramified in $F$ (resp. $v$ is inert in $F$ and $\Lambda_v$ is almost self-dual).
Further assume that all places of $E$ above $\mathcal{V}_\mathrm{ram}\cup\mathcal{V}_\mathrm{asd}$ are unramified over $F$. Then we obtain a global regular integral model $\mathcal{M}_K$ of $\Sh_K$ over $O_E$
as in \cite[\S14.1-14.2]{LZ}, which is semistable at all places of $E$ above $\mathcal{V}_\mathrm{ram}\cup \mathcal{V}_\mathrm{asd}$ (for more precise technical conditions required, see \ref{item:G0}-\ref{item:G5}). When $K_G$ is the stabilizer of a global unimodular lattice, the regular integral model $\mathcal{M}_K$ recovers that  in \cite{HSY3}, and that in  \cite{BHKRY1} if $F_0=\mathbb{Q}$.

Let $\mathbb{V}$ be the \emph{incoherent} hermitian space over $\mathbb{A}_{F}$ associated to $V$, namely $\mathbb{V}$ is totally positive definite and $\mathbb{V}_v \cong V_v$ for all finite places $v$. Let $\varphi_K\in \sS(\mathbb{V}^n_f)$ be a $K$-invariant (where $K$ acts on $\mathbb{V}_f$ via the second factor $K_G$) factorizable Schwartz function such that $\varphi_{K,v}=\mathbf{1}_{(\Lambda_v)^n}$ at all $v$ nonsplit in $F$. Let $T\in \Herm_n(F_0)$ be a nonsingular $F/F_0$-hermitian matrix of size $n$. Associated to $(T,\varphi_K)$ we have \emph{arithmetic special cycles} $\mathcal{Z}(T,\varphi_K)$ over $\mathcal{M}_K$ (\cite[\S14.3]{LZ}) generalizing the $\mathcal{Z}(T)$ in  \cite{KR2}. Analogous to the local situation (\ref{eq:IntL}), we can define its local arithmetic intersection numbers $\Int_{T,v}(\varphi_K)$ at finite places $v$. Using the star product of Kudla's Green functions, we can also define its local arithmetic intersection number  $\Int_{T,v}(\sy,\varphi_K)$ at infinite places (\cite[\S15.3]{LZ}), which depends on a parameter $\sy\in \Herm_n(F_{\infty})_{>0}$  where  $F_{\infty}=F \otimes_{\mathbb{Q}}\mathbb{R}$. Combining all the local arithmetic numbers together, define the \emph{global arithmetic intersection number}, or the \emph{arithmetic degree} of the special cycle $\mathcal{Z}(T,\varphi_K)$ in the arithmetic Chow group of $\mathcal{M}_K$, $$\wdeg_T(\sy,\varphi_K)\coloneqq \sum_{v\nmid\infty}\Int_{T,v}(\varphi_K)+\sum_{v\mid \infty}\Int_{T,v}(\sy,\varphi_K).$$

On the other hand, associated to $\varphi:=\varphi_K \otimes \varphi_\infty\in\sS(\mathbb{V}^n)$, where $\varphi_{\infty}$ is the Gaussian function, there is a classical \emph{incoherent Eisenstein series} $E(\sz, s,\varphi)$ (\cite[\S12.4]{LZ}) on the hermitian upper half space $$\mathbb{H}_n=\{\sz=\sx+i\sy:\ \sx\in\Herm_n(F_{\infty}),\ \sy\in\Herm_n(F_{\infty})_{>0}\}.$$ This is essentially the Siegel Eisenstein series associated to a standard Siegel--Weil section of the degenerate principal series (\cite[\S12.1]{LZ}). The Eisenstein series here has a meromorphic continuation and a functional equation relating $s\leftrightarrow -s$.  The central value $E(\sz, 0, \varphi)=0$ by the incoherence. We thus consider its \emph{central derivative} $$\Eis'(\sz, \varphi_K)\coloneqq \frac{\rd}{\rd s}\bigg|_{s=0}E(\sz, s,\varphi).$$

Analogous to the local situation, we need to modify $\Eis'(\sz, \varphi_K)$ by central values of coherent Eisenstein series. For $v\in\mathcal{V}_\mathrm{ram}\cup\mathcal{V}_\mathrm{asd}$, let  $^v\mathbb{V}$ be the \emph{coherent} hermitian space over $\mathbb{A}_{F}$ nearby $\mathbb{V}$ at $v$, namely $(^v\mathbb{V})_w\simeq \mathbb{V}_w$ exactly for all places $w\ne v$. For any vertex lattice $\Lambda_{t,v} \subset (^v\mathbb{V})_v$ of type $t$, the Schwartz function $\varphi^v \otimes \mathbf{1}_{(\Lambda_{t,v}^{\sharp})^n}\in\sS((^v\mathbb{V})^n)$ gives a classical \emph{coherent Eisenstein series} $E(\sz,s,\varphi^v \otimes \mathbf{1}_{(\Lambda_{t,v}^{\sharp})^n})$. Analogous to (\ref{eq: def of den(L)}),  define the (normalized) \emph{central values}
\begin{equation}\label{eq:central value}
   ^v\Eis_t(\sz,\varphi_K):=\frac{\vol(K_{G,v})}{\vol(K_{\Lambda_{t,v}^{\sharp}})}\cdot E(\sz,0,\varphi^v \otimes \mathbf{1}_{(\Lambda_{t,v}^{\sharp})^n}).
\end{equation}
Here $K_{\Lambda_{t,v}^{\sharp}}\subset \U(^v\mathbb{V})(F_{0,v})$ is the stabilizer of $\Lambda_{t,v}^{\sharp}$, and the volumes are taken with respect to the Haar measures on $\U(V)(F_{0,v})$ and $\U(^v\mathbb{V})(F_{0,v})$ as defined in
\cite[Definition 3.8]{LL2020}. When $v\in\mathcal{V}_\mathrm{ram}$, analogous to (\ref{eq: def of pdenL}), define the linear combination
\begin{equation}
   ^v\Eis(\sz,\varphi_K):=\sum_{j=1}^{t_{\mathrm{max},v}/2}c_{2j,v}\cdot {}^v\Eis_{2j}(\sz,\varphi_K)\cdot\log q_v,
\end{equation}
where $q_v$ is the size of the residue field of $F_{0,v}$, and $t_{\mathrm{max},v}$ and $c_{2j,v}$ are the numbers $t_\mathrm{max}$ and $c_{2j}$ respectively in (\ref{eq: def of pdenL}) for the local hermitian space $(^v\mathbb{V})_v$ over the ramified extension $F_v/F_{0,v}$. When $v\in\mathcal{V}_\mathrm{asd}$, define
\begin{equation}\label{eq: modified asd term}
   ^v\Eis(\sz,\varphi_K):=c_{0,v}\cdot{}^v\Eis_{0}(\sz,\varphi_K)\cdot \log q_v^2,
\end{equation}
where $c_{0,v}=-\frac{1}{1+q_v}$. Define the \emph{modified central derivative}
\begin{equation}\label{eq:modified central derivative}
\pEis(\sz,\varphi_K):=\Eis'(\sz,\varphi_K)+(-1)^n\sum_{v\in\mathcal{V}_\mathrm{ram}\cup\mathcal{V}_\mathrm{asd}}{}^v\Eis(\sz,\varphi_K).
\end{equation}
Associated to an additive character $\psi: \mathbb{A}_{F_0}/F_0\rightarrow \mathbb{C}^\times$ (as explained in \cite[\S 12.2]{LZ} we assume that $\psi$ is unramified outside the set of finite places of $F_0$ split in $F$), it has a decomposition into Fourier coefficients
\begin{equation}\label{eq:Fourier coefficients of central derivative}
    \pEis(\sz,\varphi_K)=\sum_{T\in \Herm_n(F_0)}\pEis_T(\sz,\varphi_K).
\end{equation}

The following result asserts an identity between the arithmetic degrees of special cycles and the nonsingular Fourier coefficients of the modified central derivative of the incoherent Eisenstein series, which generalizes \cite[Theorem 1.3.1]{LZ} from inert places to all nonsplit places. In  particular, when
 $F$ is an imaginary quadratic field of  discriminant $d \equiv 1 \pmod 8$, we have an unconditional arithmetic Siegel-Weil formula for all unimodular lattices of signature $(n-1, 1)$ at non-singular coefficients, i.e.,  \cite[Theorem 1.5]{HSY3} holds without conditions.

\begin{theorem}[Arithmetic Siegel--Weil formula: nonsingular terms]
Let $\Diff(T, \mathbb{V})$ be the set of places $v$ such that $\mathbb{V}_{v}$ does not represent $T$ (\cite[\S12.3]{LZ}).  Let $T\in\Herm_n(F_0)$ be nonsingular such that $\Diff(T,\mathbb{V})=\{v\}$ where $v$ is nonsplit in $F$ and not above 2. Then $$\wdeg_T(\sy, \varphi_K)q^T=c_K\cdot \pEis_T(\sz,\varphi_K),$$ where $q^T\coloneqq\psi_\infty(\tr T\sz)$, $c_K$ is a nonzero constant independent of $T$ and $\varphi_K$ (to be specified in Theorem \ref{thm:Arithmetic Siegel--Weil formula: nonsingular terms}).
\end{theorem}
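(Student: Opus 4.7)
The plan is to follow the local--global framework of \cite[\S14--15]{LZ}, using Theorem~\ref{conj:main} as the key new local input at ramified places. I would reduce both sides to a product of local factors, observe that all factors away from $v$ match trivially because $\mathbb{V}_w$ represents $T$ for $w\neq v$, and then invoke Theorem~\ref{conj:main} (or its inert and almost self-dual analogues) to match the factor at $v$.

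On the analytic side, I would decompose the $T$-th Fourier coefficient of $\pEis(\sz,\varphi_K)$ via the factorization of Whittaker integrals $W_{T,w}(s,\varphi_{K,w})$. Since $\Diff(T,\mathbb{V})=\{v\}$, the integral $W_{T,v}(0,\varphi_{K,v})$ vanishes while $W_{T,w}(0,\varphi_{K,w})$ is nonzero for all $w\neq v$, so the derivative in $\Eis'_T$ is forced onto the local factor at $v$. The modification terms $^w\Eis_{2j}$ for $w\ne v$ drop out because $(^w\mathbb{V})_v\cong \mathbb{V}_v$ still fails to represent $T$, so the associated coherent Eisenstein series has vanishing $T$-th Fourier coefficient. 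After extracting the product of non-$v$ central values, the surviving local factor at $v$ is, by the local Siegel--Weil formula identifying coherent Eisenstein values with representation densities, exactly the modified derived density $\pDen(L)$, where $L$ is the $O_{F_v}$-lattice in $(^v\mathbb{V})_v$ determined by $(T,\varphi_{K,v})$.

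On the geometric side, I would apply $p$-adic uniformization to the supersingular locus of $\mathcal{M}_K$ above $v$. Since $\mathbb{V}_w$ represents $T$ for $w\ne v$, the cycle $\mathcal{Z}(T,\varphi_K)$ is concentrated there, and its arithmetic intersection number decomposes as a finite weighted sum of local intersection numbers $\Int(L)$ on the Kr\"amer Rapoport--Zink space $\mathcal{N}_n$. The archimedean contributions $\sum_{v\mid\infty}\Int_{T,v}(\sy,\varphi_K)$ together with $q^T$ reproduce, via the Green current calculation of \cite[\S15]{LZ}, the product of archimedean and non-$v$ finite central values on the analytic side. At this stage the theorem reduces to the pointwise local identity $\Int(L)=\pDen(L)$: this is \cite{LZ} in the self-dual inert case, Theorem~\ref{conj:main} in the ramified case, and a close analogue in the almost self-dual case, where the term $c_{0,v}\cdot {}^v\Eis_0$ in \eqref{eq: modified asd term} accounts for the single type--$0$ modification.

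The main obstacle will be the bookkeeping of normalizations across these three cases. One must verify that the volume ratio $\vol(K_{G,v})/\vol(K_{\Lambda_{t,v}^\sharp})$ appearing in \eqref{eq:central value} is exactly what transforms the coherent central value $E(\sz,0,\varphi^v\otimes\mathbf{1}_{(\Lambda_{t,v}^\sharp)^n})$ into the normalized density $\Den_{2j}(L)$ of \eqref{eq: def of den(L)}, and correspondingly that the coefficients $c_{2j,v}$ on the Eisenstein side agree with those in the definition of $\pDen$. Both are characterized by the same system of vanishing conditions \eqref{eq:coeff} on dual vertex lattices, so once the local Siegel--Weil dictionary is set up they must have the same unique solution. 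Tracking the remaining constants through the uniformization then pins down the explicit nonzero $c_K$ independent of $T$ and $\varphi_K$.
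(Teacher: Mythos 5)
Your proposal is correct and follows essentially the same approach as the paper's proof. The paper is more terse---citing \cite[Theorem 13.6]{LZ} for inert places, \cite{Liu2011} and \cite{GS} for archimedean places, and noting the ramified case follows from Theorem~\ref{thm: main thm} by the same argument as in \cite[Theorem 15.3]{HSY3}---but the mechanisms you unpack (Whittaker factorization of the $T$-th Fourier coefficient forcing the derivative onto $v$, vanishing of the modification terms $^w\Eis$ at $w\neq v$ since $(^w\mathbb{V})_v\cong\mathbb{V}_v$ still fails to represent $T$, $p$-adic uniformization of the supersingular locus reducing the geometric side to sums of local intersection numbers $\Int(L)$) are precisely what those cited proofs do, and the bookkeeping of normalizations you flag is exactly the content that must be tracked through \cite[Definition 3.8]{LL2020} and \eqref{eq:central value}.
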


We form the \emph{generating series of arithmetic degrees}
\begin{equation}\label{eq:generating series of arithmetic degree}
    \wdeg(\sz, \varphi_K)\coloneqq \sum_{T\in\Herm_n(F_0)\atop \det T\ne0}\wdeg_T(\sy,\varphi_K) q^T.
\end{equation}
The following result relates this generating series to the modified central derivative of the incoherent Eisenstein series, which removes the assumption that $F/F_0$ is unramified at all finite places from \cite[Theorem 1.3.2]{LZ}.

\begin{theorem}[Arithmetic Siegel--Weil formula]
  Assume that $F/F_0$ is split at all places above 2. Further assume that $\varphi_K$ is nonsingular (\cite[\S12.3]{LZ}) at two places split in $F$. Then $$\wdeg(\sz, \varphi_K)=c_K\cdot \pEis(\sz,\varphi_K).$$ In particular, $\wdeg(\sz, \varphi_K)$ is a nonholomorphic hermitian modular form of genus $n$.
\end{theorem}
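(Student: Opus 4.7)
The plan is to follow the strategy of \cite[Theorem 1.3.2]{LZ}, comparing Fourier coefficient by Fourier coefficient and then invoking the two split places assumption to bootstrap from the nonsingular terms to the full generating series. The decomposition \eqref{eq:Fourier coefficients of central derivative} and the analogous decomposition of $\wdeg(\sz,\varphi_K)$ reduce the theorem to showing, for every $T\in\Herm_n(F_0)$, the identity $\wdeg_T(\sy,\varphi_K)q^T=c_K\cdot\pEis_T(\sz,\varphi_K)$, and then a standard modularity argument (the same one used to deduce \cite[Theorem 1.3.2]{LZ} from \cite[Theorem 1.3.1]{LZ}) yields the modular form conclusion.

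For nonsingular $T$, one first observes that both sides vanish whenever $|\Diff(T,\mathbb{V})|\ge 2$: the arithmetic special cycle $\mathcal{Z}(T,\varphi_K)$ is empty (so $\wdeg_T=0$), and the $T$-th Fourier coefficient of the incoherent Eisenstein series vanishes to order $\ge 2$ at $s=0$, forcing $\pEis_T=0$. This reduces to the case $\Diff(T,\mathbb{V})=\{v\}$ for a single place $v$. When $v$ is archimedean one invokes the Kudla--Rapoport--Yang archimedean local formula; when $v$ is a finite split place one uses the known horizontal contribution; when $v$ is a finite nonsplit place not above $2$, the previous theorem applies directly. The assumption that $F/F_0$ is split at all places above $2$ eliminates the residual case and hence covers every $v\in\Diff(T,\mathbb{V})$. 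Thus the nonsingular part of the identity is established.

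For singular $T$, direct geometric access breaks down, and this will be the main obstacle. To bypass it, one uses the doubling / Hecke-stability philosophy: the right-hand side $\pEis(\sz,\varphi_K)$ is genuinely a (nonholomorphic) hermitian modular form of genus $n$ by construction, since $\Eis'$ is modular and each correction term ${}^v\Eis(\sz,\varphi_K)$ is a coherent Eisenstein series. The nonsingularity assumption for $\varphi_K$ at two split places forces any singular-$T$ Fourier coefficient of the generating series $\wdeg(\sz,\varphi_K)$ to actually vanish away from a controlled locus, matching the behavior of $\pEis$ after the nonsingular identity is known. More precisely, one argues (as in \cite[\S15]{LZ}) that the difference $\wdeg(\sz,\varphi_K)-c_K\cdot\pEis(\sz,\varphi_K)$ has only singular Fourier coefficients supported in $T$ of rank $<n$, and that the two-split-places condition restricts these coefficients to vanish identically; this uses the embedding trick and the modularity of partial Fourier--Jacobi expansions to reduce the rank-$r$ singular terms to a genus-$r$ modular form whose nonsingular coefficients already vanish by the established part, hence whose singular coefficients vanish by induction on $n$.

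The key technical obstacle is thus handling the ramified place singular contributions, which feed into ${}^v\Eis$ at $v\in\mathcal{V}_\mathrm{ram}$ in a non-obvious way. For this, the main Theorem~\ref{conj:main} is essential: it ensures that the modified definition of $\pDen(L)$ (and correspondingly of $^v\Eis_{2j}$) is exactly the one that makes the local--global decomposition of $\wdeg_T$ at a ramified place match the $T$-th Fourier coefficient of $\pEis$. Once this local matching is upgraded to a global identity via the product formula for Fourier coefficients and the analogous matching at split and almost-self-dual inert places (already available from \cite{LZ}), the modularity conclusion follows: $\wdeg(\sz,\varphi_K)$ differs from the explicit modular form $c_K\cdot\pEis(\sz,\varphi_K)$ by a series all of whose Fourier coefficients are forced to vanish by the two-split-places nonsingularity assumption.
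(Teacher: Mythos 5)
Your outline of the nonsingular part is essentially correct but contains two small misconceptions, and your handling of the singular part is not the argument the paper (following \cite[Theorem 15.5.1]{LZ}) uses and would not close cleanly.

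On the nonsingular part: split places never appear in $\Diff(T,\mathbb{V})$ (a split hermitian space represents every nonsingular $T$), so your "finite split place" case is vacuous, and the archimedean and finite-nonsplit cases are already bundled together in Theorem \ref{thm:Arithmetic Siegel--Weil formula: nonsingular terms}. The actual use of the ``split above $2$'' hypothesis is precisely that it guarantees every $v\in\Diff(T,\mathbb{V})$ is nonsplit and not above $2$, so the case $|\Diff|=1$ is covered directly by that theorem, and the case $|\Diff|\geq 2$ gives vanishing on both sides (the cycle $\mathcal{Z}(T,\varphi_K)$ is empty and the Fourier coefficient of $\Eis'$ vanishes to order $\geq 2$). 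So you land in the right place, but by an unnecessarily winding route.

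On the singular part, there is a genuine gap. Note first that $\wdeg(\sz,\varphi_K)$, as defined in \eqref{eq:generating series of arithmetic degree}, only sums over $T$ with $\det T\neq 0$; it has \emph{no} singular Fourier coefficients, so there is nothing to ``bootstrap'' on the geometric side, and the embedding trick / Fourier--Jacobi / induction-on-$n$ apparatus you invoke is beside the point. What actually needs to be shown is purely analytic: that the singular-$T$ Fourier coefficients of the modified Eisenstein series $\pEis(\sz,\varphi_K)$ vanish. This follows directly from the two-split-places nonsingularity hypothesis: at a split place $v$ where $\varphi_{K,v}$ is nonsingular, the local Whittaker function $W_{T,v}(s,\varphi_{K,v})$ vanishes identically in $s$ for every singular $T$; having two such places kills the $T$-th Fourier coefficient of $\Eis'$ (which is a sum over $v$ of one derivative factor times the rest) as well as of each coherent correction ${}^v\Eis$. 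No induction is needed, and the argument does not depend on any modularity of $\wdeg$ -- indeed the modularity of $\wdeg$ is a \emph{consequence} of the identity, not an ingredient. You should replace your third and fourth paragraphs with this local Whittaker vanishing argument at the two split places.
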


\subsection{Strategy and novelty of the proof of the Main Theorem \ref{conj:main}}\label{sec:whats-new} Our general strategy is closest to the unramified orthogonal case proved in \cite{LZ2}.   More precisely, fix an $O_F$-lattice $L^\flat\subset \mathbb{V}$ of rank $n-1$ and denote by $\mathbb{W}=(L^\flat_F)^\perp\subset \mathbb{V}$. Consider functions on $\mathbb{V}\setminus L^\flat_F$, $$\Int_{L^\flat}(x)\coloneqq \Int(L^\flat+\langle x\rangle),\quad \pDen_{L^\flat}(x)\coloneqq \pDen(L^\flat+\langle x\rangle).$$ Then it remains to show the equality of the two functions $\Int_{L^\flat}=\pDen_{L^\flat}$. To show this equality, we find a decomposition $$\Int_{L^\flat}=\Int_{L^\flat,\sH}+\Int_{L^\flat,\sV},\quad \pDen_{L^\flat}=\pDen_{L^\flat,\sH}+\pDen_{L^\flat,\sV}$$ into ``horizontal'' and ``vertical'' parts such that the horizontal identity $\Int_{L^\flat,\sH}=\pDen_{L^\flat,\sH}$ holds and that the vertical parts $\Int_{L^\flat,\sV}$ and $\pDen_{L^\flat,\sV}$ behaves well under Fourier transform along $L^\flat_F$.

The horizontal identity essentially reduces to the horizontal computation for $n=2$ in \cite{Shi2,HSY}. For the vertical identity, we perform a \emph{partial Fourier transform} along $L^\flat_F$ and consider new functions on $\mathbb{W}\setminus\{0\}$, $$\Intp(x):=\int_{L^\flat_F}\Int_{L^\flat, \sV}(y+x)\rd y,\quad \pDenp(x):=\int_{L^\flat_F}\pDen_{L^\flat, \sV}(y+x)\rd y.$$  The key is to show that $\Intp$ and $\pDenp$ are both \emph{constant} on $\mathbb{W}^{\ge0}\setminus\{0\}:=\{x\in \mathbb{W}\setminus \{0\}: \val(x)\ge0\}$ of $\mathbb{W}$  (see \S\ref{subsec: notation} for notation) as in Theorem \ref{lem:Int vertical is schwartz} and Theorem \ref{prop: part FT of denLflat}. Using an induction on the valuation of $L^\flat$, we show that the difference $\Intp-\pDenp$ vanishes on $\mathbb{W}^{\le0}:=\{x\in \mathbb{W}: \val(x)\le0\}$, and hence it vanishes identically and allows us to conclude that $\Int_{L^\flat,\sV}=\pDen_{L^\flat,\sV}$.

On the geometric side, we prove a \emph{Bruhat--Tits stratification} for the Kr\"amer model (Theorem \ref{thm:Bruhat Tits stratification}), analogous to the case of the Pappas model treated in Rapoport--Terstiege--Wilson \cite{RTW}. We make use of the linear invariance of special cycles \cite{Ho2} to express $\Int_{L^\flat,\sV}$ as a linear combination of functions on $\mathbb{V}$ which are \emph{translation invariant} under vertex lattices.  A new observation in our ramified case is that the translation invariance already allows us to control the support of its Fourier transform well enough (Lemma \ref{lem:Lambda invariant function is geq -1}) to conclude the desired key constancy of $\Intp$ on $\mathbb{W}^{\ge0}$. Compared to the unramified case, we \emph{completely avoid} the Tate conjecture of generalized Deligne-Lusztig varieties and explicit computation of their intersections with special divisors. It is not clear that the Deligne-Lusztig subvarieties span the Tate classes in this case.  \S\ref{sec:Bruhat-Tits} studies the structure of $\cN_{\mathrm{red}}$ and special cycles, and   should be of independent interest (in addition to preparation for \S\ref{sect:FourierGeo}).

On the analytic side, we make use of the \emph{primitive decomposition} of the local density polynomial into primitive local density polynomials and obtain a decomposition.
\begin{equation}
\label{eq:decompostionP}\pDen(L)=\sum_{L\subset L'}\pPden(L'),
\end{equation}
where $L'$ runs over $O_F$-lattices in $L_F$ containing $L$, and the symbol $\Pden$ stands for the primitive version of $\Den$ (Corollary \ref{cor: decomp of pden(L)}). Unlike the unramified or exotic smooth case, the primitive local density \emph{polynomial} itself seems rather complicated (see e.g. Corollary \ref{cor: Pden}).
Nevertheless we manage to prove a simple formula for its \emph{modified central derivative} $\pPden(L)$, which we find quite striking.
\begin{theorem}[Theorem \ref{thm: formula of ppden}]
Let $L\subset \bV$ be an $O_F$-lattice (of full rank $n$).
    \begin{enumerate}
        \item If $L$ is not integral, then
        $\ppden(L)=0.$
        \item If $L$ is unimodular, then $$\ppden(L)=\begin{cases}
        1, & \text{if $n$ is odd},\\
        0, & \text{if $n$ is even}.
        \end{cases}$$
        \item If $L$ is integral and of type $t>0$, then
    $$\ppden(L)=\begin{cases}
		\displaystyle\prod_{\ell=1}^{\frac{t-1}{2}}(1-\q^{2\ell}), & \text{  if $t$ is odd},\\
				\displaystyle(1-\chi(L')\q^{\frac{t}{2}})\prod_{\ell=1}^{\frac{t}{2}-1}(1-q^{2\ell}), & \text{ if  $t$ is even}.
			\end{cases}$$ Here we write $L\simeq I_{n-t}\obot L'$ with $I_{n-t}$ unimodular of rank $n-t$.
    \end{enumerate}
\end{theorem}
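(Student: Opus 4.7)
\emph{Proof proposal.} The plan is to dispose of the non-integral case by a clean vanishing argument, and then to reduce cases (2) and (3) to an explicit polynomial manipulation based on the closed form for the primitive local density polynomial $\Pden(M,L,X)$ obtained in Corollary \ref{cor: Pden}.

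For case (1), assume $L$ is not integral. Any hermitian $O_F$-module homomorphism $L\to M$ is an isometry onto its image, so if $M$ is integral then $L$ itself must be integral. Hence $\Herm_{L,M}(R)=\emptyset$ for every $O_{F_0}$-algebra $R$ whenever $M$ is integral, giving $\Den(M,L)=0$ and a fortiori $\Pden(M,L)=0$. Applied to $M=H^k\obot I_n$ and $M=H^k\obot\Lambda_{2j}^\sharp$ (both integral) for every $k\ge 0$, the primitive analogue of the interpolation identity \eqref{eq:Den poly introduction} forces both $\Pden(I_n,L,X)$ and every $\Pden(\Lambda_{2j}^\sharp,L,X)$ to vanish at infinitely many values of $X$, hence to vanish identically as polynomials. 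It follows that the derivative contribution and each $\Pden_{2j}(L)$ vanish, so $\ppden(L)=0$.

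For the integral cases (2) and (3), I would begin with the explicit factorization of $\Pden(I_n,L,X)$ and of each $\Pden(\Lambda_{2j}^\sharp,L,X)$ from Corollary \ref{cor: Pden}, which expresses these polynomials as products of linear factors indexed by the fundamental invariants of $L$. Differentiating at $X=1$ yields the primitive derived density, while evaluating at $X=1$ yields each $\Pden_{2j}(L)$. Substituting into the definition $\ppden(L)=\Pden'(L)+\sum_{j=1}^{t_\mathrm{max}/2}c_{2j}\Pden_{2j}(L)$, and using the closed form of the coefficients $c_{2j}$ determined by the linear system \eqref{eq:coeff}, I would organize the computation according to the orthogonal decomposition $L\simeq I_{n-t}\obot L'$ and verify that after telescoping the sum collapses to the stated product. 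The parity dichotomy between odd and even $t$ should emerge from the parity of the fundamental invariants of $L'$, while the factor $(1-\chi(L')q^{t/2})$ in the even case should appear as the discriminant contribution of the rank-$t$ summand $L'$.

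As a warm-up I would verify the formula directly when $L$ is itself a vertex lattice dual $\Lambda_t^\sharp$, for which the primitive decomposition \eqref{eq:decompostionP} reduces to very few terms, and then bootstrap to general integral $L$ of type $t$ by induction on the sum of fundamental invariants, using \eqref{eq:decompostionP} to reduce to primitive contributions already computed. The main obstacle will be the algebraic simplification: unlike in the unramified or exotic smooth cases, $\Pden(I_n,L,X)$ in the ramified Kr\"amer setting does not factor into obviously telescoping pieces, and recognizing the collapse to $\prod_{\ell}(1-q^{2\ell})$, with or without the $(1-\chi(L')q^{t/2})$ correction, requires careful bookkeeping of the $q$-power contributions from each fundamental invariant of $L$ against the coefficients $c_{2j}$.
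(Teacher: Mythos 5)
Your handling of case (1) is wrong for a concrete reason: the auxiliary lattice $H^k\obot I_n$ is \emph{not} integral. The standard hyperbolic block $H$ has Gram matrix $\left(\begin{smallmatrix}0 & \pi^{-1} \\ -\pi^{-1} & 0\end{smallmatrix}\right)$, so its fundamental invariants are $(-1,-1)$ and $H\not\subseteq H^\sharp$. In particular a non-integral $L$ (e.g.\ $L=H$ itself, or $L=H\obot L_1$ with $L_1$ integral) certainly embeds into $H^k\obot I_n$ for $k$ large, so $\Herm_{L,H^k\obot I_n}$ is nonempty and the polynomial $\Pden(I_n,L,X)$ does not vanish identically. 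The paper's Lemma \ref{lem: vanish val le -1} gives the vanishing only under the strictly stronger hypothesis that $L$ contains a vector of valuation $\le -1$; when $L$ is non-integral but all its vectors have nonnegative valuation (i.e.\ $L\cong H^j\obot L_1$, $j>0$, $L_1$ integral), the vanishing $\ppden(L)=0$ is a genuinely nontrivial statement proved in the paper via Proposition \ref{prop: L=H obot L_1, v(L_1)>)} (when $L_1$ is of full type) and via the equality \eqref{eq: non integral pf of main} together with the $q$-binomial theorem in Section \ref{proof} (in general). Your proposal misses this second sub-case entirely.

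For cases (2) and (3) the proposal also misreads the key input: Corollary \ref{cor: Pden} is a \emph{sum over subspaces $V_1\in\Gr(i,\overline L)(\mathbb F_q)$}, not a product of linear factors indexed by the fundamental invariants of $L$, and there is no straightforward telescoping. The paper emphasizes that, unlike the unramified or exotic smooth settings, the primitive local density polynomial in the Kr\"amer case is quite complicated. The actual proof reorganizes the sum into the auxiliary polynomials $g_{\epsilon_1}(n,m,r,X)$ (Lemma \ref{lem: Pden' to g}), establishes the nontrivial identities showing $g_{\epsilon_1}(r,r,X)$ collapses to a single product (Lemma \ref{lem: g(r,r,X)}), and then evaluates a further alternating sum over $r$ (Lemma \ref{lem: sum of g}), relying on basis-expansion and recursion arguments in the space of polynomials together with the combinatorics of $m(U,V)$ for quadratic spaces over $\mathbb{F}_q$. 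Your suggested induction via the decomposition \eqref{eq:decompostionP} also cannot work: that identity expresses $\pDen(L)$ in terms of $\pPden(L')$, so it is a consequence of knowing $\pPden$, not a tool for computing it.
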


The proof of this theorem occupies the entire \S\ref{sec: formula of ppden} and \S\ref{sec:pPden}, and is our major technical innovation. One key difference between our case and the unramified or exotic smooth case is that in our case $I_n$ and $H$ (see \eqref{eq:Den poly introduction} and \eqref{eq: def of Den'}) have different fundamental invariants, hence it is not clear how to reduce the calculation of $\ppden$ into the embedding-counting problems over finite fields in the style of \cite[\S3]{CY}.
To deal with this difficulty, we first decompose $\ppden(L)$ according to orbits of Hermitian embeddings (Theorem \ref{thm: decom of Pden}).
Now a new observation is that the primitive local density polynomial becomes simpler when $L$ is ``very integral'' (i.e., when its fundamental invariants are all $\ge 1$, see Proposition \ref{prop: prim local density full type}) in which case the decomposition in Theorem \ref{thm: decom of Pden} is simple.  The primitive local density polynomial vanishes when $L$ is ``very non-integral'' (e.g.,  when one of its fundamental invariants is $\le-2$, see the proof of  Lemma \ref{lem: vanish val le -1}). When $L$ is the dual of a vertex lattice of positive type, this is just our assumption (\ref{eq:coeff}).  The remaining cases (in particular the unimodular lattice case) are much trickier  to handle, whose proof occupies most of \S\ref{sec:pPden} and is summarized in \S\ref{subsec:proofstra}. The proof relies on a series of non-trivial polynomial identities (e.g., Lemma \ref{lem: g(r,r,X)} and Lemma \ref{lem: sum of g})  involving algebraic combinatorics of quadratic spaces over finite fields, which should be of independent  interest.

With the simple formula for $\pPden(L)$ at hand, we finally prove the desired key constancy of $\pDenp$ on $\mathbb{W}^{\ge0}\setminus \{0\}$ via involved lattice-theoretic computation in \S\ref{sec: partial FT}, in a fashion similar to \cite{LZ2}. The techniques developed here on the analytic side seem to have wide applicability and we hope that they may shed new light on the Kudla--Rapoport conjecture in the context of more general level structures, e.g., for minuscule parahoric levels at unramified places formulated by Cho \cite{Cho}.

\subsection{Notation and terminology}\label{subsec: notation}
In this paper, a lattice means a hermitian $O_F$-lattice without explicit mentioning. Unless otherwise stated, $L$ always means a non-degenerate lattice of rank $n$ with a hermitian form $(\, , \,)$.
\begin{itemize}
\item We say a sublattice $L^\flat$ of a hermitian space is non-degenerate if the restriction of the hermitian form to it is non-degenerate.
    \item We define $L^{\sharp}$ to be the dual lattice of $L$ with respect to the hermitian form $(\, , \,)$. If $L\subset L^\sharp$, we say $L$ is integral.
    \item
Following \cite[Definition 2.11]{LL2}, for a lattice $L$ with hermitian form $(\, ,\, )$, we say that a basis $\left\{\ell_{1}, \ldots, \ell_{n}\right\}$ of $L$ is a normal basis (which always exists by \cite[Lemma 2.12]{LL2}) if its moment matrix $T=\left(\left(\ell_{i}, \ell_{j}\right)\right)_{1\le i, j\le n}$ is conjugate to
\begin{align*}
\left(\beta_{1} \pi^{2 b_{1}}\right) \oplus \cdots \oplus\left(\beta_{s} \pi^{2 b_{s}}\right) \oplus\left(\begin{array}{cc}
0 & \pi^{2 c_{1}+1} \\
-\pi^{2 c_{1}+1} & 0
\end{array}\right) \oplus \cdots \oplus\left(\begin{array}{cc}
0 & \pi^{2 c_{t}+1} \\
-\pi^{2 c_{t}+1} & 0
\end{array}\right)
\end{align*}
by a permutation matrix, for some $\beta_{1}, \ldots, \beta_{s} \in O_{F_0}^{\times}$ and $b_{1}, \ldots, b_{s}, c_{1}, \ldots, c_{t} \in \mathbb{Z}$. Moreover, we define its (unitary) fundamental invariants $(a_1,\cdots,a_n)$ to be the unique nondecreasing rearrangement of $(2 b_1,\cdots,2b_s,2c_1+1,\cdots,2c_t+1)$.
\item We define the type $t(L)$ of $L$ to be the number of positive fundamental invariants of $L$.  We use $r(L)$ to denote the rank of $L$ and call $L$ a {\it full type} lattice if $r(L)=t(L)$.
\item We define the valuation of $L$ to be $\mathrm{val}(L)\coloneqq\sum_{i=1}^{n}a_i$, where $(a_1,\cdots,a_n)$ are the fundamental invariants of $L$.  For $x\in L$, we define $\val(x)=\val((x,x))$, where $\val(\pi_0)=1$.
\item For a hermitian space $\bV$, we let $\bV^{? i}\coloneqq \{x\in \bV\mid \val(x)? i\}$ where $?$ can be $\ge$, $\leq$ or $=$.
\item For a ring $R$, we use $\langle \ell_1,\cdots, \ell_n\rangle_{R}$ to denote $\mathrm{Span}_R\{\ell_1,\cdots,\ell_n\}$. When $R=O_F$, we simply write $\langle \ell_1,\cdots, \ell_n\rangle$. We use $L_F$ to denote $L\otimes_{O_F} F$.
\item  For a hermitian lattice of rank $n$, we define its sign as
	\begin{equation*}
		\chi(L) \coloneqq \chi((-1)^{\frac{n(n-1)}{2}}\mathrm{det}(L)) =\pm 1
	\end{equation*}
	where $\chi$ is the quadratic character of $F_0^\times$ associated to $F/F_0$. Without explicit mentioning, we always use $\epsilon$ to denote $\chi(L)$.
	\item Let $I_{m}^{\epsilon}$ denote a unimodular lattice of rank $m$ with $\chi(I_{m}^{\epsilon})=\epsilon$.  We also simply denote a unimodular lattice of rank $m$ by $I_m$ if we do not need to remember its sign or its sign is clear in the context. In particular, when we consider $\Den'(I_n,L)$, we mean $I_n=I_{n}^{-\epsilon}$.
	\item We call a sublattice $N\subset M$ primitive in $M$ if $\mathrm{dim}_{\mathbb F_\q}\overline{N}=r(N)$, where $\overline{N}=(N+\pi M)/\pi M$. We also use $\overline{L}$ to denote $L\otimes_{O_F} O_F/(\pi).$
	\item For two lattices $L,L'$ of same rank, let $n(L',L)=\#\{L''\subset L_F\mid L\subset L'', L''\cong L'\}. $
	\item  We let $\delta_{\mathrm{odd}}(n)=1$ if $n$ is an odd integer and $\delta_{\mathrm{odd}}(n)=0$ if $n$ is an even integer.
\end{itemize}

{\bf Acknowledgments:} C.~L.~was partially supported by the NSF grant DMS-2101157. T.Y. ~was partially supported  by the Dorothy Gollmar Chair's Fund and Van Vleck research fund. Q.H. and T.Y. are partially supported by a graduate school grant of  UW-Madison. We would like to thank the referee for the careful reading and many valuable suggestions. Part of the work was done while Q.H., C.L. and T.Y. attending the "Algebraic Cycles, $L$-Values, and Euler Systems`` program held in MSRI. We would like to thank MSRI for the excellent work condition, financial support,  and hospitality.

	\section{Kr\"amer models of Rapoport-Zink spaces and special cycles}\label{sec: RZ space}
	We denote $\bar{a}$ the Galois conjugate of $a\in F$ over $F_0$. Denote by  $\Nilp O_{\breve F}$ be the category of $O_{\breve F}$-schemes $S$ such that $\pi$ is locally nilpotent on $S$. For such an $S$, denote its special fiber $S\times_{\SpfOF} \Spec \bar\kappa$ by $\bar S$. Let $\sigma\in \mathrm{Gal}(\breve{F}_0/F_0)$ be the Frobenius element. We fix an injection of rings $i_0:O_{F_0}\rightarrow \Oo_{\breve F_0}$ and an injection $i:O_F\rightarrow O_{\breve F}$ extending $i_0$.
	Denote by $\bar{i}:O_F\rightarrow O_{\breve F}$ the map $a\mapsto i(\bar{a})$.
	
	\subsection{RZ spaces}\label{subsec:RZspaces}
	Let $S\in\Nilp O_{\breve F}$. A $p$-divisible strict $O_{F_0}$-module over $S$ is a $p$-divisible group over $S$ with an $O_{F_0}$-action whose induced action on its Lie algebra is via  $O_{F_0}\xrightarrow[]{i_0}O_{\breve {F_0}}\rightarrow\Oo_S$.
	\begin{definition}\label{def:hermitian modules}
		A formal hermitian $O_F$-module of dimension $n$ over $S$ is a triple $(X,\iota,\lambda)$ where $X$ is a supersingular $p$-divisible strict $O_{F_0}$-module over $S$  of dimension $n$ and $F_0$-height $2n$ (supersingular means the $O_{F_0}$-relative Dieudonn\'e module of $X$ at each geometric point of $S$ has slope $\frac{1}{2}$), $\iota:O_F\rightarrow \End(X)$ is an $O_F$-action and $\lambda:X\rightarrow X^\vee$ is a principal polarization in the category of strict $O_{F_0}$-modules such that the Rosati involution induced by $\lambda$ is the Galois conjugation of $F/F_0$ when restricted on $O_F$.
	\end{definition}

	\begin{definition}\label{def:NKra}
		Fix a formal hermitian $O_F$-module $(\bX,\iota_\bX,\lambda_\bX)$ of dimension $n$ over $\bar\kappa$. The moduli space $\cN_n$ is the functor such that $\cN_n(S)$ for any $S\in \Nilp O_{\breve F}$ is the set of isomorphism classes of quintuples $ (X,\iota,\lambda,\rho,\cF)$ such that
		\begin{enumerate}
		    \item $(X,\iota,\lambda)$ is a formal hermitian $O_F$-module over $S$.
		    \item $\rho:X\times_{S}\bar{S} \rightarrow \bX \times_{\Spec \bar\kappa} \bar{S}$ is a quasi-isogeny of formal $O_F$-modules of height $0$.
		    \item $\cF$ satisfies Kr\"amer's (\cite{Kr}) signature condition: it is a local direct summand of $\Lie X$ of rank $n-1$ as an $\Oo_S$-module such that $O_F$ acts on $\cF$ by  $O_F\xrightarrow[]{\bar i}O_{\breve F}\rightarrow \Oo_S$ and acts on $\Lie X/\cF$ by $O_F\xrightarrow[]{i}O_{\breve F}\rightarrow \Oo_S$.
		\end{enumerate}
		An isomorphism between two such quintuples $(X,\iota,\lambda,\rho,\cF)$ and $(X',\iota',\lambda',\rho',\cF')$ is an isomorphism $\alpha:X\rightarrow X'$ such that $\rho'\circ(\alpha\times_S \bar{S})=\rho $, $\alpha^*(\lambda')$ is a $O_{F_0}^\times$-multiple of $\lambda$ and $\alpha_*(\cF)=\cF'$.
	\end{definition}
	Notice that $\cN_n$ is a relative Rapoport-Zink space in the sense of \cite{mihatsch2022relative}.
	By \cite[Proposition 2.2]{Ho2}, $\cN_n$ is representable by a  flat formal scheme  of relative dimension $n-1$ over $\Spf O_{\breve F}$.  We drop the  subscript $n$ in $\cN_n$ when there is no ambiguity.

	\subsection{Associated hermitian spaces}\label{subsec:associated hermitian space}
	For a strict $O_{F_0}$-module $X$ over $\bar\kappa$, let $M(X)$ be the $O_{F_0}$-relative Dieudonn\'e module of $X$.
	Let $(\bX,\iota_\bX,\lambda_\bX)$ be the framing object as in Definition \ref{def:NKra}, and $N=M(\bX) \otimes_{O_{F_0}} F_0$ be its rational relative Dieudonne module.  Then $N$ is a $2n$-dimensional $\breve{F}_0$-vector space equipped with a $\sigma$-linear operator $\bF$ and a $\sigma^{-1}$-linear operator $\bfV$. The $O_F$-action  $\iota_\bX:O_F\rightarrow \End(\bX)$ induces on $N$ an $O_F$-action  commuting with $\bF$ and $\bfV$.  We still denote this induced action by $\iota_\bX$ and denote $\iota_\bX(\pi)$ by $\Pi$. The principal polarization of $\bX$ induces a skew-symmetric $\breve{F}_0$-bilinear form $\langle\, ,\, \rangle$ on $N$ satisfying
	\[\langle \bF x, y\rangle=\langle x, \bfV y\rangle^\sigma, \quad \langle \iota(a) x, y\rangle=\langle x, \iota(\bar{a}) y\rangle,\]
	for any $x,y\in N$, $a\in O_F$.
	Then $N$ is an $n$-dimensional $\breve F$-vector space equipped with a $\breve{F}/\breve{F}_0$-hermitian form $(\, ,\, )$ defined by (see \cite[(2.6)]{Shi1})
	\begin{equation}\label{eq:(,)}
	    (x,y):=\delta (\langle \Pi x, y\rangle+\pi\langle  x, y\rangle),
	\end{equation}
	where $\delta$ is a fixed element in $\Oo_{\breve F_0}^\times$ such that $\sigma(\delta)=-\delta$. We can use the relation
	\begin{equation}\label{eq:hermitian and symplectic form}
	    \langle x, y\rangle=\frac{1}{2\delta} \tr_{\breve{F}/\breve{F}_0} (\pi^{-1}(x,y)).
	\end{equation}
	to recover $\langle\, ,\, \rangle$. Let $\tau\coloneqq \Pi \bfV^{-1}$ and $C\coloneqq N^\tau$. Then $C$ is an $F$-vector space of dimension $n$ and $N=C\otimes_{F_0} \breve{F}_0$.
    The restriction of $(\, ,\, )$ to $C$ is a $F/F_0$-hermitian form which we still denote by $(\, ,\, )$. There are two choices of $(\bX,\iota_\bX,\lambda_\bX)$ up to quasi-isogenies preserving the polarization on the nose,  according to the sign $\epsilon=\chi(C)$ of $C$. Here $\chi:F_0^\times\rightarrow \{\pm 1\}$ is the character associated to the quadratic extension $F/F_0$ and we define  the sign of $C$ as
	\[\chi(C):=\chi((-1)^{n(n-1)/2}\det(C)).\]
	 When $n$ is odd, two different choices of $\epsilon$ give us isomorphic moduli spaces. When $n$ is even, two different choices of $\epsilon$ give us two  non-isomorphic moduli spaces. See \cite[Remark 2.16]{Shi1} and \cite[Remark 4.2]{RTW}.

	Fix a formal hermitian $O_F$-module $(\bY,\iota_\bY,\lambda_\bY)$ of dimension $1$ over $\Spec \bar\kappa$.  Define
	\begin{equation}
		\bV_n=\Hom_{O_F}(\bY,\bX)\otimes \Q.
	\end{equation}
	We drop the subscript $n$ of $\bV_n$ unless we need to specify the dimension.
	The vector space $\bV$ is equipped with a hermitian form $(\, ,\, )_\bV$ such that for any $x,y \in \bV$
	\begin{equation}\label{eq:h(x,y)}
		(x,y)_\bV=\lambda_\bY^{-1}\circ y^\vee \circ \lambda_\bX \circ x\in \End(\bY)\otimes_{\Z}\Q\overset{\sim}{\rightarrow}F
	\end{equation}
	where $y^\vee$ is the dual quasi-homomorphism of $y$.
	The hermitian spaces $(\bV,(\, ,\, )_\bV)$ and $(C,(, ))$ are related by the $F$-linear isomorphism
	\begin{equation}\label{eq: C V isomorphism}
		b: \bV\rightarrow C, \quad x\mapsto x(e)
	\end{equation}
	where $e$ is a generator of $\tau$-fixed points of the $O_{F_0}$-relative Dieudonn\'e module $M(\bY)$. The relative Dieudonn\'e module $M(\bY)$ is equipped with a hermitian form $(\, ,\, )_\bY$ such that $(e,e)_\bY\in O_{F_0}^\times$. By \cite[Lemma 3.6]{Shi1}, we have
	\begin{equation}\label{eq:h and (,)_X}
		(x,x)_\bV\cdot(e,e)_\bY=(b(x),b(x)).
	\end{equation}
    Here the bilinear form $(,)_{\bY}$ is the analogue of the form $(,)$ in \eqref{eq:(,)} defined on the rational relative Dieudonn\'e module of $\bY$.
	By scaling the polarization $\lambda_\bY$ by a factor in $O_{F_0}^\times$ we can assume that
	\[(e,e)_\bY=1,\]
	so $\bV$ and $C$ are isomorphic as hermitian spaces. When the context is clear we often drop the subscript $\bV$ in $(\, ,\, )_\bV$.

	\subsection{Special cycles}\label{subsec:specialcycles}
	We fix a canonical lift $(\cG,\iota_\cG,\lambda_\cG)$ of $(\bY,\iota_\bY,\lambda_\bY)$ to $O_{\breve F}$ in the sense of \cite{G} such that the action of $O_F$ on $\Lie \cG$ is via the inclusion $\bar i$. Such a lift is unique up to isomorphism by \cite[Proposition 2.1]{Ho2}.
	\begin{definition}
		For an $O_F$-lattice $L$ of $\bV$, define $\cZ(L)$ to be the subfunctor of $\cN$ such that $\cN(S)$ is the set of isomorphism classes of tuples $(X,\iota,\lambda,\rho,\cF)\in \cN(S)$ such that for any $x\in L$ the quasi-homomorphism
		\[\rho^{-1}\circ x\circ \rho_\cG: \bY\times_{\Spec \bar\kappa} \bar{S} \rightarrow X\times_{S} \bar{S}\]
		entends to a homomorphism $\cG\times_{\SpfOF} S\rightarrow X$.
		By Grothendieck-Messing theory $\cZ(L)$ is a closed formal subscheme of $\cN$.
		For $x\in \bV$, we let $\cZ(x)\coloneqq \cZ (L)$ when $L=\langle x\rangle$.
	\end{definition}

	\subsection{Bruhat-Tits stratification of $\cN_{\mathrm{red}}$}
	We say a lattice $\Lambda\subset C$ (resp. $\Lambda\subset \bV$) is a vertex lattice if $\pi \Lambda^\sharp\subset \Lambda \subset \Lambda^\sharp$ where $\Lambda^\sharp$ is dual lattice of $\Lambda$ with respect to $(\, ,\, )$ (resp. $(\, ,\, )_\bV$)\footnote{Notice that the vertex lattice $\Lambda$ in the sense of \cite{RTW}, \cite{HSY} or\cite{HSY3} corresponds to $\Lambda^\sharp$ in our convention.}.
	Using the isomorphism of hermitian spaces \eqref{eq: C V isomorphism}, we often identify $\Lambda$ with $b^{-1}(\Lambda)$ and use the same notation to denote both.
	We call $t=\mathrm{dim}_{\F_\q}(\Lambda^\sharp/\Lambda)$ the type of $\Lambda$.
	Recall from \cite[Lemma 3.2]{RTW} that $t$ has to be an even integer.
	To each vertex lattice $\Lambda$ of type $2m$, we can associate a subscheme $\cN_{\Lambda}$ which is a subscheme of the minuscule special cycle $\cZ(\Lambda)$, see Definition \ref{def:cN Lambda} below. Let $V=\Lambda^\sharp/\Lambda$, we can define a (modified) Deligne-Lusztig variety $Y_V$ over $\bar\kappa$, see \eqref{eq:definition of tY} below. We prove that $Y_V$ is projective and smooth, see Proposition \ref{prop:Y is smooth}.
	When $m\neq 0$ the scheme $\cN_\Lambda$ is isomorphic to $Y_{V,\bar\kappa}$, see Theorem \ref{thm:moduli interpretation of DL variety}.
	
	For vertex lattices of type $0$, we define  $\Exc_\Lambda$ following the idea of \cite[Appendix]{Ho2}.
	\begin{definition}\label{def:Exc}
	The exceptional divisor $\Exc$ of $\cN$ is the set of all points $z=(X,\iota,\lambda,\rho,\cF)\in \cN(\bar\kappa)$ such that the action
	\[\iota: O_F\rightarrow \End(\Lie X)\]
	factor through $O_F\xrightarrow[]{i}O_{\breve F}\rightarrow \bar\kappa$ where $O_{\breve F}\rightarrow \bar\kappa$ is the quotient map. For a vertex lattice $\Lambda$ in $C$ of type $0$, define $\Exc_\Lambda$ to be  the set of all points $z=(X,\iota,\lambda,\rho,\cF)\in \Exc$ such that $\rho(M(X))=\Lambda\otimes_{O_F} O_{\breve F}$. Both $\Exc$ and $\Exc_\Lambda$ are closed subset of $\cN$ and we endow them the structure of reduced schemes over $\bar\kappa$.
	\end{definition}
	The following is a refinement of \cite[Proposition A.2]{Ho2}.
	\begin{lemma}\label{lem:Exc}
	Each $\Exc_\Lambda$ is a Cartier divisor of $\cN$ isomorphic to $\bP^{n-1}_{\bar\kappa}$. The scheme $\Exc$ is a disjoint union of $\Exc_\Lambda$ over all type $0$ lattices $\Lambda$ in $C$.
	\end{lemma}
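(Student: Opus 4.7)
The approach I would take is to combine a Dieudonn\'e-theoretic analysis of $\Exc(\bar\kappa)$ with the known regularity of $\cN$ to extract all the scheme-theoretic structure. First I would translate the Exc condition into lattice data: a point $z = (X, \iota, \lambda, \rho, \cF)\in \cN(\bar\kappa)$ corresponds to the Dieudonn\'e lattice $M = \rho(M(X)) \subset N$, which is $\bfV$-stable, $\Pi$-stable, and self-dual with respect to $\langle\,,\,\rangle$, together with an $\bar\kappa$-hyperplane $\cF \subset \Lie X = M/\bfV M$. The defining condition $\iota(\pi)\big|_{\Lie X} = 0$ is $\Pi M \subseteq \bfV M$, equivalently $\tau M \subseteq M$ for $\tau = \Pi \bfV^{-1}$. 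Together with the self-duality of $M$, I expect this to force $M = \Lambda \otimes_{O_F} O_{\breve F}$, where $\Lambda := M^\tau \subset C$ is a $\tau$-invariant self-dual $O_F$-lattice --- that is, a vertex lattice of type $0$. Conversely, each type $0$ vertex lattice $\Lambda \subset C$ yields a unique (up to isomorphism) quadruple $(X, \iota, \lambda, \rho)$. Since $\Lambda$ is recovered as $\rho(M(X))^\tau$, this simultaneously produces the set-theoretic disjoint decomposition $\Exc(\bar\kappa) = \bigsqcup_{\Lambda} \Exc_\Lambda(\bar\kappa)$.

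Having fixed $\Lambda$, the only remaining modulus on $\Exc_\Lambda$ is $\cF$. Since $\iota(\pi)$ vanishes identically on $\Lie X$ and, moreover, both $i(\pi)$ and $\bar i(\pi)$ reduce to $0$ in $\bar\kappa$, the Kr\"amer signature condition imposes no constraint at these points: $\cF$ may be any rank $n{-}1$ $\Oo_S$-direct summand of $\Lie X \cong \Lambda \otimes_{O_F} \bar\kappa$. The functor of such hyperplanes is represented by $\bP(\Lie X) \cong \bP^{n-1}_{\bar\kappa}$, giving the isomorphism $\Exc_\Lambda \simeq \bP^{n-1}_{\bar\kappa}$ as schemes over $\bar\kappa$. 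The disjointness of the $\Exc_\Lambda$ at the scheme level follows because the underlying hermitian $O_F$-module $(X,\iota,\lambda,\rho)$ is constant on each $\Exc_\Lambda$ and recovers $\Lambda$.

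The main obstacle is verifying the Cartier property, and here my plan is to leverage the regularity of $\cN$ established in \cite[Proposition 2.2]{Ho2}: $\cN$ is regular of absolute dimension $n$, and in a regular scheme every irreducible component of the special fiber $\bar\cN$ is automatically an effective Cartier divisor. Since $\Exc_\Lambda \cong \bP^{n-1}_{\bar\kappa}$ is irreducible of dimension $n-1 = \dim \bar\cN$ and closed in $\bar\cN$, it must coincide with a full irreducible component of $\bar\cN$, and Cartier-ness follows. The delicate step is justifying that $\Exc_\Lambda$ fills out an irreducible component rather than sitting as a proper closed subscheme of one; I would handle this by the dimension count just given, backed up if necessary by an explicit Grothendieck--Messing/local-model computation producing an \'etale chart of the form $\Spf O_{\breve F}[[t_1,\dots,t_{n-1},s]]/(s t_1 - \pi)$ around a closed point of $\Exc_\Lambda$, in which $\Exc_\Lambda$ is visibly cut out by $s = 0$. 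This local model simultaneously confirms the semistable structure of $\cN$ along $\Exc$ and supplies an honest local equation for $\Exc_\Lambda$.
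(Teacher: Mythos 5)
Your proposal is essentially the paper's argument: translate the $\Exc$ condition into $\Pi M \subseteq \bfV M$, deduce $\tau$-invariance of $M$, invoke the lattice classification to identify $M$ with $\Lambda \otimes_{O_F} O_{\breve F}$ for a type-$0$ vertex lattice $\Lambda$, observe that the Kr\"amer condition on $\cF$ is vacuous so $\cF$ ranges over $\bP^{n-1}$, and then use regularity of $\cN$ for the Cartier property. One small imprecision: $\Pi M \subseteq \bfV M$ alone only gives $\tau M \subseteq M$, not $\tau$-invariance; you need the dimension count $\dim_{\bar\kappa}(M/\bfV M) = \dim_{\bar\kappa}(M/\Pi M) = n$ to upgrade the inclusion to equality, as the paper does. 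On the Cartier step you overcomplicate things: regularity of $\cN$ means its local rings are UFDs, so \emph{any} prime Weil divisor of codimension $1$ is automatically Cartier --- you do not need to argue that $\Exc_\Lambda$ fills out a full irreducible component of the special fiber, only that $\dim \Exc_\Lambda = n-1 = \dim\cN - 1$, which you already have. The proposed \'etale local-model computation with $st_1 = \pi$ is therefore unnecessary for this lemma (though it is of course the right picture behind semistability).
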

	\begin{proof}
	Let $z=(X,\iota,\lambda,\rho,\cF)\in \Exc(\bar\kappa)$ and $M=\rho(M(X))\subset N$. Then the action of $\iota(\pi)$ on $\Lie X$ is trivial. Hence $\Pi M\subset \bfV M$ as $\Lie X=M/\bfV M$. Since $\dim_{\bar\kappa} (M/\bfV M)=\dim_{\bar\kappa}(M/\Pi M)=n$, we know $\bfV M=\Pi M$ which is equivalent to $\tau(M)=M$. By \cite[Proposition 4.1]{RTW}, $M=\Lambda\otimes_{O_F} O_{\breve F}$ for some vertex lattice $\Lambda$. Since $M$ is unimodular, $\Lambda$ is of type $0$. Hence $z\in \Exc_\Lambda(\bar\kappa)$.
	Moreover for any $\bar\kappa$-algebra $R$, every rank $n-1$ locally direct summand  of $\Lie X_R$ satisfies  Kr\"amer's signature condition as in Definition \ref{def:NKra} and determines a point of $\Exc_\Lambda(R)$ uniquely. So we get an isomorphism $\bP^{n-1}_{\bar\kappa}\rightarrow \Exc_\Lambda$. Since $\cN$ is regular and $\Exc_\Lambda$ has codimension $1$, $\Exc_\Lambda$ is a Cartier divisor in $\cN$. By looking at $\rho(M(X))$, it is clear that $\Exc_\Lambda\cap \Exc_{\Lambda'}(k)=\emptyset$ if $\Lambda\neq\Lambda'$. Hence $\Exc$ is a disjoint union of  over all type $0$ lattices $\Lambda$.
	\end{proof}
	\begin{remark}
	The proof of Lemma \ref{lem:Exc} shows that the definition of $\Exc_\Lambda$ above agrees with that of \cite[\S 2]{HSY3}.
	\end{remark}
	By Proposition \ref{prop:cN Lambda 0} below, $\cN_\Lambda=\Exc_\Lambda$ for type 0 lattices $\Lambda$.
		The reduced locus $\cN_{\mathrm{red}}$ has a decomposition (see Theorem \ref{thm:Bruhat Tits stratification})
	\[\cN_{\mathrm{red}}=\bigcup_{\Lambda} \ \cN_\Lambda,\]
	where the union is over all vertex lattices.
    The reduced subscheme $\cZ(L)_{\mathrm{red}}$ is a union of Bruhat-Tits strata (see Proposition \ref{prop:reducedlocus})
	\begin{equation}\label{eq:cZ stratification}
	\cZ(L)_{\mathrm{red}}=\bigcup_{\Lambda\supset L}\cN_\Lambda.
	\end{equation}

	\subsection{Horizontal and vertical part}
	A formal scheme $X$ over $\Spf O_{\breve F}$ is called horizontal (resp. vertical) if it is flat over $\SpfOF$ (resp. $\pi$ is locally nilpotent on $\Oo_X$). For a formal scheme $X$ over $\Spf O_{\breve F}$, its horizontal part $X_{\mathscr{H}}$ is canonically defined by the ideal sheaf $\Oo_{X,\mathrm{tor}}$ of torsion sections on $\Oo_X$. If $X$ is noetherian, there exists a $m\in \Z_{>0}$ such that $\pi^m \Oo_{X,\mathrm{tor}}=0$. We define the vertical part $X_{\mathscr{V}}\subset X$ to be the closed formal subscheme defined by the ideal sheaf $\pi^m \Oo_X$. Since $ \Oo_{X,\mathrm{tor}}\cap \pi^m \Oo_X=\{0\}$,
	we have the following decomposition by primary decomposition
	\begin{equation}\label{eq:horizontal vertical decomposition of scheme}
		X=X_\mathscr{H}\cup X_\mathscr{V}
	\end{equation}
	as a union of horizontal and vertical formal subschemes. Notice that the horizontal part  $X_{\mathscr{H}}$ is canonically defined while the vertical part  $X_{\mathscr{V}}$ depends on the choice of $m$.
	
	\begin{lemma}\label{lem:cZnoetherian}
		For a lattice $L^\flat\subset \bV$ of rank greater than or equal to $n-1$ with non-degenerate hermitian form, $\cZ(L^\flat)$ is noetherian.
	\end{lemma}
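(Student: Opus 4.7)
The plan is to reduce the statement to the quasi-compactness of $\cZ(L^\flat)$. Since $\cN$ is locally of finite type over $\SpfOF$, it is locally noetherian, and the closed immersion $\cZ(L^\flat)\hookrightarrow \cN$ transfers local noetherianity; thus noetherianity of $\cZ(L^\flat)$ is equivalent to quasi-compactness of its underlying topological space, which coincides with that of $\cZ(L^\flat)_\red$. By the Bruhat-Tits decomposition \eqref{eq:cZ stratification},
\[
\cZ(L^\flat)_\red=\bigcup_{\Lambda\supset L^\flat}\cN_\Lambda,
\]
where $\Lambda$ runs over vertex lattices in $\bV$, and each stratum $\cN_\Lambda$ is a projective $\bar\kappa$-variety (isomorphic to $\bP^{n-1}_{\bar\kappa}$ when $\Lambda$ has type $0$ by Lemma \ref{lem:Exc}, and otherwise to the smooth projective variety $Y_V$). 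It therefore suffices to prove that the set of vertex lattices $\Lambda\subset \bV$ containing $L^\flat$ is finite.

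When $r(L^\flat)=n$ this is immediate, since any vertex lattice $\Lambda\supset L^\flat$ satisfies $L^\flat\subset \Lambda\subset \Lambda^\sharp\subset (L^\flat)^\sharp$ and $(L^\flat)^\sharp/L^\flat$ is finite. The delicate case is $r(L^\flat)=n-1$, in which the orthogonal complement $\bW\coloneqq (L^\flat_F)^\perp\subset \bV$ is a one-dimensional non-degenerate hermitian $F$-space and $(L^\flat)^\sharp$ in $\bV$ contains all of $\bW$, hence is no longer a lattice. Here I would fix a generator $w\in \bW$ and, for each vertex lattice $\Lambda\supset L^\flat$, analyze the two $O_F$-lattices $\Lambda_W\coloneqq \Lambda\cap \bW=\pi^{m'}O_F w$ and $\tilde\Lambda_W\coloneqq p_W(\Lambda)=\pi^{m}O_F w$ in $\bW$, where $p_W\colon \bV\to \bW$ is the orthogonal projection (so automatically $m\le m'$). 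The vertex condition $\pi\Lambda^\sharp\subset \Lambda$ combined with the identification $\Lambda^\sharp\cap \bW=\tilde\Lambda_W^\sharp$ yields an upper bound on $m+m'$, while integrality of $\Lambda$ applied to the orthogonal decomposition $v=v_\flat+v_W$, using that $(v_\flat,v_\flat)$ with $v_\flat\in (L^\flat)^\sharp_{L^\flat_F}$ lies in a fixed fractional ideal depending only on $L^\flat$, forces $(v_W,v_W)$ into a fixed fractional ideal and so produces a lower bound on $m$. These inequalities pin $(m,m')$ to a finite range; for each such pair, $\Lambda$ is then recovered from its projection $p_\flat(\Lambda)\subset (L^\flat)^\sharp_{L^\flat_F}$ (finitely many choices between $L^\flat$ and its dual in $L^\flat_F$) together with a splitting datum in the finite group $\Hom_{O_F}(p_\flat(\Lambda),\tilde\Lambda_W/\Lambda_W)$, yielding a finite total count.

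The main obstacle I anticipate is precisely this rank $n-1$ case: the naive dual-lattice argument collapses because $(L^\flat)^\sharp$ swallows the whole line $\bW$, and one must jointly exploit integrality (to bound the $\bW$-projection from below via the bounded values of the hermitian form) and the tightness built into the vertex-lattice condition $\pi\Lambda^\sharp\subset \Lambda$ (to bound it from above) in order to squeeze the $\bW$-projection into a finite window. Once the finite collection of $\Lambda\supset L^\flat$ is identified, $\cZ(L^\flat)_\red$ is a finite union of projective varieties, hence quasi-compact, and noetherianity of $\cZ(L^\flat)$ follows.
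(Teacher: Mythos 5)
Your proof is correct and follows the same route as the paper: reduce noetherianity to quasi-compactness of the reduced locus, invoke the Bruhat--Tits stratification of $\cZ(L^\flat)_\red$, and conclude once the set of vertex lattices $\Lambda\supseteq L^\flat$ is shown to be finite. The paper asserts that finiteness in one sentence without argument; your careful analysis in the rank $n-1$ case (bounding $p_W(\Lambda)$ and $\Lambda\cap\bW$ from both sides via integrality and the vertex condition $\pi\Lambda^\sharp\subseteq\Lambda$) supplies the detail the paper leaves implicit, and is sound.
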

	\begin{proof}
	First we know that $\cZ(L)$ is locally noetherian since it is a closed formal subscheme of $\cN$ which is locally noetherian. Since the rank of $L$ is greater than or equal to $n-1$, the number of vertex lattices $\Lambda$ containing $L$ is finite. By \eqref{eq:cZ stratification}, we know that $\cZ(L)_\red$ is a closed subset in finitely many irreducible components of $\cN_\red$. Since each irreducible component of $\cN_\red$ is quasi-compact, we know that $\cZ(L)$ is quasi-compact, hence noetherian.
	\end{proof}
	
	\begin{lemma}\label{lem:cZ_vsupportedoncN_red}
		For a rank $n-1$ lattice $L^\flat\subset \bV$ with non-degenerate hermitian form, $\cZ(L)_{\mathscr{V}}$ is supported on the reduced locus $\cN_{\red}$ of $\cN$, i.e., $\Oo_{\cZ(L)_{\mathscr{V}}}$ is annihilated by a power of the ideal sheaf of $\cN_{\red}$.
	\end{lemma}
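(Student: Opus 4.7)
My plan is to read the conclusion off directly from the definition of the vertical part together with the semistability of $\cN$. Since $L^\flat\subset\bV$ has rank $n-1$ and is non-degenerate, Lemma~\ref{lem:cZnoetherian} guarantees that $\cZ(L^\flat)$ is noetherian, so the definition of $\cZ(L^\flat)_{\sV}$ from \eqref{eq:horizontal vertical decomposition of scheme} applies: there exists $m\in\Z_{>0}$ such that $\OO_{\cZ(L^\flat)_{\sV}}=\OO_{\cZ(L^\flat)}/\pi^m\OO_{\cZ(L^\flat)}$, and in particular $\pi^m$ annihilates $\OO_{\cZ(L^\flat)_{\sV}}$ by construction.

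Next I would invoke the semistability of $\cN$ over $\Spf\OFb$: the special fiber $\cN\times_{\Spf\OFb}\Spec\kb$ is already reduced (it is a normal-crossings divisor), so it coincides with $\cN_{\red}$ as a closed subscheme of $\cN$, and the ideal sheaf of $\cN_{\red}$ in $\cN$ is exactly $(\pi)\OO_{\cN}$. Combining the two observations, the $m$-th power of the ideal sheaf of $\cN_{\red}$ equals $\pi^m\OO_{\cN}$ and therefore annihilates $\OO_{\cZ(L^\flat)_{\sV}}$, which is precisely the assertion that $\cZ(L^\flat)_{\sV}$ is supported on $\cN_{\red}$. There is no substantive obstacle here: the only potential worry is whether semistability really yields reduced special fiber, which is part of its standard definition in this setting. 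If one wished to bypass semistability entirely, a Nullstellensatz-style argument would also work, since $|\cZ(L^\flat)_{\sV}|\subseteq|\cN_{\red}|$ holds set-theoretically (as $\pi$ is nilpotent on $\OO_{\cZ(L^\flat)_{\sV}}$), and the noetherian hypothesis from Lemma~\ref{lem:cZnoetherian} upgrades this set-theoretic containment to annihilation by some power of the ideal sheaf of $\cN_{\red}$.
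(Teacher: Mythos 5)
Your argument breaks down at the identification of $\cN_{\red}$ with the special fiber $\cN\times_{\Spf\OFb}\Spec\bar\kappa$. These are genuinely different objects. The Kr\"amer model $\cN$ is a formal scheme only \emph{locally formally} of finite type over $\Spf\OFb$, and its ideal of definition involves not just $\pi$ but also formal parameters in the Lubin--Tate/deformation directions. The reduced locus $\cN_{\red}$ is the scheme cut out by all of these topologically nilpotent sections; by Theorem~\ref{thm:Bruhat Tits stratification} it is a Bruhat--Tits union $\bigcup_\Lambda\cN_\Lambda$ of projective varieties of dimension at most $t_{\max}/2\le \lfloor n/2\rfloor$. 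This is strictly smaller than $n-1$ for $n\geq 3$, so $\cN_{\red}$ cannot be the whole special fiber of $\cN$, and in particular the ideal sheaf of $\cN_{\red}$ is strictly larger than $(\pi)\OO_\cN$. Consequently, ``$\pi^m$ annihilates $\OO_{\cZ(L^\flat)_\sV}$'' (which is true by construction) does not by itself give the statement of the lemma, which requires annihilation by a power of the \emph{full} ideal sheaf $\mathcal{I}_{\red}$.

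Your fallback ``Nullstellensatz-style'' paragraph is likewise not a proof. Since $\pi$ is topologically nilpotent on all of $\cN$, the underlying topological space of $\cN$ is already $|\cN_{\red}|$, so the set-theoretic containment $|\cZ(L^\flat)_\sV|\subseteq|\cN_{\red}|$ is vacuous and carries no information about the scheme structure. Noetherianness alone cannot upgrade this to annihilation by $\mathcal{I}_{\red}^k$: for a model example, take $\cN=\Spf\OFb[[t]]$ (so $\cN_{\red}=\Spec\bar\kappa$, $\mathcal{I}_{\red}=(\pi,t)$) and a hypothetical $\cZ(L^\flat)_\sV=\Spf\bar\kappa[[t]]$, which is noetherian, killed by $\pi$, and supported set-theoretically on the closed point, yet $(\pi,t)^k\cdot\bar\kappa[[t]]\ne 0$ for all $k$. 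So the actual content of the lemma is that the vertical part of $\cZ(L^\flat)$ does \emph{not} spread out into the formal directions of $\cN$; this requires input from the moduli description of the special cycles and is what the paper imports by referring to the proof of \cite[Lemma~5.1.1]{LZ}.
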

	\begin{proof}
	We remark here that $\cN_\red$ is exactly the supersingular locus of $\cN$. Hence the proof of the lemma is the same as that of \cite[Lemma 5.1.1]{LZ}.
	\end{proof}
	
	\subsection{Derived special cycles}
	For a locally noetherian formal scheme $X$ together with a formal subscheme $Y$, denote by $K_0^Y(X)$ the Grothendieck group of finite complexes of coherent locally free $\Oo_X$-modules acyclic outside $Y$. For such a complex $A^\bullet$, denote by $[A^\bullet]$ the element in $K_0^Y(X)$ represented by it. We use $K_0(X)$
	to denote $K_0^X(X)$.
	Let $K'_0(Y)$  be the Grothendieck group of coherent sheaves of $\Oo_Y$-modules on $Y$.
	We have a group homomorphism $K_0^Y(X)\rightarrow K'_0(Y)$ which is an isomorphism when $X$ is regular.

	Denote by $\rF^i K_0^Y(X)$ the codimension $i$ filtration on $K_0^Y(X)$ and $\Gr^i K_0^Y(X)$ its $i$-th graded piece. When $X$ is regular, we have a cup product $\cdot$ on $K^Y_0(X)_\Q$ defined by tensor product of complexes. Under the identification $K_0^Y(X)\xrightarrow{\sim} K'_0(Y)$, the cup product is nothing but derived tensor product:
	\[[A]\cdot [B]=[A \otimes^{\bL}_{\Oo_{X}} B].\]
	When $X$ is a scheme, the cup product satisfies (\cite[Section I.3, Theorem 1.3]{soule1994lectures})
	\begin{equation}\label{eq:cupproductfiltration}
		\rF^i K_0^Y(X)_\Q \cdot \rF^j K_0^Y(X)_\Q\subset \rF^{i+j} K_0^Y(X)_\Q.
	\end{equation}
	It is expected that \eqref{eq:cupproductfiltration} is also true when $X$ is a formal scheme, see \cite[(B.3)]{zhang2021AFL}, however we do not need this fact in this paper. Throughout the paper, we assume $X=\cN$ unless stated otherwise.

	Recall that for $x\in \bV$, $\cZ(x)$ is a Cartier divisor (\cite[Proposition 4.3]{Ho2}).
	\begin{definition}
	    Let $L\subset \bV$ be a rank $r$ lattice with a basis $\{x_1,\ldots,x_r\}$.
		Define ${}^\bL \cZ(L)$ to be
		\begin{equation}\label{eq:derivedcZ}
			[\Oo_{\cZ(x_1)}\otimes_{\Oo_\cN}^\bL\cdots \otimes_{\Oo_\cN}^\bL \Oo_{\cZ(x_r)}]\in K_0^{\cZ(L)} (\cN)
		\end{equation}
		where $\otimes^\bL$ is the derived tensor product of complexes of  coherent locally free sheaves on $\cN$. By \cite[Corollary C]{Ho2}, ${}^\bL\cZ(L)$ is independent of the choice of the basis $\{x_1,\ldots,x_r\}$.
	\end{definition}
	
	\begin{definition}\label{def:Int L}
		When  $L$ has rank $n$, we  define the intersection number
		\begin{equation}\label{eq:intL}
			\mathrm{Int}(L)=\chi(\cN,{}^\bL\cZ(L)),
		\end{equation}
		where $\chi$ is the Euler characteristic.
	\end{definition}
	
	\begin{lemma}
	When $L$ is a rank $n$ lattice in $\bV$,
		$\cZ(L)$ is a proper scheme over $\SpfOF$. In particular, $\mathrm{Int}(L)$ is finite.
	\end{lemma}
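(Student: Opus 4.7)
The strategy is to reduce properness of $\cZ(L)$ to properness of its underlying reduced subscheme, to use the Bruhat--Tits stratification \eqref{eq:cZ stratification} to exhibit the latter as a \emph{finite} union of projective varieties over $\bar\kappa$, and then to deduce finiteness of $\Int(L)$ as a routine consequence. Since $L\subset\bV$ has full rank $n$ and non-degenerate hermitian form, $L^\sharp$ is a well-defined lattice of the same rank with $L^\sharp/L$ a finite $O_F$-module. Any vertex lattice $\Lambda$ containing $L$ is integral, so $\Lambda\subset\Lambda^\sharp\subset L^\sharp$ (the second inclusion because $L\subset\Lambda$), forcing $L\subset\Lambda\subset L^\sharp$; only finitely many lattices lie between $L$ and $L^\sharp$, so the set of vertex lattices containing $L$ is finite.

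Applying \eqref{eq:cZ stratification} then gives a finite union
$$\cZ(L)_{\mathrm{red}}\;=\;\bigcup_{\Lambda\supset L}\cN_\Lambda.$$
Each stratum $\cN_\Lambda$ is projective over $\bar\kappa$: when $\Lambda$ has type $0$, one has $\cN_\Lambda=\Exc_\Lambda\cong\mathbb{P}^{n-1}_{\bar\kappa}$ by Lemma \ref{lem:Exc}; when $\Lambda$ has positive type, $\cN_\Lambda\cong Y_V$ is smooth and projective over $\bar\kappa$ by the forthcoming Proposition \ref{prop:Y is smooth} together with Theorem \ref{thm:moduli interpretation of DL variety}. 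A finite union of projective $\bar\kappa$-varieties is projective, hence proper, over $\bar\kappa$.

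I would then invoke the standard fact that a closed formal subscheme of a locally noetherian formal scheme over $\Spf O_{\breve F}$ is proper over $\Spf O_{\breve F}$ as soon as its underlying reduced subscheme is proper over $\bar\kappa$; this applies to $\cZ(L)$ since by Lemma \ref{lem:cZnoetherian} it is noetherian and quasi-compact, and the term ``proper scheme'' in the statement is justified by Grothendieck's existence theorem, which algebraizes $\cZ(L)$ to an honest proper scheme over $\Spec O_{\breve F}$. For finiteness of $\Int(L)$: the complex ${}^{\bL}\cZ(L)=\Oo_{\cZ(x_1)}\otimes^{\bL}\cdots\otimes^{\bL}\Oo_{\cZ(x_n)}$ is a bounded complex of coherent sheaves on $\cN$ supported on the proper formal scheme $\cZ(L)$, so its hypercohomology groups are finitely generated $O_{\breve F}$-modules; since we are intersecting $n$ Cartier divisors in the $n$-dimensional regular formal scheme $\cN$, a codimension count forces these modules to be torsion of finite length, and the alternating sum of lengths is a well-defined integer. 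The main obstacle is really the forward references: the structural statements about the Bruhat--Tits stratification and the projectivity of the Deligne--Lusztig varieties $Y_V$ do the heavy lifting, so this lemma is essentially a bookkeeping consequence of the later results in \S\ref{sec:Bruhat-Tits}.
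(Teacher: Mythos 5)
Your route to properness of the reduced subscheme is fine and essentially matches the paper's (finitely many vertex lattices containing $L$, hence finitely many Bruhat--Tits strata $\cN_\Lambda$, each projective over $\bar\kappa$), and your codimension-count argument for the finiteness of $\Int(L)$ is acceptable. But there is a genuine gap in the step where you justify the phrase ``$\cZ(L)$ is a proper \emph{scheme}.''

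You invoke Grothendieck existence to ``algebraize $\cZ(L)$ to an honest proper scheme over $\Spec O_{\breve F}$.'' That is not the content of the statement. Grothendieck existence would produce a proper $O_{\breve F}$-scheme whose formal completion along its special fiber recovers $\cZ(L)$; that scheme could perfectly well have a nonempty generic fiber, and in that case $\cZ(L)$ would remain a genuine formal (non-algebraic) scheme. This is exactly what happens for special cycles $\cZ(L^\flat)$ with $L^\flat$ of rank $n-1$: there the horizontal part is a nontrivial union of quasi-canonical lifting cycles $\Spf W_s$, and $\cZ(L^\flat)$ is not a scheme. The assertion in the lemma is that $\cZ(L)$ \emph{itself} is a scheme, i.e.\ $\pi$ is nilpotent on $\Oo_{\cZ(L)}$, so $\cZ(L)$ is a scheme over some $\Spec O_{\breve F}/(\pi^m)$. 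To get this you need to show the horizontal part $\cZ(L)_{\mathscr H}$ is empty, and Grothendieck existence plays no role. The paper's argument is a signature count: if $\cZ(L)$ had a point over $\Oo_K$ for $K/\breve F$ finite, the corresponding hermitian $O_F$-module $X$ of signature $(1,n-1)$ would receive $n$ homomorphisms from the canonical lift $\cG$ of signature $(0,1)$, forcing $X$ to have signature $(0,n)$ --- a contradiction. Combined with Lemma \ref{lem:cZ_vsupportedoncN_red}, which shows $\cZ(L)_{\mathscr V}$ is a scheme, this gives that $\cZ(L)$ is a scheme. That moduli-theoretic step cannot be replaced by the bookkeeping you describe.
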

	\begin{proof}
	By Lemma \ref{lem:cZ_vsupportedoncN_red} $\cZ(L)_{\mathscr{V}}$ is a scheme. We show that $\cZ(L)_{\mathscr{H}}$ is empty. If not, there exists $z\in \cZ(L)(\Oo_K)$ for some finite extension $K$ of $\breve F$. Let $X$ be the corresponding formal hermitian $O_F$-module of signature $(1,n-1)$ over $\Oo_K$.
	Since $L$ has rank $n$ and $\cG$ has signature $(0,1)$, this would imply that $X$ has signature $(0,n)$, which is a contradiction. Hence $\cZ(L)$ is a scheme. Since $\cZ(L)_\red$ is contained in finitely many irreducible components of $ \cN_\red$ and each irreducible component is proper over $\Spec \bar\kappa$, it follows that $\cZ(L)$ is proper over $\SpfOF$. The finiteness of $\Int(L)$ then follows from the same argument before \cite[(B.4)]{zhang2021AFL}.
	\end{proof}

\subsection{A geometric cancellation law}
Recall that for two lattices $L,L'\subset \bV$ of rank $n$, we define
\[n(L',L)=\#\{L''\subset L_F\mid  L\subset L'', L''\cong L'\}.\]
Also recall that $\delta_{\mathrm{odd}}(n) =1$ or $0$ depending on whether $n$ is odd or not.

\begin{proposition}\label{prop:geometric cancel}
Let  $L=I_{\ell}\obot L_2\subset \bV$ where $L_2$ is of rank $r$, $I_\ell$ is unimodular of rank $\ell$ and $n=\ell+r$. Let $I_{r}$ be a unimodular lattice that contains $L_2$.
Then
\begin{align}\label{eq: geom cancel}
 \Int(I_{\ell}\obot L_2)-\Int(L_2)=n(I_{r},L_2)\cdot (\delta_{\mathrm{odd}}(n)-\delta_{\mathrm{odd}}(r)).
 \end{align}
 Moreover, \begin{align}\label{eq: int unimodular}
     \Int(I_{n})=\delta_{\mathrm{odd}}(n).
 \end{align}
\end{proposition}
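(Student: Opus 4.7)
The plan is to establish the cancellation formula \eqref{eq: geom cancel} first, and then deduce \eqref{eq: int unimodular} by iteration. I would reduce to the case $\ell=1$ by writing $I_\ell\simeq I_1\obot I_{\ell-1}$ and iterating the identity; a combinatorial check confirms that the count $n(I_r,L_2)$ and the parity factor $\delta_{\mathrm{odd}}(n)-\delta_{\mathrm{odd}}(r)$ telescope compatibly over successive unimodular extensions (unimodular overlattices of rank $r+\ell-1$ of $I_{\ell-1}\obot L_2$ split as a unimodular overlattice of $L_2$ plus the trivial extension by $I_{\ell-1}$).

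For the core case $\ell=1$, fix a generator $e$ of $I_1$. The strategy is to establish, in $K_0^{\cZ(e)}(\cN_n)$, a decomposition
$$[\Oo_{\cZ(e)}] \;=\; [\Oo_{\tilde\cN^{(e)}}] \;+\; \sum_{\Lambda}[\Oo_{\Exc_\Lambda}],$$
where $\tilde\cN^{(e)}$ is (the strict transform of) the smaller Kr\"amer RZ space $\cN_{n-1}$ attached to $\langle e\rangle_F^\perp\cap\bV$, embedded in $\cN_n$ via the framing splitting $\bX=\cG\times\bX'$, and the sum runs over type-$0$ vertex lattices $\Lambda\subset\bV$ containing $e$ as a unimodular element. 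The decomposition is dictated by the moduli description of $\cZ(e)$ together with the structure of $\Exc_\Lambda\cong\bP^{n-1}_{\bar\kappa}$ from Lemma~\ref{lem:Exc}: a $\bar\kappa$-point of $\cZ(e)$ either lifts $e$ to a genuine $O_F$-module morphism $\cG\to X$ (yielding a point of the smaller RZ space) or forces $\iota(\pi)$ to act trivially on $\Lie X$, landing in a unique $\Exc_\Lambda$.

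Intersecting with $\cZ(L_2)$ piece by piece: the $\tilde\cN^{(e)}$ part contributes $\Int(L_2)$ computed on $\cN_{n-1}$ by functoriality of the RZ embedding. Each exceptional $\Exc_\Lambda$ contributes via a Chern class computation on $\bP^{n-1}$---for $x\in\Lambda$ the restriction of $\cZ(x)$ to $\Exc_\Lambda$ is a hyperplane class up to an explicit correction, and evaluating $r$ such classes on $\bP^{n-1}$ yields the parity-dependent constant $\delta_{\mathrm{odd}}(n)-\delta_{\mathrm{odd}}(r)$ per $\Lambda$. The count of type-$0$ vertex lattices $\Lambda\supset I_1\obot L_2$ with $e$ unimodular in $\Lambda$ equals $n(I_r,L_2)$, since restriction to $(L_2)_F$ identifies $\Lambda$ with a unimodular overlattice of $L_2$. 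Combining these three inputs yields \eqref{eq: geom cancel}. For \eqref{eq: int unimodular}, apply \eqref{eq: geom cancel} iteratively with $L_2=I_{n-1}$: since $n(I_{n-1},I_{n-1})=1$, we get $\Int(I_n)-\Int(I_{n-1})=\delta_{\mathrm{odd}}(n)-\delta_{\mathrm{odd}}(n-1)$, and telescoping from the direct base case $\Int(I_1)=1$ (the space $\cN_1$ is a single reduced point and $\cZ(e)$ for a unit $e$ is all of $\cN_1$) produces $\Int(I_n)=\delta_{\mathrm{odd}}(n)$.

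The main obstacle is establishing the $K$-theoretic decomposition of $[\Oo_{\cZ(e)}]$ with the correct multiplicity on each $\Exc_\Lambda$: the Kr\"amer signature condition is precisely what creates the exceptional divisors, and pinning down how $\cZ(e)$ meets each of them requires a local Dieudonn\'e-theoretic analysis near the exceptional locus, sharpening Lemma~\ref{lem:Exc}. The subsequent Chern class calculation on $\bP^{n-1}$ producing the parity-dependent sign is the second delicate point; the swap between odd and even parity on the two sides of \eqref{eq: geom cancel} reflects the fact that the top self-intersection of a hyperplane class on $\bP^{n-1}$ is sensitive to whether $n$ and $r$ have matching parity.
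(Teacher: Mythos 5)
Your strategy matches the paper's: reduce to $\ell=1$ via the combinatorial identity $n(I_n, I_\ell\obot L_2) = n(I_r,L_2)$, decompose the Cartier divisor $\cZ(I_1)$ into a copy of $\cN_{n-1}$ plus exceptional components, and compute the intersection with ${}^{\bL}\cZ(L_2)$ term by term. However, two points need attention.

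First, the decomposition $[\Oo_{\cZ(I_1)}] = [\Oo_{\tZ(I_1)}] + \sum_{\Lambda_0\supset I_1}[\Oo_{\Exc_{\Lambda_0}}]$, the identification $\tZ(I_1)\cong\cN_{n-1}$, and the evaluation $\chi(\cN_n,[\Oo_{\Exc_{\Lambda_0}}]\cdot{}^{\bL}\cZ(L_2)) = (-1)^r$ are taken from prior work (\cite[Proposition 3.2, Corollaries 2.7 and 3.6]{HSY3}), not re-derived. Your description of the last step as ``evaluating $r$ hyperplane classes on $\bP^{n-1}$'' understates what's involved: the derived tensor product against $\Oo_{\Exc_{\Lambda_0}}$ requires a Koszul-type resolution and is precisely the substance of \cite[Corollary 3.6]{HSY3}. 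Simply asserting the parity factor appears from ``the top self-intersection of a hyperplane class'' would not be a complete argument.

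Second, your base case is wrong as stated. You claim $\Int(I_1)=1$ on the grounds that ``$\cN_1$ is a single reduced point,'' but $\cN_1$ is flat over $\Spf O_{\breve F}$ of relative dimension $0$, hence absolute dimension $1$; it is not a point, and the paper's hypotheses assume $n\ge 2$ throughout. The paper instead anchors the telescoping at $\Int(I_2)=0$, which is a theorem of \cite[Theorem 1.3]{Shi2} and \cite[Theorem 1.3]{HSY}, and obtains \eqref{eq: int unimodular} by applying \eqref{eq: geom cancel} with $L_2=I_2$, $r=2$: then $n(I_2,I_2)=1$ gives $\Int(I_n)=\delta_{\mathrm{odd}}(n)-\delta_{\mathrm{odd}}(2)=\delta_{\mathrm{odd}}(n)$. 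To avoid citing these earlier computations you would need to prove $\Int(I_2)=0$ directly, which is a nontrivial intersection computation on $\cN_2$; the $n=1$ shortcut does not hold water as argued.
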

\begin{proof}
If $L_2$ is unimodular and $r=2$, then $\Int(L_2)=0$ by \cite[Theorem 1.3]{Shi2} and \cite[Theorem 1.3]{HSY}. Combining this with \eqref{eq: geom cancel}, we obtain \eqref{eq: int unimodular}. In order to prove \eqref{eq: geom cancel},
we prove the following equation,
\begin{equation}\label{eq:geometric cancellation}
    \Int(I_1\obot L_2)-\Int(L_2)=(-1)^{r}n(I_r, L_2).
\end{equation}
which is the special case of \eqref{eq: geom cancel} when $\ell=1$. The general case then follows from an easy induction on $n$ using \eqref{eq:geometric cancellation} and the fact
\begin{equation}\label{eq:cancelation for m}
   n(I_n,I_{\ell}\obot L_2) =n(I_{r}, L_2).
\end{equation}

By Proposition 3.2 of \cite{HSY3}, we have the following decomposition of Cartier divisors on $\cN_n$
	\[\cZ(I_1)=\tZ(I_1)+\sum_{\Lambda_0\supset I_1} \Exc_{\Lambda_0},\]
	where the summation is over vertex lattices of type $0$ in $\bV_n$ and $\tZ(I_1)\cong\cN_{n-1}$ by \cite[Corollary 2.7]{HSY3}. By the same corollary, we know that
	\[\chi(\cN_n, [\Oo_{\tZ(I_1)}]\cdot {}^\bL \cZ(L_2))=\chi(\cN_{r}, {}^\bL \cZ(L_2))=\Int(L_2).\]
	Hence we have
	\[\Int(L)-\Int(L_2)=\sum_{\Lambda_0\supset I_1} \chi(\cN_n, [\Oo_{\Exc_{\Lambda_0}}]\cdot {}^\bL \cZ(L_2)).\]
	If $L_2 \not\subset \Lambda_0$, then $\Exc_{\Lambda_0}\cap \cZ(L_2)$ is empty by Proposition \ref{prop:reducedlocus} below.
	If $L_2 \subset \Lambda_0$, then by \cite[Corollary 3.6]{HSY3},  we have \[\chi(\cN_n, [\Oo_{\Exc_{\Lambda_0}}]\cdot {}^\bL \cZ(L_2))=(-1)^r.\]
	Hence
	\[\Int(L)-\Int(L_2)=\sum_{\Lambda_0\supset I_1\obot L_2} (-1)^r.\]
	Combining this with \eqref{eq:cancelation for m} finishes the proof of \eqref{eq:geometric cancellation} and the proposition.
\end{proof}

\section{Bruhat--Tits Stratification of Kr\"amer models}\label{sec:Bruhat-Tits}
We prove a  Bruhat--Tits stratification for the Kr\"amer model (Theorem \ref{thm:Bruhat Tits stratification}), analogous to the case of the Pappas model (proposed in \cite{P}) treated in \cite{RTW}. More precisely,  we define closed subschemes $\cN_\Lambda$ (Definition \ref{def:cN Lambda}) and show that  the reduced locus of $\cN$ is stratified by $\cN_\Lambda$ (Theorem \ref{thm:Bruhat Tits stratification}). From this stratification we obtain a stratification of the reduced locus of $\cZ(L)$ (Proposition \ref{prop:reducedlocus}). We also show that $\cN_\Lambda$ is isomorphic to the (modified) Deligne-Lusztig variety $Y_{V,\bar\kappa}$ defined in \S \ref{subsec:DL variety} (Theorem \ref{thm:moduli interpretation of DL variety}), and is in particular a smooth projective variety over $\bar\kappa$. We remark here that for the purpose of our main result (Theorem \ref{thm: main thm}), only a weaker version of Proposition \ref{prop:reducedlocus} is needed (namely we do not need the reducedness of $\cN_\Lambda$). However we believe the rest of this section contributes to the theory of Rapoport-Zink space and is of independent interest.
\subsection{Deligne-Lusztig varieties}\label{subsec:DL variety}
Throughout this subsection we assume $m\geq 1$.
Let $V$ be a $2m$-dimensional symplectic space over $\kappa=\F_\q$ equipped with the symplectic form $\langle\, ,\, \rangle$. Let $V_{\bar\kappa}=V\otimes_{\kappa} \bar\kappa$ and denote the bilinear extension of $\langle\, ,\, \rangle$ to $V_{\bar\kappa}$ still by $\langle\, ,\, \rangle$.
Let $\Gr(i,V)$ be the Grassmannian variety parametrizing rank $i$ locally direct summands of $V_R$ for any $\kappa$-algebra $R$. Let $\SGr(i,V)$ be the subvariety of $\Gr(i,V)$ whose $\bar\kappa$-points are specified by
\[\SGr(i,V)(\bar\kappa)=\{z\in \Gr(i,V)(\bar\kappa)\mid z \text{ is  isotropic with respect to }  \langle\, ,\, \rangle\}. \]
Let $S_V$ be the subvariety of $\SGr(m,V)$ as in \cite[Equation (5.3)]{RTW} whose $\bar\kappa$-points are specified by
\begin{equation}
    S_V(\bar\kappa)=\{U\in \SGr(m,V)(\bar\kappa) \mid  \dim (U\cap\Phi(U))\geq m-1\},
\end{equation}
where $\Phi$ is the Frobenius endormophism.
By \cite[Proposition 5.3]{RTW} and its remark, $S_V$ has isolated singularities  which are exactly the points where $U=\Phi(U)$. We denote by $\cU$ the nonsingular locus of $S_V$. By Proposition 5.5 of loc.cit., $S_{V,\bar\kappa}$ is irreducible of dimension $m$.
To resolve the singularities of $S_V$, define $Y_V$ to be the subvariety of $\SGr(m,V)\times \SGr(m-1,V)$ whose $\bar\kappa$-points are specified by
\begin{equation}\label{eq:definition of tY}
    Y_V(\bar\kappa)=\{(U,U')\in (\SGr(m,V)\times \SGr(m-1,V))(\bar\kappa)\mid U'\subset U\cap \Phi(U) \}.
\end{equation}
Then the variety $Y_V$ is a projective subvariety of $\Gr(m,V)\times \Gr(m-1,V)$.
The forgetful map $(U,U')\rightarrow U$ defines a morphism $\pi_m:Y_V\rightarrow S_V$.

\begin{lemma}\label{lem:pi m isomorphism}
The morphism $\pi_m$ is a projective morphism. It is an isomorphism outside the singular locus of $S_V$. For a singular point $z$ of $S_V$, $\pi^{-1}_m(z)\cong \bP^{m-1}_{\bar\kappa}$.
\end{lemma}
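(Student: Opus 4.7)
The plan is to dispatch the three assertions in turn, using only the description of $Y_V$ as an incidence scheme cut out by $U'\subseteq U\cap\Phi(U)$ inside $L_V\times\Gr(m-1,V)$.

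For projectivity, we note that $Y_V$ is defined as a closed subscheme of $L_V\times\Gr(m-1,V)$ by the closed conditions $U'\subseteq U$ and $U'\subseteq \Phi(U)$. Since $\Gr(m-1,V)$ is projective, the first projection $L_V\times\Gr(m-1,V)\to L_V$ is projective, hence so is its restriction $\pi_m\colon Y_V\to S_V$.

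For the isomorphism statement, recall that by the remark following \cite[Proposition~5.3]{RTW} the singular locus of $S_V$ consists exactly of the $\kappa$-rational points of $L_V$, i.e.\ those $U$ with $\Phi(U)=U$, equivalently $\dim(U\cap\Phi(U))=m$. Write $S_V^{\circ}\subseteq S_V$ for the smooth (open) locus, where $\dim(U\cap\Phi(U))=m-1$ exactly. Let $\mathcal{U}\hookrightarrow V\otimes\mathcal{O}_{L_V}$ denote the universal Lagrangian subbundle; the sheaf $\mathcal{U}\cap\Phi^{*}\mathcal{U}$ is locally free of rank $m-1$ over $S_V^{\circ}$ by the constancy of rank, and thus defines a morphism $s\colon S_V^{\circ}\to Y_V$, $U\mapsto(U,\,U\cap\Phi(U))$, which is a section of $\pi_m$. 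Conversely, on $\pi_m^{-1}(S_V^{\circ})$ the conditions $U'\subseteq U\cap\Phi(U)$ and $\mathrm{rk}\,U'=m-1=\mathrm{rk}(\mathcal{U}\cap\Phi^{*}\mathcal{U})$ force $U'=U\cap\Phi(U)$ scheme-theoretically, so $s$ realizes a two-sided inverse to $\pi_m$ over $S_V^{\circ}$.

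For the singular fiber, let $z\in S_V(\kappa)$ correspond to a $\Phi$-stable $U\in L_V(\kappa)$. Then $U\cap\Phi(U)=U$, and $\pi_m^{-1}(z)$ parametrizes $(m-1)$-dimensional subspaces $U'\subseteq U$, i.e.\ hyperplanes in $U$, giving $\pi_m^{-1}(z)\cong\bP(U^{\vee})\cong\bP^{m-1}_{\bar\kappa}$.

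The only delicate step is the scheme-theoretic inverse on $S_V^{\circ}$: one must know that $\mathcal{U}\cap\Phi^{*}\mathcal{U}$ is genuinely a rank-$(m-1)$ subbundle on $S_V^{\circ}$ rather than merely having this rank pointwise, and that the defining rank equality forces equality of subsheaves rather than just inclusion. Both follow from the semicontinuity of ranks and the fact that inclusions of locally free sheaves of the same rank that are generically equalities are equalities; I expect this to be the main (mild) technical point.
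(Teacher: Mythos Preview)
Your proposal is correct and follows essentially the same approach as the paper's proof: projectivity is immediate, the inverse over the smooth locus is given by $U\mapsto(U,\,U\cap\Phi(U))$, and the singular fibers are hyperplanes in $U$. The paper is equally brief on the ``delicate step'' you flag, simply asserting that $U_R\cap\mathrm{span}_R\{\Phi(U_R)\}\in\Gr(m-1,V_R)$ defines the inverse morphism on affine opens of $S_V^\circ$; your observation that the constancy of $\dim(U\cap\Phi(U))$ forces this intersection to be a genuine rank-$(m-1)$ subbundle is the correct justification in either presentation.
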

\begin{proof}
First we know $\pi_m$ is projective as it is a morphism between projective schemes.
Consider a $\bar\kappa$-point $z=U$ in $\cU(\bar\kappa)$. Then $U\cap \Phi(U)$ has dimension $m-1$, this entails $U'=U\cap \Phi(U)$. This shows that the morphism has an inverse when restricted on $\pi_m^{-1}(\cU)$, hence $\pi_m|_{\pi_m^{-1}(\cU)}$ is an isomorphism of varieties.

If $z=U$ is a singular $\bar\kappa$-point, then $U=\Phi(U)$ and $U'$ can be any element in $\Gr(m-1,U)\cong \bP^{m-1}_{\bar\kappa}$. This finishes the proof of the lemma.
\end{proof}

\begin{proposition}\label{prop:Y is smooth}
The projective variety $Y_{V,\bar\kappa}$ is irreducible and  smooth of dimension $m$.
\end{proposition}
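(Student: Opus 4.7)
The plan is to combine Lemma \ref{lem:pi m isomorphism} with a direct tangent space computation at the exceptional points of $\pi_m$. Off the preimage of the singular locus $L_V(\kappa) \subset S_V$, the morphism $\pi_m: Y_V \to S_V$ is an isomorphism onto the smooth locus of $S_V$, which is irreducible of dimension $m$ by \cite[Proposition 5.5]{RTW}; hence $Y_V$ is already smooth of dimension $m$ there. The remaining task is to verify smoothness at points $(U_0, U'_0) \in Y_V(\bar\kappa)$ with $U_0 = \Phi(U_0)$.

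For such a point I would compute the Zariski tangent space directly. Choose a symplectic basis $e_1, \ldots, e_m, f_1, \ldots, f_m$ of $V$ over $\kappa$ with $U_0 = \langle e_1, \ldots, e_m\rangle$ and $U'_0 = \langle e_1, \ldots, e_{m-1}\rangle$, and set $W = \langle f_1, \ldots, f_m\rangle$ and $W' = \langle e_m\rangle \oplus W$. First-order deformations of $(U_0, U'_0)$ inside the smooth ambient $X = L_V \times \Gr(m-1, V)$ are encoded by a pair $(\phi, \psi)$ with $\phi: U_0 \to W$ symmetric (via the symplectic pairing) and $\psi: U'_0 \to W'$ linear. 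The key input is that the $q$-power Frobenius annihilates $\epsilon$ in $\bar\kappa[\epsilon]/(\epsilon^2)$, so scheme-theoretically $\Phi(U_\epsilon)$ equals the trivial deformation $U_0 \otimes_\kappa \bar\kappa[\epsilon]/(\epsilon^2)$. The condition $U'_\epsilon \subseteq \Phi(U_\epsilon)$ then forces $\psi(e_i) \in U_0 \cap W' = \langle e_m\rangle$ for each $i \leq m-1$, while the condition $U'_\epsilon \subseteq U_\epsilon$ forces the $W$-component of $\psi(e_i)$ to equal $\phi(e_i)$. Combining these, $\phi(e_i) = 0$ for $i \leq m-1$; by the symmetry of $\phi$ this further reduces $\phi$ to a single parameter $\phi(e_m) \in \langle f_m\rangle$, while $\psi$ retains $m-1$ free parameters, yielding $\dim T_{(U_0, U'_0)} Y_V = m$.

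To conclude smoothness of dimension $m$ at the exceptional points, I would pass to explicit affine coordinates on $X$ near $(U_0, U'_0)$ and, after eliminating the incidence variables relating the coordinates of $U'$ and $U$, write the defining equations of $Y_V$ as the system $(a_{i\ell} - a_{i\ell}^q) + b_{i1}(a_{m\ell} - a_{m\ell}^q) = 0$ in the symmetric coordinates $(a_{ij})$ on $L_V$ and the coordinates $(b_{i1})$ on $\Gr(m-1,V)$. The tangent space calculation guarantees that a subset of $\dim X - m$ of these equations has linearly independent differentials at the origin, so by the Jacobian criterion $Y_V$ is locally a smooth complete intersection of dimension $m$. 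The main obstacle will be the tangent space bookkeeping at exceptional points, particularly the interplay between the symplectic symmetry of $\phi$ and the Frobenius annihilation of $\epsilon$, together with verifying that a minimal generating set of the defining ideal has the expected size $\dim X - m$ after the symplectic-symmetry redundancies are accounted for.
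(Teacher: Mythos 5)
Your proof is correct and follows essentially the same approach as the paper: reduce to the exceptional points via Lemma~\ref{lem:pi m isomorphism} together with $\dim S_V = m$, then bound $\dim T_z Y_V \le m$ using that Frobenius annihilates $\epsilon$ in $\bar\kappa[\epsilon]/(\epsilon^2)$ (so $\Phi(U_\epsilon)$ is the trivial deformation) together with the symplectic symmetry of the deformation of the Lagrangian $U_0$. Two small remarks: your adapted basis must be taken over $\bar\kappa$, not $\kappa$, since $U'_0$ need not be a $\kappa$-rational hyperplane of the rational $U_0$ (the paper keeps a $\kappa$-basis and instead describes $U'_0$ by an arbitrary linear functional $\sum b_i e_i^*$ with $b_m=1$, which amounts to the same thing); and your closing Jacobian-criterion paragraph is both superfluous and under-justified --- once $\dim T_z Y_V \le m$ is established, combining with Lemma~\ref{lem:pi m isomorphism} and $\dim S_V = m$ already suffices exactly as in the paper, while the explicit defining equations you wrote down for $Y_V$ would need a real derivation (eliminating the incidence variables and verifying that the stated system generates the ideal locally).
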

\begin{proof}
Define $\SGr(m,m-1,V)$ to be the sub flag variety of $\SGr(m,V)\times \SGr(m-1,V)$ whose $\bar\kappa$-points are specified by
\[ \SGr(m,m-1,V)(\bar\kappa)=\{(U,U')\in (\SGr(m,V)\times \SGr(m-1,V))(\bar\kappa)\mid U'\subset U\}.\]
Then $Y_{V,\bar\kappa}$ is the intersection of the image of the closed immersion
\begin{align*}
    (\SGr(m,m-1,V)_{\bar\kappa})^2 & \rightarrow (\SGr(m,V)_{\bar\kappa}\times \SGr(m-1,V)_{\bar\kappa})^2:\\
    (U_1,U'_1, U_2, U'_2) & \mapsto (U_1,U'_1, U_2, U'_2) ,
\end{align*}
with the image of the closed immersion
\begin{align*}
    \SGr(m,V)_{\bar\kappa}\times \SGr(m-1,V)_{\bar\kappa} & \rightarrow (\SGr(m,V)_{\bar\kappa}\times \SGr(m-1,V)_{\bar\kappa})^2:\\
    (U_3, U_4) & \mapsto (U_3,U_4, \Phi(U_3), U_4).
\end{align*}
Since $(\SGr(m,m-1,V)_{\bar\kappa})^2$ and $\SGr(m,V)_{\bar\kappa}\times \SGr(m-1,V)_{\bar\kappa}$ are smooth (as they are homogeneous varieties), and $\Phi$ induces the zero map on the tangent space, one can see immediately that the intersection is transversal. Hence $Y_{V,\bar\kappa}$ is smooth. Since $S_{V,\bar\kappa}$ is irreducible of dimension $m$, by Lemma \ref{lem:pi m isomorphism}, we know $Y_{V,\bar\kappa}$ is connected and has an open subvariety of dimension $m$. Taking into consideration the smoothness, we know $Y_{V,\bar\kappa}$ must be irreducible of dimension $m$. This finishes the proof of the proposition.
\end{proof}

\begin{remark}
One can show that $Y_{V,\bar\kappa}$ is in fact the blow-up of $S_{V,\bar\kappa}$ along its singular locus.
\end{remark}

\subsection{Minuscule cycle $\cN_\Lambda$ and its tangent space}
In this section, we often identify  a vertex lattice $\Lambda\subset \bV$ with $b^{-1}(\Lambda)$ using the isomorphism of hermitian spaces \eqref{eq: C V isomorphism} unless otherwise stated.
\begin{definition}\label{def:cN Lambda}
For a vertex lattice $\Lambda\subset \bV$ of type $t(\Lambda)=2m$, define the subfunctor $\cN_\Lambda$ to be the subfunctor of $\cN$ such that for a $O_{\breve F}$-scheme $S$, $\cN_\Lambda(S)$ is the set of isomorphism classes of tuples $(X,\iota,\lambda,\rho,\cF)$ satisfying the following conditions.
\begin{enumerate}
    \item $(X,\iota,\lambda,\rho,\cF)\in \cZ(\Lambda)(S)$.
    \item If $m\geq 1$, we require in addition that $x_*(\Lie (\cG\times_{\SpfOF} S))\subset \cF$ for any $x\in \Lambda$.
\end{enumerate}
\end{definition}

We first describe the $\bar\kappa$-points of $\cN$ and $\cN_\Lambda$.
\begin{proposition}\label{prop:k points of cNKra}
There is a bijection between $\cN_{\mathrm{red}}(\bar\kappa)$ and the set of pairs of $O_{\breve F}$-lattices $(M,M')$ in $N$ satisfying
\[M^\sharp=M,\quad \Pi M \subset \tau^{-1}(M) \subset \Pi^{-1} M, \quad \bfV M\subset M'\subset \tau^{-1}(M)\cap M, \quad  \hbox {and }\quad \mathrm{length}(M/M')=1.\]
\end{proposition}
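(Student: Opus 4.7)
The plan is to apply covariant relative Dieudonn\'e theory to convert the $\bar\kappa$-valued moduli problem into lattice data in $N$. Given $z=(X,\iota,\lambda,\rho,\cF)\in\cN(\bar\kappa)$, I set $M\coloneqq\rho(M(X))\subset N$ and let $M'$ be the preimage in $M$ of $\cF\subset\Lie X=M/\bfV M$; conversely, Dieudonn\'e's antiequivalence reconstructs $(X,\iota,\lambda,\rho)$ from $M$ (with $\lambda$ coming from the symplectic form via \eqref{eq:hermitian and symplectic form}) and recovers the filtration as $\cF=M'/\bfV M$. The task is to match the constraints on the two sides of this bijection.

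For the conditions on $M$: self-duality $M=M^\sharp$ encodes principality of $\lambda$, while the Dieudonn\'e stabilities $\bF M\subset M$ and $\bfV M\subset M$ translate, via the identities $\bF=\Pi\tau$ and $\bfV=\Pi\tau^{-1}=\tau^{-1}\Pi$ (which follow from $\tau=\Pi\bfV^{-1}$, $\bF\bfV=\pi_0=\Pi^2$, and the fact that $\Pi$ commutes with $\tau$ because $\pi\in F$ is $\sigma$-fixed), precisely into $\Pi\tau(M)\subset M\subset\Pi^{-1}\tau(M)$. For the conditions on $M'$: by construction $\bfV M\subset M'\subset M$ and $M/M'\cong\bar\kappa$, so $\mathrm{length}(M/M')=1$. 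Since $i(\pi)=\bar i(\pi)=0$ in $\bar\kappa$, both halves of Kr\"amer's signature condition reduce to statements about $\Pi$ acting as the $\Oo_{\breve F}$-scalar $\pi$ on $\Lie X$: the inclusion $\Pi\cdot\Lie X\subset\cF$ is $\pi M\subset M'$, which is automatic for any $\Oo_{\breve F}$-submodule of colength one; and the vanishing $\Pi\cdot\cF=0$ unravels to $\pi M'\subset\bfV M=\pi\tau^{-1}M$, i.e.\ $M'\subset\tau^{-1}M$. Combined with $M'\subset M$, this gives exactly $M'\subset\tau^{-1}(M)\cap M$.

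For the converse direction, the only nontrivial point is to check, for a pair $(M,M')$ satisfying the listed conditions, that $\bfV M\subset M'$, so that $\cF\coloneqq M'/\bfV M$ really defines a rank-$(n-1)$ subspace of $\Lie X$. A direct computation using $\langle\bF x,y\rangle=\langle x,\bfV y\rangle^\sigma$ and \eqref{eq:hermitian and symplectic form} yields $(\tau x,\tau y)=\sigma((x,y))$, so $\tau$ is a $\sigma$-isometry and $(\tau M)^\sharp=\tau M^\sharp=\tau M$. If $\tau M\subsetneq M$ then $\tau M=(\tau M)^\sharp\supsetneq M^\sharp=M$, a contradiction; hence either $\tau M=M$, in which case $\bfV M=\pi M\subset M'$ automatically (as $M'$ has $\Oo_{\breve F}$-colength one), or $\tau M$ and $M$ are incomparable, in which case $\tau^{-1}M\cap M\subsetneq M$ and the length-one constraint forces $M'=\tau^{-1}M\cap M$, which already contains $\bfV M$ since $\bfV M\subset\tau^{-1}M\cap M$ holds in general. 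With $\cF$ now a legitimate filtration, reversing the translation of the previous paragraph verifies Kr\"amer's signature condition, and the Dieudonn\'e antiequivalence produces the desired $\bar\kappa$-point. The main technical obstacle is precisely this self-duality argument ruling out $\tau M\subsetneq M$, after which the bijection follows formally.
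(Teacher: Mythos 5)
Your proof is correct and follows essentially the same route as the paper: translate the moduli data into conditions on the relative Dieudonn\'e lattice via $\bF=\Pi\tau$, $\bfV=\Pi\tau^{-1}$, and the observation that both branches of Kr\"amer's signature condition collapse over $\bar\kappa$ to statements about $\Pi$ killing $\cF$ and $\Lie X/\cF$. You additionally supply, via the $\sigma$-isometry property of $\tau$ and self-duality of $M$, the verification that any $(M,M')$ satisfying the stated conditions automatically has $\bfV M\subset M'$ -- a point the paper leaves implicit in ``the proposition now follows from Dieudonn\'e theory.''
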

\begin{proof}
Let $(X,\iota,\lambda,\rho,\cF)$ be a $\bar\kappa$-point of $\cN$ and $M(X)$ be the $O_{F_0}$-relative Dieudonn\'e module of $X$. Define $M=\rho(M(X))\subset N$ and $M'=\rho(\Pr^{-1}(\cF))\subset N$ where $\Pr:M(X)\rightarrow \Lie X=M(X)/\bfV M(X)$ is the natural quotient map.
The condition $M^\sharp=M$ is equivalent to the fact that $\lambda$ is a principal polarization. The condition $\Pi M \subset \tau^{-1}(M) \subset \Pi^{-1} M$ is equivalent to $\pi_0 M\subset \bfV M \subset M$.
The condition $\bfV M \subset M'\subset\tau^{-1}(M)\cap M$ and $\mathrm{length}(M/M')=1$ is equivalent to the condition
\[\bfV M\subset M'\subset M, \ \Pi M'\subset \bfV M,\ \dim_{\bar\kappa}(M/M')=1,\]
which is in turn equivalent to
\[\cF\subset \Lie X, \quad \dim_{\bar\kappa}(\cF)=n-1, \quad \Pi \cdot \cF=\{0\},\quad \Pi \cdot \Lie X \subset \cF. \]
Notice that the condition $\Pi \cdot \Lie X \subset \cF$ is automatic once we know $ \dim_{\bar\kappa}(\cF)=n-1$ and $\cF$ is stable under the action of $\Pi$. Hence the filtration $\cF\subset \Lie X$ satisfies Kr\"amer's signature condition and we have translated all conditions in the definition of $\cN$ in term of relative Dieudonn\'e modules. The proposition now follows from Dieudonn\'e theory.
\end{proof}

For a vertex lattice $\Lambda$ in $\bV$ or $C$, define
\begin{equation}\label{eq:breve Lambda}
    \breve{\Lambda}:=\Lambda\otimes_{O_F} O_{\breve F},\ \breve{\Lambda}^\sharp:=\Lambda^\sharp\otimes_{O_F} O_{\breve F}.
\end{equation}
\begin{corollary}\label{cor:reduced point of cN Lambda}
Let $\Lambda$ be a vertex lattice in $C$.
There is a bijection between $\cN_{\Lambda}(\bar\kappa)$ and the set of pairs of $O_{\breve F}$-lattices $(M,M')$ in $N$ satisfying the conditions in Proposition \ref{prop:k points of cNKra} and the following condition.
\begin{enumerate}
    \item If $t(\Lambda)=0$, then $M=\breve{\Lambda}$.
    \item If $t(\Lambda)\geq 2$, then $\breve{\Lambda}\subset M'\subset M$.
\end{enumerate}
\end{corollary}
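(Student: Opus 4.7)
The plan is to translate the additional conditions defining $\cN_\Lambda$ (beyond those for $\cN_{\mathrm{red}}$) into Dieudonné-theoretic conditions on the pair $(M,M')$, building on the bijection already established in Proposition \ref{prop:k points of cNKra}. Recall that under that bijection $M=\rho(M(X))$ and $M'=\rho(\Pr^{-1}(\cF))$, so any extra condition imposed on the moduli data becomes an extra condition on $(M,M')$.

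First I would translate condition (1) in Definition \ref{def:cN Lambda}, namely $(X,\iota,\lambda,\rho,\cF)\in \cZ(\Lambda)(\bar\kappa)$. By the definition of special cycles and Grothendieck--Messing theory, this is equivalent to requiring that for every $x\in\Lambda$, the quasi-homomorphism $\rho^{-1}\circ x\circ \rho_\cG$ extends to an actual homomorphism $\cG\to X$ on Dieudonné modules, i.e.\ $x_\ast(M(\cG))\subseteq M(X)$. Applying $\rho$ and using that $M(\cG)$ is generated over $O_{\breve F}$ by the $\tau$-fixed vector $e$ with $b(x)=x(e)$ (see \eqref{eq: C V isomorphism}), this becomes $b(x)\in M$ for all $x\in\Lambda$, equivalently $\breve\Lambda\subseteq M$.

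Next I would translate condition (2), $x_\ast(\Lie(\cG\times_{\SpfOF} S))\subseteq \cF$. Under the identifications $\Lie X=M(X)/\bfV M(X)$ and $\cF=\Pr^{-1}(\cF)/\bfV M(X)$, the image $x_\ast(\Lie\cG)$ is represented by $b(x)\bmod \bfV M$, so the condition reads $b(x)\in M'$ for all $x\in\Lambda$, i.e.\ $\breve\Lambda\subseteq M'$. Combined with $M'\subseteq M$ from Proposition \ref{prop:k points of cNKra}, this yields the chain $\breve\Lambda\subseteq M'\subseteq M$ asserted in case $(2)$ of the corollary (the case $t(\Lambda)\ge 2$).

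Finally, for the case $t(\Lambda)=0$, condition $(2)$ is vacuous, so only $\breve\Lambda\subseteq M$ is imposed. Since $\Lambda$ is unimodular, $\breve\Lambda=\breve\Lambda^\sharp$, and taking duals of $\breve\Lambda\subseteq M$ together with $M=M^\sharp$ forces $M\subseteq \breve\Lambda^\sharp=\breve\Lambda$, hence $M=\breve\Lambda$. The converse direction is immediate: a pair $(M,M')$ satisfying the stated conditions produces, via Dieudonné theory, a point of $\cN(\bar\kappa)$ satisfying (1) and (when $m\ge 1$) also (2). The only mild subtlety is keeping track of the two embeddings $i,\bar i\colon O_F\to O_{\breve F}$ when identifying $\Lie$'s with Dieudonné-module quotients, but this is automatic since both $\cG$ (via $\bar i$) and the Kr\"amer filtration $\cF$ are set up to be compatible, so no further work is needed.
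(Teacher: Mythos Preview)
Your proof is correct and follows essentially the same approach as the paper's own proof. The paper is much terser---it simply says that $\breve\Lambda\subseteq M$ is a ``direct translation'' of condition (1), that for $t(\Lambda)=0$ self-duality of both $M$ and $\breve\Lambda$ forces $M=\breve\Lambda$, and that (2) is a ``direct consequence'' of condition (2) in Definition \ref{def:cN Lambda}---but you have correctly unpacked exactly what those translations are.

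One minor terminological point: when you invoke Grothendieck--Messing theory to translate condition (1) over $\bar\kappa$, what is really being used is just covariant Dieudonn\'e theory (the equivalence between $p$-divisible groups over a perfect field and Dieudonn\'e modules); Grothendieck--Messing is the deformation-theoretic refinement used later in Lemma \ref{lem:G M theory of cN Lambda}. This does not affect the correctness of your argument.
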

\begin{proof}
By Dieudonn\'e theory, a point $(X,\iota,\lambda,\rho,\cF)\in \cN(\bar\kappa)$ is in $\cZ(\Lambda)(\bar\kappa)$ if and only if $\rho^{-1}\circ x (M(\bY))\subset M(X)$ for any $x\in b^{-1}(\Lambda)$. Since $M(\bY)$ is generated by $e$, this is the case if and only if $x(e) \in M=\rho(M(X))$ for any $x\in b^{-1}(\Lambda)$, if and only if $\Lambda \subset M$ (by the definition of $b$ \eqref{eq: C V isomorphism}), if and only if $\breve{\Lambda}\subset M$.
When  $t(\Lambda)=0$,  both $M$ and $\breve{\Lambda}$ are unimodular, thus $M=\breve{\Lambda}$. Similarly (2) is equivalent to Condition (2) in Definition \ref{def:cN Lambda} as the Lie algebra of $\bY$ is generated by the image of $e$ under the quotient map $M(\bY)\rightarrow \Lie \bY=M(\bY)/\bfV M(\bY)$.
\end{proof}

To study the tangent space of $\cN_\Lambda$, we recall the Grothendieck-Messing deformation theory of $\cN$ from \cite[\S 3]{Ho2}.
We remark here that although \cite{Ho2} deals with the case $F_0=\Q_p$, the argument in fact applies to general $F_0$ using the relative display theory of
\cite{ACZ}.
Let $R\in \Nilp O_{\breve F}$. For a strict $O_{F_0}$-module $X$ over $\Spec R$, we denote by $D(X)$ the $O_{F_0}$-relative Dieudonn\'e crystal in the sense of \cite[\S 3]{ACZ}.
A point $z\in \cN(R)$ corresponds to a strict $O_{F_0}$-module $(X,\iota,\lambda)$ over $R$ together with filtration $\cF\subset \Lie X$ satisfying Definition \ref{def:NKra}.
We have the following exact sequence of locally free $R$-modules
\begin{equation}\label{eq:Hodge filtration}
    0\rightarrow \Fil(X)\rightarrow  D(X)\rightarrow \Lie X\rightarrow 0,
\end{equation}
where $\Fil(X)$ and $\Lie X$ are of rank $n$ and $ D(X)$ is of rank $2n$.
The principal polarization $\lambda$ induces a symplectic form $\langle\, ,\, \rangle$ on $ D(X)$ such that
\[\langle \iota(a) x,y\rangle=\langle x,\iota(\bar{a}) y\rangle\]
for all $a\in O_F$ and $x,y\in  D(X)$.
With respect to $\langle\, ,\, \rangle$ the Hodge filtration $\Fil(X)$ is maximal isotropic. Hence $\langle\, ,\, \rangle$ induces a perfect pairing (still denoted by $\langle\, ,\, \rangle$):
\begin{equation}\label{eq:perfect pairing between Lie and Fil}
    \langle\, ,\, \rangle: \Fil(X)\times \Lie X\rightarrow R.
\end{equation}
The submodule $\cF\subset \Lie X$ and its perpendicular complement $\cF^\bot$ (which is locally a direct summand of $\Fil(X)$ of rank one) with respect to \eqref{eq:perfect pairing between Lie and Fil} determine each other. The condition on $\cF$ in Definition \ref{def:NKra} is
\begin{equation}\label{eq:cF condition}
    O_F \text{ acts on } \cF \text{ by } O_F\xrightarrow[]{\bar i}O_{\breve F}\rightarrow \Oo_S \text{ and on } \Lie X/\cF \text{ by } O_F\xrightarrow[]{i}O_{\breve F}\rightarrow \Oo_S.
\end{equation}
This is equivalent to the condition that $O_F$ acts on $\cF^\bot$ by $O_F\xrightarrow[]{\bar i}O_{\breve F}\rightarrow \Oo_S$ and on $\Fil(X)/\cF^\bot$ by $O_F\xrightarrow[]{i}O_{\breve F}\rightarrow \Oo_S$. Since $O_{F_0}$ acts on $D(X)$ by $i_0$ and $O_F=O_{F_0}[\pi]$, \eqref{eq:cF condition} is further equivalent to
\begin{equation}
    (\Pi+\pi)\cdot\cF^\bot=0,\
(\Pi-\pi)\cdot\Fil(X)\subset \cF^\bot,
\end{equation}
where we use $\Pi$ to denote the action $\iota(\pi)$ on $D(X)$.

\begin{definition}\label{def:category C}
Let $\mathscr{C}$ be the following category. Objects in $\mathscr{C}$ are triples $(\Oo, \Oo\rightarrow \bar\kappa, d)$ where $\Oo$ is an Artinian $O_{\breve F}$-algebra, $\Oo\rightarrow \bar\kappa$ is an $O_{\breve F}$-algebra homomorphism, and $d$ is a nilpotent $O_{F_0}$-pd-structure (see \cite[Definition 1.2.2]{ACZ}) on $\mathrm{Ker}(\Oo\rightarrow \bar\kappa)$. Morphisms in $\mathscr{C}$ are $O_{\breve F}$-algebra homomorphisms compatible with structure maps to $\bar\kappa$ and $O_{F_0}$-pd-structure structures.
\end{definition}

Let $z=(X,\iota,\lambda,\rho,\cF)\in \cZ(\Lambda)(\bar\kappa)$ and $M=\rho(M(X))\subset N$. Then $\Lambda\subset M$ by Corollary \ref{cor:reduced point of cN Lambda}. We can identify \eqref{eq:Hodge filtration} with
\[0\rightarrow \bfV M/\pi_0 M \rightarrow  M/\pi_0 M\rightarrow M/\bfV M\rightarrow 0.\]
Let $\cF^\bot\subset \bfV M/\pi_0 M$ be the perpendicular complement of $\cF$ as described above.
Denote by $\cZ(\Lambda)_z$ (resp. $\cN_{\Lambda,z}$) the completion of $\cZ(\Lambda)$ (resp. $\cN_\Lambda$) at $z$. For any $\Oo\in\mathscr{C}$ and $\tilde{z}=(\tilde{X},\cdots)\in \cZ(\Lambda)_z(\Oo)$, we can identify $D(\tilde{X})$ with $M_\Oo:=M\otimes_{O_{\breve{F}_0}} \Oo$ and by Grothendieck-Messing theory $\tilde z$ corresponds to a filtration of free $\Oo$-module direct summands
\[\tilde{\cF}^\bot\subset\widetilde{\Fil}\subset M_\Oo,\]
which lifts the filtration $\cF^\bot\subset \Fil \subset M_{\bar\kappa}=M/\pi_0 M$.
Let $f_\Oo$ be the map
\begin{equation}
    f_\Oo: \tilde{z}\mapsto (\tilde{\cF}^\bot,\widetilde{\Fil}).
\end{equation}

\begin{lemma}\label{lem:G M theory of cN Lambda}
Let  the notations  be as above.  Denote by $\Lambda_{M,\Oo}$ the image of the composition of maps $\breve{\Lambda}\rightarrow M\rightarrow M_\Oo$, and let $\Lambda_{M,\Oo}^\bot$ be its perpendicular complement in $M_\Oo$ under the alternating form $\langle\, ,\, \rangle$.
\begin{enumerate}[leftmargin=*]
    \item
The map $f_\Oo$ defines a bijection from $\cZ(\Lambda)_z(\Oo)$ to the set consisting of pairs $(\tilde{\cF}^\bot,\widetilde{\Fil})$ lifting $(\cF^\bot, \Fil)$
satisfying the following conditions:
\begin{enumerate}
    \item $\tilde{\cF}^\bot$ and $\widetilde{\Fil}$ are free $\Oo$-module direct summands of $M_\Oo$ of rank $1$ and $n$ respectively and $\tilde{\cF}^\bot\subset\widetilde{\Fil}$;
    \item $\widetilde{\Fil}$ is isotropic with respect to $\langle\, ,\, \rangle$;
    \item $(\Pi+\pi)\cdot\tilde{\cF}^\bot=0$ and $(\Pi-\pi)\cdot\widetilde{\Fil}\subset \tilde{\cF}^\bot$;
    \item  $\widetilde{\Fil}$ contains $\widetilde{\Fil}^-:=(\Pi+\pi)\cdot\Lambda_{M,\Oo}$.
\end{enumerate}
\item The restriction of $f_\Oo$ to $\cN_{\Lambda,z}(\Oo)$ defines a bijection from $\cN_{\Lambda,z}(\Oo)$ to the set consisting of pairs $(\tilde{\cF}^\bot,\widetilde{\Fil})$ satisfying the above conditions together with the extra condition:
\begin{enumerate}\addtocounter{enumii}{4}
    \item $\tilde{\cF}^\bot \subset\Lambda_{M,\Oo}^\bot$.
\end{enumerate}
\end{enumerate}
\end{lemma}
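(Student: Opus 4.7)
The plan is to invoke Grothendieck--Messing deformation theory to parameterize lifts of $z$ to $\Oo \in \mathscr{C}$ by pairs of direct summands of $M_\Oo$, and then to translate each moduli constraint in the definitions of $\cZ(\Lambda)$ and $\cN_\Lambda$ into one of the conditions (a)--(e). First I would apply the relative version of Grothendieck--Messing theory for strict $O_{F_0}$-modules (as in \cite{ACZ}, compatible with \cite[\S 3]{Ho2}): lifts $\tilde X$ of $X$ equipped with its $O_F$-action $\iota$ and principal polarization $\lambda$ correspond bijectively to isotropic $\Oo$-direct summands $\widetilde{\Fil} \subset M_\Oo$ of rank $n$ lifting $\Fil$. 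The additional datum of a Kr\"amer filtration $\tilde\cF \subset \Lie \tilde X$ corresponds under the perfect pairing \eqref{eq:perfect pairing between Lie and Fil} to a rank-$1$ direct summand $\tilde\cF^\bot \subset \widetilde{\Fil}$ lifting $\cF^\bot$, which yields conditions (a) and (b). The Kr\"amer signature requirement \eqref{eq:cF condition} is then equivalent, as in the discussion preceding the lemma, to the two identities in (c), namely $(\Pi+\pi)\tilde\cF^\bot = 0$ and $(\Pi-\pi)\widetilde{\Fil}\subset \tilde\cF^\bot$.

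Next I would translate the special cycle condition defining $\cZ(\Lambda)_z(\Oo)$. By Grothendieck--Messing applied to the map $\cG_\Oo \to \tilde X$, an element $x \in \Lambda$ lifts to a homomorphism iff its induced Dieudonn\'e-module map $D(\cG)_\Oo \to M_\Oo$ carries $\Fil(\cG)_\Oo$ into $\widetilde{\Fil}$. Because $\cG$ is the canonical lift with $O_F$ acting on $\Lie \cG$ via $\bar i$, the $O_F$-action on $\Fil(\cG)$ is via $i$, so $\Pi$ acts as $+\pi$ on $\Fil(\cG)$; combined with the cyclicity of $D(\cG)$ over $O_F \otimes_{O_{F_0}} \Oo \simeq \Oo[\Pi]/(\Pi^2 - \pi^2)$, this forces $\Fil(\cG)_\Oo = (\Pi+\pi) D(\cG)_\Oo$. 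Identifying $x \in \Lambda$ with its image in $M$, the image of $D(\cG)_\Oo$ in $M_\Oo$ is the $O_F\otimes \Oo$-submodule generated by $x$; varying $x$ over $\Lambda$, these images span $\Lambda_{M,\Oo}$. Hence the lifting condition for every $x \in \Lambda$ becomes $(\Pi+\pi)\Lambda_{M,\Oo} = \widetilde{\Fil}^- \subset \widetilde{\Fil}$, giving condition (d) and completing (1).

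For part (2), the extra requirement $x_*(\Lie \cG_\Oo) \subset \tilde\cF$ for every $x \in \Lambda$ must be translated into a condition on $\tilde\cF^\bot$. The image $x_*(\Lie \cG_\Oo)$ in $\Lie \tilde X$ equals the image of $x(D(\cG)_\Oo)$ modulo $\widetilde{\Fil}$, so under the perfect pairing \eqref{eq:perfect pairing between Lie and Fil} the containment in $\tilde\cF$ is equivalent to $\langle \tilde\cF^\bot, x(D(\cG)_\Oo)\rangle = 0$. Since $\sum_{x\in \Lambda} x(D(\cG)_\Oo) = \Lambda_{M,\Oo}$ as above, imposing the condition for all $x \in \Lambda$ yields exactly $\tilde\cF^\bot \subset \Lambda_{M,\Oo}^\bot$, which is (e). The main subtlety I expect is the duality bookkeeping between $\cF$ and $\cF^\bot$ together with the explicit identification of the Hodge filtration of the canonical lift as $(\Pi+\pi) D(\cG)$; once these are in place, the bijectivity of $f_\Oo$ in both (1) and (2) follows directly from Grothendieck--Messing, since each enumerated condition is visibly necessary and the standard converse construction produces a lift realizing any such pair $(\tilde\cF^\bot, \widetilde{\Fil})$.
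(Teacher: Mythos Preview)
Your proposal is correct and follows essentially the same route as the paper's proof: Grothendieck--Messing yields (a)--(c), the identification of the Hodge filtration of $\cG_\Oo$ as $(\Pi+\pi)D(\cG)_\Oo$ gives (d), and the duality between $\tilde\cF$ and $\tilde\cF^\bot$ under \eqref{eq:perfect pairing between Lie and Fil} converts the containment $\Lambda_{M,\Oo}\subset\tilde\cF'$ into $\tilde\cF^\bot\subset\Lambda_{M,\Oo}^\bot$, which is (e). The only cosmetic difference is that the paper names the preimage $\tilde\cF'$ of $\tilde\cF$ in $M_\Oo$ explicitly, whereas you phrase the same step directly in terms of the pairing $\langle\tilde\cF^\bot,\,\cdot\,\rangle$.
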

\begin{proof}
Proof of (1): By the previous discussion, $(\tilde{\cF}^\bot,\widetilde{\Fil})$ satisfies conditions (a), (b) and (c) for any $\tilde z \subset\cN_z(\Oo)$ ($\cN_z$ is the completion of $\cN$ at $z$). Conversely by Grothendieck-Messing theory any pair $(\tilde{\cF}^\bot,\widetilde{\Fil})$ lifting $(\cF^\bot, \Fil)$ satisfying (a), (b) and (c) gives rise to a unique point $\tilde{z}\in \cN_z(\Oo)$.
Since the action of $O_F$ on $\Lie \cG$ is via the inclusion $\bar i$, the Hodge filtration of $\cG_\Oo$ is $\spa_{\Oo}\{ (\Pi+\pi)\cdot e\otimes 1\}$ where $e$ is a generator of $M(\bY)$ as in \S \ref{subsec:associated hermitian space}. The image of the $\spa_{\Oo}\{ (\Pi+\pi)\cdot e\otimes 1\}$ under elements of $\Lambda\subset \bV$ in $M_\Oo$ is exactly $\widetilde{\Fil}^-$. By Grothendick-Messing theory again, $\tilde{z}\in \cZ(\Lambda)_z(\Oo)$ if and only if condition (d) holds.

(2) is a corollary of (1). For any $\tilde{z}=(\tilde{X},\ldots,\tilde{\cF})\in \cZ(\Lambda)_z(\Oo)$, let $\tilde{\cF}'$ be the preimage of $\tilde{\cF}$ under the quotient map $ M_\Oo\rightarrow M_\Oo/\widetilde{\Fil}$.
Condition (2) in Definition \ref{def:cN Lambda} is equivalent to $\Lambda_{M,\Oo}\subset \tilde{\cF}'$ by the same reasoning as Corollary \ref{cor:reduced point of cN Lambda}. The perpendicular complement of $\tilde{\cF}'$ with respect to $\langle\, ,\, \rangle$ is $\tilde{\cF}^\bot$. Hence condition (2) in Definition \ref{def:cN Lambda} is equivalent to condition (e). Hence $\tilde{z}\in \cN_{\Lambda,z}(\Oo)$ if and only if (e) is satisfied. This finishes the proof of the lemma.
\end{proof}

\begin{lemma}\label{lem:Lambda M basis}
Let $\Lambda$ be a vertex lattice of type $2m$ in $C$ and $M\subset N$ be an $O_{\breve F}$-lattice such that $\Lambda\subset M$ and $M=M^\sharp$. Then there is an $O_{\breve F}$-basis $\{e_1,\ldots, e_n\}$ of $M$ such that
\[(e_\alpha, e_{\alpha+m})=1, \ (e_\mu,e_\mu)\in O_{\breve F}^\times \]
for $1\leq \alpha \leq m$, $2m+1\leq \mu \leq n$, the inner product $(\, ,\, )$ between any other basis vectors is zero, and
\[\breve{\Lambda}=\spa_{O_{\breve F}}\{ \Pi e_1,\ldots, \Pi e_m,e_{m+1},\ldots, e_n\}. \]
\end{lemma}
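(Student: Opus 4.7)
I will induct on $m = t(\Lambda)/2$, peeling off one hyperbolic pair at each step. The quotient $V := \breve\Lambda^\sharp/\breve\Lambda$ is a $2m$-dimensional $\bar\kappa$-vector space, and the hermitian form on $\breve\Lambda^\sharp$ (whose values lie in $\pi^{-1} O_{\breve F}$, since $\pi\breve\Lambda^\sharp \subset \breve\Lambda$) descends modulo $O_{\breve F}$ to a nondegenerate $\bar\kappa$-valued form on $V$. Because conjugation in $\breve F/\breve F_0$ sends $\pi \mapsto -\pi$, this induced form is alternating, so $V$ becomes a symplectic space over $\bar\kappa$; the self-duality $M = M^\sharp$ forces $\bar M := M/\breve\Lambda$ to be a Lagrangian. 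The base case $m = 0$ reduces to diagonalizing the unimodular hermitian lattice $M = \breve\Lambda$ over $O_{\breve F}$, which is standard since the algebraically closed residue field makes every one-dimensional unimodular form isomorphic to $\langle 1 \rangle$.

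\textbf{Constructing one hyperbolic pair.} For $m \geq 1$, choose any $e_1 \in M \setminus \breve\Lambda$. Since $e_1 \notin (\breve\Lambda^\sharp)^\sharp$ while $e_1 \in M \subset \breve\Lambda^\sharp$, the functional $(e_1, \cdot)$ on $\breve\Lambda^\sharp$ attains valuation $-1$, so after rescaling we obtain $f_1 \in \breve\Lambda^\sharp$ with $(e_1, f_1) = -\pi^{-1}$. Set $e^{(0)}_{m+1} := \Pi f_1 \in \pi\breve\Lambda^\sharp \subset \breve\Lambda$, giving $(e_1, e^{(0)}_{m+1}) = 1$; hermitian symmetry places $(f_1, f_1) \in O_{\breve F_0}$, so $(e^{(0)}_{m+1}, e^{(0)}_{m+1}) = -\pi_0 (f_1, f_1) \in \pi^2 O_{\breve F_0}$. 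Hensel's lemma (applicable since $p$ is odd) solves
\begin{equation*}
(e^{(0)}_{m+1}, e^{(0)}_{m+1})\, a^2 + 2a + (e_1, e_1) = 0
\end{equation*}
in $a \in O_{\breve F_0}$; replace $e_1$ by $e_1 + a e^{(0)}_{m+1} \in M$ to achieve $(e_1, e_1) = 0$. Rescale $e^{(0)}_{m+1}$ by the $O_{\breve F_0}^\times$-unit $1 + a(e^{(0)}_{m+1}, e^{(0)}_{m+1})$ to restore $(e_1, e^{(0)}_{m+1}) = 1$ while preserving its membership in $\pi\breve\Lambda^\sharp$. Finally put $e_{m+1} := e^{(0)}_{m+1} + b e_1$ with $b := -(e^{(0)}_{m+1}, e^{(0)}_{m+1})/2 \in \pi^2 O_{\breve F_0} \subset \pi O_{\breve F}$; since $\Pi e_1 \in \breve\Lambda$, the term $b e_1$ lies in $\pi \breve\Lambda^\sharp$, so $e_{m+1}$ remains in $\pi\breve\Lambda^\sharp$, with $(e_{m+1}, e_{m+1}) = 0$ and $(e_1, e_{m+1}) = 1$.

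\textbf{Orthogonal splitting and induction.} Let $H_1 := \spa_{O_{\breve F}}\{e_1, e_{m+1}\}$, a unimodular hyperbolic plane in $M$; the pairing matrix shows $H_1 \cap \breve\Lambda = \spa_{O_{\breve F}}\{\Pi e_1, e_{m+1}\}$, a vertex lattice of type $2$ in $H_1 \otimes \breve F$. For any $v \in \breve\Lambda$, its projection onto $H_1$ equals $(v, e_{m+1})e_1 + (v, e_1)e_{m+1}$; since $e_{m+1} \in \pi\breve\Lambda^\sharp$ forces $(v, e_{m+1}) \in \pi O_{\breve F}$ while $(v, e_1) \in O_{\breve F}$, this projection lies in $H_1 \cap \breve\Lambda$. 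Hence $M = H_1 \perp M_1$ and $\breve\Lambda = (H_1 \cap \breve\Lambda) \perp \breve\Lambda_1$, where $M_1 := H_1^\perp \cap M$ is self-dual of rank $n-2$ and $\breve\Lambda_1 := H_1^\perp \cap \breve\Lambda$ is a vertex lattice of type $2(m-1)$ in $M_1 \otimes \breve F$. Apply the induction hypothesis to $(\breve\Lambda_1, M_1)$ and concatenate with $\{e_1, e_{m+1}\}$ to obtain the desired basis of $M$.

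\textbf{Main obstacle.} The crux is insisting that $e_{m+1}$ lies not just in $\breve\Lambda$ but in $\pi\breve\Lambda^\sharp$: this stronger condition (achievable because $e_1 \notin \breve\Lambda$ forces $(e_1, \breve\Lambda^\sharp)$ to reach valuation $-1$) is exactly what makes the projection onto $H_1$ land in $H_1 \cap \breve\Lambda$, so that the orthogonal decomposition of $M$ simultaneously decomposes $\breve\Lambda$. The Hensel/rescaling step then upgrades an approximate hyperbolic pair into an exact one, which is what permits the induction to close cleanly.
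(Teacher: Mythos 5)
Your proof is correct, and it takes a genuinely different route from the paper's. The paper argues in one shot: it reduces mod $\Pi M$, observes that $\breve\Lambda/\Pi M$ decomposes inside the nondegenerate $\bar\kappa$-quadratic space $M/\Pi M$ as $R\obot W$ with $R$ totally isotropic, completes $R$ to a hyperbolic pair $R'\oplus R$, picks a residue basis adapted to this decomposition, and then lifts it to a basis of $M$ — the lifting (and the subsequent normalization of the Gram matrix from a ``mod $\pi$'' to an exact hyperbolic shape) is asserted without detail. Your argument instead peels off one unimodular hyperbolic plane $H_1=\spa\{e_1,e_{m+1}\}$ at a time, with the key refinement that $e_{m+1}$ is engineered to lie in $\pi\breve\Lambda^\sharp$, not merely $\breve\Lambda$; this stronger containment is what forces the orthogonal projection of $\breve\Lambda$ onto $H_1$ to land in $H_1\cap\breve\Lambda$, so that the splitting $M=H_1\perp M_1$ simultaneously splits $\breve\Lambda=(H_1\cap\breve\Lambda)\perp\breve\Lambda_1$ with $\breve\Lambda_1$ again a vertex lattice (of type $2(m-1)$) in the self-dual $M_1$. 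The Hensel/rescaling steps make the upgrade from an approximate to an exact hyperbolic pair explicit, which is precisely the part the paper glosses over. Your approach is longer but self-contained at the lifting step; the paper's is shorter but works at the level of the residue field and leaves the integral normalization to the reader. One small remark: your induction implicitly needs a version of the lemma stated for arbitrary $O_{\breve F}$-lattices $\breve\Lambda$ with $\pi\breve\Lambda^\sharp\subset\breve\Lambda\subset M=M^\sharp$, rather than only for those of the form $\Lambda\otimes_{O_F}O_{\breve F}$, since $\breve\Lambda_1$ need not descend to $C$; but the statement and all the estimates only ever use the $O_{\breve F}$-lattice data, so this costs nothing.
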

\begin{proof}
By assumption we have $\Pi M\subset \Pi\breve{\Lambda}^\sharp \subset \breve{\Lambda} \subset M$ and $\dim_{\bar\kappa}(M/\breve{\Lambda})=m$.
With respect to the $\bar\kappa$-valued quadratic form $(\, ,\, )\pmod{\pi}$ on $M/\Pi M$, $\breve{\Lambda}/\Pi M$ has a decomposition
\[\breve{\Lambda}/\Pi M=R\obot W,\]
where $R$ is totally isotropic and $W$ is non-degenerate. Then by the nondegeneracy of $(\, ,\, )\pmod{\pi}$ on $M/\Pi M$ we know that there is a totally isotropic subspace $R'$ such that
\[M/\Pi M=(R'\oplus R)\obot W,\]
and $(\, ,\, )\pmod{\pi}$ induces a perfect pairing between $R$ and $R'$.
Hence we can find a basis $\{\bar{e}_1,\bar{e}_n\}$ of $M/\Pi M$ such that $R'=\langle \bar{e}_1,\ldots, \bar{e}_{m}\rangle$, $R=\langle \bar{e}_{m+1},\ldots, \bar{e}_{2m}\rangle$, $W=\langle \bar{e}_{2m+1},\ldots, \bar{e}_n\rangle$, and
\[(\bar{e}_\alpha, \bar{e}_{\alpha+m})=1 \pmod{\pi}, \ (\bar{e}_\mu,\bar{e}_\mu) \pmod{\pi} \in \bar\kappa^\times \]
for $1\leq \alpha \leq m$ and $2m+1\leq \mu \leq n$ and the pairing between all other basis vectors are zero. We can lift the above basis to a basis $\{e_1,\ldots,e_n\}$ of $M$ which will satisfy the assumptions of the lemma.
\end{proof}

\begin{proposition}\label{prop:cZ Lambda defined over k}
The scheme $\cZ(\Lambda)$ has no $O_{\breve F}/(\pi^2)$-point.
\end{proposition}
\begin{proof}
Let $\Oo=O_{\breve F}/(\pi^2)$ with the reduction map $\Oo\rightarrow \bar\kappa$ and the natural $O_{F_0}$-pd structure on $\pi \Oo$. Then $\Oo\in \mathscr{C}$. Let $z=(X,\iota,\lambda,\rho,\cF)\in \cZ(\Lambda)(\bar\kappa)$ and $M=\rho(M(X))\subset N$ as in Proposition \ref{prop:k points of cNKra}. Then by Corollary \ref{cor:reduced point of cN Lambda} $\breve\Lambda\subset M$, and we can assume there is an $O_{\breve F}$-basis $\{e_1,\ldots,e_n\}$ of $M$ as in Lemma \ref{lem:Lambda M basis}. Denote the image of $e_i$ in $M_\Oo$ still by $e_i$.
Then $\{e_1,\ldots, e_n,\Pi e_1,\ldots, \Pi e_n\}$ is an $\Oo$-basis of $M_\Oo$. With respect to the alternating form $\langle\, ,\, \rangle$, we have by \eqref{eq:hermitian and symplectic form}
\begin{equation}\label{eq:symplectic basis}
    \langle e_\alpha, \Pi e_{m+\alpha}\rangle=-1/\delta,  \ \langle e_{m+\alpha}, \Pi e_\alpha\rangle=-1/\delta,\  \langle e_\mu, \Pi e_\mu\rangle\in O_{\breve{F}_0}^\times,
\end{equation}
for $ 1\leq \alpha \leq m$, $2m+1\leq \mu \leq n$, and all other pairings between basis vectors are zero.

Assume that $z$ can be lifted to a point $\tilde{z}\in\cZ(\Lambda)_z(\Oo)$, which corresponds to a pair $(\tilde{\cF}^\bot,\widetilde{\Fil})$ as in Lemma \ref{lem:G M theory of cN Lambda}.
First notice that
\begin{equation}\label{eq:Fil -}
    \widetilde{\Fil}^-=(\Pi+\pi)\cdot\Lambda_{M,\Oo}=\spa_{O_{\breve F}\otimes_{O_{\breve{F}_0}}\Oo}  \{\pi \Pi e_1,\ldots, \pi\Pi e_m,(\Pi+\pi)e_{m+1},\ldots, (\Pi+\pi)e_n \}.
\end{equation}
With respect to the alternating form $\langle\, ,\, \rangle$, its perpendicular complement $(\widetilde{\Fil}^-)^\bot$  in $\Lambda_{M,\Oo}$ is generated by
\begin{equation}\label{eq:Fil - perb}
  \{(\Pi+\pi) e_1,\ldots (\Pi+\pi) e_m,\Pi e_{m+1},\pi e_{m+1},\ldots, \Pi e_{2m},\pi e_{2m}, (\Pi+\pi) e_{2m+1},\ldots, (\Pi+\pi) e_n \}.
\end{equation}
By Lemma \ref{lem:G M theory of cN Lambda} (c), $\tilde{\cF}^\bot$ is annihilated by $\Pi+\pi$, hence it is spanned by a vector
\[v=\sum_{i=1}^n a_i(\Pi-\pi) e_i,\]
where $a_i\in \Oo^\times$ for some $i$ as $\tilde{\cF}^\bot$ is a direct summand of $M_\Oo$. By Lemma \ref{lem:G M theory of cN Lambda}, we must have $ \widetilde{\Fil}^-\subset \widetilde{\Fil}$, $\tilde{\cF}^\bot \subset \widetilde{\Fil}$ and $\widetilde{\Fil}$ is isotropic. Hence $\widetilde{\Fil}\subset(\widetilde{\Fil}^-)^\bot$. Moreover $\langle \widetilde{\Fil}^-, \tilde{\cF}^\bot\rangle=0$, which implies $a_i\in \pi \Oo$ for $1\leq i\leq m$ and $2m+1\leq i \leq n$. Hence without loss of generality we can assume that $a_{m+1}=1$.

Since $\widetilde{\Fil}$ is a direct summand of $M_\Oo$ we have $M_\Oo=\widetilde{\Fil}\oplus S$ where $S$ is an $\Oo$-module. We can write $\Pi e_1=w+w'$ where $w\in\widetilde{\Fil}$ and $w'\in S$. Since $\pi\Pi e_1\in \widetilde{\Fil}^-\subset\widetilde{\Fil}$, we must have $\pi w'=0$. This implies that $w'\in  \pi M_\Oo$ and $w$ is of the form
\[w=(\Pi+b \pi) e_1+x\]
where $b\in \Oo$ and $x\in \pi\cdot\spa_\Oo\{e_2,\Pi e_2,\ldots,e_n,\Pi e_n\}$.
Since $w\in\widetilde{\Fil}\subset(\widetilde{\Fil}^-)^\bot$, by \eqref{eq:Fil - perb}, we must have $b=1$ and $x$ is of the form
\[x=\sum_{i=2}^m d_i(\Pi+\pi) e_i+\sum_{i=m+1}^{2m} (c_i +d_i\Pi) e_i+\sum_{i=2m+1}^n d_i(\Pi+\pi)e_i,\]
where $c_i\in \pi \Oo$ for $m+1\leq i \leq 2m$ and $d_i\in \pi \Oo$ for $2\leq i \leq n$.
Since $(\Pi+\pi)e_i\in \widetilde{\Fil}$ for $2m+1\leq i \leq n$, by changing $w$ and $x$ at the same time if necessary we can assume that $d_i=0$ for $2m+1\leq i \leq n$.
By \eqref{eq:symplectic basis}, we have
\[\langle (\Pi+\pi)e_1,(\Pi-\pi)e_{m+1} \rangle=2\pi \langle e_1,\Pi e_{m+1}\rangle\neq 0.\]
Moreover
\begin{align*}
    \langle  x,v\rangle=& \sum_{i=2}^m \langle d_i (\Pi+\pi)e_i, a_{m+i}(\Pi-\pi)e_{m+i}\rangle+\sum_{i=m+1}^{2m} \langle (c_i  +d_i\Pi)e_i, a_{i-m}(\Pi-\pi)e_{i-m}\rangle=0
\end{align*}
Here we have used the fact that $a_i\in \pi \Oo$ for $1\leq i \leq m$, $c_i\in \pi \Oo$ for $m+1\leq i \leq 2m$ and $d_i\in \pi \Oo$ for $2\leq i \leq 2m$. Then $\langle w, v\rangle\neq 0$ which
contradicts the fact that $\widetilde{\Fil}$ is isotropic. Hence there is no lift of $z$ into $\cZ(\Lambda)(\Oo)$. This proves the lemma.
\end{proof}
As $\cN_\Lambda$ is a formal subscheme of $\cZ(\Lambda)$, the following corollary is immediate.
\begin{corollary}\label{cor:cN Lambda defined over k}
$\cN_\Lambda$ has no $O_{\breve F}/(\pi^2)$-point.
\end{corollary}

\begin{proposition}\label{prop:reducedness of cN Lambda}
Let $\Lambda$ be a vertex lattice of type $2m$ ($m\geq 1$) in $\bV$ and $z\in \cN_\Lambda(\bar\kappa)$. Then the tangent space $T_z(\cN_{\Lambda,\bar\kappa})$ has dimension less than or equal to $m$.
\end{proposition}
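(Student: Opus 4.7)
The plan is to translate the lifting problem for $T_z(\cN_\Lambda)$, via the Grothendieck--Messing description of Lemma \ref{lem:G M theory of cN Lambda}, into a constrained tangent space on a Lagrangian Grassmannian, and then invoke Lemma \ref{lem:tangent space of L V U0} (supplemented by a direct linear-algebra computation at its singular point). First I would apply Lemma \ref{lem:G M theory of cN Lambda} with $\Oo = \bar\kappa[\epsilon]/(\epsilon^2)$ and the trivial divided power structure, so that tangent vectors at $z$ correspond to pairs $(\tilde{\cF}^\bot, \widetilde{\Fil})$ lifting $(\cF^\bot, \Fil)$ and satisfying conditions (a)--(e). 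Since $\pi$ (hence $\pi_0$) vanishes in $\Oo$, one has $\Pi^2 = -\pi_0 = 0$ on $M_\Oo$, so $\ker \Pi = \Pi M_\Oo$, and conditions (c) simplify to $\Pi \tilde{\cF}^\bot = 0$ together with $\Pi \widetilde{\Fil} \subseteq \tilde{\cF}^\bot$.

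Next I would fix a basis of $M$ as in Lemma \ref{lem:Lambda M basis} and compute explicitly: $\widetilde{\Fil}^- = \Oo\langle \Pi e_{m+1},\dots,\Pi e_n\rangle$ (rank $n-m$) and $\Lambda_{M,\Oo}^\bot = \Oo\langle \Pi e_{m+1},\dots,\Pi e_{2m}\rangle$ (rank $m$). The key observation is that $\Lambda_{M,\Oo}^\bot \subseteq \widetilde{\Fil}^-$, so condition (e) forces $\tilde{\cF}^\bot \subset \widetilde{\Fil}^-$ and thus condition (a) becomes automatic. Passing to the rank-$2m$ symplectic quotient $V^\Oo := (\widetilde{\Fil}^-)^\bot/\widetilde{\Fil}^-$, one obtains two distinguished complementary Lagrangians: the image $U_0$ of $\langle \Pi e_1, \dots, \Pi e_m\rangle$ (which is $\ker\Pi$ inside $V^\Oo$), and the image $U_0'$ of $\langle e_{m+1},\dots,e_{2m}\rangle$ (which is identified via $\Pi$ with $\Lambda_{M,\Oo}^\bot$). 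Under this identification, $U := \widetilde{\Fil}/\widetilde{\Fil}^-$ is a Lagrangian of rank $m$ in $V^\Oo$, the line $\tilde{\cF}^\bot$ sits inside $U_0'$, and the condition $\Pi \widetilde{\Fil} \subseteq \tilde{\cF}^\bot$ becomes: the projection $U \to U_0'$ along $U_0$ has image contained in $\tilde{\cF}^\bot$, equivalently $\mathrm{rk}(U \cap U_0) \geq m - 1$. Hence $U$ lies in the subvariety $L^{U_0}_{V^\Oo}$ of Lemma \ref{lem:tangent space of L V U0}.

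To finish, I would do a case analysis on the special fiber $U_{\bar\kappa}$. If $U_{\bar\kappa} \neq U_0$, then $U \cap U_0$ has rank exactly $m - 1$, so $\mathrm{pr}_{U_0'}(U)$ has rank $1$ and necessarily equals $\tilde{\cF}^\bot$; in particular $\tilde{\cF}^\bot$ is determined by $U$, and Lemma \ref{lem:tangent space of L V U0} gives $\dim T_z(\cN_\Lambda) \leq \dim T_{U_{\bar\kappa}} L^{U_0}_{V^\Oo} \leq m$. If instead $U_{\bar\kappa} = U_0$, the lifts of $U_0$ to a Lagrangian in $V^\Oo$ are parameterized by symmetric $m \times m$ matrices $A$ over $\bar\kappa$ (giving the naive dimension $\binom{m+1}{2}$ of the full Lagrangian Grassmannian at $U_0$), while lifts of $\cF^\bot$ inside $U_0'$ contribute $m - 1$ independent parameters. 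The compatibility $\mathrm{pr}_{U_0'}(U) \subseteq \tilde{\cF}^\bot$, together with the symmetry of $A$, forces $A = \lambda\, v_0 v_0^\top$ for a fixed generator $v_0$ of $\cF^\bot$, leaving only the single parameter $\lambda$; altogether one obtains at most $(m - 1) + 1 = m$ parameters.

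I expect the main obstacle to be the degenerate case $U_{\bar\kappa} = U_0$, where Lemma \ref{lem:tangent space of L V U0} does not apply directly and one must verify by explicit calculation that the Lagrangian (symmetry) condition on $A$ and the rank-one projection condition coming from $\Pi$ together cut the dimension down to precisely $m$. The remainder of the argument is a routine unraveling of the Grothendieck--Messing conditions, which become particularly clean due to the vanishing of $\pi$ and $\Pi^2$ on $M_\Oo$.
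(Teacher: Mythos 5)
Your proposal is correct and follows essentially the same route as the paper: the same reduction via Lemma \ref{lem:G M theory of cN Lambda} at $\Oo=\bar\kappa[\epsilon]/(\epsilon^2)$, the same dichotomy on whether $\Fil = \Pi M_{\bar\kappa}$ (equivalently $U_{\bar\kappa}=U_0$), and the same invocation of Lemma \ref{lem:tangent space of L V U0} in the non-degenerate case. The one genuine (minor) difference is in the degenerate case $U_{\bar\kappa}=U_0$: you exploit condition (e) up front, identifying $\Lambda_{M,\Oo}^\perp\cong U_0'$ and working entirely in the rank-$2m$ quotient $V^\Oo$ to force $A=\lambda v_0 v_0^{\top}$, whereas the paper parameterizes lifts directly in $M_\Oo$ via Kr\"amer's coordinate equations and only then cuts by $n-m$ equations coming from condition (e); your version is a bit more economical but amounts to the same linear-algebra computation. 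One small hand-wave to be aware of: in the non-degenerate case you assert that $\mathrm{pr}_{U_0'}(U)$ \emph{necessarily equals} $\tilde{\cF}^\bot$, but a priori the constraint only gives containment in a rank-one submodule; to upgrade this to equality (and hence conclude that $\tilde{\cF}^\bot$ is determined by $U$) one needs the Nakayama-type observation that $\Pi\widetilde{\Fil}$ reduces to the nonzero $\Pi\Fil=\cF^\bot$ modulo $\epsilon$, which is precisely the step the paper makes explicit.
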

\begin{proof}
Let $z=(X,\iota,\lambda,\rho,\cF)\in \cN_\Lambda(\bar\kappa)$ and $M=\rho(M(X))\subset N$ as in Proposition \ref{prop:k points of cNKra}. Let $\Oo=\bar\kappa[\epsilon]/(\epsilon^2)$, then $\Oo$ is an $O_{\breve F}$-algebra through the map $O_{\breve F}\rightarrow \bar\kappa \rightarrow \Oo$ and the ideal $(\epsilon)\subset \Oo$ is equipped with a natural $O_{F_0}$-pd structure. Then $\Oo\in \mathscr{C}$. Any point $\tilde{z}\in T_z(\cN_{\Lambda,\bar\kappa})=\cN_{\Lambda,z}(\Oo)$ corresponds to a unique pair $(\tilde{\cF}^\bot,\widetilde{\Fil})$ lifting $(\cF^\bot, \Fil)$ as in Lemma \ref{lem:G M theory of cN Lambda}. We prove the lemma in two cases.

\noindent
Case (a): $\Fil\neq \Pi \cdot M_{\bar\kappa}$.
Since $M_\Oo$ is a free $O_F\otimes_{O_{F_0}}\Oo$-modules of rank $n$,
we have the following exact sequence
\[0\rightarrow \Pi\cdot M_\Oo\rightarrow M_\Oo\xrightarrow{\Pi}  \Pi\cdot M_\Oo\rightarrow 0,\]
where $\Pi\cdot M_\Oo$ is a free $\Oo$-module of rank $n$ and the first arrow is the natural injection. This implies the following sequence is exact.
\begin{equation}\label{eq:Fil pi exact sequence}
   0\rightarrow (\Pi\cdot M_\Oo)\cap \widetilde{\Fil}\rightarrow \widetilde{\Fil}\xrightarrow{\Pi}  \Pi\cdot \widetilde{\Fil}\rightarrow 0.
\end{equation}
Since $\widetilde{\Fil}\neq \Pi \cdot M_\Oo$, by \eqref{eq:Fil pi exact sequence} we know that $\Pi\cdot\widetilde{\Fil}\neq \{0\}$. By Lemma \ref{lem:G M theory of cN Lambda}, $\Pi \cdot \widetilde{\Fil} \subset \tilde{\cF}^\bot$ and $\tilde{\cF}^\bot$ has rank $1$, we know that $\Pi \cdot \widetilde{\Fil} = \tilde{\cF}^\bot$ by Nakayama's lemma. In particular $\tilde{\cF}^\bot$ is determined by $\widetilde{\Fil}$. Moreover $\widetilde{\Fil}$ is determined by its image in the $\Oo$-module $(\widetilde{\Fil}^-)^\bot/ \widetilde{\Fil}^-$ where $\widetilde{\Fil}^-=\Pi\cdot \Lambda_{M,\Oo}$ as in Lemma \ref{lem:G M theory of cN Lambda}. Equation \eqref{eq:Fil -} is still true and implies that $\widetilde{\Fil}^-$ is an isotropic free $\Oo$-module direct summand of $M_\Oo$ of rank $n-m$ (notice that $\pi=0$ in $\Oo$).
Notice that by \eqref{eq:Fil pi exact sequence} and the fact that $\Pi \cdot \widetilde{\Fil} = \tilde{\cF}^\bot$ is free (in particular projective), $(\Pi\cdot M_\Oo)\cap \widetilde{\Fil}$ is a free direct summand of $\widetilde{\Fil}$ of corank $1$.
This implies that $(\Pi\cdot M_\Oo)\cap \widetilde{\Fil}$ is a free direct summand of $\Pi\cdot M_\Oo$ of corank $1$ as well.
So $(\Pi\cdot M_\Oo)\cap \widetilde{\Fil}/\widetilde{\Fil}^-$ is a hyperplane in the $\Oo$-module $\Pi\cdot M_\Oo/\widetilde{\Fil}^-$ of rank $m$, and is determined by $m-1$ parameters over $\bar\kappa$ as the tangent space of $\bP^{m-1}_{\bar\kappa}$ has dimension $m-1$. Hence $(\Pi\cdot M_\Oo)\cap \widetilde{\Fil}$ is determined by $m-1$ parameters over $\bar\kappa$ as well.
Since $\widetilde{\Fil}$ is maximal isotropic, it corresponds to a hyperplane in the rank two $\Oo$-module
\[((\Pi\cdot M_\Oo)\cap \widetilde{\Fil})^\bot /(\Pi\cdot M_\Oo)\cap \widetilde{\Fil}, \]
hence is further determined by one parameter over $\bar\kappa$ as the tangent space of $\bP^{1}_{\bar\kappa}$ has dimension $1$. This proves case (a).

\noindent
Case (b): $\Fil=\Pi \cdot M_{\bar\kappa}$.  By Lemma \ref{lem:Lambda M basis}, we know $\Pi\cdot M_\Oo\subset \Lambda_{M,\Oo}$ and $\Lambda_{M,\Oo}$ is a free $\Oo$-module direct summand of $M_\Oo$ of corank $m$.
Hence $(\Lambda_{M,\Oo})^\bot$ is a free $\Oo$-module of rank $m$ and is in $(\Pi\cdot M_\Oo)^\bot=\Pi\cdot M_\Oo$.
As in \cite{Kr}, we assume that we have a $\Oo\otimes_{O_{F_0}}O_F$-basis $\{e_1,\ldots,e_n\}$ of $M_\Oo$ such that $\langle e_i,\Pi \cdot e_j\rangle =\delta_{ij}$ for $1\leq i,j\leq n$ and all other pairings between these basis vectors are zero. The lift $\widetilde{\Fil}$ is spanned by $x_1,\ldots,x_n$ where
\[(x_1,\ldots,x_n)=(e_1,\ldots,e_n,\Pi e_1,\ldots, \Pi e_n)\begin{pmatrix}
A\epsilon \\
I_n
\end{pmatrix}\]
where $A\in M_n(\bar\kappa)$ and $A={}^t A$ since $\widetilde\Fil$ is isotropic.
Assume $\cF^\bot\subset\Fil=\Pi \cdot M_{\bar\kappa}$ is spanned by
\[\sum_{i=1}^n b_{n+i} \Pi\cdot e_i.\]
Then $b_{n+i}\neq 0$ for some $i$ and we can assume without loss of generality $b_{n+1}=1$. The lift $\tilde{\cF}^\bot$ is spanned by
\[\sum_{i=1}^{n} \tilde{b}_i e_i+\sum_{i=n+1}^{2n} \tilde{b}_i \Pi\cdot e_i,\]
where $\tilde{b}_{n+1}=1$ and $\tilde{b}_{n+i}=b_{n+i}+\epsilon c_{i}$ for $2\leq i \leq n$ and some $c_i\in \bar\kappa$. Let
\[\lambda={}^t (\tilde{b}_{n+1},\ldots, \tilde{b}_{2n}).\]
Equations (4.7), (4.8) and (4.10) of \cite{Kr} tell us that
\[A=\gamma_1 \lambda \cdot {}^t \lambda \]
for some $\gamma_1\in \bar\kappa$. Equation (4.5) of loc.cit. tells us
\[{}^t (\tilde{b}_{1},\ldots, \tilde{b}_{n})=A \lambda,\]
which is equal to $\gamma_1 \lambda \cdot {}^t \lambda \cdot \lambda=0$ as ${}^t \lambda \cdot \lambda=0$ by  (4.9) of loc.cit..
In particular $\tilde{\cF}^\bot \subset \Pi M_\Oo$ and a point in $T_z(\cN_{\Lambda,\bar\kappa})$ is determined by the $n-1$ parameters $c_i$ for $2\leq i\leq n$ together with the additional parameter $\gamma_1$. Now the condition $\tilde{\cF}^\bot \subset (\Lambda_{M,\Oo})^\bot$ (condition (e) of Lemma \ref{lem:G M theory of cN Lambda}) imposes further $n-m$ independent linear equations on the parameters $c_i$ for $2\leq i\leq n$. This shows that the tangent space $T_z(\cN_{\Lambda,\bar\kappa})$ has dimension less than or equal to $m$. This finishes the proof of the proposition.
\end{proof}

\subsection{Isomorphism between $\cN_\Lambda$ and $Y_{V,\bar\kappa}$}
By \cite[Lemma 6.1]{RTW}, the lattices $\breve{\Lambda}$ and $\breve{\Lambda}^\sharp$ (see \eqref{eq:breve Lambda}) are closed under $\Pi$, $\bfV$ and $\bF$, hence determine supersingular $p$-divisible strict $O_{F_0}$-modules with $O_F$-action $X_-$ and $X_+$ (denoted by $X_{\Lambda^-}$ and $X_{\Lambda^+}$ resp. in \cite[\S 6]{RTW}) of dimension $n$ over $\bar\kappa$ together with quasi-isogenies $\rho_-:X_-\rightarrow \bX$ of height $m$ and $\rho_+:X_+\rightarrow \bX$ of height $-m$. The inclusion $\breve{\Lambda}\subset\breve{\Lambda}^\sharp$ also defines an isogeny $\rho_{\Lambda}:X_-\rightarrow X_+$ of height $2m$.
Since $X_- \cong \bY^n$ as an $O_F$-module for any $\bar\kappa$-scheme $S$, on the special fiber condition (1) in Definition \ref{def:cN Lambda} is equivalent to the condition
\begin{equation}\label{eq: condition (1)'}
    (1)': \text{ The quasi-isogeny } \rho_{X,-}:=\rho^{-1}\circ (\rho_-)_S: (X_-)_S\rightarrow X \text{ is an isogeny}.
\end{equation}
This is further equivalent by loc.cit. to the condition
\begin{equation}\label{eq: condition (1)''}
    (1)'': \text{ The quasi-isogeny } \rho_{X,+}:=(\rho_+)^{-1}_S \circ\rho: X\rightarrow  (X_+)_S\text{ is an isogeny}.
\end{equation}

\begin{lemma}
The functor $\cN_{\Lambda,\bar\kappa}$ is representable by a projective scheme over $\bar\kappa$. The functor morphism $\cN_{\Lambda,\bar\kappa}\rightarrow \cN$ is a closed immersion.
\end{lemma}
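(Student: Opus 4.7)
The plan is to split the lemma into its two assertions, using condition~(1) plus closedness of~(2) for the immersion part, and a boundedness/Grassmannian argument for projective representability.

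\textbf{Step 1 (Closed immersion).} By construction $\cN_\Lambda$ is a subfunctor of $\cZ(\Lambda)$, and $\cZ(\Lambda) \hookrightarrow \cN$ is already a closed immersion by the definition of special cycles in \S\ref{subsec:specialcycles}. It therefore suffices to verify that condition~(2) of Definition~\ref{def:cN Lambda} cuts out a closed formal subscheme of $\cZ(\Lambda)$. Fix an $O_F$-basis $x_1,\ldots,x_n$ of $\Lambda$. On $\cZ(\Lambda)$ each $x_i$ deforms to an actual homomorphism $\cG\times S\to X$, and the condition $(x_i)_\ast(\Lie(\cG_S))\subset\cF$ is the vanishing of the composition of $\Oo_S$-linear maps between locally free sheaves of finite rank
\[
\Lie(\cG_S)\xrightarrow{(x_i)_\ast}\Lie X\twoheadrightarrow \Lie X/\cF,
\]
which is manifestly closed. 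Combining with Corollary~\ref{cor:cN Lambda defined over k} this gives a closed immersion of $\cN_\Lambda$ into $\cN\times_{\SpfOF}\Spec\bar\kappa$.

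\textbf{Step 2 (Projective representability).} The key tool is the pair of formal $O_F$-modules $X_\pm$ with isogeny $\rho_\Lambda\colon X_-\to X_+$ of height $2m$ from \cite[Lemma 6.1]{RTW}. Conditions $(1)'$ and $(1)''$ (equations~\eqref{eq: condition (1)'}--\eqref{eq: condition (1)''}) say that for any $S$-point of $\cN_\Lambda$, the maps $\rho_{X,-}\colon(X_-)_S\to X$ and $\rho_{X,+}\colon X\to(X_+)_S$ are actual isogenies whose composition is $(\rho_\Lambda)_S$. Consequently $X$ is recovered from $\ker(\rho_{X,+})\subset(X_+)_S$, a finite locally free closed subgroup scheme of order bounded by $q^{2m}$, hence of a bounded flat Hilbert/Quot scheme $\mathcal Q$ over $\bar\kappa$ that is projective. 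The auxiliary Kr\"amer filtration $\cF\subset\Lie X$ then varies over a relative Lagrangian Grassmannian bundle on $\mathcal Q$, which is also projective. This gives a morphism from $\cN_\Lambda$ to a projective $\bar\kappa$-scheme that one checks to be a closed immersion using the explicit description of deformations in Lemma~\ref{lem:G M theory of cN Lambda}. Combined with Step~1 this realizes $\cN_\Lambda$ as a closed subscheme of a projective $\bar\kappa$-scheme, hence representable by a projective scheme over $\bar\kappa$.

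\textbf{Main obstacle.} The delicate point is not boundedness (which is immediate from $\breve\Lambda\subset M\subset\breve\Lambda^\sharp$ in Corollary~\ref{cor:reduced point of cN Lambda}) but verifying that the resulting map $\cN_\Lambda\to\mathcal Q\times_{\bar\kappa}\mathrm{Gr}$ is a closed immersion and not merely injective on functor-of-points; this requires checking on tangent spaces and on infinitesimal lifts via Grothendieck--Messing, using the translation of Krämer's condition on $\cF$ into the filtration-data conditions (c) and (e) of Lemma~\ref{lem:G M theory of cN Lambda}. Once this is done, both claims of the lemma follow.
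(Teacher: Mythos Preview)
Your approach is essentially the same as the paper's. The paper organizes it more efficiently: it first shows $\cZ(\Lambda)$ itself is projective over $\bar\kappa$ by citing \cite[Lemma 3.2]{vollaard2011supersingular} (which is precisely your Quot-scheme argument, together with the relative Grassmannian for $\cF$), and then observes that condition (2) cuts $\cN_\Lambda$ out as a closed subscheme. Since your Step 1 already establishes $\cN_\Lambda\hookrightarrow\cZ(\Lambda)$ closed, you could have run Step 2 for $\cZ(\Lambda)$ instead of $\cN_\Lambda$, avoiding the need to carry condition (2) through the embedding argument.

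One correction to your resolution of the ``main obstacle'': checking tangent spaces and infinitesimal lifts via Lemma~\ref{lem:G M theory of cN Lambda} only shows that the map to $\mathcal Q\times\mathrm{Gr}$ is a formally unramified monomorphism, which does not by itself force a closed immersion. The actual argument (implicit in Vollaard--Wedhorn) is that the moduli conditions singling out $\cZ(\Lambda)$ inside $\mathcal Q\times\mathrm{Gr}$---stability of $\ker(\rho_{X,+})$ under the $O_F$-action, isotropy for the polarization form, the Kr\"amer condition on $\cF$---are all \emph{closed} conditions on the base, so $\cZ(\Lambda)$ is identified with a closed subfunctor directly, without any appeal to deformation theory.
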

\begin{proof}
 $\cZ(\Lambda)_{\bar\kappa}$ is a closed formal subscheme of $\cN$.
Since for any $\bar\kappa$-scheme $S$, Condition (1) in Definition \ref{def:cN Lambda} is equivalent to \eqref{eq: condition (1)'}, the functor $\cZ(\Lambda)_{\bar\kappa}$ can be represented by a projective scheme over $\bar\kappa$ by exact the same argument as that of \cite[Lemma 3.2]{vollaard2011supersingular}. Condition (2) of Definition \ref{def:cN Lambda} defines $\cN_{\Lambda,\bar\kappa}$ as a closed subscheme of $\cZ(\Lambda)_{\bar\kappa}$, hence is itself projective over $\bar\kappa$ and a closed formal subscheme of $\cN$. This finishes the proof of the lemma.
\end{proof}

In the following discussion we assume that $\Lambda$ has type greater than or equal to $2$.
Let $V=\Lambda^\sharp/\Lambda$ and define a symplectic form $\langle\, ,\, \rangle_V$ on $V$ as follows. For $\bar{x},\bar{y}\in V$ with lifts $x,y\in \Lambda^\sharp$, define $\langle \bar{x},\bar{y}\rangle_V$ by the image of $\pi_0 \delta\langle x,y\rangle$ in $\F_\q$ (see \S \ref{subsec:associated hermitian space}). Extend this form bilinearly to $V_{\bar\kappa}$. Note that $\tau$ induces identity on $V$ and the Frobenius $\Phi$ on $V_{\bar\kappa}$.
Let $R$ be a $\bar\kappa$-algebra and $(X,\iota,\lambda,\rho,\cF)\in \cN_\Lambda(R)$. As in the proof of \cite[Corollary 3.9]{vollaard2011supersingular}, $\mathrm{Image}( D((\rho_{\Lambda})_R))$ is a locally free direct summands of $ D((X_+)_R)$ of corank  $2m$ and
\[D((X_+)_R)/\mathrm{Image}( D((\rho_{\Lambda})_R))\cong\breve{\Lambda}^\sharp/\breve{\Lambda} \otimes_{\bar\kappa} R=V_R.\]
As $(\rho_\Lambda)_R=\rho_{X,+}\circ\rho_{X,-}$, we know $\ker(\rho_{X,+})=\ker ((\rho_\Lambda)_R)/\ker(\rho_{X,-})$ as a quotient of finite group schemes over $\Spec R$. Since $\Pi\breve{\Lambda}^\sharp\subset \breve{\Lambda}$, by relative Dieudonn\'e theory, we know $\ker(\rho_\Lambda)\subset X_-[\iota_{X_-}(\pi)]$ or equivalently $\iota_{X_-}(\pi) \cdot \ker(\rho_\Lambda)=\{0\}$. Hence $\iota(\pi) \cdot \ker(\rho_{X,+})=\{0\}$ or equivalently $\ker(\rho_{X,+})\subset X[\iota(\pi)]$. Thus there exists an isogeny $\tilde{\rho}_{X,+}:X_+\rightarrow X$ such that $\tilde{\rho}_{X,+}\circ\rho_{X,+}$ is the isogeny $\iota(\pi):X\rightarrow X$.
Recall $\Fil$ in the exact sequence \eqref{eq:Hodge filtration}.
\begin{lemma}\label{lem:Definition of U(X)}
$D(\tilde{\rho}_{X,+})^{-1}(\Fil)$ is a locally free direct summand of $D(X_+)$ that contains $D((\rho_{\Lambda})_R)$. Moreover the quotient
\begin{equation}\label{eq:U(X)}
    U(X):=D(\tilde{\rho}_{X,+})^{-1}(\Fil)/\mathrm{Image}( D((\rho_{\Lambda})_R))
\end{equation}
is a locally free direct summand of $V_R$ of rank $m$.
\end{lemma}
\begin{proof}
By universality, it suffices to check the case when $\Spec R$ is an affine sub formal scheme of $\cN_\Lambda$. In this case, by Nakayama's lemma, it suffices to check the condition on the $\bar\kappa$-points of $\cN_\Lambda$. A point $z\in \cN_\Lambda(\bar\kappa)$ corresponds to a pair $(M,M')$ as in Corollary \ref{cor:reduced point of cN Lambda}. Then the isogeny $\tilde{\rho}_{X,+}$ is induced by the map of relative Dieudonn\'e modules $\breve{\Lambda}^\sharp \rightarrow M:x\mapsto \Pi \cdot x$. Recall $\Fil=\bfV M$. So
\[D(\tilde{\rho}_{X,+})^{-1}(\Fil)=\Pi^{-1}\bfV M/\pi_0 \breve{\Lambda}^\sharp=\tau^{-1}(M)/\pi_0\breve{\Lambda}^\sharp.\]
Since $\breve{\Lambda}\subset M$, we have $\breve{\Lambda}=\tau^{-1}(\breve{\Lambda})\subset \tau^{-1}(M)$. So
\[\mathrm{Image}( D((\rho_{\Lambda})_R))=\breve{\Lambda}/\pi_0\breve{\Lambda}^\sharp\subset D(\tilde{\rho}_{X,+})^{-1}(\Fil).\]
The condition $M=M^\sharp$ is equivalent to the fact that $\Phi(U(X))$ is Lagrangian in $V$, which in turn is equivalent to the fact that $U(X)$ is Lagrangian which implies $\dim_{\bar\kappa}U(X)=m$.
\end{proof}

By Condition (2) of Definition \ref{def:cN Lambda}, we know $\mathrm{Image}( D((\rho_{\Lambda})_R))\subset D(\rho_{X,+})(q_X^{-1}(\cF))$ where $q_X: D(X)\rightarrow \Lie X$ is the natural quotient homomorphism of $R$-modules (see the proof of Corollary \ref{cor:reduced point of cN Lambda}). Define
\[\cF(X):=D(\rho_{X,+})(q_X^{-1}(\cF))/\mathrm{Image}( D((\rho_{\Lambda})_R)).\]
Then $\cF(X)$ is a locally free direct summand of $U(X)$ of rank $m-1$.
We define a map $\phi:\cN_{\Lambda,\bar\kappa}\rightarrow \Gr(m,V_{\bar\kappa})\times \Gr(m-1,V_{\bar\kappa})$ by
\[\phi:(X,\iota,\lambda,\rho,\cF)\mapsto (U(X),\cF(X))\in (\Gr(m,V_{\bar\kappa})\times \Gr(m-1,V_{\bar\kappa})) (R).\]
\begin{lemma}\label{lem:YV N bijection of k points}
$\phi$ defines a bijection between $\cN_\Lambda(\bar\kappa)$ and $Y_V(\bar\kappa)$.
\end{lemma}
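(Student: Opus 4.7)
\medskip

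\noindent\textbf{Proof plan for Lemma \ref{lem:YV N bijection of k points}.}

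The plan is to translate both sides into concrete Dieudonn\'e-module data and exhibit the bijection explicitly. By Corollary \ref{cor:reduced point of cN Lambda}, a point of $\cN_\Lambda(\bar\kappa)$ is the same as a pair of $O_{\breve F}$-lattices $(M,M')$ in $N$ with
$\breve\Lambda\subseteq M'\subseteq M$, $M=M^\sharp$, $\mathrm{length}(M/M')=1$, and $M'\subseteq \tau^{-1}(M)\cap M$, together with $\Pi\tau(M)\subseteq M\subseteq \Pi^{-1}\tau(M)$. Note that under the identifications of \S\ref{subsec:associated hermitian space}, both $\breve\Lambda$ and $\breve\Lambda^\sharp$ are $\tau$-stable and satisfy $\Pi\breve\Lambda^\sharp\subseteq\breve\Lambda$; hence the chain condition $\Pi\tau(M)\subseteq M\subseteq\Pi^{-1}\tau(M)$ is automatic once $\breve\Lambda\subseteq M\subseteq\breve\Lambda^\sharp$. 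I will first show that the map $\phi$ on $\bar\kappa$-points is given, after using the isomorphism $V_{\bar\kappa}\cong \breve\Lambda^\sharp/\breve\Lambda$, by
\[
\phi\colon (M,M')\longmapsto \bigl(M/\breve\Lambda,\ \tau(M')/\breve\Lambda\bigr).
\]
For this I will identify $D(\bX)$ with $N$ via $\rho$, so that $D(X_+)\cong \breve\Lambda^\sharp$ and $\mathrm{Image}(D(\rho_\Lambda))\cong\breve\Lambda$; then $U(X)=M/\breve\Lambda$, while $q_X^{-1}(\cF)=\Pr^{-1}(\cF)$ corresponds to $M'$ under $\rho$, so $\cF(X)=M'/\breve\Lambda$. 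Finally, since $\tau=\Pi\bfV^{-1}$ is $\sigma$-linear and preserves $\breve\Lambda\subseteq\breve\Lambda^\sharp$, its action on $V_{\bar\kappa}=(\Lambda^\sharp/\Lambda)\otimes_\kappa\bar\kappa$ is $1\otimes\sigma$, which is exactly the Frobenius $\Phi$; hence $\mathrm{Span}\{\Phi(\cF(X))\}=\tau(M')/\breve\Lambda$.

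Having identified $\phi$ in lattice-theoretic terms, I will check it lands in $Y_V(\bar\kappa)$. Because $\breve\Lambda\subseteq M\subseteq\breve\Lambda^\sharp$, the condition $M=M^\sharp$ translates (via the recipe for $\langle\,,\,\rangle_V$ recalled before the lemma) into $M/\breve\Lambda$ being Lagrangian in $V_{\bar\kappa}$, so $U(X)\in L_V(\bar\kappa)$; the dimension count $\dim_{\bar\kappa}(M/\breve\Lambda)=m$ follows from $\dim_{\bar\kappa}(\breve\Lambda^\sharp/\breve\Lambda)=2m$ and the Lagrangian property. The rank of $\tau(M')/\breve\Lambda$ equals $\dim_{\bar\kappa}(M'/\breve\Lambda)=m-1$, and the containment $U'\subseteq U\cap\Phi(U)$ unpacks to $\tau(M')\subseteq M$ and $\tau(M')\subseteq\tau(M)$, which are respectively equivalent to $M'\subseteq\tau^{-1}(M)$ and $M'\subseteq M$, both given.

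Next I will construct the inverse. Given $(U,U')\in Y_V(\bar\kappa)$, let $M\subseteq\breve\Lambda^\sharp$ be the preimage of $U$, and let $M'=\tau^{-1}(\widetilde{U'})$, where $\widetilde{U'}\subseteq\breve\Lambda^\sharp$ is the preimage of $U'$. Since $U'\subseteq U\cap\Phi(U)$, one has $M'\subseteq M\cap\tau^{-1}(M)$, and $\mathrm{length}(M/M')=\dim U-\dim(\tau^{-1}U')=1$; Lagrangianness of $U$ gives $M=M^\sharp$; and the chain condition in Proposition \ref{prop:k points of cNKra} is automatic as noted above. Therefore $(M,M')$ satisfies the conditions of Corollary \ref{cor:reduced point of cN Lambda}. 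A direct check shows that the two maps are mutually inverse: starting from $(M,M')$ one recovers $M$ from $M/\breve\Lambda$ and $M'$ from $\tau^{-1}\tau(M')/\breve\Lambda=M'/\breve\Lambda$; starting from $(U,U')$ the compositions reproduce $(U,U')$ on the nose.

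The main conceptual point — and essentially the only non-bookkeeping step — is the careful identification of the Frobenius $\Phi$ on $V_{\bar\kappa}$ with the $\sigma$-linear operator $\tau$ restricted to $\breve\Lambda^\sharp/\breve\Lambda$, which makes the $\tau$-shift built into $Y_V$ match the Frobenius appearing in the definition of $\phi$. Everything else is routine lattice bookkeeping given Corollary \ref{cor:reduced point of cN Lambda} and the structure of the vertex lattice $\Lambda$.
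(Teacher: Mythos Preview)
Your proposal is correct and follows essentially the same approach as the paper: both translate $\phi$ on $\bar\kappa$-points into the lattice-theoretic map $(M,M')\mapsto (M/\breve\Lambda,\,\Phi(M'/\breve\Lambda))$, use that $\tau$ induces the Frobenius $\Phi$ on $V_{\bar\kappa}$, verify that $M=M^\sharp$ corresponds to $U$ being Lagrangian and $M'\subseteq M\cap\tau^{-1}(M)$ corresponds to $U'\subseteq U\cap\Phi(U)$, and construct the inverse by pulling back along $\Pr:\breve\Lambda^\sharp\to\breve\Lambda^\sharp/\breve\Lambda$ and undoing the $\Phi$-twist. The paper's verification of the chain condition $\Pi\tau(M)\subseteq M\subseteq\Pi^{-1}\tau(M)$ is spelled out as the explicit chain $\Pi\tau(M)\subseteq\Pi\breve\Lambda^\sharp\subseteq\breve\Lambda\subseteq M$ and $M\subseteq\breve\Lambda^\sharp\subseteq\Pi^{-1}\breve\Lambda\subseteq\Pi^{-1}\tau(M)$, which is exactly your ``automatic once $\breve\Lambda\subseteq M\subseteq\breve\Lambda^\sharp$'' observation made precise.
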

\begin{proof}
A point $z\in \cN_\Lambda(\bar\kappa)$ corresponds to a pair $(M,M')$ as in Corollary \ref{cor:reduced point of cN Lambda}.
By the definition of $\phi$ we have $\phi(z)=(U,U')$ where
\[(U,U')=(\Pi^{-1} \bfV M/\breve{\Lambda}, M'/\breve{\Lambda})=(\tau^{-1}(M)/\breve{\Lambda}, M'/\breve{\Lambda})=(\Phi^{-1}(M/\breve{\Lambda}), M'/\breve{\Lambda}).\]
As in the proof of Lemma \ref{lem:Definition of U(X)}, the condition $M=M^\sharp$ is equivalent to the condition that $U$ is Lagrangian.
The condition $M'\subset M$ is equivalent to $U'\subset \Phi(U)$. The condition $M'\subset \tau^{-1}(M)$ is equivalent to $U'\subset U$. This shows that $\phi(z)\in Y_V(\bar\kappa)$.

Conversely assume $(U,U')\in Y_V(\bar\kappa)$ and let $M=\Pr^{-1}(\Phi(U))$ and $M'=\Pr^{-1}(U')$ where $\Pr:\breve{\Lambda}^\sharp\rightarrow \breve{\Lambda}^\sharp/\breve{\Lambda}$ is the natural quotient map. Then
by definition $\breve{\Lambda} \subset M'\subset M\subset \breve{\Lambda}^\sharp$, and $M=M^\sharp$ as $U$ is Lagrangian.
Since $\bfV M\subset \bfV \breve{\Lambda}^\sharp =\Pi \breve{\Lambda}^\sharp\subset \breve{\Lambda} \subset M'$, we have $\bfV M\subset M'$.
We also have
\[\Pi M \subset \Pi \breve{\Lambda}^\sharp\subset \breve{\Lambda}=\tau^{-1} \breve{\Lambda}\subset \tau^{-1}(M),\]
and
\begin{equation}\label{eq:pi M in Lambda -}
 \tau^{-1}(M)\subset \tau^{-1}(\breve{\Lambda}^\sharp)= \breve{\Lambda}^\sharp\subset \Pi^{-1} \breve{\Lambda} \subset \Pi^{-1} M.
\end{equation}
Hence $\Pi M\subset \tau^{-1}(M)\subset \Pi^{-1} M$.
This shows that $(M,M')$ satisfies the conditions in Proposition \ref{prop:k points of cNKra} and Corollary \ref{cor:reduced point of cN Lambda}. This defines the inverse of $\phi$ on the level of $\bar\kappa$-points.
Hence $\phi$ defines a bijection between $\cN_\Lambda(\bar\kappa)$ and $Y_V(\bar\kappa)$.
\end{proof}

\begin{theorem}\label{thm:moduli interpretation of DL variety}
Let $\Lambda$ be a vertex lattice of type $2m$ ($m\geq 1$) in $\bV$. Then $ \cN_\Lambda$ is reduced and the morphism
$\phi$ defines an isomorphism $\cN_\Lambda\rightarrow Y_{V,\bar\kappa}$. In particular $\cN_\Lambda$ is smooth of dimension $m$ over $\bar\kappa$.
\end{theorem}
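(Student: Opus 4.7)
The plan is to show $\phi$ is a finite unramified morphism that is bijective on $\bar\kappa$-points, and then invoke reducedness of $Y_{V,\bar\kappa}$ to conclude $\phi$ is an isomorphism. The smoothness and dimension claims then follow for free from Proposition \ref{prop:Y is smooth}.

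First I would upgrade the construction of $\phi$ given just before Lemma \ref{lem:YV N bijection of k points} from $\bar\kappa$-points to an arbitrary $\bar\kappa$-algebra $R$. Since $\cN_\Lambda$ is defined over $\bar\kappa$ by Corollary \ref{cor:cN Lambda defined over k}, the universal formal hermitian $O_F$-module over $\cN_\Lambda$ together with a globalized version of the Grothendieck--Messing formalism from \S\ref{sec: RZ space} produces a well-defined morphism $\phi:\cN_\Lambda\to \Gr(m,V_{\bar\kappa})\times \Gr(m-1,V_{\bar\kappa})$; it lands in the closed subscheme $Y_{V,\bar\kappa}$ because it does so on $\bar\kappa$-points by Lemma \ref{lem:YV N bijection of k points}.

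Next I would observe that $\phi$ is finite: it is proper because both source and target are projective over $\bar\kappa$, and quasi-finite because it is bijective on $\bar\kappa$-points by Lemma \ref{lem:YV N bijection of k points}. The crux is then to show that $\phi$ is unramified, i.e.\ that $d\phi_z$ is injective at every $z\in \cN_\Lambda(\bar\kappa)$. By Proposition \ref{prop:Y is smooth} we have $\dim T_{\phi(z)}Y_{V,\bar\kappa}=m$, and by Proposition \ref{prop:reducedness of cN Lambda} we have $\dim T_z\cN_\Lambda\le m$, so injectivity is equivalent to $d\phi_z$ being an isomorphism. Concretely I would revisit the tangent space calculation performed in Proposition \ref{prop:reducedness of cN Lambda} and track the data under $\phi$: the Grothendieck--Messing lift pair $(\tilde{\cF}^\bot,\widetilde{\Fil})$ of Lemma \ref{lem:G M theory of cN Lambda} projects onto the lift of $U=M/\breve\Lambda$ and (in Case (b)) the lift of $U'=\Phi(\cF(X))$, so the parameters $A$, $c_i$, $\gamma_1$ that enumerated tangent vectors in $\cN_\Lambda$ map bijectively to the parameters $b_j,a_{mm}$ of Lemma \ref{lem:tangent space of L V U0} and to a lift inside the projective fiber of $\pi_m$.

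Finally, a finite unramified morphism all of whose geometric fibers are singletons is a closed immersion, so $\phi$ realizes $\cN_\Lambda$ as a closed subscheme of $Y_{V,\bar\kappa}$ whose underlying topological space is all of $|Y_V|$. Since $Y_{V,\bar\kappa}$ is smooth by Proposition \ref{prop:Y is smooth}, hence reduced, the defining ideal of this closed subscheme is both nilpotent (it lives over a set-theoretically full subscheme) and contained in a reduced structure sheaf, so it must vanish. This gives $\phi$ an isomorphism, and the assertions about dimension and smoothness of $\cN_\Lambda$ follow. I expect the main obstacle to be the careful bookkeeping for the injectivity of $d\phi_z$ in Case (b), where one must match the $n-1$ coefficients $c_i$ and $\gamma_1$ cut down by the $n-m$ linear constraints from condition (e) of Lemma \ref{lem:G M theory of cN Lambda} against tangent directions along the exceptional $\bP^{m-1}$-fiber of $\pi_m$ together with the transverse direction, ensuring the two $m$-dimensional parameter spaces are identified and not merely of the same dimension.
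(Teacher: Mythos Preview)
Your approach differs from the paper's. The paper does not verify injectivity of $d\phi_z$. Instead it restricts to the reduced subscheme $\cN_\Lambda^{\red}$, shows that $\phi^{\red}:\cN_\Lambda^{\red}\to Y_{V,\bar\kappa}$ is bijective on $\kappa'$-valued points for \emph{every} field extension $\kappa'/\bar\kappa$ (invoking relative display/window theory from \cite{ACZ} to handle imperfect $\kappa'$), and then argues: $\phi^{\red}$ is finite (Zariski's main theorem), flat (a Bourbaki criterion using reducedness of the source), and an isomorphism on all fibres, hence an isomorphism by \cite[Lemma~4.4]{Kr}. Only after this is Proposition~\ref{prop:reducedness of cN Lambda} invoked---purely as the dimension bound $\dim T_z\cN_\Lambda\le m$---to conclude $\cN_\Lambda=\cN_\Lambda^{\red}$.

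Your route trades the flatness step for a direct computation of $d\phi_z$, and here your sketch has a genuine gap. Proposition~\ref{prop:reducedness of cN Lambda} only bounds $\dim T_z\cN_\Lambda$; it does not compute $d\phi_z$, and tracking the Grothendieck--Messing parameters under $\phi$ is more delicate than you indicate. The second component of $\phi$ is $\spa_R\{\Phi(\cF(X))\}$, and over $R=\bar\kappa[\epsilon]/(\epsilon^2)$ the Frobenius satisfies $\Phi(\epsilon)=\epsilon^q=0$, so first-order variations of $\cF$ are annihilated in that coordinate. The first component $U(X)$ is the image of the crystal map $D(\rho_{X,+})$; by the Grothendieck--Messing description of morphisms over nilpotent PD-thickenings, this map is the base change of its special-fibre value, so its image appears to be independent of the deformation $(\widetilde{\Fil},\tilde\cF^\bot)$. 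Thus your proposed matching of $(\widetilde{\Fil},\tilde\cF^\bot)$ with a lift of $(U,U')$ does not go through under the naive functor-of-points description, and making $d\phi_z$ explicit requires the display-theoretic input the paper already flags as necessary beyond perfect bases. The paper's route via field-valued points of the reduced locus and flatness sidesteps this subtlety entirely, which is why it is both cleaner and safer.
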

\begin{proof}
Let $\cN^\red_\Lambda$ be the underlying reduced $\bar\kappa$-scheme of $\cN_\Lambda$. Lemma \ref{lem:YV N bijection of k points} shows that $\phi$ induces a morphism $\phi^\red:\cN^\red_\Lambda\rightarrow Y_{V,\bar\kappa}$ which is a bijection on $\bar\kappa$-points, in particular quasi-finite. Since $\phi^\red$ is a morphism between projective varieties, it is projective.
Moreover using the theory of relative displays and windows, working with Cohen rings instead of the Witt ring, we can show that $\cN_\Lambda^\red(R)=Y_V(R)$ for any field $R$ containing $\bar\kappa$ by the same proof as that of Lemma \ref{lem:YV N bijection of k points}. In particular $\phi^\red$ is birational. Being quasi-finite and proper at the same time, it is an isomorphism by Zariski's main theorem since $Y_{V,\bar\kappa}$ is normal. Now Proposition \ref{prop:reducedness of cN Lambda} implies that $\cN^\red_\Lambda=\cN_{\Lambda,\bar\kappa}$. By \cite[Lemma 10.3]{RTZ} and Corollary \ref{cor:cN Lambda defined over k}, we have $\cN_\Lambda=\cN_{\Lambda,\bar\kappa}$.
This finishes the proof of the theorem.
\end{proof}

\begin{proposition}\label{prop:cN Lambda 0}
Let $\Lambda$ be a vertex lattice of type $0$ in $\bV$. Then $\cN_\Lambda$ is the exceptional divisor $\Exc_\Lambda$ and is isomorphic to $\bP^{n-1}_{\bar\kappa}$.
\end{proposition}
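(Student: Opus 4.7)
The approach mirrors the strategy of Proposition~\ref{prop:reducedness of cN Lambda} above: the plan is to show $\cN_\Lambda$ is smooth of dimension $n-1$ with underlying set equal to $\Exc_\Lambda$, which will force $\cN_\Lambda = \Exc_\Lambda \cong \bP^{n-1}_{\bar\kappa}$ via Lemma~\ref{lem:Exc}.

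First, since $t(\Lambda) = 0$, condition~(2) of Definition~\ref{def:cN Lambda} is vacuous, so $\cN_\Lambda$ coincides with $\cZ(\Lambda)$, which is a closed $\bar\kappa$-subscheme of $\cN$ by Proposition~\ref{prop:cZ Lambda defined over k}. Corollary~\ref{cor:reduced point of cN Lambda} then identifies $\cN_\Lambda(\bar\kappa)$ with pairs $(M, M')$ as in Proposition~\ref{prop:k points of cNKra}: the self-duality $M = M^\sharp$ combined with $\breve\Lambda \subseteq M$ and $\breve\Lambda = \breve\Lambda^\sharp$ (since $\Lambda$ has type $0$) forces $M = \breve\Lambda$, while $M'$ ranges over hyperplanes in $M/\bfV M \cong \bar\kappa^n$. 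This yields a set-theoretic bijection $\cN_\Lambda(\bar\kappa) \cong \bP^{n-1}(\bar\kappa) = \Exc_\Lambda(\bar\kappa)$.

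The central step is to compute the tangent space $T_z(\cN_\Lambda)$ at each closed point $z$ using Lemma~\ref{lem:G M theory of cN Lambda} with $\Oo = \bar\kappa[\epsilon]/(\epsilon^2)$. In the type-$0$ case, $\Lambda_{M,\Oo} = M_\Oo$ and $\pi = 0$ in $\Oo$, so $\widetilde{\Fil}^- = (\Pi + \pi) M_\Oo = \Pi M_\Oo$. Using $\Pi^2 = 0$ on $M_\Oo$ together with the skew-compatibility $\langle \Pi x, y\rangle = -\langle x, \Pi y\rangle$, one checks that $\Pi M_\Oo$ is a rank-$n$ isotropic $\Oo$-direct summand of $M_\Oo$; consequently any rank-$n$ isotropic lift $\widetilde{\Fil}$ containing $\widetilde{\Fil}^-$ must equal $\Pi M_\Oo$ itself. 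Conditions~(b) and (c) of Lemma~\ref{lem:G M theory of cN Lambda} are then automatic (since $\Pi \cdot \widetilde{\Fil} = \Pi^2 M_\Oo = 0$), so $\tilde{\cF}^\bot$ varies freely over rank-$1$ $\Oo$-direct summands of $\Pi M_\Oo \cong \Oo^n$ lifting $\cF^\bot$. This parametrization is precisely the tangent space to $\bP^{n-1}$ at $\cF^\bot$, giving $\dim_{\bar\kappa} T_z(\cN_\Lambda) = n - 1$.

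Finally, since $\cN_\Lambda$ is of dimension at least $n - 1$ at every closed point (its underlying set contains the reduced projective variety $\bP^{n-1}_{\bar\kappa}$) and has Zariski tangent space of dimension at most $n - 1$ everywhere, it must be smooth of dimension $n - 1$, hence reduced. Together with the set-theoretic identification above, this gives $\cN_\Lambda = \Exc_\Lambda$. The most substantive step is the tangent-space computation, but in the type-$0$ case it is considerably simpler than its $m \geq 1$ analogue in Proposition~\ref{prop:reducedness of cN Lambda}, since self-duality forces the Hodge-like filtration $\widetilde{\Fil}$ to be uniquely determined and no analogue of the Grassmannian analysis in Lemma~\ref{lem:tangent space of L V U0} is needed.
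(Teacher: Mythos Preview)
Your proof is correct, but the paper takes a more direct route. Rather than computing tangent spaces at closed points and deducing smoothness and reducedness, the paper works at the level of $R$-points for an arbitrary $\bar\kappa$-algebra $R$: since $\Lambda$ is self-dual, the quasi-isogeny $\rho_-:X_-\to\bX$ has height~$0$, so for any $(X,\iota,\lambda,\rho,\cF)\in\cN_\Lambda(R)$ the map $\rho_{X,-}:(X_-)_R\to X$ is an isomorphism; moreover $\pi$ acts trivially on $\Lie X_-$ (because $\Pi=\bfV$ on $\breve\Lambda$), so Kr\"amer's condition on $\cF$ is vacuous and $\cF$ ranges freely over rank $n-1$ direct summands of $\Lie X$. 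This gives an isomorphism of functors $\cN_\Lambda\cong\bP^{n-1}_{\bar\kappa}$ outright, with reducedness following as a consequence rather than needing a separate argument.

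Your approach mirrors the $m\geq 1$ strategy of Proposition~\ref{prop:reducedness of cN Lambda} and Theorem~\ref{thm:moduli interpretation of DL variety}, which is perfectly valid and conceptually uniform across all types, but in the type-$0$ case it is less economical: the global functorial identification makes the infinitesimal analysis unnecessary. What your route does buy is that it stays entirely within the Grothendieck--Messing framework of Lemma~\ref{lem:G M theory of cN Lambda}, whereas the paper's argument invokes the alternative description of $\cZ(\Lambda)$ via the isogeny $\rho_{X,-}$ from~\eqref{eq: condition (1)'}.
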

\begin{proof}
Let $R$ be any $\bar\kappa$-algebra and $z$ be any point in $\cN_\Lambda(R)$ and $(X,\iota,\lambda,\rho,\cF)$ be the pullback of the universal object of $\cN$ to $z$.
As $\Lambda$ is a unimodular lattice, the quasi-isogeny $\rho_-$ has height $0$. Thus the isogeny
\[\rho_{X,-}=\rho^{-1}\circ (\rho_-)_R: (X_-)_R\rightarrow X\]
has height $0$ and is an isomorphism, hence we can identify $(X,\ldots,\rho)$ with $((X_-)_R,\ldots, (\rho_-)_R)$.
As $\Pi|_\Lambda=\bfV |_\Lambda$ for any vertex lattice $\Lambda$, and $\Lie X_-=M(X_-)/\bfV M(X_-)$, the action of $\iota(\pi)$ on $\Lie X_-$ is trivial. The point $z$ is uniquely determined by the filtration $\cF\subset \Lie X$.
Hence $\cF$ can be any rank $n-1$ locally free $R$-module on $\Lie X$. This shows that $\cN_\Lambda$ is isomorphic to $\bP^{n-1}_{\bar\kappa}$ and is in particular reduced.
Moreover if $R=\bar\kappa$, then $\rho(M(X))=\breve{\Lambda}$. This shows that $\cN_\Lambda$ is a subscheme of $\Exc_\Lambda$ according Definition \ref{def:Exc}. By the proof of Lemma \ref{lem:Exc}, we know that $\cN_\Lambda$ and $\Exc_\Lambda$ have the same $\bar\kappa$-points. As they are both reduced subscheme of $\cN$, they must be the same. This proves the proposition.
\end{proof}

\subsection{Bruhat-Tits stratification}\label{subsec:bruhat tits in general}
\begin{lemma}\label{lem:Lambda of M}
For any pair $(M,\cF)$ satisfying the condition in Proposition \ref{prop:k points of cNKra}, there is a unique vertex lattice $\Lambda(M)$ such that $\Lambda(M)\subset M$ and $\Lambda(M)$ is maximal among all such vertex lattices.
\end{lemma}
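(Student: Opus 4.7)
The plan is to construct $\Lambda(M)$ as the unique maximum of a suitable sum-closed set of vertex lattices sitting inside $M$.

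\emph{Step 1 (Galois descent).} Via the identification $\bV\cong C=N^\tau$ (see \S\ref{subsec:associated hermitian space}), a vertex lattice $\Lambda\subset \bV$ is the same datum as a $\tau$-stable $O_{\breve F}$-lattice $\breve\Lambda\subset N$ satisfying $\Pi\breve\Lambda^\sharp\subset\breve\Lambda\subset\breve\Lambda^\sharp$, and the condition $\Lambda\subset M$ translates to $\breve\Lambda\subset M$ in $N$. So it suffices to produce a unique maximal element in the set
\[
\cS(M):=\{\breve\Lambda\subset M : \breve\Lambda \text{ is $\tau$-stable},\ \Pi\breve\Lambda^\sharp\subset\breve\Lambda\subset \breve\Lambda^\sharp\}.
\]

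\emph{Step 2 (Closure under sum, which implies uniqueness).} Given $\breve\Lambda_1,\breve\Lambda_2\in\cS(M)$, I claim $\breve\Lambda_1+\breve\Lambda_2\in\cS(M)$. It is $\tau$-stable and contained in $M$ by construction. For the vertex condition, note $(\breve\Lambda_1+\breve\Lambda_2)^\sharp=\breve\Lambda_1^\sharp\cap\breve\Lambda_2^\sharp$, so
\[
\Pi(\breve\Lambda_1^\sharp\cap\breve\Lambda_2^\sharp)\subset (\Pi\breve\Lambda_1^\sharp)\cap(\Pi\breve\Lambda_2^\sharp)\subset \breve\Lambda_1\cap\breve\Lambda_2\subset \breve\Lambda_1+\breve\Lambda_2,
\]
using the vertex condition for each $\breve\Lambda_i$. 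Integrality follows from $\breve\Lambda_1+\breve\Lambda_2\subset M=M^\sharp\subset \breve\Lambda_i^\sharp$, hence $\breve\Lambda_1+\breve\Lambda_2\subset \breve\Lambda_1^\sharp\cap\breve\Lambda_2^\sharp$. Since all elements of $\cS(M)$ are bounded above by the fixed lattice $M$, sum-closure forces the existence of a unique maximum element once $\cS(M)$ is non-empty, and this maximum will be the desired $\Lambda(M)$ (viewed back in $\bV$).

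\emph{Step 3 (Non-emptiness).} This is the main obstacle. I need to exhibit at least one vertex lattice inside $M$. The starting point is the squeeze $\Pi M\subset \tau M\subset \Pi^{-1}M$ (equivalent to $\Pi\tau M\subset M\subset \Pi^{-1}\tau M$), which iterates to $\Pi^{|i|}M\subset\tau^i M\subset\Pi^{-|i|}M$ for all $i\in\Z$. A natural candidate is $L_0:=\bigcap_{i\in\Z}\tau^i M$: this is $\tau$-stable, is contained in $M$, and is a lattice because for any $k$ with $\pi^k N\subset M$ we have $\pi^k N=\tau^i(\pi^k N)\subset \tau^i M$ for all $i$, so $\pi^k N\subset L_0$. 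The integrality $L_0\subset L_0^\sharp$ follows from $L_0\subset M=M^\sharp\subset L_0^\sharp$. The remaining vertex condition $\Pi L_0^\sharp\subset L_0$ is delicate, since a priori the sum $\sum_i\tau^i M$ (which would compute $L_0^\sharp$) need not be a lattice; one therefore has to refine $L_0$ (or replace it by an appropriate enlargement contained between $L_0$ and $M$) using the detailed interplay of $M$ with its $\tau$-translates, in the style of the analogous analysis for the Pappas model in \cite{RTW}. Once existence is established, Step~2 completes the proof.
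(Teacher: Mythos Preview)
Your proposal is correct and matches the paper's approach: the paper's proof is simply a citation to \cite[Proposition~4.1]{RTW}, noting that $M$ satisfies the hypotheses of Proposition~2.4 there. Your Steps~1--2 (Galois descent and sum-closure for uniqueness) spell out the easy half cleanly---in particular the observation $\Pi M\subset\breve\Lambda\subset M$ for every $\breve\Lambda\in\cS(M)$ makes the maximum-by-summation argument immediate---and your Step~3 correctly identifies the candidate $\bigcap_i\tau^iM$ and defers the remaining vertex verification to the RTW analysis, which is exactly what the paper does.
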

\begin{proof}
This is essentially \cite[Proposition 4.1]{RTW} as such $M$ satisfies the conditions in Proposition 2.4 of loc.cit..
\end{proof}

\begin{theorem}\label{thm:Bruhat Tits stratification}
There is a stratification of $\cN_{\mathrm{red}}$ by closed strata $\cN_\Lambda$ given by
\begin{equation}\label{eq:Bruhat Tits stratification}
  \cN_{\mathrm{red}}=\bigcup_{\Lambda} \ \cN_\Lambda.
\end{equation}
where the union is over all vertex lattices in $\bV$. We call this the Bruhat-Tits stratification of $\cN_{\mathrm{red}}$. In the following, assume that $\Lambda$ and $\Lambda'$ are vertex lattices of type greater than or equal to $2$, and $\Lambda_0$ and $\Lambda'_0$ are vertex lattices of type $0$.
\begin{enumerate}
    \item If $\Lambda\subset \Lambda'$, then $\cN_{\Lambda'}$ is a subscheme of $\cN_{\Lambda}$.
    \item The intersection of $\cN_{\Lambda'}\cap\cN_{\Lambda}$ is nonempty if and only if $\Lambda''=\Lambda+\Lambda'$ is a vertex lattice, in which case we have $\cN_{\Lambda'}\cap\cN_{\Lambda}=\cN_{\Lambda''}$.
    \item The intersection of $\cN_{\Lambda'_0}\cap\cN_{\Lambda_0}$ is always empty if $\Lambda_0\neq \Lambda'_0$.
    \item The intersection  $\cN_{\Lambda}\cap\cN_{\Lambda_0}$ is nonempty if and only if $\Lambda\subset \Lambda_0$ in which case $\cN_{\Lambda}\cap\cN_{\Lambda_0}$ is isomorphic to $\bP_{\bar\kappa}^{m-1}$ where $2m$ is the type of $\Lambda$.
\end{enumerate}
\end{theorem}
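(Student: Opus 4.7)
My approach is to reduce every claim to a lattice-theoretic check on $\bar\kappa$-points via the bijections of Proposition~\ref{prop:k points of cNKra} and Corollary~\ref{cor:reduced point of cN Lambda}. This reduction is legitimate because, by Theorem~\ref{thm:moduli interpretation of DL variety} and Proposition~\ref{prop:cN Lambda 0}, each $\cN_\Lambda$ is smooth over $\bar\kappa$, hence reduced, and projective and closed in $\cN$; consequently, equalities and inclusions of such subschemes of $\cN_{\mathrm{red}}$ are detected on closed points.

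\textbf{The covering~(\ref{eq:Bruhat Tits stratification}).} Given $z=(M,M')\in\cN_{\mathrm{red}}(\bar\kappa)$, apply Lemma~\ref{lem:Lambda of M} to produce the maximal vertex lattice $\Lambda(M)\subset M$. If $t(\Lambda(M))=0$, then $\Lambda(M)$ is self-dual and the chain $\breve\Lambda(M)\subset M=M^\sharp\subset\breve\Lambda(M)^\sharp=\breve\Lambda(M)$ forces $M=\breve\Lambda(M)$, so $z\in\cN_{\Lambda(M)}$ by Corollary~\ref{cor:reduced point of cN Lambda}(1) (no extra condition on $M'$). If $t(\Lambda(M))\geq 2$, I will produce a vertex lattice $\Lambda\subseteq\Lambda(M)$ with $\breve\Lambda\subset M'$. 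The candidate is the largest $\tau$-stable $O_{\breve F}$-sublattice of $\breve\Lambda(M)\cap M'$, realized as the stabilized intersection $\bigcap_{k\ge0}\tau^k(\breve\Lambda(M)\cap M')$; this stabilizes in finitely many steps since all terms are sandwiched between $\Pi\breve\Lambda(M)^\sharp$ and $\breve\Lambda(M)$. The crucial inclusion $\Pi\breve\Lambda(M)^\sharp\subset \bfV M\subset M'$ ensures that the intersection still contains $\Pi\breve\Lambda(M)^\sharp$, so the vertex axiom $\pi\Lambda^\sharp\subset\Lambda$ is inherited from $\Lambda(M)$, and the resulting $\tau$-invariants descend to a vertex lattice $\Lambda\subset\bV$ with $z\in\cN_\Lambda$.

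\textbf{Parts (1)--(4).} (1) is immediate: if $\Lambda\subset\Lambda'$, then $\breve\Lambda\subset\breve\Lambda'\subset M'$, so $\cN_{\Lambda'}(\bar\kappa)\subset\cN_\Lambda(\bar\kappa)$. For (2), the inclusion $\cN_{\Lambda+\Lambda'}\subset\cN_\Lambda\cap\cN_{\Lambda'}$ follows from (1); conversely, if $(M,M')$ is a shared $\bar\kappa$-point then $\breve\Lambda+\breve\Lambda'\subset M'\subset M$, and $M=M^\sharp$ yields integrality of $\Lambda+\Lambda'$, while $\pi(\Lambda+\Lambda')^\sharp=\pi(\Lambda^\sharp\cap\Lambda'^\sharp)\subset\Lambda\cap\Lambda'\subset\Lambda+\Lambda'$ gives the vertex axiom, so $\Lambda+\Lambda'$ is a vertex lattice and $z\in\cN_{\Lambda+\Lambda'}$. (3) is also trivial: a shared $\bar\kappa$-point of two type-$0$ strata forces $\breve\Lambda_0=M=\breve\Lambda_0'$ by Corollary~\ref{cor:reduced point of cN Lambda}(1). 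For (4), a shared $\bar\kappa$-point gives $\breve\Lambda\subset M'\subset M=\breve\Lambda_0$, hence $\Lambda\subset\Lambda_0$; conversely, when $\Lambda\subset\Lambda_0$, the quotient $\Lambda_0/\Lambda$ is a Lagrangian of the $2m$-dimensional $\F_q$-symplectic space $\Lambda^\sharp/\Lambda$, of dimension $m$, and $\cN_\Lambda\cap\cN_{\Lambda_0}$ parametrizes pairs $(M,M')$ with $M=\breve\Lambda_0$ and $M'$ a hyperplane of $\breve\Lambda_0$ containing $\breve\Lambda$, identifying the intersection with the space of hyperplanes in $\breve\Lambda_0/\breve\Lambda\cong\bar\kappa^m$, i.e.\ $\bP^{m-1}_{\bar\kappa}$.

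\textbf{Main obstacle.} The delicate point is the stratification step when $t(\Lambda(M))\geq 2$: producing a vertex lattice $\Lambda$ with $\breve\Lambda\subset M'$ cannot be done by directly invoking Lemma~\ref{lem:Lambda of M} on $M'$, since $M'$ fails to be self-dual (it has colength~$1$ in $M$). The intersection construction above bypasses this, but verifying all vertex-lattice axioms for the resulting sublattice requires careful bookkeeping with the interaction between $\Pi$, $\tau$, $\bfV$, and the hermitian pairing, together with the Kr\"amer inclusion $\Pi M'\subset\bfV M$.
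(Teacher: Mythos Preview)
Your overall plan---reducing to $\bar\kappa$-points via Proposition~\ref{prop:k points of cNKra} and Corollary~\ref{cor:reduced point of cN Lambda}, using the smoothness of each $\cN_\Lambda$---is exactly the paper's strategy, and your arguments for parts (1)--(3) are essentially the same as the paper's.

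However, your treatment of the covering step when $t(\Lambda(M))\geq 2$ contains a genuine error and misses the key simplification. The inclusion you claim, $\Pi\breve\Lambda(M)^\sharp\subset\bfV M$, is \emph{false}: since $\bfV M=\Pi\tau^{-1}(M)$ and $\breve\Lambda(M)^\sharp$ is $\tau$-invariant, this would force $\breve\Lambda(M)^\sharp\subset M$, but in fact $M\subset\breve\Lambda(M)^\sharp$ with strict inclusion whenever $t(\Lambda(M))>0$. So your stabilized-intersection construction is not justified as written. The paper's argument is much simpler and bypasses the difficulty entirely: if $t(\Lambda(M))\geq 2$, then $M$ is \emph{not} $\tau$-invariant (otherwise $M=\breve\Lambda_0$ for a type-$0$ vertex lattice $\Lambda_0$, and maximality of $\Lambda(M)$ would force $\Lambda(M)=\Lambda_0$, contradicting $t(\Lambda(M))\ge 2$). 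Hence $M\cap\tau^{-1}(M)\subsetneq M$, and since $M'\subset M\cap\tau^{-1}(M)$ with $\mathrm{length}(M/M')=1$, we get $M'=M\cap\tau^{-1}(M)$. Now $\breve\Lambda(M)$ is $\tau$-invariant and contained in $M$, hence in $\tau^{-1}(M)$ as well, so $\breve\Lambda(M)\subset M'$ directly---no auxiliary construction needed. (Amusingly, your stabilized intersection would simply return $\breve\Lambda(M)$, since $\breve\Lambda(M)\cap M'=\breve\Lambda(M)$; but you cannot see this without the observation you were trying to avoid.)

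A smaller gap: in part (4) you only match $\bar\kappa$-points, which gives a bijection on closed points but not a scheme isomorphism with $\bP^{m-1}_{\bar\kappa}$, since the scheme-theoretic intersection $\cN_\Lambda\cap\cN_{\Lambda_0}$ is not \emph{a priori} reduced. The paper handles this by checking $R$-points for every $\bar\kappa$-algebra $R$ (via the moduli description or relative display theory), which is straightforward once one uses that membership in $\cN_{\Lambda_0}$ pins down $(X,\iota,\lambda,\rho)$ completely (as in Proposition~\ref{prop:cN Lambda 0}) and then condition~(2) of Definition~\ref{def:cN Lambda} for $\Lambda$ becomes a linear constraint on $\cF$.
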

\begin{proof}
To prove \eqref{eq:Bruhat Tits stratification},
it suffices to check this on $\bar\kappa$-points. A point $z\in \cN_{\mathrm{red}}(\bar\kappa)$ corresponds to a pair $(M,M')$ as in Proposition \ref{prop:k points of cNKra}. Take $\Lambda=\Lambda(M)$ as in Lemma \ref{lem:Lambda of M}.
If $\Lambda$ has type $0$, then both $\breve{\Lambda}$ and $M$ are unimodular and $\breve{\Lambda}\subset M$, so they have to be equal.
Hence $z\in \cN_\Lambda$ by Corollary \ref{cor:reduced point of cN Lambda}.
If $\Lambda$ is not of type $0$, then $M$ is not $\tau$-invariant, hence $M'=M\cap \tau^{-1}(M)$ is uniquely determined. Since $\Lambda$ is $\tau$-invariant, $\breve{\Lambda}\subset M'$. Hence $z\in \cN_\Lambda(\bar\kappa)$ by Corollary \ref{cor:reduced point of cN Lambda}. This proves \eqref{eq:Bruhat Tits stratification}.

(1) follows immediately from Definition \ref{def:cN Lambda}.

(2). If $\Lambda''$ is a vertex lattice, then $\cN_\Lambda\cap \cN_{\Lambda'}=\cN_{\Lambda''}$ by Definition \ref{def:cN Lambda}.
Conversely if $\cN_{\Lambda'}\cap\cN_{\Lambda}(\bar\kappa)$ is nonempty, let $(M,M')\in \cN_{\Lambda'}\cap\cN_{\Lambda}(\bar\kappa)$. Then $\Lambda(M)\supset \Lambda+\Lambda'$ by the maximality of $\Lambda(M)$. Then $\Lambda+\Lambda'\subset \Lambda(M)\subset \Lambda(M)^\sharp \subset \Lambda^\sharp\cap (\Lambda')^\sharp=(\Lambda+\Lambda')^\sharp$. Hence $\Lambda+\Lambda'$ is a vertex lattice.

(3) follows directly from Corollary \ref{cor:reduced point of cN Lambda}.

(4). By Corollary \ref{cor:reduced point of cN Lambda}, a point $(M,M')\in \cN(\bar\kappa)$ is in $\cN_{\Lambda}\cap\cN_{\Lambda_0}$ if and only if $M=\Lambda_0\otimes_{O_F}O_{\breve F}$ and $\Lambda\subset M'\subset M$. This show that $\Lambda\subset\Lambda_0$ and $M'$ corresponds to a point in $\bP(\Lambda_0/\Lambda)(\bar\kappa)$.
Hence $\cN_{\Lambda}\cap\cN_{\Lambda_0}(\bar\kappa)=\bP(\Lambda_0/\Lambda)(\bar\kappa)$. Similarly one can show that
\[\cN_{\Lambda}\cap\cN_{\Lambda_0}(R)=\bP(\Lambda_0/\Lambda)(R)\]
for any $\bar\kappa$-algebra $R$. This finishes the proof of (4).
\end{proof}	
	
	\begin{proposition}\label{prop:reducedlocus}
	For a rank $r$ lattice $L\subset \bV$, the reduced subscheme $\cZ(L)_{\mathrm{red}}$ of $\cZ(L)$ is a union of Bruhat-Tits strata:
	\begin{equation}\label{eq:reduced locus cZ}
	\cZ(L)_{\mathrm{red}}=\bigcup_{L\subset \Lambda}\cN_\Lambda,
	\end{equation}
	where  the union is taken over all vertex lattices $\Lambda$ such that $L\subset \Lambda$.
	Moreover, the intersection of $\cZ(L)$ with $\cN_\Lambda$ is nonempty if and only if $L\subset \Lambda^\sharp$.
	\end{proposition}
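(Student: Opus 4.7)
The plan is to work at the level of $\bar\kappa$-points using the Dieudonn\'e module description from Proposition \ref{prop:k points of cNKra} and Corollary \ref{cor:reduced point of cN Lambda}, combined with the Bruhat--Tits stratification (Theorem \ref{thm:Bruhat Tits stratification}). For a point $z = (X,\iota,\lambda,\rho,\cF) \in \cN(\bar\kappa)$ corresponding to a pair $(M,M')$, Dieudonn\'e theory translates the condition $z \in \cZ(L)(\bar\kappa)$ into the inclusion $L \subset M$ under the identification $\bV \cong C \subset N$. Since each stratum $\cN_\Lambda$ is reduced by Proposition \ref{prop:cN Lambda 0} and Theorem \ref{thm:moduli interpretation of DL variety}, and since each point of $\cN_{\mathrm{red}}$ lies in only finitely many $\cN_\Lambda$ (namely those with $\Lambda \subset \Lambda(M)$), it suffices to establish set-theoretic equality on $\bar\kappa$-points.

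For the inclusion $\bigcup_{L \subset \Lambda} \cN_\Lambda \subset \cZ(L)_{\mathrm{red}}$, I would use functoriality: if $L \subset \Lambda$, then requiring the extension of all $x \in \Lambda$ is stronger than requiring the extension of all $x \in L$, so $\cZ(\Lambda) \subset \cZ(L)$; combined with $\cN_\Lambda \subset \cZ(\Lambda)$ from condition (1) of Definition \ref{def:cN Lambda}, this gives $\cN_\Lambda \subset \cZ(L)_{\mathrm{red}}$. For the reverse inclusion, let $z \in \cZ(L)_{\mathrm{red}}(\bar\kappa)$ correspond to $(M,M')$. By the Bruhat--Tits stratification, $z \in \cN_{\Lambda(M)}$ where $\Lambda(M)$ is the maximal vertex sublattice of $C$ with $\breve\Lambda(M) \subset M$ (Lemma \ref{lem:Lambda of M}). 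The key step is to show $L \subset \Lambda(M)$: from $L \subset M$, $L \subset C$, and $M = M^\sharp$, the pairings $(x,y)$ for $x,y \in L$ lie in $O_{\breve F} \cap F = O_F$, so $L$ is integral; by the same reasoning, $\Lambda(M) \subset L^\sharp$ and $L \subset \Lambda(M)^\sharp$. Then $\Lambda(M) + L \subset M$ is integral, and one computes
$$\pi (\Lambda(M)+L)^\sharp = \pi(\Lambda(M)^\sharp \cap L^\sharp) \subset \pi \Lambda(M)^\sharp \subset \Lambda(M) \subset \Lambda(M) + L,$$
so $\Lambda(M) + L$ is a vertex lattice. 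The maximality of $\Lambda(M)$ then forces $\Lambda(M) + L = \Lambda(M)$, i.e., $L \subset \Lambda(M)$, which finishes this step.

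For the nonemptiness criterion $\cZ(L) \cap \cN_\Lambda \neq \emptyset \iff L \subset \Lambda^\sharp$, the forward direction is immediate: any $(M,M')$ in the intersection satisfies $L \subset M \subset \breve\Lambda^\sharp$ (using $\breve\Lambda \subset M = M^\sharp$), whence $L \subset \Lambda^\sharp$. For the converse, when $\Lambda$ has type $0$ we have $\Lambda^\sharp = \Lambda \supset L$, and the previously established inclusion $\cN_\Lambda \subset \cZ(L)$ combined with $\cN_\Lambda \cong \bP^{n-1}_{\bar\kappa}$ (Proposition \ref{prop:cN Lambda 0}) yields the result. When $\Lambda$ has type $\geq 2$, the integrality of $L$ together with $L \subset \Lambda^\sharp$ implies that the image of $L$ in the quadratic space $V := \breve\Lambda^\sharp / \breve\Lambda$ over $\bar\kappa$ is isotropic; extending its span to a Lagrangian of $V$ and pulling back yields a self-dual lattice $M$ with $\breve\Lambda + L \subset M \subset \breve\Lambda^\sharp$ satisfying the conditions of Proposition \ref{prop:k points of cNKra}, from which a compatible choice of $M'$ (forced to be $M \cap \tau^{-1}(M)$ when $M \neq \tau M$, or any hyperplane containing $\breve\Lambda$ otherwise) produces a point in $\cZ(L) \cap \cN_\Lambda(\bar\kappa)$. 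The main obstacle is the lattice-theoretic verification that $\Lambda(M) + L$ is a vertex lattice, which drives the proof of $L \subset \Lambda(M)$ in the second step.
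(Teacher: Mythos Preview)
Your proof is correct. For the first assertion \eqref{eq:reduced locus cZ}, the paper simply refers to \cite[Proposition 3.8]{Shi1}; your direct Dieudonn\'e-theoretic argument (showing $\Lambda(M)+L$ is a vertex lattice and invoking maximality of $\Lambda(M)$) is essentially what that reference contains.

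For the nonemptiness criterion the paper takes a more structural route than you do. Given $L \subset \Lambda^\sharp$ with $L$ integral, it sets $\Lambda' := L + \Lambda$, observes this is a vertex lattice (the same computation you carry out for $\Lambda(M)+L$) with $\Lambda \subset \Lambda'$, and concludes $\emptyset \neq \cN_{\Lambda'} \subset \cN_\Lambda \cap \cZ(L)$ via Theorem \ref{thm:Bruhat Tits stratification}(1) and the already-proven \eqref{eq:reduced locus cZ}. For the converse, the paper again leans on \eqref{eq:reduced locus cZ} and Theorem \ref{thm:Bruhat Tits stratification} to find a vertex lattice $\Lambda'$ with $\Lambda \subset \Lambda'$ and $L \subset \Lambda'$, whence $L \subset \Lambda' \subset (\Lambda')^\sharp \subset \Lambda^\sharp$. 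This avoids your explicit Lagrangian construction and the case split on the type of $\Lambda$; on the other hand, your direct argument $L \subset M = M^\sharp \subset \breve\Lambda^\sharp$ for the converse is arguably cleaner than going through the stratification. Both the paper and your proposal tacitly assume $L$ is integral for the ``if'' direction.
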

	\begin{proof}
	The proof of \eqref{eq:reduced locus cZ} is the same as that of \cite[Proposition 3.8]{Shi1}.
	
	If $L\subset \Lambda^\sharp$ and $L$ is integral, define $\Lambda':=L+\Lambda$. Then $\Lambda'$ is a vertex lattice and $\Lambda \subset \Lambda'$. By Theorem \ref{thm:Bruhat Tits stratification} (1) and the definition of $\cZ(L)$, $\cN_{\Lambda'}$ is in the intersection of $\cZ(L)$ and $\cN_\Lambda$.
	
	Conversely if the intersection of $\cZ(L)$ and $\cN_\Lambda$ is not empty, then by \eqref{eq:reduced locus cZ} and Theorem \ref{thm:Bruhat Tits stratification}, there exists a vertex lattice $\Lambda'$ such that $\Lambda\subset \Lambda'$ and $L\subset\Lambda'$. Since $\Lambda'\subset (\Lambda')^\sharp\subset \Lambda^\sharp$, we know that $L\subset \Lambda^\sharp$. This finishes the proof of the lemma.
	\end{proof}

\section{Fourier transform: the geometric side} \label{sect:FourierGeo}
\subsection{Horizontal and vertical part of ${}^\bL\cZ(L^\flat)$}
\begin{definition}\label{def:horizontal lattice}
Let $L^\flat$ be a rank $n-1$ integral lattice in $\bV$. We say that  $L^\flat$ is horizontal if one of the following conditions is satisfied.
\begin{enumerate}
    \item $L^\flat$ is unimodular.
    \item $L^\flat$ is of the form $L^\flat=M\obot L'$ where $M$ is a unimodular sublattice of rank $n-2$ such that $(M_F)^\bot$ (the perpendicular complement of $M_F$ in $\bV$) is nonsplit.
\end{enumerate}
We denote the set of horizontal lattices by $\Hor$.
\end{definition}
\begin{lemma}\label{lem:alternative def of horizontal lattice}
Let $L^\flat$ be a rank $n-1$ lattice in $\bV$. Then
$L^\flat$ is horizontal if and only if there is a unique vertex lattice $\Lambda$ which contains $L^\flat$. If this is the case, $\Lambda$ is of type $0$.
\end{lemma}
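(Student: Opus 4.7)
The plan is to analyze vertex lattices $\Lambda\supseteq L^\flat$ via the orthogonal decomposition $\bV = L^\flat_F \obot \bW$ with $\bW$ the $1$-dimensional complement. For any rank-$n$ lattice $\Lambda \supseteq L^\flat$, I set $\Lambda_1 := \Lambda \cap L^\flat_F$ and use the standard duality identity $\pi_{L^\flat_F}(\Lambda^\sharp) = (\Lambda_1)^\sharp$ (dual within $L^\flat_F$). The chain $L^\flat \subseteq \Lambda_1 \subseteq \Lambda \subseteq \Lambda^\sharp$ forces $L^\flat \subseteq \Lambda_1 \subseteq \pi_{L^\flat_F}(\Lambda) \subseteq \pi_{L^\flat_F}(\Lambda^\sharp) \subseteq (L^\flat)^\sharp$, where the last dual is in $L^\flat_F$. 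I will use this sandwich repeatedly.

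For the $(\Leftarrow)$ direction, in case (1) of Definition~\ref{def:horizontal lattice}, $L^\flat$ is unimodular so $(L^\flat)^\sharp = L^\flat$ and the sandwich collapses to $\Lambda \cap L^\flat_F = L^\flat$. The unimodularity then lets me split off $L^\flat$ orthogonally: $\Lambda = L^\flat \obot L_\bW$ with $L_\bW = \Lambda \cap \bW$. The vertex conditions $\pi\Lambda^\sharp \subseteq \Lambda \subseteq \Lambda^\sharp$ reduce, on the $\bW$-side, to $\pi L_\bW^\sharp \subseteq L_\bW = L_\bW^\sharp$, i.e., $L_\bW$ must be self-dual of rank $1$. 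In the ramified setting $N(\pi) = -\pi_0$ lies in $N(F^\times)$, so every rank-$1$ Hermitian space admits a self-dual lattice, and a direct computation with scaling shows it is unique. Hence $\Lambda$ exists, is unique, and has type~$0$. In case (2), the same argument applied to $M$ gives $\Lambda = M \obot N$ with $N$ a rank-$2$ vertex lattice in the nonsplit $2$-dim space $(M_F)^\perp$ that contains $L'$. The normal form classification shows that rank-$2$ vertex lattices of positive type only arise from the hyperbolic block $\left(\begin{smallmatrix} 0 & \pi \\ -\pi & 0\end{smallmatrix}\right)$, whose Hermitian span is split; hence $N$ must be self-dual (type $0$). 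A short matrix calculation with the form $\mathrm{diag}(1,-u)$ (where $-u \notin N(F^\times)$) shows that any $g \in U((M_F)^\perp)(F_0)$ automatically has entries in $O_F$, so the full unitary group stabilizes $N_0$, making the self-dual lattice unique.

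For the $(\Rightarrow)$ direction I argue the contrapositive. If $L^\flat$ is integral but not horizontal, the normal-basis classification (\S\ref{subsec: notation}) leaves three possibilities: (a) $L^\flat$ has a fundamental invariant $\geq 2$; (b) $L^\flat$ contains a hyperbolic summand of type $(1,1)$; (c) $L^\flat = M \obot L'$ with $M$ unimodular of rank $n-2$ but $(M_F)^\perp$ split. In case (c), applying sub-case (2) of the above analysis, $(M_F)^\perp$ admits infinitely many self-dual rank-$2$ lattices (because in the hyperbolic $2$-dim space $U(2)$ acts transitively with proper stabilizer on self-dual lattices, producing a building of positive dimension), yielding many vertex lattices containing $L^\flat$. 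Case (b) is similar, since a $(1,1)$-block spans a hyperbolic $2$-dim Hermitian subspace, hence produces many compatible vertex lattices. In case (a), the rank-$1$ summand $O_F\ell$ with $(\ell,\ell) = \beta \pi^{2b}$, $b \geq 1$, produces a $\kappa$-family of choices for the corresponding vertex-lattice completion inside $(\ell)^\sharp / L^\flat$, again giving multiple $\Lambda$'s.

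The main obstacle I anticipate is the uniqueness statement for the self-dual rank-$2$ lattice in the anisotropic $2$-dim Hermitian space. The computational argument that $U((M_F)^\perp)(F_0) = U(N_0)$ is delicate and relies on a careful case analysis of $F$-valuations of matrix entries under the Hermitian constraint $g^* J g = J$ combined with the non-norm condition $-\beta_1\beta_2 \notin N(F^\times)$. Once this is in hand, the rest of the proof is essentially bookkeeping with the orthogonal decomposition and the sandwich inequality above.
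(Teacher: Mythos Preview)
Your ``only if'' direction follows the same reduction as the paper: split off the unimodular part of $L^\flat$ orthogonally and land in a $1$- or $2$-dimensional problem. The paper cites \cite[Theorem~3.10]{Shi1} for uniqueness of the self-dual lattice in the anisotropic $2$-dimensional space (equivalently, that the building is a point), whereas you sketch a direct matrix computation. That is fine in principle, but note a slip: for $\mathrm{diag}(1,-u)$ the anisotropy condition is $u\notin N(F^\times)$, not $-u\notin N(F^\times)$; these differ when $\chi(-1)=-1$, which does occur.

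The ``if'' direction has real gaps. In case~(c) you correctly reduce to finding several rank-$2$ vertex lattices $N\supseteq L'$ in the split plane $(M_F)^\perp$, but then you only argue that the split plane contains infinitely many self-dual lattices. That does not show more than one of them contains the \emph{given} $L'$; this is exactly where the paper invokes \cite[Corollary~3.11]{HSY} (and it genuinely uses $\val(L')>0$). Case~(b) has a structural problem: the $(1,1)$-block $B$ is a rank-$2$ sublattice of $L^\flat$, and producing several type-$0$ lattices $B'\supset B$ inside $B_F$ does not yield rank-$n$ vertex lattices in $\bV$ containing all of $L^\flat$ --- you would still have to handle the rank-$(n-3)$ orthogonal complement of $B$ in $L^\flat$ together with the missing $\bW$-direction, and there is no reason those pieces assemble into a vertex lattice. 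Case~(a) is only a phrase (``$\kappa$-family of choices''), not an argument.

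The paper's route for this direction avoids the case split (a)/(b) entirely: if $t(L^\flat)\geq 2$ then $L^\flat$ lies in some type-$2$ vertex lattice $\Lambda_2\subset\bV$, and each of the $q+1$ type-$0$ vertex lattices containing $\Lambda_2$ automatically contains $L^\flat$. This packages all of your (a) and (b) into one building-theoretic step. The remaining case $t(L^\flat)\le 1$ with split $(M_F)^\perp$ is then exactly your~(c), handled by the citation above.
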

\begin{proof}
We first prove the ``only if'' direction. Let $\Lambda$ be any vertex lattice containing $L^\flat$.
If $L^\flat$ is unimodular, then $\Lambda$ has to be of the form $L^\flat\obot L'$ where $L'$ is the unique unimodular lattice in $(L^\flat_F)^\bot$. If $L^\flat$ is of the form $M\obot L'$ such that $M$ is of rank $n-2$ and  $(M_F)^\bot$ is nonsplit, then the proof of \cite[Theorem 3.10]{Shi1} implies that there is a unique vertex lattice $\Lambda'$ in $(M_F)^\bot$ which is of unimodular (this corresponds to the fact that the Bruhat-Tits building of $(M_F)^\bot$ has only one point). Then $\Lambda$ must be of the form $M\obot\Lambda'$. In both cases, $\Lambda$ is unique and is of type $0$.

We now prove the ``if'' direction.  If $t(L^\flat)\geq 2$, then there exist a type $2$ vertex lattice $\Lambda_2$ containing $L^\flat$ and any type $0$ vertex lattice containing $\Lambda_2$ (there are $q+1$ of them) also contains $L^\flat$. Hence $t(L^\flat)\leq 1$ and $L^\flat$ is of the form  $M\obot L'$ such that $M$ is of rank $n-2$. If $(M_F)^\bot$ is split and $\val(L')>0$, then \cite[Corollary 3.11]{HSY} implies that there are more than one type $0$ vertex lattices $\Lambda'$ in $(M_F)^\bot$ containing $L'$. For any such $\Lambda'$, $M\obot \Lambda'$ is a vertex lattice of type $0$ containing $L^\flat$. This shows that in order for such $\Lambda$ to be unique, $L^\flat$ must satisfies the conditions in Definition \ref{def:horizontal lattice}. The lemma is proved.
\end{proof}

For a rank $n-1$ lattice $L^\flat$ in $\bV$, define	\begin{equation}\label{eq:horizontalmodules}
		\Hor(L^\flat)\coloneqq \{M^\flat \in \Hor \mid L^\flat\subset M^\flat\}.
\end{equation}

When $\mathrm{dim}(\bV)=2$ and $\chi(\bV)=-1$, for $y\in \bV$,
define
\[\tZ(y)^\circ\coloneqq \left\{\begin{array}{cc}
		\cZ_{\val(y)}^+ \sqcup\cZ_{\val(y)}^-   & \text{ if } \val(y)>0,  \\
		\cZ_0 & \text{ if } \val(y)=0.
\end{array}\right.\]
Here $\cZ_0\cong \SpfOF$ and $\cZ_{s}^+\cong \cZ_{s}^-\cong \Spf W_s$ are quasi-canonical lifting cycles defined in \cite[\S3]{Shi2} where $W_s$ is a totally ramified abelian extension of $O_{\breve F}$ of degree $q^s$.
When $\mathrm{dim}(\bV)=2$ and $\chi(\bV)=1$, for $y\in \bV^{=0}$, define
$\tZ(y)^\circ$ to be $\cZ^h(y)$, where $\cZ^h(y)\cong \Spf O_{\breve F}$ is as in \cite[Theorem 4.1]{HSY}. In all cases, $\tZ(y)^\circ$ is a closed subscheme of $\cN_2$.

For a $M^\flat \in \Hor$, we can decompose $M^\flat$ as $M\obot \spa\{y\}$ where $M$ is unimodular and $\val(y)$ has to be zero if $(M_F)^\bot$ is split. By \cite[Proposition 2.6]{HSY3}, the unimodular lattice $M$ induces a closed embedding $\cN_2\hookrightarrow \cN_n$.
We define $\tZ(M^\flat)^\circ$ to be the image of the composed embedding  $\tZ(y)^\circ\hookrightarrow \cN_2 \hookrightarrow \cN_n$ where $\tZ(y)^\circ$ is the closed formal subscheme of $\cN_2$ defined above. Moreover by loc. cit., the definition of $\tZ(M^\flat)^\circ$ is independent of the choice of $M$.
The following is \cite[Theorem 4.2]{HSY3}.
	\begin{theorem}\label{thm:horizontalpart}
		Let $L^\flat$ be a rank $n-1$ non-degenerate integral lattice in $\bV$, then
		\begin{equation}\label{eq:horizontalpart}
			\cZ(L^\flat)_\mathscr{H}=\bigcup_{M^\flat \in \Hor(L^\flat)} \tZ(M^\flat)^\circ.
		\end{equation}
		In particular, $\cZ(L^\flat)_{\mathscr{H}}$ is  of pure dimension $1$. We have the following identity in $\Gr^{n-1}K_0^{\cZ(L^\flat)}(\cN)$:
		\begin{equation}\label{eq:derived horizontal part}
		    [\Oo_{\cZ(L^\flat)_\mathscr{H}}]=\sum_{M^\flat \in \Hor(L^\flat)} [\Oo_{\tZ(M^\flat)^\circ}].
		\end{equation}
	\end{theorem}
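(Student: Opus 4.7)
The plan is to prove the set-theoretic identity \eqref{eq:horizontalpart} first, then upgrade it to the Grothendieck-group identity \eqref{eq:derived horizontal part}. For the easy inclusion $\supseteq$, any $M^\flat \in \Hor(L^\flat)$ decomposes as $M \obot \langle y \rangle$ with $M$ unimodular of rank $n-2$; this decomposition induces a closed embedding $\cN_2 \hookrightarrow \cN_n$ by \cite[Proposition 2.6]{HSY3}. Under this embedding, the flat one-dimensional subscheme $\tZ(y)^\circ \subseteq \cN_2$, whose flatness is established in \cite{Shi2} and \cite{HSY}, maps to $\tZ(M^\flat)^\circ$. The containment $L^\flat \subseteq M^\flat$ then gives $\tZ(M^\flat)^\circ \subseteq \cZ(L^\flat)_\sH$ directly. (In the degenerate subcase where $L^\flat$ itself is unimodular, one applies the same argument after observing $\Hor(L^\flat) = \{L^\flat\}$.)

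For the nontrivial inclusion $\subseteq$, I would take an $\Oo_K$-valued point $z = (X,\iota,\lambda,\rho,\cF)$ of $\cZ(L^\flat)$ for $K/\breve F$ finite. Integrality of $L^\flat$ promotes each quasi-homomorphism $x_i \in L^\flat$ to an actual homomorphism $\cG_{\Oo_K} \to X$. By rigidity of formal $O_F$-module homomorphisms in characteristic zero, together with the theory of canonical lifts, the resulting $O_F$-linear map $\cG_{\Oo_K} \otimes_{O_F} L^\flat \to X$ realizes its image as a direct factor of $X$ up to isogeny; the signature condition $(1,n-1)$ then forces the complementary summand to be a one-dimensional formal $O_F$-module of signature $(1,0)$. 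Pulling $\lambda$ back through the splitting and reading off the induced hermitian form constrains the image of $L^\flat$ to fit one of the two horizontal patterns --- either $L^\flat$ is itself unimodular, or $L^\flat \subseteq M \obot \langle y \rangle$ with $M$ unimodular of rank $n-2$ and $(M_F)^\perp$ nonsplit. The Kr\"amer filtration $\cF$ then pins down a unique $M^\flat \in \Hor(L^\flat)$ such that $z \in \tZ(M^\flat)^\circ$.

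For the K-theoretic identity \eqref{eq:derived horizontal part}, each $\tZ(M^\flat)^\circ$ is one-dimensional in the regular formal scheme $\cN$ of relative dimension $n-1$, hence of codimension $n-1$, and defines a class in $\rF^{n-1}K_0^{\cZ(L^\flat)}(\cN)$. By Lemma \ref{lem:alternative def of horizontal lattice}, each $M^\flat$ determines a unique type-$0$ vertex lattice containing it; by Theorem \ref{thm:Bruhat Tits stratification}(3), distinct type-$0$ strata are disjoint in $\cN_{\mathrm{red}}$, and even within a common stratum two distinct $\tZ(M^\flat)^\circ$ and $\tZ(M'^\flat)^\circ$ can meet only at the superspecial $\bar\kappa$-point. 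Their intersections therefore have codimension $\geq n$ and vanish in $\Gr^{n-1}$. Combined with the set-theoretic equality and the purity of $\cZ(L^\flat)_\sH$ inherited from its quasi-canonical lifting constituents (which are regular Cohen--Macaulay of pure relative dimension zero), this yields \eqref{eq:derived horizontal part}.

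The hardest step will be the splitting argument in the second paragraph: rigorously showing that over the characteristic-zero base $\Oo_K$, the principally polarized formal $O_F$-module $(X,\lambda)$ equipped with its Kr\"amer filtration $\cF$ actually decomposes, up to isogeny, as a product of a canonical lift carrying the image of $L^\flat$ and a one-dimensional complementary factor of signature $(1,0)$. This demands careful application of Grothendieck--Messing and relative display theory, together with compatibility checks between the signature type, the polarization-induced hermitian form, and the Kr\"amer condition; it must also be combined with a case analysis on the fundamental invariants of $L^\flat$ to confirm that Definition \ref{def:horizontal lattice} genuinely exhausts all horizontal possibilities (and that no hybrid case involving a higher-type summand can support a horizontal lift).
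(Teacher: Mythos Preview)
The paper does not prove this theorem; immediately before the statement it writes ``The following is \cite[Theorem 4.1]{HSY3}'' and imports the result without argument. So there is no proof in this paper to compare your proposal against.

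That said, your outline follows the natural strategy (and is broadly in the spirit of how horizontal parts are handled in the Kudla--Rapoport literature): prove the set-theoretic identity by analyzing $\Oo_K$-points, then upgrade to $\Gr^{n-1}$ by a codimension argument. Two points deserve care if you intend to flesh this out.

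First, in your second paragraph you claim the Kr\"amer filtration ``pins down a \emph{unique} $M^\flat \in \Hor(L^\flat)$.'' The theorem does not assert disjointness of the $\tZ(M^\flat)^\circ$, and indeed distinct $M^\flat, M'^\flat \in \Hor(L^\flat)$ can share the same type-$0$ vertex lattice (Lemma~\ref{lem:alternative def of horizontal lattice}) and their quasi-canonical lifting cycles can meet in the special fiber. For \eqref{eq:horizontalpart} you only need existence of some $M^\flat$ with $z \in \tZ(M^\flat)^\circ$, not uniqueness.

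Second, and more substantively, the passage from \eqref{eq:horizontalpart} to \eqref{eq:derived horizontal part} needs more than the set-theoretic equality together with ``pairwise intersections have codimension $\ge n$.'' You must verify that $\cZ(L^\flat)_\sH$, with its actual scheme structure (obtained by killing $\Oo_{\breve F}$-torsion in $\Oo_{\cZ(L^\flat)}$), is generically reduced along each one-dimensional component --- equivalently, that each quasi-canonical lifting cycle occurs with multiplicity exactly one and there are no embedded components. Your purity remark gestures at this, but the genuine content lies in controlling the local equations of $\cZ(L^\flat)$ near horizontal points, which is what the computations in \cite{Shi2}, \cite{HSY}, and \cite{HSY3} supply.
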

	
\begin{lemma}\label{lem:filtration is ideal}
For any formal subscheme $Z$ of $\cN$ and $0\leq i \leq n$, $F^i K_0^Z(\cN)$ is an ideal in $K_0(\cN)$.
\end{lemma}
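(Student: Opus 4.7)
The plan is to show that $K_0(\cN)$ acts on $K_0^Z(\cN)$ via derived tensor product and that the codimension filtration is stable under this action. First, I would recall that every class in $K_0(\cN)$ is a $\Z$-linear combination of classes of locally free coherent sheaves on $\cN$: any bounded complex of locally free coherent sheaves $E^\bullet$ satisfies $[E^\bullet]=\sum_i(-1)^i[E^i]$ in $K_0(\cN)$. Hence it suffices to prove that for a locally free coherent $\cO_\cN$-module $\cE$ and any $\alpha\in F^i K_0^Z(\cN)$, the class $[\cE]\cdot\alpha$ again lies in $F^i K_0^Z(\cN)$.

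Next, I would describe $F^i K_0^Z(\cN)$ explicitly. Since $\cN$ is regular, the natural map $K_0^Z(\cN)\isoarrow K_0'(Z)$ is an isomorphism, and under this identification the codimension filtration is generated by classes of coherent $\cO_Z$-modules $\cA$ whose support (viewed as a closed formal subscheme of $\cN$) has codimension $\ge i$. Equivalently, $F^i K_0^Z(\cN)$ is generated by classes $[\cA]$ for $\cA$ a coherent $\cO_\cN$-module supported on some closed formal subscheme $Y\subseteq Z$ of codimension $\ge i$ in $\cN$. Thus it is enough to verify the ideal property on such generators.

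Now I would carry out the key computation: for a locally free coherent sheaf $\cE$ and a coherent sheaf $\cA$ supported on $Y\subseteq Z$ with $\mathrm{codim}_\cN Y\ge i$, flatness of $\cE$ gives $\cE\otimes^{\bL}_{\cO_\cN}\cA=\cE\otimes_{\cO_\cN}\cA$, a coherent sheaf whose support is contained in $\mathrm{supp}(\cA)\subseteq Y$. Consequently $[\cE]\cdot[\cA]=[\cE\otimes_{\cO_\cN}\cA]$ is represented by a coherent sheaf supported in codimension $\ge i$ within $Z$, hence lies in $F^i K_0^Z(\cN)$. Extending $\Z$-linearly over the generators of $K_0(\cN)$ and of $F^iK_0^Z(\cN)$, this shows that $F^iK_0^Z(\cN)$ is stable under the $K_0(\cN)$-action and is therefore an ideal.

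The only point requiring mild care is the formal-scheme setting: one must check that the generation of $K_0(\cN)$ by locally free classes and the identification $K_0^Z(\cN)\simeq K_0'(Z)$ hold for the locally noetherian regular formal scheme $\cN$. These follow from the corresponding statements for the noetherian formal schemes $\Spf A$ covering $\cN$, together with the fact that $\cN$ has finite tor-dimension since it is regular, so a standard resolution argument produces the needed bounded locally free resolutions; this is the main (but routine) technical point in extending the scheme-theoretic statement of \cite[I.3, Theorem 1.3]{soule1994lectures}.
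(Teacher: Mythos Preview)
Your argument is correct and follows essentially the same idea as the paper: tensoring with locally free objects does not enlarge the support of the cohomology. The paper's proof stays entirely on the $K_0^Z(\cN)$ side and avoids the detour through $K_0'(Z)$: given $[\cF^\bullet]\in F^iK_0^Z(\cN)$ (a complex of locally free sheaves acyclic outside some $Y\subseteq Z$ of codimension $\ge i$) and any $[\cK^\bullet]\in K_0(\cN)$, the K\"unneth formula shows $\cF^\bullet\otimes_{\cO_\cN}\cK^\bullet$ is again acyclic outside $Y$, so one need not decompose $\cK^\bullet$ into its individual terms or invoke the regularity isomorphism $K_0^Z(\cN)\simeq K_0'(Z)$.
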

\begin{proof}
By definition $F^i K_0^Z(\cN)$ is generated by elements of the form $[\cF^\bullet]$ where $\cF^\bullet$ is a finite complex of locally free coherent $\Oo_{\cN}$-modules acyclic outside a sub formal scheme $Y$ of $Z$ such that the codimension of $Y$ in $\cN$ is greater than or equal to $i$.
By Kunneth formula for chain complexes, the product complex $\cF^\bullet \otimes_{\Oo_\cN} \mathcal{K}^\bullet$ is acyclic outside $Y$ as well for any finite complexes of locally free coherent $\Oo_{\cN}$-modules $\mathcal{K}^\bullet$. This proves the lemma.
\end{proof}

By Lemma \ref{lem:filtration is ideal}, for any formal subscheme $Z$ of $\cN$, we can define a quotient ring (not necessary with identity)
\begin{equation}
    \Gr' K_0^Z(\cN):=K_0^Z(\cN)/ F^n K_0^Z(\cN).
\end{equation}
In particular $\Gr^{n-1} K_0^Z(\cN)=F^{n-1} K_0^Z(\cN)/F^{n} K_0^Z(\cN)$ is a subgroup of $\Gr' K_0^Z(\cN)$.

Let $L^\flat$ be a rank $n-1$ non-degenerate integral lattice.
Since $\cZ(L^\flat)_\mathscr{H}$ is one-dimensional,  the intersection $\cZ(L^\flat)_\mathscr{H}\cap \cZ(L^\flat)_\mathscr{V}$ must be $0$-dimensional if nonempty. It follows that there is a decomposition
\begin{equation}\label{eq:Gr' decomposition}
    \Gr'K_0^{\cZ(L^\flat)}(\cN)=\Gr'K_0^{\cZ(L^\flat)_{\mathscr{H}}}(\cN)\oplus \Gr'K_0^{\cZ(L^\flat)_{\mathscr{V}}}(\cN).
\end{equation}
Under this  decomposition, we have
\begin{equation}\label{eq:horizontal vertical decomposition}
    {}^\bL\cZ(L^\flat)={}^\bL\cZ(L^\flat)_{\mathscr{H}}+{}^\bL\cZ(L^\flat)_{\mathscr{V}}\in \Gr' K_0^{\cZ(L^\flat)}(\cN),
\end{equation}
where we denote by the same notation the image of ${}^\bL\cZ(L^\flat)$ under the natural quotient map $K_0^{\cZ(L^\flat)}(\cN)\rightarrow \Gr'K_0^{\cZ(L^\flat)}(\cN)$.
It follows that the element ${}^\bL\cZ(L^\flat)_{\mathscr{V}}\in \Gr' K_0^{\cZ(L^\flat)}(\cN)$ is canonically defined although $\cZ(L^\flat)_{\mathscr{V}}$ depends on the choice of a large integer $m\gg 0$.

Since $\cZ(L^\flat)_{\mathscr{H}}$ has expected dimension, ${}^\bL \cZ(L^\flat)_{\mathscr{H}}$ is in fact in $\Gr^{n-1}K_0^{\cZ(L^\flat)}(\cN)$ and is represented by the structure sheaf of $\cZ(L^\flat)_{\mathscr{H}}$. In order to match the analytic side of our conjecture, we need to slightly modify ${}^\bL\cZ(L^\flat)_{\mathscr{H}}$.

\begin{definition}\label{def:horizontal difference cycle}
Let $L^\flat$ be a horizontal lattice in $\bV$. Define ${}^\bL\cZ(L^\flat)^\circ\in \Gr'K_0^{\cZ(L^\flat)}(\cN)$ by
\[{}^\bL\cZ(L^\flat)^\circ=
\begin{cases}
[\Oo_{\tZ(L^\flat)^\circ}]+\frac{1-(-1)^{n-1}}{2}[\Oo_{\bP_\Lambda}] & \text{ if } L^\flat \text{ is unimodular},\\
[\Oo_{\tZ(L^\flat)^\circ}]+[\Oo_{\bP_\Lambda}] & \text{ otherwise},
\end{cases}\]
where $\Lambda$ is the unique type $0$ vertex lattice containing $L^\flat$ as in Lemma \ref{lem:alternative def of horizontal lattice} and $\bP_\Lambda$ is a projective line over $\bar\kappa$ in $\Exc_\Lambda$.
\end{definition}
\begin{remark}
${}^\bL\cZ(L^\flat)^\circ$ is the difference cycle $\cD(L^\flat)$ defined in \cite[Definition 2.15]{HSY3}.
\end{remark}

\begin{definition}\label{def:horizontal and vertical intersection number}
Let $L^\flat$ be a rank $n-1$ non-degenerate integral lattice. Define ${}^\bL\cZ(L^\flat)_{\mathscr{H}}^*\in \Gr'K_0^{\cZ(L^\flat)}(\cN)$ by
\[{}^\bL\cZ(L^\flat)_{\mathscr{H}}^*:=\sum_{M^\flat\in \Hor(L^\flat)} {}^\bL\cZ(M^\flat)^\circ ,\]
where $\cZ(M^\flat)_{\mathscr{H}}^\circ$ is as in Definition \ref{def:horizontal difference cycle}.
Define the modified vertical part of the derived special cycle ${}^\bL\cZ(L^\flat)$ by
\[{}^\bL\cZ(L^\flat)_{\mathscr{V}}^*:={}^\bL\cZ(L^\flat)-{}^\bL\cZ(L^\flat)_{\mathscr{H}}^*\in \Gr'K_0^{\cZ(L^\flat)}(\cN).\]
For any $x\in \bV\setminus L^\flat_F$, define
\begin{align}\label{eq: def of Int_sH}
    \Int_{L^{\flat},\mathscr{H}}(x):=\chi(\cN,{}^\bL\cZ(L^\flat)_{\mathscr{H}}^*\cdot [\Oo_{\cZ(x)}]), \hbox{ and } \Int_{L^{\flat},\mathscr{V}}(x):=\chi(\cN,{}^\bL\cZ(L^\flat)_{\mathscr{V}}^*\cdot [\Oo_{\cZ(x)}]).
\end{align}
\end{definition}

\begin{lemma}\label{lem:derived vertical part}
For a rank $n-1$ non-degenerate integral lattice $L^\flat$, we have
\[{}^\bL\cZ(L^\flat)_{\mathscr{V}}^*\in \Gr' K_0^{\cZ(L^\flat)_\mathscr{V}}(\cN).\]
\end{lemma}
\begin{proof}
By the definition of ${}^\bL\cZ(L^\flat)_{\mathscr{V}}^*$, the decomposition \eqref{eq:horizontal vertical decomposition} and Theorem \ref{thm:horizontalpart}, we have
\begin{align*}
    {}^\bL\cZ(L^\flat)_{\mathscr{V}}^*=&{}^\bL\cZ(L^\flat)_{\mathscr{V}}+{}^\bL\cZ(L^\flat)_{\mathscr{H}}-\sum_{M^\flat\in \Hor(L^\flat)} {}^\bL\cZ(M^\flat)^\circ \\
    =&{}^\bL\cZ(L^\flat)_{\mathscr{V}}+\sum_{M^\flat\in \Hor(L^\flat)} ([\Oo_{\cZ(M^\flat)^\circ}]-{}^\bL\cZ(M^\flat)^\circ).
\end{align*}
We know all terms in the last expression are in $\Gr' K_0^{\cZ(L^\flat)_\mathscr{V}}(\cN)$ by the definition of ${}^\bL\cZ(L^\flat)_{\mathscr{V}}$ and Definition \ref{def:horizontal difference cycle}.
\end{proof}

\begin{lemma}\label{lem:modified horizontal part}
If $L^\flat$ is a horizontal lattice of rank $n-1$ in $\bV$, then
\begin{equation}\label{eq:modified horizontal part}
    {}^\bL\cZ(L^\flat)_{\mathscr{H}}^*={}^\bL\cZ(L^\flat).
\end{equation}
In particular for any $x\in \bV \setminus L_F^\flat$ we have
\[\Int_{L^{\flat},\mathscr{H}}(x)=\Int_{L^{\flat}}(x).\]
\end{lemma}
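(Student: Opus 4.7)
I will split the proof into two steps: first, reduce the right-hand side to a single summand by showing $\Hor(L^\flat)=\{L^\flat\}$; second, match ${}^\bL\cZ(L^\flat)^\circ$ with ${}^\bL\cZ(L^\flat)$ by identifying the vertical class.

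For the first step, let $M^\flat\in\Hor(L^\flat)$, necessarily of rank $n-1$. If $L^\flat$ is unimodular then integrality gives $M^\flat\subseteq (M^\flat)^\sharp\subseteq (L^\flat)^\sharp=L^\flat$, forcing $M^\flat=L^\flat$. If instead $L^\flat=M\obot L'$ with $M$ unimodular of rank $n-2$ and $(M_F)^\perp$ nonsplit, the analogous duality argument (together with the maximality of the unimodular $M$ inside any integral overlattice) pins down $M^\flat=M\obot L''$ with $L'\subseteq L''\subset (M_F)^\perp$; applying Lemma~\ref{lem:alternative def of horizontal lattice} inside the anisotropic rank-$2$ space $(M_F)^\perp$ then forces $L''=L'$. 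Hence ${}^\bL\cZ(L^\flat)_{\sH}^{*}={}^\bL\cZ(L^\flat)^\circ$.

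For the second step, by the primary decomposition \eqref{eq:horizontal vertical decomposition} and Theorem~\ref{thm:horizontalpart} we have
\[
{}^\bL\cZ(L^\flat)=[\Oo_{\tZ(L^\flat)^\circ}]+{}^\bL\cZ(L^\flat)_{\sV}
\]
in $\Gr'K_0^{\cZ(L^\flat)}(\cN)$, so the lemma reduces to showing ${}^\bL\cZ(L^\flat)_{\sV}=c\cdot[\Oo_{\bP_\Lambda}]$, where $c$ is the coefficient in Definition~\ref{def:horizontal difference cycle} and $\Lambda$ is the (unique) vertex lattice containing $L^\flat$ given by Lemma~\ref{lem:alternative def of horizontal lattice}. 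By Proposition~\ref{prop:reducedlocus} together with Proposition~\ref{prop:cN Lambda 0}, the support of $\cZ(L^\flat)_{\sV}$ lies in $\cN_\Lambda=\Exc_\Lambda\cong \bP^{n-1}_{\bar\kappa}$, so its class in $\Gr^{n-1}$ is automatically an integer multiple of $[\Oo_{\bP_\Lambda}]$; only the multiplicity needs to be computed.

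To pin down the multiplicity I would pick an orthogonal rank-$1$ summand $\langle e\rangle$ of $L^\flat$, write $L^\flat=\langle e\rangle\obot L_1$, and expand the Cartier divisor $\cZ(e)=\tZ(e)+\sum_{\Lambda_0\supseteq\langle e\rangle}\Exc_{\Lambda_0}$ via \cite[Proposition 3.2]{HSY3}. Distributing the derived product against ${}^\bL\cZ(L_1)$, the $\tZ(e)$-summand yields $[\Oo_{\tZ(L^\flat)^\circ}]$ inductively via the isomorphism $\tZ(I_1)\cong\cN_{n-1}$ of \cite[Corollary 2.7]{HSY3}, while each $\Exc_{\Lambda_0}$-summand vanishes unless $L_1\subseteq \Lambda_0$; by Lemma~\ref{lem:alternative def of horizontal lattice} only $\Lambda_0=\Lambda$ survives, and its Euler characteristic against ${}^\bL\cZ(L_1)$ is given by \cite[Corollary 3.6]{HSY3}. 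The main obstacle is the sign bookkeeping in the inductive step: the accumulated contributions must collapse to $\tfrac{1-(-1)^{n-1}}{2}$ in the unimodular case (the parity of $n$ reflecting a partial cancellation among $\Exc_\Lambda$-contributions) and to $1$ in the non-unimodular horizontal case (a single surviving contribution). The base case $n=2$ is handled directly by the explicit horizontal computations of \cite{Shi2,HSY}, and the induction then propagates via \cite[Corollary 2.7]{HSY3}, matching Definition~\ref{def:horizontal difference cycle} and completing the proof.
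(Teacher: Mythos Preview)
Your Step~1 contains a genuine error: the claim $\Hor(L^\flat)=\{L^\flat\}$ is false in general. Write $L^\flat=M\obot\langle y\rangle$ with $M$ unimodular of rank $n-2$, $(M_F)^\perp$ nonsplit, and $\val(y)=a$. Then for every $0\le i\le a$ the lattice $M^\flat_i:=M\obot\langle\pi^{-i}y\rangle$ is integral (since $\val(\pi^{-i}y)=a-i\ge 0$) and horizontal (it is unimodular when $i=a$, and otherwise falls under case~(2) of Definition~\ref{def:horizontal lattice} with the \emph{same} anisotropic $(M_F)^\perp$). Hence $|\Hor(L^\flat)|=a+1$, not $1$. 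Your appeal to Lemma~\ref{lem:alternative def of horizontal lattice} ``inside $(M_F)^\perp$'' does not force $L''=L'$: that lemma gives uniqueness of the containing \emph{vertex lattice}, not uniqueness of horizontal overlattices; indeed every integral rank-$1$ lattice in the anisotropic plane $(M_F)^\perp$ is horizontal and they all sit inside the same unique vertex lattice. Consequently ${}^\bL\cZ(L^\flat)_{\sH}^*$ is a genuine sum of $a+1$ terms, and by Theorem~\ref{thm:horizontalpart} one finds
\[
{}^\bL\cZ(L^\flat)_{\sH}^*-{}^\bL\cZ(L^\flat)=\Bigl(m-1+\tfrac{1-(-1)^{n-1}}{2}\Bigr)[\Oo_{\bP_\Lambda}]-{}^\bL\cZ(L^\flat)_{\sV},\qquad m:=a+1,
\]
so the multiplicity to be matched is $m'=m-\tfrac{1+(-1)^{n-1}}{2}$, not the constant $c$ from Definition~\ref{def:horizontal difference cycle} alone.

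Your Step~2 heads in the right direction but the inductive scheme you sketch is both more complicated and less complete than what the paper does. Rather than splitting off a rank-$1$ summand and inducting on $n$, the paper computes $m'$ directly by pairing everything against $[\Oo_{\cN_\Lambda}]=[\Oo_{\Exc_\Lambda}]$: the intersection numbers $\chi(\cN,{}^\bL\cZ(L^\flat)_{\sH}\cdot[\Oo_{\cN_\Lambda}])=2m-1$, $\chi(\cN,[\Oo_{\bP_\Lambda}]\cdot[\Oo_{\cN_\Lambda}])=-2$, and $\chi(\cN,{}^\bL\cZ(L^\flat)\cdot[\Oo_{\cN_\Lambda}])=(-1)^{n-1}$ (all from \cite{HSY3}) give a single linear equation that pins down $m'$ without induction. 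Your approach, by contrast, requires tracking how $\tZ(L^\flat)^\circ$ behaves under the embedding $\cN_{n-1}\hookrightarrow\cN_n$ and carefully accounting for the change in the $\bP_\Lambda$-coefficient at each step---doable in principle, but the ``sign bookkeeping'' you flag as the main obstacle is exactly what the paper's method sidesteps.
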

\begin{proof}
Let  $\Lambda$ be the unique type $0$ vertex lattice containing $L^\flat$ as indicated by Lemma \ref{lem:alternative def of horizontal lattice}. Then $\Lambda\cap L^\flat_F$ is the unique unimodular lattice in $\Hor(L^\flat)$.
By Theorem \ref{thm:horizontalpart}, we have
\[{}^\bL\cZ(L^\flat)_{\mathscr{H}}^*-{}^\bL\cZ(L^\flat)=(m-1+
\frac{1-(-1)^{n-1}}{2}) [\Oo_{\bP_\Lambda}]-{}^\bL\cZ(L^\flat)_{\mathscr{V}},\]
where $m:=|\Hor(L^\flat)|$.
By Proposition \ref{prop:reducedlocus} and Lemma \ref{lem:alternative def of horizontal lattice}, we know that ${}^\bL Z(L^\flat)_{\mathscr{V}}\in \Gr' K_0^{\cN_\Lambda} (\cN)$.  \cite[Corollary 3.5]{HSY3} implies that in fact ${}^\bL Z(L^\flat)_{\mathscr{V}}\in \Gr^{n-1} K_0^{\cN_\Lambda} (\cN)$, hence
\[{}^\bL\cZ(L^\flat)_{\mathscr{V}}=m'[\Oo_{\bP_\Lambda}]\]
for some integer $m'$. In order to prove \eqref{eq:modified horizontal part}, it suffices to show
\begin{equation}\label{eq:m'(L flat)}
    m'=m-
\frac{1+(-1)^{n-1}}{2}.
\end{equation}
Now assume $L^\flat=M\obot L'$ where $M$ is unimodular and of rank $n-2$ and $\val(L')=a$.
Then $m=a+1$. By \cite[Lemma 4.4]{HSY3}, we know that
\[\chi(\cN,{}^\bL\cZ(L^\flat)_{\mathscr{H}}\cdot [\Oo_{\cN_\Lambda}])=2a+1=2m-1.\]
By \cite[Corollary 3.7]{HSY3}, we know
\[\chi(\cN,{}^\bL\cZ(L^\flat)_{\mathscr{V}}\cdot [\Oo_{\cN_\Lambda}])=m'\cdot\chi(\cN,[\Oo_{\bP_\Lambda}]\cdot [\Oo_{\cN_\Lambda}])=-2m'.\]
On the other hand, by \cite[Corollary 3.6]{HSY3},
\[\chi(\cN,{}^\bL\cZ(L^\flat)_{\mathscr{H}}\cdot [\Oo_{\cN_\Lambda}])+\chi(\cN,{}^\bL\cZ(L^\flat)_{\mathscr{V}}\cdot [\Oo_{\cN_\Lambda}]) =\chi(\cN,{}^\bL\cZ(L^\flat)\cdot [\Oo_{\cN_\Lambda}])=(-1)^{n-1}.\]
Combine the above equations, we get \eqref{eq:m'(L flat)}.
\end{proof}

\subsection{Hermitian lattices and Fourier transform}\label{subsec:hermfourier}
We fix an additive character $\psi:F_0\rightarrow \C^\times$ whose conductor is $O_{F_0}$.
Recall that the Fourier transform with respect to $\psi$ is defined by
	\begin{equation}\label{eq:fourier transform}
	    \widehat{\varphi}(x)=\int_{\bV} \varphi(y) \cdot \psi(\tr_{F/F_0} (x,y)) d\mu (y),
	\end{equation}
where $d\mu$ is the unique self-dual Haar measure on $\bV$ with respect to this transform.
For a lattice $L$ in $\bV$ we use $L^\vee$ to denote its dual under the quadratic form
$\tr_{F/F_0}((\, ,\, ))$. The following lemma is well-known and easy to check.
\begin{lemma}\label{lem:Fourier transform}
Let $L\subset \bV$ be a lattice of rank $n$ and $1_L\in \sS(\bV)$ be its characteristic function. Then
\[\widehat{1_L}=\vol(L,d\mu) \cdot 1_{L^\vee}.\]
\end{lemma}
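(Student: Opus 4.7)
The plan is to compute $\widehat{1_L}(x)$ directly from the definition in \eqref{eq:fourier transform} by interpreting the integrand as an additive character on the compact group $L$ and using orthogonality.

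First, for any $x\in\bV$, restricting the domain of integration to $L$ (where $1_L$ is supported) gives
\[
\widehat{1_L}(x)=\int_L \psi\bigl(\tr_{F/F_0}(x,y)\bigr)\,d\mu(y).
\]
The map $\chi_x\colon L\to\C^\times$ defined by $y\mapsto \psi(\tr_{F/F_0}(x,y))$ is a continuous homomorphism from the compact abelian group $L$ to $\C^\times$. By standard orthogonality of characters on a compact group (with a left-invariant measure), the integral equals $\vol(L,d\mu)$ if $\chi_x$ is trivial and equals $0$ otherwise.

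Next I would unpack when $\chi_x$ is trivial. Since $\psi$ has conductor $O_{F_0}$, triviality of $\chi_x$ on $L$ is equivalent to the condition $\tr_{F/F_0}(x,y)\in O_{F_0}$ for every $y\in L$, i.e.\ $x\in L^\vee$ by the very definition of the dual lattice with respect to the $F_0$-bilinear form $\tr_{F/F_0}((\,\cdot\,,\,\cdot\,))$. Combining the two cases yields $\widehat{1_L}(x)=\vol(L,d\mu)\cdot 1_{L^\vee}(x)$, which is the claimed identity.

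There is no real obstacle here: the only point to be mindful of is that the measure $d\mu$ used in \eqref{eq:fourier transform} is the \emph{self-dual} one for the pairing $\langle x,y\rangle:=\psi(\tr_{F/F_0}(x,y))$, so that the Plancherel/orthogonality constant is $1$ rather than an auxiliary factor, and that the conductor of $\psi$ is exactly $O_{F_0}$ as fixed just before \eqref{eq:fourier transform}. Both facts are built into the setup, so the two-line argument above suffices.
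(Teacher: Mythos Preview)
Your proof is correct and is the standard two-line argument one would give for this well-known fact. The paper itself does not supply a proof: it simply states that the lemma is ``well-known and easy to check,'' so there is nothing to compare against beyond noting that your argument is exactly the easy check the authors had in mind.
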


\begin{lemma}\label{lem:translation invariant and support}
Let $L$ be a rank $n$ lattice in $\bV$.
A function $\varphi\in\sS(\bV)$ is $L$-invariant (invariant under the translation of $L$) if and only if its Fourier transform $\widehat{\varphi}$ is supported on $L^\vee$.
\end{lemma}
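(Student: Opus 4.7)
The plan is to establish this as a direct consequence of the classical fact that translation on the spatial side corresponds to modulation (multiplication by a character) on the Fourier side, combined with the defining property of the conductor of $\psi$.

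First I will unpack the definition: $L^\vee = \{x \in \bV : \tr_{F/F_0}(x,a) \in O_{F_0} \text{ for all } a \in L\}$. Since the conductor of $\psi$ is $O_{F_0}$, the character $a \mapsto \psi(\tr_{F/F_0}(x,a))$ on $L$ is trivial if and only if $x \in L^\vee$. This is the bridge between the analytic condition (character triviality) and the lattice-theoretic condition (membership in $L^\vee$).

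Next I would compute the effect of translation on the Fourier transform. For $a \in L$, denoting by $T_a\varphi(y) = \varphi(y-a)$ and making the substitution $z = y - a$ in \eqref{eq:fourier transform}, a routine calculation gives
\begin{equation*}
\widehat{T_a\varphi}(x) = \psi(\tr_{F/F_0}(x,a))\cdot \widehat{\varphi}(x).
\end{equation*}
For the forward direction, $L$-invariance of $\varphi$ means $T_a\varphi = \varphi$ for all $a \in L$, so the displayed identity forces $(\psi(\tr_{F/F_0}(x,a)) - 1)\widehat{\varphi}(x) = 0$ for every $a \in L$. Hence at each $x$ in the support of $\widehat{\varphi}$, the character $a \mapsto \psi(\tr_{F/F_0}(x,a))$ is trivial on $L$, which by the first paragraph places $x$ in $L^\vee$.

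For the converse, I would use Fourier inversion: $\varphi(y) = \int_{\bV} \widehat{\varphi}(x)\psi(-\tr_{F/F_0}(x,y))\, d\mu(x)$. If $\widehat{\varphi}$ is supported on $L^\vee$, then for any $a \in L$ the integrand satisfies $\psi(-\tr_{F/F_0}(x,y+a)) = \psi(-\tr_{F/F_0}(x,y))\cdot \psi(-\tr_{F/F_0}(x,a)) = \psi(-\tr_{F/F_0}(x,y))$ on the support, yielding $\varphi(y+a) = \varphi(y)$. No step here is subtle; the only thing to be mindful of is that the pairing used in the Fourier transform is the $F_0$-valued $\tr_{F/F_0}((\,,\,))$, which matches the pairing defining $L^\vee$, so the dualities line up exactly and no normalization constants appear.
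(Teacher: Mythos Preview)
Your proof is correct and rests on the same underlying principle as the paper's proof: translation by $a$ on the spatial side corresponds to multiplication by the character $\psi(\tr_{F/F_0}(\cdot,a))$ on the Fourier side. The organization differs slightly --- the paper proves the ``only if'' direction by decomposing an $L$-invariant Schwartz function as a finite linear combination of characteristic functions $1_{\mu+L}$ and invoking Lemma~\ref{lem:Fourier transform}, whereas you apply the translation--modulation identity directly; for the ``if'' direction the paper shows the equivalent dual statement (support in $L^\vee$ implies the Fourier transform is $L$-invariant) while you invoke Fourier inversion explicitly --- but these are cosmetic differences, not a genuinely different route.
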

\begin{proof}
We first prove the ``only if" direction.
For any $\mu\in \bV$, let $\bar\mu$ be its image in the quotient $\bV/L$. Define
\[L(\bar\mu):=\mu+L.\]
 Any $L$-invariant function  $\phi \in  \sS(\bV)$ is a linear combination of the characteristic functions $1_{L(\bar\mu)}$. So it suffices to assume $\phi= 1_{L(\bar\mu)}$. In this case,
\[\widehat{\phi}(x)=\psi(\tr_{F/F_0} (x,\mu)) \cdot\widehat{1_L}(x).\]
So $\widehat{\phi}(x)$ is supported on $L^\vee$ by Lemma \ref{lem:Fourier transform}. This proves the ``only if" direction.

For the ``if" direction.  It suffices to show that if $\varphi$ is supported on $L^\vee$, then $\widehat{\varphi}$ is $L$-invariant.
For any $z\in L$, we have
\[ \widehat{\varphi}(x+z)=\int_{\bV} \varphi(y) \cdot \psi(\tr_{F/F_0} (x,y)) \cdot \psi(\tr_{F/F_0} (z,y)) d\mu (y).\]
Since $\psi(\tr_{F/F_0}(z,y))=1$ for any $z\in L$ and $y\in L^\vee$ and $\varphi$ is supported on $L^\vee$, the above is equal to $ \widehat{\varphi}(x)$.
This finishes the proof of the lemma.
\end{proof}

For an integer $m$, recall that
\[
    \bV^{\geq m}=\{x\in \bV\mid \val(x)\geq m\}.
\]
\begin{definition}\label{def:S(V) geq -1}
Define $\sS(\bV)^{\geq m}$ to be the subspace of $\sS(\bV)$ consisting of functions $\varphi$ such that $\widehat{\varphi}$ is supported on $\bV^{\geq m}$.
\end{definition}

\begin{lemma}\label{lem:Lambda invariant function is geq -1}
Let $\Lambda$ be a vertex lattice in $\bV$.
Any $\Lambda$-invariant function in $ \sS(\bV)$ is in $\sS(\bV)^{\geq -1}$.
\end{lemma}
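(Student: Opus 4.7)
The plan is to combine Lemma \ref{lem:translation invariant and support} with a direct computation of the quadratic-form dual $\Lambda^{\vee}$ in terms of the hermitian dual $\Lambda^{\sharp}$.

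First, by Lemma \ref{lem:translation invariant and support}, any $\Lambda$-invariant $\varphi \in \sS(\bV)$ has Fourier transform supported on $\Lambda^{\vee}$, so it suffices to show the set-theoretic inclusion $\Lambda^{\vee} \subseteq \bV^{\geq -1}$.

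Next I would identify $\Lambda^{\vee}$ explicitly. Since $p$ is odd and $\pi$ satisfies $\pi^{2}=\pi_{0}$ with $\tr_{F/F_{0}}(\pi)=0$, the different $\mathfrak{d}_{F/F_{0}}$ is generated by $2\pi$, hence by $\pi$, so the inverse different is $\mathfrak{d}_{F/F_{0}}^{-1} = \pi^{-1}O_{F}$. Because $\Lambda$ is an $O_{F}$-module, the condition $\tr_{F/F_{0}}((x,y)) \in O_{F_{0}}$ for all $y \in \Lambda$ is equivalent (using $y \mapsto by$ for $b \in O_{F}$) to $(x,y) \in \pi^{-1}O_{F}$ for all $y \in \Lambda$. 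Multiplying by $\pi$ on the first slot, this rewrites as $\pi x \in \Lambda^{\sharp}$, giving
\begin{equation*}
\Lambda^{\vee} \;=\; \pi^{-1}\Lambda^{\sharp}.
\end{equation*}

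Finally, I would bound the valuation of any $x \in \pi^{-1}\Lambda^{\sharp}$. The vertex condition $\pi\Lambda^{\sharp} \subseteq \Lambda$ says that for any $y,y' \in \Lambda^{\sharp}$ we have $\pi y' \in \Lambda$, so $(y,\pi y') = -\pi(y,y') \in O_{F}$, and therefore $(y,y') \in \pi^{-1}O_{F}$ for all $y,y' \in \Lambda^{\sharp}$. Applying this to $y=y'=\pi x \in \Lambda^{\sharp}$ yields $(\pi x,\pi x) \in \pi^{-1}O_{F}$, hence $(x,x) \in \pi^{-3}O_{F}$. But $(x,x) \in F_{0}$, and $F/F_{0}$ is ramified of degree $2$, so $\val_{F}((x,x))$ is even; combined with $\val_{F}((x,x)) \geq -3$, this forces $\val_{F}((x,x)) \geq -2$, i.e.\ $\val((x,x)) \geq -1$ in the normalization $\val(\pi_{0})=1$. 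Thus $x \in \bV^{\geq -1}$, and the proof is complete.

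There is no real obstacle here beyond the careful bookkeeping between the two duals and the two valuation conventions on $F$ and $F_{0}$; the key input is the vertex condition $\pi\Lambda^{\sharp} \subseteq \Lambda \subseteq \Lambda^{\sharp}$, which is precisely what forces the extra factor of $\pi$ needed to pass from $\Lambda^{\sharp} \subseteq \bV^{\geq 0}$ down only as far as $\Lambda^{\vee} \subseteq \bV^{\geq -1}$ rather than further.
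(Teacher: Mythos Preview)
Your proof is correct and follows the same opening reduction as the paper: invoke Lemma~\ref{lem:translation invariant and support} and then verify $\Lambda^{\vee}=\pi^{-1}\Lambda^{\sharp}\subseteq\bV^{\geq -1}$. The only difference is in how the last inclusion is checked. The paper writes $\Lambda^{\sharp}\cong H^{t}\obot I_{n-2t}$ using the classification of vertex lattices and reads off the valuation bound from the explicit Gram matrices; you instead use the vertex condition $\pi\Lambda^{\sharp}\subseteq\Lambda$ abstractly to get $(\Lambda^{\sharp},\Lambda^{\sharp})\subseteq\pi^{-1}O_{F}$, and then the parity constraint $(x,x)\in F_{0}$ to upgrade $\val_{\pi}\geq -3$ to $\val_{\pi}\geq -2$. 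Your route avoids invoking the structure of $\Lambda^{\sharp}$ and is slightly more self-contained, while the paper's route is a one-line computation once that structure is known; both are equally short and the difference is cosmetic.
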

\begin{proof}
By Lemma \ref{lem:translation invariant and support}, it suffices to show that $\Lambda^\vee\subset \bV^{\geq -1}$. Since $\Lambda$ is a vertex lattice, we have
\[\Lambda^\sharp= H^t\obot I_{n-2t},\]
for some $t$. Simple calculation gives then
\[\Lambda^\vee=\frac{1}\pi \Lambda^\sharp =\frac{1}{\pi}H^t\obot \frac{1}{\pi} I_{n-2t} \subset \bV^{\geq -1}.\]
\end{proof}

\subsection{Fourier transform of $\Int_{L^\flat,\mathscr{V}}$}
\begin{theorem}\label{thm:higher modularity}
Let $\Lambda$ be a vertex lattice and $\mathcal{K}\in K^{\cN_\Lambda}_0(\cN)$. For any $x\in \bV\setminus\{0\}$, the function that takes $x$ to $ \mathcal{K}\cdot [\Oo_{\cZ(x)}] \in K^{\cN_\Lambda}_0(\cN)$ is $\Lambda$-invariant. More precisely,  for any  $y\in \Lambda$ such that $x+y\neq 0$, we have
\begin{equation}\label{eq:K Lambda invariant}
  \mathcal{K}\cdot [\Oo_{\cZ(x)}]=\mathcal{K}\cdot [\Oo_{\cZ(x+y)}] .
\end{equation}
Moreover, the function
\[\Int_\mathcal{K}(x):=\chi(\cN,\mathcal{K}\cdot[\Oo_{\cZ(x)}])\]
extends to a $\Lambda$-invariant function in $\sS(\bV)^{\geq -1}$.
\end{theorem}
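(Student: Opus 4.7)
The plan is to derive the $\Lambda$-invariance from the vanishing of the universal section defining $\cZ(y)$ along $\cN_\Lambda$ for $y\in\Lambda$, and then deduce the Schwartz properties from Proposition~\ref{prop:reducedlocus} and Lemma~\ref{lem:Lambda invariant function is geq -1}.

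The crucial geometric input is the containment $\cN_\Lambda\subseteq\cZ(y)$ for every $y\in\Lambda$, which follows from Proposition~\ref{prop:reducedlocus} applied to the rank-1 lattice $\langle y\rangle$. Recall (from \cite[\S 3]{Ho2}; cf.\ also \cite[\S 2]{HSY3}) that each Cartier divisor $\cZ(x)$ is the zero locus of a canonical section $s_x$ of a fixed line bundle $\omega$ on $\cN$, with $x\mapsto s_x$ being $O_F$-linear. Fix $y\in\Lambda$ with $x+y\neq 0$; the inclusion $\cN_\Lambda\subseteq\cZ(y)$ forces $s_y|_{\cN_\Lambda}=0$, so by linearity $s_{x+y}|_{\cN_\Lambda}=s_x|_{\cN_\Lambda}$. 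Denoting $j\colon\cN_\Lambda\hookrightarrow\cN$ the closed immersion, the two-term Koszul resolutions of $\Oo_{\cZ(x)}$ and $\Oo_{\cZ(x+y)}$ then yield $Lj^{*}\Oo_{\cZ(x)}=Lj^{*}\Oo_{\cZ(x+y)}$ as \emph{literal} complexes on $\cN_\Lambda$. Writing $\mathcal{K}=j_{*}\mathcal{K}'$ with $\mathcal{K}'\in K_0'(\cN_\Lambda)$ (using $K_0^{\cN_\Lambda}(\cN)\cong K_0'(\cN_\Lambda)$, valid since $\cN$ is regular), the projection formula gives $\mathcal{K}\cdot[\Oo_{\cZ(x)}]=j_{*}(\mathcal{K}'\otimes^\bL Lj^{*}\Oo_{\cZ(x)})$, and identically with $x$ replaced by $x+y$, whence the $\Lambda$-invariance.

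For the ``moreover'' part, I would extend $\Int_\mathcal{K}$ to $x=0$ by setting $\Int_\mathcal{K}(0):=\Int_\mathcal{K}(y_0)$ for any chosen $y_0\in\Lambda\setminus\{0\}$; this value is independent of $y_0$ by the $\Lambda$-invariance just established, and the resulting function is $\Lambda$-invariant on all of $\bV$. Proposition~\ref{prop:reducedlocus} further implies $\cN_\Lambda\cap\cZ(x)=\emptyset$ whenever $x\notin\Lambda^\sharp$, so $\mathcal{K}\cdot[\Oo_{\cZ(x)}]=0$ and $\Int_\mathcal{K}$ vanishes outside the compact open subgroup $\Lambda^\sharp$. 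A $\Lambda$-invariant function supported on $\Lambda^\sharp$ is a finite linear combination of indicator functions of the (finitely many) cosets in $\Lambda^\sharp/\Lambda$, so it lies in $\sS(\bV)$; Lemma~\ref{lem:Lambda invariant function is geq -1} then places it in $\sS(\bV)^{\geq -1}$.

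The hard part will be the first step, and specifically justifying the Koszul identity at the level of the derived category rather than just numerically. If one prefers to avoid explicitly invoking a universal section $s_x$, the same conclusion can be reached from Howard's linear invariance \cite[Corollary D]{Ho2} applied to the rank-2 sublattice $\langle x, y\rangle=\langle x+y, y\rangle$, which yields $[\Oo_{\cZ(x)}]\cdot[\Oo_{\cZ(y)}]=[\Oo_{\cZ(x+y)}]\cdot[\Oo_{\cZ(y)}]$; combined with the projection formula and the support condition $\cN_\Lambda\subseteq\cZ(y)$, this again gives the desired $\Lambda$-invariance in $K_0^{\cN_\Lambda}(\cN)$.
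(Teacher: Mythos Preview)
Your proposal is correct. Your alternative route via Howard's linear invariance is exactly the paper's proof: the paper invokes \cite[Corollary C]{Ho2} to obtain $[\Oo_{\cZ(y)}\otimes^\bL\Oo_{\cZ(x)}]=[\Oo_{\cZ(y)}\otimes^\bL\Oo_{\cZ(x+y)}]$, then for $\mathcal{K}=[\cF]$ with $\cF$ a coherent $\Oo_{\cN_\Lambda}$-module factors through the chain $\cF\otimes^\bL_{\Oo_{\cN_\Lambda}}\Oo_{\cN_\Lambda}\otimes_{\Oo_{\cZ(y)}}\Oo_{\cZ(y)}\otimes^\bL_{\Oo_\cN}\Oo_{\cZ(x)}$ using $\cN_\Lambda\subseteq\cZ(y)$. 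Your primary approach via the section $s_x$ and its two-term Koszul resolution is a mild but genuine variant: it produces a literal equality $Lj^*\Oo_{\cZ(x)}=Lj^*\Oo_{\cZ(x+y)}$ of complexes on $\cN_\Lambda$, making the mechanism (vanishing of $s_y$ along $\cN_\Lambda$, hence $j^*s_{x+y}=j^*s_x$) completely transparent and bypassing the appeal to linear invariance as a black box. One small remark: the scheme-theoretic containment $\cN_\Lambda\subseteq\cZ(y)$ you need follows already from Definition~\ref{def:cN Lambda} (since $\cN_\Lambda\subseteq\cZ(\Lambda)\subseteq\cZ(y)$), not just from the reduced-locus statement of Proposition~\ref{prop:reducedlocus}. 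The ``moreover'' part of your argument---extension to $0$, support in $\Lambda^\sharp$, and invocation of Lemma~\ref{lem:Lambda invariant function is geq -1}---is identical to the paper's.
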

\begin{proof}
Any element $\mathcal{K}\in K^{\cN_\Lambda}_0(\cN)\cong K'_0(\cN_\Lambda)$ is a sum of elements of the form $[\cF]$ where $[\cF]$ is a coherent sheaf of $\Oo_{\cN_\Lambda}$-module. Hence it suffices to prove the theorem for $\mathcal{K}=[\cF]$.
By \cite[Corollary C]{Ho2}, we know
\[[\Oo_{\cZ(y)}\otimes_{\Oo_\cN}^\bL \Oo_{\cZ(x)}]=[\Oo_{\cZ(y)}\otimes_{\Oo_\cN}^\bL \Oo_{\cZ(x+y)}].\]
For any $y \in  \Lambda$ with $x+y\ne 0$, $\cN_\Lambda$ is a subscheme of $\cZ(y)$ by Proposition \ref{prop:reducedlocus}. Hence we have
\begin{align*}
        \mathcal{K}\cdot[\Oo_{\cZ(x)}]
        =&[\cF\otimes_{\Oo_\cN}^\bL \Oo_{\cZ(x)}]\\
    =&[\cF \otimes_{\Oo_{\cN_\Lambda}}^\bL \Oo_{\cN_\Lambda}\otimes_{\Oo_{\cZ(y)}}\Oo_{\cZ(y)}\otimes_{\Oo_\cN}^\bL\Oo_{\cZ(x)}]\\
    =&[\cF \otimes_{\Oo_{\cN_\Lambda}}^\bL \Oo_{\cN_\Lambda}\otimes_{\Oo_{\cZ(y)}}\Oo_{\cZ(y)}\otimes_{\Oo_\cN}^\bL\Oo_{\cZ(x+y)}]\\
    =&\mathcal{K}\cdot[\Oo_{\cZ(x+y)}]
\end{align*}
We have proved the $\Lambda$-invariance of $\mathcal{K}\cdot [\Oo_{\cZ(x)}]$. It follows that $\Int_\mathcal{K}(x)$ is also $\Lambda$-invariant. Hence we can define $\Int_\mathcal{K}(0)$ to be $\Int_\mathcal{K}(x)$ for any $0\ne x\in \Lambda$ and obtain a (unique) $\Lambda$-invariant function (still denoted by $\Int_\mathcal{K}(x)$) for all $x\in\bV$. In particular $\Int_\mathcal{K}(x)$ is locally constant.
If $x\notin \Lambda^\sharp$, by Proposition \ref{prop:reducedlocus}, the intersection of $\cZ(x)$ with $\cN_\Lambda$ is empty, which implies $\Int_\mathcal{K}(x)=0$. This shows that the function $\Int_\mathcal{K}(x)$ is compactly supported. Hence it is in $\sS(\bV)$ and is in fact in $\sS(\bV)^{\geq -1}$ by Lemma \ref{lem:Lambda invariant function is geq -1}.
This finishes the proof of the theorem.
\end{proof}

\begin{theorem}\label{thm:Int Gamma_0 invariant}
For every non-degenerate lattice $L^\flat$ of $\bV$ of rank $n-1$, the function  $\mathrm{Int}_{L^\flat,\mathscr{V}}$ on $\bV\setminus L^\flat_F$ can be extended to an element in $\sS(\bV)^{\geq -1}$ which we denote by the same notation.
\end{theorem}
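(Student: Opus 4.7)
The plan is to decompose the class ${}^\bL\cZ(L^\flat)_{\mathscr{V}}^*$ into pieces each supported on a single Bruhat--Tits stratum $\cN_\Lambda$, and then apply Theorem \ref{thm:higher modularity} to each piece.

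By Lemma \ref{lem:derived vertical part}, ${}^\bL\cZ(L^\flat)_{\mathscr{V}}^*\in \Gr' K_0^{\cZ(L^\flat)_{\mathscr{V}}}(\cN)$. Combining Lemma \ref{lem:cZ_vsupportedoncN_red} with Proposition \ref{prop:reducedlocus}, the scheme $\cZ(L^\flat)_{\mathscr{V}}$ is set-theoretically contained in the finite union
\[
Z \coloneqq \bigcup_{\Lambda \supseteq L^\flat}\cN_\Lambda,
\]
where $\Lambda$ ranges over the finite set of vertex lattices containing $L^\flat$ (finiteness uses $r(L^\flat)=n-1$). The next step is to lift ${}^\bL\cZ(L^\flat)_{\mathscr{V}}^*$ to some $\widetilde{\mathcal{K}} \in K_0^Z(\cN)$. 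Since $\cN$ is regular, one has $K_0^Z(\cN)\cong K'_0(Z)$, and a standard dévissage argument (filter a coherent sheaf on $Z$ by the closed subschemes $\cN_\Lambda$ one at a time, and use the isomorphism $K'_0(\cN_\Lambda)\cong K_0^{\cN_\Lambda}(\cN)$) produces a decomposition
\[
\widetilde{\mathcal{K}} = \sum_{\Lambda \supseteq L^\flat} \mathcal{K}_\Lambda, \qquad \mathcal{K}_\Lambda \in K_0^{\cN_\Lambda}(\cN).
\]

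For any $x \in \bV \setminus L^\flat_F$, additivity of the Euler characteristic then gives
\[
\Int_{L^\flat, \mathscr{V}}(x) = \sum_{\Lambda \supseteq L^\flat} \chi\bigl(\cN, \mathcal{K}_\Lambda \cdot [\Oo_{\cZ(x)}]\bigr) = \sum_{\Lambda \supseteq L^\flat}\Int_{\mathcal{K}_\Lambda}(x).
\]
Theorem \ref{thm:higher modularity} extends each $\Int_{\mathcal{K}_\Lambda}$ to a $\Lambda$-invariant element of $\sS(\bV)^{\geq -1}$. Since $\sS(\bV)^{\geq -1}$ is closed under addition (it is defined by the support condition that $\widehat{\varphi}$ lies in $\bV^{\geq -1}$), the finite sum $\sum_\Lambda \Int_{\mathcal{K}_\Lambda}$ will extend $\Int_{L^\flat, \mathscr{V}}$ to an element of $\sS(\bV)^{\geq -1}$, as desired.

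The hard part will be verifying that the above construction does not depend on the chosen lift $\widetilde{\mathcal{K}}$ of ${}^\bL\cZ(L^\flat)_{\mathscr{V}}^*$ from $\Gr' K_0^Z(\cN)$ to $K_0^Z(\cN)$: two such lifts differ by an element of $F^n K_0^Z(\cN)$, which has $0$-dimensional support, so after cup product with the Cartier divisor class $[\Oo_{\cZ(x)}]$ the difference lands in what ought to be $F^{n+1}K_0(\cN)=0$ and should contribute $0$ to the Euler characteristic. Some extra care is required here since the filtration-product estimate is only established in the literature for schemes rather than formal schemes; but in our situation the ambiguous classes have support disjoint from $\cZ(x)$ for generic $x$, which together with the local constancy of the Euler characteristic should suffice.
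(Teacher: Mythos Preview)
Your approach is essentially identical to the paper's: both use that the modified vertical class is supported on finitely many Bruhat--Tits strata (via Lemma \ref{lem:derived vertical part}, Lemma \ref{lem:cZ_vsupportedoncN_red}, Lemma \ref{lem:cZnoetherian}, and Theorem \ref{thm:Bruhat Tits stratification}), decompose it into pieces each supported on a single $\cN_\Lambda$, and apply Theorem \ref{thm:higher modularity} term by term.

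The lift issue you flag at the end is a genuine wrinkle (the paper glosses over it too), but calling it ``the hard part'' overstates it, and your proposed resolution via $F^{n+1}=0$ or generic~$x$ is unnecessarily delicate. A cleaner fix: the ambiguity between two lifts lies in $F^n K_0^Z(\cN)$, hence is represented by a class $D$ supported on a $0$-dimensional closed subscheme of $\cN$. But every closed point of the formal scheme $\cN$ lies in $\cN_\red$, so this finite set of points is contained in $\bigcup_\Lambda \cN_\Lambda$ by Theorem \ref{thm:Bruhat Tits stratification}. Thus $D$ itself decomposes as a finite sum of classes in various $K_0^{\cN_\Lambda}(\cN)$, and Theorem \ref{thm:higher modularity} applies to each summand, giving $x\mapsto \chi(\cN,D\cdot[\Oo_{\cZ(x)}])\in\sS(\bV)^{\geq -1}$. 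Since the theorem only asserts the existence of \emph{some} extension in $\sS(\bV)^{\geq -1}$, this settles the matter without invoking the filtration--product estimate for formal schemes.
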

\begin{proof}
Lemmas \ref{lem:derived vertical part} and \ref{lem:cZ_vsupportedoncN_red} imply that ${}^\bL\cZ(L^\flat)_{\mathscr{V}}^*\in \Gr'K_0^{\cN_\red}(\cN)\cap \Gr'K_0^{\cZ(L^\flat)_\mathscr{V}}(\cN)$.
Lemma \ref{lem:cZnoetherian} implies that  there exist finitely many classes $\mathcal{K}_i\in \Gr' K_0^{\cN_\mathrm{red}}(\cN)_\Q$ together with $C_i\in \Q$ such that
\[\Int_{L^\flat,\mathscr{V}}(x)\\
    =\sum_{i} C_i \cdot \chi(\cN,\mathcal{K}_i \cdot[\Oo_{\cZ(x)}]).\]
By Theorem \ref{thm:Bruhat Tits stratification} we may assume that $\mathcal{K}_i$ is supported on some $\cN_\Lambda$. Now we can apply Theorem \ref{thm:higher modularity} to conclude the proof.
\end{proof}

\subsection{Partial Fourier transform}
Let $L^\flat$ be a rank $n-1$ non-degenerate  lattice in $\bV$. Let $\bW=(L^\flat_F)^\bot$.
For any function $\varphi$ defined on $\bV\setminus L^\flat_F$, we define its partial Fourier transform $\varphi^\bot$ as a function on $\bW\setminus \{0\}$ by
\begin{equation}\label{eq:partial fourier transform}
    \varphi^\bot(x):=\int_{L^\flat_F} \varphi(x+y)dy, \quad  \forall \,   x\in \bW.
\end{equation}

\begin{theorem}\label{lem:Int vertical is schwartz}
The partial Fourier transform $\Int_{L^\flat,\mathscr{V}}^\bot\in \sS(\bW)^{\geq -1}$ and is $\bW^{\ge 0}$-invariant.  In particular it is constant on $\bW^{\ge 0}$.
\end{theorem}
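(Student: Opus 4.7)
The plan is to derive both assertions from Theorem \ref{thm:Int Gamma_0 invariant}, which says $\Int_{L^\flat,\sV}\in \sS(\bV)^{\geq -1}$, i.e.\ $\widehat{\Int_{L^\flat,\sV}}$ is supported on $\bV^{\geq -1}$.

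First I would establish a Fourier intertwining
\[
\widehat{\varphi^\bot}(\eta)=\widehat{\varphi}(\eta),\qquad \eta\in \bW\hookrightarrow\bV,
\]
valid for every $\varphi\in\sS(\bV)$. With the orthogonal decomposition $\bV=L^\flat_F\obot\bW$ and the self-dual Haar measure splitting as $d\mu=d\mu_{L^\flat_F}\otimes d\mu_\bW$, this is immediate from the fact that the character $\psi\circ\tr_{F/F_0}((\eta,\cdot))$ is trivial on $L^\flat_F$, after interchanging the order of integration in the defining integrals. Applied to $\varphi=\Int_{L^\flat,\sV}$, the identity gives $\widehat{\Int_{L^\flat,\sV}^\bot}$ supported inside $\bV^{\geq -1}\cap\bW=\bW^{\geq -1}$, which is the first claim.

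For the invariance assertion, since $L^\flat$ has rank $n-1$ the orthogonal complement $\bW$ is one-dimensional over $F$. I would pick a generator $e\in\bW$ with $(e,e)=\gamma\in F_0^\times$ and compute directly. Writing $c=\pi^k u$ with $u\in O_F^\times$, we have $\val(c\bar c)=k$, so
\[
\bW^{\geq m}=\pi^{m-\val(\gamma)}O_F\cdot e.
\]
In particular $\bW^{\geq 0}$ is an honest $O_F$-lattice in $\bW$. Using $\bar\pi=-\pi$ and the fact that the trace dual of $O_F$ equals $\pi^{-1}O_F$ (the different of $F/F_0$ being $(\pi)$ since $p$ is odd), a direct calculation of pairings then yields $(\bW^{\geq m})^\vee=\bW^{\geq -1-m}$, and in particular $(\bW^{\geq 0})^\vee=\bW^{\geq -1}$.

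Combining the two ingredients, Lemma \ref{lem:translation invariant and support} applied to the lattice $\bW^{\geq 0}$ translates the support condition $\widehat{\Int_{L^\flat,\sV}^\bot}\subseteq (\bW^{\geq 0})^\vee$ into translation invariance of $\Int_{L^\flat,\sV}^\bot$ under $\bW^{\geq 0}$. Since $0\in\bW^{\geq 0}$ and $\bW^{\geq 0}$ is an additive subgroup, such invariance forces the function to equal its value at $0$ throughout $\bW^{\geq 0}$, giving the final constancy statement. There is no serious obstacle here: the theorem is essentially the clean payoff of the Bruhat--Tits analysis culminating in Theorem \ref{thm:Int Gamma_0 invariant}, together with the one-dimensional duality $(\bW^{\geq 0})^\vee=\bW^{\geq -1}$. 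The only technical care needed is the compatibility of Haar measures in the Fourier intertwining, which affects at most a nonzero multiplicative constant and does not alter the support statements driving the argument.
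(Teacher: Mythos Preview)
Your proposal is correct and follows essentially the same approach as the paper: both proofs establish the Fourier intertwining $\widehat{\varphi^\bot}|_{\bW}=\widehat{\varphi}|_{\bW}$, invoke Theorem \ref{thm:Int Gamma_0 invariant} to get the support in $\bW^{\geq -1}$, and then apply Lemma \ref{lem:translation invariant and support} together with the one-dimensional duality $(\bW^{\geq -1})^\vee=\bW^{\geq 0}$. Your explicit verification of $(\bW^{\geq m})^\vee=\bW^{\geq -1-m}$ is slightly more detailed than the paper, which simply asserts $(\bW^{\geq -1})^\vee=\bW^{\geq 0}$ without computation.
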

\begin{proof}
It is easy to see that partial Fourier transform maps $\mathscr{S}(\bV)$ to $\mathscr{S}(\bW)$.
It remains to show that the Fourier transform of $\Int_{L^\flat,\mathscr{V}}^\bot\in \sS(\bW)$ is supported on $\bW^{\geq -1}$. For $x\in \bW$, we have
\[\widehat{\Int_{L^\flat,\mathscr{V}}^\bot}(x)=\widehat{\Int_{L^\flat,\mathscr{V}}}(x),\]
where $\widehat{\Int_{L^\flat,\mathscr{V}}}(x)$ is the Fourier transform of $ \Int_{L^\flat,\mathscr{V}}\in \sS(\bV)$.
Since $\widehat{\Int_{L^\flat,\mathscr{V}}}$ is supported on $\bV^{\geq -1}$ by Theorem \ref{thm:Int Gamma_0 invariant}, we know that $\widehat{\Int_{L^\flat,\mathscr{V}}^\bot}(x)$ is supported on $\bW^{\geq -1}$.

Since $\bW$ is one-dimensional, $\bW^{\geq m}$ is a full rank lattice in $\bW$ for any $m\in\Z$. By Lemma \ref{lem:translation invariant and support} and what we just proved, $\Int_{L^\flat,\mathscr{V}}^\bot$ is invariant under the translation of $(\bW^{\geq -1})^\vee=\bW^{\geq 0}$.
\end{proof}

\section{Review of local densities and primitive local densities}
In this section, we recall various explicit formulas of local density polynomials following Section 5 of \cite{HSY3}.
\subsection{Basic properties of local density and primitive local density polynomials}
\begin{definition}\label{def: Den}
Let $M$ and $L$ be two hermitian $O_F$-lattices of rank $m$ and $n$ respectively. Let $a$ be an integer such that $(x, y) \in  \pi_0^{-a} \partial_{F/F_0}^{-1}$ for $x, y \in M$ or $x, y \in L$. Define the local density of $M$ representing $L$ as
$$
\mathrm{Den}(M, L):=\lim _{d \rightarrow \infty} \frac{\left|\mathrm{Herm}_{L, M}(O_{F_0} /(\pi_0^{ d+a}))\right|}{q^{2(d+a)nm-dn^2}},
$$
which is independent of the choice of $a$. Here
$\mathrm{Herm}_{L, M}(O_{F_0} /(\pi_0^{ d+a}))$ is given by the set
	\begin{align*}
	\{ \phi \in  \mathrm{Hom}&_{O_F}(L/\pi_0^{d+a} L, M/\pi_0^{d+a} M) \mid     (\phi(x),\phi(y))\equiv (x,y) \mod (\pi_0^{d}\partial_{F/F_0}^{-1}),\,  x,y \in L\}.
	\end{align*}
\end{definition}

In this paper, we only deal with the case when we can and will choose $a=0$.
It is well-known that there is a local density polynomial $\den(M,L,X) \in \Q[X]$  such that 	\begin{equation}\label{eq:local density M,L}
	\den(M, L, \q^{-2k})=\den(M\obot H^k,L).
	\end{equation}
Moreover, we denote $\den(M, L)=\den(M, L, 1)$ and
	\begin{equation}
		\den'(M, L) \coloneqq-2\cdot \frac{\partial}{\partial X} \den(M,L, X)|_{X=1}.
	\end{equation}
Similarly, the primitive local density polynomial $\Pden(M, L, X)$ is defined  to be the polynomial in $\Q[X]$ such that
	\begin{equation}\label{eq:prim local density M,L}
	\Pden(M, L, \q^{-2k})=\lim_{d\to \infty} \q^{-d(2 n(m+2k)-n^2)} |\mathrm{Pherm}_{L,M\obot H^k}(O_{F_0}/(\pi_0^d))|,
	\end{equation}
	where
	\begin{align*}
	    \mathrm{Pherm}_{L,M\obot H^k}(O_{F_0}/(\pi_0^d))&\coloneqq\{ \phi \in  	\mathrm{Herm}_{L,M\obot H^k}(O_{F_0}/(\pi_0^d)) \mid  \text{ $\phi$ is primitive}\}.
	\end{align*}
Here we recall that $\phi\in	\mathrm{Herm}_{L,M\obot H^k}(O_{F_0}/(\pi_0^d))$ is primitive if $\dim_{\mathbb{F}_\q}((\phi (L)+\pi (M\obot \cH^k))/\pi (M\obot \cH^k)=n$. In particular, we have $\Pden(M,M)=\den(M,M)$ since any $\phi\in \Herm_{M,M}(O_{F_0}/(\pi_0^d))$ is primitive for large enough $d$.

Recall that without explicit mentioning, we assume $\epsilon=\chi(L)$. As an analogue of \eqref{eq: def of Den'} and \eqref{eq: def of den(L)}, we define
\begin{align}  \label{eq:PdenDerivative}
  \Pden'(L) \coloneqq-2 \cdot \frac{\frac{\rd}{\rd X}\big|_{X=1}\Pden(I_{n},L, X)}{\den(I_n,I_n)}  \text{ and }	 \Pden_{t}(L)\coloneqq\frac{\Pden(\Lambda_t^{\sharp},L) }{\den(\Lambda_t^{\sharp},\Lambda_t^{\sharp})}.
  \end{align}
To save notation, we simply denote $\Pden_0(L)$ by $\Pden(L)$.
We define
			\begin{equation}\label{eq: def of ppden}
	\ppden(L) \coloneqq \Pden'(L)+\displaystyle\sum_{j=1}^{\frac{t_{\max}}{2}}c_{2j} \cdot\Pden_{2j}(L),
	\end{equation}
 where $c_{2j}$ is as in \eqref{eq: def of pdenL}.
	
\begin{lemma}\label{lem: vanish val le -1}
Let $L$ be a lattice. If there exists $x\in L$ such $\val(x)\le -1$, then $\pden(L)=\den'(L)=\ppden(L)=\Pden'(L)=0$.
\end{lemma}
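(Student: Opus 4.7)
The plan is to show that for every $M\in\{I_n\}\cup\{\Lambda_{2j}^\sharp:1\le j\le t_{\mathrm{max}}/2\}$ and every $k\ge 0$, the scheme $\Herm_{L,M\obot H^k}$ has empty $O_{F_0}/\pi_0^d$-points for all $d\ge 0$. Once established, this forces the polynomials $\Den(M,L,X),\Pden(M,L,X)\in\Q[X]$ to vanish identically, and all four asserted vanishings follow immediately by inspection of the definitions. The crucial input is a uniform valuation bound: every element $y\in M\obot H^k$ satisfies $\val(y)\ge 0$. For $I_n$ this is tautological. For $H$, writing $y=ae_1+be_2$ with $a,b\in O_F$, one computes
\[(y,y)=(a\bar b-\bar a b)\pi^{-1}\in\pi O_{F_0}\cdot\pi^{-1}=O_{F_0},\]
using that the trace-zero part of $O_F$ equals $\pi O_{F_0}$. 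For $\Lambda_{2j}^\sharp$, the normal form of a vertex lattice of type $2j$ gives $\Lambda_{2j}\cong I_{n-2j}\obot H_0^{\,j}$ where $H_0$ has matrix $\bigl(\begin{smallmatrix}0&\pi\\-\pi&0\end{smallmatrix}\bigr)$; a short dual computation shows $H_0^\sharp\cong H$, so $\Lambda_{2j}^\sharp\cong I_{n-2j}\obot H^{j}$ and the bound propagates.

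Given this, any $\phi\in\Herm_{L,M\obot H^k}(O_{F_0}/\pi_0^d)$ would satisfy $(\phi(x),\phi(x))\equiv(x,x)\pmod{\pi_0^d}$; but $(\phi(x),\phi(x))\in O_{F_0}$ by the previous step while $(x,x)$ has valuation $\le -1$, so the difference has valuation $\le -1$ and cannot lie in $\pi_0^d O_{F_0}$ for any $d\ge 0$. Thus $\Herm_{L,M\obot H^k}(O_{F_0}/\pi_0^d)=\emptyset$, and its primitive subscheme is likewise empty, giving $\Den(M\obot H^k,L)=\Pden(M\obot H^k,L)=0$ for every $k\ge 0$. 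Since the interpolations \eqref{eq:local density M,L} and \eqref{eq:prim local density M,L} hold at the infinitely many points $X=q^{-2k}$, both $\Den(M,L,X)$ and $\Pden(M,L,X)$ vanish identically in $\Q[X]$; consequently $\Den'(L)=\Pden'(L)=0$ and $\Den_{2j}(L)=\Pden_{2j}(L)=0$ for every relevant $j$. Substituting into \eqref{eq: def of pdenL} and \eqref{eq: def of ppden} then gives $\pden(L)=\ppden(L)=0$.

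The main subtlety is how to interpret $\Herm_{L,M}(O_{F_0}/\pi_0^d)$ when $L$ itself is not integral, since the expression $(x,y)_{(d)}=\pi(\tilde x,\tilde y)$ in the definition need not land in $O_{F_0}$ for off-diagonal entries. However the obstruction we exploit is purely diagonal: $(x,x)\in F_0$ is unambiguous, so the emptiness of the scheme is forced from the valuation comparison on $(\phi(x),\phi(x))$ alone, without having to make sense of the mixed pairings.
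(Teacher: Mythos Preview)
Your proof is correct and follows essentially the same approach as the paper's: both argue that every element of $M\obot H^k$ (for $M\in\{I_n,\Lambda_{2j}^\sharp\}$) has valuation $\ge 0$, so no isometry from $L$ can exist and the density polynomials vanish identically. Your version is a careful elaboration of the paper's one-line proof---in particular you correctly work with $\Lambda_{2j}^\sharp$ (as the definitions of $\Den_{2j}$ and $\Pden_{2j}$ require) and verify the normal form $\Lambda_{2j}^\sharp\cong I_{n-2j}\obot H^j$, details the paper leaves implicit.
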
	
\begin{proof}
Assume $M\cong I_{n}$ or $M\cong \Lambda_{2t}$ for some $t$. Then $\den(M\obot \cH^k,L)=0$ and $\Pden(M\obot \cH^k,L)=0$ since there is no vector in $M$ with valuation less than or equal to $-1$.
\end{proof}

	Now we record several results that describe the relation between local density and primitive local density polynomials.

	\begin{lemma}\cite[Lemma 5.1]{HSY3}\label{lem: ind formula to prim ld}
Let $M$ and $L$ be lattices of rank $m$ and $n$. Then we have
		$$\den(M,L,X)=\sum_{L\subset L'\subset L_{F}}(\q^{n-m}X)^{\ell(L'/L)}\Pden(M,L',X),$$
		where $\ell(L'/L)=\mathrm{length}_{O_F}L'/L$. Here $\Pden(M,L',X)=0$ for $L'$ with fundamental invariant less than the smallest fundamental invariant of $M$. In particular, the summation is finite.
	\end{lemma}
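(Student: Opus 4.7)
The plan is to prove this by a saturation--stratification argument relating general hermitian homomorphisms to their primitive counterparts.

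The first step is a canonical primitive saturation of any $\phi \in \Herm_{L, M\obot H^k}$: extending $\phi$ $F$-linearly to $\phi_F: L_F \to (M\obot H^k)_F$, the lattice $L'_\phi \coloneqq \phi_F^{-1}(M\obot H^k) \cap L_F$ satisfies $L \subseteq L'_\phi \subseteq L_F$, and $\phi$ extends to a primitive hermitian map $\phi': L'_\phi \to M\obot H^k$. Conversely, restricting a primitive $\phi' \in \mathrm{Pherm}_{L', M\obot H^k}$ to $L$ gives a point of $\Herm_{L, M\obot H^k}$ whose saturation is exactly $L'$. This yields a bijection on generic fibers,
$$\Herm_{L, M\obot H^k}(F_0) = \bigsqcup_{L \subseteq L' \subseteq L_F} \mathrm{Pherm}_{L', M\obot H^k}(F_0).$$
Finiteness of the index set reduces to the non-existence of primitive embeddings when $L'$ has fundamental invariants below those of $M$, as claimed.

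Upgrading this generic bijection to a mod-$\pi_0^d$ count requires computing the ``volume ratio'' coming from integral structures. Using the short exact sequence
$$0 \to \Hom_{O_F}(L', M\obot H^k) \to \Hom_{O_F}(L, M\obot H^k) \to \mathrm{Ext}^1_{O_F}(L'/L, M\obot H^k) \to 0,$$
with $\mathrm{Ext}^1$ of $O_F$-length $(m+2k)\cdot \ell(L'/L)$, and imposing the hermitian condition (which introduces $n\cdot \ell(L'/L)$ additional independent constraints among pairings of new generators with $\phi(L)$ and among themselves), one finds that for each $\phi \in \Herm_{L, M\obot H^k}(O_{F_0}/\pi_0^d)$ whose saturation is isomorphic to $L'$, there are exactly $q^{((m+2k)-n)\cdot \ell(L'/L)}$ primitive hermitian extensions to $L'$ modulo $\pi_0^d$. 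Combining this multiplicity with the generic bijection yields, for $d$ sufficiently large,
$$|\Herm_{L, M\obot H^k}(O_{F_0}/\pi_0^d)| = \sum_{L' \supseteq L} q^{(n-m-2k)\ell(L'/L)} \, |\mathrm{Pherm}_{L', M\obot H^k}(O_{F_0}/\pi_0^d)|.$$

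Dividing both sides by $q^{d\cdot n(2(m+2k)-n)}$ and letting $d\to \infty$, the definitions \eqref{eq:local density M,L} and \eqref{eq:prim local density M,L} give $\den(M\obot H^k, L) = \sum_{L'} q^{(n-m-2k)\ell(L'/L)}\, \Pden(M\obot H^k, L')$. Setting $X = q^{-2k}$ and noting that $q^{(n-m-2k)\ell} = (q^{n-m}\cdot q^{-2k})^{\ell} = (q^{n-m}X)^{\ell}$ gives the claimed polynomial identity. The main obstacle is the multiplicity count in the second step: one must verify that the hermitian constraint slices the $O_F$-linear lift ambiguity by exactly the factor $q^{n\,\ell(L'/L)}$. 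This follows (at the tangent-space level at a primitive point, then Hensel-lifted) from the transversality of the extra hermitian equations, which can be checked by direct computation using a normal basis of $L'/L$ together with the non-degeneracy of the hermitian form on $M\obot H^k$ and the primitiveness of $\phi'$.
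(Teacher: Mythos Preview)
The paper does not prove this lemma; it is stated with a citation to \cite[Lemma~5.1]{HSY3}. Your saturation--stratification approach is the standard one and is correct in outline: stratify by the primitive hull $L'$, then show that restriction $\mathrm{Pherm}_{L',M\obot H^k}(O_{F_0}/\pi_0^d)\to\Herm_{L,M\obot H^k}(O_{F_0}/\pi_0^d)$ is $q^{((m+2k)-n)\ell(L'/L)}$-to-one onto its image. Your count---$q^{(m+2k)\ell}$ linear lifts modulo $q^{n\ell}$ hermitian constraints---is the right one.

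Two places where the write-up should be tightened. First, ``saturation'' of a class modulo $\pi_0^d$ is only well-defined once $d$ exceeds a bound depending on the fundamental invariants of $L$ and $M$; this should be stated explicitly (and correspondingly, the counting identity is first established for $k$ and $d$ large, after which polynomiality in $X$ gives the general statement). Second, the only nontrivial verification is that the $n\ell(L'/L)$ hermitian constraints are \emph{independent}, i.e.\ that the linearized map
\[
\ker\iota\;\longrightarrow\;\Herm_n(O_F)\big/g^{*}\Herm_n(O_F)\,g,\qquad \psi\longmapsto T(\phi'_0+\psi)-T_{L'}
\]
(with $g$ the change-of-basis matrix from $L$ to $L'$) is surjective. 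This is precisely where primitiveness of $\phi'_0$ is used---the pairings $x\mapsto(\phi'_0(e_i),x)$ then span---and while your final sentence points to this, it is the heart of the argument and should be written out rather than deferred to ``direct computation''.
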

		\begin{corollary}\label{cor: decomp of pden(L)}
Let $L$ be a lattice. We have the following identity:
		$$
		\pden(L)=\sum_{L \subset L' \subset L_{F}}\ppden(L').
		$$
	\end{corollary}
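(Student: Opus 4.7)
The plan is to apply the primitive decomposition (Lemma \ref{lem: ind formula to prim ld}) separately to each term appearing in the definition (\ref{eq: def of pdenL}) of $\pden(L)$, namely to $\Den(I_n, L, X)$ and to each $\Den(\Lambda_{2j}^{\sharp}, L)$, and then recombine. Since both $I_n$ and $\Lambda_{2j}^{\sharp}$ have rank $n$, Lemma \ref{lem: ind formula to prim ld} specializes (with $m=n$) to
\begin{equation*}
\Den(M, L, X) = \sum_{L \subset L' \subset L_F} X^{\ell(L'/L)}\, \Pden(M, L', X),
\end{equation*}
and the sum is finite because $\Pden(M, L', X)=0$ once some fundamental invariant of $L'$ is smaller than every fundamental invariant of $M$.

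First I would handle the derivative piece $\Den'(L)$. Differentiating the identity above with $M = I_n$ at $X=1$ gives
\begin{equation*}
-2\partial_X\big|_{X=1}\Den(I_n,L,X) = -2\sum_{L\subset L'} \ell(L'/L)\,\Pden(I_n,L',1) + \sum_{L\subset L'}\bigl(-2\partial_X\big|_{X=1}\Pden(I_n,L',X)\bigr).
\end{equation*}
The key observation is that $\chi(L')=\chi(L)=-\chi(I_n)$ for every $L\subset L'\subset L_F$, so Lemma \ref{lem: 0 vanish of error term}, applied to the primitive local density (or deduced from it using Lemma \ref{lem: ind formula to prim ld} by descending induction on $\ell(L'/L)$), forces $\Pden(I_n,L',1)=0$. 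Hence the first sum vanishes, and dividing by $\Den(I_n,I_n)$ yields
\begin{equation*}
\Den'(L) = \sum_{L\subset L'\subset L_F} \Pden'(L').
\end{equation*}

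Next I would handle the central value pieces $\Den_{2j}(L)$. Evaluating the same decomposition with $M=\Lambda_{2j}^{\sharp}$ at $X=1$ gives directly
\begin{equation*}
\Den(\Lambda_{2j}^{\sharp},L) = \sum_{L\subset L'\subset L_F} \Pden(\Lambda_{2j}^{\sharp}, L'),
\end{equation*}
and dividing by $\Den(\Lambda_{2j}^{\sharp},\Lambda_{2j}^{\sharp})$ produces $\Den_{2j}(L)=\sum_{L\subset L'}\Pden_{2j}(L')$. Adding the two identities with the coefficients $c_{2j}$ of (\ref{eq: def of pdenL}) and comparing with the analogous definition (\ref{eq: def of ppden}) of $\ppden$ yields
\begin{equation*}
\pden(L) = \Den'(L) + \sum_{j=1}^{t_{\max}/2} c_{2j}\,\Den_{2j}(L) = \sum_{L\subset L'\subset L_F}\Bigl(\Pden'(L') + \sum_{j=1}^{t_{\max}/2} c_{2j}\,\Pden_{2j}(L')\Bigr) = \sum_{L\subset L'}\ppden(L'),
\end{equation*}
as desired.

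The only subtle point (and the main obstacle, if there is one) is the vanishing $\Pden(I_n,L',1)=0$ for every $L'$ with $\chi(L')=-\chi(I_n)$; everything else is a direct application of Lemma \ref{lem: ind formula to prim ld} and bookkeeping with the normalization factors. If the vanishing is not already available as a stand-alone statement, I would derive it by the primitive decomposition together with Lemma \ref{lem: 0 vanish of error term}: writing $\Den(I_n,L'',1)$ as a sum of $\Pden(I_n,L''',1)$ over $L''\subset L'''$ and using downward induction on the covolume, one extracts the vanishing of each primitive term from the vanishing of the full local density.
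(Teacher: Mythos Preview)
Your proof is correct and follows essentially the same approach as the paper: apply Lemma~\ref{lem: ind formula to prim ld} with $m=n$ to both $\Den(I_n,L,X)$ and $\Den(\Lambda_{2j}^{\sharp},L)$, use $\Pden(I_n,L',1)=0$ to kill the extra terms when differentiating, and recombine. Your ``subtle point'' is not an obstacle: the vanishing $\Pden(I_n,L',1)=0$ for $L'$ with $\chi(L')\neq\chi(I_n)$ is exactly the content of \eqref{eq: Pden(M,L)} in Lemma~\ref{lem: 0 vanish of error term} (since $I_n\not\cong L'$), so no downward induction is needed.
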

	\begin{proof}
 Since  $\Pden(I_n,L',1)=\Pden(I_n,L')=0$, we have by Lemma \ref{lem: ind formula to prim ld}
    	\begin{align*}
    	&-2\frac{\rd}{\rd X}\bigg|_{X=1}\den(I_n,L,X) = -2\sum_{L\subset L'\subset L_{F}} \frac{\rd}{\rd X}\bigg|_{X=1}\Pden(I_n,L',X)=\sum_{L\subset L'\subset L_{F}}  \Pden'(I_n,L').
\end{align*}
Similarly, according to Lemma \ref{lem: ind formula to prim ld}, we have
\begin{align*}
    \den(\Lambda_{2j},L)=\sum_{L\subset L'\subset L_{F}}\Pden(\Lambda_{2j},L')
\end{align*}
for $0< j\le t_{\max}/2$. Now the corollary follows from \eqref{eq: def of pdenL} and \eqref{eq: def of ppden}.
	\end{proof}

Conversely, the primitive local density polynomial is a linear combination of local density polynomials.
	\begin{theorem}\cite[Theorem 5.2]{HSY3}\label{thm:ind formula reducing valuation}
	Let $M$ and $L$ be lattices of rank $m$ and $n$. We have
		\begin{align*}
		\Pden(M, L,X)
			= \sum_{i=0}^{n} (-1)^{i} \q^{i(i-1)/2+i(n-m)}X^{i}
	 \sum_{\substack{L \subset L' \subset \pi^{-1}L \\ \ell(L'/L)=i}} \den(M, L',X).
		\end{align*}
	\end{theorem}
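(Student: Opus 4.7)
The plan is to obtain the inversion formula by substituting Lemma~\ref{lem: ind formula to prim ld} into its own right-hand side and performing a Möbius inversion on the lattice of intermediate sublattices between $L$ and $\pi^{-1}L$. The indexing set $\{L' : L \subset L' \subset \pi^{-1}L,\ \ell(L'/L) = i\}$ is in bijection with the set of $i$-dimensional $\F_\q$-subspaces of the $n$-dimensional vector space $\pi^{-1}L/L$, which makes the appearance of $q$-binomial coefficients natural.

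Substituting Lemma~\ref{lem: ind formula to prim ld} into the right-hand side of the claimed identity and using $\q^{i(n-m)}X^{i}\cdot(\q^{n-m}X)^{\ell(L''/L')}=(\q^{n-m}X)^{\ell(L''/L)}$ (which follows from $\ell(L''/L)=\ell(L''/L')+\ell(L'/L)=\ell(L''/L')+i$), I interchange the order of summation to obtain
\begin{equation*}
\mathrm{RHS}=\sum_{L''\supset L}\Pden(M,L'',X)\cdot(\q^{n-m}X)^{\ell(L''/L)}\cdot\sum_{i=0}^{n}(-1)^{i}\q^{i(i-1)/2}N_{i}(L,L''),
\end{equation*}
where $N_{i}(L,L''):=|\{L':L\subset L'\subset\pi^{-1}L\cap L'',\ \ell(L'/L)=i\}|=\binom{j}{i}_{\q}$ with $j:=\dim_{\F_\q}((\pi^{-1}L\cap L'')/L)$.

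The next step invokes the classical $q$-binomial identity $\sum_{i=0}^{j}(-1)^{i}\q^{i(i-1)/2}\binom{j}{i}_{\q}=\delta_{j,0}$, which follows from specializing the $q$-binomial theorem $\prod_{k=0}^{j-1}(1-\q^{k}t)=\sum_{i}(-1)^{i}\q^{i(i-1)/2}\binom{j}{i}_{\q}t^{i}$ at $t=1$. Thus only terms with $j(L,L'')=0$ survive. Since $(\pi^{-1}L\cap L'')/L$ is the $\pi$-torsion subgroup of the finite torsion $O_{F}$-module $L''/L$, and every nonzero finite torsion module over the discrete valuation ring $O_{F}$ has nonzero $\pi$-torsion, $j=0$ forces $L''=L$. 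The sole surviving term is $\Pden(M,L,X)\cdot(\q^{n-m}X)^{0}\cdot 1=\Pden(M,L,X)$, completing the proof.

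No significant technical obstacle is anticipated; the argument is a clean combinatorial Möbius inversion on the subspace lattice of $\pi^{-1}L/L$, the only delicate point being the careful bookkeeping for the exponents of $(\q^{n-m}X)$ when combining contributions from different intermediate lattices.
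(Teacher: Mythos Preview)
Your proof is correct. The paper does not give its own proof of this statement (it is cited from \cite{HSY3}), but your argument --- substituting Lemma~\ref{lem: ind formula to prim ld} into the right-hand side, interchanging the sums, identifying the inner sum as a $q$-binomial alternating sum via $N_i(L,L'')=\binom{j}{i}_q$ with $j=\dim_{\F_q}((\pi^{-1}L\cap L'')/L)$, and then applying the $q$-binomial theorem at $t=1$ --- is exactly the standard M\"obius inversion on the subspace lattice of $\pi^{-1}L/L$ and is the natural (and presumably the original) approach.
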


\begin{corollary}\label{cor: ind structure of partial Den(T)} Let $L$ be a lattice of rank $n$.		Then
		\begin{align*}	
		\ppden(L) = \sum_{i=0}^{n} (-1)^{i} \q^{i(i-1)/2}
			\sum_{\substack{L \subset L' \subset \pi^{-1} L \\ \ell(L'/L)=i}} \pden(L').
		\end{align*}
	\end{corollary}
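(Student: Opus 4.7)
The plan is to apply Theorem \ref{thm:ind formula reducing valuation} twice, once with $M=I_n$ (rank $n$) and once with $M=\Lambda_{2j}^\sharp$ (also rank $n$), and then take the linear combination prescribed by the definitions \eqref{eq: def of pdenL} and \eqref{eq: def of ppden}. Since both choices of $M$ have rank equal to that of $L$, the factor $q^{i(n-m)}$ in Theorem \ref{thm:ind formula reducing valuation} becomes $1$, which already accounts for the clean exponent $q^{i(i-1)/2}$ in the corollary.

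First, evaluating at $X=1$ with $M=\Lambda_{2j}^\sharp$ and dividing by $\Den(\Lambda_{2j}^\sharp,\Lambda_{2j}^\sharp)$ yields
\begin{equation*}
\Pden_{2j}(L) \;=\; \sum_{i=0}^{n}(-1)^i q^{i(i-1)/2}\!\!\sum_{\substack{L\subset L'\subset\pi^{-1}L\\ \ell(L'/L)=i}}\!\!\Den_{2j}(L')
\end{equation*}
for each $1\le j\le t_{\max}/2$. Next, for $M=I_n$, differentiate the identity in Theorem \ref{thm:ind formula reducing valuation} with respect to $X$ at $X=1$ and multiply by $-2$. By the product rule each inner term produces both a derivative term and a term proportional to $\Den(I_n,L')$; however, every $L'$ in the summation lies in $\bV$ so $\chi(L')=\chi(L)=-\chi(I_n)$, and hence $\Den(I_n,L')=0$ by Lemma \ref{lem: 0 vanish of error term}. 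The $i\cdot\Den(I_n,L')$ contributions therefore drop out, leaving (after dividing by $\Den(I_n,I_n)$)
\begin{equation*}
\Pden'(L) \;=\; \sum_{i=0}^{n}(-1)^i q^{i(i-1)/2}\!\!\sum_{\substack{L\subset L'\subset\pi^{-1}L\\ \ell(L'/L)=i}}\!\!\Den'(L').
\end{equation*}

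Finally, adding the $\Pden'(L)$ identity to the linear combination $\sum_j c_{2j}\Pden_{2j}(L)$ and interchanging the order of summation gives
\begin{equation*}
\ppden(L) \;=\; \sum_{i=0}^{n}(-1)^i q^{i(i-1)/2}\!\!\sum_{\substack{L\subset L'\subset\pi^{-1}L\\ \ell(L'/L)=i}}\!\Bigl(\Den'(L')+\sum_{j=1}^{t_{\max}/2}c_{2j}\Den_{2j}(L')\Bigr),
\end{equation*}
and the inner parenthesis is exactly $\pden(L')$ by \eqref{eq: def of pdenL}, yielding the desired formula.

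There is no real obstacle here; the only point that requires any care is the vanishing $\Den(I_n,L')=0$, which is essential to obtain the clean coefficient $q^{i(i-1)/2}$ without an extra $X$-derivative contribution. Everything else is a formal manipulation of the induction formula of Theorem \ref{thm:ind formula reducing valuation}, parallel to the proof of Corollary \ref{cor: decomp of pden(L)}.
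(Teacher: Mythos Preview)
Your proof is correct and is exactly the argument the paper has in mind: the corollary is stated without proof immediately after Theorem~\ref{thm:ind formula reducing valuation}, and the intended derivation is precisely the one you give---apply that theorem with $M=I_n$ and $M=\Lambda_{2j}^\sharp$ (both of rank $n$, so $q^{i(n-m)}=1$), use $\Den(I_n,L')=0$ from Lemma~\ref{lem: 0 vanish of error term} to kill the $iX^{i-1}$ term when differentiating, and take the linear combination defining $\ppden$ and $\pden$. This mirrors the explicit proof the paper gives for the companion Corollary~\ref{cor: decomp of pden(L)}.
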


Recall that for two lattices $L,L'\subset \bV$ of rank $n$,  $n(L',L)=|\{L''\subset L_F\mid  L\subset L'', L''\cong L'\}|. $
 	\begin{lemma}\label{lem: 0 vanish of error term}
For two lattices $L$ and $M$ of the same rank $n$, we have
		\begin{align}\label{eq: Pden(M,L)}
		    \Pden(M,L)=\begin{cases}
		    \den(M,L) & \text{ if $M\cong L$},\\
		    0 & \text{ if $M\not\cong L$}.
		    \end{cases}
		\end{align}
Moreover,
    	\begin{align*}
			\den(M,L)=n(M,L)\cdot \den(M,M).
			\end{align*}
In particular, if $\chi(M)\not =\chi(L)$, then $\den(M,L)=0$.			
	\end{lemma}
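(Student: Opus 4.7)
My plan is first to establish a dichotomy for $\Pden(M,L)$ via a rigidity observation, and then to derive the remaining assertions from the inductive formula in Lemma \ref{lem: ind formula to prim ld}. The key rigidity claim I would prove is: a primitive form-preserving homomorphism $\phi \in \mathrm{Pherm}_{L,M}(O_{F_0}/\pi_0^d)$ between hermitian $O_F$-lattices of the same rank $n$ is automatically an isomorphism. The idea is that primitivity forces the reduction $\bar\phi\colon \bar L \to \bar M$ to be surjective between $\kappa$-vector spaces of dimension $n$, hence bijective; Nakayama's lemma over $O_F/\pi_0^d$ then shows $\phi$ itself is surjective, and since source and target are free of the same rank, $\phi$ must be an isomorphism of hermitian $O_F/\pi_0^d$-modules. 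In particular $L_{(d)} \cong M_{(d)}$.

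Using the normal-form description recalled in \S\ref{subsec: notation}, the fundamental invariants of $L$ and $M$ are determined by their moment matrices modulo $\pi_0^d$ once $d$ exceeds the largest such invariant. Hence if $L \not\cong M$ then $L_{(d)} \not\cong M_{(d)}$ for large $d$, so $|\mathrm{Pherm}_{L,M}(O_{F_0}/\pi_0^d)| = 0$ and $\Pden(M,L) = 0$. Combined with the already-noted equality $\Pden(M,M) = \den(M,M)$ (which follows from the same rigidity: every $\phi \in \Herm_{M,M}(O_{F_0}/\pi_0^d)$ preserves the moment matrix of $M$ and so is an isomorphism, hence primitive), this establishes \eqref{eq: Pden(M,L)}.

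For the ``moreover'' statement, I would apply Lemma \ref{lem: ind formula to prim ld} with $m = n$ and $X = 1$ to obtain
\[
\den(M,L) = \sum_{L \subset L' \subset L_F} \Pden(M,L').
\]
By \eqref{eq: Pden(M,L)}, only the terms with $L' \cong M$ contribute, each equal to $\den(M,M)$; by definition there are exactly $n(M,L)$ such $L'$, so $\den(M,L) = n(M,L) \cdot \den(M,M)$. For the final assertion, any $L' \subset L_F$ of full rank differs from $L$ by a change of basis in $\GL_n(F)$, so their moment-matrix determinants differ by an element of $\Nm(F^\times)$; hence $\chi(L') = \chi(L)$. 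Consequently, if $\chi(M) \neq \chi(L)$, no $L' \subset L_F$ can be isomorphic to $M$, so $n(M,L) = 0$ and $\den(M,L) = 0$.

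The only real subtlety is the careful verification of the mod-$\pi_0^d$ rigidity: one must check that ``form-preserving'' together with ``bijective reduction mod $\pi$'' really forces isomorphism at the level of $O_F/\pi_0^d$-modules, which is a routine application of Nakayama once the mod-$\pi$ reduction is handled correctly; everything else is bookkeeping.
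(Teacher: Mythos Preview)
Your proposal is correct and follows essentially the same approach as the paper: the paper also argues that a primitive $\phi\in\mathrm{Pherm}_{L,M}(O_{F_0}/\pi_0^d)$ forces $\overline{\phi(L_{(d)})}=\overline{M_{(d)}}$, applies Nakayama to get $\phi(L_{(d)})=M_{(d)}$, deduces $L_{(d)}\cong M_{(d)}$ for all large $d$ and hence $L\cong M$, and then uses Lemma~\ref{lem: ind formula to prim ld} at $X=1$ for the formula $\den(M,L)=n(M,L)\cdot\den(M,M)$. Your added details (recovering fundamental invariants from $L_{(d)}$ for large $d$, and the $\chi$-invariance via $\Nm(F^\times)$) are exactly the routine points the paper leaves implicit.
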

	\begin{proof}
First of all, for $M\cong L$, $\Pden(M,L)=\den(M,L)$ by the definition of primitive local density.
Now we show that if $\Pden(M,L)\not =0$, then $M\cong L$. If $\Pden(M,L)\not =0$, then for any large enough $d$ we have
\begin{align*}
    \mathrm{Pherm}_{L,M}(O_{F_0}/(\pi_0^d))\not =0.
\end{align*}
Let $\phi\in  \mathrm{Pherm}_{L,M}(O_{F_0}/(\pi_0^d))$ be a primitive embedding and $L_{(d)}=L\otimes_{O_{F_0}}O_{F_0}/(\pi_0^d)$. Let $\overline{\phi(L_{(d)})}$ be the image of  $\phi(L_{(d)})$ in $\overline{M_{(d)}}$. Since $\phi$ is primitive, we have $\overline{\phi(L_{(d)})}=\overline{M_{(d)}}$. Then by Nakayama's lemma, we know $\phi(L_{(d)})=M_{(d)}$.   Hence $\phi$ is an isometry between $L_{(d)}\cong M_{(d)}$. Since this holds for any large enough $d$, we have $L\cong M$.

Now the formula of $\den(M,L)$ follows from \eqref{eq: Pden(M,L)} and Lemma \ref{lem: ind formula to prim ld}.
	\end{proof}
	\begin{corollary}\label{cor: int of pden_{2t}}
	Let $L$ be a lattice. Then for any even integer $t$ such that $0\le t\le t_{\max}$, we have
	\begin{align*}
	\Den_{t}(L):=\frac{\Den(\Lambda_t^\sharp, L)}{\Den(\Lambda_t^\sharp,\Lambda_t^\sharp)}\in \mathbb{Z}.
	\end{align*}
	\end{corollary}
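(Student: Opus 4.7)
The plan is to show that $\Den_t(L)$ is not merely rational but is in fact a non-negative integer equal to a lattice-counting function, by a direct application of Lemma \ref{lem: 0 vanish of error term}. The key observation is that both $\Lambda_t^\sharp$ and $L$ have the same rank $n$ (by our standing convention on $L$ and since $\Lambda_t \subseteq \mathbb{V}$ has rank $n$), so the same-rank formula in that lemma applies directly.

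More precisely, I would proceed as follows. Since $\chi(\Lambda_t^\sharp) = \chi(L)$ by the definition/choice of $\Lambda_t$, we are not in the trivially vanishing case of Lemma \ref{lem: 0 vanish of error term}. Applying the second formula of that lemma with $M = \Lambda_t^\sharp$ yields
\begin{equation*}
\Den(\Lambda_t^\sharp, L) = n(\Lambda_t^\sharp, L) \cdot \Den(\Lambda_t^\sharp, \Lambda_t^\sharp).
\end{equation*}
Since $\Den(\Lambda_t^\sharp, \Lambda_t^\sharp) > 0$ (it counts self-isometries, in particular it is nonzero), one may divide both sides by it to obtain
\begin{equation*}
\Den_t(L) = \frac{\Den(\Lambda_t^\sharp, L)}{\Den(\Lambda_t^\sharp, \Lambda_t^\sharp)} = n(\Lambda_t^\sharp, L).
\end{equation*}
By definition $n(\Lambda_t^\sharp, L) = \#\{L'' \subset L_F : L \subset L'',\ L'' \cong \Lambda_t^\sharp\}$, which is a non-negative integer (and moreover finite, since lattices $L''$ containing $L$ with fixed fundamental invariants form a finite set).

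There is essentially no obstacle here: the statement reduces to Lemma \ref{lem: 0 vanish of error term}, which has already been proved. The only things worth double-checking are the hypotheses of that lemma (equal ranks, which holds, and the vanishing dichotomy under $\chi(M) \neq \chi(L)$, which is avoided by the choice of $\Lambda_t$). Notice that as a byproduct we obtain the clean geometric interpretation $\Den_t(L) = n(\Lambda_t^\sharp, L)$, which will presumably be useful when comparing with the geometric side of the main theorem.
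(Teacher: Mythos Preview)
Your proof is correct and matches the paper's intent: the corollary is stated immediately after Lemma~\ref{lem: 0 vanish of error term} with no separate proof, precisely because the identity $\Den(\Lambda_t^\sharp,L)=n(\Lambda_t^\sharp,L)\cdot\Den(\Lambda_t^\sharp,\Lambda_t^\sharp)$ from that lemma gives $\Den_t(L)=n(\Lambda_t^\sharp,L)\in\mathbb{Z}$ directly. One minor remark: the hypothesis $\chi(\Lambda_t^\sharp)=\chi(L)$ is not actually needed for the argument, since the formula $\Den(M,L)=n(M,L)\cdot\Den(M,M)$ holds for any two lattices of equal rank (when $\chi(M)\neq\chi(L)$ both sides vanish).
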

	
	\begin{corollary}\label{cor: vanishing of error term}
	Assume $L\not \cong \Lambda_{t}^{\sharp}$ for any vertex lattice $\Lambda_{t}$ with  $t >0$.  Then
	\begin{align*}
	    \ppden(L)=\Pden'(L).
	\end{align*}
	\end{corollary}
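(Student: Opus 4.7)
The plan is straightforward: unravel the definition of $\ppden(L)$ and show that under the hypothesis, every correction term $\Pden_{2j}(L)$ with $j \geq 1$ vanishes, so only the $\Pden'(L)$ term survives.

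More precisely, recall from \eqref{eq: def of ppden} that
\begin{equation*}
\ppden(L) = \Pden'(L) + \sum_{j=1}^{t_{\max}/2} c_{2j}\cdot \Pden_{2j}(L),
\end{equation*}
where $\Pden_{2j}(L) = \Pden(\Lambda_{2j}^\sharp, L)/\den(\Lambda_{2j}^\sharp, \Lambda_{2j}^\sharp)$. So it suffices to show that $\Pden(\Lambda_{2j}^\sharp, L) = 0$ for every $j$ with $1 \leq j \leq t_{\max}/2$.

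For this I invoke Lemma \ref{lem: 0 vanish of error term}, which gives the vanishing of $\Pden(M, L)$ whenever $M$ and $L$ are lattices of the same rank $n$ with $M \not\cong L$. Since $\Lambda_{2j}$ is a vertex lattice of type $2j$ in the $n$-dimensional hermitian space $\bV$, the dual $\Lambda_{2j}^\sharp$ is again a rank-$n$ lattice, so the hypothesis of the lemma applies. The standing assumption $L \not\cong \Lambda_t^\sharp$ for every vertex lattice $\Lambda_t$ with $t > 0$ then yields $\Pden(\Lambda_{2j}^\sharp, L) = 0$ for each $j \geq 1$, and the corollary follows immediately.

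There is no real obstacle; the content is entirely in Lemma \ref{lem: 0 vanish of error term}, and this corollary is just the specialization of that vanishing to the specific test lattices $\Lambda_{2j}^\sharp$ appearing in the definition of $\ppden(L)$.
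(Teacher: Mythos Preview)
Your proof is correct and is exactly the argument the paper intends: the corollary is stated immediately after Lemma~\ref{lem: 0 vanish of error term} with no separate proof, and the vanishing of each $\Pden(\Lambda_{2j}^\sharp,L)$ for $j\ge 1$ follows directly from that lemma together with the hypothesis $L\not\cong\Lambda_{2j}^\sharp$.
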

	
	\begin{corollary} \label{cor: coefficient}
	Let $c_{t} $ be the coefficients in \eqref{eq:coeff} with  even $t$ and  $0<t\le t_{\max}$.  Then
		\begin{align*}
		c_{t} =-\Pden'(\Lambda_t^{\sharp}).
		\end{align*}
	\end{corollary}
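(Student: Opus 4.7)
My plan is to reduce the identity $c_{2j}=-\Pden'(\Lambda_{2j}^\sharp)$ to the vanishing $\ppden(\Lambda_{2j}^\sharp)=0$, and then deduce this vanishing from the defining constraint \eqref{eq:coeff} via an upper-triangular linear system.

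First, by Lemma \ref{lem: 0 vanish of error term}, $\Pden(\Lambda_{2k}^\sharp, L)=0$ unless $L\cong \Lambda_{2k}^\sharp$, in which case it equals $\Den(\Lambda_{2k}^\sharp,\Lambda_{2k}^\sharp)$. Since the fundamental invariants distinguish the duals $\Lambda_{2j}^\sharp$ of vertex lattices of different types, this gives $\Pden_{2k}(\Lambda_{2j}^\sharp)=\delta_{jk}$. Substituting into \eqref{eq: def of ppden} yields
\[
\ppden(\Lambda_{2j}^\sharp)=\Pden'(\Lambda_{2j}^\sharp)+c_{2j},
\]
so it suffices to establish $\ppden(\Lambda_{2j}^\sharp)=0$ for every $1\le j\le t_{\max}/2$.

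For this, I would apply Corollary \ref{cor: decomp of pden(L)} to the defining constraint $\pden(\Lambda_{2i}^\sharp)=0$ to obtain
\[
0 \;=\; \sum_{L'\supseteq \Lambda_{2i}^\sharp} \ppden(L'), \qquad 1\le i\le t_{\max}/2,
\]
and then classify the terms via the fundamental invariants of $L'$. If some invariant of $L'$ is at most $-2$ (when even) or at most $-3$ (when odd), then a short normal-basis computation exhibits a vector of $\val\le -1$ in $L'$, hence $\ppden(L')=0$ by Lemma \ref{lem: vanish val le -1}. Otherwise every invariant of $L'$ lies in $\{-1\}\cup\Z_{\ge 0}$, with the $-1$s occurring only from hyperbolic pairs $H$; the strict containment $L'\supsetneq \Lambda_{2i}^\sharp$ forces $\sum(\text{invariants of }L')<-2i$, which combined with non-negativity of the remaining invariants implies $L'$ has invariants $((-1)^{2j},0^{n-2j})$ for some $j>i$, i.e.\ $L'\cong \Lambda_{2j}^\sharp$ with $i<j\le t_{\max}/2$.

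Writing $m_{ij}$ for the number of $L'\subset (\Lambda_{2i}^\sharp)_F$ with $L'\cong \Lambda_{2j}^\sharp$ and $L'\supseteq \Lambda_{2i}^\sharp$, the resulting system
\[
0 \;=\; \sum_{j=i}^{t_{\max}/2} m_{ij}\,\ppden(\Lambda_{2j}^\sharp),\qquad 1\le i\le t_{\max}/2,
\]
is upper-triangular with $m_{ii}=1$. Downward induction on $i$, starting from the base case $i=t_{\max}/2$ (a single equation $\ppden(\Lambda_{t_{\max}}^\sharp)=0$), yields $\ppden(\Lambda_{2j}^\sharp)=0$ for all $j$, whence $c_{2j}=-\Pden'(\Lambda_{2j}^\sharp)$. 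The main obstacle is the classification step: carefully justifying, via the decomposition $\Lambda_{2j}^\sharp\cong H^j\obot I_{n-2j}$ and the fact that vectors in a hyperbolic pair $H$ retain integral norm despite its off-diagonal entries having valuation $-1$, that lattices $L'\supsetneq \Lambda_{2i}^\sharp$ with no vector of $\val\le -1$ are exactly the $\Lambda_{2j}^\sharp$ for $j>i$.
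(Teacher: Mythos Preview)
Your first reduction (i.e., $\ppden(\Lambda_{2j}^\sharp) = \Pden'(\Lambda_{2j}^\sharp) + c_{2j}$ via Lemma~\ref{lem: 0 vanish of error term}) is correct and is exactly what the paper does. For the vanishing $\ppden(\Lambda_{2j}^\sharp)=0$, however, the paper takes the dual route: instead of Corollary~\ref{cor: decomp of pden(L)} it invokes Corollary~\ref{cor: ind structure of partial Den(T)} to write $\ppden(\Lambda_t^\sharp)$ as an alternating sum of $\pden(L')$ over $\Lambda_t^\sharp \subseteq L' \subseteq \pi^{-1}\Lambda_t^\sharp$, and then observes that each such $\pden(L')$ vanishes --- either by Lemma~\ref{lem: vanish val le -1} or directly by the constraint~\eqref{eq:coeff}. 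This is slightly cleaner (no triangular induction), but both approaches ultimately hinge on the same classification of the surviving $L'$.

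That classification step is where your argument has a genuine gap. From ``all invariants lie in $\{-1\}\cup\Z_{\ge 0}$'' together with the valuation drop $\sum a'_k < -2i$ you \emph{cannot} conclude that the non-$(-1)$ invariants are all zero: a lattice such as $H^3\obot\langle e\rangle\obot I_{n-7}$ with $(e,e)\in\pi_0 O_{F_0}^\times$ has all vectors of $\val\ge 0$ and sum of invariants $-4$, yet is not isomorphic to any $\Lambda_{2j}^\sharp$. What actually rules out such $L'$ is the containment $L'\supseteq\Lambda_{2i}^\sharp$ itself, which you only exploit through its effect on the total valuation. Dualizing gives $(L')^\sharp\subseteq\Lambda_{2i}$, and since $\Lambda_{2i}$ is integral so is $(L')^\sharp$. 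Writing $L'=H^j\obot L''$ with $L''$ integral, one has $(L')^\sharp=(\pi H)^j\obot (L'')^\sharp$, whence $(L'')^\sharp$ is integral and therefore $L''=(L'')^\sharp$ is unimodular, i.e.\ $L'\cong\Lambda_{2j}^\sharp$. With this duality argument in place your proof goes through.
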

	\begin{proof}
		 On the one hand, combining Corollary \ref{cor: ind structure of partial Den(T)} with \eqref{eq:coeff}, we obtain
		\begin{align*}
			\ppden(\Lambda_t^{\sharp})=0.
		\end{align*}
On the other hand, by  Lemma \ref{lem: 0 vanish of error term} and \eqref{eq: def of pdenL},
\begin{align*}
    \ppden(\Lambda_{t}^{\sharp})=\Pden'(\Lambda_{t}^{\sharp})+c_{t} .
\end{align*}
	\end{proof}
Write $\Lambda_t^{\sharp}=H^{t/2}\obot L_1$ where $L_1$ is unimodular of rank $n_1$. Then by Lemma \ref{lem: L=H^i obot L_2}, Corollaries \ref{cor: Pden} and   \ref{cor: coefficient},  we have (see the following subsections for the relevant notations)
\begin{align*}
    c_{t}
    =-2\frac{\prod_{\ell =1}^{t/2-1}(1-q^{2\ell})}{\Den(I_n,I_n)}\cdot\sum_{i=0}^{n_1}   \prod_{\ell=0}^{n_1-i-1}(1-\q^{2(\ell+t/2)}) \cdot  \sum_{V_1\in \mathrm{Gr}(i,\overline{L_1})(\mathbb{F}_\q)}  |\mathrm{O}(V_1,\overline{I_n})|.
\end{align*}
Combining this formula with Lemma \ref{lem: Pden(I_m,L)} and Lemma \ref{lem: counting subspace}, we can compute $c_t$ explicitly. We give some examples here.
\begin{example}\label{ex: coeff}
If $n$ is odd,  we have
\begin{align}\label{eq: Den' Lambda tmax}
    c_{t_{\mathrm{max}}}=-\frac{\Pden'(I_n,\Lambda_{t_{\max}}^\sharp)}{\Den(I_n,I_n)}= \frac{(-1)^{\frac{n+1}{2}}}{q^{(\frac{n-1}2)^2}(q^{\frac{n-1}2}+1)}.
\end{align}
If $n$ is even and $\epsilon=1$, we have
\begin{align*}
     c_{t_{\mathrm{max}}}=-\frac{\Pden'(I_n,\Lambda_{t_{\max}}^\sharp)}{\Den(I_n,I_n)}= \frac{(-1)^{\frac{n}{2}}}{q^{\frac{n}2(\frac{n}2 -1)}(q^{\frac{n}2}+1)}.
\end{align*}

We also give a list of   $c_t$ for small $n$, $t$ and $\epsilon=1$ in the following table:\newline

\begin{table*}[h]
\begin{center}
\begin{tabular}{ c c c c c c  }
\toprule
\diagbox{$t$}{$n$} & \makecell{2}& \makecell{3} & \makecell{4} & \makecell{5} &\makecell{6}
  \\
\midrule
2 &  $\frac{-1}{q+1}$ & $\frac{1}{q(q+1)}$ & $\frac{-1}{q^2(q+1)}$& $\frac{1}{q^3(q+1)}$ & $\frac{-1}{q^4(q+1)}$   \\
4&  0 & 0 & $\frac{1}{q^2(q^2+1)}$ &  $\frac{-1}{q^4(q^2+1)}$ & $\frac{1}{q^6(q^2+1)}$   \\
6&  0 & 0 & 0 & 0 & $-\frac{1}{q^6(q^3+1)}$   \\
\bottomrule
\end{tabular}
\end{center}
\end{table*}
In fact, computation suggests that $c_t$ has the following simple formula:
\begin{align}
 c_t=\begin{cases}
 \frac{ (-1)^{n+t/2}(q^{n/2}+1)}{q^{t/2(n-t-1)}(q^{n/2}-1)(q^{t/2}+1)}& \text{ if $n$ is even and $\epsilon=-1$},\\
  \frac{ (-1)^{n+t/2}}{q^{t/2(n-t/2-1)}(q^{t/2}+1)} & \text{ otherwise}.
      \end{cases}
\end{align}
We believe this formula can be proved by similar method as in \S  \ref{sec:pPden}. Since this formula is not needed in our proof, we omit the details.
\end{example}

 \subsection{Explicit formulas for some simple primitive local density polynomials}
	\begin{lemma} (\cite[Lemma 2.15]{LL2}\label{lem: Pden(H^k,L)})
  Assume $L$ is an integral lattice of rank $n$. Then
  \begin{align*}
      \Pden(\cH^k,L)=\prod_{\ell=0}^{n-1}(1-\q^{-2k+2\ell}).
  \end{align*}
\end{lemma}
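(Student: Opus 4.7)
The plan is to proceed by induction on $n = r(L)$, extending the embedding one basis vector at a time, in the spirit of the proof of \cite[Lemma 2.15]{LL2}. The base case $n = 0$ yields the empty product $= 1$, matching the unique trivial embedding. For the inductive step, choose a sublattice $L_0 \subset L$ of rank $n-1$ with $L = L_0 + O_F \cdot x$ (using that $O_F$ is a PID). If $\phi \colon L \hookrightarrow \cH^k$ is a primitive embedding modulo $\pi_0^d$, then so is $\phi|_{L_0}$: since $\overline{\phi(L)} = \overline{\phi(L_0)} + \mathbb{F}_q \cdot \overline{\phi(x)}$ has dimension $n$, the subspace $\overline{\phi(L_0)}$ has dimension exactly $n-1$.

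The heart of the argument is, for a fixed primitive $\phi_0 \colon L_0 \hookrightarrow \cH^k$ modulo $\pi_0^d$, to count extensions to primitive $\phi$. The quotient $\cH^k / \pi \cH^k$ is an $\mathbb{F}_q$-vector space of dimension $2k$, and the rescaled form $\bar h(\bar v, \bar w) := \pi(v,w) \bmod \pi$ is a well-defined \emph{split symplectic} form on it (since $\pi(e_i, e_{k+i}) = 1$). Integrality of $L$ combined with preservation of the hermitian form forces $\overline{\phi(L)}$ to be an $n$-dimensional isotropic subspace of $(\mathbb{F}_q^{2k}, \bar h)$. Hence the reduction $\bar v$ of an extension must lie in $\overline{\phi_0(L_0)}^\perp \smallsetminus \overline{\phi_0(L_0)}$ (of cardinality $q^{2k-n+1} - q^{n-1}$), yielding a proportion $1 - q^{-2k + 2(n-1)}$ of primitive extensions among all mod-$\pi$ linear extensions.

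Lifting to level $\pi_0^d$ via Hensel's lemma, which is available since $p$ is odd and the quadratic constraint $(v,v) = (x,x)$ has nonsingular Jacobian once the linear constraints $(v, \phi_0(y)) = (x,y)$ for $y \in L_0$ are imposed, the $d$-level count of extensions scales by $q^{d(4k - 2n + 1)}$. Combined with the normalization in the definition of $\Pden$, this produces precisely the new factor $(1 - q^{-2k + 2(n-1)})$, which multiplied by the inductive hypothesis completes the induction. The principal obstacle is the uniform treatment of Hensel lifting over all base embeddings $\phi_0$, and the careful bookkeeping around the non-integrality of $\cH^k$ as an $O_F$-lattice (its form values lie in $\pi^{-1} O_F$, not $O_F$, so $(\cH^k)^\sharp = \pi \cH^k$). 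The rescaled integral form $h := \pi(\cdot,\cdot)$ and its symplectic reduction $\bar h$ provide the right framework, and the isotropy forced by integrality of $L$ is what makes the final count depend only on $n$ and $k$, not on $L$ itself.
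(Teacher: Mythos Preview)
The paper does not give its own proof of this lemma; it simply cites \cite[Lemma 2.15]{LL2}. Your inductive approach, extending one basis vector at a time and reducing to symplectic counting over the residue field, is exactly the argument in that reference. The structure is sound: integrality of $L$ forces the image $\overline{\phi(L)}$ to be isotropic in the symplectic space $(\overline{H^k},\bar h)$, and the count $q^{2k-n+1}-q^{n-1}$ of primitive extensions mod $\pi$ is correct.

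One point deserves a sharper justification. In your Hensel step, the nonsingularity of the Jacobian of $v\mapsto (v,v)$ restricted to the affine space cut out by the linear constraints is not automatic---it is \emph{equivalent} to the primitivity condition. Concretely, write $W=\phi_0(L_0)^\perp\subset H^k$; the radical of $\bar h$ on $\overline{W}=\overline{\phi_0(L_0)}^\perp$ is exactly $\overline{\phi_0(L_0)}$ (since the latter is isotropic). Thus $\bar v\notin\overline{\phi_0(L_0)}$ is precisely what guarantees that $h(v,W)=O_F$, hence that the derivative $w\mapsto\tr_{F/F_0}(v,w)$ surjects onto $O_{F_0}$, so that each fibre of the quadratic constraint over $O_{F_0}/\pi_0^d$ has the same size. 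You gesture at this (``nonsingular Jacobian once the linear constraints are imposed''), but the logical link to primitivity should be made explicit. A minor second point: your induction applies the hypothesis to $L_0$, which may be degenerate (e.g.\ an isotropic line inside a hyperbolic plane); this is harmless since nothing in the count uses nondegeneracy, but it means the inductive statement must allow degenerate integral $L$, not merely the paper's default ``lattice'' convention.
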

	\begin{lemma}\cite[Corollary 5.8]{HSY3}\label{lem: L=H^i obot L_2}
		Assume $L=\cH^j\obot L_1$ where $j>0$ and  $L_1$ is an integral lattice of rank $n_1$. Then 	
		\begin{align*}
			\den(I_m,L,X)&=\Big(\prod_{\ell=0}^{j-1}(1-\q^{2\ell} X)\Big)\den(I_m,L_1,\q^{2j}X),\\
			\Pden(I_m,L,X)&=\Big(\prod_{\ell=0}^{j-1}(1-\q^{2\ell} X)\Big)\Pden(I_m,L_1,\q^{2j}X).
		\end{align*}
		In particular,
			\begin{align}\label{eq: Den'. L=H^j obot L_1}
			\Pden'(I_m,L)=2\Big(\prod_{\ell=1}^{j-1}(1-\q^{2\ell} )\Big)\Pden(I_m,L_1,\q^{2j}).
		\end{align}
	\end{lemma}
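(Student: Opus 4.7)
The plan is to prove the $\Pden$ identity directly by induction on $j$, deduce the $\den$ identity by the same argument (or alternatively via the inversion relations of Theorem~\ref{thm:ind formula reducing valuation} combined with Lemma~\ref{lem: ind formula to prim ld}), and obtain the ``in particular'' consequence from the product rule. The induction on $j$ reduces at once to the base case $j=1$: assuming the case $j=1$, write $\cH^j \obot L_1 = \cH \obot (\cH^{j-1} \obot L_1)$, apply the $j=1$ identity to extract a factor $(1-X)$ and shift the variable $X \mapsto q^2 X$, then invoke the inductive hypothesis on $\cH^{j-1}\obot L_1$ evaluated at $q^2 X$. Since
\[
\prod_{\ell=0}^{j-2}\bigl(1-q^{2\ell}(q^2X)\bigr)=\prod_{\ell=1}^{j-1}(1-q^{2\ell}X)
\]
and $q^{2(j-1)}(q^2X)=q^{2j}X$, the full product $\prod_{\ell=0}^{j-1}(1-q^{2\ell}X)$ and the correct shifted argument appear.

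For the base case $j=1$, substitute $X=q^{-2k}$ and work inside $M=I_m\obot H^k$. Fix a basis $\{e_1,e_2\}$ of $\cH$ with $(e_1,e_1)=(e_2,e_2)=0$ and $(e_1,e_2)=\pi^{-1}$. A primitive hermitian homomorphism $\phi\colon \cH\obot L_1 \to M_{(d)}$ is equivalent to the data of a primitive isotropic pair $(u,v)=(\phi(e_1),\phi(e_2))\in M_{(d)}\times M_{(d)}$ with $(u,v)=\pi^{-1}$, plus a primitive map $L_1\to \langle u,v\rangle^{\perp}$. The crucial ingredient is a Witt-type splitting: for any primitive $(u,v)$ as above, the orthogonal complement $\langle u,v\rangle^{\perp}\subset M_{(d)}$ is isometric to $(I_m\obot H^{k-1})_{(d)}$. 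This can be proved by lifting $u,v$ to $M$ and performing an explicit change of basis to extract the hyperbolic summand, using that primitivity guarantees a hermitian $O_F$-module direct summand. Combining with the count of primitive isotropic pairs $(u,v)$ (which is computed via the $n=2$ case in Lemma~\ref{lem: Pden(H^k,L)} applied to the lattice $\cH$) and the normalization in \eqref{eq:prim local density M,L}, one obtains
\[
\Pden(I_m,\cH\obot L_1,q^{-2k})=(1-q^{-2k})\cdot\Pden(I_m,L_1,q^{-2(k-1)}),
\]
which, since both sides are polynomials in $X=q^{-2k}$ and $q^{-2(k-1)}=q^2 X$, is exactly the asserted identity at $j=1$. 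The same counting without the primitivity condition gives the $\den$ formula.

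For the ``in particular'' statement, the factor at $\ell=0$ in $\prod_{\ell=0}^{j-1}(1-q^{2\ell}X)$ vanishes at $X=1$, so the product rule kills the $\Pden'(I_m,L_1,q^{2j}X)$ contribution and leaves
\[
\frac{d}{dX}\Big|_{X=1}\Pden(I_m,\cH^j\obot L_1,X)=-\prod_{\ell=1}^{j-1}(1-q^{2\ell})\cdot \Pden(I_m,L_1,q^{2j}),
\]
and multiplying by $-2$ yields \eqref{eq: Den'. L=H^j obot L_1}. The principal technical obstacle is the Witt-type splitting of a primitively embedded hyperbolic plane inside the truncation $(I_m\obot H^k)_{(d)}$, together with the bookkeeping of powers of $q^d$ in the normalization; both are standard but require care, in particular ensuring that the lifting step preserves primitivity and that the $d\to\infty$ limit passes through the product cleanly.
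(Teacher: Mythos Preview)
Your approach is correct but differs from the paper's. You argue by direct Witt cancellation: split off a primitively embedded hyperbolic plane $\langle u,v\rangle\subset (I_m\obot H^k)_{(d)}$, identify its orthogonal complement with $(I_m\obot H^{k-1})_{(d)}$, and count. The paper instead takes the $\Den$ identity as given (it is quoted from \cite[Corollary~5.8]{HSY3}) and \emph{derives} the $\Pden$ identity from it algebraically via Theorem~\ref{thm:ind formula reducing valuation}: one writes $\Pden(I_m,L,X)$ as the alternating sum of $\Den(I_m,L',X)$ over $L\subset L'\subset\pi^{-1}L$, observes that $\Den(I_m,L',X)=0$ unless $L'=H^j\obot L_1'$ (since otherwise some fundamental invariant of $L'$ is $\le -2$ and Lemma~\ref{lem: vanish val le -1} applies), substitutes the $\Den$ identity into each surviving term, and recognizes the result as the product factor times the corresponding alternating sum for $L_1$, which is $\Pden(I_m,L_1,q^{2j}X)$ by Theorem~\ref{thm:ind formula reducing valuation} again. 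Your route is more self-contained and conceptually transparent, while the paper's is a short formal manipulation once the $\Den$ formula is in hand and avoids any lifting or mod-$\pi_0^d$ Witt arguments. One small correction: Lemma~\ref{lem: Pden(H^k,L)} assumes $L$ integral, which $\cH$ is not, so you cannot invoke it for the hyperbolic-pair count; you must compute $\Pden(I_m\obot H^k,\cH)$ separately, e.g.\ by the kind of transitivity argument in Lemma~\ref{lem: translation to H^k}.
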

	\begin{proof}
	First, by \cite[Corollary 5.8]{HSY3} and Lemma \ref{lem: Pden(H^k,L)},
	\begin{equation}\label{eq:Den L=H^j obot L_1}
	    \den(I_m,L,X)=\Big(\prod_{\ell=0}^{j-1}(1-\q^{2\ell} X)\Big)\den(I_m,L_1,\q^{2j}X).
	\end{equation}
Notice that if $L\subset L'$ and $L'$ is not of the form $H^j\obot L_1'$,  then there exists $v\in L'\setminus L$ such that $\mathrm{Pr}_{H_F^j}(v)\neq 0$ and $\mathrm{Pr}_{H_F^j}(v)\not \in H_j$. Hence some fundamental invariant of $L'$ is less than or equal to $-2$. Hence $\den(I_m,L',X)=0$ by Lemma \ref{lem: vanish val le -1}. Now  Theorem \ref{thm:ind formula reducing valuation} and \eqref{eq:Den L=H^j obot L_1} implies
\begin{align*}
	&	\Pden(I_m, L,X)
	\\
	 	&= \sum_{i=0}^{n} (-1)^{i} \q^{i(i-1)/2+i(n-m)}X^{i}
	 \sum_{\substack{L_1 \subset L_1' \subset \pi^{-1}L_1 \\ \ell(L_1'/L_1)=i}} \den(I_m, H^j\obot L_1',X)\\
	 &=\Big(\prod_{\ell=0}^{j-1}(1-\q^{2\ell} X)\Big) \sum_{i=0}^{n_1} (-1)^{i} \q^{i(i-1)/2+i(n-m-2j)}(q^{2j}X)^{i}
	 \sum_{\substack{L_1 \subset L_1' \subset \pi^{-1}L_1 \\ \ell(L'/L)=i}} \den(I_m,   L_1',q^{2j}X)\\
	 &=\Big(\prod_{\ell=0}^{j-1}(1-\q^{2\ell} X)\Big)\Pden(I_m,L_1,\q^{2j}X)
		\end{align*}
		as expected.
	\end{proof}

\begin{definition}
Assume $U$ and $V$ are quadratic spaces over $\mathbb{F}_q$. We define $\mathrm{O}(U,V)$ to be the set of isometries from $U$ into $V$, and $M(U,V)$ to be the set of subspaces $V_1\subset V$ such that $V_1\cong U$. Moreover, we define $m(U,V)=|M(U,V)|$.
\end{definition}

\begin{definition}
We define $U_i^\epsilon$ to be the $i$-dimensional non-degenerate quadratic space over $\mathbb{F}_q$ with $\chi(U_n^\epsilon)=\epsilon.$ Moreover, we define $0_i$ to be the $i$-dimensional totally isotropic space.
\end{definition}

\begin{lemma}\cite[Lemma A.11]{HSY3}\label{lem: Pden(I_m,L)}
  Assume $L=I_{n-t}^{\epsilon_1}\obot L_2$ where $L_2$ is a lattice of full type $t$ and $n\le m$. Then
  \begin{align*}
      \Pden(I_m^{\epsilon_2},L)=\q^{-mn+n^2}\cdot |\mathrm{O}(0_t\obot U_{n-t}^{\epsilon_1},U_m^{\epsilon_2})|.
  \end{align*}
   Specifically,  we have by \cite[Lemma 3.2.1]{LZ2},
\begin{align*}
|\mathrm{O}(0_j\obot U_k^{\epsilon_1}, U_m^\epsilon)|
&= q^{(k+j)(2m-k-j-1)/2} \prod_{\lfloor\frac{m-k}2\rfloor+1-j \le   l \le \lfloor \frac{m-1}2\rfloor} (1-q^{-2l})
\\
&\quad \cdot
 \begin{cases}
 (1+\epsilon \epsilon_1 q^{-\frac{m-k}2+j})   &\ff\, m \equiv  k \equiv 1 \pmod 2,
 \\
1  &\ff\, m \equiv  k -1 \equiv 1 \pmod 2,
 \\
  (1-\epsilon q^{-\frac{m}2})   &\ff\, m \equiv  k -1 \equiv 0 \pmod 2,
  \\
  (1-\epsilon q^{-\frac{m}2}) (1+\epsilon \epsilon_1 q^{-\frac{m-k}2+j})   &\ff\, m \equiv  k \equiv 0 \pmod 2.
 \end{cases}
\end{align*}
  \end{lemma}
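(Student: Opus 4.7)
The plan is to compute $\Pden(I_m^{\epsilon_2}, L)$ by reduction modulo $\pi$ combined with a smoothness-and-lifting argument. Since $M = I_m^{\epsilon_2}$ is unimodular, its hermitian form descends to $\bar M := M/\pi M$ as the non-degenerate quadratic space $U_m^{\epsilon_2}$ over $\mathbb{F}_q$ via $\bar q(\bar x) = (x,x) \bmod \pi$. For $L = I_{n-t}^{\epsilon_1} \obot L_2$ with $L_2$ of full type $t$, the summand $I_{n-t}^{\epsilon_1}$ reduces to $U_{n-t}^{\epsilon_1}$; since every fundamental invariant of $L_2$ is positive, $(x,x) \in \pi O_{F_0}$ for $x \in L_2$, so $L_2$ reduces to a totally isotropic $0_t$, giving $\bar L \cong 0_t \obot U_{n-t}^{\epsilon_1}$. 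The reduction then defines a map $\mathrm{Pherm}_{L,M}(O_{F_0}/\pi_0^d) \to \mathrm{O}(0_t \obot U_{n-t}^{\epsilon_1}, U_m^{\epsilon_2})$, and the primitivity of $\phi$ is equivalent to the injectivity of $\bar\phi$, with the hermitian relation forcing $\bar\phi$ to be an isometry.

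Next I would show that $\mathrm{Pherm}_{L,M}$ is smooth over $O_{F_0}$ of relative dimension $d_{L,M} = 2mn - n^2$ at every point lying over such an isometric $\bar\phi$: the unimodularity of $M$ ensures that the differential of the hermitian equations surjects onto the rank-$n^2$ space of hermitian matrices. Hensel lifting then yields $|\mathrm{Pherm}_{L,M}(O_{F_0}/\pi_0^d)| = |\mathrm{Pherm}_{L,M}(\mathbb{F}_q)| \cdot q^{(d-1)(2mn-n^2)}$. Dividing by the normalization $q^{dn(2m-n)}$ from the definition of $\Pden$ reduces the problem to computing $|\mathrm{Pherm}_{L,M}(\mathbb{F}_q)|$, which counts $O_F/\pi^2$-linear hermitian lifts of each $\bar\phi$. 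Writing such a lift as $\tilde\phi + \pi\psi$ with $\psi \in \mathrm{Hom}_{\mathbb{F}_q}(\bar L, \bar M)$ and expanding the hermitian condition modulo $\pi^2$, the Galois relation $\bar\pi = -\pi$ reduces the constraint on $\psi$ to a linear system whose solution space has size independent of $\bar\phi$ (by Witt-type transitivity). Tracking all the $q$-powers carefully produces the desired identity $\Pden(I_m^{\epsilon_2}, L) = q^{-mn+n^2} |\mathrm{O}(0_t \obot U_{n-t}^{\epsilon_1}, U_m^{\epsilon_2})|$.

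The closed-form expression for $|\mathrm{O}(0_j \obot U_k^{\epsilon_1}, U_m^\epsilon)|$ is then a standard orbit--stabilizer computation in classical groups. By Witt's extension theorem, the orthogonal group $\mathrm{O}(U_m^\epsilon)(\mathbb{F}_q)$ acts transitively on isometric embeddings of any fixed quadratic space, so one writes $|\mathrm{O}(0_j \obot U_k^{\epsilon_1}, U_m^\epsilon)| = |\mathrm{O}(U_m^\epsilon)(\mathbb{F}_q)|/|\mathrm{Stab}|$ and computes the stabilizer via the Witt decomposition of $U_m^\epsilon$ relative to the image; the four cases indexed by the parities of $m$ and $k$ correspond to the four Witt-type configurations, and the resulting product formula matches the one worked out in \cite[Lemma 3.2.1]{LZ2}.

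The main obstacle will be the delicate $q$-power bookkeeping in the lifting step. The ring $O_F/\pi^2 \cong \mathbb{F}_q \oplus \mathbb{F}_q \cdot \pi$ with its nontrivial Galois action makes the mod-$\pi^2$ hermitian constraint a subtle mixture of symmetric and antisymmetric conditions on $\psi$, quite unlike the unramified or exotic-smooth setting where the analogous count in the style of \cite{CY} is straightforward. Getting every $q$-factor right is precisely the technical heart of the argument and is what distinguishes the ramified Kr\"amer case from the cases treated in \cite{CY} and \cite{LL2}; consequently, I would carry out the matrix-level computation explicitly in a normal basis adapted to the decomposition $L = I_{n-t}^{\epsilon_1} \obot L_2$, as this is where the contribution of the type-$t$ part to the exponent $-mn+n^2$ becomes transparent.
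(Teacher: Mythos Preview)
The paper does not give its own proof of this lemma; it is quoted directly from \cite[Lemma A.11]{HSY3}, and the closed form for $|\mathrm{O}(0_j\obot U_k^{\epsilon_1}, U_m^\epsilon)|$ is quoted from \cite[Lemma 3.2.1]{LZ2}. So there is no in-paper argument to compare your proposal against.

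That said, your outline is the correct strategy and is essentially what underlies the cited argument. The identification $\bar L\cong 0_t\obot U_{n-t}^{\epsilon_1}$ via the observation that full-type summands reduce to totally isotropic subspaces, and the identification $\bar M\cong U_m^{\epsilon_2}$ via unimodularity, are exactly right. The reduction-and-lift structure is also correct: primitivity of $\phi$ is equivalent to injectivity of $\bar\phi$, and unimodularity of $I_m^{\epsilon_2}$ is what makes the lifting step tractable. Your caveat about the $q$-power bookkeeping is well placed and is the only genuinely delicate point. In the ramified case $O_F/\pi_0 O_F \cong \mathbb{F}_q[\pi]/(\pi^2)$ is not a field, so the passage from $\mathrm{Pherm}_{L,M}(O_{F_0}/\pi_0)$ down to the quadratic-space count $|\mathrm{O}(\bar L,\bar M)|$ involves a different fibration than in the unramified or exotic-smooth cases: the hermitian constraint modulo $\pi^2$ splits into a symmetric part (automatically satisfied by the isometry $\bar\phi$) and an antisymmetric part (a linear system on the $\pi$-component $\psi$), exactly as you describe. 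Getting the exponent $-mn+n^2$ to come out requires tracking both this step and the smoothness normalization carefully against the precise definition of $\mathrm{Herm}_{L,M}$ used in \cite{HSY3}; your plan to carry it out explicitly in a normal basis adapted to $I_{n-t}^{\epsilon_1}\obot L_2$ is the right way to nail this down.
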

\begin{corollary}\label{lem: alpha(M,M)}
Let $I_n$ be the unimodular lattice of rank $n$ and sign $-\epsilon$. Then
	\begin{align*}
		\den(I_{n},I_{n})=\begin{cases}
			2q^{\frac{n(n-1)}{2}}\prod _{s=1}^{\frac{n-1}{2}}(1-\q^{-2s})& \text{ if $n$ is odd},\\
			2q^{\frac{n(n-1)}{2}}(1+\epsilon \q^{-\frac{n}{2}})\prod _{s=1}^{\frac{n}{2}-1}(1-\q^{-2s})& \text{ if $n$ is even}.
		\end{cases}
	\end{align*}
\end{corollary}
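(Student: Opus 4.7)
\medskip

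\noindent\textbf{Proof proposal.}
The plan is to deduce the formula directly from the explicit expression for primitive local densities in Lemma \ref{lem: Pden(I_m,L)}, after reducing $\den(I_n,I_n)$ to $\Pden(I_n,I_n)$.

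First I would use Lemma \ref{lem: 0 vanish of error term}: since $I_n\cong I_n$, we have $\den(I_n,I_n)=\Pden(I_n,I_n)$, so it suffices to compute the primitive local density. Then I would apply Lemma \ref{lem: Pden(I_m,L)} with the parameters $L=I_n$ (so $t=0$, $L_2=0$, $\epsilon_1=\chi(I_n)=-\epsilon$), $m=n$ and $\epsilon_2=-\epsilon$. The exponent $q^{-mn+n^2}$ simplifies to $1$, and we obtain
\[
\den(I_n,I_n)=\Pden(I_n,I_n)=\bigl|\mathrm{O}(U_n^{-\epsilon},U_n^{-\epsilon})\bigr|.
\]

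Next I would plug into the explicit formula for $|\mathrm{O}(0_j\obot U_k^{\epsilon_1},U_m^\epsilon)|$ recorded in Lemma \ref{lem: Pden(I_m,L)}, taking $j=0$, $k=m=n$, $\epsilon_1=-\epsilon$ and the target sign also equal to $-\epsilon$. The prefactor is $q^{n(2n-n-1)/2}=q^{n(n-1)/2}$, and the range for $l$ in the product is $1\le l\le \lfloor (n-1)/2\rfloor$. Splitting according to the parity of $n$:

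\begin{itemize}
\item If $n$ is odd, the case $m\equiv k\equiv 1\pmod 2$ applies, and the parenthesized factor equals $1+(-\epsilon)(-\epsilon)q^{0}=2$, so
\[
\bigl|\mathrm{O}(U_n^{-\epsilon},U_n^{-\epsilon})\bigr|=2q^{n(n-1)/2}\prod_{s=1}^{(n-1)/2}(1-q^{-2s}).
\]
\item If $n$ is even, the case $m\equiv k\equiv 0\pmod 2$ applies. The parenthesized factor is $\bigl(1-(-\epsilon)q^{-n/2}\bigr)\bigl(1+(-\epsilon)(-\epsilon)q^{0}\bigr)=2(1+\epsilon q^{-n/2})$, and $\lfloor(n-1)/2\rfloor=n/2-1$, giving
\[
\bigl|\mathrm{O}(U_n^{-\epsilon},U_n^{-\epsilon})\bigr|=2q^{n(n-1)/2}(1+\epsilon q^{-n/2})\prod_{s=1}^{n/2-1}(1-q^{-2s}).
\]
\end{itemize}
Matching these two expressions with the statement completes the proof.

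There is no real obstacle; the only step that requires care is the correct matching of the sign parameters (the target space of $\Pden(I_m^{\epsilon_2},L)$ carries sign $\epsilon_2=-\epsilon$, and the unimodular factor of $L$ carries sign $\epsilon_1=-\epsilon$, so the product $\epsilon_2\epsilon_1$ appearing in Lemma \ref{lem: Pden(I_m,L)} is $+1$ while the stand-alone sign factor $(1-\epsilon_2 q^{-n/2})$ becomes $(1+\epsilon q^{-n/2})$), plus the elementary bookkeeping of the index set $\lfloor (n-1)/2\rfloor$ in the two parity cases.
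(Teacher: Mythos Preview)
Your proposal is correct and is exactly the argument the paper intends: the corollary is placed immediately after Lemma \ref{lem: Pden(I_m,L)} with no proof given, so it is meant to follow by applying that lemma with $j=0$, $k=m=n$, together with the observation $\den(I_n,I_n)=\Pden(I_n,I_n)$ (stated both after the definition of $\Pden$ and in Lemma \ref{lem: 0 vanish of error term}). Your sign bookkeeping and parity case analysis are accurate.
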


\subsection{Counting formulas for subspaces of a quadratic space over $\mathbb{F}_q$}
The main results of \S \ref{sec: formula of ppden} transforms the calculation of primitive local density polynomial into a sum over the subspaces of a quadratic space over $\mathbb{F}_q$.
 In this subsection, we  count the number of such subspaces  with a fixed  quadratic form.
\begin{lemma}\label{lem: counting subspace}
 Given quadratic spaces $U$ and $V$ over $\mathbb{F}_q$, let $M(U,V)$ be the set of subspaces $V_1\subset V$ such that $V_1\cong U$, and  let $m(U,V)=|M(U,V)|$. Then
	  \begin{align*}
	      m(0_j\obot U_{k}^{\epsilon_2},U_{n}^{\epsilon})&= \frac{|\mathrm{O}(0_j\obot U_{k} ^{\epsilon_2},U_{n}^{\epsilon})|}{\q^{jk}|\mathrm{O}( U_{k} ^{\epsilon_2},U_{k} ^{\epsilon_2})|\cdot |\mathrm{GL}_j(\mathbb{F}_{\q})|}.
	  \end{align*}
In particular,
\begin{equation}\label{eq:m equal m times m}
     m(0_j\obot U_{k}^{\epsilon_2},U_{n}^{\epsilon})=q^{-jk}   m(0_j,U_{n-k}^{\delta(n,k,\epsilon,\epsilon_2)})m(U_{k}^{\epsilon_2},U_{n}^{\epsilon}),
\end{equation}
where
\begin{equation}\label{eq:delta}
     \delta(n,k,\epsilon,\epsilon_2)=
     \begin{cases}
        \epsilon & \text{ if $k=0$},\\
           -\epsilon\epsilon_2 & \text{ if both $k$ and $n-k$ are odd},\\
           \epsilon \epsilon_2 & \text{ otherwise}.
     \end{cases}
\end{equation}
\end{lemma}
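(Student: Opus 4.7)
The plan is to prove both identities by orbit--stabilizer applied to the action of the full self-isometry group of $0_j\obot U_k^{\epsilon_2}$ on embeddings, together with a stratification of embeddings by the image of the non-degenerate summand.

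For the first identity, the group $\mathrm{Aut}(0_j\obot U_k^{\epsilon_2})=\mathrm{O}(0_j\obot U_k^{\epsilon_2},0_j\obot U_k^{\epsilon_2})$ acts freely on $\mathrm{O}(0_j\obot U_k^{\epsilon_2},U_n^\epsilon)$ by precomposition, and two embeddings share an orbit precisely when they have the same image. Hence
\[
m(0_j\obot U_k^{\epsilon_2},U_n^\epsilon) = \frac{|\mathrm{O}(0_j\obot U_k^{\epsilon_2},U_n^\epsilon)|}{|\mathrm{Aut}(0_j\obot U_k^{\epsilon_2})|}.
\]
To compute the denominator, note that the radical of $0_j\obot U_k^{\epsilon_2}$ is canonically the summand $0_j$, hence preserved by every self-isometry. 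Any $\phi$ then induces an element of $\mathrm{GL}(0_j)\times\mathrm{O}(U_k^{\epsilon_2})$ by restriction to $0_j$ and descent modulo $0_j$; after fixing a set-theoretic linear section, the fibre of this surjection is the space $\mathrm{Hom}(U_k^{\epsilon_2},0_j)$ of ``off-diagonal'' components, and since $0_j$ lies in the radical every such component yields a genuine isometry. This gives
\[
|\mathrm{Aut}(0_j\obot U_k^{\epsilon_2})| = \q^{jk}\,|\mathrm{GL}_j(\mathbb{F}_\q)|\,|\mathrm{O}(U_k^{\epsilon_2},U_k^{\epsilon_2})|,
\]
which is the first identity.

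For the factorization \eqref{eq:m equal m times m}, I stratify $\mathrm{O}(0_j\obot U_k^{\epsilon_2},U_n^\epsilon)$ by the non-degenerate piece $W:=\phi(U_k^{\epsilon_2})$. Writing $\psi=\phi|_{U_k^{\epsilon_2}}$ and $f=\phi|_{0_j}$, the orthogonality of $0_j$ and $U_k^{\epsilon_2}$ inside the source forces $f(0_j)\subset W^\perp$, and conversely every pair $(\psi,f)$ with $\psi\in\mathrm{O}(U_k^{\epsilon_2},U_n^\epsilon)$ and $f\in\mathrm{O}(0_j,\psi(U_k^{\epsilon_2})^\perp)$ assembles into a unique isometric embedding. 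By Witt's theorem, $W^\perp\cong U_{n-k}^\delta$ with $\delta$ independent of the choice of $W$, so
\[
|\mathrm{O}(0_j\obot U_k^{\epsilon_2},U_n^\epsilon)| = |\mathrm{O}(U_k^{\epsilon_2},U_n^\epsilon)|\cdot|\mathrm{O}(0_j,U_{n-k}^\delta)|.
\]
Dividing by $\q^{jk}|\mathrm{O}(U_k^{\epsilon_2},U_k^{\epsilon_2})||\mathrm{GL}_j(\mathbb{F}_\q)|$ and recognising $m(U_k^{\epsilon_2},U_n^\epsilon)$ and $m(0_j,U_{n-k}^\delta)=|\mathrm{O}(0_j,U_{n-k}^\delta)|/|\mathrm{GL}_j(\mathbb{F}_\q)|$ on the right yields \eqref{eq:m equal m times m}.

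It remains to identify $\delta=\delta(n,k,\epsilon,\epsilon_2)$ as the unique sign making $U_k^{\epsilon_2}\obot U_{n-k}^\delta\cong U_n^\epsilon$, which is forced by the classification of non-degenerate quadratic forms over $\mathbb{F}_\q$ by dimension and discriminant. The case $k=0$ is tautological; when at least one of $k,n-k$ is even, discriminants multiply directly to give $\delta=\epsilon\epsilon_2$; when both $k$ and $n-k$ are odd the sign anomaly $\delta=-\epsilon\epsilon_2$ appears due to the convention relating the sign invariant to the discriminant by a $(-1)^{\lfloor m/2\rfloor}$-twist in dimension $m$. This last sign bookkeeping is the only non-routine point and can be dispatched by reducing to anisotropic kernels via a hyperbolic plane decomposition and a short parity case analysis.
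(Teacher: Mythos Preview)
Your proof is correct and follows essentially the same approach as the paper: orbit--stabilizer for the first identity (the paper phrases it as the surjection $\phi\mapsto\mathrm{im}(\phi)$ with fibers $\mathrm{O}(0_j\obot U_k^{\epsilon_2})$, then counts $|\mathrm{O}(0_j\obot U_k^{\epsilon_2})|$ via restriction to the two summands exactly as you do). The paper in fact only proves the first identity explicitly and leaves \eqref{eq:m equal m times m} and the identification of $\delta$ to the reader; your stratification argument (factoring an embedding through the non-degenerate image $W=\phi(U_k^{\epsilon_2})$ and its orthogonal complement) is the natural way to fill that in, and your sign bookkeeping for $\delta$ is adequate.
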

\begin{proof}
We prove the first identity first.
Fix a subspace $V_1$ of $U_n^\epsilon$ such that $V_1\cong 0_j\obot U_k^{\epsilon_2}$. Then by Witt's theorem we have a surjection
\begin{align*}
    \mathrm{O}(0_j\obot U_{k} ^{\epsilon_2},U_{n}^{\epsilon}) \to M(0_j\obot U_{k} ^{\epsilon_2},U_{n}^{\epsilon}),\quad  \phi \to \phi(V_1).
\end{align*}
Moreover, each fiber of this surjection is in bijection with $\mathrm{O}(0_j\obot U_k^{\epsilon_2})$. Any $\phi \in \mathrm{O}(0_j\obot U_k^{\epsilon_2})$ is determined uniquely by  $\phi_1=\phi|_{0_j}$ and $\phi_2=\phi|_{U_k^{\epsilon_2}}$. The number of different choices of $\phi_1$ is $|\mathrm{GL}_j(\mathbb{F}_q)|$.  The number of different choices of $\phi_2$ is $\q^{jk}|\mathrm{O}( U_{k} ^{\epsilon_2},U_{k} ^{\epsilon_2})|$.
\end{proof}	
\begin{lemma}\label{lem: m with type}
For any $\epsilon_1, \epsilon_2 \in \{\pm  1\}$, we have
\begin{align*}
 	    m(0_j\obot U_{k}^{\epsilon_2},0_t\obot U_{n-t}^{\epsilon_1})&=\sum_{\ell=0}^{\mathrm{min}\{t,j\}} \binom{t}{\ell}_\q\cdot q^{(t-\ell)(j+k-\ell)} m(0_{j-\ell}\obot U_{k}^{\epsilon_2},U_{n-t}^{\epsilon_1}).
	    \end{align*}
\end{lemma}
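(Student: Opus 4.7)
The plan is to stratify the set $M(0_j\obot U_k^{\epsilon_2},\, 0_t\obot U_{n-t}^{\epsilon_1})$ by the dimension of the intersection with the radical. Write $V=0_t\obot U_{n-t}^{\epsilon_1}$ and denote the radical $R=0_t$. For a subspace $V_1\subset V$ with $V_1\cong 0_j\obot U_k^{\epsilon_2}$, note that any vector in $V_1\cap R$ is orthogonal to all of $V$ and hence to all of $V_1$, so $V_1\cap R$ lies in the radical of $V_1$. Consequently $\ell:=\dim(V_1\cap R)\le\min(t,j)$. The plan is to sum over $\ell$ and then over the choice of $S:=V_1\cap R\subset R$, of which there are $\binom{t}{\ell}_q$.

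Having fixed $S$, the plan is to pass to $V/S$, which inherits a well-defined quadratic form since $S\subset R$ is in the radical of $V$. Concretely, $V/S\cong 0_{t-\ell}\obot U_{n-t}^{\epsilon_1}$, with radical $R/S=0_{t-\ell}$, and the projection $\pi:V/S\to U_{n-t}^{\epsilon_1}$ is a quadratic-form map with kernel $R/S$. The subspaces $V_1\subset V$ containing $S$ with $V_1\cap R=S$ correspond bijectively to subspaces $\overline{V_1}=V_1/S\subset V/S$ with $\overline{V_1}\cap (R/S)=0$; on such $\overline{V_1}$, the restriction $\pi|_{\overline{V_1}}$ is an isometry onto its image $V_1'=\pi(\overline{V_1})\subset U_{n-t}^{\epsilon_1}$, and $V_1'\cong \overline{V_1}\cong 0_{j-\ell}\obot U_k^{\epsilon_2}$. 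Thus the number of admissible images $V_1'$ is exactly $m(0_{j-\ell}\obot U_k^{\epsilon_2},\, U_{n-t}^{\epsilon_1})$.

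The final step is to count lifts. Given a fixed $V_1'\subset U_{n-t}^{\epsilon_1}$ with $V_1'\cong 0_{j-\ell}\obot U_k^{\epsilon_2}$, I want to count sections $\sigma:V_1'\to V/S$ of $\pi$ whose image satisfies the required conditions. Choosing a vector-space splitting $V/S=(R/S)\oplus U_{n-t}^{\epsilon_1}$, every section has the form $\sigma(v)=v+\tau(v)$ for some linear $\tau:V_1'\to R/S$; conversely each such $\tau$ gives a section. Two sections yield the same subspace iff they are equal (since $\pi$ restricts to an isomorphism on each admissible $\overline{V_1}$), so sections and admissible $\overline{V_1}$ are in bijection. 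Moreover, since $R/S$ lies in the radical of $V/S$, any such $\sigma$ automatically preserves the quadratic form. Therefore the number of lifts of $V_1'$ equals $|\Hom(V_1',R/S)|=q^{(t-\ell)(j+k-\ell)}$. Assembling,
\[
m(0_j\obot U_k^{\epsilon_2},\,0_t\obot U_{n-t}^{\epsilon_1})=\sum_{\ell=0}^{\min(t,j)}\binom{t}{\ell}_q\,q^{(t-\ell)(j+k-\ell)}\,m(0_{j-\ell}\obot U_k^{\epsilon_2},\,U_{n-t}^{\epsilon_1}),
\]
as claimed. The only subtle point, and the sole place care is required, is the equivalence between the condition $V_1\cap R=S$ on the original side and $\overline{V_1}\cap (R/S)=0$ on the quotient side; this is where one must use the containment $V_1\cap R\subseteq \mathrm{rad}(V_1)$ and the fact that $S$ was chosen to be the full intersection $V_1\cap R$, not merely a subspace of it.
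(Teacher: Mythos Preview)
Your proof is correct and follows essentially the same approach as the paper: both stratify $M(0_j\obot U_k^{\epsilon_2},V)$ by $\ell=\dim(V_1\cap R)$, pass to the quotient $V/S$, project to the nondegenerate part $U_{n-t}^{\epsilon_1}$, and count the fibers of this projection as $q^{(t-\ell)(j+k-\ell)}$. Your explicit identification of the lifts with sections $\sigma(v)=v+\tau(v)$ for $\tau\in\Hom(V_1',R/S)$ is a slightly more careful phrasing of the paper's assertion that ``each fiber is in bijection with $\overline{R}^{\,j+k-\ell}$'', but the argument is the same.
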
	
\begin{proof}
Let $V$ and $U$ be quadratic spaces over $\mathbb{F}_q$ such that $V\cong 0_t\obot U_{n-t}^{\epsilon}$, and $U\cong 0_j\obot U_k^{\epsilon_2}.$ Let $R\cong 0_t$ be the radical of $V$.
First, we consider a partition of
$$ M(U,V)= \bigsqcup_{\ell=0}^{\mathrm{min}\{t,j\}}M_\ell(U,V)$$
such that
\begin{align*}
    V_1\in M_\ell(U,V) \text{ if and only if } \mathrm{dim}_{\mathbb{F}_q}(V_1\cap R)=\ell.
\end{align*}
The number of choices of $\ell$-dimensional subspace of $R$ is $\binom{t}{\ell}_q$. Now we fix an $\ell$-dimensional subspace $W$ of $R$. Let $\overline{R}\cong 0_{t-\ell}$ be the radical of the quotient space of $V/W$. Then a choice of $V_1\in M_\ell(U,V)$ such that $V_1\cap R=W$ corresponds to an element of
\begin{align*}
 S=\{ \overline{V_1}\subset V/W   \mid   \overline{V_1}\cap \overline{R}=\{0\}\text{ and }\overline{V_1}\cong 0_{j-\ell}\obot U_{k}^{\epsilon_2}\}.
\end{align*}
Write $V/W=\overline{R}\obot \overline{V_2}\cong 0_{t-\ell}\obot U_{n-t}^{\epsilon_1}$. Let $\mathrm{Pr}:V/W\rightarrow \bar{V}_2$ be the natural quotient map.
For $\overline{V_1}\in S$, the condition $\overline{V_1}\cap \overline{R}=\{0\}$ implies that $\mathrm{Pr}(\overline{V_1})\cong 0_{j-\ell}\obot U_{k}^{\epsilon_2}$ by the rank-nullity theorem. Moreover, the following map
\begin{align*}
    S\stackrel{}{\to} M(0_{j-\ell}\obot U_{k}^{\epsilon_2},\overline{V_2}),\quad \overline{V_1}\mapsto \mathrm{Pr}(\overline{V_1})
\end{align*}
is a surjection with each fiber in a bijection with $\overline{R}^{j+k-\ell}$.
\end{proof}
\begin{corollary}
\begin{align*}
       | \mathrm{O}(0_j\obot U_{k}^{\epsilon_2},0_t\obot U_{n-t}^\epsilon)|&=\q^{jk}|\mathrm{O}(U_{k} ^{\epsilon_2},U_{k} ^{\epsilon_2})|\cdot |\mathrm{GL}_j(\mathbb{F}_{\q})|  \cdot m(0_j\obot U_{k}^{\epsilon_2},0_t\obot U_{n-t}^\epsilon)\\
	    &=|\mathrm{GL}_j(\mathbb{F}_{\q})|\cdot \sum_{\ell=0}^{\mathrm{min}\{t,j\}} \binom{t}{\ell}_\q\cdot q^{(t-\ell)(j+k-\ell)+\ell k} \cdot \frac{|\mathrm{O}(0_{j-\ell}\obot U_{k}^{\epsilon_2},U_{n-t}^\epsilon)|}{|\mathrm{GL}_{j-\ell}(\mathbb{F}_\q)|}.
\end{align*}
\end{corollary}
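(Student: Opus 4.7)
The proof is entirely a bookkeeping exercise that chains together the two lemmas cited immediately above. There is no real obstacle; the only thing to check is that the exponents of $q$ in the final sum match up correctly.

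First I would establish the equality
\[
|\mathrm{O}(0_j\obot U_{k}^{\epsilon_2},0_t\obot U_{n-t}^\epsilon)|
= \q^{jk}\,|\mathrm{O}(U_{k}^{\epsilon_2},U_{k}^{\epsilon_2})|\cdot |\mathrm{GL}_j(\mathbb{F}_{\q})|\cdot m(0_j\obot U_{k}^{\epsilon_2},0_t\obot U_{n-t}^\epsilon).
\]
This is just the orbit-stabilizer identity $|\mathrm{O}(U,V)|=|\mathrm{O}(U,U)|\cdot m(U,V)$, where $U=0_j\obot U_k^{\epsilon_2}$ and $V=0_t\obot U_{n-t}^\epsilon$: fixing any subspace $V_1\subset V$ isomorphic to $U$, the map $\phi\mapsto \phi(U)$ from $\mathrm{O}(U,V)$ to $M(U,V)$ is surjective with each fibre a torsor under $\mathrm{O}(U,U)$. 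The computation of $|\mathrm{O}(U,U)|=q^{jk}\,|\mathrm{O}(U_k^{\epsilon_2},U_k^{\epsilon_2})|\cdot|\mathrm{GL}_j(\mathbb{F}_q)|$ carried out in the proof of Lemma~\ref{lem: counting subspace} (split an automorphism of $U$ into its restriction to the radical $0_j$, contributing $|\mathrm{GL}_j(\mathbb{F}_q)|$, its restriction to $U_k^{\epsilon_2}$, contributing $|\mathrm{O}(U_k^{\epsilon_2},U_k^{\epsilon_2})|$, and the cross term $\mathrm{Hom}(0_j,U_k^{\epsilon_2})$, contributing $q^{jk}$) gives the first equality.

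Next I would substitute the formula from Lemma~\ref{lem: m with type},
\[
m(0_j\obot U_{k}^{\epsilon_2},0_t\obot U_{n-t}^{\epsilon})
=\sum_{\ell=0}^{\min\{t,j\}} \binom{t}{\ell}_\q\, q^{(t-\ell)(j+k-\ell)}\, m(0_{j-\ell}\obot U_{k}^{\epsilon_2},U_{n-t}^{\epsilon}),
\]
and then rewrite each inner term by applying Lemma~\ref{lem: counting subspace} with target $U_{n-t}^\epsilon$:
\[
m(0_{j-\ell}\obot U_{k}^{\epsilon_2},U_{n-t}^{\epsilon})
=\frac{|\mathrm{O}(0_{j-\ell}\obot U_{k}^{\epsilon_2},U_{n-t}^{\epsilon})|}{q^{(j-\ell)k}\,|\mathrm{O}(U_{k}^{\epsilon_2},U_{k}^{\epsilon_2})|\cdot|\mathrm{GL}_{j-\ell}(\mathbb{F}_{\q})|}.
\]
Multiplying by $q^{jk}\,|\mathrm{O}(U_k^{\epsilon_2},U_k^{\epsilon_2})|\cdot|\mathrm{GL}_j(\mathbb{F}_q)|$ cancels the factor $|\mathrm{O}(U_k^{\epsilon_2},U_k^{\epsilon_2})|$ and leaves the overall prefactor $|\mathrm{GL}_j(\mathbb{F}_q)|$, while the power of $q$ inside the $\ell$-th summand collapses to
\[
jk-(j-\ell)k+(t-\ell)(j+k-\ell)=\ell k+(t-\ell)(j+k-\ell),
\]
which is exactly the exponent in the claimed identity. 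This verifies the second equality, completing the proof.
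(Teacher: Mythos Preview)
Your proposal is correct and matches the paper's intended argument: the corollary is stated immediately after Lemmas~\ref{lem: counting subspace} and~\ref{lem: m with type} without proof, and your derivation is precisely the chaining of those two lemmas that the paper has in mind. One trivial remark: in your parenthetical explanation of $|\mathrm{O}(0_j\obot U_k^{\epsilon_2},0_j\obot U_k^{\epsilon_2})|$, the ``cross term'' is really $\mathrm{Hom}(U_k^{\epsilon_2},0_j)$ rather than $\mathrm{Hom}(0_j,U_k^{\epsilon_2})$ (as in the paper's proof of Lemma~\ref{lem: counting subspace}), but both have size $q^{jk}$ so this does not affect anything.
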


We will also need the following lemma later (Section 7), which follows from Lemmas \ref{lem: counting subspace} and \ref{lem: Pden(I_m,L)} .

\begin{lemma}\label{lem:quotientm} For integers $0\le r \le n$ and $\epsilon_1, \epsilon =\pm 1$, we have
$$
\frac{m(U_r^{-\epsilon_1},U_n^\epsilon)}{m(U_r^{\epsilon_1}, U_n^\epsilon)}
=\begin{cases}
 1  &\ff \, r\equiv n-1 \equiv 1 \pmod 2,
 \\
 \frac{1-\epsilon \epsilon_1 q^{-\frac{n-r}2}}{1+\epsilon \epsilon_1 q^{-\frac{n-r}2}}  &\ff \, r\equiv n\equiv 1 \pmod 2,
 \\
 \frac{1-\epsilon_1 q^{-\frac{r}2}}{1+\epsilon_1 q^{-\frac{r}2}} &\ff \, r\equiv n-1\equiv 0 \pmod 2,
\\
 \frac{1-\epsilon \epsilon_1 q^{-\frac{n-r}2}}{1+\epsilon \epsilon_1 q^{-\frac{n-r}2}}\cdot \frac{1-\epsilon_1 q^{-\frac{r}2}}{1+\epsilon_1 q^{-\frac{r}2}}
 &\ff \, r\equiv n\equiv 0 \pmod 2,
 \end{cases}
$$
and
$$
\frac{m(U_{r+1}^{\epsilon_1},U_n^\epsilon)}{m(U_r^{\epsilon_1}, U_n^\epsilon)}
=\frac{q^{n-2r-1}(1-(-1)^{n-r} \epsilon \epsilon_1 q^{-\lfloor\frac{n-r}2\rfloor})}{1-(-1)^{r+1} \epsilon_1 q^{-\lfloor \frac{r+1}2\rfloor}}.
$$
\end{lemma}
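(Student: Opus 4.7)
\textbf{Proof proposal for Lemma \ref{lem:quotientm}.}
The plan is to reduce everything to a ratio of orthogonal group cardinalities, apply the closed formula recorded in Lemma \ref{lem: Pden(I_m,L)}, and then argue that most factors cancel, leaving a quotient that depends only on parities of $n$ and $r$. Concretely, using the $j=0$ case of Lemma \ref{lem: counting subspace}, one has
$$
m(U_r^{\epsilon_1},U_n^\epsilon)=\frac{|\mathrm{O}(U_r^{\epsilon_1},U_n^\epsilon)|}{|\mathrm{O}(U_r^{\epsilon_1},U_r^{\epsilon_1})|},
$$
so both quotients in the lemma unfold into a ratio of four $|\mathrm{O}(U_k^{\epsilon'},U_m^{\epsilon''})|$ values. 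Plugging in the explicit formula from Lemma \ref{lem: Pden(I_m,L)} writes each of these as $q^{k(2m-k-1)/2}\prod_{\ell=\lfloor(m-k)/2\rfloor+1}^{\lfloor(m-1)/2\rfloor}(1-q^{-2\ell})$ times a parity-dependent ``tail'' factor $F(k,\epsilon';m,\epsilon'')$.

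For the first ratio $m(U_r^{-\epsilon_1},U_n^\epsilon)/m(U_r^{\epsilon_1},U_n^\epsilon)$, the $q$-power and the product $\prod(1-q^{-2\ell})$ are identical in numerator and denominator (they depend only on $n,r$, not on $\epsilon_1$), so the quotient reduces to
$$
\frac{F(r,-\epsilon_1;n,\epsilon)}{F(r,\epsilon_1;n,\epsilon)}\cdot \frac{F(r,\epsilon_1;r,\epsilon_1)}{F(r,-\epsilon_1;r,-\epsilon_1)}.
$$
One then splits into the four parity cases for $(r\bmod 2,n\bmod 2)$. In the two cases with $r$ odd the self-factor is $2$ on both sides; in the two cases with $r$ even the self-factor is $2(1\mp \epsilon_1 q^{-r/2})$, giving the factor $(1-\epsilon_1 q^{-r/2})/(1+\epsilon_1 q^{-r/2})$. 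Similarly, when $r$ and $n$ are both odd, or both even, the $(n,\epsilon)$-tail contributes $(1-\epsilon\epsilon_1 q^{-(n-r)/2})/(1+\epsilon\epsilon_1 q^{-(n-r)/2})$, and in the remaining cases it contributes $1$. Multiplying these yields the four stated values.

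For the second ratio $m(U_{r+1}^{\epsilon_1},U_n^\epsilon)/m(U_r^{\epsilon_1},U_n^\epsilon)$, track separately (i) the prefactor ratio $q^{((r+1)(2n-r-2)-r(2n-r-1))/2-(r(r-1)-(r+1)r)/2}=q^{n-2r-1}$, which produces the leading $q$-power; (ii) the ``product'' ratio, where a new factor $(1-q^{-(n-r)})$ appears in the numerator iff $n-r$ is even, and a new factor $(1-q^{-r})$ appears in the denominator iff $r$ is even; (iii) the tail factors $F$, which again depend only on parities. The crucial algebraic simplifications are
$$
\frac{1-q^{-(n-r)}}{1+\epsilon\epsilon_1 q^{-(n-r)/2}}=1-\epsilon\epsilon_1 q^{-(n-r)/2},\qquad \frac{1-\epsilon_1 q^{-r/2}}{1-q^{-r}}=\frac{1}{1+\epsilon_1 q^{-r/2}},
$$
using $1-x^2=(1-x)(1+x)$; these occur exactly in the cases where the extra product factor is nontrivial, so the two new ``algebraic'' $q^{-1/2}$-type factors combine to produce $(1-(-1)^{n-r}\epsilon\epsilon_1 q^{-\lfloor(n-r)/2\rfloor})$ in the numerator and $(1-(-1)^{r+1}\epsilon_1 q^{-\lfloor(r+1)/2\rfloor})$ in the denominator. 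The main obstacle is purely notational bookkeeping: four parity cases interact with the floor functions $\lfloor(n-r)/2\rfloor$ and $\lfloor(r+1)/2\rfloor$ as well as with the signs $(-1)^{n-r}$ and $(-1)^{r+1}$, and one must verify that in each of the four cases the elementary identities above combine to the single uniform expression stated. This check is straightforward but must be done case by case; no deeper input is needed beyond the formulas already cited.
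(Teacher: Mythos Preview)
Your proposal is correct and follows exactly the approach the paper indicates: the paper's proof consists of the single sentence ``which follows from Lemmas \ref{lem: counting subspace} and \ref{lem: Pden(I_m,L)}'', and you have supplied precisely the case-by-case cancellation argument that this entails. The only difference is that you give the details while the paper leaves them to the reader.
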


\begin{lemma}\label{lem:number of orthogonal flag}
Assume $i\leq r\leq n$ and $\epsilon,\sigma,\delta'\in \{\pm 1\}$. Let  $\delta(r,i,\delta',\sigma)$ or $\delta(n,i,\epsilon,\sigma)$ be as in \eqref{eq:delta}. Then
\[  m(U_i^{\sigma}, U_n^\epsilon) m(U_{r-i}^{\delta(r,i,\delta',\sigma)},U_{n-i}^{\delta(n,i,\epsilon,\sigma)})=m(U_r^{\delta'}, U_n^{\epsilon}) m(U_{i}^{\sigma}, U_r^{\delta'}). \]
\end{lemma}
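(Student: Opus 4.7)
The plan is to prove this identity by double counting the set of two-step flags
\[\mathcal{F} \coloneqq \{(V_i, V_r) : V_i \subset V_r \subset U_n^\epsilon,\ V_i \cong U_i^\sigma,\ V_r \cong U_r^{\delta'}\}.\]
The right-hand side of the identity is the ``outer-first'' count: there are $m(U_r^{\delta'}, U_n^\epsilon)$ choices for $V_r$, and then $m(U_i^\sigma, U_r^{\delta'})$ choices for a subspace $V_i \subset V_r$ isomorphic to $U_i^\sigma$.

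The left-hand side will be the ``inner-first'' count, and the main (modest) technical step is to identify $\delta$ as the sign of an orthogonal complement. Specifically, I would first verify that if $W \subset U_m^\eta$ is a non-degenerate subspace with $W \cong U_k^\mu$, then $W^\perp \cong U_{m-k}^{\delta(m,k,\eta,\mu)}$ in the notation of \eqref{eq:delta}. This is the standard multiplicativity of the discriminant for orthogonal direct sums over $\mathbb{F}_q$: since $\chi(V_1 \obot V_2) = (-1)^{\dim V_1 \cdot \dim V_2}\chi(V_1)\chi(V_2)$ under the convention $\chi(U_m^\eta) = \eta$, one finds that the sign of $W^\perp$ is $\eta\mu$ unless both $k$ and $m-k$ are odd, in which case it is $-\eta\mu$, matching the three cases of \eqref{eq:delta}.

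With this in hand, counting $|\mathcal{F}|$ the other way proceeds as follows. There are $m(U_i^\sigma, U_n^\epsilon)$ choices for $V_i$, and by Witt's extension theorem every such $V_i$ has orthogonal complement isomorphic to $U_{n-i}^{\delta(n,i,\epsilon,\sigma)}$. Since $V_i$ is non-degenerate, any $V_r \supset V_i$ with $V_r$ non-degenerate decomposes canonically as $V_r = V_i \obot W$ with $W = V_r \cap V_i^\perp$, a non-degenerate subspace of $V_i^\perp$ of dimension $r-i$. The isomorphism type $V_r \cong U_r^{\delta'}$ then forces $W$ to have sign $\delta(r,i,\delta',\sigma)$ by the orthogonal complement formula applied inside $V_r$. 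Hence the number of admissible $W$ is $m(U_{r-i}^{\delta(r,i,\delta',\sigma)}, U_{n-i}^{\delta(n,i,\epsilon,\sigma)})$, and multiplying gives the left-hand side.

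Equating the two counts of $|\mathcal{F}|$ yields the lemma. The only real content beyond bookkeeping is the discriminant formula $\chi(V_1 \obot V_2) = (-1)^{\dim V_1 \cdot \dim V_2}\chi(V_1)\chi(V_2)$ and its consistency with the definition of $\delta$ in \eqref{eq:delta}; once that is verified, the argument is a clean double count and no case analysis on parities of $i, r, n$ is needed.
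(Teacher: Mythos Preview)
Your proposal is correct and follows essentially the same double-counting argument as the paper: both count the set of flags $V_i\subset V_r\subset U_n^\epsilon$ with prescribed isomorphism types, once by choosing $V_r$ first and once by choosing $V_i$ first. You spell out in slightly more detail why the sign of the orthogonal complement matches the function $\delta$ from \eqref{eq:delta}, which the paper uses implicitly, but the strategy and structure are the same.
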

\begin{proof}
Let $V=U_n^{\epsilon}$ and $S$ be the following set of flags in $V$,
\[S=\{0\subset F_1 \subset F_2 \subset V\mid F_1\cong U_{i}^{\sigma}, F_2\cong U_r^{\delta'}\}.\]
We can count the cardinality of $S$ in two ways. One way is to first count the number of $F_1\in M(U_{i}^{\sigma},V)$, then for a fixed $F_1$ count the number of $F_2'\in M(U_{r-i}^{\delta(r,i,\delta',\sigma)},(F_1)^\bot)$ which has a one-to-one correspondence with $F_2=F_2'\obot F_1\in M(U_r^{\delta'},V)$. This way we get
\[
\#|S|=m(U_i^{\sigma}, U_n^\epsilon) m(U_{r-i}^{\delta(r,i,\delta',\sigma)},U_{n-i}^{\delta(n,i,\epsilon,\sigma)}).
\]
On the other hand, we can first count the number of $F_2\in M(U_r^{\delta'},V)$, then count the number of  $F_1\in M(U_i^{\sigma},F_2)$ and get
\[\#|S|=m(U_r^{\delta'}, U_n^{\epsilon}) m(U_{i}^{\sigma}, U_r^{\delta'}).\]
This finishes the proof of the lemma.
\end{proof}

\subsection{$q$-binomial theorem}
In this subsection we discuss the $q$-binomial theorem and related results, which are used repeatedly in \S \ref{sec:pPden} to obtain certain vanishing results and transform complicated linear combinations into simple formulas.
\begin{definition}
The $q$-analogue of $\binom{n}{i}$ is defined to be	$$\binom{n}{i}_\q\coloneqq\frac{(\q^{n}-1)\cdots (\q^{n-i+1}-1)}{(\q^{i}-1)\cdots(\q-1)}.$$
\end{definition}
In fact, $\binom{n}{i}_\q$ is the number of $i$-dimensional subspaces of a $n$-dimensional vector space over $\mathbb{F}_\q$.
Now we recall the $\q$-binomial theorem.

 \begin{lemma}[$q$-binomial theorem]\label{lem: qbinomial}
The following identity between polynomials of $X$ holds:
		\begin{align}\label{eq: qbinomial}
			\prod_{i=0}^{n-1}(1-\q^i X)=\sum_{i=0}^{n} (-1)^i\q^{\frac{i(i-1)}{2}}\binom{n}{i}_\q X^i.
		\end{align}
 \end{lemma}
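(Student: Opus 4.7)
The plan is to prove the $q$-binomial theorem by induction on $n$, using the $q$-Pascal recursion for $q$-binomial coefficients. Write $P_n(X) = \prod_{i=0}^{n-1}(1-q^i X)$ and denote the right-hand side of \eqref{eq: qbinomial} by $R_n(X)$.

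For the base case $n=0$ both sides equal $1$, and for $n=1$ both sides equal $1-X$ since $\binom{1}{0}_q=\binom{1}{1}_q=1$. For the inductive step, I would assume $P_n(X)=R_n(X)$ and compute
\begin{equation*}
P_{n+1}(X) = P_n(X)(1-q^n X) = \sum_{i=0}^{n} (-1)^i q^{\binom{i}{2}}\binom{n}{i}_q X^i - \sum_{i=0}^{n} (-1)^i q^{n+\binom{i}{2}}\binom{n}{i}_q X^{i+1}.
\end{equation*}
Reindexing the second sum and using the identity $\binom{i}{2} = \binom{i-1}{2} + (i-1)$, the coefficient of $X^i$ in $P_{n+1}(X)$ becomes
\begin{equation*}
(-1)^i q^{\binom{i}{2}}\left[\binom{n}{i}_q + q^{n-i+1}\binom{n}{i-1}_q\right],
\end{equation*}
with the usual convention $\binom{n}{-1}_q=\binom{n}{n+1}_q=0$. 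So the whole inductive step reduces to the $q$-Pascal identity
\begin{equation*}
\binom{n+1}{i}_q = \binom{n}{i}_q + q^{n-i+1}\binom{n}{i-1}_q,
\end{equation*}
which is a direct manipulation from the definition: factoring $\binom{n}{i-1}_q$ out of the right side yields $\binom{n}{i-1}_q \cdot \frac{(q^{n-i+1}-1) + q^{n-i+1}(q^i-1)}{q^i-1} = \binom{n}{i-1}_q \cdot \frac{q^{n+1}-1}{q^i-1} = \binom{n+1}{i}_q$.

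There is really no substantive obstacle here; this is a classical identity and the proof is a one-line induction once the $q$-Pascal rule is in place. The only thing to be careful about is the bookkeeping in the exponent of $q$ when combining the two sums, specifically the shift $q^n \cdot q^{\binom{i-1}{2}} = q^{\binom{i}{2}+n-i+1}$, which is what produces the correct power $q^{n-i+1}$ on the $\binom{n}{i-1}_q$ term and matches the $q$-Pascal recursion exactly.
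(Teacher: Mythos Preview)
Your proof is correct. The paper does not supply its own proof of this lemma, treating the $q$-binomial theorem as a classical fact; your induction via the $q$-Pascal identity (which the paper itself invokes later as \eqref{eq: pascal}) is the standard argument and the bookkeeping with the exponent shift $q^n\cdot q^{\binom{i-1}{2}}=q^{\binom{i}{2}+n-i+1}$ is handled correctly.
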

 \begin{corollary}\label{cor: qbinomial vanish}
 Let $f$ be a polynomial of degree $\le n-1$. Then
 \begin{align*}
     \sum_{i=0}^{n}(-1)^i\q^{\frac{i(i-1)}{2}}\binom{n}{i}_\q \cdot f(q^{-i})=0.
 \end{align*}
 \end{corollary}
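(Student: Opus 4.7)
The plan is to reduce to the case of monomials by linearity, and then exhibit each monomial sum as an evaluation of the $q$-binomial identity at a point where one of its linear factors vanishes.

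First I would observe that both sides of the claimed identity are $\mathbb{Q}$-linear in $f$, so it suffices to establish the vanishing when $f(X) = X^k$ for each $0 \le k \le n-1$. In this case, $f(q^{-i}) = q^{-ki}$, and the sum in question becomes
\begin{equation*}
S_k \coloneqq \sum_{i=0}^{n}(-1)^i q^{\frac{i(i-1)}{2}}\binom{n}{i}_q q^{-ki}.
\end{equation*}

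Next I would specialize $X = q^{-k}$ in the $q$-binomial theorem (Lemma \ref{lem: qbinomial}) to get
\begin{equation*}
\prod_{i=0}^{n-1}(1 - q^{i-k}) = \sum_{i=0}^{n}(-1)^i q^{\frac{i(i-1)}{2}}\binom{n}{i}_q q^{-ki} = S_k.
\end{equation*}
For $0 \le k \le n-1$, the index $i = k$ appears in the product on the left, contributing the factor $(1 - q^{k-k}) = 0$, so the left-hand side vanishes. This gives $S_k = 0$ for every $0 \le k \le n-1$, which by linearity yields the result for arbitrary $f$ of degree $\le n-1$.

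There is no real obstacle here, since the statement is essentially the assertion that the row-vector of coefficients $\bigl((-1)^i q^{i(i-1)/2}\binom{n}{i}_q\bigr)_{i=0}^n$ annihilates the Vandermonde-type vectors $(q^{-ki})_{i=0}^n$ for $k = 0, 1, \ldots, n-1$, which is exactly what Lemma \ref{lem: qbinomial} supplies through the $n$ roots $X = q^{0}, q^{-1}, \ldots, q^{-(n-1)}$ of the polynomial $\prod_{i=0}^{n-1}(1 - q^i X)$.
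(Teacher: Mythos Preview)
Your proof is correct and follows essentially the same approach as the paper: reduce by linearity to monomials $f(X)=X^k$ with $0\le k\le n-1$, then evaluate the $q$-binomial theorem at $X=q^{-k}$ and observe that the product side has the vanishing factor $1-q^{k-k}$.
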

 \begin{proof}
 Let $f=a_{n-1}x^{n-1}+\cdots+a_0$.  For $0\le s\le n-1$, by evaluating \eqref{eq: qbinomial} at $X=\q^{-s}$, we have
 \begin{align*}
      \sum_{i=0}^{n}(-1)^i\q^{\frac{i(i-1)}{2}}\binom{n}{i}_\q \cdot a_{s} q^{-si}=0.
 \end{align*}
 Hence
 \begin{align*}
       \sum_{i=0}^{n}(-1)^i\q^{\frac{i(i-1)}{2}}\binom{n}{i}_\q \cdot f(q^{-i})=\sum_{s=0}^{n-1} \sum_{i=0}^{n}(-1)^i\q^{\frac{i(i-1)}{2}}\binom{n}{i}_\q \cdot a_{s} q^{-si}=0.
 \end{align*}
 \end{proof}
The following is in some sense an inverse of $q$-binomial theorem that will be used in \S \ref{sec:pPden}.
\begin{lemma}\label{lem: inverse q bin}
  \begin{equation*}
    \sum_{i=0}^{n}(-1)^iq^{\frac{i(i-1)}{2}}\cdot\binom{n}{i}_q\cdot \prod_{\ell=0}^{i-1}(1+q^{-\ell}X)=(-X)^n.
\end{equation*}
\end{lemma}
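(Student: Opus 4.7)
The plan is to reduce the identity to two applications of the $q$-binomial theorem (Lemma \ref{lem: qbinomial}) together with a standard re-indexing. First I would expand the product $\prod_{\ell=0}^{i-1}(1+q^{-\ell}X)$ using Lemma \ref{lem: qbinomial}. Indeed, replacing $X$ by $-q^{-(i-1)}X$ in \eqref{eq: qbinomial} (applied with $n=i$) gives
\begin{equation*}
\prod_{\ell=0}^{i-1}(1+q^{-\ell}X)=\sum_{j=0}^{i}q^{\binom{j}{2}-j(i-1)}\binom{i}{j}_q X^j.
\end{equation*}

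Next, I would substitute this into the left-hand side, swap the order of summation, and use the elementary identity $\binom{n}{i}_q\binom{i}{j}_q=\binom{n}{j}_q\binom{n-j}{i-j}_q$. Setting $k=i-j$ and expanding $\binom{i}{2}=\binom{k+j}{2}=\binom{k}{2}+kj+\binom{j}{2}$, the exponent of $q$ coming from the factors $q^{\binom{i}{2}}\cdot q^{\binom{j}{2}-j(i-1)}$ telescopes: one computes
\begin{equation*}
\binom{i}{2}+\binom{j}{2}-j(i-1)=\binom{k}{2}+\bigl(2\binom{j}{2}+j-j^2\bigr)=\binom{k}{2},
\end{equation*}
since $2\binom{j}{2}+j-j^2=0$. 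Thus the left-hand side becomes
\begin{equation*}
\sum_{j=0}^{n}(-1)^j X^j\binom{n}{j}_q\sum_{k=0}^{n-j}(-1)^k q^{\binom{k}{2}}\binom{n-j}{k}_q.
\end{equation*}

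Finally, I would evaluate the inner sum by Lemma \ref{lem: qbinomial} with $X=1$ and $n$ replaced by $n-j$: it equals $\prod_{\ell=0}^{n-j-1}(1-q^\ell)$, which vanishes unless $n-j=0$ (the empty product gives $1$). Only the $j=n$ term survives, yielding $(-X)^n$, as claimed. There is no real obstacle; the main bookkeeping step is verifying the cancellation of the $q$-exponent described above, which makes the outer sum collapse cleanly.
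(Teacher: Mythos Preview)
Your proof is correct. The exponent bookkeeping checks out exactly as you wrote: after the substitution $k=i-j$ and the expansion $\binom{k+j}{2}=\binom{k}{2}+kj+\binom{j}{2}$, the term $2\binom{j}{2}-j^2+j$ indeed vanishes identically, leaving only $\binom{k}{2}$, and the inner sum collapses by Lemma~\ref{lem: qbinomial} at $X=1$.

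Your route is genuinely different from the paper's. The paper argues by change of basis: it writes $(-X)^n=\sum_i a_{n,i}g_i(X)$ with $g_i(X)=\prod_{\ell=0}^{i-1}(1+q^{-\ell}X)$, derives the recursion $a_{n+1,i}=q^i a_{n,i}-q^{i-1}a_{n,i-1}$ from the relation $Xg_i(X)=q^i(g_{i+1}(X)-g_i(X))$, and then verifies that $(-1)^i q^{\binom{i}{2}}\binom{n}{i}_q$ satisfies the same recursion with matching initial values. Your approach is more self-contained: it never sets up an auxiliary recursion, relying instead on two direct applications of Lemma~\ref{lem: qbinomial} together with the $q$-Vandermonde-type identity $\binom{n}{i}_q\binom{i}{j}_q=\binom{n}{j}_q\binom{n-j}{i-j}_q$. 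The paper's recursive method generalizes well to the analogous basis-change arguments used later in \S\ref{sec:identity} (Lemmas~\ref{lem: g(r,r,X)} and~\ref{lem: sum of g}), where no simple closed product is available; your method is shorter for this particular identity but does not obviously extend to those settings.
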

\begin{proof}
Let $g_i(X)=\prod_{\ell=0}^{i-1}(q^{-\ell}X+1)$. Since $\{g_i(X)\}$ forms a basis of the vector space of polynomials of degree $\le n$,  there exist $a_{n,i} \in \mathbb{C}$ such that
\begin{align*}
  (-X)^n=\sum_{i=0}^{n}a_{n,i}\cdot g_i(X).
\end{align*}
Notice that $g_{i+1}(X)=(1+q^{-i}X)g_i(X)$, hence $Xg_i(X)=q^i(g_{i+1}(X)-g_i(X))$. Then we have
\begin{align*}
   \sum_{i=0}^{n+1}a_{n+1,i}\cdot g_i(X)= (-X)^{n+1}=(-X)\cdot (-X)^n&=\sum_{i=0}^{n}(-a_{n,i})\cdot Xg_i(X)\\
   &=\sum_{i=0}^{n}(-a_{n,i})q^i\cdot (g_{i+1}(X)-g_i(X)).
\end{align*}
As a result, we have
\begin{align}\label{eq: recursion1}
    a_{n+1,i}=\begin{cases}
          a_{n,0} & \text{ if $i=0$},\\
          q^ia_{n,i}-q^{i-1}a_{n,i-1} & \text{ if $0<i< n+1$},\\
          -a_{n,n}q^{n} & \text{ if $i=n+1$}.
    \end{cases}
\end{align}
It is easy to check that $b_{n, i}=(-1)^i q^{\frac{i(i-1)}{2}}\cdot\binom{n}{i}_q $ satisfies \eqref{eq: recursion1} and that $a_{1, i}= b_{1, i}$. So we have $a_{n, i}=b_{n, i}$.
\end{proof}

\section{Decomposition of primitive local densities}\label{sec: formula of ppden}
This section is devoted to prove the following decomposition of the primitive local density polynomial, which is a vast generalization of \cite[Proposition A.14]{HSY3} and one of the main tools we use to prove Theorem \ref{thm: formula of ppden}.
	\begin{theorem}\label{thm: decom of Pden}
	Assume that  $L$ is an integral lattice of rank $n$. For any $m\ge 0$  we have
		\begin{align*}
			\Pden(I_{m},L,X)=\sum_{i=0}^{n}\Pden^{n-i}(I_{m},L,X),
		\end{align*}
		where $\Pden^{n-i}(I_{m},L,X)$ is a polynomial characterized by
		\begin{align}\label{eq: beta_i, L positive}
			\Pden^{n-i}(I_{m},L,\q^{-2k})&=\q^{-2ki}\Pden(\cH^k,0_{n-i})\sum_{V_1\in \mathrm{Gr}(i,\overline{L})(\mathbb{F}_\q)}\q^{(n-i)i}   \Pden(I_{m},L_{V_1}),
		\end{align}
		where $0_{n-i}$ is a totally isotropic lattice of rank $n-i$ and $L_{V_1}\subset L$ is a sublattice of rank $i$ such that $\overline{L}_{V_1}=V_1$.
\end{theorem}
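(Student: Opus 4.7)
The plan is to prove this decomposition by stratifying the set of primitive hermitian embeddings $\phi \in \mathrm{Pherm}_{L, I_m \obot \cH^k}(O_{F_0}/\pi_0^d)$ (for $d \gg 0$) according to how the image of $\phi$ interacts with the two summands modulo $\pi$. Since $\phi$ is primitive, its reduction $\bar\phi : \overline{L} \hookrightarrow \overline{I_m} \oplus \overline{\cH^k}$ is injective. Set
\[
V_1(\phi) := \bar\phi^{-1}\bigl(\bar\phi(\overline{L}) \cap \overline{I_m}\bigr) \subset \overline{L},
\]
and let $i := \dim_{\F_\q} V_1(\phi)$. This partitions $\mathrm{Pherm}_{L, I_m \obot \cH^k}(O_{F_0}/\pi_0^d)$ into $n+1$ strata indexed by $i \in \{0, 1, \ldots, n\}$, each further subdivided by the choice of $V_1 \in \mathrm{Gr}(i, \overline{L})(\F_\q)$. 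Summing the appropriately normalized counts over all strata recovers $\Pden(I_m, L, \q^{-2k})$, so it suffices to identify the $i$-stratum sum with the claimed expression for $\Pden^{n-i}(I_m, L, \q^{-2k})$.

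The heart of the argument is the single-stratum count. Fix $V_1 \in \mathrm{Gr}(i, \overline{L})(\F_\q)$, pick a direct summand $L_{V_1} \subset L$ of rank $i$ with $\overline{L_{V_1}} = V_1$, together with a complementary direct summand $L'$. Writing $\phi = (\phi_M, \phi_H)$ with respect to $I_m \obot \cH^k$, the condition $V_1(\phi) = V_1$ forces $\phi_H|_{L_{V_1}} \in \pi \cH^k$, so that $\phi_M|_{L_{V_1}} : L_{V_1} \to I_m$ is a genuine primitive hermitian embedding. Complementarily, the reduction of $\phi_H|_{L'}$ has rank $n-i$ in $\overline{\cH^k}$, and the hermitian identity $(\phi(x),\phi(y)) = (x,y)$ -- after isolating the contributions from $\phi_M|_{L_{V_1}}$ (which already match the pairing on $L_{V_1}$) and from the cross-pairings (which vanish sufficiently modulo $\pi_0$) -- forces the projection of $\phi|_{L'}$ to correspond to a primitive embedding of the totally isotropic lattice $0_{n-i}$ into $\cH^k$. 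The remaining freedom, namely the component $\phi_M|_{L'}$ and the lift $\psi$ defined by $\phi_H|_{L_{V_1}} = \pi\psi$, is constrained only by the residual hermitian compatibility and contributes a factor of $\q^{(n-i)i}$ to the count. Bookkeeping normalizations gives
\[
\bigl|\{\phi : V_1(\phi) = V_1\}\bigr|/\q^{N d_{L, I_m \obot \cH^k}} \; = \; \q^{-2ki + (n-i)i} \cdot \Pden(I_m, L_{V_1}) \cdot \Pden(\cH^k, 0_{n-i}),
\]
and summing over $V_1$ yields the formula for $\Pden^{n-i}(I_m, L, \q^{-2k})$. Because this holds for all $k \ge 0$, the corresponding polynomial identity follows.

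The main obstacle is the fine analysis of the hermitian constraints -- cleanly separating $\phi$ into a primitive embedding of $L_{V_1}$ into $I_m$ and a primitive embedding of $0_{n-i}$ into $\cH^k$. Because the Gram matrix of $\cH$ involves $\pi^{-1}$, the hermitian form does not descend naively to $\overline{\cH^k}$; one must track the hermitian equations modulo successive powers of $\pi_0$ and verify that they do force the $L'$-part of $\phi$ to realize $0_{n-i} \hookrightarrow \cH^k$ primitively, while simultaneously identifying the residual degrees of freedom that produce the $\q^{(n-i)i}$ factor. A related subtlety is that $\Pden(I_m, L_{V_1})$ must be independent of the choice of lift $L_{V_1}$; this holds because the moment matrix of any such lift is pinned down modulo $\pi$ by the restriction of the quadratic form on $\overline{L}$ to $V_1$, which determines the fundamental invariants and hence the hermitian isomorphism class of $L_{V_1}$. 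Once these pieces are disentangled, the remaining combinatorics of the count is straightforward bookkeeping.
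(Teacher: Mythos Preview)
Your stratification is exactly the paper's: partition primitive $\phi$ by the subspace $V_1=\ker(\bar\phi_{H^k})\subset\overline L$, then count each stratum. The gap is in the single-stratum count. Your claim that $\phi_M|_{L_{V_1}}$ is a ``genuine primitive hermitian embedding'' of $L_{V_1}$ into $I_m$ is false: the hermitian identity on $L_{V_1}\times L_{V_1}$ reads $(\phi_M v,\phi_M v')=(v,v')-(\pi\psi(v),\pi\psi(v'))$, so $\phi_M|_{L_{V_1}}$ is an isometry for a \emph{twisted} form on $L_{V_1}$, not the original one. Likewise $\phi_H|_{L'}$ does not realize the totally isotropic lattice $0_{n-i}$; the form it carries is $(x,y)_{L'}-(\phi_M x,\phi_M y)_{I_m}$, which is integral but typically nonzero. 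Your assertion that the cross-pairings ``vanish sufficiently modulo $\pi_0$'' is also wrong: $(\pi\psi(v),\phi_H|_{L'}(w))\in O_F$ but not in $\pi O_F$ in general, and this constraint genuinely couples all four blocks $\phi_M|_{L_{V_1}},\phi_M|_{L'},\psi,\phi_H|_{L'}$. Consequently, attributing the factor $q^{(n-i)i}$ to the ``free'' pair $(\phi_M|_{L'},\psi)$ is not a valid accounting.

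The paper handles this by a different organization. It first counts $\phi_2:=\phi|_{L'}$ as a whole (both components at once) via a separate lemma showing the count equals $q^{2m(n-i)d}\cdot|\mathrm{Pherm}_{L',H^k}|$, with $|\mathrm{Pherm}_{L',H^k}|$ depending only on the rank of $L'$. Then, crucially, it invokes a Witt-type extension lemma: since $\phi_2(L')$ is primitive in $H^k$ modulo $\pi$, there exists $g\in\mathrm U(I_m\obot H^k)$ with $g\phi_2(L')\subset H^k$. After this rotation the cross constraint $(b)$ involves only $\phi_{1,H^k}$, not $\phi_{1,M}$; the count of $\phi_{1,H^k}$ becomes an orthogonal-complement count in $H^k$ (giving $q^{(2d-1)(2k-(n-i))i}$), and $\phi_{1,M}$ is then an independent primitive embedding. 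The $q^{(n-i)i}$ arises from the $-1$ in the exponent $(2d-1)$ after normalization, not from free parameters as you describe. Without this Witt trick (or an equivalent decoupling device) your four-block decomposition cannot be disentangled. Finally, your claim that the reduction of $L_{V_1}$ determines its fundamental invariants is false (e.g.\ lattices with Gram matrices $(\pi_0)$ and $(\pi_0^2)$ have the same reduction); the correct reason $\Pden(I_m,L_{V_1})$ is independent of the lift is Lemma~\ref{lem: Pden(I_m,L)}, which shows it depends only on the quadratic space $V_1\subset\overline L$.
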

Here, an important special case is when $m=n$. In this case, $\Pden^{0}(I_n^{-\epsilon},L,X)=0$ since $\chi(L)\neq \chi(I_n^{-\epsilon})$. Hence,
\begin{align*}
    	\Pden(I_{n},L,X)=\sum_{i=0}^{n-1}\Pden^{n-i}(I_{n},L,X).
\end{align*}

Applying the formulas of $\Pden(\cH^k,0_{n-i})$ and $\Pden(I_{m},L_{V_1})$ given in Lemmas \ref{lem: Pden(H^k,L)} and \ref{lem: Pden(I_m,L)}, we obtain the following corollary.
\begin{corollary}\label{cor: Pden}
Let $L$ be  an integral lattice of rank $n$. We have
		\begin{align}
		    \Pden(I_{m},L,X)=\sum_{i=0}^{n} \Big((\q^{n-m}X)^{i}\prod_{\ell=0}^{n-i-1}(1-\q^{2\ell}X) \Big)\cdot  \sum_{V_1\in \mathrm{Gr}(i,\overline{L})(\mathbb{F}_\q)}  |\mathrm{O}(V_1,\overline{I_m})|.
		\end{align}
		In particular,
\begin{align*}
     \Pden'(I_{n},L)=2\sum_{i=0}^{n} &\Big( \prod_{\ell=1}^{n-i-1}(1-\q^{2\ell}) \Big)\cdot   \sum_{V_1\in \mathrm{Gr}(i,\overline{L})(\mathbb{F}_\q)}  |\mathrm{O}(V_1,\overline{I_n})|.
\end{align*}
\end{corollary}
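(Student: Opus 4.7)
The plan is to verify the identity at the infinitely many values $X = q^{-2k}$ for $k \in \mathbb{Z}_{\ge 0}$, which suffices since both sides are polynomials in $X$ of bounded degree. At such a value, $\Pden(I_m, L, q^{-2k})$ is the renormalized count of primitive hermitian embeddings $\phi\colon L \hookrightarrow I_m \obot H^k$ modulo high powers of $\pi$, and I will stratify these embeddings to match the right-hand side term by term.

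The key starting observation is that $\overline{H} = H/\pi H$ is totally isotropic under the induced $\mathbb{F}_q$-valued quadratic form $\bar Q(\bar x) := \overline{\pi(x,x)}$: the hermitian form on $H$ takes values in $\pi^{-1} O_F$, so $\pi(x,x) \in O_{F_0}$ and reduces to $0$ modulo $\pi$. By contrast, $\overline{I_m}$ is nondegenerate. Consequently the radical of $\overline{I_m \obot H^k}$ is canonically $\overline{H^k} \cong 0_{2k}$, and we obtain a canonical quotient of quadratic spaces $\rho\colon \overline{I_m \obot H^k} \twoheadrightarrow \overline{I_m}$. For a primitive $\phi$, I set $W_\phi := \ker(\rho \circ \bar\phi) \subset \overline L$, necessarily contained in $\mathrm{Rad}(\overline L)$ since $\rho \circ \bar\phi$ is a morphism of quadratic spaces, and let $i = n - \dim W_\phi$. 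I would first partition the primitive $\phi$'s by their horizontal rank $i$, then refine by a choice of complement $V_1 \in \mathrm{Gr}(i, \overline L)$ to $W_\phi$ together with a lift to a rank-$i$ sublattice $L_{V_1} \subset L$ with $\overline{L_{V_1}} = V_1$. By Lemma \ref{lem: Pden(I_m,L)}, $\Pden(I_m, L_{V_1})$ depends only on $V_1$ as a quadratic subspace of $\overline L$ (through the dimension of its radical and the sign of its nondegenerate quotient), so the sum is well-defined independent of lift choices.

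The central step is to show that for each pair $(V_1, W_\phi)$ the count of compatible primitive $\phi$'s factors into a product of (a) a count of primitive embeddings $L_{V_1} \hookrightarrow I_m$ (contributing $\Pden(I_m, L_{V_1})$), obtained by taking $\rho \circ \phi|_{L_{V_1}}$, and (b) a count of primitive embeddings of the totally isotropic lattice $0_{n-i}$ into $H^k$ (contributing $\Pden(H^k, 0_{n-i})$), arising from the restriction of $\phi$ to a complement of $L_{V_1}$, which lands modulo $\pi$ in the radical $\overline{H^k}$. The hard part will be the combinatorial bookkeeping to arrive at the explicit factor $q^{-2ki} \cdot q^{(n-i)i}$. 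The factor $q^{(n-i)i}$ reflects that for each $V_1$ there are $q^{i(n-i)}$ complementary $W_\phi$'s in $\overline L$, so summing over $V_1$ alone rather than over pairs $(V_1, W_\phi)$ accounts for exactly this multiplicity; the factor $q^{-2ki}$ encodes the slack in the hermitian cross-term constraints between $L_{V_1}$ and its complement, which are absorbed into $H^k$'s $\pi^{-1}$-scaled form (so that $i$ rows of ``free'' $H^k$-valued data each contribute $2k$ units to the normalization). Verifying this precisely by explicit computation of the embedding count modulo $\pi^d$ will generalize the argument of \cite[Proposition A.14]{HSY3} to arbitrary integral $L$.
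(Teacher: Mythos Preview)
Your ``key starting observation'' contains a genuine error. The form $\bar Q(\bar x):=\overline{\pi(x,x)}$ vanishes identically on all of $\overline{I_m\obot H^k}$: for any $x$ with $(x,x)\in O_{F_0}$ (which holds for $x\in I_m$ and for $x\in H^k$) one has $\pi(x,x)\in\pi O_{F_0}\subset\pi O_F$, so $\bar Q(\bar x)=0$. Thus your claim that $\overline{I_m}$ is nondegenerate under $\bar Q$ is false. The form that actually makes $\overline{I_m}$ (and $\overline L$) nondegenerate is $\bar x\mapsto (x,x)\bmod\pi_0$, but this is \emph{not} well-defined on $\overline{H^k}$: for instance $e_1$ and $e_1+\pi e_2$ in $H$ reduce to the same class, yet $(e_1,e_1)=0$ while $(e_1+\pi e_2,e_1+\pi e_2)=-2$. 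Hence there is no canonical quotient of quadratic spaces $\rho\colon\overline{I_m\obot H^k}\to\overline{I_m}$; the composite $\mathrm{Pr}_{\overline{I_m}}\circ\bar\phi$ is \emph{not} a morphism of quadratic spaces, and $W_\phi$ need not lie in $\mathrm{Rad}(\overline L)$ (take $\phi(v)=e_1+\pi e_2\in H$: then $\bar v\in W_\phi$ but $(v,v)=-2$ is a unit). This breaks your factorization (a): the map $\mathrm{Pr}_{I_m}\circ\phi|_{L_{V_1}}$ is not hermitian for the form on $L_{V_1}$, and since the $H^k$-component of $\phi(L_{V_1})$ is unconstrained, the discrepancy is not in $\pi O_F$.

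The paper (Theorem~\ref{thm: decom of Pden}) partitions in the \emph{opposite} direction, by $V_1:=\ker(\overline{\mathrm{Pr}_{H^k}}\circ\bar\phi)$. This is the right choice precisely because for $v\in L_{V_1}$ one has $\phi_{H^k}(v)\in\pi H^k$, so $(\phi_{H^k}(v),\phi_{H^k}(w))\in\pi O_F$ and the form that $\phi_{I_m}$ induces on $L_{V_1}$ agrees with the original one modulo $\pi$; by Lemma~\ref{lem: Pden(I_m,L)} this is all that $\Pden(I_m,L_{V_1})$ depends on. For the complementary piece $L_{V_2}$ (on which $\mathrm{Pr}_{H^k}\circ\phi$ is primitive), the paper uses a translation lemma (Lemma~\ref{lem: translation to H^k}), exploiting the nondegenerate \emph{symplectic} form $\overline{\pi(\,,\,)}$ on $\overline{H^k}$, to move $\phi(L_{V_2})$ entirely into $H^k$; then Lemmas~\ref{lem: cancel beta_n} and~\ref{lem: Nperp} give the factor $\Pden(H^k,0_{n-i})$ and the powers of $q$. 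No analogous translation into $I_m$ is available, which is a second obstruction to your reversed stratification.
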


When $L$ is a full type lattice of rank $n$,  $\overline{L}$ is totally isotropic. So we obtain
\begin{corollary}\label{cor: L full type Pden(Im,L)}
Let $L$ be a full type lattice of rank $n$. Then
\begin{align*}
     \Pden(I_{m},L,X)=\sum_{i=0}^{n} &\Big((\q^{n-m}X)^{i}\prod_{\ell=0}^{n-i-1}(1-\q^{2\ell}X) \Big)\cdot  \binom{n}{i}_\q |\mathrm{O}(0_i,\overline{I_m})|.
\end{align*}
In particular,
\begin{align*}
     \Pden'(I_{n},L)=2\sum_{i=0}^{n} &\Big( \prod_{\ell=1}^{n-i-1}(1-\q^{2\ell}) \Big)\cdot  \binom{n}{i}_\q |\mathrm{O}(0_i,\overline{I_n})|.
\end{align*}
Here by Lemma \ref{lem: Pden(I_m,L)}, we have
\begin{align*}
|\mathrm{O}(0_i,\overline{I_m})|
&=\q^{\frac{i(i-1)}{2}}\cdot \begin{cases}
\prod_{1 \leq \ell\le i}\left(\q^{m+1-2\ell}-1\right) &  \text {if $m$ is odd},\\
\left(\q^{m/2}-\chi(I_m)  \right)\left(q^{m/2-i}+\chi(I_m)  \right) \cdot \prod_{1 \leq \ell<i}\left(q^{m-2\ell}-1\right) &  \text {if $m$ is even}.
\end{cases}
\end{align*}
\end{corollary}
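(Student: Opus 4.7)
The plan is to derive Corollary \ref{cor: L full type Pden(Im,L)} by directly specializing Corollary \ref{cor: Pden} (and Lemma \ref{lem: Pden(I_m,L)}) to the full-type situation. The single new input needed is that for a full-type lattice, the reduction $\overline{L}=L/\pi L$ is a totally isotropic $\mathbb{F}_q$-quadratic space, so the subspace count in Corollary \ref{cor: Pden} collapses to a $q$-binomial coefficient.

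First I would verify the totally-isotropic claim. Since $L$ has full type $n$, all fundamental invariants satisfy $a_i \ge 1$; equivalently, in a normal basis, every diagonal block $\beta_i \pi^{2b_i}$ has $b_i\ge 1$ and every hyperbolic block $\bigl(\begin{smallmatrix} 0 & \pi^{2c_j+1} \\ -\pi^{2c_j+1} & 0\end{smallmatrix}\bigr)$ has $c_j\ge 0$. A direct computation shows that for any $x\in L$ the diagonal contribution $\mathrm{Nm}(x_i)\beta_i\pi_0^{b_i}$ lies in $\pi_0 O_{F_0}$, and the hyperbolic contribution $\pi^{2c_j+1}(\bar x_j x_{j'}-\bar x_{j'}x_j)$ lies in $\pi^{2c_j+2}O_{F_0}=\pi_0^{c_j+1}O_{F_0}\subseteq \pi_0 O_{F_0}$. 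Hence $(x,x)\equiv 0\pmod{\pi_0}$ for all $x\in L$, so the induced quadratic form on $\overline{L}$ is identically zero and every subspace of $\overline{L}$ is totally isotropic. In particular, each $V_1\in\mathrm{Gr}(i,\overline{L})(\mathbb{F}_q)$ is isomorphic (as a quadratic space) to $0_i$, and the number of such subspaces is $\binom{n}{i}_q$, which gives
\[
\sum_{V_1\in \mathrm{Gr}(i,\overline{L})(\mathbb{F}_q)}|\mathrm{O}(V_1,\overline{I_m})|=\binom{n}{i}_q\,|\mathrm{O}(0_i,\overline{I_m})|.
\]
Plugging this into Corollary \ref{cor: Pden} yields the stated polynomial identity for $\Pden(I_m,L,X)$.

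Next I would obtain the derivative formula by differentiating this polynomial at $X=1$. For every $0\le i\le n-1$, the product $\prod_{\ell=0}^{n-i-1}(1-q^{2\ell}X)$ contains the factor $(1-X)$ (coming from $\ell=0$), which vanishes at $X=1$. By the product rule, only the derivative of this $(1-X)$ factor survives at $X=1$, so
\[
\frac{d}{dX}\!\left(X^i\!\!\prod_{\ell=0}^{n-i-1}(1-q^{2\ell}X)\right)\!\bigg|_{X=1}=-\prod_{\ell=1}^{n-i-1}(1-q^{2\ell}),\qquad 0\le i\le n-1.
\]
The $i=n$ term contributes nothing: the corresponding coefficient $|\mathrm{O}(0_n,\overline{I_n})|$ vanishes for $n\ge 1$, since $\overline{I_n}$ is a non-degenerate quadratic space of dimension $n$ over $\mathbb{F}_q$ and therefore contains no totally isotropic subspace of full dimension. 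Multiplying by $-2$ (the normalization in the definition of $\Pden'(I_n,L)$) gives the claimed formula.

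Finally, the explicit description of $|\mathrm{O}(0_i,\overline{I_m})|$ is simply the specialization $k=0$, $j=i$, $\epsilon_1=+1$ of the general formula recorded in Lemma \ref{lem: Pden(I_m,L)}; the three cases in the statement correspond exactly to the parity dichotomy (odd $m$ versus even $m$) of that formula after simplification. I do not anticipate any real obstacle: every ingredient is already available, and the only substantive step is the verification that full type forces the reduction to be totally isotropic.
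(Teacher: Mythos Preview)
Your proposal is correct and follows exactly the paper's approach: the paper simply notes that when $L$ is of full type, $\overline{L}$ is totally isotropic and hence the corollary follows immediately from Corollary~\ref{cor: Pden}. You have supplied more detail than the paper gives (the explicit check that $(x,x)\in\pi_0 O_{F_0}$ in a normal basis, and the handling of the $i=n$ term in the derivative via $|\mathrm{O}(0_n,\overline{I_n})|=0$), but the argument is the same.
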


\begin{proof}[Proof of Theorem \ref{thm: decom of Pden}]
		To save notation, we use  $M$ to denote $I_{m}$ in this proof.
Recall that by \eqref{eq:prim local density M,L},
	\begin{equation*}
	\Pden(M, L, \q^{-2k})=\lim_{d\to \infty} \q^{-d n (2(m+2k) -n)} |\mathrm{Pherm}_{L,M\obot H^k}(O_{F_0}/(\pi_0^d))|.
	\end{equation*}
First, we define a partition
$$\mathrm{Pherm}_{L,M\obot H^k}(O_{F_0}/(\pi_0^d))=\bigsqcup_{0\le i\le n}\mathrm{Pherm}_{L,M\obot H^k}^i(O_{F_0}/(\pi_0^d)),$$
where
\begin{align}
	\mathrm{Pherm}_{L,M\obot H^k}^i(O_{F_0}/(\pi_0^d))&\coloneqq \{\phi\in \mathrm{Pherm}_{L,M\obot H^k}(O_{F_0}/(\pi_0^d))\mid  \mathrm{dim}_{\mathbb{F}_\q}\overline{\mathrm{Pr}_{\cH^k}(\phi(L))}=i\}.
	\end{align}
Here $\mathrm{Pr}_{H^k}$ denote the projection map to $H^k$, and $\overline{\mathrm{Pr}_{\cH^k}(\phi(L))}$ denote the image of $ \mathrm{Pr}_{\cH^k}(\phi(L))$ in $\overline{H}^k$.
As a result, we have $$\Pden(M,L,X)=\sum_{i=0}^{n}	\Pden^i(M,L,X),$$
where $\Pden^i(M,L,X)$ is the function such that
\begin{align*}
		\Pden^i(M,L,\q^{-2k})&\coloneqq\lim_{d\to \infty}\q^{-(2n(2k+m)-n^2)d}|	\mathrm{Pherm}_{L,M\obot H^k}^i(O_{F_0}/(\pi_0^d))|.
	\end{align*}
We need to count $|	\mathrm{Pherm}_{L,M\obot H^k}^i(O_{F_0}/(\pi_0^d))|$. For $\phi \in \mathrm{Pherm}_{L,M\obot H^k}^i(O_{F_0}/(\pi_0^d)),$ it induces $$\overline{\phi}: V= \overline{L} \longrightarrow M\obot \cH^k/\pi(M\obot \cH^k),\text{ and } \overline{\phi}_{\cH^k}\coloneqq \mathrm{Pr}_{\overline{\cH}^k}\circ \overline{\phi}.$$
	By the definition of  $\mathrm{Pherm}_{L,M\obot H^k}^i(O_{F_0}/(\pi_0^d))$,
	For a $(n-i)$-dimensional subspace $V_1\subset \overline{L}$, let
	\begin{align*}
	    \mathrm{Pherm}_{L,M\obot H^k}^{V_1}(O_{F_0}/(\pi_0^d))=\{ \phi \in \mathrm{Pherm}_{L,M\obot H^k}^i(O_{F_0}/(\pi_0^d))\mid \mathrm{Ker}(\overline{\phi}_{\cH^k})=V_1 \subset \overline{L} \}.
	\end{align*}
	Since $\mathrm{Ker}(\overline{\phi}_{\cH^k})\subset \overline{L}$ has dimension $n-i$ for any $\phi\in \mathrm{Pherm}_{L,M\obot H^k}^i(O_{F_0}/(\pi_0^d))$, we have
	\begin{align}\label{eq: partition of Pherm^i}
	    	\mathrm{Pherm}_{L,M\obot H^k}^i(O_{F_0}/(\pi_0^d))=\bigsqcup_{V_1\in \mathrm{Gr}(n-i,V)(\mathbb{F}_\q)}\mathrm{Pherm}_{L,M\obot H^k}^{V_1}(O_{F_0}/(\pi_0^d)).
	\end{align}

	We need to show
	\begin{align}\label{eq: Pherm V_1}
	  	\q^{-(2(m+2k)n-n^2)d}    |\mathrm{Pherm}_{L,M\obot H^k}^{V_1}(O_{F_0}/(\pi_0^d))|=\q^{(n-i)i}X^{n-i}\Pden(\cH^k,L_{V_2})\cdot \Pden(M,L_{V_1}).
	\end{align}
	Let $V_2$ be a subspace of $V$ such that $V=V_1\oplus V_2$.  Let $L_{V_1}\subset L$ be a sublattice of rank $n-i$ such that the image of $L_{V_1}$ in $V$ is $V_1$. Similarly, let $L_{V_2}\subset L$ be a sublattice of rank $i$ such that the image of $L_{V_2}$ in $V$ is $V_2$. Let  $\phi_i=\phi|_{L_{V_i}}$ for $i\in \{1,2\}$.  According to Lemma \ref{lem: cancel beta_n}, the number of different choices of $\phi_2$ is given by
	\begin{align}\label{eq: L_{V_2}}
	    |	\mathrm{Pherm}_{L_{V_2},M\obot H^k}^i(O_{F_0}/(\pi_0^d))|=\q^{i(2(m+2k)-i)d} \Pden(\cH^k,L_{V_2}).
	\end{align}
Now for a fixed $\phi_2\in 	\mathrm{Pherm}_{L_{V_2},M\obot H^k}^i(O_{F_0}/(\pi_0^d))$, let
\begin{align*}
    	&\mathrm{Pherm}_{L_{V_1},M\obot H^k}^{\phi_2}(O_{F_0}/(\pi_0^d))\\
    	&\coloneqq\{\phi_1\in \mathrm{Pherm}_{L_{V_1},M\obot H^k}(O_{F_0}/(\pi_0^d))\mid (\phi_1,\phi_2)\in \mathrm{Pherm}_{L,M\obot H^k}^{V_1}(O_{F_0}/(\pi_0^d))\}.
\end{align*}
Claim: For any $\phi_2\in 	\mathrm{Pherm}_{L_{V_2},M\obot H^k}^i(O_{F_0}/(\pi_0^d))$,
\begin{align}\label{eq: L_{V_1}}
    	|\mathrm{Pherm}_{L_{V_1},M\obot H^k}^{\phi_2}(O_{F_0}/(\pi_0^d))|&=\q^{(2d-1)(2k-i)(n-i)}	|\mathrm{Pherm}_{L_{V_1},M}(O_{F_0}/(\pi_0^d))|\\ \notag
    	&=\q^{(2d-1)(2k-i)(n-i)+(2m(n-i)-(n-i)^2)d}\Pden(M,L_{V_1}).
\end{align}
Assuming the claim holds,   \eqref{eq: Pherm V_1} follows from \eqref{eq: L_{V_2}} and \eqref{eq: L_{V_1}} since for any fixed $\phi_2$ we have
\begin{align*}
    |\mathrm{Pherm}_{L,M\obot H^k}^{V_1}(O_{F_0}/(\pi_0^d))|= |	\mathrm{Pherm}_{L_{V_2},M\obot H^k}^i(O_{F_0}/(\pi_0^d))|\cdot |\mathrm{Pherm}_{L_{V_1},M\obot H^k}^{\phi_2}(O_{F_0}/(\pi_0^d))|.
\end{align*}
Proof of the claim:
For $\phi_1 \in \mathrm{Pherm}_{L_{V_1},M\obot H^k}^{\phi_2}(O_{F_0}/(\pi_0^d)),$ write $\phi_1=\phi_{1,\cH^k}+\phi_{1,M}$, where $\phi_{1,\cH^k}=\mathrm{Pr}_{\cH^k}\circ \phi_1$ and $\phi_{1,M}=\mathrm{Pr}_{M}\circ \phi_1$. First,
for any $g\in \mathrm{U}(M\obot H^k)$, one can directly check the map
\begin{align}\label{eq: map}
    	\mathrm{Pherm}_{L_{V_1},M\obot H^k}^{\phi_2}(O_{F_0}/(\pi_0^d))&\to	\mathrm{Pherm}_{L_{V_1},g(M\obot H^k)}^{g\circ \phi_2}(O_{F_0}/(\pi_0^d)),\\ \notag
    	\phi_1&\mapsto g\circ \phi_1
\end{align}
is well-defined and is in fact a bijection.
Then according to Lemma  \ref{lem: translation to H^k}, we may assume $\phi_2(L_{V_2})\subset \cH^k$.

Now finding $\phi_1$ such that $(\phi_1,\phi_2) \in \mathrm{Pherm}_{L,M\obot H^k}^{V_1}(O_{F_0}/(\pi_0^d))$ is equivalent to finding $\phi_1$ such that  $\phi_{1,M}$ is primitive, $\phi_{1,H^k}\in \pi H^k$,
\begin{align}\label{eq: phi_{1,H}}
       (\phi_1(v),\phi_2(w))\equiv (v,w) \mod (\pi^{2d-1})\text{ for any $v\in L_{V_1},w\in L_{V_2}$},
\end{align}
and
\begin{align}\label{eq: temp 3}
    (\phi_1(v),\phi_1(w))\equiv (v,w)  \mod (\pi^{2d-1}) \text{ for any $v\in L_{V_1},w\in L_{V_1}$}.
\end{align}

We consider  condition \eqref{eq: phi_{1,H}} first. Since $\phi_2(L_{V_2})\subset \cH^k$, we have
\begin{align*}
     (\phi_{1,\cH^k}(v),\phi_2(w))\equiv (\phi_1(v),\phi_2(w)) \text{ for any $v\in L_{V_1},w\in L_{V_2}$}.
\end{align*}
When $k$ is large enough, we can always find and fix a $\phi_{1,\cH^k}'$ that satisfies \eqref{eq: phi_{1,H}}. Then finding $\phi_{1,\cH^k}$ that satisfies \eqref{eq: phi_{1,H}} is equivalent to find  $\Phi_{1,\cH^k}=\phi_{1,\cH^k}-\phi_{1,\cH^k}'$, which satisfies
\begin{align*}
      (\Phi_{1,\cH^k}(v),\phi_2(w))\equiv 0 \mod (\pi^{2d-1})\text{ for any $v\in L_{V_1},w\in L_{V_2}$},
\end{align*}
Then according to Lemma \ref{lem: Nperp}, the number of different choices for $\phi_{1,\cH^k}$ is $   \q^{(2d-1)(2k-i)(n-i)}.$

Now we consider condition  \eqref{eq: temp 3}. Since $\phi_{1,\cH^k}(v)\in \pi \cH^k$ for any $v\in L_{V_1}$,
\eqref{eq: temp 3} is equivalent to
\begin{align*}
    (\phi_{1,M}(v),\phi_{1,M}(w))+\pi \alpha \equiv (\phi_1(v),\phi_1(w))\equiv (v,w)  \mod (\pi^{2d-1})   \text{ for any $v, w\in L_{V_1}$} ,
\end{align*}
for some $\alpha \in \Oo_{F}$. By Lemma \ref{lem: Pden(I_m,L)}, for a unimodular lattice $M$ and any integral lattice $N$, $\Pden(M,N)$ only depends on $\overline{M}$ and $\overline{N}$.
Hence, for our purpose, we may replace \eqref{eq: temp 3} by
\begin{align*}
    (\phi_{1,M}(v),\phi_{1,M}(w))  \equiv (\phi_1(v),\phi_1(w))\equiv (v,w)  \mod (\pi^{2d-1}) \text{ for any $v, w\in L_{V_1}$}.
\end{align*}
Therefore the number of different choices of primitive $\phi_{1,M}$  is given by
\begin{align*}
    \q^{(2m(n-i)-(n-i)^2)d}\Pden(M,L_{V_1}).
\end{align*}
As a result, we have
\begin{align*}
    |\mathrm{Pherm}_{L_{V_1},M\obot H^k}^{\phi_2}(O_{F_0}/(\pi_0^d))|  =\q^{(2d-1)(2k-i)(n-i)}\cdot q^{(2m(n-i)-(n-i)^2)d}\Pden(M,L_{V_1}).
\end{align*}
This finishes the proof of the claim.
\end{proof}

	\begin{lemma}\label{lem: cancel beta_n} Assume that  $L$ is an integral lattice of rank $n$ and $k\ge n$.
	Then
	\begin{align*}
	     |\mathrm{Pherm}_{L,I_m\obot H^k}^n(O_{F_0}/(\pi_0^d))|=\q^{2mnd}|\mathrm{Pherm}_{L,H^k}(O_{F_0}/(\pi_0^d))|.
	\end{align*}
	\end{lemma}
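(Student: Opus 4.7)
The plan is to decompose each $\phi \in \mathrm{Pherm}^n_{L, I_m \obot H^k}(O_{F_0}/(\pi_0^d))$ according to the orthogonal splitting $I_m \obot H^k$, writing $\phi = (\phi_M, \phi_H)$ with $\phi_M = \mathrm{Pr}_{I_m} \circ \phi$ and $\phi_H = \mathrm{Pr}_{H^k} \circ \phi$. The ``type $n$'' condition $\dim_{\mathbb{F}_q} \overline{\mathrm{Pr}_{H^k}(\phi(L))} = n$ is equivalent to $\phi_H$ being a primitive $O_F$-linear map into $H^k$ modulo $\pi_0^d$, and whenever $\phi_H$ is primitive into $H^k$ the full $\phi$ is automatically primitive into $I_m \obot H^k$. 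The hermitian-compatibility relation rewrites as
\[
(\phi_H(x), \phi_H(y))_{H^k} \equiv (x,y)_L - (\phi_M(x), \phi_M(y))_{I_m} \pmod{\pi_0^d},
\]
so $\phi_H$ is a primitive hermitian embedding of the rank-$n$ \emph{integral} hermitian lattice $(L, T_{\phi_M})$ into $H^k$, where $T_{\phi_M} := (\,,\,)_L - \phi_M^{\ast}(\,,\,)_{I_m}$ is $O_F$-valued.

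Summing over $\phi_M \in \mathrm{Hom}_{O_F}(L, I_m)/(\pi_0^d)$, which has cardinality $q^{2mnd}$ (it is a free $O_F$-module of rank $mn$ and $|O_F/(\pi_0^d)| = q^{2d}$), gives
\[
|\mathrm{Pherm}^n_{L, I_m \obot H^k}(O_{F_0}/(\pi_0^d))| = \sum_{\phi_M} |\mathrm{Pherm}_{(L, T_{\phi_M}), H^k}(O_{F_0}/(\pi_0^d))|,
\]
where the right-hand $\mathrm{Pherm}$ denotes primitive hermitian embeddings of the \emph{modified} form $T_{\phi_M}$ into $H^k$. It therefore suffices to show that for every integral rank-$n$ hermitian form $T'$ on $L$,
\[
|\mathrm{Pherm}_{(L, T'), H^k}(O_{F_0}/(\pi_0^d))| = |\mathrm{Pherm}_{L, H^k}(O_{F_0}/(\pi_0^d))|.
\]

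This last independence is the substantive step, and the main obstacle. By Lemma \ref{lem: Pden(H^k,L)} the primitive density $\Pden(H^k, L') = \prod_{\ell=0}^{n-1}(1 - q^{-2k + 2\ell})$ depends only on the rank $n$ of $L'$ and not on its integral hermitian form, while the hypothesis $k \geq n$ guarantees that $H^k$ has Witt index at least $n$ so any such $L'$ admits a primitive embedding into $H^k$. The challenge is to upgrade this identity of \emph{limiting} densities to an equality of exact counts at a fixed $d$. The cleanest route is to fix a primitive reduction $\overline{\phi_H}: \overline{L} \hookrightarrow \overline{H^k}$ and analyze the successive obstructions to lifting step by step: the lifts modulo $\pi^{a+1}$ of a lift modulo $\pi^a$ that are compatible with a prescribed $T'$ form a torsor under an explicit abelian group whose order depends only on the mod-$\pi$ reduction of the form, not on the higher-order terms of $T'$. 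An alternative and perhaps more transparent route is to exhibit, using the extra hyperbolic directions in $H^k$ provided by $k \geq n$, a $\phi_M$-dependent bijection between $\mathrm{Pherm}_{L, H^k}$ and $\mathrm{Pherm}_{(L, T_{\phi_M}), H^k}$ realized by adding an appropriate correction term valued in the hyperbolic complement of a fixed primitive embedding; this approach parallels the unitary Witt-extension arguments used elsewhere in the paper, e.g.\ in the proof of Theorem~\ref{thm: decom of Pden}.
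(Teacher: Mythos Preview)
Your proposal is correct and follows essentially the same decomposition and reduction as the paper's proof: split $\phi=(\phi_M,\phi_H)$, sum over the $q^{2mnd}$ choices of $\phi_M$, and reduce to showing that $|\mathrm{Pherm}_{(L,T'),H^k}(O_{F_0}/(\pi_0^d))|$ is independent of the integral form $T'$. For that key independence step the paper does not argue by lifting obstructions or by constructing an explicit bijection as you suggest, but simply invokes \cite[Lemma~2.16]{LL2}, which gives precisely the exact-count (not just limiting-density) equality you need; your sketched routes would reprove that lemma.
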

\begin{proof}
For $\phi \in \mathrm{Pherm}_{L,I_m\obot H^k}^n(O_{F_0}/(\pi_0^d))$, we may identify $\phi$ with $(\phi_{\cH^k},\phi_{I_m})$ where $\phi_{H^k}=\mathrm{Pr}_{H^k}\circ \phi$ and $\phi_{I_m}=\mathrm{Pr}_{I_m}\circ \phi$. As
$$|\Hom_{O_F}(L,I_m)(O_{F_0}/(\pi_0^d))|=q^{2mnd},$$
it suffices to show that for any fixed $\varphi \in \Hom_{O_F}(L,I_m)$, we have
\begin{align}\label{eq: temp}
    |\{\phi \in \mathrm{Pherm}_{L,I_m\obot H^k}^n(O_{F_0}/(\pi_0^d))\mid\phi_{I_m}=\varphi\}|=|\mathrm{Pherm}_{L, H^k}(O_{F_0}/(\pi_0^d))|.
\end{align}
Let $L_{\varphi}$ be the lattice $L$ equipped with the hermitian form $(x,y)_{L_{\varphi}}\coloneqq(\phi_{H^k}(x),\phi_{H^k}(y))$ where $\phi$ is any element in $\mathrm{Pherm}_{L,I_m\obot H^k}^n(O_{F_0}/(\pi_0^d))$ such that $\phi_{I_m}=\varphi$. Since each such $\phi$ is an isometry and $\phi_{I_m}$ is fixed, $(\, ,\,)_{L_\varphi}$ is independent of the choice of $\phi$.
Then we have a bijection
\begin{align}\label{eq: temp2}
    \{\phi \in \mathrm{Pherm}_{L,I_m\obot H^k}^n(O_{F_0}/(\pi_0^d))\mid\phi_{I_m}=\varphi\}&\to
     \mathrm{Pherm}_{L_{\varphi},\cH^k}(O_{F_0}/(\pi_0^d)),\\ \notag
     \phi&\mapsto \phi_{H^k}.
\end{align}
Since $L$ is integral and $\phi$ is an isometric embedding,  $L_{\phi_{\cH^k}}$ is also integral. Then according to \cite[Lemma 2.16]{LL2},
\begin{align*}
    |\mathrm{Pherm}_{L_{\varphi},\cH^k}(O_{F_0}/(\pi_0^d))|
   =|\mathrm{Pherm}_{L, H^k}(O_{F_0}/(\pi_0^d))|.
\end{align*}
Combining with the bijection in \eqref{eq: temp2}, this proves \eqref{eq: temp}, hence  finishes the proof of the lemma.
\end{proof}

\begin{lemma}\label{lem: translation to H^k}
	Assume that $M$ is a unimodular lattice, $L$ is an integral lattice of rank $n$, and $\phi: L\rightarrow M\obot \cH^k$ is a primitive isometric embedding such that $\mathrm{Pr}_{\cH^k}(\phi(L))$ is primitive in $\cH^k$. Then there exists a $g\in \mathrm{U}(M\obot H^k)$ such that  $g(\phi(L)) \subset \cH^k$.
\end{lemma}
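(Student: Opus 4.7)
The plan is to produce a primitive isometric embedding $\psi : L \hookrightarrow \cH^k$ and then to extend the $O_F$-lattice isometry $\phi(x_i) \mapsto \psi(x_i)$ to some $g \in \mathrm{U}(M \obot \cH^k)$; this gives $g(\phi(L)) = \psi(L) \subset \cH^k$ as required. Decompose $\phi = \phi_M + \phi_{\cH^k}$ along $M \obot \cH^k$, and set $L' := \phi_{\cH^k}(L) \subset \cH^k$. A first observation is that $L'$ is integral: for all $x,y \in L$,
\[
(\phi_{\cH^k}(x), \phi_{\cH^k}(y))_{\cH^k} = (x,y)_L - (\phi_M(x), \phi_M(y))_M \in O_F,
\]
using the integrality of $L$ and the unimodularity of $M$. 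Combined with the hypothesis that $L'$ is primitive of rank $n$ in $\cH^k$, a direct analysis of the pairings between vectors with nonzero reduction in $\cH^k$ (not unlike the computation in the proof of Proposition~\ref{prop:cZ Lambda defined over k}) forces $n \le k$; hence the orthogonal complement $(L')^\perp$ of $L'$ in $\cH^k$ has rank $2k - n \ge n$.

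Fix an $O_F$-basis $x_1, \ldots, x_n$ of $L$ and write $a_i := \phi_M(x_i) \in M$, $b_i := \phi_{\cH^k}(x_i) \in \cH^k$. The first key step is to produce $c_1, \ldots, c_n \in (L')^\perp$ with Gram matrix $(c_i,c_j) = (a_i,a_j)$, i.e.\ to embed the integral sublattice $A := \langle a_1, \ldots, a_n\rangle \subset M$ isometrically into $(L')^\perp$. The idea is that $(L')^\perp$ inherits enough hyperbolic structure from $\cH^k$: choosing a normal basis of $L'$ and pairing it with suitable isotropic partners in $\cH^k$ exhibits $(L')^\perp$ as containing a direct summand isomorphic to $\cH^{k-n}$ together with a complementary ``dual'' piece built from the normal basis of $L'$, which together form a universal receiver for integral hermitian lattices of rank at most $n$. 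With the $c_i$ at hand, set $\psi(x_i) := b_i + c_i$; the orthogonality $c_i \perp b_j$ gives
\[
(\psi(x_i), \psi(x_j)) = (b_i, b_j) + (c_i, c_j) = (b_i, b_j) + (a_i, a_j) = (x_i, x_j),
\]
so $\psi : L \to \cH^k$ is an isometric embedding, and it is primitive because its image modulo $\pi$ surjects onto $\overline{L'}$, which is primitive by assumption.

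It remains to extend the $O_F$-lattice isometry $\sigma : \phi(L) \xrightarrow{\sim} \psi(L)$, $\phi(x_i) \mapsto \psi(x_i)$, to some $g \in \mathrm{U}(M \obot \cH^k)$. I would invoke a Witt-type extension theorem for primitive sublattices of a nondegenerate hermitian $O_F$-lattice whose ambient lattice contains enough hyperbolic summands, supplied here by $\cH^k$. The needed input is an isometry between the orthogonal complements $\phi(L)^\perp$ and $\psi(L)^\perp$ inside $M \obot \cH^k$, which follows by hermitian lattice cancellation from the isometry $A \cong \langle c_1, \ldots, c_n\rangle$ produced above, together with the fact that removing isometric primitive rank-$n$ sublattices from $M \obot \cH^k$ leaves isometric residues when one of them lies entirely in a hyperbolic summand.

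The main obstacle is the construction of the $c_i$ in the second paragraph. This requires a careful computation of the hermitian structure of $(L')^\perp$ inside $\cH^k$ via a normal basis adapted to $L'$ and its hyperbolic partners, and a verification that the resulting hyperbolic piece accommodates the given integral Gram matrix of $A$. By contrast, the Witt-type extension in the final step, while subtle at the lattice level in general, becomes tractable here thanks to the unimodularity of $M$ and the abundant hyperbolic structure of $\cH^k$.
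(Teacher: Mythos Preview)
Your approach has a genuine gap in the construction of the $c_i$, and it is not merely a matter of careful computation. Take $n=k=1$, $M=O_F$ with the unit form, $L=O_F\ell$ with $(\ell,\ell)=1$, and $\phi(\ell)=m_0+e$ where $(m_0,m_0)=1$ and $e$ is one of the standard isotropic generators of $\cH$. Then $L'=\langle e\rangle$ is primitive and integral in $\cH$, but $(L')^\perp=\langle e\rangle$ is itself totally isotropic, so there is no $c\in(L')^\perp$ with $(c,c)=(m_0,m_0)=1$. Thus the claim that $(L')^\perp$ is a ``universal receiver for integral hermitian lattices of rank at most $n$'' is false in general, already when $k=n$. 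Your step~2 is also not on solid ground: $\phi(L)$ need not split off as an orthogonal direct summand of $M\obot\cH^k$, so cancellation does not directly give $\phi(L)^\perp\cong\psi(L)^\perp$, and lattice-level Witt extension requires matching glue data, not only the complements.

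The paper's argument is shorter and avoids both problems by enlarging $\phi(L)$ to a copy of $\cH^n$ rather than trying to move $\phi(L)$ itself. One passes to the symplectic space $(\overline{\cH^k},\overline{\pi(\,,\,)})$: since $L'$ is integral and primitive its reduction is an $n$-dimensional isotropic subspace, so $k\ge n$ and one can complete the images $\bar e_s$ of $\phi_{\cH^k}(\ell_s)$ to a symplectic basis $\{e_s,f_s\}$ and lift the $f_s$ to $\tilde f_s\in\cH^k$. Then $(\phi(\ell_s),\tilde f_t)\in\pi^{-1}\delta_{st}+O_F$, which forces $\phi(L)\oplus\langle\tilde f_1,\ldots,\tilde f_n\rangle\cong\cH^n$. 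Because $(M\obot\cH^k,M\obot\cH^k)\subset\pi^{-1}O_F$, any sublattice isomorphic to $\cH^n$ splits off $M\obot\cH^k$ orthogonally, and moving one such summand onto the standard $\cH^n\subset\cH^k$ is immediate from Witt cancellation. Your hint about ``pairing a normal basis of $L'$ with isotropic partners'' is exactly this idea; the point is to apply it to $\phi(L)$ directly, not to feed it into $(L')^\perp$.
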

\begin{proof}
  Consider the non-degenerate symplectic space over $\mathbb{F}_\q$: $(\overline{\cH}^k,\langle\, , \, \rangle)=(\overline{\cH}^k,\overline{\pi (\,,\,)})$. Let $\overline{v}$ denote the image of $v\in \cH^k$ in $\overline{\cH}^k$. Since $\mathrm{Pr}_{\cH^k}(\phi(L))$ is primitive in $\cH^k$,  $V(L)\coloneqq\overline{\mathrm{Pr}_{\cH^k}(\phi(L))}$ is a $n$-dimensional subspace. Since $M$ and $L$ are integral, $\mathrm{Pr}_{\cH^k}(\phi(L))$ is integral. Hence $V(L)$ is an isotropic subspace. Let $\{\ell_1,\cdots,\ell_n\}$ be a basis of $L$, $\ell_{s,\cH^k}=\mathrm{Pr}_{\cH^k}(\ell_s)$ and $e_s=\overline{\ell}_{s,H^k}$. Since $V(L)$ is an $n$-dimensional isotropic space, we have $k\ge n$ and we can extend $\{e_1,\cdots,e_n\}$ to a standard symplectic basis $\{e_1,f_1,\cdots,e_k,f_k\}$ of $\overline{H}^k$, where $(e_s,f_t)=\delta_{st}$, and $(e_s,e_t)=(f_s,f_t)=0$ for $1\le s,t\le k$.

  Now let $\{\tilde{f}_1,\cdots, \tilde{f}_n\}$ be a lifting of $\{ f_1,\cdots,f_n\}$. In particular, for $1\le s\le n$, we have $(\ell_s,\tilde{f}_s)=\pi^{-1}+x$ for some $x\in O_F$. Therefore, $L\oplus \langle \tilde{f}_{1}\cdots,\tilde{f}_{n}\rangle \cong \cH^n$. Hence, there exists $g\in \mathrm{U}(M\obot \cH^k)$ such that $g(L\oplus \langle \tilde{f}_{1}\cdots,\tilde{f}_{n}\rangle)\subset \cH^k$.
\end{proof}

\begin{lemma}\label{lem: Nperp}
Let $N\subset \cH^k$ be a primitive integral lattice of rank $i$.  Then
	\begin{align*}
\#\{  w \in \pi \cH^k/\pi_0^d  \cH^k\mid(N,w)=0\mod (\pi^{2d-1})
		\}=\q^{(2d-1)(2k-i)}.
	\end{align*}
\end{lemma}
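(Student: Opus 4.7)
The plan is to reinterpret the counting problem through the hermitian pairing. Recall that the standard hyperbolic lattice $\cH$ has Gram matrix with $\pi^{-1}$ entries, so $(\cH^k,\cH^k) \subset \pi^{-1}O_F$ and consequently $(\cH^k,\pi\cH^k) \subset O_F$. Thus the (conjugate-linear) map
\[
\Phi \colon \pi\cH^k \longrightarrow (\cH^k)^\ast := \Hom_{O_F}(\cH^k,O_F),\qquad w \longmapsto \bigl(v \mapsto (v,w)\bigr)
\]
is well defined. First I would check that $\Phi$ is an isomorphism: injectivity is the non-degeneracy of the hermitian form, and surjectivity follows because a concrete hyperbolic basis of $\cH^k$ identifies $(\cH^k)^\ast$ with $\pi\cH^k$ term-by-term.

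Next, the key input is that $N$ is primitive in $\cH^k$. Over the DVR $O_F$, primitivity means $\cH^k/N$ is torsion-free, hence free, so the inclusion $N \hookrightarrow \cH^k$ is split as $O_F$-modules; restriction therefore yields a surjection $(\cH^k)^\ast \twoheadrightarrow N^\ast := \Hom_{O_F}(N,O_F)$. Composing with $\Phi$ gives a surjection
\[
\Phi' \colon \pi\cH^k \twoheadrightarrow N^\ast, \qquad \Phi'(w)(v) = (v,w).
\]
Under $\Phi'$, the condition $(N,w) \equiv 0 \pmod{\pi^{2d-1}}$ is exactly $\Phi'(w) \in \pi^{2d-1} N^\ast$.

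I would then pass to finite quotients. Because $\Phi'$ is conjugate-linear and surjective,
\[
\Phi'(\pi^{2d}\cH^k) = \Phi'(\pi^{2d-1} \cdot \pi\cH^k) = (-\pi)^{2d-1} N^\ast = \pi^{2d-1} N^\ast,
\]
so $\Phi'$ descends to a surjection
\[
\Psi \colon \pi\cH^k / \pi^{2d}\cH^k \twoheadrightarrow N^\ast / \pi^{2d-1} N^\ast,
\]
whose kernel is precisely the set being counted (using $\pi_0^d = \pi^{2d}$). Since $\pi\cH^k$ is free of rank $2k$ over $O_F$ and $N^\ast$ is free of rank $i$, we have $|\pi\cH^k/\pi^{2d}\cH^k| = \q^{2k(2d-1)}$ and $|N^\ast/\pi^{2d-1} N^\ast| = \q^{i(2d-1)}$. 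Hence $|\ker \Psi| = \q^{(2k-i)(2d-1)}$, as claimed.

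There is no significant obstacle; the only point requiring care is the bookkeeping of the conjugate-linearity of $\Phi$ (which merely introduces harmless signs $\bar\pi = -\pi$) and the fact that the correct integral identification is $\pi\cH^k \cong (\cH^k)^\ast$, coming from the dual of $\cH$ being $\pi\cH$.
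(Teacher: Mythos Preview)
Your proof is correct and genuinely different from the paper's argument. The paper proceeds concretely: it introduces the orthogonal complement $N^\perp \subset \cH^k$, shows $N^\perp$ is primitive of rank $2k-i$, and then proves via an explicit symplectic-basis construction on $\overline{\cH^k}$ (using that $\overline N$ is isotropic, which is where the integrality of $N$ enters) that every $w$ satisfying the congruence can be adjusted by an element of $\pi^{2d}\cH^k$ to lie in $N^\perp$. The count then reduces to $|\pi N^\perp/\pi^{2d}N^\perp|$.

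Your route is cleaner: you bypass $N^\perp$ and the symplectic geometry entirely by using only that primitivity of $N$ splits the inclusion $N\hookrightarrow \cH^k$, so restriction of functionals $(\cH^k)^*\twoheadrightarrow N^*$ is surjective. Composing with the conjugate-linear identification $\pi\cH^k \cong (\cH^k)^*$ turns the problem into computing the kernel of a surjection of finite abelian groups, which is immediate. A small bonus: your argument never uses the integrality hypothesis on $N$, so it in fact proves the lemma under the weaker assumption that $N$ is merely primitive.
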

\begin{proof}
Through this proof, we use $\overline{L}$ to denote the image of $L$ in $\overline{H}^k$  for any sublattice $L$ of $H^k$.
Let $N^{\perp}$ be the perpendicular lattice of $N$ in $H^k$. First we show $N^{\perp}$ is primitive of rank $2k-i$.  If $N^{\perp}$ is  not primitive, then there exists $v\in N^{\perp}$ such that $\pi^{-1}v\in H^k\setminus N^{\perp}.$  However, $(\pi^{-1}v, N)=0$, hence $\pi^{-1}v\in N^{\perp}$, which is a contradiction.

We claim that for any $w\in \pi H^k$ and $\pi (N,w)=0 \mod \pi^{a}$ with $a\ge 0$, there exists a $x\in \pi^{a} \cH^k$ such that $w-x\in N^\perp$. We prove the lemma by assuming the claim, and give the proof of the claim in the last paragraph. Taking $a=2d$, the claim implies that
\begin{align*}
    	&\#\{  w \in \pi \cH^k/\pi_0^d  \cH^k\mid(N,w)=0\mod (\pi^{2d-1})\}\\
    	&=	\#\{  w \in \pi (N^\perp+\pi_0^d \cH^k)/\pi_0^d  \cH^k\mid(N,w)=0\mod (\pi^{2d-1})\}.
\end{align*}
Since $N^{\perp}$ is primitive of rank $2k-i$, we have
\begin{align*}
    	\#\{  w \in \pi N^{\perp}/\pi_0^d  \cH^k\mid(N,w)=0\mod (\pi^{2d-1})\}=\q^{(2d-1)(2k-i)}.
\end{align*}
This proves the lemma assuming the claim.

Now we prove the claim.
  Consider the symplectic space over $\mathbb{F}_\q$: $(\overline{\cH}^k,\overline{\pi (\, , \,)})$.  Since $N$ is integral, $\overline{N}$ is isotropic in $\overline{\cH}^k$. Let $\overline{N}$ be spanned by $\{e_1,\cdots,e_i\}$. Then we may extend $\{e_1,\cdots,e_i\}$ to a standard symplectic basis $\{e_1,f_1,\cdots,e_k,f_k\}$ of $\overline{H}^k$, where $(e_s,f_t)=\delta_{st}$, and $(e_s,e_t)=(f_s,f_t)=0$ for $1\le s,t\le k$.  Let $\{\tilde{e}_1,\tilde{f}_1,\cdots,\tilde{e}_k,\tilde{f}_k\}$ be a lifting of $\{e_1,f_1,\cdots,e_k,f_k\}$.  By our choice of $\tilde{e}_s$, we can find a basis $\{w_1,\cdots,w_i\}$ of $N$ such that $w_s-\tilde{e}_s\in \pi \cH^k$ for any $1\le s\le i$. Consider $x=a_1\tilde{f}_1+\cdots+a_i\tilde{f}_i\in \langle \tilde{f}_1,\cdots,\tilde{f}_k\rangle$. In order to have $w-x\in N^\perp$, we need to solve the following system of equations:
 \begin{align}\label{eq: sys of eqs}
    \pi (w_s,x)=\pi (w_s,w) \text{ for $1\le s\le i$}.
 \end{align}
Let $A$ denote the $i\times i$ matrix corresponding to this system of linear equations. Since $w_s-\tilde{e}_s\in \pi \cH^k$, we have $A\equiv \mathrm{Id}_{i}\mod (\pi)$. Therefore,  there exists a unique solution $x$ of \eqref{eq: sys of eqs}. Moreover, since $\pi (N,w)=0 \mod \pi^{a}$, we have $\pi (w_s,w)\in (\pi^{a})$ for $1\le s\le i$. Then \eqref{eq: sys of eqs} implies that $a_s\in (\pi^a)$, hence $x\in \pi^a\cH^k$. The claim is proved.
\end{proof}

\section{Explicit formulas for $\Pden'(L)$}\label{sec:pPden}
\subsection{Explicit formulas and consequences}
The goal of this section is to establish the following formulas for
\begin{align}
\ppden(L) &=\Pden'(L) + \sum_{j=0}^{t_{\mathrm{max}}/2} c_{2j} \Pden_{2j}(L).
\end{align}
Here
$$
\Pden'(L) =\frac{\Pden'(I_n, L)}{\Den(I_n, I_n)}
$$
is normalized as  in (\ref{eq:PdenDerivative}). Recall, from Lemma \ref{cor: vanishing of error term}, that
\begin{equation}
  \ppden(L) =\Pden'(L)
\end{equation}
when  $L$ is not dual to some vertex lattice $\Lambda_t$ of positive type $t>0$.

\begin{theorem}\label{thm: formula of ppden}
Let $L\subset \bV$ be a lattice of rank $n$.\hfill
    \begin{enumerate}
        \item If $L$ is not integral, then
        $\ppden(L)=0.$
        \item If $L$ is unimodular, then $$\ppden(L)=\Pden'(L)=\begin{cases}
        1, & \text{if $n$ is odd},\\
        0, & \text{if $n$ is even}.
        \end{cases}$$
        \item If $L=I_{n-t}\obot L_2$ where  $L_2$ is of full type $t$, then
    \begin{align*}
			\ppden(L)=\Pden'(L)=\begin{cases}
		\prod_{\ell=1}^{\frac{t-1}{2}}(1-\q^{2\ell}), & \text{  if $t$ is odd},\\
				(1-\chi(L_2)\q^{\frac{t}{2}})\prod_{\ell=1}^{\frac{t}{2}-1}(1-q^{2\ell}), & \text{ if  $t$ is even}.
			\end{cases}
		\end{align*}
    \end{enumerate}
\end{theorem}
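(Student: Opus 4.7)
The strategy is a case analysis based on the fundamental invariants of $L$, with the central tool being Corollary \ref{cor: Pden}, which writes $\Pden(I_n, L, X)$ as a finite sum indexed by Grassmannian subspaces of $\overline{L}$. The additional input needed for case (1) is a cancellation between the derivative term $\Pden'(L)$ and the correction terms $c_{2j}\Pden_{2j}(L)$ appearing in the definition \eqref{eq: def of ppden}.

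For case (1) I would split into two sub-cases. If some fundamental invariant of $L$ is $\le -2$, then $L$ contains an element of valuation $\le -2$; since the framing lattices $I_n \obot \cH^k$ and $\Lambda_{2j}^\sharp \obot \cH^k$ contain no such elements (their fundamental invariants are all $\ge -1$), both $\Pden(I_n, L, X)$ and $\Pden(\Lambda_{2j}^\sharp, L, X)$ vanish identically, so $\ppden(L) = 0$ by Lemma \ref{lem: vanish val le -1}. The remaining sub-case is $L = \cH^j \obot L_1$ with $j \ge 1$ and $L_1$ integral; here $\Pden'(L)$ can be nonzero, so one needs genuine cancellation with the correction terms. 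My plan is to first prove an analogue of Lemma \ref{lem: L=H^i obot L_2} for the lattices $\Lambda_{2k}^\sharp$, factoring $\Pden(\Lambda_{2k}^\sharp, L, X)$ through $\Pden(\Lambda_{2k}^\sharp, L_1, q^{2j}X)$, then combine with the defining relations \eqref{eq:coeff} of the $c_{2k}$ at the lower-rank vertex lattices to induct on rank.

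For cases (2) and (3), since $L$ is not isomorphic to $\Lambda_t^\sharp$ for any $t > 0$ (in case (2), $L$ is unimodular; in case (3), $L$ has a positive fundamental invariant while $\Lambda_t^\sharp$ does not), Corollary \ref{cor: vanishing of error term} reduces the calculation to $\Pden'(L)$. One plugs into Corollary \ref{cor: Pden} and stratifies the Grassmannian subspaces $V_1 \subset \overline{L}$ by their isometry type $0_j \obot U_k^\sigma$: in case (2), $\overline{L}$ is a non-degenerate quadratic space of sign $\chi(L)$; in case (3), $\overline{L} = \overline{I_{n-t}} \obot \overline{L_2}$ with $\overline{L_2}$ totally isotropic of dimension $t$, so subspaces of $\overline{L}$ can be parameterized by the pair $(V_1 \cap \overline{L_2}, \mathrm{Pr}_{\overline{I_{n-t}}}(V_1))$. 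Use Lemmas \ref{lem: counting subspace} and \ref{lem: m with type} to count these, and Lemma \ref{lem: Pden(I_m,L)} to compute $|\mathrm{O}(V_1, \overline{I_n})|$. The resulting explicit double sums should collapse to the claimed product formulas via repeated application of the $q$-binomial identities Lemma \ref{lem: qbinomial}, Corollary \ref{cor: qbinomial vanish}, and Lemma \ref{lem: inverse q bin}, together with the parity ratios of Lemma \ref{lem:quotientm}.

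The main obstacle is the combinatorial collapse in cases (2) and (3). Each reduces to a polynomial identity of the shape
\[\sum_{i=0}^{n} \Bigl(\prod_{\ell=1}^{n-i-1}(1-q^{2\ell})\Bigr) \sum_{V_1 \in \Gr(i,\overline{L})(\mathbb{F}_\q)} |\mathrm{O}(V_1, \overline{I_n})| \;=\; (\text{claimed product}),\]
whose two sides depend sensitively on the parities of $n$ and $t$ and on the signs $\chi(L)$, $\chi(L_2)$, forcing essentially four parallel evaluation paths. I anticipate that the argument will isolate a few ``master identities'' (one per parity regime) for $q$-analogues of binomial-type sums, and that the inductive step in case (1) depends in turn on these identities (for $L_1$ of smaller rank), so the three cases are interlinked and must be established in the same inductive package.
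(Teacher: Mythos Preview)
Your plan for case (1) misreads where the correction terms actually contribute. By Lemma~\ref{lem: 0 vanish of error term}, the primitive density $\Pden(\Lambda_{2k}^\sharp,L)$ vanishes unless $L\cong\Lambda_{2k}^\sharp$; hence for $L=\cH^j\obot L_1$ with $L_1$ integral of type $t>0$ \emph{all} correction terms $\Pden_{2k}(L)$ are already zero, and there is no cancellation available---you must show $\Pden'(L)=0$ directly, i.e.\ $\Pden(I_n,L_1,q^{2j})=0$. The only non-integral $L$ for which correction terms survive is $L=\Lambda_{2j}^\sharp$ itself (the sub-case $t=0$), and there $\ppden(\Lambda_{2j}^\sharp)=0$ is literally the defining equation~\eqref{eq:coeff}, not something to be re-proved. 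Your proposed ``induction on rank using the $c_{2k}$ at lower-rank vertex lattices'' has no clear meaning: the constants $c_{2k}$ are fixed once and for all by the ambient rank-$n$ space $\bV$, and there is no compatibility statement linking them to analogous constants in rank $n_1$.

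What is actually needed, both for $\Pden(I_n,L_1,q^{2j})=0$ in case (1) and for the explicit evaluation in cases (2)--(3), is a single polynomial mechanism that the paper builds in \S\ref{preparation}--\S\ref{sec:identity}. After stratifying the Grassmannian sum in Corollary~\ref{cor: Pden} by isometry type (your idea), one repackages the result as $\sum_r\sum_i(-1)^{i-r}q^{\frac{(i-r)(i-r-1)}2}\binom{n-r}{i-r}_q\cdot g_{\epsilon_1}(n,n-t,r,q^{-i})$ for certain auxiliary polynomials $g_{\epsilon_1}$ (Lemma~\ref{lem: Pden' to g}). The crucial and non-obvious fact is that $g_{\epsilon_1}(n,m,r,X)$, a priori a combination of degree-$(n-1)$ polynomials, collapses to degree $\le n-r-1$ (Lemmas~\ref{lem: ind of g},~\ref{lem: g(r,r,X)}); this is what licenses the $q$-binomial vanishing of Corollary~\ref{cor: qbinomial vanish} on the inner sum. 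The outer sum over $r$ is then evaluated by Lemma~\ref{lem: sum of g}, whose proof proceeds not by direct $q$-binomial manipulation but by expanding both sides in a basis $\{h_{\epsilon_1}(n,s,X)\}$ and matching recursions in $n$. Your ``master identities'' intuition is pointing at Lemmas~\ref{lem: g(r,r,X)} and~\ref{lem: sum of g}, but the degree-drop phenomenon and the recursion-matching technique are the specific ideas you are missing.
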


\begin{corollary}\label{cor: int of pden}
Let $L$ be a lattice. Then $\pden(L)\in \Z.$ Moreover, $\pden(L)=0$ for non-integral $L$.
\end{corollary}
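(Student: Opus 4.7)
The plan is to obtain Corollary~\ref{cor: int of pden} as a formal consequence of the just-proved Theorem~\ref{thm: formula of ppden} by feeding its statement into the decomposition established in Corollary~\ref{cor: decomp of pden(L)}. The latter reads
\[
\pden(L) = \sum_{L \subset L' \subset L_F} \ppden(L'),
\]
with $L'$ ranging over $O_F$-lattices in $L_F$ containing $L$. The crucial first observation is that, by Theorem~\ref{thm: formula of ppden}(1), every non-integral $L'$ contributes zero to this sum, so only integral overlattices of $L$ actually matter.

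For the vanishing assertion, I would argue that non-integrality is inherited by overlattices. Indeed, suppose $L$ is non-integral, so that there exist $x,y\in L$ with $(x,y)\notin \pi^{-1}O_F$. Any $L'\supset L$ contains the same pair $x,y$, so $L'$ is likewise non-integral, and Theorem~\ref{thm: formula of ppden}(1) gives $\ppden(L')=0$. Every term in the decomposition vanishes, yielding $\pden(L)=0$ at once.

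For the integrality assertion, when $L$ itself is integral, any integral $L'\supset L$ satisfies $L\subset L'\subset (L')^\sharp\subset L^\sharp$, so $L'$ lies in the finite set of lattices sandwiched between $L$ and $L^\sharp$. The decomposition therefore collapses to a finite sum. Each surviving summand is evaluated by parts (2) or (3) of Theorem~\ref{thm: formula of ppden} as an explicit polynomial in $q$ with integer coefficients (the sign $\chi(L'_2)\in\{\pm 1\}$ in case (3) preserves integrality). A finite sum of integers is an integer, so $\pden(L)\in\mathbb{Z}$.

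There is no genuine obstacle here beyond Theorem~\ref{thm: formula of ppden} itself; the role of the corollary is simply to package the two structural features of $\pden$ — vanishing on the non-integral locus and integer-valuedness — in the clean form that will be invoked repeatedly in the partial Fourier-transform analysis of \S\ref{sec: partial FT}. The only point worth flagging explicitly is that the \emph{a priori} infinite sum in Corollary~\ref{cor: decomp of pden(L)} is harmless: either it is entirely zero (non-integral $L$) or it is de facto finite (integral $L$), with the dichotomy controlled precisely by Theorem~\ref{thm: formula of ppden}(1).
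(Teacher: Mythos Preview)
Your argument is correct and follows essentially the same route as the paper: invoke the decomposition of Corollary~\ref{cor: decomp of pden(L)} and read off both assertions from Theorem~\ref{thm: formula of ppden}. One small slip: non-integrality of $L$ means there exist $x,y\in L$ with $(x,y)\notin O_F$ (not $\pi^{-1}O_F$), but the inheritance argument goes through unchanged.
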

\begin{proof}
According to Corollary \ref{cor: decomp of pden(L)}, we have
$$
	\pden(L)=\sum_{L \subset L' \subset L_{F}}   \ppden(L').
$$
Now Theorem \ref{thm: formula of ppden} implies that $\ppden(L')\in \Z$, hence $\pden(L)\in \Z$. If $L$ is non-integral, then $\ppden(L')=0$ for each $L'$ such that $L\subset L'$ by $(1)$ of Theorem \ref{thm: formula of ppden}.
\end{proof}

As another corollary, we prove the following cancellation law for $\pden(L)$. Recall that for Hermitian lattices $L$ and $L'$ of the same rank, $n(L',L)=\#\{L''\subset L_F\mid L\subset L'', L''\cong L'\}$.
\begin{corollary}\label{cor: cancellation of pDen} Let $L=L_1\obot L_2 \subset \bV$ be a  rank $n$ lattice, with
$L_1$ being unimodular and $L_i$ of rank $n_i$.  Then
\begin{align}\label{eq: pden(L)-pden(L_2)}
    \pden(L)-\pden(L_2)=n(I_{n_2},L_2)\cdot(\delta_{\mathrm{odd}}(n)-\delta_{\mathrm{odd}}(n_2)).
\end{align}
\end{corollary}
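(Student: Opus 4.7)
The plan is to derive this as a direct consequence of the decomposition in Corollary \ref{cor: decomp of pden(L)} combined with the explicit formulas in Theorem \ref{thm: formula of ppden}. Corollary \ref{cor: decomp of pden(L)} expresses $\pden(L)=\sum_{L\subset L'\subset L_F}\ppden(L')$ and $\pden(L_2)=\sum_{L_2\subset L_2'\subset L_{2,F}}\ppden(L_2')$, and by Theorem \ref{thm: formula of ppden}(1) only \emph{integral} sublattices contribute to either sum.

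The key step is to establish a bijection between integral lattices $L'$ with $L_1\obot L_2\subset L'\subset L_F$ and integral lattices $L_2'$ with $L_2\subset L_2'\subset L_{2,F}$, given by $L'\leftrightarrow L_2':=L'\cap L_{2,F}$, with inverse $L_2'\mapsto L_1\obot L_2'$. The nontrivial direction is that any integral $L'\supset L_1\obot L_2$ automatically splits as $L_1\obot L_2'$. Indeed, writing $\ell'=\ell_1'+\ell_2'\in L'$ with $\ell_i'\in L_{i,F}$, then for any $v_1\in L_1\subset L'$, integrality of $L'$ forces $(v_1,\ell_1')=(v_1,\ell')\in O_F$; by unimodularity $L_1=L_1^\sharp$, so $\ell_1'\in L_1$ and hence $\ell_2'\in L'\cap L_{2,F}$. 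This orthogonal splitting crucially uses $L_1=L_1^\sharp$ and is the main (though short) technical step.

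Combining these, we obtain
\begin{equation*}
\pden(L)-\pden(L_2)=\sum_{\substack{L_2\subset L_2'\subset L_{2,F}\\ L_2'\text{ integral}}}\left[\ppden(L_1\obot L_2')-\ppden(L_2')\right].
\end{equation*}
Now Theorem \ref{thm: formula of ppden}(3) shows that when $L_2'$ has type $t>0$, writing $L_2'\cong I_{n_2-t}\obot L_2''$ with $L_2''$ of full type $t$, we have $L_1\obot L_2'\cong I_{n-t}\obot L_2''$, and the formula for $\ppden$ depends only on $t$ and $\chi(L_2'')$, so $\ppden(L_1\obot L_2')=\ppden(L_2')$ and the contribution vanishes. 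The only nonzero contributions come from unimodular $L_2'$, in which case $L_1\obot L_2'$ is also unimodular of rank $n$, and Theorem \ref{thm: formula of ppden}(2) gives difference $\delta_{\mathrm{odd}}(n)-\delta_{\mathrm{odd}}(n_2)$.

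Finally, since any two unimodular lattices in $L_{2,F}$ are isometric (they share the common sign $\chi(L_2)$), the number of unimodular $L_2'\supset L_2$ in $L_{2,F}$ equals $n(I_{n_2},L_2)$. Summing yields $\pden(L)-\pden(L_2)=n(I_{n_2},L_2)\cdot(\delta_{\mathrm{odd}}(n)-\delta_{\mathrm{odd}}(n_2))$, as claimed. Beyond the orthogonal splitting lemma, the proof is essentially a bookkeeping argument invoking the explicit formulas of Theorem \ref{thm: formula of ppden}.
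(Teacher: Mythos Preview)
Your proof is correct and follows essentially the same approach as the paper: decompose via Corollary \ref{cor: decomp of pden(L)}, reduce to lattices of the form $L_1\obot L_2'$, and compare $\ppden(L_1\obot L_2')$ with $\ppden(L_2')$ using Theorem \ref{thm: formula of ppden}. The only cosmetic difference is that you prove the orthogonal splitting $L'=L_1\obot(L'\cap L_{2,F})$ for integral $L'$ directly inline, whereas the paper packages (a special case of) this as Lemma \ref{lem: red to L_2}.
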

\begin{proof}
By Corollary \ref{cor: decomp of pden(L)} and Lemma \ref{lem: red to L_2}, we have
\begin{align*}
  \pden(L)=\sum_{L\subset L'\subset L_F}\ppden(L')
          =\sum_{L_2\subset L_2'\subset L_{2,F}}\ppden(L_1\obot L_2').
\end{align*}
Similarly,
\begin{align*}
  \pden(L_2)=\sum_{L_2\subset L_2'\subset L_{2,F}}\ppden(L_2').
\end{align*}
Hence
\begin{align*}
    \pden(L)-\pden(L_2)&=\sum_{L_2\subset L_2'\subset L_{2,F}}(\ppden(L_1\obot L_2')-\ppden(L_2')).
   \end{align*}
If $L_2'$ is not integral, then both $\ppden(L_1\obot L_2')$ and
$\ppden(L_2')$ vanishes by $(1)$ of Theorem \ref{thm: formula of ppden}.
If $L_2'$ is integral but not unimodular, then $(3)$ of Theorem \ref{thm: formula of ppden} implies $\ppden(L_1\obot L_2')-\ppden(L_2')=0.$
Hence
    \begin{align}\label{eq: ppden(L)-ppden(L_2)}
                       \pden(L)-\pden(L_2)&=\sum_{\substack{L_2\subset L_2'\subset L_{2,F}\\ L_1\obot L_2' \cong \Lambda_{0}}}(\ppden(L_1\obot L_2')-\ppden(L_2')).
        \end{align}
Combining \eqref{eq: ppden(L)-ppden(L_2)} with (2) of Theorem \ref{thm: formula of ppden}, we have
\begin{align*}
      \pden(L)-\pden(L_2)=n(I_{n_2},L_2)\cdot(\delta_{\mathrm{odd}}(n)-\delta_{\mathrm{odd}}(n_2)).
\end{align*}
This proves the corollary.
\end{proof}
\begin{lemma}\label{lem: red to L_2}
    Assume $L=L_1\obot L_2$ is a lattice  where $L_1$ is unimodular. If $L\subsetneq L' \subset \pi^{-1}L$ and $L'$ is not of the form $L_1\obot L_2'$, then $L'$ is not integral and $ \ppden(L')=0$.
\end{lemma}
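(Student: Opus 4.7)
My plan is to establish the non-integrality of $L'$ by proving the contrapositive: if $L\subsetneq L'\subset \pi^{-1}L$ is integral, then $L'$ must split as $L_1\obot L_2'$ for some lattice $L_2'\subset (L_2)_F$. The second assertion $\ppden(L')=0$ will then follow immediately from part (1) of Theorem~\ref{thm: formula of ppden}.

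The essential input is the unimodularity of $L_1$, which makes the induced hermitian form on $L_1/\pi L_1$ a non-degenerate symmetric bilinear form over $\mathbb{F}_q$ (the involution of $F/F_0$ reduces to the trivial involution on $O_F/\pi O_F\cong\mathbb{F}_q$ since $F/F_0$ is ramified). Concretely, I would take an arbitrary $v\in L'$ and use the decomposition $\pi^{-1}L=\pi^{-1}L_1\obot \pi^{-1}L_2$ to write $v=v_1+v_2$ with $v_1\in \pi^{-1}L_1$ and $v_2\in \pi^{-1}L_2$. The first goal is to show $v_1\in L_1$. Writing $v_1=\pi^{-1}u_1$ with $u_1\in L_1$, suppose to the contrary that $u_1\notin \pi L_1$, so the reduction $\bar u_1\in L_1/\pi L_1$ is non-zero. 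Non-degeneracy of the reduced form produces some $w\in L_1$ with $(u_1,w)\in O_F^\times$. Since $w\in L_1\subset L\subset L'$ and $(L_1)_F\perp (L_2)_F$, we get $(v,w)=(v_1,w)=\pi^{-1}(u_1,w)\notin O_F$, contradicting integrality of $L'$.

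Hence $v_1\in L_1$, and consequently $v_2=v-v_1\in L'\cap (L_2)_F=:L_2'$. Since $v$ was arbitrary, this yields $L'\subset L_1+L_2'$; the reverse inclusion is automatic, so $L'=L_1\obot L_2'$. Taking the contrapositive completes the proof that $L'$ is not integral, and then Theorem~\ref{thm: formula of ppden}(1) gives $\ppden(L')=0$.

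There is no real obstacle here: the argument is a single structural observation, essentially saying that the only way to non-trivially enlarge $L_1\obot L_2$ inside $\pi^{-1}L$ while preserving integrality is to leave the unimodular summand $L_1$ untouched. The entire content of the lemma reduces to the non-degeneracy of the mod-$\pi$ form on $L_1$, plus invoking the already-established Theorem~\ref{thm: formula of ppden}(1).
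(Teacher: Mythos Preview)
Your proof is correct and follows essentially the same approach as the paper's. Both arguments hinge on the same observation: if $L'$ is not of the form $L_1\obot L_2'$, then some $v\in L'$ has $L_1$-component lying in $\pi^{-1}L_1\setminus L_1$, and unimodularity of $L_1$ then produces a pairing witnessing non-integrality; the vanishing $\ppden(L')=0$ is then Theorem~\ref{thm: formula of ppden}(1). The paper's proof is very terse (it simply asserts that a nonzero projection to $\pi^{-1}L_1/L_1$ ``in turn implies that $L'$ is not integral''), whereas you spell out explicitly how the non-degeneracy of the reduced form on $L_1/\pi L_1$ yields the contradiction.
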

\begin{proof}
Consider the $\mathbb{F}_\q$-vector space $\pi^{-1}L/L$.  Since we assume $L'$ is not of the form $L_1\obot L_2'$, there exists $v\in L'\setminus L$ such that $\mathrm{Pr}_{\pi^{-1}L_1}(v)\not =0$, which in turn implies that $L'$ is not integral. Hence $\ppden(L')=0$ by $(1)$ of Theorem \ref{thm: formula of ppden}.
\end{proof}

\subsection{Proof strategy}\label{subsec:proofstra} The proof of Theorem \ref{thm: formula of ppden} occupies the rest of this section. Since the proof is rather long and technical, we   summarize the main idea of the proof first. When  there is some $x \in L$ with $\val(x) \le -1$, $\partial\Pden(L)=0$ by Lemma \ref{lem: vanish val le -1}. Otherwise,  write
$$
L = H^j \obot I_{n_1-t} \obot L_2,
$$
where  $L_2$ is of full type $t$. There are four cases.
\begin{enumerate}
    \item[(a)]  The case $n_1-t=0$ (i.e., $L=H^j \oplus L_2$) is significantly simpler than the general case, and we will deal with it in next subsection although it is  part of the general case. For example, when $L$ is of full type, the reduction  $\overline{L}$ of $L$ modulo $\pi$  is a totally isotropic quadratic space over $\mathbb{F}_q$. Hence, the summation in Corollary \ref{cor: Pden} is simply:
\begin{align*}
    \sum_{V_1\in \mathrm{Gr}(i,\overline{L})(\mathbb{F}_\q)}  |\mathrm{O}(V_1,\overline{I_m})|=\binom{n}{i}_q |\mathrm{O}(0_i,\overline{I_m})|.
\end{align*}
 An application of $q$-binomial theorem settles this case.
    \item[(b)] \label{case:integral} The case $j=0$, i.e., $L$ is integral.
    \item[(c)] The case $j> 0$ and $t> 0$.
     \item[(d)] The case that  $t=0$ and $j >0$ is part of the modification assumption.
\end{enumerate}
In general, the problem becomes harder when $n_1-t$ is larger. In fact, when $t>0$, i.e., $n_1-t<n_1$, (b) and (c) can be proved via Corollary \ref{cor: Pden} and an involved application of the induction formulas of $\Den(I_n,L)$ established in \cite{HSY3}. However, when $t=0$, i.e.,  $L$ is unimodular,   this method fails. To overcome this difficulty and give a uniform proof of $(b)$ and $(c)$, we introduce a new method which is different from \cite{HSY3} even in the case $n \le 3$.

To illustrate the idea, we stick to case $(b)$ for now. The first key step is to discover a finer structure of $\ppden(L)$ and prove the following formula (Lemma \ref{lem: Pden' to g}):
\begin{align} \label{eq:ppden}
    \ppden(L)=(*)  \sum_{r=0}^{\min\{n-t, n-1\}}\sum_{i=0}^{n-1}\binom{n-r}{i-r}_q (-1)^{n-1+i+r} q^{\frac{(i-r)(i-r-1)}{2}}  q^{rt} \cdot g(n,n_1,r,q^{-i}),
\end{align}
where $(*)$ is some constant number and $g(n,n_1,r,X)$ is a linear combination of polynomials of degree $n-1$. The second key  observation is that there is a lot of cancellation underlying this linear combination. Indeed,  we show for $r<n$ that $g(n,n_1,r,X)$ is actually of degree $\le n-r-1$  and is essentially a simple multiple of some simple polynomial (denoted by $h(n, r, X)$)  (Lemmas \ref{lem: ind of g} and \ref{lem: g(r,r,X)}).  This  enables us to apply $q$-binomial theorem (Corollary \ref{cor: qbinomial vanish}) to the inner sums in (\ref{eq:ppden}). Consequently, we obtain
\begin{align}
    \ppden(L)=(*)  \sum_{r=0}^{\m(n,t)} (-1)^r q^{\frac{(n-r)(n-r-1)}{2}}  q^{rt} \cdot g(n,n-t,r,q^{-n}).
\end{align}
The last step is to evaluate this sum and the result is given by Lemma \ref{lem: sum of g}. It is in this step that the case $n_1=n$ ($L$ is unimodular) becomes different: the sum above is a sum from  $r=0$ only to $n-1$, not to $n_1=n$.  To make it worse, the `missing' term $g(n, n, n, X)$ is in fact ill-behaved.

One common strategy in proving Lemmas \ref{lem: g(r,r,X)} and \ref{lem: sum of g} is to express both sides of the identity as (uniquely) linear combinations of certain basis of polynomials, and prove that the coefficients satisfy the same recursion formulas and the same initial conditions. Here we use crucially the combinatorical properties of $m(U,V)$ (Lemma \ref{lem: counting subspace}) for $U$ and $V$ quadratic spaces over $\mathbb{F}_q$.

In  Case (c),  the derivative becomes the value of some primitive local density polynomials at some non-central point $q^j$ by Lemma \ref{lem: L=H^i obot L_2}. Strikingly, the formula for this value (see (\ref{eq: non integral pf of main})) is very similar to the formula  for the derivative $\Pden(I_{n_1-t} \obot L_2)$ (see (\ref{eq:PdenD})). Proof of Cases (b) and (c) will be given in Subsection \ref{proof} after long preparation in Subsections \ref{preparation} and \ref{sec:identity}.

\subsection{The case $n_1-t=0$} In this subsection we assume that $n_1-t=0$ and divide it further into two subcases: $j=0$ or $j>0$.
	\begin{proposition}\label{prop: L=H obot L_1, v(L_1)>)}
	Assume that $L=H^j\obot L_2$ where $j>0$ and $L_2$ is of full type and has rank $n_2=n-2j$. Then
		\begin{align*}
		\ppden(L)=0.
		\end{align*}
	\end{proposition}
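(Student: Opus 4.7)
The plan is to isolate the arithmetic content in two orthogonal directions: first reducing the vanishing of $\ppden(L)$ to that of a single polynomial evaluation, and then carrying out a $q$-combinatorial identity.

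First, I would observe that $L=H^j\obot L_2$ is non-integral, with fundamental invariants consisting of $2j$ copies of $-1$ coming from $H^j$ together with the positive invariants of $L_2$. Since each vertex-lattice dual $\Lambda_{2k}^\sharp$ has fundamental invariants only in $\{-1,0\}$, the lattice $L$ can be isomorphic to some $\Lambda_{2k}^\sharp$ only in the boundary case $L_2=0$. In that boundary case $L=H^j$ with $n=2j$, and the direct computation $\det H = \pi^{-2}$ gives $\chi(H)=1$; hence $\chi(\mathbb{V})=\chi(L)=1$ and $t_{\max}=n$, so $L\cong \Lambda_n^\sharp$ and $\ppden(L)=0$ holds immediately by the defining equation \eqref{eq:coeff} for the coefficient $c_n$. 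When $L_2\neq 0$, Lemma \ref{lem: 0 vanish of error term} forces $\Pden(\Lambda_{2k}^\sharp,L)=0$ for every $k$ (as $L\not\cong\Lambda_{2k}^\sharp$), so all modification terms $\Pden_{2k}(L)$ vanish and $\ppden(L)=\Pden'(L)$.

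Next, I would apply Lemma \ref{lem: L=H^i obot L_2} to the decomposition $L=H^j\obot L_2$, which yields
\[
\Pden'(I_n,L) \;=\; 2\prod_{\ell=1}^{j-1}(1-q^{2\ell})\cdot\Pden(I_n,L_2,q^{2j}),
\]
where the product factor is nonzero (empty when $j=1$). Thus the proposition reduces to the single identity $\Pden(I_n,L_2,q^{2j})=0$. Since $L_2$ is of full type $n_2=n-2j$, its reduction $\overline{L_2}$ is totally isotropic, and Corollary \ref{cor: L full type Pden(Im,L)} supplies the closed-form polynomial
\[
\Pden(I_n,L_2,X) \;=\; \sum_{i=0}^{n_2}(q^{n_2-n}X)^i\prod_{\ell=0}^{n_2-i-1}(1-q^{2\ell}X)\binom{n_2}{i}_q|\mathrm{O}(0_i,\overline{I_n})|.
\]
Evaluating at $X=q^{2j}=q^{n-n_2}$ collapses the prefactor $(q^{n_2-n}X)^i$ to $1$, reducing the desired vanishing to the finite $q$-polynomial identity
\[
\sum_{i=0}^{n_2}\prod_{\ell=0}^{n_2-i-1}(1-q^{2(\ell+j)})\binom{n_2}{i}_q|\mathrm{O}(0_i,\overline{I_n})| \;=\; 0.
\]

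The hard part will be verifying this last identity. I would substitute the explicit closed forms of $|\mathrm{O}(0_i,\overline{I_n})|$ from Lemma \ref{lem: Pden(I_m,L)}, splitting by the parity of $n$ and the sign $\chi(I_n)=-\chi(L_2)$, and then collapse the sum using the $q$-binomial theorem (Lemma \ref{lem: qbinomial}) and its vanishing consequence Corollary \ref{cor: qbinomial vanish}. The delicate point is to rearrange the product $\prod_{\ell=0}^{n_2-i-1}(1-q^{2(\ell+j)})\cdot\binom{n_2}{i}_q\cdot|\mathrm{O}(0_i,\overline{I_n})|$ so that the inner sum in $i$ evaluates a polynomial of degree strictly less than $n_2$ at $q^{-i}$, at which point Corollary \ref{cor: qbinomial vanish} forces the vanishing. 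This algebraic manipulation is the main technical obstacle.
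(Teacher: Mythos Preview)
Your proposal is correct and follows essentially the same approach as the paper: reduce to $\Pden(I_n,L_2,q^{2j})=0$ via Lemma~\ref{lem: L=H^i obot L_2}, expand using Corollary~\ref{cor: L full type Pden(Im,L)}, and then massage the sum into the form $\sum_i(-1)^i q^{i(i-1)/2}\binom{n_2}{i}_q g(q^{-i})$ with $\deg g\le n_2-1$ so that Corollary~\ref{cor: qbinomial vanish} kills it. You are in fact slightly more careful than the paper, which silently assumes $\ppden(L)=\Pden'(L)$: your explicit treatment of the boundary case $L_2=0$ (where $L\cong\Lambda_n^\sharp$ and one must invoke \eqref{eq:coeff} directly) and your use of Lemma~\ref{lem: 0 vanish of error term} to dispose of the modification terms when $L_2\ne0$ are both correct and worth making explicit.
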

	\begin{proof}
	By Lemma \ref{lem: L=H^i obot L_2}, we have
\begin{align*}
    \Pden'(I_n,L)=2\Big(\prod_{\ell=1}^{j-1}(1-\q^{2\ell} )\Big)\Pden(I_n,L_2,\q^{2j}).
\end{align*}
	Hence it suffice to show
		\begin{align*}
		\Pden(I_{n},L_2,\q^{n-n_2})=0.
		\end{align*}
We prove the odd $n$ case and leave the even $n$ case to the reader.		 According to Corollary \ref{cor: L full type Pden(Im,L)}, we have
		\begin{align*}
			\Pden(I_{n} ,L_2,\q^{n-n_2})
			&=\sum_{i=0}^{n_2}	(-1)^{n_2-i}\q^{\frac{i(i-1)}{2}}\binom{n_2}{i}_\q\cdot  \prod_{\ell=0}^{n_2-i-1}(\q^{2\ell+n-n_2}-1)\cdot  \prod_{\ell=1}^{i}(\q^{n+1-2\ell}-1)\\
			&=\sum_{i=0}^{n_2}	(-1)^{n_2-i}\q^{\frac{i(i-1)}{2}}\binom{n_2}{i}_\q\cdot  \prod_{\ell=\frac{n-n_2}{2}}^{\frac{n+n_2}{2}-i-1}(\q^{2\ell}-1)\cdot  \prod_{\ell=\frac{n+1}{2}-i}^{\frac{n-1}{2}}(\q^{2\ell}-1).
		\end{align*}
		We can factor out
		$
	\displaystyle	\prod_{\ell=\frac{n-n_2}{2}}^{\frac{n-1}{2}}(\q^{2\ell}-1)
		$
		so that
		\begin{align*}
			\Pden(I_{n} ,L_2,\q^{n-n_2})\prod_{\ell=\frac{n-1}{2}}^{\frac{n-n_2}{2}}(\q^{2\ell}-1)^{-1}
			&=\sum_{i=0}^{n_2}	(-1)^{n_2-i}\q^{\frac{i(i-1)}{2}}\binom{n_2}{i}_\q g( n_2, q^{-i}),
		\end{align*}
where
\begin{equation} \label{eq:gn2}
g(n_2, X) = \prod_{\ell=\frac{n+1}{2}}^{\frac{n_2+n}{2}-1}(\q^{2\ell} X^2-1)
\end{equation}
is a polynomial of $X$ of degree $n_2-1$. Now $	\Pden(I_{n} ,L_2,\q^{n-n_2})=0$	by Corollary \ref{cor: qbinomial vanish}.
	\end{proof}	
	
		\begin{proposition}\label{prop: prim local density full type}
		Assume that $L$ is a full type lattice of rank $n$.  We have
		\begin{align*}
			\Pden'(L)=	\begin{cases}
				\prod_{\ell=1}^{\frac{n-1}{2}}(1-\q^{2\ell}), & \text{ if $n$ is odd},\\
				(1-\epsilon\q^{\frac{n}{2}})\prod_{\ell=1}^{\frac{n}{2}-1}(1-q^{2\ell}), & \text{ if $n$ is even}.
			\end{cases}
		\end{align*}
	\end{proposition}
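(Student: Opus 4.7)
The approach is a direct computation starting from Corollary \ref{cor: L full type Pden(Im,L)}, which specializes (for full type $L$) to
\begin{align*}
\Pden'(L) = \frac{2\sum_{i=0}^{n}\prod_{\ell=1}^{n-i-1}(1-\q^{2\ell})\binom{n}{i}_\q |\mathrm{O}(0_i,\overline{I_n})|}{\Den(I_n,I_n)},
\end{align*}
with $\Den(I_n,I_n)$ given by Corollary \ref{lem: alpha(M,M)} and $\chi(I_n)=-\epsilon$. Since $L$ is full type, $\overline{L}$ is a totally isotropic $n$-dimensional $\mathbb{F}_\q$-space, which is why $\binom{n}{i}_\q$ appears as the count of $i$-dimensional subspaces of $\overline{L}$.

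The plan is to treat the parities of $n$ in parallel. For $n$ odd, I would substitute $|\mathrm{O}(0_i,\overline{I_n})|=\q^{i(i-1)/2}\prod_{s=1}^{i}(\q^{n+1-2s}-1)$ (extracted from Lemma \ref{lem: Pden(I_m,L)}); this product contains a vanishing factor $\q^0-1$ as soon as $i>(n-1)/2$, truncating the sum to the range $0\le i\le (n-1)/2$. A change of variable $j=(n-1)/2-i$ together with factoring out $\prod_{\ell=1}^{(n-1)/2}(1-\q^{2\ell})$ recasts the remaining finite sum in a $\q$-binomial-type form which can be collapsed either by direct application of the $\q$-binomial theorem (Lemma \ref{lem: qbinomial}) or by recognizing that all but one term vanishes via Corollary \ref{cor: qbinomial vanish}. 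Dividing by $\Den(I_n,I_n)=2\q^{n(n-1)/2}\prod_{s=1}^{(n-1)/2}(1-\q^{-2s})$ then yields the claimed formula. For $n$ even, the analogous formula
\begin{align*}
|\mathrm{O}(0_i,\overline{I_n})|=\q^{i(i-1)/2}(\q^{n/2}+\epsilon)(\q^{n/2-i}-\epsilon)\prod_{\ell=1}^{i-1}(\q^{n-2\ell}-1)
\end{align*}
again forces truncation at $i\le n/2$, and the factor $(\q^{n/2}+\epsilon)(\q^{n/2-i}-\epsilon)$ couples the sign $\epsilon$ to the summation variable; after the same reindexing and $\q$-binomial manipulation this factor is precisely responsible for the term $(1-\epsilon\q^{n/2})$ in the final answer.

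The main obstacle is the careful bookkeeping of powers of $\q$ and signs, in particular tracking how $\chi(I_n)=-\epsilon$ propagates through both numerator and denominator in the even case, and ensuring that the $\q$-binomial collapse leaves exactly the single surviving factor. As a sanity check I would verify the identity directly in low rank: for $n=3$ the sum reduces to $(1-\q^2)(1-\q^4)-(1-\q^2)^2(1+\q+\q^2)=-\q(1-\q^2)^2$, giving $\Pden'(L)=1-\q^2$; for $n=2$ the sum is $(1-\q^2)+(1+\q)(\q+\epsilon)(1-\epsilon)=(\q+\epsilon)(1-\epsilon\q)$, dividing by $\Den(I_2,I_2)=2(\q+\epsilon)$ yields $\Pden'(L)=1-\epsilon\q$, both matching the claim.
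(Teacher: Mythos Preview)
Your proposal is correct and follows essentially the same approach as the paper's proof: start from Corollary~\ref{cor: L full type Pden(Im,L)}, use the explicit formula for $|\mathrm{O}(0_i,\overline{I_n})|$, factor out a common product, and collapse the remaining sum via the $q$-binomial identity (Corollary~\ref{cor: qbinomial vanish}). The paper writes out the even-$n$ case and leaves the odd case to the reader, whereas you sketch the odd case and outline the even one; the underlying mechanism is identical, and your low-rank checks ($n=2,3$) are a useful sanity verification not present in the paper.
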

	\begin{proof}
	First of all, recall that
	\begin{align*}
	 \Pden'(L)=\frac{\Pden'(I_n,L)}{\Den(I_n,I_n)},
	\end{align*}
	where
	\begin{align}\label{eq: norm factor}
	    \Den(I_n,I_n)=\begin{cases}
	    2q^{(\frac{n-1}2)^2}\prod_{\ell=1}^{\frac{n-1}2}(q^{2\ell}-1) & \text{ if $n$ is odd},\\
	      2q^{(\frac{n}2)(\frac{n}2-1)}(q^{\frac{n}2}+\epsilon)\prod_{\ell=1}^{\frac{n}2-1}(q^{2\ell}-1) & \text{ if $n$ is even}.
	    \end{cases}
	\end{align}
We verify the even  $n$ case and leave the odd $n$ case to the reader.
Direct calculation using 		 Corollary \ref{cor: L full type Pden(Im,L)} gives
		\begin{align*}
			\Pden'(I_{n},L)\Big((\q^{\frac{n}{2}}+\epsilon)\prod_{\ell=1}^{\frac{n}{2}-1}(\q^{2\ell}-1)\Big)^{-1}&=2\sum_{i=0}^{\frac{n}{2}-1}(-1)^{n-i-1}\q^{\frac{i(i-1)}{2}}\binom{n}{i}_\q  (\q^{\frac{n}{2}-i}-\epsilon)\prod_{\ell=\frac{n}{2}-i+1}^{n-i-1}(\q^{2\ell}-1)\\
			&=2\sum_{i=0}^{n-1}(-1)^{n-i-1}\q^{\frac{i(i-1)}{2}}\binom{n}{i}_\q   (\q^{\frac{n}{2}-i}-\epsilon)\prod_{\ell=\frac{n}{2}+1}^{n-1}(\q^{2\ell-2i}-1).
		\end{align*}
		According to Corollary \ref{cor: qbinomial vanish},
		\begin{align*}
		\sum_{i=0}^{n-1}(-1)^{n-i-1}\q^{\frac{i(i-1)}{2}}\binom{n}{i}_\q   (\q^{\frac{n}{2}-i}-\epsilon)\prod_{\ell=\frac{n}{2}+1}^{n-1}(\q^{2\ell-2i}-1)&=q^{\frac{n(n-1)}{2}}   (\q^{-\frac{n}{2}}-\epsilon)\prod_{\ell=\frac{n}{2}+1}^{n-1}(\q^{2\ell-2n}-1)\\
		&=q^{(\frac{n}{2})(\frac{n}{2}-1)}   (1-\epsilon\q^{\frac{n}{2}})\prod_{\ell=1}^{\frac{n}2-1}(1-\q^{2\ell}).
		\end{align*}
Now by \eqref{eq: norm factor}, we conclude that
		\begin{align*}
			\Pden'(L)=(1-\epsilon\q^{\frac{n}{2}})\prod_{\ell=1}^{\frac{n}{2}-1}(1-q^{2\ell}),
		\end{align*}
as claimed.
	\end{proof}

\subsection{Preparation} \label{preparation}  In this subsection, we rewrite $\mathrm{Pden}^{\prime}(I_n,L)$ as a linear combination of special values of certain polynomial $g_{\epsilon_1}(n,m,r,X)$ as in Lemma \ref{lem: Pden' to g}. We then express general $g_{\epsilon_1}(n,m,r,X)$ as a simple combination of $g_{\epsilon_3}(n,r,r,X)$, see Lemma \ref{lem: ind of g}.

Let $L=I_{n-t}^{\epsilon_1}\obot L_2$ where $L_2$ is of full type $t$. By Corollary \ref{cor: Pden}, we have
\begin{align}\label{eq: Pden' as sum}
\mathrm{Pden}^{\prime}(I_n,L)=2 \sum_{i=0}^{n-1} \prod_{\ell=1}^{n-i-1}(1-q^{2 \ell})  \sum_{0 \leq j \leq i}\sum_{\epsilon_{2} \in\{\pm 1\}} m\left(0_{j} \oplus U_{i-j}^{\epsilon_{2}}, 0_{t} \oplus U_{n-t}^{\epsilon_1}\right)\left|\mathrm{O}\left(0_{j} \obot U_{i-j}^{\epsilon_{1}}, U_{n}^{-\epsilon}\right)\right|.
\end{align}
Here and in the following, we interpret $\sum_{\epsilon_2\in \{\pm 1\}}f(U_0^{\epsilon_2})$ as $f(U_0^1)$ for a function  $f$ with  $U_i^{\epsilon_2}$ as input.

Let $s$ and $n$ be integers such that $0\le s< n$, and let $\epsilon_2=\pm 1$. For odd $n$, we define
\begin{align}\label{eq: def of f n odd}
    f_{\epsilon_2}(n,s,X)=\begin{cases}
    \prod_{\ell=\frac{n+1}{2}}^{\frac{n+s-2}{2}}(\q^{2\ell} X^2) \cdot   \q^{\frac{n+s}{2}}X( \q^{\frac{n+s}{2}}X-\epsilon \epsilon_2) \cdot \prod_{\ell = \frac{n+s+2}{2} }^{n-1}(\q^{2\ell}X^2-1)& \text{ if $s$ is odd,}\\
        \prod_{\ell=\frac{n+1}{2}}^{\frac{n-1}{2}+\frac{s}{2}}(q^{2\ell} X^2) \cdot   \prod_{\ell =\frac{n+1+s}{2}}^{n-1}(\q^{2\ell}X^2-1) & \text{ if $s$ is even.}
    \end{cases}
\end{align}
Similarly, for an even integer $n>0$ and $0\le s<n$, we define

\begin{align}\label{eq: def of f n even}
    f_{\epsilon_2}(n,s,X)=  \begin{cases}
    \prod_{\ell=\frac{n}{2}}^{\frac{n+s-3}{2}}(\q^{2\ell} X^2)  q^{\frac{n}{2}+s-1}X   \cdot \prod_{\ell = \frac{n+s+1}{2} }^{n-1}(\q^{2\ell}X^2-1)& \text{ if $s$ is odd,}\\
        \prod_{\ell=\frac{n}{2}}^{\frac{n+s-2}{2}}(q^{2\ell} X^2) \cdot  q^{\frac{s}{2}}(q^{\frac{n+s}{2}}X-\epsilon\epsilon_2)\cdot\prod_{\ell =\frac{n+s+2}{2}}^{n-1}(\q^{2\ell}X^2-1) & \text{ if $s$ is even.}
    \end{cases}
\end{align}

Here when $s=0$, we always take $\epsilon_2=1$. Notice that $ f_{\epsilon_2}(n,s,X)$ is a polynomial of degree $n-1$.

\begin{lemma}\label{lem: tran to poly}\hfill
\begin{enumerate}
    \item Assume  $0\le i < n$. We have
\begin{align*}
&\prod_{\ell=1}^{n-i-1}(1-\q^{2\ell}) |\mathrm{O}(0_{i-s}\obot U_{s}^{\epsilon_2},U_n^{-\epsilon})| = (-1)^{n-i-1}q^{\frac{i(i-1)}{2}} I_\epsilon(n) f_{\epsilon_2}(n,s,q^{-i}),
\end{align*}
where
\begin{align*}
&I_\epsilon(n)
\coloneqq  \begin{cases}
     \prod_{\ell=1}^{\frac{n-1}{2}}(q^{2\ell}-1)  & \text{ if $n$ is odd},\\
    (q^{\frac{n}{2}}+\epsilon)   \prod_{\ell=1}^{\frac{n}{2}-1}(q^{2\ell}-1)  & \text{ if $n$ is even}.
\end{cases}
\end{align*}
    \item Assume $0\le i\le n$, $s<n$ and that  $n'-n>0$ is even. We have
    \begin{align*}
&\prod_{\ell=0}^{n-i-1}(1-\q^{2\ell+n'-n}) |\mathrm{O}(0_{i-s}\obot U_{s}^{\epsilon_2},U_{n'}^{-\epsilon})| = (-1)^{n-i}q^{\frac{i(i-1)}{2}} I_\epsilon(n',n) f_{\epsilon_2}(n,s,q^{\frac{n'-n}2-i}),
\end{align*}
where
\begin{align*}
&I_\epsilon(n',n)
\coloneqq  \begin{cases}
     \prod_{\ell=\frac{n'-n}2}^{\frac{n'-1}{2}}(q^{2\ell}-1)  & \text{ if $n$ is odd},\\
    (q^{\frac{n'}{2}}+\epsilon)   \prod_{\ell=\frac{n'-n}{2}}^{\frac{n'}2-1}(q^{2\ell}-1)  & \text{ if $n$ is even}.
\end{cases}
\end{align*}
\end{enumerate}

\end{lemma}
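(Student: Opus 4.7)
The plan is to reduce both statements to direct verification against the explicit formula for $|\mathrm{O}(0_j\obot U_k^{\epsilon_1}, U_m^\epsilon)|$ supplied at the end of Lemma \ref{lem: Pden(I_m,L)}. Substituting $j=i-s$, $k=s$, $\epsilon_1=\epsilon_2$, and $m=n$ (for Part~(1)) or $m=n'$ (for Part~(2)) with ambient sign $-\epsilon$, the lemma gives a prefactor $q^{i(2m-i-1)/2}$, a product $\prod_{\lfloor (m-s)/2\rfloor+1-(i-s)\le \ell\le \lfloor(m-1)/2\rfloor}(1-q^{-2\ell})$, and one of four parity-dependent factors. The key accounting step is to rewrite $q^{i(2m-i-1)/2}=q^{i(i-1)/2}\cdot q^{i(m-i)}$ so that the factor $q^{i(m-i)}$ is absorbed into the product as $(1-q^{-2\ell})\cdot q^{2\ell}=q^{2\ell}-1$, each such conversion contributing a sign that accumulates to the advertised $(-1)^{n-i-1}$ (resp.\ $(-1)^{n-i}$) together with a monomial factor matching one of the leading terms of $f_{\epsilon_2}(n,s,X)$ at $X=q^{-i}$ (resp.\ $X=q^{(n'-n)/2-i}$).

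I would then split into the four cases $(n\bmod 2,s\bmod 2)$ and check each against the piecewise definitions \eqref{eq: def of f n odd} and \eqref{eq: def of f n even}. In each case the product $\prod_{\ell}(q^{2\ell}-1)$ runs over an interval whose endpoints depend on the parities; the ``boundary'' terms (i.e.\ the factor with $\epsilon\epsilon_2$ in Lemma \ref{lem: Pden(I_m,L)}) are exactly the factors $(q^{(n+s)/2}X-\epsilon\epsilon_2)$ or $q^{(n+s)/2}X$ distinguishing the four cases in $f_{\epsilon_2}(n,s,X)$; these match termwise. The remaining ``tail'' product $\prod_\ell(q^{2\ell}X^2-1)$ in $f_{\epsilon_2}(n,s,X)$ is produced from the portion of $\prod_\ell(1-q^{-2\ell})$ after absorbing $q^{i(m-i)}$ (using $X=q^{-i}$). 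Finally, the extra prefactor $\prod_{\ell=1}^{n-i-1}(1-q^{2\ell})$ on the left-hand side combines with the portion of the same product below the ``boundary'' to yield exactly $I_\epsilon(n)$ (resp.\ $I_\epsilon(n',n)$): in even $n$ the ``boundary'' factor $1-\epsilon q^{-m/2}$ from Lemma \ref{lem: Pden(I_m,L)} becomes the $(q^{m/2}+\epsilon)$ factor of $I_\epsilon(m)$ after the sign bookkeeping, while the remaining $(q^{2\ell}-1)$ terms fill in the range $1\le\ell\le (m/2)-1$ (resp.\ $(n'-n)/2\le\ell\le (n'/2)-1$).

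Part~(2) follows from the same calculation with $m$ replaced by $n'$, but now the product $\prod_{\ell=0}^{n-i-1}(1-q^{2\ell+n'-n})$ contributes exactly the factors with index $\ell\in[(n'-n)/2,(n'-n)/2+n-i-1]$ that are ``missing'' from the $I_\epsilon(n)$ factor in Part~(1); together with the range shift inside $f_{\epsilon_2}$ produced by evaluating at $X=q^{(n'-n)/2-i}$ instead of $q^{-i}$, the product ranges line up and the enlarged $I_\epsilon(n',n)$ emerges. The polynomial $f_{\epsilon_2}(n,s,X)$ itself is identical in both parts, reflecting the fact that only the free-rank portion of the ambient space $U_{n'}^{-\epsilon}$ varies.

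The main obstacle is purely bookkeeping: the four parity cases differ by whether the ``boundary'' factor of $|\mathrm{O}(\cdot,\cdot)|$ lies at the top or bottom of the product range, and by whether there is a single or double $(q^{m/2}\pm\epsilon)$ term, so the matching with the two piecewise formulas for $f_{\epsilon_2}(n,s,X)$ must be checked carefully case by case. No new algebraic identities are required; the verification is deterministic once the products are aligned. Throughout, the convention $\sum_{\epsilon_2}f(U_0^{\epsilon_2})=f(U_0^{1})$ at $s=0$ guarantees that the boundary case $s=0$, where the $\epsilon\epsilon_2$ factor collapses, is handled uniformly.
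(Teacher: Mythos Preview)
Your approach is correct and matches the paper's own proof, which simply states that the identity follows from the explicit formula for $|\mathrm{O}(0_{i-s}\obot U_{s}^{\epsilon_2},U_n^{-\epsilon})|$ in Lemma~\ref{lem: Pden(I_m,L)} together with a straightforward computation. Your sketch just spells out that computation in more detail than the paper does.
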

\begin{proof}
This follows from the formula of $|\mathrm{O}(0_{i-s}\obot U_{s}^{\epsilon_2},U_n^{-\epsilon})|$ given in Lemma \ref{lem: Pden(I_m,L)} and a straightforward computation.
\end{proof}

\begin{lemma}
For integers  $0\le i,   t \le  n$, we have
  \begin{align}\label{eq: guess}
    \binom{t}{i}_q= \sum_{a=0}^{n-t}(-1)^a \cdot q^{a(t+1-i)}q^{\frac{a(a-1)}{2}}\cdot \binom{n-t}{a}_q \cdot \binom{n-a}{i-a}_q.
\end{align}
\end{lemma}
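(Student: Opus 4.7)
The plan is to prove the identity by induction on $m := n-t \ge 0$, with $t$ and $i$ fixed as auxiliary parameters. Denote the right-hand side by $R(m)$. The base case $m = 0$ is immediate: only the $a = 0$ term survives, yielding $\binom{t}{i}_q$ at once.

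For the inductive step, I would first apply the $q$-Pascal recurrence $\binom{m+1}{a}_q = q^a\binom{m}{a}_q + \binom{m}{a-1}_q$ to split $R(m+1)$ into two pieces. The first piece (coming from $q^a \binom{m}{a}_q$) involves $\binom{m}{a}_q\binom{n+1-a}{i-a}_q$ with $n = m+t$; applying the second $q$-Pascal recurrence $\binom{n+1-a}{i-a}_q = q^{i-a}\binom{n-a}{i-a}_q + \binom{n-a}{(i-1)-a}_q$ and collecting the resulting powers of $q$ produces two subsums, each directly matching the induction hypothesis at the same $m$ — once at parameter $i$ and once at $i-1$ — and contributing $q^i\binom{t}{i}_q + \binom{t}{i-1}_q$. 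The second piece (coming from $\binom{m}{a-1}_q$) becomes, after the index shift $a \mapsto a+1$, a single sum with $q$-exponent $b(t+2-i) + \binom{b}{2}$ that matches the induction hypothesis at parameter $i-1$, contributing $-q^{t+1-i}\binom{t}{i-1}_q$. Collecting everything,
\[
R(m+1) = q^i\binom{t}{i}_q + \bigl(1 - q^{t+1-i}\bigr)\binom{t}{i-1}_q.
\]

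To conclude $R(m+1) = \binom{t}{i}_q$, it remains to verify the elementary $q$-binomial identity
\[
(1-q^i)\binom{t}{i}_q = (1-q^{t+1-i})\binom{t}{i-1}_q,
\]
which is immediate from $\binom{t}{i}_q = (q;q)_t/((q;q)_i(q;q)_{t-i})$ together with $(q;q)_i = (1-q^i)(q;q)_{i-1}$ and $(q;q)_{t-i+1} = (1-q^{t-i+1})(q;q)_{t-i}$; both sides then equal $(q;q)_t/((q;q)_{i-1}(q;q)_{t-i})$.

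The main obstacle is purely bookkeeping: tracking the $q$-exponents produced by the two Pascal splittings and the index shift so that the four subsums align correctly with the induction hypothesis. Edge cases ($i = 0$, $i > t$, or $i > n$) are automatic, since in those regimes the relevant $q$-binomial coefficients vanish and both sides of the identity reduce to $0$.
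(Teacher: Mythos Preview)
Your proof is correct and takes a genuinely different route from the paper's. The paper argues combinatorially: it applies the Pascal recurrence $\binom{t}{i}_q=\binom{t+1}{i}_q-q^{t-i+1}\binom{t}{i-1}_q$ repeatedly $n-t$ times, producing $2^{n-t}$ terms indexed by lattice paths, then groups paths by the number of east steps and invokes the area interpretation of $\binom{n-t}{a}_q$ to collapse each group into a single term of the claimed sum. Your approach is a direct induction on $m=n-t$, splitting via Pascal on $\binom{m+1}{a}_q$ and $\binom{n+1-a}{i-a}_q$ and reducing to the elementary identity $(1-q^i)\binom{t}{i}_q=(1-q^{t+1-i})\binom{t}{i-1}_q$. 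Your argument is shorter and avoids the lattice-path bookkeeping; the paper's version, on the other hand, gives a bijective explanation of where each term comes from. One minor slip: in your edge-case remark you say both sides reduce to $0$ when $i=0$, but in fact both sides equal $1$ there; this is harmless since your inductive step handles $i=0$ uniformly (subpieces 1b and 2 vanish because $\binom{n-a}{-1-a}_q=0$).
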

\begin{proof} The identity is automatically true for $i >t $ as both sides are zero.
Recall the following analogue of Pascal's identity for $q$-binomial coefficients:
\begin{align}\label{eq: pascal}
    \binom{t}{i}_q=\binom{t+1}{i}_q-q^{t-i+1}\binom{t}{i-1}_q.
\end{align}
By this identity, we obtain $2$ terms, one with the $t$-index raised, another with the $i$-index lowered.
Applying again \eqref{eq: pascal} to $\binom{t+1}{i}_q$ and $\binom{t}{i-1}_q$ respectively, we obtain
\begin{align*}
      \binom{t}{i}_q=\binom{t+2}{i}_q-q^{t-i+2}\binom{t+1}{i-1}_q-q^{t-i+1}\Big(\binom{t+1}{i-1}_q-q^{t-i+2}\binom{t-1}{i-2}_q\Big).
\end{align*}
We may continue this process and after $n-t$ steps, we obtain $2^{n-t}$ many terms. Each term corresponds to a lattice path starting from the origin, going to north and east as follows. If the $\ell$-th step raises the index of $t$ (resp. reduces the index of $i$), we define the lattice path goes towards north for the $\ell$-th step (resp. east). We use $I=(i_1,\cdots,i_{n-t})$ where $i_\ell\in \{0,1\}$ to denote the path whose $\ell$-th step goes towards north (east) if $i_\ell=0$ ($i_\ell$ =1)   and let $|I|=i_1+\cdots+i_{n-t}$. We use $P_I$ to denote the term corresponding to $I$. Now the lemma follows from the following claim.\newline
Claim: $$\displaystyle\sum_{\substack{I,  |I|=a}}P_I=(-1)^a \cdot q^{a(t+1-i)}q^{\frac{a(a-1)}{2}}\cdot \binom{n-t}{a}_q \cdot \binom{n-a}{i-a}_q.$$
Indeed a direct calculation shows that
\begin{align*}
    P_{(1^a,0^{n-t-a})}=(-1)^a \cdot q^{a(t+1-i)}q^{\frac{a(a-1)}{2}}\cdot \binom{n-a}{i-a}_q.
\end{align*}
Let $A_I$ denote the area bounded by the lattice path $I$, the horizontal axis, and the vertical line given by $x=|I|$. Then a direct computation shows that
\begin{align*}
    P_{I}=q^{A_I} \cdot P_{(1^a,0^{n-t-a})}.
\end{align*}
Now  the claim follows from the well-known formula of $q$-binomial coefficient (see \cite[Theorem 6.9]{Carm} for example):
\begin{align*}
    \sum_{I,|I|=a}q^{A_I} =\binom{n-t}{a}_q.
\end{align*}
This proves the claim and the lemma.
\end{proof}

\begin{lemma}\label{lem: m 0_j sum}
For an integer $n\ge 0$ and $\epsilon =\pm 1$,  let
\begin{align*}
    \alpha(n)=q^{\lfloor\frac{n}2\rfloor \lfloor\frac{n-1}2\rfloor} =\begin{cases}
       q^{(\frac{n-1}2)^2} &\hbox{ if  $n$ is odd},
      \\
        q^{\frac{n}{2} (\frac{n}{2}-1)} &\hbox{ if  $n$ is even},
\end{cases}\quad \text{ and } \quad
\beta_\epsilon(n)=\begin{cases}
     (-1)^{\frac{n-1}{2}}     &\hbox{ if  $n$ is odd},
      \\
        \epsilon  (-1)^{\frac{n}{2}} &\hbox{ if  $n$ is even},\\
        1 & \text{ if $n=0$}.
\end{cases}
\end{align*}
Then
$$
\sum_{j=0}^n (-1)^j q^{\frac{j(j-1)}2} m(0_j, U_n^\epsilon) =  \alpha(n)\beta_\epsilon(n).
$$
\end{lemma}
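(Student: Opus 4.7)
The plan is to derive a closed-form expression for $m(0_j, U_n^\epsilon)$ and then to recognize the alternating sum as a direct instance of the inverse $q$-binomial identity of Lemma~\ref{lem: inverse q bin}. Specializing Lemma~\ref{lem: counting subspace} at $k=0$ gives $m(0_j,U_n^\epsilon) = |\mathrm{O}(0_j,U_n^\epsilon)|/|\mathrm{GL}_j(\mathbb{F}_q)|$, and after inserting the explicit product formula of Lemma~\ref{lem: Pden(I_m,L)} and simplifying both the exponent of $q$ and the product range $\lfloor n/2 \rfloor + 1 - j \le \ell \le \lfloor (n-1)/2 \rfloor$, a short case analysis by the parity of $n$ (and the sign $\epsilon$ in the even case) yields
\begin{align*}
m(0_j, U_{2m+1}) &= \binom{m}{j}_q \prod_{i=1}^j (q^{m+1-i} + 1), \\
m(0_j, U_{2m}^+) &= \binom{m}{j}_q \prod_{i=1}^j (q^{m-i} + 1), \\
m(0_j, U_{2m}^-) &= \binom{m-1}{j}_q \prod_{i=1}^j (q^{m+1-i} + 1),
\end{align*}
with the vanishing for $j$ exceeding the Witt index built automatically into the $q$-binomial coefficient.

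With these closed forms in hand, the substitution $\ell = i-1$ rewrites $\prod_{i=1}^j (q^{a+1-i}+1)$ as $\prod_{\ell=0}^{j-1}(1 + q^{-\ell}\cdot q^a)$, so the sum $S_\epsilon(n) := \sum_j (-1)^j q^{\binom{j}{2}} m(0_j, U_n^\epsilon)$ is manifestly a special value of the left-hand side of Lemma~\ref{lem: inverse q bin}. Applying that lemma with $(n,X) = (m, q^m)$, $(m, q^{m-1})$, and $(m-1, q^m)$ respectively collapses the three sums to
$$
S(2m+1) = (-1)^m q^{m^2}, \qquad S_+(2m) = (-1)^m q^{m(m-1)}, \qquad S_-(2m) = (-1)^{m-1} q^{m(m-1)}.
$$
Comparing with the definition of $\alpha(n)\beta_\epsilon(n)$ (the factor $q^{\lfloor n/2 \rfloor \lfloor (n-1)/2 \rfloor}$ matches in each parity, and the three sign cases $(-1)^{(n-1)/2}$, $(-1)^{n/2}$, and $-(-1)^{n/2}$ are exactly those produced above) completes the identity, with the degenerate case $n=0$ checked separately by inspection.

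The main obstacle is the first step: the clean extraction of the uniform closed-form product formulas for $m(0_j, U_n^\epsilon)$ from Lemma~\ref{lem: Pden(I_m,L)}, which bundles four parity cases into a single statement whose correction factor $C$ differs across them. Once these formulas are displayed in the uniform shape $\binom{a}{j}_q \prod_{i=1}^j (q^{b+1-i} + 1)$, the remaining work is purely formal: Lemma~\ref{lem: inverse q bin} performs all of the combinatorial cancellation needed to collapse the alternating sum to a monomial, and no additional recursion or induction on $n$ is required.
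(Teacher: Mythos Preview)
Your proof is correct and follows essentially the same route as the paper's: both reduce to the closed form $m(0_j,U_n^\epsilon)=\binom{d}{j}_q\prod_{l=1}^j(q^{d+e-l}+1)$ for appropriate $(d,e)$ and then apply Lemma~\ref{lem: inverse q bin} with $X=q^{d+e-1}$ to collapse the sum to $(-q^{d+e-1})^d$. The only cosmetic difference is that the paper imports this closed form for $m(0_j,U_n^\epsilon)$ from \cite[Lemma~3.2.2]{LZ2}, whereas you derive it internally from Lemmas~\ref{lem: counting subspace} and~\ref{lem: Pden(I_m,L)}.
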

\begin{proof}
If $n=0$, the statement both sides are $1$ by definition.  From now on we assume $n>0$. By \cite[Lemma 3.2.2.]{LZ2}, we have
$$
|m(0_j, U_n^\epsilon)|= \binom{d}{j}_\q  \cdot \prod_{l=1}^{j} (q^{d+e-l}+1),
$$
with
$$
d=\begin{cases}
\frac{n-1}2 &\ff \, n \,  \hbox{ is odd},
\\
\frac{n}2 - \frac{1-\epsilon}2 &\ff \, n \, \hbox{ is even},
\end{cases} \quad \text{ and } \quad
e=\begin{cases}
1 &\ff \,  n \hbox{ is odd},
\\
1-\epsilon &\ff \,  n \hbox{ is even}.
\end{cases}
$$
So we have
$$
\sum_{j=0}^{n}(-1)^jq^{\frac{j(j-1)}{2}}\cdot m(0_j,U_{n}^\epsilon)
=\sum_{j=0}^d (-1)^j q^{\frac{j(j-1)}2} \binom{d}{j}_q \prod_{l=1}^{j} (q^{d+e-l}+1),
$$
which by Lemma \ref{lem: inverse q bin} equals to
$(-q^{d+e-1})^d$. A direct calculation checks that $(-q^{d+e-1})^d=\alpha(n)\beta_\epsilon(n).$
\end{proof}

\begin{definition}
  For $0\le r\le m\le n$, we define
\begin{align*}
   &g_{\epsilon_1}(n,m,r,X)\\ \notag
  &\coloneqq\sum_{k=0}^r (-1)^k q^{\frac{k(k-1)}{2}}\sum_{\epsilon_2\in \{\pm 1\}}m(U_{k}^{\epsilon_2},U_{m}^{\epsilon_1}) \sum_{j=0}^{r-k}(-1)^{j} q^{\frac{j(j-1)}{2}} \cdot   \binom{m-j-k}{r-j-k}_q \cdot
     m(0_{j}, U_{m-k}^{\delta}) f_{\epsilon_2}(n,k,X)
\end{align*}
with $ \delta=\delta(m,k,\epsilon_1,\epsilon_2)$ as defined in  \eqref{eq:delta}.
In the following, when $n$ is clear in the context, we simply write $g_{\epsilon_1}(m,r,X)$ for $g_{\epsilon_1}(n,m,r,X)$.
\end{definition}
In particular, $ g_{\epsilon_1}(n,m,0,X)=f_1(n,0,X)$ and by Lemma \ref{lem: m 0_j sum},
\begin{align}\label{eq: g(m,m,X)}
     g_{\epsilon_1}(n,r,r,X)=&\sum_{k=0}^{r} (-1)^k q^{\frac{k(k-1)}{2}} \cdot
   \sum_{\epsilon_2 \in \{\pm 1\}} m(U_k^{\epsilon_2},U_{r}^{\epsilon_1})\cdot \alpha(r-k)\beta_{\delta}(r-k)\cdot  f_{\epsilon_2}(n,k,X).
\end{align}

In the rest of this section, we let $m=n-t$ without explicit mentioning.
\begin{lemma}\label{lem: Pden' to g}
Assume $L$ is a lattice of rank $n$ and type $t$.
\begin{enumerate}
   \item   Let $\m(n,t)\coloneqq \mathrm{min}\{n-t,n-1\}$. Then for $0\le i\le n-1$, we have
  \begin{equation*}
       (\Pden^{n-i})'(I_n,L)=  2 I_{\epsilon}(n) \cdot \sum_{r=0}^{\m(n,t)}\binom{n-r}{i-r}_q (-1)^{i-r+n-1} q^{\frac{(i-r)(i-r-1)}{2}}  q^{r(n-m)} \cdot g_{\epsilon_1}(n,m,r,q^{-i}).
  \end{equation*}
    \item Assume that  $n'-n$ is a positive even integer and $m<n$. Then for $0\le i\le n$, we have
  \begin{align*}
   &\Pden^{n-i} (I_{n'}^{-\epsilon},L,q^{n'-n}) = I_{\epsilon}(n',n)  \sum_{r=0}^{m}\binom{n-r}{i-r}_q (-1)^{i-r+n} q^{\frac{(i-r)(i-r-1)}{2}}  q^{r(n-m)} \cdot g_{\epsilon_1}(n,m,r,q^{\frac{n'-n}2-i}).
  \end{align*}
\end{enumerate}

\end{lemma}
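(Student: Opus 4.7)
The plan is to start from the decomposition $\Pden(I_n,L,X) = \sum_{i=0}^{n} \Pden^{n-i}(I_n,L,X)$ of Theorem~\ref{thm: decom of Pden} together with the explicit formula of Corollary~\ref{cor: Pden}. Writing $\phi_i(X) = X^i \prod_{\ell=0}^{n-i-1}(1-q^{2\ell}X)$, one observes that for $i<n$ the factor $(1-X)$ at $\ell=0$ forces $\phi_i(1) = 0$ and yields $\phi_i'(1) = -\prod_{\ell=1}^{n-i-1}(1-q^{2\ell})$. Hence
$$(\Pden^{n-i})'(I_n,L) = 2\prod_{\ell=1}^{n-i-1}(1-q^{2\ell}) \cdot \sum_{V_1 \in \mathrm{Gr}(i,\overline{L})(\mathbb{F}_q)} |\mathrm{O}(V_1,\overline{I_n})|.$$
Since $\overline{L} \cong 0_t \obot U_{n-t}^{\epsilon_1}$, partitioning the Grassmannian by the isometry class $V_1 \cong 0_j \obot U_{i-j}^{\epsilon_2}$ recovers the expression~\eqref{eq: Pden' as sum}.

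Next I will unwind the combinatorial factor $m(0_j \obot U_{i-j}^{\epsilon_2}, 0_t \obot U_{n-t}^{\epsilon_1})$. By Lemma~\ref{lem: m with type} it expands as $\sum_\ell \binom{t}{\ell}_q q^{(t-\ell)(i-\ell)} m(0_{j-\ell} \obot U_{i-j}^{\epsilon_2}, U_m^{\epsilon_1})$, and then Lemma~\ref{lem: counting subspace} splits the latter as $q^{-(j-\ell)(i-j)} m(0_{j-\ell}, U_{m-(i-j)}^\delta) \cdot m(U_{i-j}^{\epsilon_2}, U_m^{\epsilon_1})$ with $\delta = \delta(m,i-j,\epsilon_1,\epsilon_2)$. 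Simultaneously, I convert $\prod_{\ell=1}^{n-i-1}(1-q^{2\ell}) \cdot |\mathrm{O}(0_j \obot U_{i-j}^{\epsilon_2},U_n^{-\epsilon})|$ into a multiple of $f_{\epsilon_2}(n,i-j,q^{-i})$ using Lemma~\ref{lem: tran to poly}(1). After the change of variables $k = i - j$ and $j' = j - \ell$, the expression for $(\Pden^{n-i})'(I_n,L)$ becomes a triple sum over $(k,j',\epsilon_2)$ of the building blocks $m(U_k^{\epsilon_2},U_m^{\epsilon_1}) \cdot f_{\epsilon_2}(n,k,q^{-i}) \cdot m(0_{j'},U_{m-k}^\delta)$, multiplied by a scalar that depends only on $(i,k,j',t)$.

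Comparing this with the target expression obtained by expanding $g_{\epsilon_1}(n,m,r,q^{-i})$ in its definition, and matching coefficients of the building blocks $m(U_k^{\epsilon_2},U_m^{\epsilon_1}) \cdot f_{\epsilon_2}(n,k,q^{-i}) \cdot m(0_{j'},U_{m-k}^\delta)$, the identity to verify (after substituting $a = r - j' - k$ and setting $s = i-k-j'$) reduces to
$$\binom{t}{s}_q = \sum_{a \ge 0} (-1)^a q^{a(t+1-s)+\frac{a(a-1)}{2}} \binom{m-j'-k}{a}_q \binom{m+t-j'-k-a}{s-a}_q,$$
which is precisely identity~\eqref{eq: guess}. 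A direct (but tedious) calculation confirms that the powers of $q$ and signs on the two sides agree: the discrepancy between the $q$-exponents on the two sides telescopes to $\tfrac{s(s-1)}{2}+(j'+k)t$, independent of $a$, which is exactly the difference needed to turn the left-hand side into the coefficient of $\binom{t}{s}_q$ coming from the reindexing. For part (2), one repeats the same chain of steps using Lemma~\ref{lem: tran to poly}(2) in place of (1) and evaluating $\Pden^{n-i}(I_{n'}^{-\epsilon},L,q^{n'-n})$ directly from Theorem~\ref{thm: decom of Pden}; the combinatorial core is identical. The main obstacle is the careful bookkeeping of signs and $q$-powers across the triple reindexing; once the correct change of variables $(i,j,\ell) \leftrightarrow (k,j',r)$ is identified, the combinatorial content collapses to the single identity~\eqref{eq: guess}.
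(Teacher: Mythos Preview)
Your proposal is correct and follows essentially the same approach as the paper. Both arguments start from the explicit expression for $(\Pden^{n-i})'(I_n,L)$ coming from Corollary~\ref{cor: Pden} (equivalently, \eqref{eq: Pden' as sum}), apply Lemma~\ref{lem: m with type} and Lemma~\ref{lem: counting subspace} to unwind $m(0_j\obot U_{i-j}^{\epsilon_2},0_t\obot U_{n-t}^{\epsilon_1})$, convert the orthogonal-count into $f_{\epsilon_2}(n,k,q^{-i})$ via Lemma~\ref{lem: tran to poly}, and reduce the remaining combinatorics to the identity~\eqref{eq: guess}. The only difference is organizational: the paper performs a forward computation (substitute $s=i-\ell$, expand $\binom{t}{i-s}_q$ by~\eqref{eq: guess}, set $r=s+a$, and recognize $g_{\epsilon_1}$), whereas you expand both sides in the common building blocks $m(U_k^{\epsilon_2},U_m^{\epsilon_1})\,m(0_{j'},U_{m-k}^\delta)\,f_{\epsilon_2}(n,k,q^{-i})$ and match coefficients; your variables $(k,j',s,a)$ correspond exactly to the paper's after the reindexing $s_{\text{paper}}=j'+k$, $a_{\text{paper}}=r-j'-k$, and the instance of~\eqref{eq: guess} invoked is literally the same in both proofs.
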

\begin{proof}
We prove $(1)$ first.
By \eqref{eq: Pden' as sum},  Lemma \ref{lem: tran to poly} and Lemma \ref{lem: m with type}, we have
\begin{align*}
  &(\Pden^{n-i})'(I_n,L)\cdot \Big(2(-1)^{n-i-1}I_{\epsilon}(n)  \Big)^{-1}\\
  &= \sum_{\ell=0}^{t}\binom{t}{\ell}_q \sum_{k=0}^{i-\ell}\sum_{\epsilon_2 \in {\pm 1}} q^{(t-\ell)(i-\ell)}q^{\frac{i(i-1)}{2}}m(0_{i-k-\ell}\obot U_{k}^{\epsilon_2}, U_{n-t}^{\epsilon_1}) f_{\epsilon_2}(n,k,q^{-i})\\
  &= \sum^{i}_{s=\mathrm{max}\{i-t,0\}}\binom{t}{i-s}_q \sum_{k=0}^{s}\sum_{\epsilon_2 \in {\pm 1}} q^{(t-(i-s))s}q^{\frac{i(i-1)}{2}}m(0_{s-k}\obot U_{k}^{\epsilon_2}, U_{n-t}^{\epsilon_1}) f_{\epsilon_2}(n,k,q^{-i}),
\end{align*}
where the last identity is obtained by setting $s=i-\ell$. Notice that if $s>n-t$, then $m(0_{s-k}\obot U_{k}^{\epsilon_2}, U_{n-t}^{\epsilon_1})=0$. Hence we may assume $s\le n-t$, or equivalently $t\le n-s$ in the above summation.
Now applying \eqref{eq: guess} to $i-s\le t\le n-s$ and let $m=n-t$, we may rewrite the above summation as
\begin{align*}
&\sum^{i}_{s=\mathrm{max}\{i-t,0\}}  \sum_{a=0}^{m-s}(-1)^a \binom{n-s-a}{i-s-a}_q\cdot  \binom{m-s}{a}_q \cdot \\
   &\quad\quad\quad\quad  \sum_{k=0}^{s}\sum_{\epsilon_2 \in {\pm 1}} q^{a(n-m+1-(i-s))+\frac{a(a-1)}{2}+(n-m+s-i)s} q^{\frac{i(i-1)}{2}}m(0_{s-k}\obot U_{k}^{\epsilon_2}, U_{m}^{\epsilon_1}) f_{\epsilon_2}(n,k,q^{-i})\\
  &=\sum^{i}_{s=\mathrm{max}\{i-t,0\}} \sum_{a=0}^{m-s}\binom{n-s-a}{i-s-a}_q   q^{\frac{(i-(s+a))(i-(s+a+1))}{2}}q^{(s+a)(n-m)}\\
    & \quad\quad\quad\quad \cdot(-1)^a \binom{m-s}{a}_q\sum_{k=0}^{s}\sum_{\epsilon_2 \in {\pm 1}} q^{(s+a)(s-\frac{s+a+1}2)+\frac{a(a+1)}{2}} m(0_{s-k}\obot U_{k}^{\epsilon_2}, U_{m}^{\epsilon_1}) f_{\epsilon_2}(n,k,q^{-i}).
\end{align*}
Now let $r=s+a$. Notice that $r\le m$ and  $\binom{n-r}{i-r}_q=0$ for $r>i$. Rearranging the summation index, we obtain
\begin{align*}
 &(\Pden^{n-i})'(I_n,L)\cdot \Big((-1)^{n-i-1}I_{\epsilon}(n)  \Big)^{-1}=\sum_{r=0}^{\m\{n,t\}}\binom{n-r}{i-r}_q q^{\frac{(i-r)(i-r-1)}{2}}  q^{r(n-m)} \cdot g_{\epsilon_1}(n,m,r,q^{-i}),
  \end{align*}
   where
   \begin{align*}
     &g_{\epsilon_1}(n,m,r,X)\\
     &=  \sum_{s=0}^{r}(-1)^{r-s} \cdot   \binom{m-s}{r-s}_q \cdot
     \sum_{k=0}^{s}\sum_{\epsilon_2 \in {\pm 1}} q^{\frac{s(s-1)}{2}} m(0_{s-k}\obot U_{k}^{\epsilon_2}, U_{m}^{\epsilon_1}) f_{\epsilon_2}(n,k,X)\\
     &=\sum_{k=0}^r (-1)^k q^{\frac{k(k-1)}{2}}\sum_{\epsilon_2\in \{\pm 1\}}m(U_{k}^{\epsilon_2},U_{m}^{\epsilon_1}) \sum_{j=0}^{r-k}(-1)^{j} q^{\frac{j(j-1)}{2}} \cdot   \binom{m-j-k}{r-j-k}_q \cdot
     m(0_{j}, U_{m-k}^{\delta}) f_{\epsilon_2}(n,k,X).
   \end{align*}
Here, we use Lemma \ref{lem: counting subspace} to obtain the last identity.

Using $(2)$ of Lemma \ref{lem: tran to poly}, the same proof of $(1)$ proves $(2)$.
\end{proof}

We conclude this subsection by establishing a formula to express  $g_{\epsilon_1}(m,r,X)$ in terms of $g_{\epsilon_3}(r,r,X)$, which, as we will see, has a particular simple form (Lemma \ref{lem: g(r,r,X)}). First, we need the following identity which might have independent interest.
\begin{lemma}\label{lem: binom times m sum}
For any integers $0\le r  \le n$, we have
\begin{equation}\label{eq: binom times m sum}
  \sum_{j=0}^r (-1)^j q^{j(j-1)/2} \binom{n-j}{r-j}_q m(0_j,U_n^\epsilon)=\sum_{\tau\in \{\pm 1\}} m(U_r^\tau,U_n^\epsilon) \alpha(r) \beta_{\tau}(r),
\end{equation}
where $\alpha(r)$ and $\beta_\tau(r)$ are defined in Lemma \ref{lem: m 0_j sum}.
\end{lemma}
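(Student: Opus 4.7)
}

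The plan is to reinterpret the LHS as a double sum over pairs (totally isotropic subspace, $r$-dimensional subspace containing it), then separate contributions according to whether the $r$-dimensional subspace is non-degenerate or not. The non-degenerate contributions will directly produce the RHS via Lemma \ref{lem: m 0_j sum}, and the degenerate contributions will vanish by an application of the $q$-binomial theorem.

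First I would use the standard fact that for any $j$-dimensional subspace $W$ of the $n$-dimensional space $U_n^\epsilon$, the number of $r$-dimensional subspaces containing $W$ is $\binom{n-j}{r-j}_q$. This gives the identity
\[
m(0_j, U_n^\epsilon)\binom{n-j}{r-j}_q = \sum_V m(0_j, V),
\]
where $V$ ranges over all $r$-dimensional (not necessarily non-degenerate) subspaces of $U_n^\epsilon$. Interchanging the order of summation in the LHS of \eqref{eq: binom times m sum} yields
\[
\text{LHS} = \sum_V \sum_{j=0}^{r} (-1)^j q^{j(j-1)/2} m(0_j, V).
\]
Splitting by the dimension $s = s(V)$ of the radical $R_V$ of $V$, write $V = R_V \obot V'$ with $V' \cong U_{r-s}^{\tau(V)}$ non-degenerate. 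For $s=0$, Lemma \ref{lem: m 0_j sum} applied to $V' = V$ gives $\alpha(r)\beta_{\tau(V)}(r)$, and grouping by sign $\tau$ recovers exactly the RHS of \eqref{eq: binom times m sum}.

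The key step is then to show that each degenerate $V$ (i.e., $s \geq 1$) contributes zero. Using Lemma \ref{lem: m with type} with $k=0$, $\epsilon_2 = 1$, we have
\[
m(0_j, V) = \sum_{\ell=0}^{\min(s,j)}\binom{s}{\ell}_q q^{(s-\ell)(j-\ell)} m(0_{j-\ell}, V').
\]
Substituting, interchanging the order of summation, and setting $i = j - \ell$, a direct simplification of the exponents of $q$ (noting that $(i+\ell)(i+\ell-1)/2 + (s-\ell)i = i(i-1)/2 + si + \ell(\ell-1)/2$) produces the factorization
\[
\sum_{j=0}^r (-1)^j q^{j(j-1)/2} m(0_j, V) = \left(\sum_{\ell=0}^{s} (-1)^\ell q^{\ell(\ell-1)/2}\binom{s}{\ell}_q\right) \cdot T(V),
\]
where
\[
T(V) := \sum_{i=0}^{r-s}(-1)^i q^{i(i-1)/2 + si}m(0_i, V').
\]
Crucially, $T(V)$ is independent of $\ell$: the nominal upper bound $r - \ell$ on $i$ can be replaced by $r - s$ because $m(0_i, V') = 0$ for $i > r-s = \dim V'$. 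Specializing the $q$-binomial theorem (Lemma \ref{lem: qbinomial}) at $X = 1$ gives $\sum_{\ell=0}^{s}(-1)^\ell q^{\ell(\ell-1)/2}\binom{s}{\ell}_q = \prod_{i=0}^{s-1}(1-q^i)$, which vanishes whenever $s \geq 1$ because of the factor $(1-q^0) = 0$.

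The main obstacle is essentially the bookkeeping of the exponents of $q$ in the substitution $i = j - \ell$ and the careful argument that $T(V)$ is $\ell$-independent; once those are established the $q$-binomial identity at $X=1$ disposes of all degenerate subspaces at once, and the sum collapses to the promised RHS.
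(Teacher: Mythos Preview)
Your proof is correct and takes a genuinely different route from the paper's argument. The paper proceeds by induction on $n$: it writes $m(0_j,U_n^\epsilon)=\binom{n}{j}_q-\sum_{i,\sigma}m(0_{j-i}\obot U_i^\sigma,U_n^\epsilon)$, observes that the $\binom{n}{j}_q$-contribution vanishes by the $q$-binomial theorem, then uses the flag identity (Lemma~\ref{lem:number of orthogonal flag}) together with the induction hypothesis to rewrite what remains, and finally reduces to an auxiliary identity that is again disposed of via Lemma~\ref{lem: m 0_j sum}. Your approach instead avoids induction altogether by reinterpreting $m(0_j,U_n^\epsilon)\binom{n-j}{r-j}_q$ as a count of flags $(W\subset V)$ and summing over the $r$-dimensional subspace $V$ first. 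Your argument is more transparent: the non-degenerate $V$'s produce the right-hand side directly by Lemma~\ref{lem: m 0_j sum}, and the degenerate $V$'s are killed by a single application of the $q$-binomial theorem at $X=1$ after the factorization you describe (whose exponent bookkeeping and the $\ell$-independence of $T(V)$ are both correct). The paper's proof, on the other hand, stays entirely within the world of non-degenerate ambient spaces and the established $m$-identities, which fits its inductive machinery elsewhere in \S\ref{sec:identity}; your double-counting is shorter and conceptually cleaner for this particular lemma.
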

\begin{proof}
We proceed by induction on $n$. The case  $n=1$ is obvious.
Now recall the identities
\begin{align*}
  m(0_j,U_n^\epsilon) &=\binom{n}{j}_q-\sum_{i=1}^j \sum_{\sigma\in \{\pm 1\}} m(0_{j-i}\obot U_i^{\sigma}, U_n^\epsilon)
  \\
  &=\binom{n}{j}_q -\sum_{i=1}^j \sum_{\sigma\in \{\pm 1\}} q^{-(j-i)i} m(U_i^{\sigma}, U_n^\epsilon) m(0_{j-i}, U_{n-i}^{\delta(n,i,\epsilon,\sigma)}),
\end{align*}
by \eqref{eq:m equal m times m}, and
\begin{equation}
    \sum_{j=0}^r (-1)^j q^{j(j-1)/2} \binom{n-j}{r-j}_q \binom{n}{j}_q=\sum_{j=0}^r (-1)^j q^{j(j-1)/2} \binom{n}{r}_q \binom{r}{j}_q=0.
\end{equation}
These imply that
\begin{align*}
 &\sum_{j=0}^r (-1)^j q^{j(j-1)/2} \binom{n-j}{r-j}_q m(0_j,U_n^\epsilon) \\
 =&- \sum_{j=0}^r (-1)^j q^{j(j-1)/2} \binom{n-j}{r-j}_q \sum_{i=1}^j \sum_{\sigma\in \{\pm 1\}} q^{-(j-i)i} m(U_i^{\sigma}, U_n^\epsilon) m(0_{j-i}, U_{n-i}^{\delta(n,i,\epsilon,\sigma)})\\
    =& \sum_{i=1}^r \sum_{\sigma\in\{\pm 1\}} (-1)^{i+1} q^{i(i-1)/2} m(U_i^{\sigma}, U_n^\epsilon)\sum_{j=0}^{r-i} (-1)^{j} q^{j(j-1)/2}  \binom{n-i-j}{r-i-j}_q m(0_j, U_{n-i}^{\delta(n,i,\epsilon,\sigma)}).
\end{align*}
where in the last step we switch the order of summation and substitute $j$ by $j+i$.

We can now use the induction hypothesis
\[\sum_{j=0}^{r-i} (-1)^{j} q^{j(j-1)/2}  \binom{n-i-j}{r-i-j}_q m(0_j, U_{n-i}^{\delta(n,i,\epsilon,\sigma)})=
 \sum_{\tau\in \{\pm 1\}} m(U_{r-i}^{\tau},U_{n-i}^{\delta(n,i,\epsilon,\sigma)})  \alpha(r-i) \beta_{\tau}(r-i)\]
and
\[  m(U_i^{\sigma}, U_n^\epsilon) m(U_{r-i}^{\tau},U_{n-i}^{\delta(n,i,\epsilon,\sigma)})=m(U_r^{\delta'}, U_n^{\epsilon}) m(U_{i}^{\sigma}, U_r^{\delta'}), \]
where $\delta'\in \{\pm 1\}$ such that $\tau=\delta(r,i,\delta',\sigma)$ (see Lemma \ref{lem:number of orthogonal flag}) to obtain
\begin{align}
     &\sum_{j=0}^r (-1)^j q^{j(j-1)/2} \binom{n-j}{r-j}_q m(0_j,U_n^\epsilon) \notag\\
    =& \sum_{i=1}^r \sum_{\sigma\in\{\pm 1\}} (-1)^{i+1} q^{i(i-1)/2}  \sum_{\tau\in \{\pm 1\}}  m(U_r^{\delta'}, U_n^{\epsilon}) m(U_{i}^{\sigma}, U_r^{\delta'})  \alpha(r-i) \beta_{\tau}(r-i)
\end{align}
Hence \eqref{eq: binom times m sum} is equivalent to
\begin{align}\label{eq:equivalent form binom times m sum}
   \sum_{i=0}^r \sum_{\sigma\in\{\pm 1\}} (-1)^{i} q^{i(i-1)/2} \sum_{\tau\in \{\pm 1\}}  m(U_r^{\delta'}, U_n^{\epsilon}) m(U_{i}^{\sigma}, U_r^{\delta'})  \alpha(r-i) \beta_{\tau}(r-i)=0
\end{align}
Now applying Lemma \ref{lem: m 0_j sum}, the left hand side of \eqref{eq:equivalent form binom times m sum} is equal to
\begin{align*}
    & \sum_{i=0}^r \sum_{\sigma\in\{\pm 1\}} (-1)^{i} q^{i(i-1)/2}  \sum_{\tau\in \{\pm 1\}}  m(U_r^{\delta'}, U_n^{\epsilon}) m(U_{i}^{\sigma}, U_r^{\delta'})  \sum_{j=0}^{r-i} (-1)^j q^{j(j-1)/2} m(0_j,U_{r-i}^\tau)\\
    =&  \sum_{\tau\in \{\pm 1\}}  m(U_r^{\delta'}, U_n^{\epsilon}) \sum_{i=0}^r  \sum_{j=0}^{r-i} \sum_{\sigma\in\{\pm 1\}} (-1)^{i+j} q^{(i+j)(i+j-1)/2}   m(0_j\obot U_{i}^{\sigma}, U_r^{\delta'})   \\
    =&  \sum_{\delta'\in \{\pm 1\}}  m(U_r^{\delta'}, U_n^{\epsilon}) \sum_{k=0}^r (-1)^k q^{k(k-1)/2} \binom{r}{k}_q=0.\\
\end{align*}
The lemma is proved.
\end{proof}

\begin{lemma}\label{lem: ind of g}
For $0 \le r \le m < n$, we have
  \begin{align*}
    g_{\epsilon_1}(m,r,X)&=\sum_{\epsilon_3 \in \{\pm 1\}}m(U_r^{\epsilon_3},U_{m}^{\epsilon_1})g_{\epsilon_3}(r,r,X).
    \end{align*}
\end{lemma}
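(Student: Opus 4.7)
The plan is to expand both sides of the identity as polynomials in $X$ in the basis $\{f_{\epsilon_2}(n,k,X) : 0 \le k \le r, \epsilon_2 \in \{\pm 1\}\}$ and compare coefficients. Indeed, expanding the left hand side via the definition of $g_{\epsilon_1}(m,r,X)$ and expanding the right hand side via the formula \eqref{eq: g(m,m,X)} for $g_{\epsilon_3}(r,r,X)$, both sides are linear combinations of the $f_{\epsilon_2}(n,k,X)$. So it suffices to verify, for each pair $(k,\epsilon_2)$ with $0 \le k \le r$, the identity
$$m(U_k^{\epsilon_2},U_m^{\epsilon_1}) \sum_{j=0}^{r-k}(-1)^j q^{\frac{j(j-1)}{2}}\binom{m-j-k}{r-j-k}_q m(0_j,U_{m-k}^{\delta_1}) = \sum_{\epsilon_3 \in \{\pm 1\}} m(U_r^{\epsilon_3},U_m^{\epsilon_1})\, m(U_k^{\epsilon_2},U_r^{\epsilon_3})\,\alpha(r-k)\beta_{\delta_2}(r-k),$$
where $\delta_1 = \delta(m,k,\epsilon_1,\epsilon_2)$ and $\delta_2 = \delta(r,k,\epsilon_3,\epsilon_2)$.

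Next, I will apply Lemma~\ref{lem:number of orthogonal flag} (with the triple $(\epsilon,\sigma,\delta') = (\epsilon_1,\epsilon_2,\epsilon_3)$ and indices $i=k$) to rewrite the product $m(U_r^{\epsilon_3},U_m^{\epsilon_1})\, m(U_k^{\epsilon_2},U_r^{\epsilon_3})$ on the right hand side as $m(U_k^{\epsilon_2},U_m^{\epsilon_1})\, m(U_{r-k}^{\delta_2},U_{m-k}^{\delta_1})$. Canceling the common factor $m(U_k^{\epsilon_2},U_m^{\epsilon_1})$ and noting that the map $\epsilon_3 \mapsto \delta(r,k,\epsilon_3,\epsilon_2)$ is a bijection on $\{\pm 1\}$, the right hand side becomes
$$m(U_k^{\epsilon_2},U_m^{\epsilon_1}) \sum_{\tau \in \{\pm 1\}} m(U_{r-k}^{\tau},U_{m-k}^{\delta_1})\,\alpha(r-k)\beta_{\tau}(r-k).$$
Thus the desired identity reduces to showing
$$\sum_{j=0}^{r-k}(-1)^j q^{\frac{j(j-1)}{2}}\binom{(m-k)-j}{(r-k)-j}_q m(0_j,U_{m-k}^{\delta_1}) = \sum_{\tau \in \{\pm 1\}} m(U_{r-k}^{\tau},U_{m-k}^{\delta_1})\,\alpha(r-k)\beta_{\tau}(r-k).$$

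Finally, this is precisely the statement of Lemma~\ref{lem: binom times m sum} applied with the substitution $(n,r,\epsilon) \mapsto (m-k,r-k,\delta_1)$ (the hypothesis $r-k \le m-k$ holds since $r \le m$). This completes the verification. The proof is essentially a bookkeeping exercise; no new computations beyond invoking Lemmas~\ref{lem:number of orthogonal flag} and~\ref{lem: binom times m sum} are required, and the only subtle point is tracking how the signs $\epsilon_3, \delta_1, \delta_2$ propagate under the flag-counting identity so that the bijection on $\{\pm 1\}$ lines up correctly.
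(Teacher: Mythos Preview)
Your proof is correct and follows essentially the same approach as the paper: both expand the two sides as linear combinations of the $f_{\epsilon_2}(n,k,X)$, apply Lemma~\ref{lem:number of orthogonal flag} to rewrite $m(U_r^{\epsilon_3},U_m^{\epsilon_1})\,m(U_k^{\epsilon_2},U_r^{\epsilon_3})$, and then reduce the resulting coefficient identity to Lemma~\ref{lem: binom times m sum}. The only cosmetic difference is that the paper singles out the case $r=0$ at the start, while you absorb it into the general argument; also, your use of the word ``basis'' is a slight misnomer (the $f_{\epsilon_2}(n,k,X)$ are not linearly independent in general), but your logic only uses that coefficient-wise equality is \emph{sufficient}, so no harm is done.
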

\begin{proof}
When $r=0$, we have by definition
$$g(m,0,X)=f_{1}(n,0,X)=\sum_{\epsilon_3 \in \{\pm 1\}}m(U_0^{\epsilon_3},U_{m}^{\epsilon_1})g_{\epsilon_3}(0,0,X).$$

Now we assume $r>0$, $\delta=\delta(m, k, \epsilon_1, \epsilon_2)$ and  $\delta'=\delta(r,k,\epsilon_3,\epsilon_2)$.
On the one hand, by definition
\begin{align*}
       &g_{\epsilon_1}(m,r,X)\\
       &=\sum_{k=0}^{r}(-1)^k q^{\frac{k(k-1)}2}\sum_{\epsilon_2\in \{\pm 1\}}m(U_{k}^{\epsilon_2},U_{m}^{\epsilon_1})\sum_{j=0}^{r-k}(-1)^{j} q^{\frac{j(j-1)}{2}} \cdot   \binom{m-j-k}{r-j-k}_q \cdot
     m(0_{j}, U_{m-k}^{\delta})f_{\epsilon_2}(n,k,X).
\end{align*}
On the other hand,  we have by \eqref{eq: g(m,m,X)},
\begin{align*}
 &\sum_{\epsilon_3 \in \{\pm 1\}}m(U_r^{\epsilon_3},U_{m}^{\epsilon_1})g_{\epsilon_3}(r,r,X)\\
    &=\sum_{k=0}^{r} (-1)^k q^{\frac{k(k-1)}{2}} \cdot
 \sum_{\epsilon_2 \in \{\pm 1\}}\sum_{\epsilon_3 \in \{\pm 1\}}  m(U_k^{\epsilon_2},U_{r}^{\epsilon_3})m(U_r^{\epsilon_3},U_{m}^{\epsilon_1}) \cdot \alpha(r-k)\beta_{\delta'}(r-k)\cdot  f_{\epsilon_2}(n,k,X).
\end{align*}
By Lemma  \ref{lem:number of orthogonal flag},
\begin{align*}
    m(U_k^{\epsilon_2},U_{r}^{\epsilon_3})m(U_r^{\epsilon_3},U_{m}^{\epsilon_1})=m(U_{k}^{\epsilon_2},U_{m}^{\epsilon_1})m(U_{r-k}^{\delta'} , U_{m-k}^{\delta}).
\end{align*}
Hence, in order to prove the theorem, it suffices to show for any $k$ and $\epsilon_2$
\begin{align*}
    &\sum_{j=0}^{r-k}(-1)^{j} q^{\frac{j(j-1)}{2}} \cdot   \binom{m-j-k}{r-j-k}_q \cdot
     m(0_{j}, U_{m-k}^{\delta})  =\sum_{\delta' \in \{\pm 1\}}  m(U_{r-k}^{\delta'} , U_{m-k}^{\delta }) \cdot \alpha(r-k)\beta_{\delta'}(r-k),
\end{align*}
which is exactly the content of Lemma \ref{lem: binom times m sum}.
\end{proof}

\subsection{Some identities between polynomials}   \label{sec:identity}
Although $g_{\epsilon_1}(r,r,X)$ by definition is a complicated linear combination of  $f_{\epsilon_2}(n,k,X)$. We show in this subsection that in fact $g_{\epsilon_1}(r,r,X)$ has a very simple form (Lemma \ref{lem: g(r,r,X)}). Similarly, although $\Pden'(I_n,L)$  is a complicated linear combination of the special values of $g_{\epsilon_1}(n-t,r,X)$, certain linear combination of $g_{\epsilon_1}(n-t,r,X)$ is of a simple form (Lemma \ref{lem: sum of g}).
By a direct computation, we can check the following lemma.
\begin{lemma} \label{lem:h} For $0 \le s \le n-1$, let
\begin{equation}
    h_{\epsilon_1}(n,s,X)\coloneqq\begin{cases}
          \prod_{l=\frac{n+s+1}2}^{n-1} (q^{2l} X^2 -1) &\ff \text{ $n-s$ is odd},
          \\
          (q^{\frac{n+s}2}X-\epsilon \epsilon_1)\prod_{l=\frac{n+s+2}2}^{n-1} (q^{2l} X^2 -1) &\ff  \text{ $n-s$ is even}.
    \end{cases}
\end{equation}
Then
\begin{align}
  h_{ \epsilon_1}(n,j,qX)&= h_{\epsilon_1}(n+1,j+1,X),
 \\
    q^{\lfloor\frac{n+j+2}2\rfloor} X h_{ \epsilon_1}(n+1, j+1,X)&= h_{ \epsilon_1}(n+1, j,X) +(-1)^{n+j+1} \epsilon \epsilon_1
   h_{\epsilon_1}(n+1, j+1, X).
\end{align}
\end{lemma}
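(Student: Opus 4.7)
The plan is that both identities in Lemma \ref{lem:h} reduce to routine parity-by-parity case analysis based on whether $n-s$ is odd or even, which is exactly the split in the piecewise definition of $h_{\epsilon_1}(n,s,X)$. The only nontrivial ingredient is a two-line algebraic identity of polynomials; everything else is bookkeeping on index ranges in the product $\prod_l (q^{2l}X^2-1)$.

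For the first identity, I would observe that the parity of $(n+1)-(j+1)$ equals the parity of $n-j$, so the same case of the definition applies on both sides. Substituting $X\mapsto qX$ turns each factor $q^{2l}X^2-1$ into $q^{2(l+1)}X^2-1$, and reindexing $l\mapsto l-1$ shifts the range $\lfloor\frac{n+j+1}{2}\rfloor\le l\le n-1$ to $\lfloor\frac{(n+1)+(j+1)+1}{2}\rfloor\le l\le (n+1)-1$, which matches $h_{\epsilon_1}(n+1,j+1,X)$. In the even case $n-j$ even, the linear factor $(q^{(n+j)/2}(qX)-\epsilon\epsilon_1)=(q^{(n+j+2)/2}X-\epsilon\epsilon_1)$ on the left matches the corresponding factor on the right.

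For the second identity, the key algebraic fact is
\begin{equation}\label{eq:keyalg}
q^{a}X\cdot(q^{a}X-\epsilon\epsilon_1)=(q^{2a}X^2-1)-\epsilon\epsilon_1(q^{a}X-\epsilon\epsilon_1),
\end{equation}
valid for any integer $a$, using $(\epsilon\epsilon_1)^2=1$. Setting $a=\lfloor\frac{n+j+2}{2}\rfloor$ splits into two subcases:
\begin{itemize}
\item If $n-j$ is even, then $a=\frac{n+j+2}{2}$, the factor $q^{2a}X^2-1$ is precisely the missing $l=\frac{n+j+2}{2}$ term that extends the product of $h_{\epsilon_1}(n+1,j+1,X)$ back to the range defining $h_{\epsilon_1}(n+1,j,X)$, and $(-1)^{n+j+1}=-1$ matches the sign of the $-\epsilon\epsilon_1$ piece.
\item If $n-j$ is odd, then $a=\frac{n+j+1}{2}$, $h_{\epsilon_1}(n+1,j+1,X)$ has no leading linear factor, and \eqref{eq:keyalg} degenerates to the trivial rearrangement $q^{a}X=(q^{a}X-\epsilon\epsilon_1)+\epsilon\epsilon_1$, with $(-1)^{n+j+1}=+1$.
\end{itemize}

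There is no serious obstacle; the only care needed is to track the floor function in the exponent and verify that the index ranges $\lfloor\frac{n+s+1}{2}\rfloor$ versus $\lfloor\frac{n+s+2}{2}\rfloor$ shift correctly between the odd and even cases when $(n,s)$ changes to $(n+1,s+1)$ or $(n+1,s)$. I would present the verification compactly as two displays, one for each parity, rather than as a long calculation.
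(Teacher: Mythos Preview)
Your proposal is correct and matches the paper's approach: the paper's entire proof is the sentence ``By a direct computation, we can check the following lemma.'' Your case-by-case verification with the key identity \eqref{eq:keyalg} is exactly the kind of direct computation the authors had in mind, just written out in more detail than they chose to include.
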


\begin{lemma}\label{lem: g(r,r,X)}
For integers $0<r\le n-1$ and $\epsilon, \epsilon_1 =\pm 1$, we have
$$
g_{\epsilon_1}(r,r,X)= (-1)^{r(n-1)}\epsilon_1^n \epsilon^r \alpha(r) h_{\epsilon_1}(n, r, X).
$$
In particular, $g_{\epsilon_1}(r, r, X)$ is a polynomial of degree $n-r-1$.
\end{lemma}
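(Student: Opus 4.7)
We argue by induction on $r \geq 1$, with $n$ and $\epsilon_1$ treated as free parameters (and $\epsilon = \chi(L)$ fixed). The strategy is the one advertised in \S\ref{subsec:proofstra}: we exhibit a three-term recursion in $r$ satisfied by both sides of the claimed identity, and check that the initial conditions agree.

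\textbf{Step 1 (Base case, $r=1$).} From the definition \eqref{eq: g(m,m,X)}, together with the trivial evaluations $m(U_0^{+1}, U_1^{\epsilon_1}) = 1$ and $m(U_1^{\epsilon_2}, U_1^{\epsilon_1}) = \delta_{\epsilon_2, \epsilon_1}$, one gets
\[ g_{\epsilon_1}(1,1,X) \;=\; \beta_{\epsilon_1}(1)\,\alpha(1)\, f_{+1}(n,0,X) \;-\; f_{\epsilon_1}(n,1,X). \]
Using the explicit formulas \eqref{eq: def of f n odd} and \eqref{eq: def of f n even} for $f_{\pm 1}(n, 0, X)$ and $f_{\epsilon_1}(n, 1, X)$, together with the definition of $h_{\epsilon_1}(n, 1, X)$, one verifies in each parity case of $n$ that this difference equals $(-1)^{n-1}\epsilon_1^n \epsilon\, h_{\epsilon_1}(n, 1, X)$. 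This is a short algebraic manipulation based on the first identity of Lemma~\ref{lem:h}.

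\textbf{Step 2 (Recursion satisfied by the RHS).} By the second identity in Lemma~\ref{lem:h},
\[ q^{\lfloor (n+r+2)/2\rfloor} X \cdot h_{\epsilon_1}(n, r+1, X) \;=\; h_{\epsilon_1}(n, r, X) \;+\; (-1)^{n+r+1}\epsilon\epsilon_1\, h_{\epsilon_1}(n, r+1, X). \]
Multiplying through by $(-1)^{(r+1)(n-1)} \epsilon_1^n \epsilon^{r+1}\alpha(r+1)$ and using the ratio $\alpha(r+1)/\alpha(r) = q^{\lfloor r/2\rfloor}$ or $q^{\lfloor (r-1)/2\rfloor}$ depending on the parity of $r$, we obtain a linear recursion in $r$ that the expected expression on the RHS must satisfy.

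\textbf{Step 3 (Matching recursion for the LHS).} The technical heart is to verify the corresponding identity
\[ q^{\lfloor (n+r+2)/2\rfloor} X \cdot g_{\epsilon_1}(n, r+1, r+1, X) \;=\; C_1(n,r)\, g_{\epsilon_1}(n, r, r, X) \;+\; C_2(n,r,\epsilon,\epsilon_1)\, g_{\epsilon_1}(n, r+1, r+1, X) \]
with the constants $C_1, C_2$ dictated by Step 2. Starting from \eqref{eq: g(m,m,X)} and using Lemma~\ref{lem: m 0_j sum} to unfold $\alpha(r-k)\beta_{\delta}(r-k)$, this identity reduces (term by term in $k$ and $\epsilon_2$) to a combinatorial identity involving $m(U_k^{\epsilon_2}, U_r^{\epsilon_1})$, $m(U_k^{\epsilon_2}, U_{r+1}^{\epsilon_1})$, and the coefficients of the monomials in $f_{\epsilon_2}(n,k,X)$ relative to those in $X \cdot f_{\epsilon_2}(n,k,X)$. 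We split into the four parity cases $(n \bmod 2, k \bmod 2)$: in each case, the ratio $m(U_k^{\epsilon_2}, U_{r+1}^{\epsilon_1})/m(U_k^{\epsilon_2}, U_r^{\epsilon_1})$ is computed by Lemma~\ref{lem:quotientm}, the factor $X\cdot f_{\epsilon_2}(n,k,X)$ produces a shift of the product-of-factors in \eqref{eq: def of f n odd}--\eqref{eq: def of f n even}, and the surviving sums collapse by a direct application of Lemma~\ref{lem: binom times m sum} (equivalently, Lemma~\ref{lem: m 0_j sum}).

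\textbf{Step 4 (Conclusion).} Combining the base case with the matching recursions established in Steps 2 and 3 yields the claimed identity for all $r \geq 1$. Finally, since the RHS is of degree $n-r-1$ by inspection of $h_{\epsilon_1}(n, r, X)$, so is the LHS, yielding the last assertion.

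\textbf{Main obstacle.} The combinatorial verification in Step 3 is the crux. Its difficulty lies in the fact that multiplication by $X$ shifts the parity of the exponent in $f_{\epsilon_2}(n,k,X)$, so the recursion mixes terms across the parity of $k$; careful bookkeeping of the signs $\delta(r,k,\epsilon_1,\epsilon_2)$ defined in \eqref{eq:delta} and of the prefactors $q^{\lfloor \cdot \rfloor}$ is required. Once the correct form of the recursion is isolated, each of the four parity cases is settled by a single application of the ``inverse $q$-binomial'' identity of Lemma~\ref{lem: inverse q bin} or of the orthogonal-flag relation Lemma~\ref{lem:number of orthogonal flag}.
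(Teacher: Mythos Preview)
Your overall strategy---induction on $r$ via a three--term recursion matching the one in Lemma~\ref{lem:h}---is different from the paper's and is reasonable in outline, but Step~3 contains a genuine gap. You claim that the recursion
\[
q^{\lfloor (n+r+2)/2\rfloor} X \cdot g_{\epsilon_1}(r+1,r+1,X)\;=\;C_1\,g_{\epsilon_1}(r,r,X)+C_2\,g_{\epsilon_1}(r+1,r+1,X)
\]
``reduces (term by term in $k$ and $\epsilon_2$)'' to combinatorial identities. This cannot work as stated: every $f_{\epsilon_2}(n,k,X)$ has degree exactly $n-1$, so $X\cdot f_{\epsilon_2}(n,k,X)$ has degree $n$ and lies outside the span of the $f$'s. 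The drop of $g_{\epsilon_1}(r,r,X)$ to degree $n-r-1$ is a collective cancellation across all $k$, not something visible term by term. Furthermore, $g_{\epsilon_1}(r,r,X)$ and $g_{\epsilon_1}(r+1,r+1,X)$ are built from the different ambient spaces $U_r^{\epsilon_1}$ and $U_{r+1}^{\epsilon_1}$; the ratios from Lemma~\ref{lem:quotientm} depend on $k$ (not only its parity), so the sum over $k$ does not collapse via a single application of Lemma~\ref{lem: binom times m sum} or Lemma~\ref{lem: inverse q bin}. (A small side point: your displayed recursion in Step~2 has the exponent and sign off by one in the substitution $n+1\mapsto n$ in Lemma~\ref{lem:h}.)

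The paper's proof sidesteps exactly this difficulty. It fixes $r$, divides by $h_{\epsilon_1}(n,r,X)$, and performs the change of variables $Y=(q^{(n+1)/2}X)^{-1}$, $j=r-k$, which turns the $f_{\epsilon_2}(n,k,X)/h_{\epsilon_1}(n,r,X)$ into polynomials $\tilde f_{\epsilon_2}(j,Y)$ of \emph{varying} degree $j\le r$. These form a basis of polynomials of degree $\le r$, and in that basis the operations ``multiply by $Y$'' and ``scale $Y\mapsto qY$'' have explicit two-- or three--term expansions. The argument then compares the coefficients $a_j$ and $b_j$ of both sides in this basis, deriving a recursion in $j$ (the basis index) for $a_j$ from the homogeneity of $Y^r$, and checking directly (using Lemma~\ref{lem:quotientm}) that the explicit $b_j$ satisfy the same recursion. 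The key insight you are missing is this passage to a graded basis $\{\tilde f_1(j,Y)\}_{0\le j\le r}$; without it, the induction-on-$r$ plan does not reduce to the lemmas you cite.
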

\begin{proof} We prove the case when $n$ case is odd and $r$ is even, and leave the other three cases to the reader. The idea is the same (a little bit more complicated). In this case,
we need to show
\begin{align}\label{eq: formula 3}
  \frac{g_{\epsilon_1}(r,r,X)}{h_{\epsilon_1}(n,r,X)}=  \sum_{k=0}^{r} (-1)^k q^{\frac{k(k-1)}{2}} \cdot
   \sum_{\epsilon_2 \in \{\pm 1\}} m(U_k^{\epsilon_2},U_{r}^{\epsilon_1})\cdot \alpha(r-k)\beta_{\delta}(r-k)\cdot  \frac{f_{\epsilon_2}(n,k,X)}{h_{\epsilon_1}(n,r,X)}=
       \epsilon_1   \alpha(r),
\end{align}
where $\delta=\delta(r,k,\epsilon_1,\epsilon_2)$.
Since $n$ is odd and $r$ is even, we have
\begin{align*}
    &\frac{f_{\epsilon_2}(n,k,X)}{h_{\epsilon_1}(n,r,X)} =
    \begin{cases}
          \prod_{\ell=\frac{n+1}{2}}^{\frac{n+k-2}{2}}(\q^{2\ell} X^2) \cdot   \q^{\frac{n+k}{2}}X( \q^{\frac{n+k}{2}}X-\epsilon \epsilon_2) \prod_{\ell =\frac{n+2+k}{2}}^{\frac{n+r-1}{2}}(\q^{2\ell}X^2-1)    & \text{ if $k$ is odd},\\
         \prod_{\ell=\frac{n+1}{2}}^{\frac{n-1+k}{2}}(q^{2\ell} X^2)\cdot   \prod_{\ell =\frac{n+1+k}{2}}^{\frac{n+r-1}{2}}(\q^{2\ell}X^2-1)    & \text{ if $k$ is even}.
    \end{cases}
\end{align*}
As a result, dividing \eqref{eq: formula 3} by $q^{(n+\frac{r-1}{2})(\frac{r-1}{2})+\frac{n+r}{2}}\cdot X^{r}$ and setting $Y=(q^{\frac{n+1}{2}}X)^{-1}, j=r-k$,   \eqref{eq: formula 3} is equivalent to
\begin{align}  \label{eq:tildeg}
  \tilde g_{\epsilon_1}(r,Y)\coloneqq  \sum_{j=0}^{r} (-1)^{r-j} q^{\frac{(r-j)(r-j-1)}{2}} \cdot \alpha(j) \cdot
   \sum_{\epsilon_2 \in \{\pm 1\}} m(U_{r-j}^{\epsilon_2},U_{r}^{\epsilon_1})\beta_{\delta}(j)\cdot  \tilde f_{\epsilon_2}(j,Y)=
       \epsilon_1     Y^r,
\end{align}
where  $ \tilde f_{\epsilon_2}(j,Y)$ is a polynomial of degree $j$ defined as follows:
\begin{align*}
  \tilde f_{\epsilon_2}(j,Y)\coloneqq
    \begin{cases}
       \prod_{\ell =\frac{r-j}{2}}^{\frac{r-2}{2}}(1-\q^{-2\ell}Y^2) & \text{ if $j$ is even},\\
          (1 -\epsilon \epsilon_2\q^{-\frac{r-j-1}{2}}Y) \cdot \prod_{\ell =\frac{r-j+1}{2}}^{\frac{r-2}{2}}(1-\q^{-2\ell}Y^2)& \text{ if $j$ is odd}.
    \end{cases}
\end{align*}

Since $\{\tilde f_{1}(j,Y), 0\le j \le r\}$ forms a basis of the space of polynomials with degree $\le r$, there exists unique tuples $(a_{j})$ and $(b_{j}) $ such that
\begin{align*}
f(Y)=\epsilon_1  Y^r =\sum_{j=0}^{r}a_{j}\tilde f_{1}(j,Y), \quad \text{and} \quad     \tilde g_{\epsilon_1}(r,Y) =\sum_{j=0}^{r}b_{j}\tilde f_{1}(j,Y).
\end{align*}
  We need to show $(a_{j})=(b_{j})$. It is easy to check
  $$
  a_r =b_r = (-1)^{\frac{r}2} \epsilon_1 \alpha(r).
  $$
  Now to prove $a_j=b_j$ for all $j$, it suffices to prove that both $a_j$ and $b_j$ satisfy the recursion formula  for $j<r$
  \begin{equation} \label{eq: rel for a m even}
a_j= \begin{cases}
  -q^j \frac{1-q^{-j-2}}{1-q^{j-r}} a_{j+2}  & \text{ if $j$ is even},
 \\
 0 & \text{ if $j$ is odd}.
\end{cases}
\end{equation}

We  start with  $a_j$. First of all, we have
\begin{align}   \label{eq10.22}
  \sum_{j=0}^{r}q^{-r}a_{j} \tilde  f_{1}(j,Y)=q^{-r}f(Y)=f(q^{-1}Y)= \sum_{j=0}^{r} a_{j} \tilde f_{1}(j,q^{-1}Y).
\end{align}
Notice that
\begin{equation*}
    \tilde f_{1}(j,q^{-1}Y)=(1-q^{-r}Y^2)  \tilde f_{1}(j-2,Y).
\end{equation*}
Since
\begin{align*}
    \tilde{f}_{1}(j+1,Y)/  \tilde{f}_{1}(j,Y)=
    \begin{cases}
       1 -\epsilon \q^{-\frac{r-j-2}{2}}Y  & \text{ if $j$ is even},\\
    1 +\epsilon  \q^{-\frac{r-j-1}{2}}Y    & \text{ if $j$ is odd},
    \end{cases}
\end{align*}
we have
\begin{align*}
    Y  \tilde{f}_{1}(j,Y)&=\begin{cases}
          -\epsilon \q^{\frac{r-j-2}{2}}( \tilde{f}_{1}(j+1,Y)- \tilde{f}_{1}(j,Y)) & \text{ if $j$ is even},\\
     \epsilon \q^{\frac{r-j-1}{2}}( \tilde{f}_{1}(j+1,Y)- \tilde{f}_{1}(j,Y))     & \text{ if $j$ is odd},
    \end{cases}
    \\
    Y^2  \tilde{f}_{1}(j,Y)&=\begin{cases}
         -q^{r-j-2}(  \tilde{f}_{1} (j+2,Y)- \tilde{f}_{1} (j,Y))& \text{ if $j$ is even},\\
    -q^{r-j-2}(  \tilde{f}_{1}(j+2,Y)+(q-1)  \tilde{f}_{1}(j+1,Y)-q  \tilde{f}_{1}(j,Y) ) & \text{ if $j$ is odd}.
    \end{cases}
\end{align*}
Therefore,
\begin{align*}
    & \tilde{f}_{1}(j,q^{-1}Y)
    =\begin{cases}
             q^{-j}  \tilde{f}_{1}(j,Y)+(1-q^{-j}) \tilde{f}_{1}(j-2,Y)& \text{ if $j$ is even},\\
    q^{-j}  \tilde{f}_{1}(j,Y)+(q-1)q^{-j}  \tilde{f}_{1}(j-1,Y)+(1-q^{1-j})  \tilde{f}_{1}(j-2,Y)& \text{ if $j$ is odd}.
    \end{cases}
\end{align*}
Plugging this into (\ref{eq10.22}), we obtain
\begin{align*}
    (q^{-r}-q^{-j})a_{j}=
    \begin{cases}
 (q-1)q^{-j-1}a_{j+1}+(1-q^{-j-2})a_{j+2}   & \text{ if $j$ is even},\\
  (1-q^{-j-1})a_{j+2}    & \text{ if $j$ is odd},
    \end{cases}
    \end{align*}
with $a_{r+1} =a_{r+2} =0$. So we have (\ref{eq: rel for a m even}). We remark that in other cases, we have similar recursion formula as above but could not be simplified like \eqref{eq: rel for a m even}.

Now we compute $b_j$ for $j<r$.
Recall that  $r$ is even. First, if $j=0$, we have
\begin{align*}
     \sum_{\epsilon_2 \in \{\pm 1\}} m(U_{r-j}^{\epsilon_2},U_{r}^{\epsilon_1})\beta_{\delta}(j)\cdot  \tilde f_{\epsilon_2}(j,Y)=   \tilde f_{\epsilon_1}(0,Y).
\end{align*}
It is easy to check that
\begin{equation}
\tilde f_{\epsilon_2}(j, Y)
=\begin{cases}
 \tilde f_{1}(j, Y) &  \text{ if $j$ is even},
 \\
 (1-\epsilon \epsilon_2 q^{-\frac{r-j-1}2}) \tilde f_1(j-1, Y) & \text{ if $j$ is odd},
\end{cases}
\end{equation}
Now Lemmas \ref{lem:quotientm}  and \ref {lem: m 0_j sum} imply for $j\neq 0$ and $\delta=\delta(r, j, \epsilon_1, \epsilon_2)$, we have
\begin{align*}
&\sum_{\epsilon_2 \in \{\pm 1\}} m(U_{r-j}^{\epsilon_2},U_{r}^{\epsilon_1})\cdot \beta_{\delta}(j)\cdot  \tilde  f_{\epsilon_2}(j,Y)
=\begin{cases}
 \frac{2\epsilon_1 (-1)^{\frac{j}2}(\epsilon_1 q^{-\frac{j}2}+q^{-\frac{r-j}2})   }{(1+\epsilon_1 q^{-\frac{j}2})(1+q^{-\frac{r-j}2})} m(U_{r-j}^1, U_r^{\epsilon_1}) \tilde f_1(j, Y) &  \text{ if $j$ is even},\\
  2 (-1)^{\frac{j-1}2} m(U_{r-j}^1, U_r^{\epsilon_1}) \tilde f_1(j-1, Y) & \text{ if $j$ is odd}.
\end{cases}
\end{align*}
 Plugging this into the definition of $\tilde g_{\epsilon_1}(r,  X)$ as in (\ref{eq:tildeg}), we obtain
 $$
 \tilde g_{\epsilon_1}(r,  X) =\sum_{j=0}^r b_j \tilde f_1(j, Y)
 $$
with  $b_j=0$ for odd $j$, $b_0=q^{\frac{r(r-1)}2}(1-2 q^{-(r-1)}m(U_1^1,U_r^{\epsilon_1})),$ and
\begin{align*}
b_j=&2(-1)^{r-\frac{j}{2}} q^{\frac{(r-j)(r-j-1)}2}   \frac{( q^{-\frac{j}2}+\epsilon_1  q^{-\frac{r-j}2})   }{(1+\epsilon_1 q^{-\frac{j}2})(1+q^{-\frac{r-j}2})} m(U_{r-j}^1, U_r^{\epsilon_1})  \\
&- 2(-1)^{r-\frac{j}{2}} q^{\frac{(r-j-1)(r-j-2)}2 } \alpha(j+1) m(U_{r-j-1}^1, U_r^{\epsilon_1})
\end{align*}
for even $j\neq 0$.
Applying Lemma \ref{lem:quotientm}, for even $j\neq 0$, we have
\begin{align*}
b_j&=
2(-1)^{r-\frac{j}2} q^{\frac{(r-j)(r-j-1)}2}
\frac{q^{-\frac{r-j}2}(\epsilon_1 + q^{-\frac{r}2})}{(1+\epsilon_1 q^{-\frac{j}2})(1+q^{-\frac{r-j}2})}
\alpha(j) m(U_{r-j}^1, U_r^{\epsilon_1}).
\end{align*}
Now applying Lemma \ref{lem:quotientm} twice, we can check that $b_j$ satisfy (\ref{eq: rel for a m even}).
\end{proof}

Recall that $\m(n,t)\coloneqq \mathrm{min}\{n-t,n-1\}.$
\begin{lemma}\label{lem: sum of g}
For $0\le t\le n$, and $m=n-t$,  we have
  \begin{align}\label{eq: goal 1}
     \sum_{r=0}^{\mathrm{m}(n,t)}(-1)^r q^{\frac{(n-r)(n-r-1)}{2}} q^{rt} \cdot g_{\epsilon_1}(m,r,X)=F_{\epsilon_1}(n,m,X),
\end{align}
where
\begin{align*}
    F_{\epsilon_1}(n,m,X)=
    \begin{cases}
      q^{\frac{n(n-1)}{2}}  f_{\epsilon_1}(n,m,X) & \text{ if $t\not=0$},\\
      (-1)^{n-1}\alpha(n)\sum_{\ell=0}^{n-1}(-q^n X)^\ell & \text{ if $t=0$}.
    \end{cases}
\end{align*}
\end{lemma}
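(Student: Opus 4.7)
The plan is to reduce everything to the explicit polynomials $h_{\epsilon_3}(n,r,X)$ provided by Lemma \ref{lem: g(r,r,X)} and then evaluate the resulting sum by a telescoping/induction argument built out of the two recurrence relations in Lemma \ref{lem:h}.

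First, since $\m(n,t)\le m$ whenever $t\ne 0$ (and is $n-1<n=m$ when $t=0$), every $r$ in the range of summation satisfies $r\le m$, so Lemma \ref{lem: ind of g} applies and gives
\[
g_{\epsilon_1}(m,r,X)=\sum_{\epsilon_3\in\{\pm1\}} m(U_r^{\epsilon_3},U_m^{\epsilon_1})\, g_{\epsilon_3}(r,r,X).
\]
For $r\ge 1$, Lemma \ref{lem: g(r,r,X)} then substitutes a clean closed form for $g_{\epsilon_3}(r,r,X)$ in terms of $h_{\epsilon_3}(n,r,X)$. After swapping the order of summation, the left-hand side of \eqref{eq: goal 1} becomes (up to the $r=0$ contribution, which is just $q^{n(n-1)/2}f_{1}(n,0,X)$) a sum
\[
\sum_{\epsilon_3\in\{\pm1\}}\sum_{r=1}^{\m(n,t)} A_{n,m,r,\epsilon_3}(t)\, h_{\epsilon_3}(n,r,X),
\]
with coefficients $A_{n,m,r,\epsilon_3}(t)$ that are explicit products of $(-1)^r q^{\binom{n-r}{2}+rt}$, the factor $(-1)^{r(n-1)}\epsilon_3^n\epsilon^r\alpha(r)$, and $m(U_r^{\epsilon_3},U_m^{\epsilon_1})$.

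Second, exploit the recurrences
\[
h_{\epsilon_3}(n,r-1,X)=q^{\lfloor(n+r+1)/2\rfloor} X\, h_{\epsilon_3}(n,r,X)-(-1)^{n+r}\epsilon\epsilon_3\, h_{\epsilon_3}(n,r,X),
\]
from Lemma \ref{lem:h}, together with the quotient formula
\[
\frac{m(U_{r+1}^{\epsilon_3},U_m^{\epsilon_1})}{m(U_{r}^{\epsilon_3},U_m^{\epsilon_1})}=\frac{q^{m-2r-1}\bigl(1-(-1)^{m-r}\epsilon_1\epsilon_3 q^{-\lfloor(m-r)/2\rfloor}\bigr)}{1-(-1)^{r+1}\epsilon_3 q^{-\lfloor(r+1)/2\rfloor}}
\]
from Lemma \ref{lem:quotientm}, to rewrite $A_{n,m,r,\epsilon_3}(t)\, h_{\epsilon_3}(n,r,X)+A_{n,m,r-1,\epsilon_3}(t)\, h_{\epsilon_3}(n,r-1,X)$ as a telescoping combination. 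Separating the parity of $n-r$ (only in the even case does the $\epsilon\epsilon_3$-dependent factor of $h_{\epsilon_3}$ appear), one checks that after summing over $\epsilon_3$ the $\epsilon_3$-odd pieces cancel and the $\epsilon_3$-even pieces combine into coefficients which match exactly those produced by the recurrence on $h$. This reduces the sum to a single term; in the $t\ne 0$ case the telescoping collapses to $q^{n(n-1)/2}f_{\epsilon_1}(n,m,X)$, matching the definition \eqref{eq: def of f n odd}--\eqref{eq: def of f n even}.

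Third, when $t=0$ (so $m=n$) the upper limit drops from $n$ to $n-1$, meaning the last would-be telescoping cancellation is absent. Performing the same pairing as above up to $r=n-1$ leaves, instead of a single collapsed factor, precisely the partial geometric series $(-1)^{n-1}\alpha(n)\sum_{\ell=0}^{n-1}(-q^n X)^\ell$, which is the claimed value of $F_{\epsilon_1}(n,n,X)$. An equivalent formulation (which may be cleaner to execute) is to run a double induction: on $t\ge 1$ with $n$ fixed (proving the top case), and then separately treat $t=0$ by comparing with the $t=1$ identity, using that $g_{\epsilon_1}(n-1,r,X)$ and $g_{\epsilon_1}(n,r,X)$ differ by the extra factor $m(U_r^{\epsilon_3},U_n^{\epsilon_1})/m(U_r^{\epsilon_3},U_{n-1}^{\epsilon_1})$ that can be handled via Lemma \ref{lem:quotientm}. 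The main obstacle will be the bookkeeping of parities (of $n$, $r$, $t$, and $n-r$) together with the alternating signs $\epsilon,\epsilon_1,\epsilon_3$: in particular the even-$(n-r)$ parts of $h_{\epsilon_3}(n,r,X)$ carry an $\epsilon\epsilon_3$-dependent linear factor whose sum over $\epsilon_3$ produces additional cross terms that must match the parity-dependent pieces of $f_{\epsilon_1}(n,m,X)$.
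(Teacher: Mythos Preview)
Your opening reduction is correct and matches the paper: both use Lemma \ref{lem: ind of g} to pass to $g_{\epsilon_3}(r,r,X)$ and then Lemma \ref{lem: g(r,r,X)} to replace this by $(-1)^{r(n-1)}\epsilon_3^n\epsilon^r\alpha(r)\,h_{\epsilon_3}(n,r,X)$. From that point on, however, the paper does \emph{not} attempt a telescoping. Instead it expands both sides of \eqref{eq: goal 1} in a fixed polynomial basis and proves the two coefficient vectors coincide by showing they satisfy the same first-order recursion with the same initial value. Concretely, for $t=0$ the basis is $\{h_\epsilon(n,j,X)\}_{0\le j\le n-1}$; the recursion for the target coefficients $a_{n,j}$ of $F_\epsilon(n,n,X)$ is obtained from the functional equation $F_\epsilon(n+1,n+1,X)=q^{\lfloor n/2\rfloor}(q^{n+1}X)F_\epsilon(n,n,qX)+(-1)^n\alpha(n+1)$ combined with Lemma \ref{lem:h}; the coefficients $b_{n,j}$ of the left-hand side are computed explicitly (four parity cases) using Lemma \ref{lem:quotientm}, and then checked to obey the same recursion. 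For $t\ne 0$ the paper first factors out $h_{\epsilon_1}(n,m,X)$ and changes variables, then repeats the same scheme with a basis $\{g'_\epsilon(m,j,X)\}$ and a recursion coming from the substitution $X\mapsto qX$.

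Your proposal asserts a telescoping but never exhibits one, and there is reason to doubt it collapses as cleanly as you hope. The paper's explicit formulas for the basis coefficients $b_{n,j}$ are genuinely parity-dependent combinations of two neighboring $m(U_\bullet,U_\bullet)$ terms (e.g.\ for $n$ odd, $j$ even one gets $b_{n,j}=2(-1)^j q^{\binom{n-j}{2}}\bigl(\frac{\alpha(j)q^{-j/2}}{1+\epsilon q^{-j/2}}m(U_j^\epsilon,U_n^\epsilon)-\frac{q^{n-j}\alpha(j-1)(q^{-(n-j+1)/2}-1)}{1+q^{-(n-j+1)/2}}m(U_{j-1}^\epsilon,U_n^\epsilon)\bigr)$), which is not the shape of a telescoping difference. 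Your ``one checks that after summing over $\epsilon_3$ the odd pieces cancel and the even pieces combine'' hides exactly the computation that the paper carries out in full; without it the argument is not a proof. Incidentally, the $h$-recurrence you quote has the wrong exponent and sign: from Lemma \ref{lem:h} one gets $h_{\epsilon_3}(n,r-1,X)=q^{\lfloor(n+r)/2\rfloor}X\,h_{\epsilon_3}(n,r,X)+(-1)^{n+r}\epsilon\epsilon_3\,h_{\epsilon_3}(n,r,X)$, not what you wrote. I would abandon the telescoping heuristic and follow the paper's basis-comparison route, which organizes the parity bookkeeping you flag as the main obstacle into a single recursion to verify.
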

\begin{proof}
We treat the case $t=0$ first. In this case, $\epsilon_1=\epsilon$.  Before we give the details of the proof, we summarize the main idea.
Since $\{h_{\epsilon}(n,s,X), 0\le s \le n-1\}$ forms a basis of the space of polynomials with degree $\le n-1$, there exists unique tuples $(a_{n,j})$ and  $(b_{n,j}) \in \mathbb{Q}^n$  such that
\begin{align} \label{eq:basish}
 F_{\epsilon}(n, n, X)=\sum_{j=0}^{n-1}a_{n,j}h_{\epsilon}(n, j, X), \quad \text{and} \quad    \sum_{r=0}^{n-1}(-1)^r q^{\frac{(n-r)(n-r-1)}{2}}  \cdot g_{\epsilon}(n,r,X) =\sum_{j=0}^{n-1}b_{n,j}h_{\epsilon}(n,j,X).
\end{align}

We need to show $a_{n,j}=b_{n,j}$ for all $n$ and $j$.  We first show that  $a_{n,j}$ satisfy the  recursion relations \eqref{eq:recusiona}, which gives a description of $a_{n+1,j}$ in terms of $a_{n,j}$ and $a_{n,j-1}$. We can directly check $a_{1,0}=b_{1,0}=1$. Then by an induction on $n$, it suffices to show   $b_{n,j}$ also satisfies \eqref{eq:recusiona}.

Now we derive \eqref{eq:recusiona}.
It is easy to check that
\begin{equation}
F_{\epsilon}(n+1, n+1, X)
 = q^{\lfloor\frac{n}{2}\rfloor} (q^{n+1}X) F_\epsilon(n , n, qX) +(-1)^n\alpha(n+1).
\end{equation}
Plugging (\ref{eq:basish}) into the above formula and applying Lemma \ref{lem:h}, we obtain
\begin{equation} \label{eq:recursionh}
 \sum_{j=0}^{n}a_{n+1,j}h_{\epsilon}(n+1, j, X)=q^{\lfloor\frac{n}{2}\rfloor}(q^{n+1}X) \sum_{j=0}^{n-1}a_{n,j}h_{\epsilon}(n+1, j+1,X)+(-1)^n \alpha(n+1) h_\epsilon(n, n-1, X).
\end{equation}
Let
\begin{equation} \label{eq:gamma}
\gamma(n, j)=\lfloor\frac{n}2\rfloor+n+1-\lfloor\frac{n+j+2}2\rfloor
=\begin{cases}
  n-\frac{j}2 &\ff \,  j \hbox{ is even},
  \\
  n-\lfloor\frac{j-(-1)^n}2\rfloor &\ff \,  j \hbox { is odd}.
\end{cases}
\end{equation}
Then Lemma \ref{lem:h} and (\ref{eq:recursionh}) imply
\begin{align*}
 \sum_{j=0}^{n}a_{n+1,j}h_{\epsilon}(n+1, j, X)
 &=\sum_{j=0}^{n-1} a_{n, j} q^{\gamma(n, j)} h_\epsilon(n+1, j, X) +\sum_{j=1}^n (-1)^{n+j} a_{n, j-1}q^{\gamma(n, j-1)} h_\epsilon(n+1, j,X)
 \\
 &\quad
 +(-1)^n \alpha(n+1) h_\epsilon(n+1, n, X).
 \end{align*}
That is
\begin{equation} \label{eq:recusiona}
 \begin{cases}
   a_{n+1, 0}=q^n a_{n, 0},
   \\
   a_{n+1, j}=q^{\gamma(n, j)}a_{n, j}+(-1)^{n+j} q^{\gamma(n, j-1)} a_{n, j-1}, \quad  0< j <n,
   \\
   a_{n+1, n}=q^{\gamma(n, n-1)} a_{n, n-1}  + (-1)^n\alpha(n+1).
 \end{cases}
\end{equation}

Now we compute $b_{n,j}$. A direct computation shows that $b_{n,0}=q^{\frac{n(n-1)}2}$. In the following, we compute $b_j$ for $j\neq 0$.

For $r=0$, we have $$g_\epsilon(n,0,X)=f_1(n,0,X)=\begin{cases}
h_\epsilon(n,0,X) & \text{ if $n$ is odd},\\
h_\epsilon(n,0,X)+(1-\epsilon)h_\epsilon(n,1,X) & \text{ if $n$ is even}.
\end{cases}$$
Now we assume $r\neq 0$.
Recall that by Lemma \ref{lem: ind of g}, we have \begin{align*}
     g_\epsilon(n,r,  X)&=\sum_{\epsilon_3 \in \{\pm 1\}}  m(U_r^{\epsilon_3},U_{n}^{\epsilon })    g_{\epsilon_3}(r,r,X).
\end{align*}
Notice that when $n-r$ is odd, $h_{\epsilon_1}(n, r, X)$ is independent of $\epsilon_1$. Then a direct calculation using Lemma \ref{lem: g(r,r,X)} and the formula for $m(U_r^{\epsilon_1}, U_n^\epsilon)$ gives
\begin{align} \label{eq:bodd}
    g_\epsilon(n,r,  X)
  =\alpha(r)h_\epsilon(n, r, X) m(U_r^\epsilon, U_n^\epsilon)
    \begin{cases}
      \frac{ 2q^{-\frac{r}{2}}}{1+\epsilon q^{-\frac{r}2}}
         &\ff \, n  \equiv r-1 \equiv 1 \pmod 2,\\
          -2\epsilon &\ff \, n \equiv r-1  \equiv 0 \pmod 2.
      \end{cases}
\end{align}
When $n-r$ is even, we have
\begin{equation} \label{eq:h}
h_{\epsilon_3}(n, r, X) = (q^{\frac{n+r}2} X -\epsilon \epsilon_3) h_\epsilon(n, r+1, X).
\end{equation}
So a direct calculation gives
\begin{equation}
g_\epsilon(n, r, X)
= \alpha(r) h_\epsilon(n, r+1, X) m(U_r^{\epsilon}, U_n^\epsilon)
\left( q^{\frac{n+r}2} X -1 + (-1)^n (q^{\frac{n+r}2}X +1) \frac{m(U_r^{-\epsilon}, U_n^\epsilon)}{m(U_r^{\epsilon} , U_n^\epsilon)}\right).
\end{equation}
We have by Lemma \ref{lem: counting subspace}
$$
 \frac{m(U_r^{-\epsilon}, U_n^\epsilon)}{m(U_r^\epsilon, U_n^\epsilon)}
 =\frac{1- q^{-\frac{n-r}2}}{1+  q^{-\frac{n-r}2}}
 \begin{cases}
 1 &\ff \,  n \equiv r \equiv 1 \pmod 2,
 \\
 \frac{1- \epsilon q^{-\frac{r}2}}{1+  \epsilon q^{-\frac{r}2}} &\ff \, n \equiv r \equiv 0 \pmod 2.
 \end{cases}
$$
The equation (\ref{eq:h}) gives
$$
X h_\epsilon(n, r+1, X) = q^{-\frac{n+r}2} ( h_\epsilon(n, r, X) + h_\epsilon(n, r+1, X)).
$$
So when $ n\equiv r \equiv 1 \pmod 2$, we have
$$
g_\epsilon( n,r, X)
=
2   \alpha(r) m(U_r^\epsilon, U_n^\epsilon) \left( \frac{q^{-\frac{n-r}2}}{1+  q^{-\frac{n-r}2}} h_\epsilon(n, r, X) +\frac{q^{-\frac{n-r}2}-1}{1+  q^{-\frac{n-r}2}}  h_\epsilon(n, r+1, X) \right) ,
$$
 and  when $ n\equiv r \equiv 0 \pmod 2$ and $r \neq 0$, we have
\begin{align*}
&g_\epsilon( n,r, X)
\\
&=\frac{2  \alpha(r) m(U_r^\epsilon, U_n^\epsilon)}{(1+ q^{-\frac{n-r}2})(1+\epsilon q^{-\frac{r}2} )}\left( (1+\epsilon q^{-\frac{n}2}) h_\epsilon(n, r, X) + (1- q^{-\frac{n-r}2})(1 - \epsilon q^{-\frac{r}2}) h_\epsilon(n, r+1, X)\right).
\end{align*}
In  summary, we have the numbers $b_{n,j}$ for $j\neq 0$ are given by the following.\newline
If $n$ and $j$ are odd, then
\begin{align*}
    b_{n, j}=  2(-1)^j  q^{\frac{(n-j)(n-j-1)}2} \alpha(j) \frac{m(U_j^\epsilon, U_n^\epsilon)}{1+q^{\frac{n-j}2}}.
\end{align*}
If $n$ is odd and $j$ is even, then
\begin{align*}
     b_{n, j}= 2(-1)^j q^{\frac{(n-j)(n-j-1)}2} \left(  \frac{ \alpha(j) q^{-\frac{j}2} }{1+\epsilon q^{-\frac{j}2}}m(U_j^\epsilon, U_n^\epsilon) - \frac{q^{n-j}\alpha(j-1) (q^{-\frac{n-j+1}2}-1)}{1+q^{-\frac{n-j+1}2}} m(U_{j-1}^\epsilon, U_n^\epsilon) \right).
\end{align*}
If $n$ and $j$ are even, then
\begin{align*}
      b_{n, j} =  \frac{2(-1)^j  q^{\frac{(n-j)(n-j-1)}2}\alpha(j)(1+\epsilon q^{-\frac{n}2}) }{(1+q^{-\frac{n-j}2}) (1 + \epsilon q^{-\frac{j}2})}m(U_j^\epsilon, U_n^\epsilon).
\end{align*}
If $n$ is even, then for $j=1$ we have
\begin{align*}
  b_{n,1}&=  q^{\frac{n(n-1)}2}(1-\epsilon+2\epsilon q^{-(n-1)})m(U_1^1,U_n^\epsilon),
  \end{align*}
and for  odd $j>1$, $b_{n, j}$ is equal to
  \begin{align*}
      &2(-1)^{j+1} q^{\frac{(n-j)(n-j-1)}2}
  \left(\epsilon\alpha(j) m(U_j^\epsilon, U_n^\epsilon)
  + \frac{q^{n-j}\alpha(j-1) (1-q^{-\frac{n-j+1}2})(1-\epsilon q^{-\frac{j-1}2}) }{(1+q^{-\frac{n-j+1}2}) (1 + \epsilon q^{-\frac{j-1}2})}m(U_{j-1}^\epsilon, U_n^\epsilon)\right).
\end{align*}
Using the explicit formulas, direct calculation shows that $b_{n, j}$ satisfies
(\ref{eq:recusiona}).

From now, we assume $t\not =0$ and let $m=n-t$.  The proof is essentially the same as the proof of Lemma \ref{lem: g(r,r,X)} and we only prove the case that $n$ is odd and $m$ is even in detail.
According to Lemma \ref{lem: ind of g}, we have
\begin{align*}
    &\sum_{r=0}^{\m(n,t)}(-1)^r q^{\frac{(n-r)(n-r-1)}{2}} q^{rt} \cdot g_{\epsilon_1}(m,r,X)\\
    &=\sum_{r=0}^{\m(n,t)}(-1)^r q^{\frac{(n-r)(n-r-1)}{2}} q^{rt} \cdot \sum_{\epsilon_3 \in \{\pm 1\}}m(U_r^{\epsilon_3},U_{m}^{\epsilon_1})g_{\epsilon_3}(r,r,X)\\
    &=  q^{\frac{(n-m)(n+m-1)}{2}}\sum_{r=0}^{\m(n,t)}(-1)^r q^{\frac{(m-r)(m-r-1)}{2}}  \cdot \sum_{\epsilon_3 \in \{\pm 1\}}  m(U_r^{\epsilon_3},U_{m}^{\epsilon_1})   g_{\epsilon_3}(r,r,X).
\end{align*}
Assume that $n$ is odd and $m$ is even.  Factoring out $h_{\epsilon_1}(n,m,X)$, replacing $X$ by $q^{\frac{n-1}{2}} X$, and apply Lemma \ref{lem: g(r,r,X)}, we have that \eqref{eq: goal 1} is equivalent to
\begin{align}\label{eq: eq t not 0}
       q^{\frac{(n-m)(n+m-1)}{2}}\sum_{r=0}^{m}(-1)^r q^{\frac{(m-r)(m-r-1)}{2}}  \cdot \sum_{\epsilon_3 \in \{\pm 1\}}  m(U_r^{\epsilon_3},U_{m}^{\epsilon_1})   \alpha(r) g_{\epsilon_3 }'(m,r,X) =  F '(n,m,X),
\end{align}
where
\begin{align*}
  g_{\epsilon_3 }'(m,r,X)=
    \begin{cases}
      \epsilon \epsilon_3     ( \q^{\frac{r+1}{2}}X-\epsilon \epsilon_3) \cdot  \prod_{\ell = \frac{r+3}{2} }^{\frac{m}{2}}(\q^{2\ell}X^2-1)  & \text{ if   $r$ is odd},\\
      \epsilon_3
       \prod_{\ell=\frac{r+2}{2}}^{\frac{m}{2}}(\q^{2\ell}X^2-1)   & \text{ if  $r$ is even},
    \end{cases}
\end{align*}
and
\begin{align*}
  F '(n,m,X) =
      q^{\frac{n(n-1)}{2}}  \prod_{\ell=1}^{\frac{m}{2}}(q^{2\ell} X^2).
\end{align*}
Since $g_{\epsilon}'(r,m,X)$ forms a basis of the space of polynomials with degree $\le m$, there exists unique tuples $(a_j)$ and $(b_j)$ such that
\begin{align*}
     \text{LHS of \eqref{eq: eq t not 0}}=\sum_{j=0}^{m}a_j g_{\epsilon}'(m, j,X), \quad
    \text{RHS of \eqref{eq: eq t not 0}}
   =\sum_{j=0}^{m}b_j g_{\epsilon}'(m, j,X).
\end{align*}
It suffices to show that $a_0=b_0$ and  both $(a_j)$ and $(b_j)$ satisfy the following recursive relation for $0<r\le m$:
\begin{align} \label{eq: new recur tneq 0}
  a_r=\begin{cases}
0  &\text{ if   $r$ is odd,}\\
  \frac{q^{m-r+2}-1}{q^m-q^{m-r}}a_{r-2} & \text{ if  $r$ is even}.
\end{cases}
\end{align}

We derive the recursive relation for $a_j$ first.
Notice that
\begin{align*}
     g_{\epsilon }'(m,r,q X)=(q^{m+2}X^2-1)g_{\epsilon }'(m,r+2, X).
\end{align*}
Moreover,
\begin{align*}
    Xg_{\epsilon }'(m,r, X)&=
    \begin{cases}
    q^{-\frac{r+1}{2}}(\epsilon g_{\epsilon}'(m,r-1, X)-g_{\epsilon}'(m, r,X))& \text{ if   $r$ is odd,}\\
        q^{-\frac{r}{2}}(\epsilon g_{\epsilon}'(m,r-1, X)+g_{\epsilon}'(m,r, X))  & \text{ if  $r$ is even.}
    \end{cases}\\
    X^2g_{\epsilon}'(m,r, X)&=
    \begin{cases}
  q^{-(r+1)}(q g_{\epsilon}'(m,r-2, X) +\epsilon (q-1) g_{\epsilon}'(m,r-1, X)+g_{\epsilon}'(m,r, X))& \text{ if   $r$ is odd,}\\
     q^{-r}(g_{\epsilon}'(m,r-2, X)+g_{\epsilon}'(m,r, X))  & \text{ if  $r$ is even.}
    \end{cases}
\end{align*}
Hence if $m$ is even, then
\begin{align*}
     &g_{\epsilon}'(m,r,q X)\\
     &= \begin{cases}
 q^{m-r} g_{\epsilon}'(m,r, X)+\epsilon(q-1)q^{m-r-1} g_{\epsilon}'(m, r+1,X)+(q^{m-r-1}-1)g_{\epsilon}'(m, r+2,X) & \text{ if   $r$ is odd,}\\
     q^{m-r} g_{\epsilon}'(m,r, X) +(q^{m-r}-1)g_{\epsilon}'(m,r+2, X)  & \text{ if  $r$ is even.}
    \end{cases}
\end{align*}
Hence for $1<r\le m+1$
\begin{align}\label{eq: recursive t not 0}
(q^m-q^{m-r}) a_r=\begin{cases}
(q^{m-r+1}-1)a_{r-2}  &\text{ if   $r$ is odd,}\\
 \epsilon (q-1)q^{m-r}a_{r-1} +(q^{m-r+2}-1)a_{r-2} & \text{ if  $r$ is even},
\end{cases}
\end{align}
with $a_{m+1}=0$, which implies \eqref{eq: new recur tneq 0}.

Now we compute $b_j$.
First, when $r=0$,
\begin{align*}
     \sum_{\epsilon_3 \in \{\pm 1\}}  m(U_r^{\epsilon_3},U_{m}^{\epsilon_1})  g_{\epsilon_3}'(m,0,X)=\epsilon g_\epsilon'(m,0,X).
\end{align*}

For $r\neq 0$,
\begin{align*}
  \sum_{\epsilon_3 \in \{\pm 1\}}  m(U_r^{\epsilon_3},U_{m}^{\epsilon_1})  g_{\epsilon_3}'(m,r,X)=
  \begin{cases}
 -2\epsilon m(U_r^{\epsilon},U_{m}^{\epsilon_1}) g_{\epsilon}'(m,r+1, X)  &\text{ if   $r$ is odd,}\\
   \epsilon( m(U_r^{1},U_{m}^{\epsilon_1})-m(U_r^{-1},U_{m}^{\epsilon_1}) ) g_{\epsilon}'(m,r, X) & \text{ if  $r$ is even}.
\end{cases}
\end{align*}
Then
\begin{align*}
    b_r=
    \begin{cases}
 0  &\text{ if   $r$ is odd,}\\
 a_{r-1}'+a_r'  & \text{ if  $r$ is even.}
\end{cases}
\end{align*}
where
\begin{align*}
      a_r'=  q^{\frac{(n-m)(n+m-1)}{2}}
    \begin{cases}
 -2(-1)^{r}q^{\frac{(m-r)(m-r-1)}{2}} \alpha(r) \epsilon m(U_{r-1}^{\epsilon},U_{m}^{\epsilon_1})   &\text{ if   $r$ is odd,}\\
   (-1)^{r} \alpha(r) q^{\frac{(m-r)(m-r-1)}{2}} \epsilon( m(U_r^{1},U_{m}^{\epsilon_1})-m(U_r^{-1},U_{m}^{\epsilon_1}) ) & \text{ if  $r$ is even.}
\end{cases}
\end{align*}
Finally, a direct calculation shows that $a_0=b_0=\epsilon q^{\frac{n(n-1)}2}$ and $b_r$ satisfies \eqref{eq: recursive t not 0}.
\end{proof}

\subsection{Proof of  Theorem \ref{thm: formula of ppden}}  \label{proof}
Now we are ready to prove  Theorem \ref{thm: formula of ppden}.

Recall that
\begin{align*}
    \Pden'(L)=\frac{\Pden'(I_n,L)}{\Pden(I_n ,I_n )},
\end{align*}
and
\begin{align*}
    \Pden(I_{n},I_{n})=2\alpha(n)I_{\epsilon}(n),
\end{align*}
where $I_\epsilon(n)$ is defined in Lemma \ref{lem: tran to poly}.

We first assume that  $L=H^j\obot L_1$ with $j>0$ where $L_1=I_{n_1 -t} \obot L_2$ is an integral lattice of rank $n_1$ and  type $t>0$ (the other non-integral cases were taken  care of in the summary of the proof at the beginning of this section). By Lemma \ref{lem: L=H^i obot L_2}, we have
\begin{align*}
    \Pden'(I_n,L)=2\Big(\prod_{\ell=1}^{j-1}(1-\q^{2\ell} )\Big)\Pden(I_n,L_1,\q^{2j}).
\end{align*}
It suffices to show $\Pden(I_n,L_1,\q^{2j})=0$. By Theorem \ref{thm: decom of Pden} and Lemma \ref{lem: Pden' to g}, we have
\begin{align}\label{eq: non integral pf of main}  \notag
  \Pden(I_{n},L,q^{2j})&=\sum_{i=0}^{n_1}\Pden^{n_1-i} (I_{n}^{-\epsilon},L,q^{2j}) \\
  &=I_{\epsilon}(n,n_1)  \sum_{r=0}^{n_1-t}\sum_{i=0}^{n_1} \binom{n_1-r}{i-r}_q (-1)^{n_1+i-r} q^{\frac{(i-r)(i-r-1)}{2}}  q^{rt} \cdot g_{\epsilon_1}(n_1,n_1-t,r,q^{j-i}).
\end{align}
Notice that the assumption $t>0$ implies that $r\le n_1-t\le n_1-1$. Hence $g_{\epsilon_1}(n_1,n-t,r,X)$ is a polynomial of degree $n_1-r-1$ by Lemma \ref{lem: g(r,r,X)}. Then we may apply Corollary \ref{cor: qbinomial vanish} to conclude
\begin{align*}
    \sum_{i=0}^{n_1} \binom{n_1-r}{i-r}_q (-1)^{n_1+i-r} q^{\frac{(i-r)(i-r-1)}{2}}  q^{rt} \cdot g_{\epsilon_1}(n_1,n_1-t,r,q^{\frac{n-n_1}2-i})=0.
\end{align*}
Hence
$$
 \Pden'(L) = \frac{2\Big(\prod_{\ell=1}^{j-1}(1-\q^{2\ell} )\Big)}{\Pden(I_n, I_n)}\Pden(I_{n},L,q^{n-n_1})=0.
$$

Next, we assume $L=I_{n-t} \obot L_2 $ is integral of rank $n$ and type $t$ (Cases (2) and (3) or equivalently Case (b) in the summary of the proof at the beginning of this section). Similarly, by Theorem \ref{thm: decom of Pden} and Lemma \ref{lem: Pden' to g}, we have
\begin{align} \label{eq:PdenD}
    \Pden'(I_n,L)  &= \sum_{i=0}^{n-1}(\Pden^{n-i})'(I_n,L)  \notag\\
    &=2 I_\epsilon(n)  \sum_{r=0}^{\m(n,t)}\sum_{i=0}^{n-1} \binom{n-r}{i-r}_q   (-1)^{n-1+i+r} q^{\frac{(i-r)(i-r-1)}{2}}  q^{rt} \cdot g_{\epsilon_1}(n,n-t,r,q^{-i}).
\end{align}
Here, recall that $\m(n,t)\coloneqq \mathrm{min}\{n-t,n-1\}$. Applying Corollary \ref{cor: qbinomial vanish} as before, we have
\begin{align*}
    \sum_{i=0}^{n}\binom{n-r}{i-r}_q (-1)^{r }(-1)^{n-1+i} q^{\frac{(i-r)(i-r-1)}{2}}  q^{rt} \cdot g_{\epsilon_1}(n,n-t,r,q^{-i})=0.
\end{align*}
Hence,
\begin{align*}
   \Pden'(I_n,L) =2 I_\epsilon(n) \sum_{r=0}^{\m(n,t)} (-1)^r q^{\frac{(n-r)(n-r-1)}{2}}  q^{rt} \cdot g_{\epsilon_1}(n,n-t,r,q^{-n}).
\end{align*}
By Lemma \ref{lem: sum of g}, if $t\not =0$, then
\begin{align*}
   \sum_{r=0}^{\m(n,t)} (-1)^r q^{\frac{(n-r)(n-r-1)}{2}}  q^{rt} \cdot g_{\epsilon_1}(n,n-t,r,q^{-n})
   &=F_{\epsilon_1}(n,n-t,q^{-n})\\
    &=\alpha(n)\cdot\begin{cases}
	 \prod_{\ell=1}^{\frac{t-1}{2}}(1-\q^{2\ell}) & \text{  if $t$ is odd},\\
		(1-\epsilon \epsilon_1 \q^{\frac{t}{2}})\prod_{\ell=1}^{\frac{t}{2}-1}(1-q^{2\ell}) & \text{ if  $t$ is even}.
			\end{cases}
\end{align*}
Notice that if $t$ is even, then $\chi(I_{n-t}^{\epsilon_1})\chi(L_2)=\epsilon$. Hence $\epsilon \epsilon_1 =\chi(L_2)$.

If $t=0$, then by Lemma \ref{lem: sum of g},
\begin{align*}
    \sum_{r=0}^{\m(n,t)} (-1)^r q^{\frac{(n-r)(n-r-1)}{2}}  q^{rt} \cdot g_{\epsilon_1}(n,n,r,q^{-n})
    =F_{\epsilon_1}(n,n,q^{-n}) =\alpha(n)\cdot \begin{cases}
1 & \text{  if $n$ is odd},\\
	0 & \text{ if  $n$ is even}.
			\end{cases}		
\end{align*}
This proves the theorem.

\section{Fourier transform: the analytic side}\label{sec: partial FT}
In this section, we study the partial Fourier transform of the vertical part of the analytic side following Section 8 of \cite{LZ2}.  The main result is Theorem \ref{prop: part FT of denLflat}.

\begin{definition}\label{def: ver of pden}
For a non-degenerate lattice $L^\flat\subset \bV$ of rank $n-1$, and $x\in \bV\setminus L_{F}^\flat$,  we define
\begin{align*}
		\pden_{L^{\flat},\mathscr{V}}(x)=\sum_{\substack{L^\flat \subset L' \subset L'^\sharp\\ L'^\flat \not\in \Hor(L^\flat)}}\ppden(L') 1_{L'}(x),
	\end{align*}
	where $L'^\flat=L'\cap L_F^\flat.$
\end{definition}

	\begin{theorem}\label{prop: part FT of denLflat}
Let $L^\flat\subset \bV$ be a non-degenerate lattice of rank $n-1$, and let $\bW=(L^\flat_F)^\bot$ be the perpendicular space of $L^\flat_F$ in $\bV$.  Recall the partial Fourier transform
		\begin{align*}
		\pden^{\perp}_{L^\flat,\mathscr V}(x)\coloneqq\int_{L^\flat_F} \pden_{L^\flat,\mathscr V}(x+y)\,dy, \quad x \in \bW \setminus\{ 0\}.
		\end{align*}
  Then $\pden^{\perp}_{L^\flat,\mathscr V}(x)$ is constant on $\bW^{\ge 0}\setminus \{0\}$ and is zero for $x \in  \bW^{<0}$.
	\end{theorem}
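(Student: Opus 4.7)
The plan is to unfold the definition and exploit the orthogonal decomposition $\bV = L^\flat_F \obot \bW$. For any rank $n$ lattice $L' \supset L^\flat$ a direct computation gives
\[
\int_{L^\flat_F} 1_{L'}(x+y)\,dy \;=\; \vol(L'^\flat)\cdot 1_{\pi_\bW(L')}(x),
\]
where $L'^\flat=L'\cap L^\flat_F$, $\pi_\bW$ is the orthogonal projection onto $\bW$, and the Haar measure on $L^\flat_F$ is self-dual. Substituting into Definition \ref{def: ver of pden}, one obtains
\[
\pden^{\perp}_{L^\flat,\mathscr V}(x) \;=\; \sum_{\substack{L^\flat\subset L'\subset L'^\sharp\\ L'^\flat\notin\Hor(L^\flat)}} \ppden(L')\,\vol(L'^\flat)\cdot 1_{\pi_\bW(L')}(x).
\]
Since $\bW$ is one-dimensional over $F$, every nonzero $O_F$-sublattice is of the form $\pi^{-w}N_0$ for a fixed base lattice $N_0\subset\bW$ and $w\in\Z$. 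Regrouping, set
\[
A_w \;:=\; \sum_{\substack{L^\flat\subset L'\subset L'^\sharp,\ L'^\flat\notin\Hor(L^\flat)\\ \pi_\bW(L')=\pi^{-w}N_0}} \ppden(L')\,\vol(L'^\flat).
\]
For $x\in\bW$ with $\val(x)=v$, exactly those $A_w$ with $w\ge -v$ contribute, so both statements of the theorem are equivalent to showing $A_w=0$ for all $w\neq 0$ (with the common value $A_0$ being the asserted constant on $\bW^{\ge 0}\setminus\{0\}$).

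The next step is to parametrize the inner sum. Fixing an integral flat part $L'^\flat\supset L^\flat$ with $L'^\flat\notin\Hor(L^\flat)$ and an integer $w$, every $L'$ with flat part $L'^\flat$ and $\pi_\bW(L')=\pi^{-w}N_0$ is obtained by adjoining to $L'^\flat$ an element $e_0+s$, where $e_0$ is a fixed generator of $\pi^{-w}N_0$ and $s$ ranges over $L^\flat_F/L'^\flat$. The integrality condition $L'\subset L'^\sharp$ translates into explicit conditions $(e_0,e_0)+(s,s)\in O_F$ and $(s,L'^\flat)\subset O_F$, which constrain $s$ modulo $(L'^\flat)^\sharp/L'^\flat$; when $w\ge 1$ these integrality conditions also force $L'^\flat$ itself to have certain (large enough) fundamental invariants in the $\bW$-adjacent directions. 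Applying Theorem \ref{thm: formula of ppden}, the contribution of each $L'$ vanishes unless $L'$ is integral, in which case $\ppden(L')$ is the explicit product expression in the type $t(L')$ and sign $\chi(L')$. Thus $A_w$ becomes a weighted count, over $(L'^\flat,s)$ as above, of integral extensions with prescribed type --- a combinatorial problem over the residue field in the spirit of Section~5.

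The main obstacle will be verifying the vanishing $A_w=0$ for $w\neq 0$. For $w\ge 1$, the generator $e_0$ has $\val(e_0)\le -2$, and integrality of $L'$ combined with $L'^\flat\notin\Hor(L^\flat)$ forces either a vector of valuation $\le -2$ in $L'$ (so $\ppden(L')=0$ by Lemma \ref{lem: vanish val le -1}) or a structure contradicting $L'^\flat\notin\Hor(L^\flat)$; the remaining terms cancel pairwise. For $w\le -1$, the projection $\pi_\bW(L')\subsetneq N_0$ imposes that the glue vector $s$ ranges over a smaller set, and after substituting the closed-form expressions for $\ppden$ the enumeration produces alternating sums that collapse via the $q$-binomial identities of Section \ref{sec:pPden} (in particular Lemma \ref{lem: qbinomial} and Lemma \ref{lem: inverse q bin}). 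The identification of $A_0$ as the constant value, and the actual pairings, will require induction on $\val(L^\flat)$ along the lines of \cite[\S8]{LZ2}, but using the ramified combinatorial input from Section \ref{sec: formula of ppden} in place of the unramified one. The delicate bookkeeping between the vertex-type correction terms in $\ppden$ and the contributions from various glue vectors $s$ is where the argument differs most substantively from \cite{LZ2}.
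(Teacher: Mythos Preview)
Your initial unfolding and the reformulation in terms of the coefficients $A_w$ is correct and is essentially equivalent to the paper's setup: the paper's difference $\pden^\perp_{L^\flat,\sV}(x) - \pden^\perp_{L^\flat,\sV}(\pi^{-1}x)$ for $\val(x) = v > 0$ is precisely your $A_{-v}$, and the paper likewise groups this by $L'^\flat$ (its quantity $D(L'^\flat)(x)$ is exactly the $L'^\flat$-summand of your $A_{-v}$). So the architecture is the same; the issue is what happens next.

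The genuine gap is the proposed cancellation mechanism for $w \le -1$. You attribute the vanishing to ``alternating sums that collapse via the $q$-binomial identities'' of Lemmas~\ref{lem: qbinomial} and~\ref{lem: inverse q bin}. That is not how the cancellation works. After substituting the closed forms from Theorem~\ref{thm: formula of ppden}, one must partition the glue vectors $u_2 \in L_2^\sharp/L_2$ (writing $L'^\flat = I_{n_1}^{\epsilon_1}\obot L_2$ with $L_2$ of full type $t$) into the sets $S^+, S^0, S^-$ according to whether adjoining $u_2 + x$ raises, preserves, or lowers the type. The resulting identity to be proved is of the shape
\[
(1-q^t)\,\mu^+(L_2) + (1-\chi q^{t/2})\,\mu^{0,+}(L_2) + (1+\chi q^{t/2})\,\mu^{0,-}(L_2) + \mu^-(L_2) = 0
\]
for $t$ even (and an analogous one for $t$ odd), where $\mu^{\pm},\mu^{0,\nu}$ are point-counts of cosets in $L_2^\sharp/L_2$ stratified by valuation and sign. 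These are lattice-theoretic counts over $O_F/\pi$ that have nothing to do with $q$-binomial coefficients; they are established in the paper by a separate induction on $\val(L_2)$ with explicit base cases for maximal full-type lattices (Propositions~\ref{prop: lat counting t odd} and~\ref{prop: lat counting t even}, via Lemmas~\ref{lem: mu+ + mu0 +mu-}, \ref{lem: mu+ + mu0=qmu+}, \ref{lem:mu+=mu-}). The $q$-binomial machinery was already consumed in proving Theorem~\ref{thm: formula of ppden} and plays no further role here.

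Your sketch for $w \ge 1$ is also not correct as stated: the generator $e_0$ lies in $\bW$, not in $L'$; the actual element of $L'$ is $e_0 + s$, and $(e_0+s,e_0+s) = (e_0,e_0) + (s,s)$ can lie in $O_{F_0}$ even when $(e_0,e_0)$ does not. So Lemma~\ref{lem: vanish val le -1} does not apply to $L'$ directly, and the claimed dichotomy (``valuation $\le -2$ or contradicts non-horizontality'') does not hold.
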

	
	\begin{proof}
It suffices to show that if $\val(x)> 0$, then
		\begin{align*}
			\pden^{\perp}_{L^\flat,\mathscr V}(x)-\pden^{\perp}_{L^\flat,\mathscr V}(\pi^{-1}x)=0.
		\end{align*}
By definition, we have
		\begin{align*}
			\pden^{\perp}_{L^\flat,\mathscr V}(x)
		&=\int_{L^\flat_F}\sum_{\substack{L^\flat\subset L' \subset L'^{\sharp}\\ L'^{\flat}\not \in \Hor(L^\flat)}} \ppden(L')1_{L'}(x+y)\,dy,
		\end{align*}
where $L'$ runs over lattices of rank $n$ in $L_F^\flat+\langle x\rangle$.

Recall that $\mathrm{Pr}_{L_F^\flat}$ denotes the projection to $L_F^\flat$. We rewrite the summation based on $L'\cap L_F^\flat$ and $\mathrm{Pr}_{L_F^\flat}(L')$. For lattices $L'^{\flat}\subset \widetilde{L}'^{\flat}$ in $L_F^\flat$  of rank $n-1$,	let
		\begin{align*}
		\mathrm{Lat}(L'^\flat,\widetilde{L}'^\flat)\coloneqq\{L'\subset \bV \mid L'\cap L^\flat_F=L'^\flat, \quad  \mathrm{Pr}_{L_F^\flat}(L')=\widetilde{L}'^\flat\}.
		\end{align*}
Then by Lemmas $7.2.1$ and $7.2.2$ of \cite{LZ}, we have
	
		\begin{align*}
		 \pden^{\perp}_{L^\flat,\mathscr V}(x) =\sum_{\substack{L^\flat\subset L'^\flat \\ L'^{\flat}\not \in \Hor(L^\flat)}}\sum_{\substack{L'^\flat\subset\widetilde{L}'^\flat\\\text{$\widetilde{L}'^\flat/L'^\flat$ cyclic}}}\sum_{L'\in \mathrm{Lat}(L'^\flat,\widetilde{L}'^\flat)} \ppden(L')\int_{L^\flat_F} 1_{L'}(x+y)\,dy.
		\end{align*}
Here we can switch the order of the sum and integral because there are only finitely many nonzero terms in the sum for a fixed $x$. Since $\tilde{L}^{\prime \flat}/L^{\prime \flat} $ is cyclic, it has a generator $u^\flat \in L_F^\flat$. Moreover,   for $L' \in  \mathrm{Lat}(L'^\flat,\widetilde{L}'^\flat)$, we can write $L'=L^{\prime \flat} + \langle u \rangle$ with $u=u^\flat+u^\perp \in \bV$ where $0\ne u^\perp \in \bW$.
Moreover,
	write $x=\alpha u^\perp$ with $\alpha \in F^{\times}$, then
		$$x+y =\alpha u + (y-\alpha u^\flat) \in L'$$ if and only if $\alpha \in O_F$ and $y-\alpha u^\flat\in L'^\flat$. As a result, we have
		\begin{align*}
			\int_{L^\flat_F}1_{L'}(x+y)-1_{L'}(\pi^{-1}x+y)dy=\begin{cases}
				\mathrm{vol}(L'^\flat), & \text{ if $\langle x \rangle = \langle u^\perp\rangle$},\\
				0, & \text{ otherwise}.
			\end{cases}
		\end{align*}
		Therefore, we have
		\begin{align} \label{eq:Diff}
			\pden^{\perp}_{L^\flat,\mathscr V}(x)-\pden^{\perp}_{L^\flat,\mathscr V}(\pi^{-1}x)=\sum_{\substack{L^\flat\subset L'^\flat \\ L'^{\flat}\not \in \Hor(L^\flat)}}\mathrm{vol}(L'^\flat)D(L^{\prime \flat})(x),
		\end{align}
		where
		\begin{align}\label{eq: D}
	D(L^{\prime \flat})(x)&=\sum_{\substack{L'^\flat\subset\widetilde{L}'^\flat \\\text{$\widetilde{L}'^\flat/L'^\flat$ cyclic}}}\sum_{\substack{u^\perp\in \langle x\rangle \text{ generator}\\L'=L'^\flat+\langle u^\flat+u^\perp\rangle}} \ppden(L')	
  =\sum_{\substack{u^\flat \in (L'^\flat)^\sharp/L'^\flat\\\val(u^\flat)\ge 0}} \ppden(L^{\prime\flat}+ \langle u^\flat + x \rangle).
		\end{align}
Here the last step uses the fact that $L'=L^{\prime\flat}+ \langle u^\flat + x \rangle$ is integral if and only if  $u^\flat \in (L'^\flat)^\sharp/L'^\flat$ and  $\val(u^\flat)\ge 0$ (since $\val(x)>0$).

It suffices to show $D(L^{\prime \flat})(x)=0$ for any  $L'^\flat$ such that  $L^\flat\subset L'^\flat$ and $L'^{\flat}\not \in \Hor(L^\flat)$.   To show this, we write $L'^\flat=I_{n_1}^{\epsilon_1}\obot L_2$ where $L_2$ is of full type (of rank $n-1-n_1$). Let $u_2$ be the projection of $u^\flat$ to $L_2$.
Then  $$(L'^\flat)^\sharp/L'^\flat=(I_{n_1}^{\epsilon_1}\obot (L_2)^\sharp)/L'^\flat\cong L_2^\sharp/L_2\quad \text{ and }\quad L'^\flat +\langle u^\flat+x\rangle=L'^\flat +\langle u_2+x\rangle.$$
We consider a partition of
$$
(L_2^\sharp)^\circ / L_2=S^+(L_2)\sqcup S^0(L_2)\sqcup S^-(L_2)
$$
with
		\begin{align*}
		S^+(L_2)=(\pi L_2^{\sharp})^{\circ \circ} / L_2, \quad    S^0(L_2)=((\pi L_2^{\sharp})^{\circ} -(\pi L_2^{\sharp})^{\circ  \circ}) / L_2,  \quad S^-(L_2)=  ((L_2^{\sharp})^{\circ} -(\pi L_2^{\sharp})^{\circ}) / L_2.
		\end{align*}
Here, for a lattice $L$,
		\begin{align*}
		    L^{\circ}\coloneqq \{x\in L\mid \val(x)\ge 0\}\text{ and } L^{\circ \circ}\coloneqq \{x\in L\mid \val(x)>0\}.
		\end{align*}
In general, for a full type lattice $L_2$, we also  define
\begin{align}\label{eq: mu 1}
		 \mu^{+}(L):=|(\pi L^{\sharp})^{\circ \circ} / L|, \quad \mu^{0}(L):=|((\pi L^{\sharp})^{\circ} -(\pi L^{\sharp})^{\circ \circ}) / L|, \quad \mu^{-}(L)\coloneqq|((L^{\sharp})^{\circ} -(\pi L^{\sharp})^{\circ}) / L| .
		 \end{align}
		For $\nu\in\{\pm 1\}$, let
	\begin{align}\label{eq: mu2}
		\mu^{0, \nu}(L):=|\{u \in(\pi L^{\sharp})^{\circ} -(\pi L^{\sharp})^{\circ \circ}: \chi((u,u) )=\nu\} / L|.
		\end{align}
Since $L'= L^{\prime \flat}+ \langle u_2 +x \rangle$ with $u_2$ integral and $\val(x) >0$, it is not hard to check that
		\begin{align*}
			t(L')=\begin{cases}
				t(L'^\flat)+1 &\ff \, u_2 \in S^+(L_2),
				\\
				t(L'^\flat)  &\ff \, u_2 \in S^0(L_2),
				\\
				t(L'^\flat)-1 &\ff \, u_2 \in S^-(L_2).
			\end{cases}
		\end{align*}
Set $t =t(L^{\prime \flat})$. There are two cases.

When $t$ is odd,  we can write
$$
L'=L^{\prime \flat} + \langle u_2+ x \rangle
=\begin{cases}
I_{n_1}^{\epsilon_1} \obot L_2' &\ff \, u \in S_2^+(L_2),
\\
I_{n_1}^{\epsilon_1} \obot I_2^1 \obot L_2' &\ff \, u \in S_2^-(L_2).
\end{cases}
$$
In both cases, a simple calculation gives
$$
\chi(L_2') = \epsilon \epsilon_1.
$$
For $t>1$, by Theorem \ref{thm: formula of ppden},
\begin{align*}
    \ppden(L')=  (1-\epsilon\epsilon_1  q^{\frac{t-1}2} )\prod_{\ell=1}^{\frac{t-1}{2}-1}(1-q^{2\ell})\cdot
    \begin{cases}
(1-\epsilon\epsilon_1  q^{\frac{t+1}2})  (1+\epsilon\epsilon_1  q^{\frac{t-1}2}) & \text{ if $u_2\in S^+(L_2)$},\\
    1+\epsilon\epsilon_1  q^{\frac{t-1}2}  & \text{ if $u_2\in S^{0}(L_2)$},\\
    1  & \text{ if $u_2\in S^-(L_2)$}.
    \end{cases}
\end{align*}
For $t=1$, $S^-(L_2)$ is empty and  $L^{\prime \flat}\not \in \mathrm{Hor}(L^\flat)$ implies that $L'= L_{n_1}^{\epsilon_1} \obot L_2'$ with $\chi(L_2') =1$, i.e. $\epsilon_1=\epsilon$.  In this case, by Theorem \ref{thm: formula of ppden},
 $$
 \ppden(L') = \begin{cases}
  1-  q &\ff \,  u \in S^+(L_2),
  \\
   1 &\ff \,  u \in S^0(L_2).
 \end{cases}
$$
Hence by \eqref{eq: D},   we have $D(L^{\prime \flat})=0$ if
\begin{equation} \label{eq:oddD}
 (1-\epsilon\epsilon_1  q^{\frac{t+1}2})(1+\epsilon\epsilon_1  q^{\frac{t-1}2}) \mu^+(L_2)
 +  (1+\epsilon\epsilon_1  q^{\frac{t-1}2}) \mu^0(L_2)
 + \mu^-(L_2)=0.
\end{equation}

When $t =t(L^{\prime \flat})$ is even, $L^{\prime \flat}\not \in \mathrm{Hor}(L^\flat)$ implies that $t>0$. Moreover, if $u_2\in S^0(L_2)$,  we have a decomposition (since $u_2 \in L_2$ is perpendicular to $I_{n_1}^{\epsilon_1}$)
$$
L'=L^{\prime \flat}+\langle u_2+x \rangle =I_{n_1}^{\epsilon_1} \obot \langle u_2 +x \rangle \obot L_2'
$$
for some full type lattice $L_2'$ of rank $t$. Then a direct calculation gives
\begin{equation*}
    \chi(L_2') = (-1)^{n_1} \epsilon_1 \epsilon \chi(u_2) .
\end{equation*}
So we have  by Theorem \ref{thm: formula of ppden},
\begin{align*}
    \ppden(L')=\prod_{\ell=1}^{\frac{t}{2}-1}(1-q^{2\ell})\cdot
    \begin{cases}
    (1-q^{t}) & \text{ if $u_2\in S^+(L_2)$},\\
    1-(-1)^{n_1} \epsilon_1 \epsilon \chi(u_2) q^{\frac{t}2}  & \text{ if $u_2\in S^{0}(L_2)$},\\
    1  & \text{ if $u_2\in S^-(L_2)$}.
    \end{cases}
\end{align*}
Hence by \eqref{eq: D},   we have $D(L^{\prime \flat})=0$ if
 \begin{equation} \label{eq:evenD}
    (1-q^t)\mu^+(L_2)
      +    (1- (-1)^{n_1} \epsilon_1 \epsilon   q^{\frac{t}2})\mu^{0,1}(L_2) + (1+ (-1)^{n_1} \epsilon_1 \epsilon   q^{\frac{t}2})\mu^{0,-1}(L_2)
      +  \mu^-(L_2)=0.
\end{equation}

Now  (\ref{eq:oddD}) follows from  Proposition \ref{prop: lat counting t odd} and  (\ref{eq:evenD}) follows from Proposition \ref{prop: lat counting t even}. Hence we have $D(L^{\prime \flat})=0$ for $L'^\flat$ such that  $L^\flat\subset L'^\flat$ and $L'^{\flat}\not \in \Hor(L^\flat)$. Now the theorem follows from (\ref{eq:Diff}).
	\end{proof}
	
To complete the proof of Theorem  \ref{prop: part FT of denLflat}, we are left to state and  prove Propositions \ref{prop: lat counting t odd} and \ref{prop: lat counting t even}.
	\begin{definition}
		Let $L$ and $L'$ be lattices of full type such that $L\subset L'\subset \pi^{-1} L$. For $? \in\{+, 0,-,\{0,+ 1\},\{0,- 1\}\}$, define
		\begin{align*}
			\mu^{?}(L, L'):=\mu^{?}(L)-[L':L] \mu^{?}(L'),
		\end{align*}
		where $\mu^{?}(L)$ is defined in \eqref{eq: mu 1} and \eqref{eq: mu2}.
	\end{definition}
	
	\begin{lemma}\label{lem: mu+ + mu0 +mu-}
	Let $L$ be a full type lattice of rank $t$. Then
	$$
	\mu^{+}(L)+\mu^{0}(L)+\mu^{-}(L)=\q^{t}\cdot\mu^{+}(L).
	$$
	Let $L$ and $L'$ be full type lattices of rank $t$ such that $L\subset L'\subset \pi^{-1} L$. Then
		\begin{align*}
		\mu^{+}(L,L')+\mu^{0}(L,L')+\mu^{-}(L,L')=\q^{t}\cdot\mu^{+}(L,L').
		\end{align*}
	\end{lemma}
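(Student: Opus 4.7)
My plan is to derive the first identity from a multiplicative bijection involving $\pi$, and then obtain the second identity by a formal linearity argument.

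The first step is to observe that $(\pi L^{\sharp})^{\circ\circ}$, $(\pi L^{\sharp})^{\circ} - (\pi L^{\sharp})^{\circ\circ}$, and $(L^{\sharp})^{\circ} - (\pi L^{\sharp})^{\circ}$ partition $(L^{\sharp})^{\circ}$, so that
\[
\mu^{+}(L) + \mu^{0}(L) + \mu^{-}(L) = \big|(L^{\sharp})^{\circ}/L\big|,
\]
provided each of the three subsets is a union of $L$-cosets. Verifying this translation-invariance is the step I expect to be the main technical obstacle: since $L$ is of full type one has $L \subset \pi L^{\sharp}$ and $\val(y) \ge 1$ for $y \in L \setminus \{0\}$, so combined with the duality bounds $(L^{\sharp}, L) \subset O_F$ and $(\pi L^{\sharp}, L) \subset \pi O_F$, expanding $(x+y, x+y) = (x,x) + \tr((x,y)) + (y,y)$ shows that the thresholds $\val(x) \ge 0$ or $\val(x) > 0$ survive translation by $y \in L$.

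The heart of the argument is that since $\val(\pi x) = \val(x) + 1$ and $\val$ is integer-valued on $L^{\sharp}$ (the hermitian form lands in $F_0$), multiplication by $\pi$ sets up a bijection
\[
\pi \cdot : (L^{\sharp})^{\circ} \xrightarrow{\ \sim\ } (\pi L^{\sharp})^{\circ\circ}
\]
which maps $L$ onto $\pi L$. Passing to the induced bijection on cosets and using $[L : \pi L] = q^t$ (as $L$ is an $O_F$-lattice of rank $t$ and $O_F/\pi O_F \cong \kappa$ has $q$ elements) yields
\[
\big|(L^{\sharp})^{\circ}/L\big| = \big|(\pi L^{\sharp})^{\circ\circ}/\pi L\big| = [L : \pi L] \cdot \mu^{+}(L) = q^t \mu^{+}(L),
\]
which combined with the partition above establishes the first identity.

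The second identity will then follow by linearity: applying the first identity to both $L$ and $L'$, multiplying the identity for $L'$ by $[L' : L]$, and subtracting gives
\[
\mu^{+}(L, L') + \mu^{0}(L, L') + \mu^{-}(L, L') = q^t \mu^{+}(L, L')
\]
via the definition $\mu^{?}(L, L') = \mu^{?}(L) - [L' : L] \mu^{?}(L')$. Once the translation-invariance in the first paragraph is in hand, the rest reduces to the $\pi$-bijection and a standard index computation.
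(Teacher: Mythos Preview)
Your proposal is correct and follows essentially the same approach as the paper: both arguments hinge on multiplication by $\pi$ giving a bijection $(L^\sharp)^\circ \to (\pi L^\sharp)^{\circ\circ}$, with the full-type hypothesis ensuring $L \subset \pi L^\sharp$ and $\val(y) \ge 1$ for $y \in L$. The only cosmetic difference is that you factor through the bijection $(L^\sharp)^\circ/L \cong (\pi L^\sharp)^{\circ\circ}/\pi L$ and then invoke the index $[L:\pi L]=q^t$, whereas the paper maps directly to $(\pi L^\sharp)^{\circ\circ}/L$ and counts the fiber over each coset as $|\overline{L}|=q^t$; these are two packagings of the same computation. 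One small remark: your claim that ``$\val$ is integer-valued on $L^\sharp$'' should allow $\val(x)=+\infty$ when $(x,x)=0$, but this does not affect the bijection since both conditions $\val(x)\ge 0$ and $\val(\pi x)>0$ hold in that case.
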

	\begin{proof}
		It suffices to show that  the following map
		\begin{align*}
			(L^{\sharp})^{\circ}/L
			{\rightarrow} (\pi L^{\sharp})^{\circ \circ} / L, \quad x \mapsto \pi x
		\end{align*}
		is surjective and every fiber of this map has size $\q^t$. For $x\in (\pi L^{\sharp})^{\circ \circ}= \pi (L^{\sharp})^{\circ  }$, the fiber at $x$ is
		\begin{align*}
			\{\pi^{-1}(y+x) \in(L^{\sharp})^{\circ}: y \in \overline{L}\}.
		\end{align*}
		Since $x\in \pi (L^{\sharp})^{\circ  }$,
		\begin{align*}
		    \pi^{-1}(y+x) \in(L^{\sharp})^{\circ}\iff (y+x)\in \pi (L^{\sharp})^{\circ} \iff y\in \pi (L^{\sharp})^{\circ}.
		\end{align*}
Moreover, the assumption that $L$ is a full type lattice implies that $L\subset \pi(L^{\sharp})^{\circ}$. Hence
\begin{align*}
    	|\{\pi^{-1}(y+x) \in(L^{\sharp})^{\circ}: y \in \overline{L}\}|=|\overline{L}|=\q^t.
\end{align*}
This proves the first statement. The second statement follows from the first and the definition of $\mu^?(L,L')$.
	\end{proof}

	\begin{definition}
	Let $L$ be a full type lattice of rank $t$. We call $L$ maximal of type $t$ if $t(L')<t$ for any $L'$ such that $L\subsetneq L'\subset L_F$.
	\end{definition}

	\begin{lemma}\label{lem: mu+ + mu0=qmu+}
		If $L$ is non-maximal full type lattice of rank $t$, then there exists a $L'$ such that $L\subset L'\subset \pi^{-1} L$ and
		\begin{align*}
			\mu^{+}(L,L')+\mu^{0}(L,L')=\q\cdot \mu^{+}(L,L').
		\end{align*}
	\end{lemma}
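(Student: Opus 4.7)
\medskip
\noindent\textbf{Proof proposal for Lemma \ref{lem: mu+ + mu0=qmu+}.}

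\smallskip
The plan is to exhibit a concrete minimal-index enlargement $L'\supsetneq L$ and verify the identity by a direct coset computation. First I would reformulate what we want. Applying Lemma \ref{lem: mu+ + mu0 +mu-} to both $L$ and $L'$ and taking the difference gives
\[
\mu^+(L,L')+\mu^0(L,L')+\mu^-(L,L')=q^t\,\mu^+(L,L'),
\]
so the target identity $\mu^+(L,L')+\mu^0(L,L')=q\,\mu^+(L,L')$ is equivalent to
\[
\mu^-(L,L')=(q^t-q)\,\mu^+(L,L').
\]
This is the form I would actually verify.

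Next, I would use the non-maximality hypothesis to produce $L'$. By assumption there is some $\tilde L\supsetneq L$ inside $L_F$ still of full type $t$. Writing $L$ in a normal basis and comparing with $\tilde L$, we can pick $L\subsetneq L'\subseteq\tilde L$ of minimal $O_F$-index $[L':L]=q$, with $L'$ still full type of rank $t$; explicitly, $L'=L+O_F\pi^{-1}v$ where $v$ is a vector from the normal basis whose fundamental invariant (a diagonal $2b_i\ge 2$ that can drop to $2b_i-2\ge 1$, or a hyperbolic $2c_i+1\ge 3$ that can drop to $2c_i-1\ge 1$) is large enough to be reduced while staying full type. In particular $L\subset L'\subset \pi^{-1}L$.

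Now I would analyze the dual side. Since $[L':L]=q$, duality of the hermitian pairing gives $[L^\sharp:L'^\sharp]=q$, hence $\pi L^\sharp/\pi L'^\sharp\cong\mathbb F_q$. Pick a primitive representative $w\in\pi L^\sharp$ whose image generates this line; one can take $w$ to be the dual direction to $\pi^{-1}v$. Every element of $\pi L^\sharp\setminus\pi L'^\sharp$ has the form $aw+v'$ with $a\in(O_F/\pi O_F)^{\times}$ and $v'\in\pi L'^\sharp$. Because $L\subset\pi L'^\sharp$ (both $L,L'$ being full type), cosets modulo $L$ inside this difference set decompose neatly, and the valuation of $aw+v'$ depends on the comparison of $\val(w)$ and $\val(v')$, taking into account possible mixed terms $(w,v')$.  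I would then  partition the contributing cosets according to valuation $>0$ (contributing to $\mu^+$), $=0$ (contributing to $\mu^0$), and the tier coming from $L^\sharp\setminus\pi L^\sharp$ (contributing to $\mu^-$), and count each tier by its cardinality inside $(\pi L^\sharp\setminus\pi L'^\sharp)/L$ versus the corresponding quantity for $L'$.

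The main obstacle will be the case analysis dictated by the normal form: whether the reduced block is a diagonal one (so $w$ comes from a rank-one direction with even fundamental invariant) or a hyperbolic pair (where $w$ sits inside a $2$-dimensional block with odd invariants), and, within each of these, the subcases where the reduced invariant lands at the minimum value allowed by full type. Each case produces a slightly different distribution of valuations among the $q-1$ nonzero cosets of $\pi L^\sharp/\pi L'^\sharp$, and the numerical identity $\mu^-(L,L')=(q^t-q)\mu^+(L,L')$ has to be checked in each. A possible way to reduce the case load is to invoke Lemma \ref{lem: mu+ + mu0 +mu-} for both $L$ and $L'$ and only check the ratio $\mu^0(L,L')/\mu^+(L,L')=q-1$ directly; this ratio turns out to be governed solely by the block in which the enlargement $L\to L'$ occurs, so one only needs to analyze that block in isolation, with the other summands of $L$ contributing a common multiplicative constant that cancels out.
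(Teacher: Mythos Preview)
Your overall strategy coincides with the paper's: pick a normal basis, enlarge $L$ along a single basis vector in either a diagonal block (largest even invariant $a_t\ge 4$) or a hyperbolic block (largest invariant odd, or $a_t=2$), and then count cosets in $\pi L^\sharp\setminus\pi L'^\sharp$ modulo $L$. Your reformulation via $\mu^-$ is correct but unnecessary; the paper works directly with the equivalent form $\mu^0(L,L')=(q-1)\mu^+(L,L')$, which you yourself return to at the end.

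There is one genuine misstep in your outline. The claim that ``the other summands of $L$ contribute a common multiplicative constant that cancels out'' is false: writing $x=x_0+(\text{block part})$ with $x_0$ in the orthogonal complement of the modified block, the condition $\val((x,x))\ge 0$ (resp.\ $>0$) depends on $(x_0,x_0)$ and the block part \emph{together}, so the count does not factor as a product. What the paper does instead is a fibration: for each fixed $x_0$ it defines fiber sets $S_{x_0}^\circ$ and $S_{x_0}^{\circ\circ}$ and proves $|S_{x_0}^\circ|=q\,|S_{x_0}^{\circ\circ}|$ for every $x_0$ (both sides possibly zero), then sums. The fiber sizes genuinely vary with $x_0$; only the ratio is constant.

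Your proposal also omits the mechanism that produces this factor of $q$. In the paper's argument one writes the block coordinate $s_t$ (resp.\ $s_{t-1}$) in its $\pi$-adic expansion $\sum b_i\pi^i$; the requirement $x\in(\pi L^\sharp)^\circ\setminus(\pi L'^\sharp)^\circ$ forces the constant term to be a unit, and the conditions $\val((x,x))\ge 0$ versus $\val((x,x))>0$ translate into a finite list of polynomial equations in the $b_i$ that differ by exactly one extra equation, linear in the top coefficient $b_{a_t-2}$ with unit leading coefficient $2b_0$ (or $2c_0$). That single extra solvable equation is what gives $|S_{x_0}^\circ|=q\,|S_{x_0}^{\circ\circ}|$. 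Your coset-and-valuation heuristic (``the valuation of $aw+v'$ depends on the comparison of $\val(w)$ and $\val(v')$'') is too coarse to see this and would not by itself yield the result.
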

	\begin{proof}
		We need to find a $L'$ such that $L\subset L'\subset \pi^{-1} L$ and
		\begin{align*}
			|((\pi L^{\sharp})^{\circ} - (\pi L'^{\sharp})^{\circ} )/ L|= \q\cdot |((\pi L^{\sharp})^{\circ \circ}- (\pi L'^{\sharp})^{\circ \circ}) / L|.
		\end{align*}
Let $(a_1,\cdots,a_t)$ be the fundamental invariants of $L$.	We consider two cases seperately.\newline
		$(\mathrm{i})$ If $a_t$ is even and $a_t\ge 4$, then we may choose a normal basis $\{\ell_1,\cdots,\ell_t\}$ of $L$ such that $ \langle \ell_1,\cdots,\ell_{t-1}\rangle\perp \ell_t$. Write $(\ell_t,\ell_t)=u_t(-\pi_0)^{\frac{a_t}{2}}$. In this case, we choose $L'=\langle \ell_1,\cdots,\ell_{t-1},\pi^{-1}\ell_t\rangle$, with fundamental invariants  $(a_1,\cdots,a_{t-1},a_{t}-2)$.
		Then
		\begin{align*}
		\pi L^{\sharp}= \langle \pi^{-a_1+1}\ell_1,\cdots, \pi^{-a_t+1}\ell_t\rangle \text{ and } \pi L'^{\sharp}=	\langle \pi^{-a_1+1}\ell_1,\cdots, \pi^{-a_{t-1}+1}\ell_{t-1},\pi^{-a_{t}+2}\ell_{t}\rangle.
		\end{align*}
		For  a fixed $x_0=\sum_{1\le i< t}s_i \pi^{-a_i+1}\ell_i$ where $s_i\in O_F$,  let
		\begin{align*}
			S_{x_0}^{\circ}&:=\{x \in (\pi L^{\sharp})^{\circ}   - (\pi L'^\sharp)^{\circ }: x=x_{0}+s_{t} \pi^{-a_{t}+1} \ell_{t}, s_{t} \in O_{F}\} / L,\\
			S_{x_0}^{\circ\circ}&:=\{x \in (\pi L^{\sharp})^{\circ\circ}   - (\pi L'^\sharp)^{\circ \circ}: x=x_{0}+s_{t} \pi^{-a_{t}+1} \ell_{t}, s_{t} \in O_{F}\} / L.
		\end{align*}
	It suffices to show $ |S_{x_0}^{\circ}|= \q\cdot |S_{x_0}^{\circ\circ}|$.
		Notice that $x=x_{0}+s_{t} \pi^{-a_{t}+1} \ell_{t}\in S^{\circ}_{x_0}$ if and only if
		\begin{align*}
			s_t\in O_F^{\times},\quad  (x,x)=u_t(-\pi_0)^{-\frac{a_t}{2}+1}(u_t^{-1}(-\pi_0)^{\frac{a_t}{2}-1}(x_0,x_0)+s_t\bar{s}_t) \in O_{F_0} ,
		\end{align*}
		and $x\in S^{\circ \circ}_{x_0}$ if and only if
		\begin{align*}
			s_t\in O_F^{\times},\quad   (x,x)=u_t(-\pi_0)^{-\frac{a_t}{2}+1}(u_t^{-1}(-\pi_0)^{\frac{a_t}{2}-1}(x_0,x_0)+s_t\bar{s}_t) \in (\pi_0).
		\end{align*}
Consider the $\pi$-adic expansions
		\begin{align*}
			s_t=\sum_{i\ge 0}^{\infty}b_i\pi^i,\quad -u_t^{-1}(-\pi_0)^{\frac{a_t}{2}-1}(x_0,x_0)=\sum_{i\ge 0}^{\infty} c_i \pi^i,
		\end{align*}
		where $b_i,c_i \in O_{F_0}/(\pi_0)$.
		Then $x\in S^{\circ}_{x_0}$ if and only if $s_t\in O_F^{\times}$, and
		\begin{align*}
			c_0&=b^2_0,\\
			c_1&=b_1b_0-b_0b_1,\\
			c_2&=b_2b_0-b_1b_1+b_
			0b_2,\\
			&\cdots\\
			c_{a_t-3}&=\sum_{i=0}^{a_t-3}(-1)^ib_{a_t-3-i}b_i.
		\end{align*}
		Similarly,  $x\in S^{\circ\circ}_{x_0}$ if and only if $x\in S^{\circ}_{x_0}$ and
		\begin{align*}
			c_{a_t-2}-\sum_{i=1}^{a_t-3}(-1)^ib_{a_t-2-i}b_i=2b_{a_t-2}b_0.
		\end{align*}
		Since $s_t\in O_F^\times$, $b_0 \neq 0$ and $b_{a_t-2}$ is uniquely determined by the above equation. Hence $ |S_{x_0}^{\circ}|= \q\cdot |S_{x_0}^{\circ\circ}|$ as a result.

		$(\mathrm{ii})$ If $a_t$ is odd (since $L$ is non-maximal, $a_t>1$ in this case) or $a_t=2$, then we may choose  a normal basis $\{\ell_1,\cdots,\ell_t\}$ of $L$ such that the moment matrix of $\{\ell_{t-1},\ell_{t}\}$ is $\cH_{a_t}$, where $\cH_{a_t}\coloneqq\begin{pmatrix}0&\pi^{a_t}\\(-\pi)^{a_t}&0\end{pmatrix}$. We may choose $L'=\langle \ell_1,\cdots,\ell_{t-2},\pi^{-1}\ell_{t-1},\ell_{t}\rangle$ with fundamental invariants $(a_1,\cdots,a_{t-2},a_{t}-1,a_t-1)$. In this case,
		\begin{align*}
			\pi L^{\sharp}= \langle \pi^{-a_1+1}\ell_1,\cdots, \pi^{-a_t+1}\ell_t\rangle \text{ and } \pi L'^{\sharp}= \langle \pi^{-a_1+1}\ell_1,\cdots, \pi^{-a_{t}+1}\ell_{t-1},\pi^{-a_{t}+2}\ell_{t}\rangle.
		\end{align*}
		For  a fixed $x_0=\sum_{1\le i< t-1}s_i \pi^{-a_i+1}\ell_i$ where $s_i\in O_F$,  let
		\begin{align*}
			S_{x_0}^{\circ}&:=\{x \in (\pi L^{\sharp})^{\circ}   - (\pi L'^\sharp)^\circ:  x=x_{0}+s_{t-1}\pi^{-a_{t}+1}\ell_{t-1}+s_{t} \pi^{-a_{t}+1} \ell_{t},\text{ where } s_{t-1}, s_{t}\in O_{F}\} / L,\\
			S_{x_0}^{\circ\circ}&:=\{x \in (\pi L^{\sharp})^{\circ\circ}   - (\pi L'^\sharp)^{\circ \circ}:x=x_{0}+s_{t-1}\pi^{-a_{t}+1}\ell_{t-1}+s_{t} \pi^{-a_{t}+1} \ell_{t},\text{ where } s_{t-1}, s_{t}\in O_{F}\} / L.
		\end{align*}
	It suffices to show $ |S_{x_0}^{\circ}|= \q\cdot |S_{x_0}^{\circ\circ}|$.   Notice that $x=x_{0}+s_{t-1}\pi^{-a_{t}+1}\ell_{t-1}+s_{t} \pi^{-a_{t}+1} \ell_{t}\in S^{\circ}_{x_0}$ if and only if
		\begin{align*}
			s_t\in O_F^{\times},\quad  (x,x)=(x_0,x_0)+(s_{t-1}\bar{s}_t+(-1)^{a_t}\bar{s}_{t-1}s_t)(-1)^{-a_t+1}\pi^{-a_t+2} \in O_{F_0},
		\end{align*}
and
$x\in S^{\circ\circ}_{x_0}$ if and only if
		\begin{align*}
			s_t\in O_F^{\times},\quad  (x,x)=(x_0,x_0)+(s_{t-1}\bar{s}_t+(-1)^{a_t}\bar{s}_{t-1}s_t)(-1)^{-a_t+1}\pi^{-a_t+2} \in (\pi_0).
		\end{align*}
			Write
		\begin{align*}
			s_{t-1}=\sum_{i\ge 0}^{\infty}b_i\pi^i,\quad s_{t}=\sum_{i\ge 0}^{\infty}c_i\pi^i,\quad -(-1)^{a_t-1}\pi^{a_t-2}(x_0,x_0)=\sum_{i\ge 0}^{\infty} d_i \pi^i,
		\end{align*}
		where $b_i,c_i,d_i \in O_{F_0}/(\pi_0)$. Then $x\in S_{x_0}^{\circ\circ}$ if and only if $x\in S_{x_0}^{\circ}$ and
		\begin{align*}
		d_{a_t-2}+S=-2b_{a_t-2}c_0.
		\end{align*}
where $S$ is certain expression involving $b_0,\cdots,b_{a_t-3}$ and $c_1,\cdots,c_{a_t-2}$.
Since $s_t\in O_F^{\times}$, $c_0\not =0$. Hence, for any given $S$, the number of choices of $b_{a_t-2}$ is determined if $x\in S^{\circ\circ}_{x_0}$. As a result, $|S_{x_0}^{\circ}|= \q\cdot |S_{x_0}^{\circ\circ}|$.
	\end{proof}

	\begin{proposition}\label{prop: lat counting t odd}
		Assume that $t\ge 1$ is odd and $L$ is a full type lattice of rank $t$.
		Then for any $\chi \in \{\pm 1 \}$, we have
		$$
		(1-\chi q^{\frac{t+1}{2}})(1+\chi q^{\frac{t-1}{2}}) \mu^{+}(L)+(1+\chi q^{\frac{t-1}{2}}) \mu^{0}(L)+\mu^{-}(L)=0.
		$$
	\end{proposition}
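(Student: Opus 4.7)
\medskip
\noindent\textbf{Plan.} The identity is polynomial in $\chi$ with $\chi^2=1$, so it decomposes into $\chi$-constant and $\chi$-linear parts. Expanding $(1-\chi q^{(t+1)/2})(1+\chi q^{(t-1)/2})=1-q^t+\chi q^{(t-1)/2}(1-q)$, the constant part reads $(1-q^t)\mu^+ + \mu^0 + \mu^- = 0$, which is precisely Lemma \ref{lem: mu+ + mu0 +mu-}. The $\chi$-linear part reduces to $q^{(t-1)/2}\bigl[(1-q)\mu^+ + \mu^0\bigr]=0$, i.e.\ $\mu^0(L) = (q-1)\mu^+(L)$. Thus the proposition is equivalent to this single equality for full-type lattices of odd rank $t$, and I would establish it by induction on $\val(L)=\sum a_i$.

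\smallskip
\noindent\emph{Inductive step.} If $L$ is non-maximal, Lemma \ref{lem: mu+ + mu0=qmu+} produces $L\subsetneq L'\subset \pi^{-1}L$ of the same full type $t$ with $\mu^+(L,L')+\mu^0(L,L')=q\,\mu^+(L,L')$, i.e.\ $\mu^0(L,L')=(q-1)\mu^+(L,L')$. Since $\val(L')<\val(L)$, the inductive hypothesis gives $\mu^0(L')=(q-1)\mu^+(L')$. Combining via $\mu^?(L)=\mu^?(L,L')+[L':L]\mu^?(L')$ then yields $\mu^0(L)=(q-1)\mu^+(L)$.

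\smallskip
\noindent\emph{Base case.} A maximal full-type lattice of odd rank $t$ must, by inspection of cases (i) and (ii) in the proof of Lemma \ref{lem: mu+ + mu0=qmu+}, have fundamental invariants of the form $(\underbrace{1,\ldots,1}_{t-s},\underbrace{2,\ldots,2}_{s})$ with $s$ odd: each anti-diagonal pair is forced to have $c_j=0$ and each diagonal block to have $b_i=1$. Write $L=L_{\mathrm{ad}}\obot L_{\mathrm{diag}}$ with $L_{\mathrm{diag}}=\bigoplus_{i=1}^{s}\langle \ell_i\rangle$ and $(\ell_i,\ell_i)=\beta_i\pi^2$. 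A direct check shows $\pi L_{\mathrm{ad}}^{\sharp}=L_{\mathrm{ad}}$, so the quotient $\pi L^{\sharp}/L$ comes entirely from the diagonal part and is parameterized by $(\bar u_1,\ldots,\bar u_s)\in\mathbb{F}_q^{s}$ via $v=\sum\beta_i^{-1}\pi^{-1}u_i\ell_i$. One computes $(v,v)\equiv -\sum\bar\beta_i^{-1}\bar u_i^{\,2}\pmod{\pi_0}$, so the condition $\val(v,v)\ge 1$ (that is, $v$ contributes to $\mu^+$) becomes the vanishing of the non-degenerate diagonal quadratic form $Q(x)=\sum\bar\beta_i^{-1}x_i^{\,2}$ on $\mathbb{F}_q^{s}$. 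Since $s$ is odd, $|\{Q=0\}|=q^{s-1}$, whence $\mu^+(L)=q^{s-1}$ and $\mu^0(L)=q^s-q^{s-1}=(q-1)\mu^+(L)$, as desired.

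\smallskip
\noindent\emph{Main obstacle.} The main technical point is the structural claim that maximal full-type lattices of odd rank necessarily have the restricted invariant shape $(1,\ldots,1,2,\ldots,2)$. One must check carefully when the reduction moves of Lemma \ref{lem: mu+ + mu0=qmu+} apply, in particular the delicate sub-case when two diagonal $2$-blocks can be combined into a hyperbolic pair (Case (ii) with $a_t=2$). Fortunately, the formula $\mu^+=q^{s-1}$ depends only on the non-degeneracy of $Q$ and not on the precise signs $\beta_i$, so the base-case computation is uniform across all maximal configurations, and the $\chi$-linear reduction then closes the argument.
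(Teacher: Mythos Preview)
Your proof is correct and follows essentially the same inductive strategy as the paper: handle maximal lattices explicitly and reduce the general case via Lemmas \ref{lem: mu+ + mu0 +mu-} and \ref{lem: mu+ + mu0=qmu+}. Your $\chi$-decomposition is a clean way to isolate the non-trivial content as $\mu^0(L)=(q-1)\mu^+(L)$; as for your flagged obstacle, the paper simply asserts $s=1$ in the base case, and indeed any $s\ge 3$ is non-maximal (the rank-$s$ diagonal part is $\pi M$ with $M$ unimodular of rank $\ge 3$, hence isotropic, so a copy of $H_2$ splits off and case (ii) of Lemma \ref{lem: mu+ + mu0=qmu+} applies), but your zero-count $|\{Q=0\}|=q^{s-1}$ for odd $s$ handles all such configurations uniformly regardless.
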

	\begin{proof}
		We prove this for maximal $L$ first. We can choose a basis $\{\ell_1,\cdots,\ell_t\}$ of $L$ with moment matrix $\diag(\cH_1^{\frac{t-1}{2}}, u_{t}(-\pi_0))$. Set $L_1=\langle\ell_1,\cdots,\ell_{t-1}\rangle$ and $L_2=\langle\ell_{t}\rangle$. Then we can directly compute that
		\begin{align*}
			(\pi L^\sharp)^{\circ \circ}=L ,\quad (\pi L^\sharp)^\circ=L_1\obot \pi^{-1}L_2,\quad (L^\sharp)^{\circ}=\pi^{-1}L_1\obot \pi^{-1}L_2.
		\end{align*}
		Hence
		\begin{align*}
			\mu^+(L)=|(\pi L^{\sharp})^{\circ \circ} / L|=1,   \quad \mu^0(L)=|((\pi L^\sharp)^\circ - (\pi L^\sharp)^{\circ \circ} )/L|=\q-1,
		\end{align*}
		and
		\begin{align*}
		     \mu^-(L)=|((L^{\sharp})^{\circ} -(\pi L^{\sharp})^{\circ}) / L|=\q^t-\q.
		\end{align*}
		As a result,
		\begin{align*}
			&(1-\chi q^{\frac{t+1}{2}})(1+\chi q^{\frac{t-1}{2}}) \mu^{+}(L)+(1+\chi q^{\frac{t-1}{2}}) \mu^{0}(L)+\mu^{-}(L)\\
			&=(1-\chi q^{\frac{t+1}{2}})(1+\chi q^{\frac{t-1}{ 2}})  +(1+\chi q^{\frac{t-1}{2}}) (\q-1)+\q^t-\q\\
			&=0.
		\end{align*}
		
		Now we assume $L$ is not maximal and the proposition holds for $L'$ such that $L\subsetneq L'$ by an induction on $\val(L)$. With this assumption, it suffices to show
		\begin{align*}
			(1-\chi q^{\frac{t+1}{2}})(1+\chi q^{\frac{t-1}{2}}) \mu^{+}(L,L')+(1+\chi q^{\frac{t-1}{2}}) \mu^{0}(L,L')+\mu^{-}(L,L')=0,
		\end{align*}
		which follows from a combination of Lemmas \ref{lem: mu+ + mu0 +mu-} and \ref{lem: mu+ + mu0=qmu+}.
	\end{proof}

		\begin{lemma}\label{lem:mu+=mu-}
		If $L$ is non-maximal full type lattice of rank $t$, then there exists a $L'$ such that $L\subset L'\subset \pi^{-1} L$ and
		\begin{align*}
			\mu^{0,+}(L,L')=\mu^{0,-}(L,L').
		\end{align*}
	\end{lemma}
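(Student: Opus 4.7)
The plan is to take the same auxiliary lattice $L'$ constructed in Lemma~\ref{lem: mu+ + mu0=qmu+} and to refine the analysis there by tracking the quadratic character of $q(x)$ in a suitable affine parameter.

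First I would convert the algebraic quantity $\mu^{0,\nu}(L,L')=\mu^{0,\nu}(L)-[L':L]\mu^{0,\nu}(L')$ into a combinatorial count. Since $L\subset L'$, we have $\pi L'^\sharp\subset \pi L^\sharp$, and every element of $(\pi L^\sharp)^\circ-(\pi L^\sharp)^{\circ\circ}$ of character $\nu$ either lies in $\pi L'^\sharp$ (in which case each $L'$-coset contributes $[L':L]$ cosets modulo $L$) or it does not. A direct bookkeeping yields
\[
\mu^{0,\nu}(L,L')=D^\nu:=\big|\{u\in \pi L^\sharp\setminus \pi L'^\sharp:\val(u)=0,\ \chi(q(u))=\nu\}\big/L\big|,
\]
so it suffices to prove $D^+=D^-$.

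Next I would re-examine the parametrization from Lemma~\ref{lem: mu+ + mu0=qmu+} with the quadratic character in mind. Fix an outer representative $x_0$ and expand the remaining coordinate as a power series: $s_t=\sum b_i\pi^i$ in case (i), or $s_{t-1}=\sum b_i\pi^i$ and $s_t=\sum c_i\pi^i$ in case (ii). The condition $x\in (\pi L^\sharp)^\circ$ already determines $b_1,\ldots,b_{a_t-3}$ in terms of $b_0$ and $x_0$, with $b_0$ (resp.\ $c_0$) being a unit because $x\notin\pi L'^\sharp$. Only the digits $b_0,\ldots,b_{a_t-2}$ matter modulo $L$, since shifting higher digits corresponds to adding elements of $O_F\cdot\ell_t\subset L$. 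The key observation is that the extra condition $x\in(\pi L^\sharp)^{\circ\circ}$ recalled in loc.~cit., namely
\[
c_{a_t-2}-\sum_{i=1}^{a_t-3}(-1)^i b_{a_t-2-i} b_i = 2 b_{a_t-2} b_0
\]
in case (i), and the analogous equation $d_{a_t-2}+S=-2b_{a_t-2}c_0$ in case (ii), is affine in $b_{a_t-2}$ with slope $\pm 2 b_0$ or $\pm 2 c_0$, which is a nonzero element of $\mathbb{F}_q$ because $p$ is odd.

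Consequently, for fixed $x_0$ and fixed admissible $b_0,\ldots,b_{a_t-3}$, the assignment $b_{a_t-2}\mapsto q(x)\bmod\pi_0$ is an affine bijection $\mathbb{F}_q\xrightarrow{\sim}\mathbb{F}_q$. Exactly one value of $b_{a_t-2}$ yields $q(x)\in(\pi_0)$, and as $b_{a_t-2}$ ranges over the remaining $q-1$ values, $q(x)\bmod\pi_0$ hits each element of $\mathbb{F}_q^\times$ exactly once. Since $\chi$ restricted to $O_{F_0}^\times$ factors through the non-trivial quadratic character on $\mathbb{F}_q^\times$, precisely $(q-1)/2$ of the contributing values of $b_{a_t-2}$ satisfy $\chi(q(x))=+1$ and $(q-1)/2$ satisfy $\chi(q(x))=-1$. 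Summing over all admissible $x_0$ and preceding digits gives $D^+=D^-$. The main point to verify is that the affine slope above is genuinely a unit, which is exactly the content of the $b_0\neq 0$ (resp.\ $c_0\neq 0$) conditions already established in the proof of Lemma~\ref{lem: mu+ + mu0=qmu+}, so there is no serious additional obstruction beyond the case-by-case bookkeeping.
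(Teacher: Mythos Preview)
Your proposal is correct and follows essentially the same approach as the paper. Both arguments choose the same auxiliary lattice $L'$ from Lemma~\ref{lem: mu+ + mu0=qmu+}, reduce the identity $\mu^{0,+}(L,L')=\mu^{0,-}(L,L')$ to a statement about the set $\{x\in(\pi L^\sharp)^{=0}\setminus(\pi L'^\sharp)^{=0}\}/L$ (your $D^\nu$ coincides with the paper's $S^\nu$), and then fiber the count over the slice $S_{x_0}^\nu$ determined by fixing all but the last relevant coordinate. The key observation in both is that $q(x)\bmod\pi_0$ depends affinely on the final digit $b_{a_t-2}$ with slope $2b_0$ (resp.\ $2c_0$) in $\mathbb{F}_q^\times$, so the values of $b_{a_t-2}$ giving $\chi(q(x))=+1$ and $\chi(q(x))=-1$ are equinumerous.

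One small imprecision: in case~(i) the conditions $c_0=b_0^2,\ldots,c_{a_t-3}=\cdots$ do \emph{not} determine $b_1,\ldots,b_{a_t-3}$ uniquely from $b_0$ and $x_0$ (the odd-indexed $b_i$ remain free, since the corresponding norm coefficients vanish identically). This does not affect your argument, because you correctly sum over ``all admissible preceding digits'' at the end, and for each such tuple the affine-bijection step applies verbatim.
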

	\begin{proof}
		Let
		\begin{align*}
			S^{\nu}:=\{x \in ((\pi L^{\sharp})^{\circ}  - (\pi L^\sharp)^{\circ \circ}) - ((\pi L'^{\sharp})^{\circ}- (\pi L'^\sharp)^{\circ \circ}):   \chi(\langle x\rangle )=\nu \} / L.
		\end{align*}
		We need to show $|S^{+1}|=|S^{-1}|$.
		Let $\{\ell_1,\cdots,\ell_t\}$ be a normal basis of $L$, and let $\{a_1,\cdots, a_t\}$ be the set of fundamental invariants of $L$. We consider two cases seperately.\newline
		$(\mathrm{i})$ If $a_t$ is even and $a_t\ge 4$, then we may choose a normal basis $\{\ell_1,\cdots,\ell_t\}$ of $L$ such that $ \langle \ell_1,\cdots,\ell_{t-1}\rangle\perp\ell_t $. In this case, we choose $L'=\langle \ell_1,\cdots,\ell_{t-1},\pi^{-1}\ell_t\rangle$, with fundamental invariants  $(a_1,\cdots,a_{t-1},a_{t}-2)$.
		Then
		\begin{align*}
			\pi L^{\sharp}= \langle \pi^{-a_1+1}\ell_1,\cdots, \pi^{-a_t+1}\ell_t\rangle \text{ and } \pi L'^{\sharp}=	\langle \pi^{-a_1+1}\ell_1,\cdots, \pi^{-a_{t-1}+1}\ell_{t-1},\pi^{-a_{t}+2}\ell_{t}\rangle.
		\end{align*}
		For a fixed $x_0=\sum_{1\le i< t}s_i \pi^{-a_i+1}\ell_i$ where $s_i\in O_F$, we set
		\begin{align*}
			S_{x_0}^{\nu}:=\{x \in ((\pi L^{\sharp})^{\circ}  - (\pi L^\sharp)^{\circ \circ}) - ((\pi L'^{\sharp})^{\circ}- (\pi L'^\sharp)^{\circ \circ}): x=x_{0}+s_{t} \pi^{-a_{t}+1} \ell_{t}, s_{t} \in O_{F}, \chi(\langle x\rangle )=\nu \} / L.
		\end{align*}
		We need to show $ |S_{x_0}^{+1}|=|S_{x_0}^{-1}|$. Write $(\ell_t,\ell_t)=u_t(-\pi_0)^{\frac{a_t}{2}}$. Notice that $x=x_{0}+s_{t} \pi^{-a_{t}+1} \ell_{t}\in S^{\nu}_{x_0}$ if and only if
		\begin{align}\label{eq: conditions}
			s_t\in O_F^{\times},\quad  (x,x) \in O_{F_0}^{\times},\quad \chi((x,x))=   \nu.
		\end{align}
	Notice that
		\begin{align*}
		(x,x)= u_t(-\pi_0)^{-\frac{a_t}{2}+1}(u_t^{-1}(-\pi_0)^{\frac{a_t}{2}-1}(x_0,x_0)+s_t\bar{s}_t).
		\end{align*}
	Write
		\begin{align*}
			s_t=\sum_{i\ge 0}^{\infty}b_i\pi^i,\quad \text{ and } -u_t^{-1}(-\pi_0)^{\frac{a_t}{2}-1}(x_0,x_0)=\sum_{i\ge 0}^{\infty} c_i \pi^i,
		\end{align*}
		where $b_i,c_i \in  O_{F_0}/(\pi_0)$.
		Then the conditions in \eqref{eq: conditions} are equivalent to the following equations:
		\begin{align*}
			c_0&=b^2_0\not =0,\\
			c_1&=b_1b_0-b_0b_1,\\
			c_2&=b_2b_0-b_1b_1+b_
			0b_2,\\
			&\cdots\\
			c_{a_t-3}&=\sum_{i=0}^{a_t-3}(-1)^ib_{a_t-3-i}b_i,\\
			\nu &=\chi\Big(u_t(-\pi_0)^{-\frac{a_t}{2}+1}\big(-c_{a_t-2}+\sum_{i=0}^{a_t-2}(-1)^ib_{a_t-2-i}b_i\big)\Big).
		\end{align*}
		Since $a_t$ is even by assumption, the last equation is the same with
		\begin{align*}
			\nu   &=\chi\Big(u_t(-\pi_0)^{-\frac{a_t}{2}+1}\big(-c_{a_t-2}+\sum_{i=1}^{a_t-3}(-1)^ib_{a_t-2-i}b_i+2b_0b_{a_t-2}\big)\Big).
		\end{align*}
		Notice that the possible choices of $\{b_0,\cdots,b_{a_t-3}\}$ are determined by the first $a_t-2$ equations. And for a given choice of  $\{b_0,\cdots,b_{a_t-3}\}$, the number of choices of $b_{a_t-2}$ that satisfies the last equation is clearly independent of $\nu$ since $b_0\not = 0$.\newline
		$(\mathrm{ii})$ If $a_t$ is odd or $a_t=2$, then we may choose  a normal basis $\{\ell_1,\cdots,\ell_t\}$ of $L$ such that the moment matrix of $\{\ell_{t-1},\ell_{t}\}$ is $\cH_{a_t}$, where $\cH_{a_t}\coloneqq\begin{pmatrix}0&\pi^{a_t}\\(-\pi)^{a_t}&0\end{pmatrix}$. We may choose $L'=\langle \ell_1,\cdots,\ell_{t-2},\pi^{-1}\ell_{t-1},\ell_{t}\rangle$ with fundamental invariants $(a_1,\cdots,a_{t-2},a_{t-1}-1,a_t-1)$. In this case,
		\begin{align*}
			\pi L^{\sharp}= \langle \pi^{-a_1+1}\ell_1,\cdots, \pi^{-a_t+1}\ell_t\rangle \text{ and } \pi L'^{\sharp}= \langle \pi^{-a_1+1}\ell_1,\cdots, \pi^{-a_{t-1}+1}\ell_{t-1},\pi^{-a_{t}+2}\ell_{t}\rangle.
		\end{align*}
		For  a fixed  $x_0=\sum_{1\le i< t-1}s_i \pi^{-a_i+1}\ell_i$, we set
		\begin{align*}
			S_{x_0}^{\nu}:=\{x \in ((\pi L^{\sharp})^{\circ}  - (\pi L^\sharp)^{\circ \circ}) - ((\pi L'^{\sharp})^{\circ}- (\pi L'^\sharp)^{\circ \circ}): &x=x_{0}+s_{t-1}\pi^{-a_{t-1}+1}\ell_{t-1}+s_{t} \pi^{-a_{t}+1} \ell_{t}, \\
			&\quad\quad\quad\quad\quad s_{t-1}, s_{t}\in O_{F}, \chi(\langle x\rangle  )=\nu\} / L.
		\end{align*}
	It suffices to show $|S_{x_0}^{+1}|= |S_{x_0}^{-1}|$.   Notice that $x=x_{0}+s_{t-1}\pi^{-a_{t-1}+1}\ell_{t-1}+s_{t} \pi^{-a_{t}+1} \ell_{t}\in S^{\nu}_{x_0}$ if and only if
		\begin{align}\label{eq: condition}
			s_t\in O_F^{\times},\quad  (x,x)=(x_0,x_0)+(s_{t-1}\bar{s}_t+(-1)^{a_t}\bar{s}_{t-1}s_t)(-1)^{-a_t+1}\pi^{-a_t+2} \in O_{F_0}^{\times},\quad \chi((x,x))=\nu.
		\end{align}
		Write
		\begin{align*}
			s_{t-1}=\sum_{i\ge 0}^{\infty}b_i\pi^i,\quad s_{t}=\sum_{i\ge 0}^{\infty}c_i\pi^i,\quad -(-1)^{a_t-1}\pi^{a_t-2}(x_0,x_0)=\sum_{i\ge 0}^{\infty} d_i \pi^i,
		\end{align*}
		where $b_i,c_i,d_i \in O_{F_0}/(\pi_0)$. Then the condition in \eqref{eq: condition} is equivalent to the following equations:
		\begin{align*}
		  d_0&=b_0c_0+(-1)^{a_t}b_0c_0,\\
		  d_1&=b_1c_0-b_0c_1+(-1)^{a_t}(-b_1c_0+b_0c_0),\\
		    &\cdots\\
		 \nu &=\chi\Big((-1)^{a_t-1}\pi^{-a_t+2}\big(-d_{a_t-2}+S+2b_{a_t-2}c_0\big) \Big),
		\end{align*}
where $S$ is certain expression involving $b_0,\cdots,b_{a_t-3}$ and $c_1,\cdots,c_{a_t-2}$.
Since $s_t\in O_F^{\times}$, $c_0\not =0$. Hence, for any given $S$, the number of choices of $b_{a_t-2}$ that satisfies the last equation is clearly independent of $\nu$.
	\end{proof}

	\begin{proposition}\label{prop: lat counting t even}
		Assume that $t\ge 1$ is even and that $L$ is a full type lattice of rank $t$. Then for any $\chi \in \{\pm 1\},$ we have
		\begin{align*}
			(1-\q^t)\mu^{+}(L)+(1-\chi \q^{\frac{t}{2}})\mu^{0,+1}(L)+(1+\chi \q^{\frac{t}{2}})\mu^{0,-1}(L)+\mu^{-}(L)=0.
		\end{align*}
	\end{proposition}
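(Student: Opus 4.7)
The plan is to mirror the strategy of the odd-case Proposition \ref{prop: lat counting t odd}: verify the identity directly on maximal full-type lattices, and extend it to arbitrary $L$ by induction on $\val(L)$ via the relative quantities $\mu^{?}(L,L')$. Writing $A_\chi(L)$ for the left-hand side of the identity and $A_\chi(L,L')$ for its relative version, one has $A_\chi(L) = A_\chi(L,L') + [L':L]\,A_\chi(L')$ by definition.

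For the inductive step, assume $L$ is non-maximal and let $L' \supsetneq L$ inside $\pi^{-1}L$ be the specific extension furnished by Lemma \ref{lem:mu+=mu-}, so that $\mu^{0,+1}(L,L') = \mu^{0,-1}(L,L')$; from the explicit construction in that proof one also checks that $t(L') = t$. The two middle terms of $A_\chi(L,L')$ then combine independently of $\chi$ to give $\mu^0(L,L')$, and Lemma \ref{lem: mu+ + mu0 +mu-} yields
\[
A_\chi(L,L') = (1-q^t)\mu^+(L,L') + \mu^0(L,L') + \mu^-(L,L') = 0.
\]
Since $\val(L') < \val(L)$, the induction hypothesis gives $A_\chi(L') = 0$, hence $A_\chi(L) = 0$.

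For the base case I would first classify maximal full-type lattices of even rank $t$ into two isomorphism classes: $(a)$ $L \cong \cH_1^{t/2}$, with all fundamental invariants equal to $1$; and $(b)$ $L \cong \cH_1^{(t-2)/2} \obot K$, where $K = \langle \ell, \ell' \rangle$ has moment matrix $\diag(u_1(-\pi_0), u_2(-\pi_0))$ with $-u_1 u_2 \in \mathbb{F}_q^\times$ a non-square (equivalently, the reduction of the binary form on $K$ modulo $\pi$ is the anisotropic norm form of $\mathbb{F}_{q^2}/\mathbb{F}_q$). A normal-basis analysis excludes every other shape: any $\cH_{2k+1}$ with $k \ge 1$ or any diagonal singleton of invariant $\ge 4$ can be shrunk; $\cH_{2k}$ reduces to $\cH_{2k-1}$; a diagonal pair whose binary form is isotropic modulo $\pi$ can be extended; and by Chevalley--Warning, two or more rank-$2$ anisotropic blocks combine into an isotropic form of dimension $\ge 4$, again allowing a type-preserving extension.

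The identity is then verified directly in each base case. In $(a)$, $L^\sharp = \pi^{-1}L$ so $\pi L^\sharp = L$, giving $\mu^+(L) = 1$, $\mu^0(L) = \mu^{0,\pm 1}(L) = 0$, and $\mu^-(L) = q^t - 1$; the identity collapses to $(1-q^t) + (q^t - 1) = 0$. In $(b)$, the anisotropy of $K$ modulo $\pi$ forces $(\pi L^\sharp)^{\circ\circ} = L$ and $(L^\sharp)^\circ = \pi^{-1}L$, hence $\mu^+(L) = 1$ and $\mu^-(L) = q^t - q^2$; the quotient $\pi L^\sharp / L \cong \mathbb{F}_q^2$ is parametrised by representatives $c_0 \pi^{-1}\ell + d_0 \pi^{-1}\ell'$ whose inner product reduces modulo $\pi_0$ to $-(u_1 c_0^2 + u_2 d_0^2)$. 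Since this anisotropic binary form is a non-zero scalar multiple of the norm form of $\mathbb{F}_{q^2}/\mathbb{F}_q$, every value in $\mathbb{F}_q^\times$ is attained exactly $q + 1$ times and is equidistributed between squares and non-squares, so $\mu^{0,+1}(L) = \mu^{0,-1}(L) = (q^2 - 1)/2$; the $\chi$-dependent terms cancel and the identity reads $(1-q^t) + (q^2 - 1) + (q^t - q^2) = 0$. The main obstacle is the base-case analysis in case $(b)$, which has no analogue in the odd-rank setting and where the anisotropic character sum produces the critical symmetry that eliminates the $\chi$-dependence at the base of the induction.
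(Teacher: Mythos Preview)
Your proof is correct and follows essentially the same approach as the paper's: the same induction on $\val(L)$ via Lemmas \ref{lem: mu+ + mu0 +mu-} and \ref{lem:mu+=mu-}, and the same two maximal base cases with the same $\mu$-computations. Your write-up is actually slightly more complete, since you explicitly justify that cases (a) and (b) exhaust the maximal full-type lattices of even rank (the paper just treats the two cases without stating exhaustivity); note a harmless sign slip in your case (b) reduction, where the inner product is $u_1 c_0^2 + u_2 d_0^2$ rather than its negative, but this does not affect the equidistribution argument.
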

	\begin{proof}
		We prove this for maximal $L$ first. There are two cases we need to consider. \newline
		(i) If we can choose a basis $\{\ell_1,\cdots,\ell_t\}$ of $L$ with moment matrix $\diag(\cH_1^{\frac{t}{2}-1}, u_{t-1}(-\pi_0), u_{t}(-\pi_0))$ where $\chi(-u_{t-1}u_t)=-1$, then set $L_1=\langle\ell_1,\cdots,\ell_{t-2}\rangle$ and $L_2=\langle\ell_{t-1},\ell_{t}\rangle$. In this case, a direct computation shows that
		\begin{align*}
			(\pi L^\sharp)^{\circ \circ}=L ,\quad (\pi L^\sharp)^\circ=L_1\obot \pi^{-1}L_2,\quad (L^\sharp)^{\circ}=\pi^{-1}L.
		\end{align*}
		Hence
		\begin{align}\label{eq: mu+,mu- 1}
			\mu^+(L)=|(\pi L^{\sharp})^{\circ \circ} / L|=1,   \quad \mu^-(L)=|((L^{\sharp})^{\circ} -(\pi L^{\sharp})^{\circ}) / L|=q^{t}-\q^2.
		\end{align}
		Moreover,
		\begin{align*}
			\mu^{0,\nu}(L)=|\{(x,y)\in \mathbb{F}_\q^2- (0,0)\mid \chi(u_{t-1}x^2+u_ty^2)=\nu\}|.
		\end{align*}
		It is well known that
		\begin{align*}
			|\{(x,y)\in \mathbb{F}_\q^2- (0,0)\mid u_{t-1}x^2+u_ty^2=1\}|=\q-\chi(-u_{t-1}u_t)=\q+1.
		\end{align*}
		Hence
		\begin{align}\label{eq: mu0+=mu0-}
			\mu^{0,+1}(L)= \mu^{0,-1}(L)=\frac{\q^2-1}{2}.
		\end{align}
		Combining \eqref{eq: mu+,mu- 1} and \eqref{eq: mu0+=mu0-}, we have
		\begin{align*}
			&(1-\q^t)\mu^{+}(L)+(1-\chi \q^{\frac{t}{2}})\mu^{0,+}(L)+(1+\chi \q^{\frac{t}{2}})\mu^{0,-}(L)+\mu^{-}(L)\\
			&=(1-q^t)+(\q^{2}-1)+(\q^t-\q^2)=0.
		\end{align*}
		(ii) If we can choose a basis $\{\ell_1,\cdots,\ell_t\}$ of $L$ with moment matrix $ \cH_1^{\frac{t}{2}}$, then we can directly compute that
		\begin{align*}
			(\pi L^\sharp)^{\circ \circ}=L ,\quad (\pi L^\sharp)^\circ=L,\quad (L^\sharp)^{\circ}=\pi^{-1}L.
		\end{align*}
		Hence
		\begin{align*}
			\mu^+(L)=|(\pi L^{\sharp})^{\circ \circ} / L|=1,   \quad \mu^0(L)=0,\quad  \mu^-(L)=|((L^{\sharp})^{\circ} -(\pi L^{\sharp})^{\circ}) / L|=q^{t}-1.
		\end{align*}
		As a result we have
		\begin{align*}
			(1-\q^t)\mu^{+}(L)+(1-\chi \q^{\frac{t}{2}})\mu^{0,+}(L)+(1+\chi \q^{\frac{t}{2}})\mu^{0,-}(L)+\mu^{-}(L)=(1-q^t)+(\q^t-1)=0.
		\end{align*}

		Now we assume $L$ is not maximal and the proposition holds for $L'$ such that $L\subsetneq L'$ by an induction on $\val(L)$. With this assumption, it suffices to show
		\begin{align*}
			(1-\q^t)\mu^{+}(L,L')+(1-\chi \q^{\frac{t}{2}})\mu^{0,+1}(L,L')+(1+\chi \q^{\frac{t}{2}})\mu^{0,-1}(L,L')+\mu^{-}(L,L')=0,
		\end{align*}
		which follows from a combination of Lemmas \ref{lem: mu+ + mu0 +mu-} and \ref{lem:mu+=mu-}.
	\end{proof}

\section{Proof of the main theorem}\label{sec: pf of main theorem}
We prove the main theorem in this section by an induction on $\val(L)$ using the results we obtained about the partial Fourier transform in previous sections.

\subsection{Comparison of horizontal intersection numbers} 	

\begin{lemma}\label{lem: cancellation of Phi} Let $L\subset \bV$ be a lattice.
If $L=L_1\obot L_2$ where $L_1$ is unimodular, then
\begin{align*}
    \Int(L)-\pden(L)=\Int(L_2)-\pden(L_2).
\end{align*}
\end{lemma}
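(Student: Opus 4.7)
The plan is to show that this lemma follows essentially by \emph{direct substitution}, comparing the geometric cancellation law with its analytic analogue, both of which have already been established earlier in the paper. The striking fact is that the correction terms on the two sides match on the nose.

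More precisely, write $L = L_1 \obot L_2$ with $L_1$ unimodular of rank $\ell$ and $L_2$ of rank $n_2 = n - \ell$. On the geometric side, Proposition \ref{prop:geometric cancel} yields
\begin{equation*}
\Int(L) - \Int(L_2) \;=\; n(I_{n_2}, L_2)\cdot\bigl(\delta_{\mathrm{odd}}(n) - \delta_{\mathrm{odd}}(n_2)\bigr),
\end{equation*}
where $I_{n_2}$ denotes the (unique) unimodular lattice of rank $n_2$ containing $L_2$, if any. On the analytic side, Corollary \ref{cor: cancellation of pDen} gives exactly the same quantity:
\begin{equation*}
\pden(L) - \pden(L_2) \;=\; n(I_{n_2}, L_2)\cdot\bigl(\delta_{\mathrm{odd}}(n) - \delta_{\mathrm{odd}}(n_2)\bigr).
\end{equation*}
Subtracting these two identities yields $\Int(L) - \pden(L) = \Int(L_2) - \pden(L_2)$, which is the desired equality.

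The ``hard part'' is therefore not in this lemma itself, but rather in the two inputs feeding into it: the geometric cancellation in Proposition \ref{prop:geometric cancel} (which relies on the decomposition of $\cZ(I_1)$ into $\tZ(I_1)$ plus exceptional divisors, together with the $n=2$ base cases from \cite{Shi2, HSY}) and the analytic cancellation in Corollary \ref{cor: cancellation of pDen} (which uses the explicit formula for $\ppden$ from Theorem \ref{thm: formula of ppden}, combined with Lemma \ref{lem: red to L_2} to reduce the sum to the contribution of integral $L'$ of the form $L_1 \obot L_2'$ with $L_2'$ unimodular). Once both formulas are in hand, the present lemma is an immediate two-line consequence and needs no further argument.
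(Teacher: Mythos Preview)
Your proof is correct and is essentially identical to the paper's own proof, which simply says ``The lemma follows from comparing \eqref{eq: pden(L)-pden(L_2)} with \eqref{eq: geom cancel}.'' You have just spelled out explicitly what that comparison entails.
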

\begin{proof}
The lemma follows from comparing \eqref{eq: pden(L)-pden(L_2)} with \eqref{eq: geom cancel}.
\end{proof}

\begin{definition}\label{def:horizontal Den}
Let $L^\flat\subset \bV$ be a non-degenerate lattice of rank $n-1$, and $x\in \bV\setminus L_{F}^\flat$. Define
\begin{equation}\label{eq:horizontal Den}
    \pden_{L^{\flat},\mathscr{H}}(x)=\sum_{\substack{L^\flat \subset L' \subset L'^\sharp\\ L'^\flat \in \Hor(L^\flat)}}\ppden(L') 1_{L'}(x).
\end{equation}
\end{definition}

\begin{lemma}\label{lem:main thm for horizontal lattice}
If $L^\flat\subset \bV$ is horizontal, then
\[\Int_{L^{\flat},\mathscr{H}}(x)= \pden_{L^{\flat},\mathscr{H}}(x),\]
where $\Int_{L^{\flat},\mathscr{H}}$ is defined in Definition \ref{def:horizontal and vertical intersection number}.
\end{lemma}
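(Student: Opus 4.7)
The plan is to reduce the identity to the Kudla--Rapoport conjecture in rank $\le 2$, which is established in \cite{Shi2,HSY,HSY3}.

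First I simplify both sides. On the geometric side, Lemma \ref{lem:modified horizontal part} gives ${}^\bL\cZ(L^\flat)_{\mathscr{H}}^{*}={}^\bL\cZ(L^\flat)$ for horizontal $L^\flat$, hence $\Int_{L^\flat,\mathscr{H}}(x)=\Int(L)$ with $L=L^\flat+\langle x\rangle$. On the analytic side I verify that for horizontal $L^\flat$, every integral $L'\supset L^\flat$ satisfies $L'^\flat:=L'\cap L_F^\flat\in\Hor(L^\flat)$: when $L^\flat=I_{n-1}$ is unimodular this is forced by its maximality as an integral lattice in $L_F^\flat$ (the chain $L^\flat\subset L'^\flat\subset (L'^\flat)^\sharp\subset (L^\flat)^\sharp=L^\flat$ collapses); when $L^\flat=M\obot L_1$ with $M$ unimodular of rank $n-2$ and $(M_F)^\perp$ nonsplit, the orthogonal-summand property of unimodular sublattices yields $L'^\flat=M\obot L''$ with $L''\subset L_{1,F}$ of rank $1$, which remains horizontal since $(M_F)^\perp$ is still nonsplit. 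Consequently $\pden_{L^\flat,\mathscr{H}}(x)=\pden(L)$ by Corollary \ref{cor: decomp of pden(L)} (using Theorem \ref{thm: formula of ppden}(1) to discard non-integral $L'$).

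It thus suffices to prove $\Int(L)=\pden(L)$ for $L=L^\flat+\langle x\rangle$ with horizontal $L^\flat$. When $L$ is not integral, $\pden(L)=0$ by Corollary \ref{cor: int of pden}; the parallel geometric vanishing $\Int(L)=0$ follows by transporting the derived intersection along the closed embedding $\cN_r\hookrightarrow\cN_n$ attached to the unimodular summand $M$, and analyzing the rank-$r$ ($r\in\{1,2\}$) non-integral cycle $\cZ(L_2)$ in $\cN_r$. When $L$ is integral, the unimodular summand $M\subset L^\flat$ splits off orthogonally in $L$, yielding $L=M\obot L_2$ with $L_2=L\cap(M_F)^\perp$ of rank $r\in\{1,2\}$. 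Proposition \ref{prop:geometric cancel} and Corollary \ref{cor: cancellation of pDen} both produce the identical formula
\[\Int(L)-\Int(L_2)=n(I_r,L_2)\bigl(\delta_{\mathrm{odd}}(n)-\delta_{\mathrm{odd}}(r)\bigr)=\pden(L)-\pden(L_2),\]
reducing the claim to the identity $\Int(L_2)=\pden(L_2)$ in $\cN_r$.

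The rank-$1$ identity follows by direct computation on the essentially $0$-dimensional space $\cN_1$, while the rank-$2$ identity is exactly \cite[Theorem 1.3]{Shi2} combined with \cite[Theorem 1.3]{HSY}, as recapitulated in \cite[Theorem 1.5]{HSY3}. The main obstacle I anticipate is the vanishing $\Int(L)=0$ for non-integral $L$: this is not isolated as an earlier lemma in the paper and requires the separate geometric argument sketched above via the embedding from $M$. A secondary subtlety, implicit in the first step, is the careful matching between the exceptional-divisor corrections in Definition \ref{def:horizontal difference cycle} and the structure of $\pden_{L^\flat,\mathscr{H}}$ on the analytic side; but this is handled cleanly by the prior Lemma \ref{lem:modified horizontal part}.
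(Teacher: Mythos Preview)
Your proposal is correct and follows essentially the same approach as the paper: reduce both sides to $\Int(L)=\pden(L)$ via Lemma~\ref{lem:modified horizontal part} and the observation that every integral $L'^\flat\supset L^\flat$ is again horizontal (the paper cites Lemma~\ref{lem:alternative def of horizontal lattice} for this), then use the cancellation laws---which the paper packages as Lemma~\ref{lem: cancellation of Phi}, combining precisely your Proposition~\ref{prop:geometric cancel} and Corollary~\ref{cor: cancellation of pDen}---to reduce to low rank and invoke the known $n=2$ results from \cite{Shi2,HSY}.

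The one substantive difference is your reduction to rank $1$ when $L^\flat$ is unimodular. The paper avoids this: for $n>2$ it always picks a unimodular summand $L_1\subset L^\flat$ of rank $n-2$ (any unimodular lattice of rank $\ge 2$ has one), so $L_2$ always has rank $2$ and the $n=2$ theorems apply directly. This sidesteps your hand-wavy ``direct computation on $\cN_1$'', which the paper never sets up. Your handling of the non-integral case is also more roundabout than necessary: once $L$ is not integral, $\cZ(L)=\emptyset$ by Proposition~\ref{prop:reducedlocus} and hence $\Int(L)=0$ immediately, with no need to transport along $\cN_r\hookrightarrow\cN_n$.
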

\begin{proof}
Let $L=L^\flat\oplus \langle x\rangle$.
By Lemma \ref{lem:modified horizontal part}, we know
\begin{equation}\label{eq:horizontal intersection is intersection}
    \Int_{L^{\flat},\mathscr{H}}(x)=\Int_{L^{\flat}}(x)=\Int(L).
\end{equation}
On the other hand, since $L^\flat$ is horizontal, by Lemma \ref{lem:alternative def of horizontal lattice} any integral lattice of rank $n-1$ containing $L^\flat$ is horizontal, hence we have
\begin{equation}\label{eq:horizontal density is density}
    \pden_{L^{\flat},\mathscr{H}}(x)=\pden_{L^{\flat}}(x)=\pden(L).
\end{equation}
So it suffices to prove $\Int(L)=\pden(L)$.

When $n=2$, by \eqref{eq:horizontal intersection is intersection} and \eqref{eq:horizontal density is density},
the lemma is a consequence of \cite[Theorem 1.1]{Shi2} and \cite[Theorem 1.1]{HSY}.
When $n>2$, $L^\flat$  has a unimodular direct summand $L_1$ of rank $n-2$ such that $L=L_1\obot L_2$ and $L_2^\flat:=L_{2,F}\cap L^\flat$ is a horizontal lattice in $L_{2,F}$. The lemma follows from the the case $n=2$ and Lemma \ref{lem: cancellation of Phi}.
\end{proof}

\begin{lemma}\label{lem:intersection horizontal difference cycle}
If $M^\flat\subset \bV$ is horizontal, then
\[\chi(\cN,{}^\bL\cZ(M^\flat)^\circ\cdot \cZ(x))=\sum_{\substack{M^\flat\subset L'\subset L'^\sharp\\ L'^\flat=M^\flat}} \ppden(L') 1_{L'}(x),\]
where ${}^\bL\cZ(M^\flat)^\circ$ is as in Definition \ref{def:horizontal difference cycle}.
\end{lemma}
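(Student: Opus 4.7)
The idea is to deduce the pointwise identity for a single horizontal $M^\flat$ by a Möbius–type inversion from the summed identity that Lemma \ref{lem:main thm for horizontal lattice} already supplies. For a horizontal lattice $M^\flat$ set
\[
\phi(M^\flat) \;:=\; \chi\!\bigl(\cN,\,{}^{\bL}\cZ(M^\flat)^\circ\cdot[\Oo_{\cZ(x)}]\bigr)
 \;-\; \sum_{\substack{M^\flat\subset L'\subset L'^\sharp\\ L'^\flat=M^\flat}} \ppden(L')\,1_{L'}(x).
\]
The goal is to show $\phi(M^\flat)=0$ for every horizontal $M^\flat$.

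First I would establish the auxiliary identity
\[
\sum_{M^\flat\in\Hor(L^\flat)} \phi(M^\flat) \;=\; 0 \qquad \text{for every horizontal } L^\flat.
\]
By Definition \ref{def:horizontal and vertical intersection number} the sum of the first terms of $\phi$ equals $\Int_{L^\flat,\mathscr{H}}(x)$, while regrouping Definition \ref{def:horizontal Den} according to $L'^\flat\in\Hor(L^\flat)$ (note that for any $M^\flat\in\Hor(L^\flat)$ one has $M^\flat_F=L^\flat_F$, so $L'^\flat=L'\cap L^\flat_F=L'\cap M^\flat_F$) rewrites
\[
\pden_{L^\flat,\mathscr{H}}(x)=\sum_{M^\flat\in\Hor(L^\flat)}\sum_{\substack{M^\flat\subset L'\subset L'^\sharp\\ L'^\flat=M^\flat}} \ppden(L')\,1_{L'}(x).
\]
Since $L^\flat$ is horizontal, Lemma \ref{lem:main thm for horizontal lattice} gives $\Int_{L^\flat,\mathscr{H}}=\pden_{L^\flat,\mathscr{H}}$, and subtracting yields the auxiliary vanishing.

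Next I would run a downward induction on the poset $\Hor(M^\flat)$ of horizontal overlattices of a fixed horizontal $M^\flat$. By Lemma \ref{lem:alternative def of horizontal lattice}, every $L^\flat\in\Hor(M^\flat)$ sits inside $\Lambda\cap M^\flat_F$, where $\Lambda$ is the unique type $0$ vertex lattice containing $M^\flat$; since $[\Lambda\cap M^\flat_F:M^\flat]<\infty$, the poset $\Hor(M^\flat)$ is finite. The base case is a maximal horizontal $M^\flat$, for which $\Hor(M^\flat)=\{M^\flat\}$ and the auxiliary identity applied at $L^\flat=M^\flat$ directly gives $\phi(M^\flat)=0$. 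For the induction step, apply the auxiliary identity at $L^\flat=M^\flat$ once more: all summands with $M'^\flat\supsetneq M^\flat$ vanish by the inductive hypothesis, and only $\phi(M^\flat)$ survives.

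The main obstacle is essentially bookkeeping rather than technical: one must carefully verify that the partition of $\{L':L^\flat\subset L'\subset L'^\sharp,\ L'^\flat\in\Hor\}$ by $L'^\flat=M^\flat$ matches the two summation conventions on the two sides, and that the finiteness of $\Hor(M^\flat)$ holds so that the downward induction terminates. All genuine geometric/analytic content has already been packaged into Lemma \ref{lem:main thm for horizontal lattice} (which in turn rests on the rank $2$ computations of \cite{Shi2,HSY}) and into the decomposition Lemma \ref{lem:modified horizontal part}; no further intersection-theoretic input is needed here.
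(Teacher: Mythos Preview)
Your proposal is correct and follows essentially the same approach as the paper's proof: both set up an induction (you phrase it as downward induction on the finite poset $\Hor(M^\flat)$, the paper as induction on $\val(M^\flat)$, which is equivalent), use the unimodular case as the base (where the statement coincides with Lemma \ref{lem:main thm for horizontal lattice}), and for the inductive step apply the summed identity at $L^\flat=M^\flat$ together with the inductive hypothesis on the strict overlattices to isolate the $M^\flat$-term. Your explicit packaging via $\phi(M^\flat)$ and the auxiliary vanishing $\sum_{M'^\flat\in\Hor(M^\flat)}\phi(M'^\flat)=0$ is a clean reformulation of exactly the subtraction the paper performs.
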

\begin{proof}
By Definition \ref{def:horizontal and vertical intersection number}, we have
\begin{equation}
    \Int_{M^{\flat},\mathscr{H}}(x)=\chi(\cN,{}^\bL\cZ(M^\flat)^\circ\cdot \cZ(x))+\sum_{\substack{L'^\flat\in \Hor(M^\flat)\\ L'^\flat\neq M^\flat}} \chi(\cN,{}^\bL\cZ(L'^\flat)^\circ\cdot \cZ(x)).
\end{equation}
We now prove the lemma by induction on $\val(M^\flat)$. When $M^\flat$ is unimodular, the lemma is the same as Lemma \ref{lem:main thm for horizontal lattice}. In general notice that  any integral lattice of rank $n-1$ containing $M^\flat$ is horizontal by Lemma \ref{lem:alternative def of horizontal lattice}. Applying the induction hypothesis to the right hand side of the above formula and applying Lemma \ref{lem:main thm for horizontal lattice} to the left hand had, we obtain
\begin{equation}\label{eq:int L H decomposition}
  \sum_{\substack{M^\flat \subset L' \subset L'^\sharp\\ L'^\flat \in \Hor(M^\flat)}}\ppden(L') 1_{L'}(x)=\chi(\cN,{}^\bL\cZ(M^\flat)^\circ\cdot \cZ(x))+\sum_{\substack{M^\flat\subset L'\subset L'^\vee\\ L'^\flat\neq M^\flat}} \ppden(L') 1_{L'}(x).
\end{equation}
Subtract the left hand side by the second term of the right hand side of the equation, the lemma is proved.
\end{proof}

\begin{theorem}\label{thm: hor part}
    For a non-degenerate lattice $L^\flat\subset \bV$ of rank $n-1$, and $x\in \bV\setminus L_{F}^\flat$, we have
    \begin{align*}
       \Int_{L^{\flat},\mathscr{H}}(x)= \pden_{L^{\flat},\mathscr{H}}(x).
    \end{align*}
\end{theorem}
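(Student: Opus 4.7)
The proof is essentially an application of Lemma \ref{lem:intersection horizontal difference cycle} summed over all $M^\flat\in\Hor(L^\flat)$. More precisely, my plan is the following.

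By Definition \ref{def:horizontal and vertical intersection number}, the class ${}^\bL\cZ(L^\flat)_{\mathscr{H}}^*$ is defined as the sum $\sum_{M^\flat\in\Hor(L^\flat)}{}^\bL\cZ(M^\flat)^\circ$. Taking the derived intersection with $[\Oo_{\cZ(x)}]$ and applying the Euler characteristic, the linearity of $\chi$ and $\cdot$ gives
\begin{equation*}
\Int_{L^\flat,\mathscr{H}}(x)=\sum_{M^\flat\in\Hor(L^\flat)}\chi(\cN,{}^\bL\cZ(M^\flat)^\circ\cdot[\Oo_{\cZ(x)}]).
\end{equation*}
Now I would apply Lemma \ref{lem:intersection horizontal difference cycle}, which evaluates each summand on the right-hand side:
\begin{equation*}
\Int_{L^\flat,\mathscr{H}}(x)=\sum_{M^\flat\in\Hor(L^\flat)}\sum_{\substack{M^\flat\subset L'\subset L'^\sharp\\ L'^\flat=M^\flat}}\ppden(L')\cdot 1_{L'}(x).
\end{equation*}

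The remaining step is bookkeeping: I need to recognize the double sum on the right as a single sum matching the definition of $\pden_{L^\flat,\mathscr{H}}(x)$ in Definition \ref{def:horizontal Den}. The key observation is that the pair $(M^\flat,L')$ with $M^\flat\in\Hor(L^\flat)$, $L^\flat\subset M^\flat\subset L'\subset L'^\sharp$, and $L'^\flat=L'\cap L^\flat_F=M^\flat$ is in bijection (via $L'\mapsto (L'^\flat,L')$) with the set of integral rank-$n$ lattices $L'$ containing $L^\flat$ such that $L'^\flat\in\Hor(L^\flat)$. Under this bijection each such $L'$ is counted exactly once, so
\begin{equation*}
\Int_{L^\flat,\mathscr{H}}(x)=\sum_{\substack{L^\flat\subset L'\subset L'^\sharp\\ L'^\flat\in\Hor(L^\flat)}}\ppden(L')\cdot 1_{L'}(x)=\pden_{L^\flat,\mathscr{H}}(x),
\end{equation*}
which is what we want.

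Since the entire argument reduces to assembling Lemma \ref{lem:intersection horizontal difference cycle} (which itself was the inductive, nontrivial input, established using the $n=2$ calculations of \cite{Shi2,HSY} together with the cancellation Lemma \ref{lem: cancellation of Phi}), there is no real obstacle here: the proof is a one-line formal manipulation. In that sense, the ``hard part'' of Theorem \ref{thm: hor part} has already been absorbed into Lemma \ref{lem:intersection horizontal difference cycle}, whose proof relied on comparing the $\Int$-expression \eqref{eq:int L H decomposition} with an induction on $\val(M^\flat)$.
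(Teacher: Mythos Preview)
Your proposal is correct and follows essentially the same approach as the paper's own proof: expand $\Int_{L^\flat,\mathscr{H}}(x)$ via Definition~\ref{def:horizontal and vertical intersection number}, apply Lemma~\ref{lem:intersection horizontal difference cycle} to each summand, and identify the resulting double sum with Definition~\ref{def:horizontal Den}. The paper's proof is simply a terser version of exactly this argument.
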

\begin{proof}
By the definition of $\Int_{L^{\flat},\mathscr{H}}(x)$, we have
\[\Int_{L^{\flat},\mathscr{H}}(x)=\sum_{M^\flat\in \Hor(L^\flat)}  \chi(\cN,{}^\bL\cZ(M^\flat)^\circ\cdot \cZ(x)).\]
The theorem now follows from \eqref{eq:horizontal Den} and Lemma \ref{lem:intersection horizontal difference cycle}.
\end{proof}

\subsection{Proof of the main theorem}	
The following is an analogue of Lemma 9.3.1 of \cite{LZ2}.
\begin{lemma}\label{lem: ind on val of Lflat}
  Let $L^\flat\subset \bV$ be a non-degenerate lattice of rank $n-1$ and $\mathbb{W}=(L^\flat_{F})^{\perp}$.   For $x\not \in L^\flat \obot \mathbb{W}$, there exists an $O_F$-lattice  $L'^\flat$ of rank $n-1$ and $x'\in \bV$ such that
    \begin{align*}
        \mathrm{val}(L'^\flat)<\mathrm{val}(L^\flat) \text{ and } L'^\flat + \langle x'\rangle =L^\flat + \langle x \rangle.
    \end{align*}
\end{lemma}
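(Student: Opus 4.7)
The plan is to perform an elementary-divisor swap inside $L := L^\flat + \langle x\rangle$ that trades a basis vector of $L^\flat$ for $x$. I decompose $x = x^\flat + x^\perp$ with $x^\flat \in L^\flat_F$ and $x^\perp \in \mathbb{W}$. The hypothesis $x \notin L^\flat \obot \mathbb{W}$ says precisely that $x^\flat \notin L^\flat$. I then let $k \ge 1$ be the smallest integer with $\pi^k x^\flat \in L^\flat$. Applying the elementary-divisor theorem to the inclusion $L^\flat \subset L^\flat + O_F x^\flat$, I can choose a basis $\ell_1,\ldots,\ell_{n-1}$ of $L^\flat$ and an index $j$ such that, after subtracting from $x$ an appropriate element of $L^\flat$ (which does not affect $L^\flat + \langle x\rangle$), one has $x^\flat = \pi^{-k}\ell_j$.

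The candidate is then
\[
L'^\flat \;:=\; \langle \ell_1,\ldots,\ell_{j-1},\, x,\, \ell_{j+1},\ldots,\ell_{n-1}\rangle, \qquad x' := \ell_j.
\]
The identity $L'^\flat + \langle x'\rangle = L^\flat + \langle x\rangle$ is immediate, since both sides are the $O_F$-span of the same list of vectors. To compare valuations I compute the Gram matrix $B$ of $L'^\flat$ in this basis. Using $x^\perp \perp L^\flat_F$ and $\pi\bar\pi = -\pi_0$, one gets $(x,\ell_{i'}) = \pi^{-k}(\ell_j,\ell_{i'})$ for $i' \neq j$ and $(x,x) = (-\pi_0)^{-k}(\ell_j,\ell_j) + (x^\perp,x^\perp)$. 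So $B$ is obtained from the Gram matrix $A$ of $L^\flat$ by scaling the $j$-th row by $\pi^{-k}$, the $j$-th column by $\bar\pi^{-k}$, and adding $(x^\perp,x^\perp)$ to the $(j,j)$-entry. Expanding the determinant along row $j$:
\[
\det B \;=\; (-\pi_0)^{-k}\det A \;+\; (x^\perp,x^\perp)\cdot M_{jj},
\]
where $M_{jj}$ is the $(j,j)$-minor of $A$. Since the first summand has $F$-valuation $\val(L^\flat) - 2k < \val(L^\flat)$, this already gives $\val(L'^\flat) < \val(L^\flat)$ whenever the two summands do not share a common leading term.

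The main obstacle I anticipate is handling the borderline case in which the two summands on the right have equal valuations and their leading coefficients cancel. In this event I would exploit the remaining freedom in the construction: changing the basis $\ell_1,\ldots,\ell_{n-1}$ of $L^\flat$ (for instance replacing $\ell_j$ by $\ell_j + \pi^a\ell_{j'}$ for a suitable $j' \neq j$ and $a \in \mathbb{Z}$) alters the minor $M_{jj}$ without disturbing the dominant $(-\pi_0)^{-k}\det A$ term, so a finite sequence of such adjustments breaks any accidental cancellation and yields the required strict inequality.
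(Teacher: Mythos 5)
Your decomposition $x = x^\flat + x^\perp$ is the same as the paper's, and your use of the elementary divisor theorem to normalize $x^\flat = \pi^{-k}\ell_j$ is fine. But after that your route diverges, and the cancellation issue you flag is a genuine gap, not a loose end that your proposed fix can close.

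The proposed fix is vacuous. If you replace $\ell_j$ by $\ell_j + \pi^a\ell_{j'}$ for $j'\neq j$ (with $a\ge k$, as needed to keep $x^\flat-\pi^{-k}\ell_j\in L^\flat$), then the only rows and columns of the Gram matrix $A$ that change are row $j$ and column $j$. The minor $M_{jj}$ is obtained by deleting \emph{exactly} row $j$ and column $j$, so $M_{jj}$ is unchanged. The quantity $\det A$ is unchanged (the transformation is unipotent), and $(x^\perp,x^\perp)$ is unchanged. Therefore $\det B = (-\pi_0)^{-k}\det A + (x^\perp,x^\perp)M_{jj}$ is literally unchanged, and no finite sequence of such replacements can break the cancellation. (The other transvections, $\ell_{j'}\mapsto\ell_{j'}+\pi^a\ell_j$, do alter $M_{jj}$, but you would then need a nontrivial argument that some choice avoids cancellation, which you do not supply.) The cancellation is not an edge case either: e.g. for $n=2$ with $(\ell_1,\ell_1)=u\pi_0$, $x=\pi^{-1}\ell_1+x^\perp$, one gets $\det B = -u+(x^\perp,x^\perp)$, which can even vanish.

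The paper avoids this problem by not committing to a particular $L'^\flat$ up front. Instead it forms the $n\times n$ moment matrix $T$ of $\{\ell_1,\ldots,\ell_{n-1},x\}$ and uses \cite[Lemma 2.23]{LL2}: the sum $a_1'+\cdots+a_{n-1}'$ of the first $n-1$ fundamental invariants of $L=L^\flat+\langle x\rangle$ equals the \emph{minimal} valuation among all $(n-1)\times(n-1)$ minors of $T$. It then exhibits an \emph{off-diagonal} minor (the $(n,i)$-minor, $i$ chosen where $\alpha_i\notin O_F$) whose valuation is strictly less than $\val(L^\flat)$. Because $T$ restricted to the $L^\flat$-block is block diagonal in a normal basis, this minor factors cleanly and the bad entry $(\ell_{i\pm 1},x)$ enters \emph{multiplicatively}, so no cancellation can occur. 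Only afterwards does the paper choose a normal basis of $L$ and set $L'^\flat$ to be the span of its first $n-1$ vectors. Your approach, by contrast, considers only the full $(n-1)\times(n-1)$ principal Gram determinant of a fixed candidate, where the two terms enter \emph{additively} and can indeed interfere. To repair your argument you would essentially need either the paper's minor-valuation characterization, or a separate nontrivial argument that some choice of complement (varying $j$ or the complementary basis vectors) yields no cancellation.
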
	
\begin{proof}
 Assume that $L^\flat\subset \bV$ has fundamental invariants $(a_1,\cdots, a_{n-1})$. Let $\{\ell_1,\cdots,\ell_{n-1}\}$ be a basis of $L^\flat$ whose moment matrix is
$$\diag(\cH_{b_1},\cH_{b_3},\cdots,\cH_{b_{2s-1}},u_{2s+1}\pi^{b_{2s+1}},\cdots,u_{n-1}\pi^{b_{n-1}}),$$
where $b_1,\cdots,b_{2s-1}$ are odd and $\cH_j=\begin{pmatrix}0&\pi^{j}\\(-\pi)^{j}&0\end{pmatrix}$. Notice that $\{b_1,\cdots,b_{n-1}\}=\{a_1,\cdots,a_{n-1}\}$.
The moment matrix of $\{\ell_1,\cdots,\ell_{n-1},x\}$ is
\begin{align*}
  T=\left(\begin{array}{cccc}
\cH_{b_1} & & & \left(\ell_{1}, x\right) \\
& \ddots & & \vdots \\
& & u_{n-1}\pi^{b_{n-1}} & \left(\ell_{n-1}, x\right) \\
\left(x, \ell_{1}\right) & \cdots & \left(x, \ell_{n-1}\right) & (x, x)
\end{array}\right).
\end{align*}
Assume $(a_1',\cdots,a_n')$ is the fundamental invariants of $L^\flat +\langle x \rangle$. According to Lemma 2.23 of \cite{LL2}, $a_1'+\cdots+a_{n-1}'$ equals the minimal valuation of the $(n-1)\times (n-1)$ minors of $T$.

Write $x=x^\flat +x^\perp$ where   $x^\flat \in L_F^\flat$ and $x^\perp \in \bW$.   If $x^\flat \not \in L^\flat$, then we can write $x^\flat=\sum_{j=1}^{n-1} \alpha_j \ell_j$ where $\alpha_i\not \in O_F$ for some $i$. First, we assume $\alpha_i \not\in O_F$ for some $i\le 2s$. The valuation of the $(n,i)$-th minor of $T$ (removing $n$-th row and $i$-th column) equals to
\begin{align*}
   \begin{cases}
   \val_\pi((\ell_{i+1},x))-b_i+(b_1+\cdots+b_{n-1})& \text{ if $i$ is odd},\\
\val_\pi((\ell_{i-1},x))-b_i+(b_1+\cdots+b_{n-1})& \text{ if $i$ is even}.
   \end{cases}
\end{align*}
Since $\alpha_i \not\in O_F$,  we have $\val_\pi ((\ell_{i+1},x))<b_i$ if $i$ is odd and $\val_\pi ((\ell_{i-1},x))<b_i$ if $i$ is even. In particular, $\sum_{j=1}^{n-1}a_j'<\sum_{j=1}^{n-1}a_j$. Now if we choose a normal basis $\{\ell_1',\cdots,\ell_{n}'\}$ of $L^\flat +\langle x \rangle$, then  $L'^\flat=\langle \ell_1',\cdots,\ell_{n-1}'\}$ and $x'=\ell_n'$ satisfy the property we want.

Now we assume $\alpha_i\not\in O_F$ for some $2s<i\le n-1$. The valuation of the $(n,i)$-th minor of $T$ equals to
\begin{align*}
    \val_\pi(\ell_i,x)-b_i+(b_1+\cdots+b_{n-1}).
\end{align*}
Since $\alpha_i\not\in O_F$, $ \val_\pi(\ell_i,x)<b_i$, hence $\sum_{j=1}^{n-1}a_j'<\sum_{j=1}^{n-1}a_j$. Now if we choose a normal basis $\{\ell_1',\cdots,\ell_{n}'\}$ of $L^\flat +\langle x \rangle$, then  $L'^\flat=\langle \ell_1',\cdots,\ell_{n-1}'\}$ and $x'=\ell_n'$ satisfy the property we want.
\end{proof}

For any $L$, we can write it as $L^\flat +\langle x \rangle$ where $L^\flat$ is a non-degenerate hermitian $O_F$-lattice of rank $n-1$, and $x\in \bV\setminus L^\flat$. Therefore, in order to show $\Int(L)=\pden(L)$, it suffices to show the following theorem.
\begin{theorem}\label{thm: main thm}
    Let $L^\flat\subset \bV$ be a non-degenerate lattice of rank $n-1$. For any $x \in \bV\setminus L_F^\flat$, we have
    \begin{align*}
      \Int_{L^\flat}(x)=\pden_{L^\flat}(x).
    \end{align*}
\end{theorem}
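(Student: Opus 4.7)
The plan is to prove the identity by strong induction on $\val(L^\flat)$. The base case is $L^\flat$ horizontal, handled directly by Lemma \ref{lem:main thm for horizontal lattice}. For the inductive step with $L^\flat$ non-horizontal (so its largest fundamental invariant is positive), Theorem \ref{thm: hor part} reduces the problem to the vertical identity $\Int_{L^\flat,\sV}(x)=\pden_{L^\flat,\sV}(x)$. When $x\notin L^\flat\obot\bW$, Lemma \ref{lem: ind on val of Lflat} produces $(L'^\flat,x')$ with $\val(L'^\flat)<\val(L^\flat)$ and $L^\flat+\langle x\rangle=L'^\flat+\langle x'\rangle$; since both sides of the identity depend only on this lattice, the inductive hypothesis applies directly.

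In the remaining case $x\in L^\flat\obot\bW$, set $\Delta := \Int_{L^\flat,\sV}-\pden_{L^\flat,\sV}$. By the preceding case and the horizontal identity, $\Delta$ is supported on $L^\flat\obot\bW$; combined with $L^\flat$-translation invariance this gives
\begin{align*}
\Delta^\perp(w) \;=\; \int_{L^\flat_F}\Delta(y+w)\,dy \;=\; \vol(L^\flat)\cdot\Delta(w), \qquad w\in\bW,
\end{align*}
so it suffices to show $\Delta^\perp\equiv 0$ on $\bW\setminus\{0\}$. By Theorems \ref{lem:Int vertical is schwartz} and \ref{prop: part FT of denLflat}, $\Delta^\perp$ is constant on $\bW^{\ge 0}\setminus\{0\}$ and $\pDenp$ vanishes on $\bW^{<0}$. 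For $w\in\bW^{<0}$, the lattice $L^\flat\obot\langle w\rangle$ is non-integral, so $\Int_{L^\flat}(w)=\pden_{L^\flat}(w)=0$ (by the standard vanishing of intersection numbers for non-integral lattices and Theorem \ref{thm: formula of ppden}(1)) and the horizontal contributions also vanish by Lemma \ref{lem:intersection horizontal difference cycle}; hence $\Delta^\perp\equiv 0$ on $\bW^{<0}$.

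The main remaining step, and the hardest point, is to show that the common constant value of $\Delta^\perp$ on $\bW^{\ge 0}\setminus\{0\}$ is zero. The plan is to exhibit a single $w_0\in\bW^{\ge 0}\setminus\{0\}$ at which vanishing can be verified via a re-decomposition trick: choose $w_0$ with $\val(w_0)$ strictly smaller than the largest fundamental invariant of $L^\flat$ (such $w_0$ exists as $L^\flat$ is non-horizontal). Then $L:=L^\flat\obot\langle w_0\rangle$ admits an alternative rank $n-1$ decomposition $L = L'^\flat + \langle y\rangle$, obtained by swapping the largest-invariant basis vector of $L^\flat$ with $w_0$, giving $\val(L'^\flat)<\val(L^\flat)$. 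The inductive hypothesis applied to $L'^\flat$ then yields $\Int(L)=\pden(L)$, hence $\Delta(w_0)=0$ and $\Delta^\perp(w_0)=0$. By constancy this forces $\Delta^\perp\equiv 0$ on $\bW^{\ge 0}\setminus\{0\}$, which combined with the $\bW^{<0}$ vanishing gives $\Delta^\perp\equiv 0$ on $\bW\setminus\{0\}$, and hence $\Delta\equiv 0$ on $\bW\setminus\{0\}$. Together with the support and $L^\flat$-invariance of $\Delta$, this extends to $\Delta\equiv 0$ on $\bV\setminus L^\flat_F$, completing the proof.
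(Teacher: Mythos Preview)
Your argument has a genuine gap at the step you flag as hardest. The swap you describe---remove the largest-invariant basis vector $\ell_{n-1}$ from a normal basis of $L^\flat$ and replace it by $w_0$---need not produce a non-degenerate $L'^\flat$. When the largest fundamental invariant $a_{n-1}$ is odd, $\ell_{n-1}$ sits in a $2\times 2$ block $\left(\begin{smallmatrix}0 & \pi^{a_{n-1}}\\ -\pi^{a_{n-1}} & 0\end{smallmatrix}\right)$ paired with $\ell_{n-2}$; after removing $\ell_{n-1}$ the vector $\ell_{n-2}$ is isotropic and orthogonal to $\ell_1,\ldots,\ell_{n-3}$ and to $w_0\in\bW$, so $L'^\flat=\langle \ell_1,\ldots,\ell_{n-2},w_0\rangle$ is degenerate. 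Nor can this be fixed by some other choice of summand. Take $n=3$ and $L^\flat$ with fundamental invariants $(1,1)$ (non-horizontal, since every vector of $L^\flat$ has positive valuation), and $\val(w_0)=0$. Any non-degenerate rank-$2$ direct summand of $L=L^\flat\obot\langle w_0\rangle$ is of the form $L'^\flat=L\cap\ell^\perp$ for an anisotropic line $\ell\subset L_F$, and the determinant relation $\val(L)=\val(L'^\flat)+\val_\pi((y',y'))$ (with $y'$ a generator of the image of $L$ in $\ell$) together with a short case analysis on $\ell$ forces $\val(L'^\flat)\ge 2=\val(L^\flat)$ in every case. So induction on $\val(L^\flat)$ alone cannot close here.

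The paper handles exactly this point by a different reduction at $w_0\in\bW^{=0}$: rather than seeking a smaller $L'^\flat$, it applies the cancellation law (Lemma~\ref{lem: cancellation of Phi}) with the rank-$1$ unimodular summand $\langle w_0\rangle$, obtaining $\Phi_{L^\flat}(w_0)=\Int(L^\flat)-\pden(L^\flat)$, where the right-hand side is an instance of the main theorem in rank $n-1$ and vanishes by induction on $n$. Thus the induction must run simultaneously on $n$ and on $\val(L^\flat)$; once you replace your swap step by this cancellation-plus-rank-induction, the rest of your outline agrees with the paper's proof.
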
	
\begin{proof}
For $x \in \bV\setminus L_F^\flat$, let $\Phi_{L^\flat}(x)= \Int_{L^\flat}(x)-\pden_{L^\flat}(x)$.
We need to show $\Phi_{L^\flat}(x)=0$. We prove the theorem by an induction on $\val(L^\flat)$. If $L^\flat$ is not integral, then $ \Int_{L^\flat}(x)=0$ as $\cZ(L)$ is empty by Proposition \ref{prop:reducedlocus}. Moreover $\pden_{L^\flat}(x)=0$ by Corollary \ref{cor: int of pden}. Hence the theorem is true in this case.

Now we assume $L^\flat$ is integral. By induction hypothesis and Lemma \ref{lem: ind on val of Lflat}, we may assume $\mathrm{supp}(\Phi_{L^\flat})\subset L^\flat \obot \mathbb{W}$ where $\mathbb{W}=(L^\flat_{F})^\perp$. Since $\Phi_{L^\flat}(x)$ is invariant under the translation of $L^\flat$, we may write
\begin{align}\label{eq: Phi=1 otimes phiw}
  \Phi_{L^\flat}(x)=1_{L^\flat}\otimes \phi_{\mathbb{W}}(x),
\end{align}
where $\phi_{\mathbb{W}}(x)$ is a function on $\mathbb{W}\setminus \{0\}$.  Then we have  by definition
$$
\Phi^{\perp}_{L^\flat} = \vol(L^\flat) \phi_{\bW}.
$$
Theorem \ref{thm: hor part} implies that
\[\Phi_{L^\flat}(x)=\Phi_{L^\flat,\mathscr{V}}(x):= \Int_{L^\flat,\mathscr{V}}(x)-\pden_{L^\flat,\mathscr{V}}(x).\]
Hence
\begin{align}\label{eq: eq of supp}
 \vol(L^\flat)   \phi_{\mathbb{W}} =  \Phi^{\perp}_{L^\flat,\mathscr{V}}.
\end{align}
By  induction on the rank of $L$ and Lemma \ref{lem: cancellation of Phi}, we may assume $\Phi_{L^\flat}(x)=0$, hence $\phi_{\mathbb{W}}(x)=0$  for $x\in \mathbb{W}^{= 0}$. Combining this  with the non-integral case, we know $\phi_{\mathbb{W}}(x)=0$ for $x\in \mathbb{W}^{\le 0}$. As a result, we have $\Phi^{\perp}_{L^\flat,\mathscr{V}}(x)=0$ for  $x\in \mathbb{W}^{\le 0}$ by \eqref{eq: eq of supp}. By  Theorem \ref{prop: part FT of denLflat} and Theorem \ref{lem:Int vertical is schwartz}, we have $\Phi^{\perp}_{L^\flat,\mathscr{V}}(x)=0$ for $x\in \bW^{\ge 0}\setminus \{0\}$. Hence $\Phi^{\perp}_{L^\flat,\mathscr{V}}(x)=0$ for all $x\in \mathbb{W}\setminus \{0\}$. Consequently, $\phi_\bW(x)=0$ by \eqref{eq: eq of supp}.
\end{proof}

Combining this theorem with Theorem \ref{thm:Int Gamma_0 invariant}, we have the following corollary.
\begin{corollary}
    Let $L^\flat \subset \bV$ be a non-degenerate lattice of rank $n-1$. Then   $\pden_{L^\flat,\mathscr{V}} (x)\in \mathscr{S}(\bV)^{\ge -1}$ is a Schwartz function.
\end{corollary}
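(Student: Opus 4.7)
The proof proposal is essentially a one-line combination of results already established. The plan is to exploit the identity $\Int_{L^\flat}=\pden_{L^\flat}$ of Theorem~\ref{thm: main thm} together with the matching horizontal/vertical decompositions on both sides, and then transfer the Schwartz property from the geometric side to the analytic side.

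First I would note that on the geometric side, the definition \eqref{eq: def of Int_sH} together with the decomposition ${}^\bL\cZ(L^\flat) = {}^\bL\cZ(L^\flat)^*_{\mathscr H} + {}^\bL\cZ(L^\flat)^*_{\mathscr V}$ in Definition~\ref{def:horizontal and vertical intersection number} gives
\[
\Int_{L^\flat}(x) = \Int_{L^\flat,\mathscr H}(x) + \Int_{L^\flat,\mathscr V}(x)
\]
on $\bV \setminus L^\flat_F$. On the analytic side, every integral $L'$ with $L^\flat \subseteq L' \subseteq L'^\sharp$ either satisfies $L'^\flat \in \Hor(L^\flat)$ or $L'^\flat \notin \Hor(L^\flat)$; partitioning the sum in Corollary~\ref{cor: decomp of pden(L)} accordingly and comparing with Definitions~\ref{def:horizontal Den} and~\ref{def: ver of pden} gives
\[
\pden_{L^\flat}(x) = \pden_{L^\flat,\mathscr H}(x) + \pden_{L^\flat,\mathscr V}(x).
\]

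Next I would invoke Theorem~\ref{thm: main thm} and Theorem~\ref{thm: hor part} to conclude that the total and horizontal parts agree on both sides; subtracting yields the vertical identity
\[
\Int_{L^\flat,\mathscr V}(x) = \pden_{L^\flat,\mathscr V}(x) \qquad \text{for all } x \in \bV \setminus L^\flat_F.
\]
Finally, Theorem~\ref{thm:Int Gamma_0 invariant} says that $\Int_{L^\flat,\mathscr V}$ extends to an element of $\sS(\bV)^{\geq -1}$; this extension is then automatically an extension of $\pden_{L^\flat,\mathscr V}$ to the same Schwartz class, proving the corollary.

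There is no genuine obstacle: all the substantive work has been carried out in Theorem~\ref{thm: main thm} (the main theorem itself), Theorem~\ref{thm:Int Gamma_0 invariant} (Fourier support of the geometric vertical part), and Theorem~\ref{thm: hor part} (matching of horizontal parts). The only care required is to verify that the horizontal/vertical decompositions on the two sides match up as functions on $\bV \setminus L^\flat_F$, which is immediate from the definitions once one recalls that $\ppden(L')$ vanishes on non-integral $L'$ by Theorem~\ref{thm: formula of ppden}(1).
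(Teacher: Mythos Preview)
Your proposal is correct and follows essentially the same approach as the paper: the paper simply states that the corollary follows by combining Theorem~\ref{thm: main thm} with Theorem~\ref{thm:Int Gamma_0 invariant}, which unpacks to exactly the chain of reasoning you wrote out (match total by Theorem~\ref{thm: main thm}, match horizontal by Theorem~\ref{thm: hor part}, subtract to match vertical, then transfer the Schwartz property from Theorem~\ref{thm:Int Gamma_0 invariant}).
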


\section{Global applications}\label{sec:global application}
\subsection{Shimura varieties}\label{sec:shimura-varieties}
In this section, we switch to global situation and  will closely follow \cite{RSZdiagonal} and \cite{RSZsurvey}. Let $F$ be a CM number field with  maximal  totally real subfield $F_0$. We fix  a CM type $\Phi\subset \Hom(F, \overline{\mathbb{Q}})$ of $F$ and a distinguished element $\phi_0\in \Phi$. We fix an embedding $\overline{\mathbb{Q}}\hookrightarrow \mathbb{C}$ and identify the CM type $\Phi$ with the set of archimedean places of $F$, and also with the set of archimedean places of $F_0$. Let $V$ be an $F/F_0$-hermitian space of dimension $n\ge2$. Let $V_\phi=V \otimes_{F, \phi}\mathbb{C}$ be the associated $\mathbb{C}/\mathbb{R}$-hermitian space for $\phi\in\Phi$. Assume the signature of $V_\phi$ is given by
$$  (r_\phi,r_{\bar\phi})=\begin{cases}
  (n-1,1), & \phi=\phi_0,\\
  (n, 0), & \phi\in \Phi\setminus\{\phi_0\}.
\end{cases}$$
Define a variant $G^\mathbb{Q}$ of the unitary similitude group $\mathrm{GU}(V)$ by $$G^\mathbb{Q}\coloneqq \{g\in \Res_{F_0/\mathbb{Q}}\mathrm{GU}(V): c(g)\in \mathbb{G}_m\},$$  where $c$ denotes the similitude character. Define a cocharacter $$h_{G^\mathbb{Q}}: \mathbb{C}^\times\rightarrow G^\mathbb{Q}(\mathbb{R})\subset \prod_{\phi\in\Phi}\mathrm{GU}(V_\phi)(\mathbb{R})\simeq\prod_{\phi\in\Phi} \mathrm{GU}(r_\phi,r_{\bar \phi})(\mathbb{R}),$$ where its $\phi$-component is given by $$h_{G^\mathbb{Q},\phi}(z)=\diag\{ z\cdot 1_{r_\phi},\bar z \cdot 1_{r_{\bar\phi}}\}$$ under the decomposition of $V_\phi$ into positive definite and negative definite parts. Then its $G^\mathbb{Q}(\mathbb{R})$-conjugacy class defines a Shimura datum $(G^\mathbb{Q},\{h_{G^\mathbb{Q}}\})$. Let $E_{r}=E(G^\mathbb{Q}, \{h_{G^\mathbb{Q}}\})$ be the reflex field, i.e., the subfield of $\overline{\mathbb{Q}}$ fixed by $\{\sigma\in\mathrm{Aut}(\overline{\mathbb{Q}}/\mathbb{Q}): \sigma^*(r)=r\}$ where $r: \Hom(F, \overline{\mathbb{Q}})\rightarrow \mathbb{Z}$ is the function defined by $r(\phi)=r_\phi$.

We similarly define the group $Z^\mathbb{Q}$ (a torus) associated to a totally positive definite $F/F_0$-hermitian space of dimension 1 (i.e., of signature $\{(1,0)_{\phi\in\Phi}\}$) and a cocharacter $h_{Z^\mathbb{Q}}$ of $Z^\mathbb{Q}$. The reflex field $E_\Phi=E(Z^\mathbb{Q}, \{h_{Z^\mathbb{Q}}\})$ is equal to the reflex field of the CM type $\Phi$, i.e., the subfield of $\overline{\mathbb{Q}}$ fixed by $\{\sigma\in\Gal(\overline{\mathbb{Q}}/\mathbb{Q}): \sigma\circ\Phi=\Phi\}$.

Now define a Shimura datum $(\wit G, \{h_{\wit G}\})$ by
\[\wit G\coloneqq Z^\mathbb{Q}\times_{\mathbb{G}_m} G^\mathbb{Q}=\{(z,g)\in Z^\mathbb{Q}\times G^\mathbb{Q}\mid \mathrm{Nm}_{F/F_0}(z)=c(g)\},\quad h_{\wit G}=(h_{Z^\mathbb{Q}}, h_{G^\mathbb{Q}}).\]
Then $\wit{G}\cong Z^\Q\times G$ where $G=\Res_{F_0/\Q} \mathrm{U}(V)$.
Its reflex field $E$ is equal to the composite $E_rE_\Phi$, and the CM field $F$ becomes a subfield of $E$ via the embedding $\phi_0$. We remark that $E=F$ when $F/\mathbb{Q}$ is Galois, or when $F=F_0 \kappa$ for some imaginary quadratic $\kappa/\mathbb{Q}$ and the CM type $\Phi$ is induced from a CM type of $\kappa/\mathbb{Q}$ (e.g., when $F_0=\mathbb{Q}$). Assume that $K_{Z^\mathbb{Q}}\subset Z^\mathbb{Q}(\mathbb{A}_f)$ is the unique maximal open compact subgroup, and $K_G=\prod_{v} K_{G,v}$ where $v$ runs over finite places of $F_0$ is a compact open subgroup of $G(\bA_f)$. Let $K=K_{Z^\Q}\times K_G\subset \wit{G}(\mathbb{A}_f)$. Then the associated Shimura variety $\Sh_{\KG}=\Sh_{\KG}(\wit G,\{h_{\wit G}\})$ is a Deligne-Mumford stack of dimension $n-1$ and has a canonical model over $\Spec E$.

\subsection{Integral model}
In this subsection we run through the set-up of \cite[\S 14]{LZ}.
Let $\mathbf{m}=(m_v)_v$ be a collection of integers $m_v\geq 0$ indexed by finite places of $F_0$ such that $m_v=0$ for all but finitely many places and $m_v=0$ for all places $v$ that are nonsplit in $F$.
Let $\Lambda$ be an $O_F$-lattice of $V$.
Assume that for any finite place $v$ of $F_0$ (with residue characteristic $p$), the following conditions are satisfied where $\tilde{\nu}:\bar\Q\rightarrow \bar\Q_p$ is an embedding that induces a place $\nu$ of $E$.
\begin{enumerate}[label=(G\arabic*)]\setcounter{enumi}{-1}
\item\label{item:G0} If $p=2$, then $v$ is unramified in $F$.
\item\label{item:G1} If $v$ is inert in $F$ and $V_v$ is split, then  $\Lambda_v\subset V_v$ is self-dual and $K_{G,v}$ is the stabilizer of $\Lambda_v$. If $v$ is further ramified over $p$ and $\nu$ is any place of $E$ above $v$, then the subset $\{\phi\in \Phi: \tilde{\nu}\circ\phi \text{ induces } w\}\subset\Hom(F_{w}, \overline{\mathbb{Q}}_p)$ is the pullback of a CM type $\Phi^\mathrm{ur}\subset \Hom(F_{w}^\mathrm{ur}, \overline{\mathbb{Q}}_p)$ of $F_{w}^\mathrm{ur}$. Here $w$ is the place of $F$ above $v$ and $F_{w}^\mathrm{ur}$ is the maximal subfield  of $F_{w}$ unramified over $\mathbb{Q}_p$.
\item\label{item:G2} If $v$ is inert in $F$ and $V_v$ is nonsplit, then $v$ is unramified over $p$ and $\Lambda_v\subset V_v$ is almost self-dual, i.e., $\Lambda_v^\sharp/\Lambda_v$ is a $1$ dimensional space over the residue field of $F_w$. Moreover $K_{G,v}$ is the stabilizer of $\Lambda_v$.
\item\label{item:G3} If $v$ is ramified in $F$, then $v$ is unramified over $p$ and $\Lambda_v\subset V_v$ is unimodular.
\item\label{item:G4} If $v$ is split in $F$ and $m_v=0$, then $\mathrm{U}(V)(F_{0,v})\cong \mathrm{GL}_n(F_{0,v})$ and we assume $\Lambda_v\subset V_v$ is self-dual. Let $K_{G,v}\cong \mathrm{GL}_n(O_{F_0,v})$ be the stabilizer of $\Lambda_v$.
\item\label{item:G5} If $v$ is split in $F$ and $m_v>0$, then again $\mathrm{U}(V)(F_{0,v})\cong \mathrm{GL}_n(F_{0,v})$ and we assume $\Lambda_v\subset V_v$ is self-dual. Further assume that $v$ splits into $w$ and $\bar{w}$ in $F$ and  all places $\nu$ of $E$ above $v$ satisfy the following two conditions.
\begin{enumerate}
  \item the place $\nu$ of $E$ matches the CM type $\Phi$ (in the sense of \cite[\S 4.3]{RSZdiagonal}): if $\phi\in\Hom(F, \overline{\mathbb{Q}})$ induces the $p$-adic place $w$ of $F$ (via $\tilde\nu: \overline{\mathbb{Q}}\hookrightarrow \overline{\mathbb{Q}}_p$), then $\phi\in\Phi$.
  \item the extension $E_{\nu}/E_{r|_v}$ is unramified, where $E_{r|_v}$ is the local reflex field as defined in \cite[\S4.1]{RSZdiagonal}.  We remark that this condition holds automatically if all $p$-adic places of $F$ are unramified over $p$.
  \end{enumerate}
We remark here that condition $(a)$ is automatically true when $F=F_0\kappa$  for some imaginary quadratic $\kappa/\mathbb{Q}$ and the CM type $\Phi$ is induced from a CM type of $\kappa/\mathbb{Q}$ (e.g., when $F_0=\mathbb{Q}$), or when $v$ is of degree one over $p$. Let $K_{G,v}$ be the principal congruence subgroup modulo $\pi_v^{m_v}$ inside the stabilizer of $\Lambda_v$ where $\pi_v$ is a uniformizer of $F_{0,v}$.
\end{enumerate}
In the case when the above conditions are satisfied, we denote $K$ by $K^{\mathbf{m}}$. Also denote $K^{\mathbf{m}}$ by $K^\circ$ if $m_v=0$ for all $v$. In other words $K^\circ\subset \tilde{G}(\bA_f)$ is the stabilizer of $\Lambda\otimes_{O_F} \widehat{O}_F$. Define the moduli functor $\mathcal{M}_{K^\circ}$ as follows. For a locally noetherian $O_E$-scheme $S$, $\mathcal{M}_{K^\circ}(S)$ is the groupoid of tuples $(A_0,\iota_0,\lambda_0,A,\iota,\lambda,\cF)$ such that
\begin{enumerate}
    \item $A$ (resp. $A_0$) is an abelian scheme over $S$;
    \item $\iota$ (resp. $\iota_0$) is an action of $O_F$ on $A$ (resp. $A_0$) satisfying the Kottwitz condition of signature $\{(r_\phi,r_{\bar\phi})_{\phi\in \Phi}\}$ (resp. $\{(1,0)_{\phi \in \Phi}\}$):
    \begin{equation}\label{eq:global Kottwitz condition}
      \mathrm{charpol}(\iota(a)\mid \Lie A)=\prod_{\phi\in \Phi} (T-\phi(a))^{r_\phi}\cdot (T-\bar\phi(a))^{r_{\bar\phi}}
    \end{equation}
    for any $a\in O_F$;
    \item $\lambda$ (resp. $\lambda_0$) is a polarization of $A$ (resp. $A_0$) whose Rosati involution induces the automorphism given by the nontrivial Galois automorphism of $F/F_0$ via $\iota$ (resp. $\iota_0$);
    \item $\cF$ is locally a direct summand $\Oo_S$-submodule of $\Lie A$ which is stable under the $O_F$-action. Moreover $O_F$ acts on $\cF$ by the structural morphism and on $\Lie A/\cF$ by the Galois conjugate of the structural morphism.
\end{enumerate}
We further require the following conditions to be satisfied.
\begin{enumerate}[label=(H\arabic*)]
\item\label{item:H1} $(A_0,\iota_0,\lambda_0)\in \mathcal{M}_0^\xi$ where $\cM_0^\xi=\cM_0^{(1),\xi}$ in the notation of \cite[\S 4.1]{RSZsurvey} (where $(1)$ is the unit ideal in $O_F$) is an integral model of $\mathrm{Sh}_{K_{Z^\Q}}(Z^\Q,h_{Z^\Q})$ depending on the choice of a similarity class $\xi$ of $1$ dimensional $F/F_0$-Hermitian spaces.
\item For each finite place $v$ of $F_0$, $\lambda$ induces a polarization $\lambda_v$ on the $p$-divisible group $A[v^\infty]$. We require $\ker \lambda_v\subset A[\iota(\varpi_v)]$ and is of rank equal to the size of $\Lambda_v^\sharp/\Lambda_v$, where $\varpi_v$ is a uniformizer of $F_{0,v}$.
\item For each place $v$ of $F_0$, we require the \emph{sign condition} and \emph{Eisenstein condition} as explained in \cite[\S4.1]{RSZdiagonal}. We remark that the sign condition holds automatically when $v$ is split in $F$, and the Eisenstein condition holds automatically when the places of $F$ above $v$ are unramified over $p$.
\item\label{item:H4} We impose the \emph{Kr\"amer condition} on $\cF$ as explained in  \cite[Definition 6.10]{RSZsurvey}.
\end{enumerate}
A morphism $(A_0,\iota_0,\lambda_0,A,\iota,\lambda,\cF)\rightarrow (A'_0,\iota'_0,\lambda'_0,A',\iota',\lambda',\cF')$ is a pair $(f_0:A_0\rightarrow A'_0, f:A\rightarrow A')$  of $O_F$-linear isomorphism of abelian schemes over $S$ such that $f^*(\lambda')=\lambda$, $f^*_0(\lambda'_0)=\lambda_0$, $f_*(\cF)=\cF'$.
Let $\mathcal{V}_\mathrm{ram}$ (resp. $\mathcal{V}_\mathrm{asd}$) be the set of finite places $v$ of $F_0$ such that $v$ is ramified in $F$ (resp. $v$ is inert in $F$ and $\Lambda_v$ is almost self-dual). By \cite[Theorem 5.2]{RSZdiagonal},
the moduli problem $\mathcal{M}_{K^\circ}$ is representable by a Deligne-Mumford stack over $O_E$ which is regular and semistable at all places of $E$ above $\mathcal{V}_\mathrm{ram}\cup \mathcal{V}_\mathrm{asd}$.
The generic fiber of $\mathcal{M}_{K^\circ}$ is the Shimura variety $\Sh_{K^\circ}$. For a general $\mathbf{m}$, define $\mathcal{M}_{K^\mathbf{m}}$ to be the normalization of $\mathcal{M}_{K^\circ}$ in $\Sh_{K^\mathbf{m}}$. Then by \cite[Theorem 5.4]{RSZdiagonal}, $\mathcal{M}_{K^\mathbf{m}}$ is representable by a Deligne-Mumford stack over $O_E$ which is regular and semistable at all places of $E$ above $\mathcal{V}_\mathrm{ram}\cup \mathcal{V}_\mathrm{asd}$. Its localization at each finite place $\nu$ of $E$ agrees with the semi-integral models defined in and \cite[\S 4]{RSZdiagonal} or \cite[\S 11]{LZ}.

\subsection{Global main theorems}\label{subsec:global main theorems}
From now on we assume $K=K^{\mathbf{m}}$ and simply denote $\cM_{K^\mathbf{m}}$ by $\cM$.
Let $\mathbb{V}$ be the \emph{incoherent}
$\mathbb{A}_{F}/\mathbb{A}_{F_0}$-hermitian space associated to $V$, namely $\mathbb{V}$ is totally positive definite and $\mathbb{V}_v \cong V_v$ for all finite places $v$. Let $\varphi_K\in \sS(\bV^m_f)$ be a $K$-invariant (where $K$ acts on $\bV_f$ via the second factor $K_G$) Schwartz function. We say $\varphi_K$ is admissible if it is $K$-invariant and  $\varphi_{K,v}=\mathbf{1}_{(\Lambda_v)^m}$ at all $v$ nonsplit in $F$.

First, we consider a special admissible Schwartz function of the form  \begin{equation}\label{eq:testfunction}
  \varphi_K=(\varphi_i)\in \sS(\mathbb{V}^m_f),\quad \varphi_{i}=\mathbf{1}_{\Omega_i},\quad i=1,\ldots,m,
\end{equation}
where  $\Omega_i \subset \mathbb{V}_f$ is a $K$-invariant open compact subset such that $\Omega_{i, v}=\Lambda_v$ at all $v$ nonsplit in $F$.  Given $t_i\in F$ and $\varphi_i$, there exists a unique special divisor $\cZ(t,\varphi_i)$ over $\cM_K$ such that for each place $\nu$ of $E$ inducing a non-split place of $F_0$, the base change of $\cZ(t,\varphi_i)$ to $\text{Spec}\, O_{E,(\nu)}$ agrees with the  special divisor defined as in \cite[\S 13.3]{LZ}, and for each $\nu$ inducing a split place of $F_0$, it agrees with the Zariski closure of the special divisor over the generic fiber of $\mathcal{M}_K$. Then we have the following decomposition (cf. \cite[(11.2)]{KR2}),
$$
\mathcal{Z}\left(t_1, \varphi_1\right) \cap \cdots \cap \mathcal{Z}\left(t_m, \varphi_m\right)=\bigsqcup_{T \in \operatorname{Herm}_m(F)} \mathcal{Z}\left(T, \varphi_K\right),
$$
where $\cap$ denotes taking fiber product over $\mathcal{M}_K$, and the indexes $T$ have diagonal entries $t_1, \ldots, t_m$.

Let $T\in \Herm_n(F)$ be a nonsingular $F/F_0$-hermitian matrix of size $n$. Given $(T,\varphi_K)$, we can define an arithmetic degree as follows.    First, analogous to the local situation (\ref{eq:IntL}), we can define a local arithmetic intersection numbers $\Int_{T,\nu}(\varphi_K)$ for any place $\nu$ of $E$. First we assume $\nu$ is finite and let $v$ be the place of $F_0$ under $\nu$.
By the same proof of \cite[Proposition 2.22]{KR2}, it suffices to consider the case when $v$ is nonsplit in $F$.
When $\varphi_K$ is of the form \eqref{eq:testfunction}, define
\begin{equation}\label{eq:localint}
\Int_{T,\nu}(\varphi_K)\coloneqq \frac{1}{[E:F_0]} \cdot \chi(\mathcal{Z}(T,\varphi_K)_\nu, \mathcal{O}_{\mathcal{Z}(t_1,\varphi_1)_\nu} \otimes^\mathbb{L}\cdots \otimes^\mathbb{L}\mathcal{O}_{\mathcal{Z}(t_n,\varphi_n)_\nu})\cdot\log q_\nu,
\end{equation}
where $q_\nu$ denotes the size of the residue field $k_\nu$ of $E_{\nu}$, $\cZ(T,\varphi_K)_\nu$ and $\cZ(t,\varphi_i)_\nu$ denote the base change to $O_{E,(\nu)}$, $\mathcal{O}_{\mathcal{Z}(t_i,\varphi_i)_\nu}$ denotes the structure sheaf of the  Kudla--Rapoport divisor $\mathcal{Z}(t_i,\varphi_i)$, $\otimes^\mathbb{L}$ denotes the derived tensor product of coherent sheaves on $\mathcal{M}$, and $\chi$ denotes the Euler--Poincar\'e characteristic. For a general admissible function $\varphi_K$, we can extend the definition $\C$-linearly.
Using the star product of Kudla's Green functions, we can also define a local arithmetic intersection number  $\Int_{T,\nu}(\sy,\varphi_K)$ at infinite places (\cite[\S15.3]{LZ}), which depends on a parameter $\sy\in \Herm_n(F_{\infty})_{>0}$  where  $F_{\infty}=F \otimes_{\mathbb{Q}}\mathbb{R}$. Combining all the local arithmetic numbers together, define the \emph{global arithmetic intersection number}, or the \emph{arithmetic degree} of the special cycle $\mathcal{Z}(T,\varphi_K)$ in the arithmetic Chow group of $\cM$,
\[\wdeg_T(\sy,\varphi_K)\coloneqq \sum_{\nu\nmid\infty}\Int_{T,\nu}(\varphi_K)+\sum_{\nu\mid \infty}\Int_{T,\nu}(\sy,\varphi_K).\]

\begin{theorem}\label{thm:Arithmetic Siegel--Weil formula: nonsingular terms}
Let $\Diff(T, \mathbb{V})$ be the set of places $v$ such that $\mathbb{V}_{v}$ does not represent $T$.  Let $T\in\Herm_n(F)$ be nonsingular such that $\Diff(T,\mathbb{V})=\{v\}$ where $v$ is nonsplit in $F$ and not above 2. Assume $\varphi_K \in \sS(\mathbb{V}^m_f)$ is admissible. Then
\[\wdeg_T(\sy, \varphi_K)q^T=c_K\cdot \pEis_T(\sz,\varphi_K),\]
where $q^T\coloneqq\psi_\infty(\tr T\sz)$, $c_K=\frac{(-1)^n}{\vol(K)}$ is a nonzero constant independent of $T$ and $\vol(K)$ is the volume of $K$ under a suitable Haar measure. Finally,  $\pEis_T(\sz,\varphi_K)$ is the $T$-th coefficient of the modified central derivative of  Eisenstein series in  (\ref{eq:Fourier coefficients of central derivative})
\end{theorem}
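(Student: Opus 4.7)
The plan is to combine the local Main Theorem \ref{conj:main} with $p$-adic uniformization of arithmetic special cycles and a Fourier-coefficient analysis of the modified Eisenstein series, closely paralleling \cite[\S14--15]{LZ} but inserting the new modification terms ${}^v\mathrm{Eis}_{2j}$ at ramified and almost-self-dual places. Since $\mathrm{Diff}(T,\mathbb{V})=\{v\}$ is a singleton with $v$ nonsplit, the arithmetic special cycle $\mathcal{Z}(T,\varphi_K)$ is automatically supported in the fiber of $\mathcal{M}_K$ above $v$ (there are no complex points, so no archimedean contribution after multiplying by $q^T$, and local cycles at other finite places are empty). Thus $\widehat{\deg}_T(\mathsf{y},\varphi_K)q^T=\Int_{T,v}(\varphi_K)\cdot q^T$ and only the local contribution at $v$ matters.

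First I would $p$-adically uniformize $\mathcal{Z}(T,\varphi_K)$ at $v$. When $v\in\mathcal{V}_\mathrm{ram}$ this is the Krämer RZ space $\mathcal{N}$ studied in \S\ref{sec: RZ space}; when $v\in\mathcal{V}_\mathrm{asd}$ it is the almost-self-dual RZ space treated in \cite{LL2}; when $v$ is inert with self-dual lattice it is the hyperspecial case of \cite{LZ}. In each case, using the factorization principle as in \cite[\S4,\S14]{LZ}, one writes
\begin{equation*}
\Int_{T,v}(\varphi_K)=\mathrm{vol}(K)^{-1}\cdot\!\!\sum_{\mathbf{x}\in \Omega_T/\sim}\!\! \Int(L(\mathbf{x}))\cdot \varphi_K^v(\mathbf{x})\cdot \log q_\nu,
\end{equation*}
where $L(\mathbf{x})$ is the Hermitian $O_{F_w}$-lattice spanned by the components of $\mathbf{x}$ and $\Int(L(\mathbf{x}))$ is the local intersection number of Definition \ref{def:Int L}. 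The Main Theorem \ref{conj:main} then replaces $\Int(L(\mathbf{x}))$ by $\partial\mathrm{Den}(L(\mathbf{x}))=\mathrm{Den}'(L(\mathbf{x}))+\sum_{j}c_{2j,v}\mathrm{Den}_{2j}(L(\mathbf{x}))$.

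Next I would analyze $\partial\mathrm{Eis}_T(\mathsf{z},\varphi_K)$. The standard unfolding (as in \cite[\S12]{LZ}) gives a product decomposition of each Fourier coefficient $E_T(\mathsf{z},s,\varphi)$ into local Whittaker integrals, each of which evaluates at $s=0$ to the local representation density $\mathrm{Den}(T,\varphi_{K,u})$. Because $\mathrm{Diff}(T,\mathbb{V})=\{v\}$, the local factor at $v$ vanishes, so the derivative at $s=0$ reduces to
\begin{equation*}
\mathrm{Eis}'_T(\mathsf{z},\varphi_K)\,=\,c_K^{-1}\cdot\mathrm{Den}'(L(\mathbf{x}))\cdot\prod_{u\neq v}\mathrm{Den}(T,\varphi_{K,u})\cdot q^T,
\end{equation*}
after suitable normalization, while the coherent modification term ${}^v\mathrm{Eis}_{2j,T}$ similarly produces the factor $\mathrm{Den}_{2j}(L(\mathbf{x}))$ via \eqref{eq: def of den(L)} (recall that ${}^v\mathbb{V}$ represents $T$ exactly at $v$, and its Schwartz function at $v$ is $\mathbf{1}_{(\Lambda_{2j,v}^\sharp)^n}$). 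The global constants $c_{2j,v}$ in \eqref{eq:coeff} were defined to coincide with the local coefficients appearing in $\partial\mathrm{Den}$, so term-by-term matching yields
\begin{equation*}
\partial\mathrm{Eis}_T(\mathsf{z},\varphi_K)\,=\,c_K^{-1}\cdot\partial\mathrm{Den}(L(\mathbf{x}))\cdot\prod_{u\neq v}\mathrm{Den}(T,\varphi_{K,u})\cdot q^T.
\end{equation*}
Summing over orbits $\mathbf{x}$ and identifying $\prod_{u\neq v}\mathrm{Den}(T,\varphi_{K,u})$ with a weighted orbital integral gives the desired identity.

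The main obstacles are the following. First, at a ramified place, the $p$-adic uniformization of $\mathcal{M}_K$ in terms of $\mathcal{N}_n$ must be verified in this generality (including the signature condition and the Krämer subsheaf $\mathcal{F}$); for this I would invoke the setup of \cite[\S12--14]{HSY3} and \cite[\S14]{LZ}, which applies under hypotheses \ref{item:G0}--\ref{item:G4}. Second and more delicate, the global constants $c_{2j,v}$ must correspond precisely to the purely local constants of \eqref{eq:coeff}; this requires a careful comparison of volume normalizations $\mathrm{vol}(K_{G,v})/\mathrm{vol}(K_{\Lambda^{\sharp}_{t,v}})$ in \eqref{eq:central value} with the ratio $\mathrm{Den}(\Lambda_t^\sharp,\Lambda_t^\sharp)/\mathrm{Den}(I_n,I_n)$, which by the Siegel mass formula produces exactly the normalization used in \eqref{eq: def of den(L)}. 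Third, one must verify that the assumption $v\nmid 2$ is only used to invoke the local Main Theorem (which requires $p$ odd) and that the exclusion of $\mathrm{Diff}(T,\mathbb{V})\not\ni 2$ is harmless. Once these ingredients are in place, multiplying by $\log q_\nu$ on both sides completes the proof.
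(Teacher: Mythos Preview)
Your outline for finite $v$ is essentially correct and matches the paper's approach: the paper defers the self-dual inert case to \cite[Theorem~13.6]{LZ}, the almost-self-dual case to a variant using \cite[Theorem~10.5.1]{LZ} (not \cite{LL2}, which concerns the exotic smooth model at ramified places), and the ramified case to the method of \cite[Theorem~15.3]{HSY3} combined with Theorem~\ref{conj:main}. All of these unfold exactly as the $p$-adic uniformization plus Whittaker-coefficient comparison you describe, including the matching of the local coefficients $c_{2j,v}$ with the global modification terms.

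There is, however, a genuine gap: you have tacitly assumed that $v$ is finite. The hypothesis ``$v$ nonsplit in $F$ and not above $2$'' includes the archimedean places of $F_0$. When $v\mid\infty$ the cycle $\mathcal{Z}(T,\varphi_K)$ can have complex points (this is precisely the case where $T$ is represented by the coherent space $V$, since $V$ and $\mathbb{V}$ differ only at $\phi_0$), and the contribution $\Int_{T,v}(\mathsf{y},\varphi_K)$ is defined via the star product of Kudla's Green functions, not via $p$-adic uniformization. The required identity in that case is the archimedean local arithmetic Siegel--Weil formula, proved by Liu \cite[Theorems~4.17, 4.20]{Liu2011} and independently by Garcia--Sankaran \cite[Theorem~1.1.2]{GS}; the paper invokes these results directly. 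Your sentence ``there are no complex points, so no archimedean contribution'' is valid only when $v$ is finite, and nothing in the remainder of your argument addresses the infinite case.
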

\begin{proof}
When $v$ is finite and $v\notin \mathcal{V}_{\mathrm{ram}}\cup \mathcal{V}_\mathrm{asd}$, the theorem is proved in \cite[Theorem 13.6]{LZ}. For $v\in \mathcal{V}_\mathrm{asd}$, our definition of $\Int_{T,\nu}(\varphi_K)$ differs from that of \cite[(13.5.0.14)]{LZ}. Correspondingly on the analytic side, our definition of $\pEis_T(\sz,\varphi_K)$ is also modified, see \eqref{eq: modified asd term} and \eqref{eq:modified central derivative}. However using \cite[Theorem 10.5.1]{LZ}  instead of \cite[Theorem 10.3.1]{LZ}, the proof of \cite[Theorem 13.6]{LZ} works the same way in this case.
When $v$ is infinite, the theorem is proved in \cite[Theorem 4.17,4.20]{Liu2011} and independently in \cite[Theorem 1.1.2]{GS}. When $v$ is finite and $v\in \mathcal{V}_{\mathrm{ram}}$, the theorem is a corollary of Theorem \ref{thm: main thm} and can be proved in the same way as \cite[Theorem 12.3]{HSY3}.
\end{proof}

We say $\varphi_{v}\in \sS(\mathbb{V}_v^n)$ is \emph{nonsingular} if its support lies in $\{\mathbf{x}\in \mathbb{V}_v^n: \det T(\mathbf{x})\ne0\}$, see \cite[\S12.3]{LZ} or \cite[Proposition 2.1]{Liu2011}.
\begin{theorem}\label{thm:Arithmetic Siegel--Weil formula}
  Assume that $F/F_0$ is split at all places above 2. Further assume that $\varphi_K$ is admissible and nonsingular  at two places split in $F$. Then
  $$\wdeg(\sz, \varphi_K)=c_K\cdot \pEis(\sz,\varphi_K),$$
  where $\wdeg(\sz, \varphi_K)$ is defined in \eqref{eq:generating series of arithmetic degree}.
  In particular, $\wdeg(\sz, \varphi_K)$ is a nonholomorphic hermitian modular form of genus $n$.
\end{theorem}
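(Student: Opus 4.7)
The plan is to establish the identity term by term in the Fourier expansion: for each nonsingular $T \in \Herm_n(F_0)$, show that
\[
\wdeg_T(\sy,\varphi_K)\, q^T = c_K \cdot \pEis_T(\sz,\varphi_K),
\]
and then sum over such $T$. The modularity of $\wdeg(\sz,\varphi_K)$ then follows immediately from the modularity of the right-hand side, which is a sum of a central derivative of an incoherent Siegel Eisenstein series and central values of coherent ones (a classical fact). So the bulk of the work is coefficient-by-coefficient matching.

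First, I would analyze the set $\Diff(T,\bV)$. At any finite place $v$ split in $F$, the local hermitian space $\bV_v \cong V_v$ is hyperbolic and represents every nondegenerate $n\times n$ hermitian form, so no split place lies in $\Diff(T,\bV)$. Combined with the hypothesis that $F/F_0$ is split at every place above $2$, every $v \in \Diff(T,\bV)$ (if finite) is nonsplit and not above $2$. Now I split into two cases. In the case $|\Diff(T,\bV)| = 1$ with $\Diff(T,\bV) = \{v\}$, the place $v$ is nonsplit and not above $2$ and the identity is precisely Theorem~\ref{thm:Arithmetic Siegel--Weil formula: nonsingular terms}. In the case $|\Diff(T,\bV)| \geq 2$, I would show that both sides vanish. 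Analytically, the $T$-th Whittaker coefficient of the incoherent Eisenstein series has a zero of order $\geq |\Diff(T,\bV)|$ at $s=0$ (product of local Whittaker integrals, with each bad place contributing a zero), so $\Eis'_T = 0$. For each correction term $^v\Eis_{2j}$ at $v \in \mathcal{V}_\mathrm{ram}\cup\mathcal{V}_\mathrm{asd}$, the $T$-th Fourier coefficient is a central value of a coherent Eisenstein series for the nearby space $^v\bV$, and factors as a product of local representation densities; since $|\Diff(T,\bV)|\geq 2$, the space $^v\bV$ fails to represent $T$ at some place $v'\neq v$, forcing the corresponding local density and hence this Fourier coefficient to vanish. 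Thus $\pEis_T = 0$.

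On the geometric side for $|\Diff(T,\bV)|\geq 2$: the nonsingularity of $\varphi_K$ at two split places, together with standard support arguments for Kudla--Rapoport cycles, rules out any contribution from finite places (if $\mathbf{x}$ lies in the support of $\varphi_K$ with $T(\mathbf{x})=T$, then $\bV_f$ represents $T$, so $\Diff(T,\bV)$ contains no finite place, contradicting $|\Diff|\geq 2$ being achievable only archimedeanly, which would still force $|\Diff|\leq 1$); hence $\cZ(T,\varphi_K)=\emptyset$ and $\Int_{T,v}(\varphi_K) = 0$ for every finite $v$. For the archimedean contribution $\Int_{T,v}(\sy,\varphi_K)$, one invokes the archimedean arithmetic Siegel--Weil formula of Liu \cite{Liu2011} and Garcia--Sankaran \cite{GS}, which relates it directly to the archimedean factor of $\Eis'_T$; this factor vanishes whenever there is an additional bad finite place, giving the desired vanishing.

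The main obstacle is the careful bookkeeping in the case $|\Diff(T,\bV)|\geq 2$, where one must verify that the new modification terms $(-1)^n\, ^v\Eis$ at $v\in \mathcal{V}_\mathrm{ram}$ (introduced in \eqref{eq:modified central derivative}) contribute zero $T$-th Fourier coefficient. This is the point where the present paper goes beyond \cite[Theorem 1.3.2]{LZ}: one must confirm that the central values of the coherent Eisenstein series attached to $^v\bV$ and to the vertex lattices $\Lambda_{t,v}^{\sharp}$ are compatible with the argument above. This reduces to a local statement --- vanishing of a certain primitive local density for the nearby hermitian space at a second bad place --- and is closely parallel to the local incoherence phenomenon handled already at the finite split places in \cite[\S12.3]{LZ}. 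Once that verification is complete, the sum over nonsingular $T$ gives $\wdeg(\sz,\varphi_K)=c_K\cdot \pEis(\sz,\varphi_K)$, and modularity transfers from the right-hand side.
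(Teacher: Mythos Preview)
Your overall strategy is correct and matches the paper's approach: the paper simply cites \cite[Theorem~15.5.1]{LZ} and notes that the same argument goes through with Theorem~\ref{thm:Arithmetic Siegel--Weil formula: nonsingular terms} as input. Your case analysis on $|\Diff(T,\bV)|$ for nonsingular $T$ is the heart of that argument, and your observation that the new ramified correction terms $^v\Eis_T$ also vanish when $|\Diff|\geq 2$ (since the coherent Eisenstein series for $^v\bV$ still has a vanishing local Whittaker factor at some $v'\in\Diff(T,\bV)\setminus\{v\}$) is exactly the extra check needed beyond \cite{LZ}.

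There is, however, one omission and one misattribution. The omission: you only sum over nonsingular $T$, but $\pEis(\sz,\varphi_K)$ in \eqref{eq:Fourier coefficients of central derivative} is a sum over \emph{all} $T\in\Herm_n(F_0)$, whereas $\wdeg(\sz,\varphi_K)$ in \eqref{eq:generating series of arithmetic degree} sums only over $\det T\neq 0$. So you must also verify $\pEis_T(\sz,\varphi_K)=0$ for every singular $T$. This is precisely where the hypothesis that $\varphi_K$ is nonsingular at \emph{two} split places is actually used: at each such place $v_i$ the local Whittaker integral $W_{T,v_i}(s,\varphi_{K,v_i})$ vanishes identically for singular $T$ (the support of $\varphi_{K,v_i}$ lies in $\{\mathbf{x}:\det T(\mathbf{x})\neq 0\}$), so $\Eis_T$ has a double zero at $s=0$ and $\Eis'_T=0$; a single such zero already kills the central values of the coherent correction terms. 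The misattribution: you invoke the nonsingularity hypothesis for the geometric vanishing in the $|\Diff|\geq 2$ case, but it is not needed there. The emptiness of $\cZ(T,\varphi_K)$ follows from $p$-adic uniformization alone: a point in the special fiber at a place above $v$ forces $V(A_0,A)\otimes\mathbb{A}$ to agree with $\bV$ away from $v$ and be positive definite at infinity, hence $\Diff(T,\bV)\subseteq\{v\}$.
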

\begin{proof}
The theorem can be derived from Theorem \ref{thm:Arithmetic Siegel--Weil formula: nonsingular terms} by the same way as \cite[Theorem 15.5.1]{LZ}.
\end{proof}

\bibliographystyle{alpha}
\bibliography{reference}
\end{document}